\newcommand{\pqmatrix}{\mathcal M}
\newcommand{\canondiv}{\di{K}_\CC}
\newcommand{\qeq}{\approx}
\newcommand{\PP}{\mathfrak{P}}
\newcommand{\EC}{\mathcal E}
\newcommand{\OOred}{\OO_{\mathrm{red}}}
\newcommand{\OOmultszar}{\mathcal V}
\newcommand{\Obb}{\mathbb{O}}
\newcommand{\posval}{positive valuation\xspace}
\newcommand{\negval}{negative valuation\xspace}
\renewcommand{\spann}[1]{\left(#1\right)}
\author{Olaf Merkert}
\date{Anno accademico 2015-2016}
\title{Reduction and specialization of hyperelliptic continued fractions}
\begin{document}

\maketitle
\tableofcontents

% \vfill \gitversioninfo \newpage

\chapter{Introduction}
\label{sec:org6b1216b}
This thesis investigates how prime factors arise in denominators of polynomial continued fractions, with a focus on continued fractions of the square root of a polynomial. This is strongly related to the problem of reducing polynomial continued fractions modulo a prime.

Continued fractions have a very long history -- those of rational numbers express the Euclidean Algorithm which was already known in ancient Greece. In modern times, mathematicians such as Lagrange and Galois studied continued fractions of irrational numbers, in particular quadratics (for example square roots). Even today, continued fractions of real numbers remain an important research topic in number theory and other branches of mathematics. 

We write a continued fraction as
\begin{equation*}
\alpha = [a_0, a_1, a_2, \dots] = a_0 + \dfrac{1}{a_1 + \dfrac{1}{a_2 + \ddots}}.
\end{equation*}
For the classical continued fractions with \(\alpha \in \R\), the \emph{partial quotients} \(a_n\) are integers, positive for \(n \geq 1\). 
Instead, one may also take the \(a_n \in \Q[X]\) to be polynomials, non-constant for \(n \geq 1\), to build the continued fraction of a Laurent series in \(\inv X\), i.e. \(\alpha \in \laurinx \Q\). The role of the nearest integer is then played by the polynomial part of the Laurent series.

We are interested for which \(n\) a given prime number \(\pp\) divides the denominator of the coefficients of the \(a_n\) (for brevity, we say the ``prime \(\pp\) appears in the denominator of \(a_n\)''). We are especially interested when it first appears and whether it can disappear again.

Of particular interest is the continued fraction of \(\sqrt{D}\), where \(D \in \Q[X]\) is a monic non-square polynomial of even degree \(2d\). It was first considered by Abel in 1826 \cite{abel-1826-ueber-integ-differ}, who used it to study the integration in elementary terms of certain algebraic functions. Abel showed that periodicity of this continued fraction is equivalent to the existence of a non-trivial solution \(p, q \in \Q[X]\), \(q \neq 0\) of the polynomial Pell equation \(p^2 - D \, q^2 = 1\) (see Chapter \ref{sec:org953fbfb} and Theorem \ref{thm-pellian-iff-cf-periodic}). We say that \(D\) is \emph{Pellian} if such a solution exists. Later, Chebyshev expanded upon these results \cite{chebyshev-1857-sur-integration}.

We call continued fractions of this type \emph{hyperelliptic} because they encode information about the (hyper)elliptic curve \(Y^2 = D(X)\), given that \(d \geq 1\) and \(D\) is also square-free. For example, if \(O_\pm\) are the two points at infinity in a smooth model, the class of \(\pd{O_+} - \pd{O_-}\) is torsion in the Jacobian of the curve \IFF \(D\) is Pellian, i.e. the continued fraction is periodic (see Theorem \ref{thm-pellian-iff-torsion}). 

Note that the polynomials of degree \(2d\), after some normalisation, form an affine variety of dimension \(2d-2\). The Pellian polynomials are then contained in a denumerable union of subvarieties of dimension at most \(d-1\) (see \cite{zannier-2014-pell-survey}, a survey focusing on the geometric aspects of the polynomial Pell equation).
This implies that unlike positive square-free integers which are always ``Pellian'', most polynomials \(D\) are not Pellian, and usually we do not expect a periodic continued fraction. But other results for the classical continued fractions have direct analogues for polynomial continued fractions, see for example \cite{schmidt-2000-continued-fractions-diophantine}.

\medskip

Let us also introduce the \emph{canonical convergents} which are defined via the recurrence relations
\begin{align*}
p_n &= a_n \, p_{n-1} + p_{n-2}, &  
q_n &= a_n \, q_{n-1} + q_{n-2}
\end{align*}
and \(p_0 = a_0, \; p_{-1} = q_0 = 1, \; q_{-1} = 0\).
These imply that \(p_n, q_n \in \Q[X]\) are coprime for any integer \(n \geq 0\), via the identity \(p_n \, q_{n-1} - q_n \, p_{n-1} = (-1)^{n+1}\). The canonical convergents arise by calculating the numerator and denominator of the finite continued fraction 
\begin{equation*}
\frac{p_n}{q_n} = [a_0, a_1, \dots, a_n] = a_{0} + \dfrac{1}{a_{1} + \dfrac{1}{ \ddots + \dfrac{1}{a_{n}}}}.
\end{equation*}
Note that they are usually not monic nor have content \(1\). This is related to prime numbers suddenly appearing in the denominators of the coefficients of the \(a_n\), something van der Poorten was already aware of (see \cite{poorten-2001-non-periodic-continued}).

This follows from the fact that \(D_\pp\), the reduction of \(D\) modulo \(\pp\), is Pellian unless it is a square (the Jacobian over \(\F_\pp\) is finite, all points on it are torsion), so the continued fraction of \(\sqrt{D_\pp}\) is automatically periodic. This leads to one of the main results of this thesis:
\begin{thm}
\label{thm-intro-infinite-poles-rationals}
Let \(\sqrt{D} = [a_0, a_1, a_2, \dots]\). If \(D \in \Q[X]\) is not Pellian, then for all prime numbers \(\pp\) except finitely many, \(\pp\) appears in infinitely many polynomials \(a_n\) in a denominator (of the coefficients).
\end{thm}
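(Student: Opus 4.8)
\emph{Plan of proof.} This is a reduction-modulo-$\pp$ argument, playing the non-periodicity of the continued fraction of $\sqrt{D}$ over $\Q$ against its forced periodicity modulo $\pp$. First I would fix the finitely many exceptional primes: write $D = A\,B^2$ with $A\in\Q[X]$ squarefree, so $\deg A\geq 1$ since $D$ is not a square. Then for all but finitely many primes $\pp$ the reduction $D_\pp$ is a non-square over $\F_\pp$ of degree $2d$ — because $A_\pp$ stays squarefree of degree $\deg A$, and $\pp\nmid\operatorname{lc}(D)$ — and moreover $\pp$ is odd and divides no denominator of a coefficient of $D$; I also discard the finitely many $\pp$ that behave badly in the (finitely many, $D$-dependent) first steps of the constructions below. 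For every surviving $\pp$ the continued fraction of $\sqrt{D_\pp}$ is periodic: the Jacobian (generalized, if $D_\pp$ is not squarefree) of $Y^2 = D_\pp(X)$ over the finite field $\F_\pp$ is finite, so all its points are torsion, so $D_\pp$ is Pellian by the $\F_\pp$-version of Theorem \ref{thm-pellian-iff-torsion}, so $\sqrt{D_\pp}$ is periodic by that of Theorem \ref{thm-pellian-iff-cf-periodic}.

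Now suppose towards a contradiction that a surviving prime $\pp$ occurs in the denominator of only finitely many $a_n$, and fix $N$ with all coefficients of all $a_n$ ($n\geq N$) being $\pp$-integral. Writing the complete quotients as $\alpha_n = (\sqrt{D}+P_n)/Q_n$ with $P_n,Q_n\in\Q[X]$ and $Q_n\mid D-P_n^2$, the key step is to upgrade this to \emph{good reduction of the tail}: after enlarging $N$, the $P_n,Q_n$ for $n\geq N$ can be normalised to be $\pp$-integral with $\deg(Q_n\bmod\pp)=\deg Q_n$, so that reduction commutes with the continued-fraction recursion and $\overline{\alpha_N}\in\F_\pp(X)(\sqrt{D_\pp})$ has continued fraction $[\overline{a_N},\overline{a_{N+1}},\dots]$ with non-constant partial quotients. \textbf{This is the main obstacle:} the hypothesis only says $\pp$ appears in no denominator from $N$ on, and one must show this already forces the degrees of the $Q_n$ — equivalently the leading coefficients of the $a_n$ — to survive reduction. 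I expect to obtain this from the structural criterion, developed earlier, describing precisely when $\pp$ enters a denominator (namely when reduction makes $\deg Q_n$ drop), so that a prime with only finitely many occurrences cannot disturb the tail.

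Granting good reduction of the tail, the contradiction follows. The complete quotient $\alpha_N$ of $\sqrt D$ represents, in the dictionary between continued fractions and reduced ideals, a reduced element of the \emph{principal} ideal class; since reduction sends principal to principal, $\overline{\alpha_N}$ is again reduced and principal for $D_\pp$, hence is itself a complete quotient of $\sqrt{D_\pp}$, so $[\overline{a_N},\overline{a_{N+1}},\dots]$ is a tail of the periodic continued fraction of $\sqrt{D_\pp}$. Being periodic, that tail contains a complete quotient $\overline{\alpha_m}$ ($m\geq N$) of the form $(\sqrt{D_\pp}+\text{poly})/c$ with $c\in\F_\pp^\times$; that is, $Q_m$ is a nonzero constant modulo $\pp$, so by the degree preservation $Q_m\in\Q^\times$. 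Equivalently, some convergent $p/q$ of $\sqrt{D}$ with $q\neq 0$ satisfies $p^2 - D\,q^2 = c\in\Q^\times$ — whence $\bigl((p^2 + D\,q^2)/c\bigr)^2 - D\,\bigl((2pq)/c\bigr)^2 = (p^2 - D q^2)^2/c^2 = 1$, so $D$ is Pellian, contradicting the hypothesis. Hence each surviving prime appears in a denominator of infinitely many $a_n$, which is the assertion of the theorem.
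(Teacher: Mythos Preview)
Your contradiction argument has a real gap at the step you flag as ``the main obstacle'': upgrading ``$a_n$ is $\pp$-integral for all $n\geq N$'' to ``good reduction of the tail'' (i.e.\ $\alpha_n\in\laurinx{\O}$ for all large $n$). These are genuinely different conditions. The hypothesis $\nu(a_n)\geq 0$ only gives $\nu(\LC(a_n))\geq 0$, not $\nu(\LC(a_n))=0$; meanwhile the bad partial quotients $a_0,\dots,a_{N-1}$ may already have forced $\alpha_N\notin\laurinx{\O}$, so the induction ``$\alpha_{n}\in\laurinx{\O}$ and $a_n\in\O[X]$ imply $\alpha_{n+1}\in\laurinx{\O}$ or $\nu(\LC(a_{n+1}))<0$'' has no place to start. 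Your appeal to ``the structural criterion\dots describing precisely when $\pp$ enters a denominator (namely when reduction makes $\deg Q_n$ drop)'' conflates two things: $\deg Q_n$ dropping under reduction is equivalent to $\nu(\LC(a_n))<0$, which is strictly stronger than $\nu(a_n)<0$; and the representation $\alpha_n=(\sqrt{D}+P_n)/Q_n$ has no scaling freedom (the coefficient of $\sqrt{D}$ is fixed to $1$), so there is nothing to ``normalise''.

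The gap can be closed, but only with the paper's machinery. One needs a single index $M\geq N$ with $\alpha_M\in\laurinx{\O}$ to restart the induction; the paper produces such $M$ via the fibre analysis of the convergent-reduction map $\lambda$: at a single-element fibre $\{n\}=\lambda^{-1}(m)$ one has $\nu(\alpha_{n+1})=\nu(a_{n+1})$ (Proposition~\ref{prop-single-element-fibre-analysis}), so your hypothesis forces $\alpha_{n+1}\in\laurinx{\O}$. But guaranteeing infinitely many single-element fibres is exactly Proposition~\ref{prop-criterion-infinite-single-element-fibres}, and it is why the paper must exclude finitely many further primes (those with bad reduction before the first $a_n$ of minimal degree $\delta$). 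Once you have both infinitely many single- and multi-element fibres, Proposition~\ref{fibre-conditions-infinite-poles} gives the conclusion directly, without the detour through your contradiction and the ideal-class heuristic of your step~5 --- which, incidentally, also needs care: even granting good reduction of the tail, $\overline{\alpha_M}$ is only a \emph{scalar multiple} of a complete quotient of $\sqrt{D_\pp}$ (see \eqref{eq-1elem-red-cq}), not literally one, though this is harmless for the final degree comparison.
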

We prove this in Theorem \ref{thm-infinite-poles-number-field} more generally for arbitrary number fields.
Note that the formula for multiplying polynomial continued fractions with a constant,
\begin{equation}
\label{intro-eq-cf-mult-const}
\pp^e \, [b_0, b_1, b_2, \dots] = [\pp^e \, b_0, \pp^{-e} \, b_1, \pp^{e} \, b_2, \dots], \quad (e \in \Z),
\end{equation}
raises the question if -- at least for a fixed prime \(\pp\) -- the infinite occurrences in the denominators of the \(a_n\) arise in this rather trivial way. Indeed, this is not the case; we can show that for any \(e \in \Z\), the continued fraction of \(\sqrt{\pp^{-2e} D} = [b_0, b_1, b_2, \dots]\) enjoys the property that \(\pp\) appears in infinitely many \(b_n\) as a denominator.

The primes which are excluded in Theorem \ref{thm-intro-infinite-poles-rationals} are the prime \(2\), any primes appearing already in a denominator of \(D\) and those with \(D_\pp\) square. For technical reasons, we may also need to exclude further primes, depending on the first occurrence of an \(a_n\) with minimal degree. These primes can be determined effectively, too (see Remark \ref{minimal-an-degree-effective}). The prime \(2\) is of course excluded because we are taking square roots.

\begin{rem}
\label{intro-counterexample-infinite-good-reduction}
This result is true only for \(\sqrt{D}\), and does not apply to other elements of the hyperelliptic function field \(\Q(X, \sqrt{D})\). With an analogue of the fact that there are infinitely many primes \(\pp\) such that \(2^n \not\equiv 5 \mod \pp\) for all \(n\), we construct an example of type \(\alpha = \ifracBb{r + \sqrt{D}}{X}\) where there are infinitely many primes \(\pp\) that never appear in the denominators of the \(a_n\) (see Theorem \ref{thm-good-reduction-infinite-primes} in Section \ref{sec:org53fcc2b}). The proof relies on the \v{C}ebotarev density theorem, and represents a variant of the results of Schinzel \cite{schinzel-1960-the-congruence-a} and Corrales-Rodrigáñez-Schoof \cite{corrales-schoof-1997-support-problem-its}.
\end{rem}

For \(\deg D = 4\), another more explicit approach avoids the issue of excluding additional primes. This is described in the rather technical Theorem \ref{thm-genus1-zero-patterns} and Corollary \ref{cor-infinite-poles-deg4}. The former has another consequence for the Gauss norm of the convergents.

Recall that, given some valuation on a field \(K\), we may extend the valuation to polynomials. Define the valuation of a polynomial in \(K[X]\) as the minimum of the valuation on the coefficients (see Section \ref{sec:orge83be0b} for details). The corresponding absolute value is usually called a \emph{Gauss norm}. For \(D \in \Q[X]\), we naturally use the \(\pp\)-adic valuation \(\nu_\pp\). A negative \(\nu_\pp(f)\) then indicates that \(\pp\) appears in at least one denominator of the coefficients of the polynomial \(f\).

As a special case of Corollary \ref{cor-genus1-unbounded-gauss-norm}, we obtain:
\begin{thm}
\label{thm-intro-genus1-unbounded-gauss-norm}
Let \(D\) be a non-Pellian polynomial of degree \(4\), and let \(\pp\) an odd prime with \(D_\pp\) square-free and the class of \(\pd{O_+} - \pd{O_-}\) of \emph{even} torsion order \(m\) in the (finite) Jacobian of the elliptic curve \(Y^2 = D_\pp(X)\).
Then
\begin{align*}
(-1)^n \nu_\pp(a_{n}) &\geq 2 \floor{\ifracBb{n-1}{m}}_\Z + 2 \floor{\ifracBb{n+1}{m}}_\Z,\\
(-1)^n \nu_\pp(q_{n}) &\geq 2 \floor{\ifracBb{n+1}{m}}_\Z,
\end{align*}
where \(\floor{\cdot}_\Z\) denotes the floor function. In particular, the Gauss norms of the partial quotients and the convergents are unbounded both from above and below.
\end{thm}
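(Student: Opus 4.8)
The plan is to exploit the interaction between the $\pp$-adic reduction of the continued fraction and the periodicity forced by the finiteness of the Jacobian over $\F_\pp$. First I would recall the general principle (presumably established in the body of the thesis): since $D_\pp$ is square-free, the point class $\pd{O_+} - \pd{O_-}$ has finite torsion order $m$ in $\mathrm{Jac}(Y^2 = D_\pp)$, and by the correspondence between torsion and periodicity (Theorem \ref{thm-pellian-iff-torsion}) the continued fraction of $\sqrt{D_\pp}$ over $\F_\pp$ is periodic. Crucially, the period length and the degrees of the reduced partial quotients are governed by $m$: in the genus $1$ case the degrees of the convergents $q_n$ of $\sqrt{D_\pp}$ grow linearly in $n$ with the pattern dictated by $m$, and in particular $\deg q_n^{(\pp)} = n+1 - 2\floor{(n+1)/m}_\Z$ (and similarly for $\deg a_n^{(\pp)}$), because each ``step'' of the period loses degree relative to the generic (non-reduced) case exactly at the multiples of $m$. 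This degree drop over $\F_\pp$ is precisely what the Gauss norm must compensate for.

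The second step is to compare degrees over $\Q$ with degrees over $\F_\pp$. Over $\Q$ (where $D$ is non-Pellian), the continued fraction of $\sqrt{D}$ is the ``generic'' one: for $\deg D = 4$ the normal situation is $\deg a_n = 1$ for all $n\geq 1$ and $\deg q_n = n$. When we reduce the coefficients of $q_n$ (or $a_n$) modulo $\pp$, the leading coefficients may vanish $\pp$-adically, and the amount by which the degree drops upon reduction is bounded below by $-\nu_\pp$ of the leading coefficient, hence by $-\nu_\pp(q_n)$ (the Gauss norm). Since reduction of the convergents of $\sqrt{D}$ gives the convergents of $\sqrt{D_\pp}$ up to normalisation by constants — this is exactly the specialisation-of-continued-fractions result that Theorem \ref{thm-genus1-zero-patterns} must be built on — the degree over $\F_\pp$ equals $\deg q_n$ minus (at least) $-\nu_\pp(q_n)$ worth of vanishing, which after a sign bookkeeping using formula \eqref{intro-eq-cf-mult-const} (the $\pp^{\pm e}$ alternation explains the $(-1)^n$) yields $(-1)^n\nu_\pp(q_n) \geq 2\floor{(n+1)/m}_\Z$. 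The bound for $a_n$ follows from $a_n = q_n/q_{n-1} \cdot (\text{unit adjustments})$ or more directly from the three-term recurrence together with the already-established bound for the $q$'s, giving the sum $2\floor{(n-1)/m}_\Z + 2\floor{(n+1)/m}_\Z$.

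The final step, unboundedness, is immediate once the main estimate is in place: $\floor{(n+1)/m}_\Z \to \infty$, so for the infinitely many even $n$ the right-hand side tends to $+\infty$ (hence $\nu_\pp(a_n), \nu_\pp(q_n) \to +\infty$, i.e. Gauss norms $\to 0$, unbounded below in absolute value of $\nu$ in one direction), and by the multiplication formula \eqref{intro-eq-cf-mult-const} the odd-index terms must carry the compensating negative valuations (Gauss norms $\to\infty$), so the norms are unbounded in both directions. I would phrase this by noting that $\sum_n \nu_\pp(a_n)$ telescopes in a controlled way against the known total degree, forcing the odd terms negative.

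The main obstacle, I expect, is the precise degree-drop accounting in the second step: one must show not merely that reduction \emph{can} drop the degree, but that over $\F_\pp$ the degree pattern is \emph{exactly} the periodic one with the stated $\floor{\cdot/m}$ shape, and that the non-Pellian hypothesis over $\Q$ guarantees no ``accidental'' extra degree drops that would spoil the equality direction — equivalently, that the minimal-degree partial quotient behaviour over $\Q$ is the generic one for $\deg D = 4$. Pinning down that the torsion order $m$ controls the period (and the even/odd torsion distinction, which is why $m$ even appears in the hypothesis — for odd $m$ the relevant object would be $2m$ or the Pell-type normalisation differs by a sign/unit) is the delicate point that Theorem \ref{thm-genus1-zero-patterns} presumably handles and that I would cite rather than reprove.
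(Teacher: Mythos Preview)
Your central mechanism is incorrect. You claim that the degree drop of $q_n$ upon reduction mod $\pp$ is ``at least $-\nu_\pp(q_n)$ worth of vanishing'', and that comparing $\deg q_n$ over $\Q$ with $\deg q_n^{(\pp)}$ over $\F_\pp$ yields the Gauss norm bound. But the degree drop after normalisation has no direct relationship to $\nu_\pp(q_n)$: the drop is $\deg q_n - \deg \Redn{q_n}$, which in genus~$1$ is always $0$ or $1$ (see the tables after Theorem~\ref{thm-genus1-zero-patterns}, or Remark~\ref{red-deg4-coprime-convergents}). Meanwhile $\nu_\pp(q_n)$ grows without bound. The growth of the Gauss norm does not come from a single degree comparison; it comes from the \emph{accumulation} of normalisation factors $g_n$ across many steps, and that accumulation is not visible in your degree-counting argument.

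The paper's route is genuinely different. Theorem~\ref{thm-genus1-zero-patterns} establishes \emph{exact} formulas $\nu_\pp(q_n) = 2F_n$ and $\nu_\pp(a_n) = 2(F_{n-2}+F_n)$, where $F_n = -(F_{n-1}+f_n)$ and $f_n = \nu_\pp(\LC(\vartheta_{n-1})) \geq 0$, with $f_n > 0$ precisely when $n \in \unboundedset = \{jm-1 : j\geq 1\}$. These formulas are proved by a careful fibre-by-fibre analysis of the reduction map $\lambda$ (Propositions~\ref{prop-single-element-fibre-analysis} and~\ref{prop-two-element-fibre-analysis}), tracking how the valuation of $\LC(\vartheta_n)$ propagates through the recursion for the convergents. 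The bound in Corollary~\ref{cor-genus1-unbounded-gauss-norm} then follows because $(-1)^n F_n = \sum_{j\in\unboundedset,\,j\leq n}(-1)^{j+1}f_j$, and when $m$ is even every $j\in\unboundedset$ is odd, so all terms have the same sign and each contributes at least~$1$. This is also where the parity hypothesis actually enters: for $m$ odd the $j\in\unboundedset$ alternate in parity and the $f_j$ may cancel (see the Remark after Corollary~\ref{cor-genus1-unbounded-gauss-norm} and Table~\ref{cfr-mod19-valuations-table}), not because of a period-doubling or unit-normalisation issue as you suggest.
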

In the case of \emph{odd} torsion order \(m\), the negative valuations are possibly cancelled out by positive valuations coming from phenomena as in \eqref{intro-eq-cf-mult-const}; this currently prevents any similar prediction (see Example \ref{ex-cfp1-zero-pattern-deg4}, in particular table \ref{cfr-mod19-valuations-table}). Moreover, the precise growth of these Gauss norms is not understood at all right now.
This is an even bigger issue for \(\deg D > 4\), where we have to keep track of further unknowns. This makes an exact estimation of the valuations for higher degrees much more difficult.

The Gauss norms are also related to the height of polynomials. However, we have no information on the archimedean place and the \(2\)-adic valuations, so we have to be careful if we want to compare with known results about the height of the convergents (see Section \ref{sec:orgba963e8}).

Indeed, the convergents \((p_n, q_n)\) are also Padé approximations of \(\sqrt{D}\), i.e. they satisfy
\begin{equation}
\label{intro-convergent-order-inequality}
\ord_\infty (p_n - \sqrt{D} \, q_n) > \deg q_n, % \quad \text{ where } p, q \in \mino{\Q[X]}
\end{equation}
where \(\ord_\infty\) is the non-archimedean valuation with \(\ord_\infty X = -1\) and which makes \(\laurinx \Q\) the completion of \(\Q(X)\). In other words \(p_n - \sqrt{D} \, q_n\) has a zero of high order at infinity.

Then by a general result of Bombieri and Paula Cohen \cite{bombieri-cohen-1997-siegels-lemma-pade} on the height of Padé approximations, it follows in the non-periodic case that the logarithmic projective height of the convergents grows quadratically in \(n\). In this thesis, we have worked out the details of a simpler proof for the hyperelliptic case suggested by Zannier, see Theorem \ref{convergent-height-lower-bound} and Theorem \ref{convergents-upper-proj-height-bound} for lower respectively upper bounds. This leads to upper bounds for the projective height of the partial quotients as well (see Corollary \ref{partial-quotients-upper-proj-height-bound}). The corresponding lower bounds for the height of the partial quotients require different arguments, see \cite{zannier-2016-hyper-contin-fract}.

\medskip

The main approach to prove results like Theorem \ref{thm-intro-infinite-poles-rationals} and \ref{thm-intro-genus1-unbounded-gauss-norm} is to study reduction of continued fractions modulo primes. This is interesting in itself, as it gives an example of a map between two ``spaces'' of continued fractions. Chapter \ref{sec:orgd5f1900} contains a general exposition of reduction of continued fractions, using the theory of discrete valuation rings.

The idea is to compare the continued fractions of \(\sqrt{D}\) and \(\sqrt{D_\pp}\). Their partial quotients are contained in \(\Q[X]\) respectively in \(\F_p[X]\). A naive approach would be to try to reduce the partial quotients, but this does not capture the structure of the continued fraction sufficiently. Instead we have to try to reduce the complete quotients \(\alpha_n = [a_n, a_{n+1}, \dots]\) of \(\sqrt{D}\) which are Laurent series in \(\inv X\) over \(\Q\).

We say that a continued fraction has \emph{good reduction in \(\pp\)} if we can reduce the complete quotients of \(\sqrt{D}\) and obtain exactly the complete quotients of \(\sqrt{D_\pp}\). If this fails, we speak of \emph{bad reduction of the continued fraction}.  The latter is the usual situation for non-Pellian \(D\) over \(\Q\) -- and this is a key ingredient for the proof of Theorem \ref{thm-intro-infinite-poles-rationals}.
Other equivalent characterisations for good reduction of the continued fraction are given in Theorem \ref{cf-good-red-partial-quotients}. Note that this notion of good or bad reduction for the continued fraction of \(\sqrt{D}\) is very different from the good or bad reduction of the corresponding (hyper)elliptic curve.

If the continued fraction of \(\sqrt{D}\) is periodic, it trivially has good reduction at almost all primes \(\pp\). This implies that the period length of the continued fraction of \(\sqrt{D_\pp}\) is essentially independent of \(\pp\). This can also be stated and deduced directly in terms of reducing minimal solutions of the polynomial Pell equation, and has recently been used by Platonov \cite{platonov-2014-number-theoretic-properties}, also together with Benyash-Krivets \cite{benyash-platonov-2007-groups-s-units} and Petrunin \cite{platonov-petrunin-2012-the-torsion-problem}, to construct hyperelliptic curves over \(\Q\) of genus \(2\), where the Jacobian contains a torsion point of a specific order. These examples are relevant for the uniform boundedness conjecture for torsion points of abelian varieties.

Van der Poorten's approach to reduction of continued fractions deals primarily with reduction of the convergents: the inequality \eqref{intro-convergent-order-inequality} essentially characterises the convergents up to a common factor of small degree, constant if  \(p\) and \(q\) are coprime (see Corollary \ref{cf-convergent-classification}).
If we normalise \(p_n\) and \(q_n\) correctly, their reduction modulo \(\pp\) remains a convergent of \(\sqrt{D_\pp}\). Moreover, the following theorem holds (both for Pellian and non-Pellian \(D\)):
\begin{thm}[van der Poorten]
\label{thm-vdp-intro}
If the prime \(\pp\) does not appear in a denominator in \(D\), then the reductions modulo \(\pp\) of the normalised convergents \((\normal{p_n}, \normal{q_n})\) of \(\sqrt{D}\) yield \emph{all} the convergents of \(\sqrt{D_\pp}\).
\end{thm}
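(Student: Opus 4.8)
The plan is to move everything into the $\pp$-adic setting — writing $\Z_{(\pp)}$ for the localisation of $\Z$ at $\pp$, with residue field $\F_\pp$ — and then to play off the description of convergents as Padé approximations (Corollary~\ref{cf-convergent-classification}), once for $\sqrt D$ over $\Q$ and once for $\sqrt{D_\pp}$ over $\F_\pp$. First I would record that the Laurent series $\sqrt D$ itself reduces well: since $D$ is monic of degree $2d$ and $\pp\neq2$ divides no denominator of $D$, the binomial series places $\sqrt D$ in $\Z_{(\pp)}(\!(\inv X)\!)$, and the continuous, $X$-fixing reduction homomorphism $\Z_{(\pp)}(\!(\inv X)\!)\to\F_\pp(\!(\inv X)\!)$ carries it to $\sqrt{D_\pp}$. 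Recall that the normalised convergent $(\normal{p_n},\normal{q_n})$ is a constant multiple $\lambda_n(p_n,q_n)$ with $\lambda_n\in\Q^\times$, chosen so that $\normal{p_n},\normal{q_n}\in\Z_{(\pp)}[X]$ and the pair is $\pp$-primitive; thus its reduction $(\overline{\normal{p_n}},\overline{\normal{q_n}})\in\F_\pp[X]^2$ is a well-defined nonzero pair. Multiplying by the constant $\lambda_n$ does not change orders at infinity, and reduction cannot lower them, so the (standard, and sharper than \eqref{intro-convergent-order-inequality}) identity $\ord_\infty(p_n-\sqrt D q_n)=\deg q_{n+1}$ yields
\[
\ord_\infty\bigl(\overline{\normal{p_n}}-\sqrt{D_\pp}\,\overline{\normal{q_n}}\bigr)\geq\ord_\infty\bigl(\normal{p_n}-\sqrt D\,\normal{q_n}\bigr)=\deg q_{n+1}>\deg q_n\geq\deg\overline{\normal{q_n}} .
\]
In particular $\overline{\normal{q_n}}\neq0$.

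The core of the argument will be a lemma I would isolate, valid over any field $K$ of characteristic $\neq2$ (so that $\sqrt D\in K(\!(\inv X)\!)$) and any monic non-square $D\in K[X]$ of even degree, with convergents $(P_j,Q_j)$ of $\sqrt D$: for every $\mu\geq0$ there is a unique index $j$ with $\deg Q_j\leq\mu<\deg Q_{j+1}$ (the $\deg Q_i$ increasing strictly to infinity), and every nonzero $(p,q)$ with $\deg q\leq\mu$ and $\ord_\infty(p-\sqrt D q)>\mu$ is a polynomial multiple of $(P_j,Q_j)$. For the proof, write $(p,q)=g\cdot(p',q')$ with $(p',q')$ coprime and $q'\neq0$ (the case $q=0$ cannot occur here); then $\ord_\infty(p'-\sqrt D q')=\ord_\infty(p-\sqrt D q)+\deg g>\deg q'$, so Corollary~\ref{cf-convergent-classification} (which holds over $K$ with the same proof) gives $(p',q')=c(P_i,Q_i)$ for some $i$ and a constant $c$; hence $\deg Q_i=\deg q'\leq\deg q\leq\mu$ while $\deg Q_{i+1}=\ord_\infty(p'-\sqrt D q')\geq\mu+1+\deg g>\mu$, forcing $i=j$. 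Applying this lemma over $\F_\pp$ (legitimate since $\pp$ is odd) with $\mu=\deg q_n$ to the pair from the first paragraph then shows that $(\overline{\normal{p_n}},\overline{\normal{q_n}})$ is, up to a common polynomial factor, a convergent of $\sqrt{D_\pp}$. (If $D_\pp$ is a square the claim is trivial: then $\sqrt{D_\pp}$ is a polynomial, $(P_0,Q_0)$ the only convergent, and $(\overline{\normal{p_n}},\overline{\normal{q_n}})$ is forced to be a polynomial multiple of it.)

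For surjectivity, the idea is to pick, for a given convergent $(P_k,Q_k)$ of $\sqrt{D_\pp}$, the right lift. Set $\mu=\deg Q_k$ and let $N$ be the unique index (which exists since $\deg q_n\to\infty$) with $\deg q_N\leq\mu<\deg q_{N+1}$. Arguing exactly as in the first paragraph gives $\deg\overline{\normal{q_N}}\leq\deg q_N\leq\mu$ and
\[
\ord_\infty\bigl(\overline{\normal{p_N}}-\sqrt{D_\pp}\,\overline{\normal{q_N}}\bigr)\geq\ord_\infty\bigl(\normal{p_N}-\sqrt D\,\normal{q_N}\bigr)=\deg q_{N+1}>\mu=\deg Q_k ,
\]
so by the lemma (over $\F_\pp$, with this $\mu$) the nonzero pair $(\overline{\normal{p_N}},\overline{\normal{q_N}})$ is a polynomial multiple of the convergent $(P_{k'},Q_{k'})$ with $\deg Q_{k'}\leq\mu<\deg Q_{k'+1}$; but $\mu=\deg Q_k$ forces $k'=k$. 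Thus $(\overline{\normal{p_N}},\overline{\normal{q_N}})$ equals $(P_k,Q_k)$ up to a polynomial factor, and every convergent of $\sqrt{D_\pp}$ is indeed obtained.

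I expect the main obstacle to be the lemma on Padé solution spaces, together with the degree bookkeeping that pins down \emph{which} convergent a given solution is a multiple of; this is precisely where Corollary~\ref{cf-convergent-classification} and the sharp identity $\ord_\infty(p_n-\sqrt D q_n)=\deg q_{n+1}$ carry the weight (the plain inequality \eqref{intro-convergent-order-inequality} would not suffice in the surjectivity step). By contrast, the good reduction of $\sqrt D$ and the stability of the order inequality under reduction are routine — the one point requiring care being that reduction may well lower the degree of $\normal{q_n}$ (which is exactly the bad-reduction phenomenon this theorem skirts) yet can never lower the order at infinity.
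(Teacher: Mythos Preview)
Your proof is correct. The ingredients are the same as the paper's --- stability of the convergent inequality under reduction, the classification of convergents (Corollary~\ref{cf-convergent-classification}), and the sharp identity $\ord_\infty(p_n - \sqrt{D}\,q_n) = \deg q_{n+1}$ --- and your Pad\'e-type lemma is essentially the paper's best-approximation classification (Proposition~\ref{cf-best-approx-classification}) repackaged for the purpose at hand.

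The organisation, however, differs. The paper (Theorem~\ref{convergent-reduction-surjective}) first defines the map $\lambda : \N_0 \to \N_0$ via $(\Redn{p_n}, \Redn{q_n}) = h_n \cdot (u_{\lambda(n)}, v_{\lambda(n)})$, proves separately that $\lambda$ is non-decreasing (Proposition~\ref{reduced-convergents-increasing}) and has finite fibres, and then establishes surjectivity by an inductive ``no gaps'' argument: $\lambda(0)=0$, and whenever $\lambda(n) < \lambda(n+1)$ the chain of inequalities $\deg v_{\lambda(n+1)} \leq \deg q_{n+1} \leq \deg v_{\lambda(n)+1}$ forces $\lambda(n+1)=\lambda(n)+1$. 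Your argument bypasses both the monotonicity lemma and the induction: given a target $(P_k,Q_k)$ you pick $N$ directly by the degree condition $\deg q_N \leq \deg Q_k < \deg q_{N+1}$, and your lemma pins down $k'=k$ in one stroke. This is cleaner for the bare surjectivity statement. What the paper's route buys is the map $\lambda$ together with its fibre structure and the identification $\deg v_m = \deg q_n$ at the minimum of each fibre --- machinery that is used repeatedly downstream (Corollary~\ref{cor-lambda-degree-sum}, the fibre analysis of Section~\ref{sec:org84e8497}, and ultimately Theorem~\ref{thm-genus1-zero-patterns}).
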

Unfortunately, the proofs given by van der Poorten (there are slightly different versions in  \cite{poorten-1998-formal-power-series}, \cite{poorten-1999-reduction-continued-fractions} and \cite{poorten-2001-non-periodic-continued}) do not appear to be complete. So one of the main goals of Chapter \ref{sec:orgd5f1900} is to give a more precise statement and a rigorous proof of van der Poorten's result (as in Theorem \ref{convergent-reduction-surjective}).

As might be expected, the reduction of the convergents is strongly related to the reduction of the continued fraction. For example, the bad reduction of the continued fraction is caused by two (or more) convergents of \(\sqrt{D}\) reducing to the same convergent modulo \(\pp\) -- see Proposition \ref{cf-good-reduction-lambda-bijective} and example \ref{ex-cfp2-zero-pattern-deg6}, in particular table \ref{cf2-mod3-degrees-table}.

Finally, we remark that periodicity of the continued fraction of \(\sqrt{D}\) is equivalent to \(\deg a_n = d\) for at least one \(n \geq 1\), where \(2d = \deg D\) (see Corollary \ref{cor-pq-degree-periodicity}). Bad reduction of the continued fraction is also determined by how these degrees increase under reduction (see the discussion in Section \ref{sec:org84e8497}) which connects periodicity of the continued fraction of \(\sqrt{D_\pp}\) and occurrences of \(\pp\) in the denominators. The interplay with the normalisation factors of the canonical convergents then allows us to exclude issues related to \eqref{intro-eq-cf-mult-const}, and leads to a proof of Theorem \ref{thm-intro-infinite-poles-rationals}.

\medskip

\noindent \textbf{On specialization}

The reduction theory for continued fractions of Chapter \ref{sec:orgd5f1900} applies also to specialization. Instead of reducing \(D \in \Q[X]\) modulo a prime, we take for example \(D \in \C(t)[X]\), and try to specialize \(t\) to some \(t_0 \in \C\). Searching for the values \(t_0\) of \(t\) that specialize to a periodic continued fraction of \(\sqrt{D_{t=t_0}}\) corresponds to a special case of the relative Manin-Mumford conjecture, which in turn is a consequence of Pink's conjecture. Recall that periodicity is equivalent to the class of \(\pd{O_+} - \pd{O_-}\) being torsion in the Jacobian of the curve \(Y^2 = D(X)\).

The periodicity of the reduction of the continued fraction was a crucial ingredient for the proof of Theorem \ref{thm-intro-infinite-poles-rationals}. It is therefore natural to ask for specialization analogues of this theorem. The answer depends on the geometry: 

For example Masser and Zannier showed that for \(D = X^6 + X + t\), the continued fraction of \(\sqrt{D}\) is non-periodic, the Jacobian of the curve \(Y^2 = D(X)\) is simple and there are only finitely many \(t_0 \in \C\) such that \(\sqrt{D_{t=t_0}}\) has a periodic continued fraction (see \cite{masser-zannier-2015-torsion-points-on}, here we have reformulated the results in the language of continued fractions). For these \(t_0\), all of them algebraic numbers, we can reuse the arguments from Theorem \ref{thm-intro-infinite-poles-rationals} and show that \(t-t_0\) appears in infinitely many \(a_n\) of the generic continued fraction as a denominator of a coefficient. 

However, from the results of Masser and Zannier follows also that there are infinitely many \(t_1 \in \closure{\Q}\) for which \(t-t_1\) appears at least once as a denominator of a coefficient of some \(a_n\). They might appear infinitely often, but we will show that this can happen only for the trivial reason that we excluded in Theorem \ref{thm-intro-infinite-poles-rationals}. More precisely we can find \(e \in \Z\) (perhaps not effectively), such that in
\begin{equation*}
(t-t_1)^e \, \sqrt{D} = [b_0, b_1, b_2, \dots], \qquad b_n \in \C(t)[X]
\end{equation*}
the ``prime'' \((t-t_1)\) appears only in finitely many \(b_n\) as a denominator. We will discuss this in more detail in Section \ref{sec:orgf9f9759}.

% \newpage
\section{Acknowledgements}
\label{sec:org9942b95}
First and foremost, I would like to thank my supervisor Prof. Umberto Zannier, for pointing me to interesting mathematical problems and sharing his mathematical insight. You have helped me to see number theory in a new light, and improved my understanding of various problems. This thesis would not exist without his input and support. Thank you for answering my many questions and teaching me not to give up and to be independent. I am indebted to you and Scuola Normale Superiore for offering me the chance to pursue a Perfezionamento.

I also would like to dearly thank Prof. David Masser for introducing me to the polynomial Pell equation, and sending me towards Pisa in the first place. I thank Prof. Vistoli for teaching me some algebraic geometry.

A very big ``thank you'' goes to Lars, for many discussions about mathematics and other more trivial topics, putting up with me as a flatmate, and actually reading a draft of this thesis.

Big thanks also to Laura, Fabrizio, Michele and Soli, for countless lunches, game nights and for working together. Thanks for all your help, and for listening to me, even if I made rather less sense. Special thanks to Laura for helping me from my first day in Italy, and to Michele for participating in many sometimes crazy activities.

I would like to thank Francesca for working together, and being a very diligent mathematician.

My referees I am indebted to for their suggestions and careful perusal of my thesis.

\medskip

I wish to thank all the wonderful and interesting people I met at Scuola Normale Superiore, for silly and serious conversations and reminding me that there are people in this world. Many of you I consider now my friends. 

Let me thank in particular Josefine for showing me Florence and the beach, Alex for early morning runs and literally talking to everybody, Sara for teaching me about real friendship, Alexey for extraordinary observations and highly entertaining discussions, Clélie for not being afraid to talk of anything, Błażej for making me a better table tennis player, and Giacomo for his delicious chinese cooking and strange questions. 

Thanks to Mario and Simone for explaining Italy, and Michele (the other one) for explaining biology with a passion. Thanks to Ilir, Marcello and Renata for being loyal hikers, to Umesh for playing table-tennis, to Adam for trying to take silly things seriously,to Elisa and Henry for chatting about fotography and to François and Max for reminding me that I am german.

Thanks also to all the people I spent time with at conferences, for interesting discussions and experiences from other places. Harry and Jung-Kyu, thanks for inviting me to visit the math department of Basel every once in a while.

\smallskip

To Aki, even if we have never met in real life, thank you for the countless hours in the skys of Georgia, Nevada and elsewhere, and in the woods of Chernarus, and for sharing your knowledge of aviation.
\enlargethispage{1cm}

Finally, I want to thank my parents, my brother Sven and my sister Heike, for your support (logistical and otherwise) and for \emph{always} believing that I could complete my PhD. It looks like you were right in the end.

\newpage
\section{Notation reference}
\label{sec:org69c4145}
% \enlargethispage{1cm}
\begin{center}
\begin{tabular}{ll}
Symbol & Description\\
\hline
\(\N\) & natural numbers: \(\{ 1, 2, 3, \dots \}\)\\
\(\N_0\) & natural numbers with \(0\): \(\{ 0, 1, 2, 3, \dots \}\)\\
\(\K\) & field of characteristic \(\neq 2\)\\
\(\Fr(R)\) & fraction field of integral domain \(R\)\\
\(\laurinx \K\) & Laurent series in \(\inv X\) with coefficients in \(\K\)\\
\(\ord(f)\) & zero-order at \(X = \infty\), sometimes denoted \(\ord_\infty\)\\
\(\LC(f)\) & leading coefficient of polynomial or Laurent series\\
\(\Batest{\K}\) & \(\{ (p, q) \in \K[X]^2 \mid q \neq 0 \}\)\\
\(\Coset{\alpha}{\K}\) & set of convergents of \(\alpha\)\\
\(\Baset{\alpha}{\K}\) & set of best-approximations of \(\alpha\)\\
\(D\), \(d\) & polynomial, with \(\deg D = 2d\) and \(\LC(D)\) a square\\
\(\sol{D}\) & solutions of polynomial Pell equation \eqref{pell}\\
\(\solu{D}\) & solutions of unit-norm equation \eqref{pellu}\\
\(\sigma\) & involution \(\sqrt{D} \to - \sqrt{D}\)\\
\hline
\(\O\), \(\mm = \spann{\pi}\) & discrete valuation ring and maximal ideal with uniformiser\\
\(K\), \(k\) & fraction field and residue field of \(\O\), of characteristic \(\neq 2\)\\
\(\nu\) & valuation, usually of \(\O\)\\
\(\laurinx K_\nu\) & Laurent series with coefficient valuations bounded from below\\
\(\normal{x}\) & normalisation of \(x \in \laurinx K_\nu\) to valuation \(\nu(\normal{x}) = 0\)\\
\(\Red{x} = \RedM{x}\) & reduction/specialization of \(x \in \laurinx \O\)\\
\(\Redn{x} = \RedM{\normal{x}}\) & reduction of normalisation\\
\(\pp\) & prime \emph{number} \(\pp\) (positive integer)\\
\(\PP\) & prime \emph{ideal} \(\PP\) (usually over \(\pp\))\\
\hline
\(\CF(\alpha)\) & continued fraction of \(\alpha\)\\
\(a_n\) & partial quotient of \(\alpha\)\\
\(\alpha_n\) & complete quotient of \(\alpha\)\\
\((p_n, q_n)\) & canonical convergent of \(\alpha\)\\
\(g_n\) & normalisation factor of canonical convergent, \(\nu(g_n) = \nu(q_n)\).\\
\(\vartheta_n\) & \(p_n - \alpha \, q_n\) normalised to \(\nu(\vartheta_n) = 0\)\\
\(c_n\) & partial quotient of \(\gamma = \Red{\alpha}\)\\
\(\gamma_n\) & complete quotient of \(\gamma\)\\
\((u_n, v_n)\) & canonical convergent of \(\gamma\)\\
\(h_n\) & correction factor (in \(k[X]\)) for reduced convergents\\
\(\lambda : \N_0 \to \N_0\) & \((\Redn{p_n}, \Redn{q_n}) = h_n \cdot (u_{\lambda(n)}, v_{\lambda(n)})\), see Corollary \ref{definition-convergent-reduction-map-lambda}\\
\hline
\(\pd{P}\) & point as divisor\\
\(\j{P}\) & divisor class of point\\
\(\di{D}\) & divisor (bold)\\
\(\jdi{D}\) & divisor class\\
\(\CCa, \CC\) & smooth affine and projective models of \(Y^2 = D(X)\)\\
\(O_\pm\), \(\OO\) & the two points of \(\CC\) at infinity; \(\OO = \pd{O_+} - \pd{O_-}\)\\
\(\sigma\) & conjugation of points, \(Y \to -Y\)\\
\hline
\end{tabular}
\end{center}

\chapter{Pell equation}
\label{sec:org953fbfb}
We begin by exploring some well-known basic properties of the Pell equation over polynomials, usually called the \emph{polynomial Pell equation}.
We also explain how to write square roots of polynomials in \(X\) as Laurent series in \(\inv X\), and use this to show that the group of solutions of the polynomial Pell equation has rank at most \(1\).

\medskip

Given a base field \(\K\) with \(\Char \K \neq 2\), let \(D \in \K[X]\) a \emph{non-constant} polynomial and consider the \emph{polynomial Pell equation}
\begin{equation}
\label{pell}
  % \tag{P_1}
  p^2 - D \, q^2 = 1.
\end{equation}
Clearly, there \emph{always} exist the trivial solutions \((p,q) = (\pm 1,0)\), so naturally we ask if there exist other solutions \((p,q) \in \K[X]^2\) with \(q \neq 0\), which we call the \emph{non-trivial solutions}. If this is the case, we say \(D\) is \emph{Pellian}. If \(\K\) is finite, one may show as for the classical Pell equation over \(\Z\) that \(D\) is always Pellian. If \(\K\) is infinite, it is unlikely that \(D\) is Pellian -- because \(D\) Pellian is equivalent to a torsion condition on a point in the Jacobian of a (hyper)elliptic curve, see Chapter \ref{sec:org99a8e17} for details.

\begin{prop}
\label{pell-necessary-conditions}
Suppose \(D\) is Pellian. Then \(\deg D\) must be even, and the leading coefficient \(\LC(D)\) is a square in \(\K\). However \(D\) cannot be a square in \(\K[X]\).
\end{prop}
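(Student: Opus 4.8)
The plan is to derive each of the three necessary conditions by working with the factorization $p^2 - 1 = (p-1)(p+1) = D\,q^2$ over $\K[X]$, where $(p,q)$ is a non-trivial solution, i.e.\ $q \neq 0$. First observe that $p$ cannot be constant: if $p \in \K$, then $D\,q^2 = p^2 - 1$ is constant with $q \neq 0$, forcing $D$ constant, contradicting the hypothesis $\deg D \geq 1$; hence $\deg p \geq 1$. Now compare degrees in the Pell relation: $\deg(D\,q^2) = \deg D + 2\deg q$, while $\deg(p^2) = 2\deg p$. Since $p$ is non-constant, $\deg(p^2) \geq 2$, and the relation $p^2 = 1 + D\,q^2$ shows $2\deg p = \deg D + 2\deg q$, because the leading term of $D q^2$ cannot be cancelled by the constant $1$ (as $\deg D + 2\deg q \geq 1$). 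Therefore $\deg D = 2(\deg p - \deg q)$ is even.

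Next, for the leading coefficient: from $2\deg p = \deg D + 2\deg q$ and the fact that the top-degree coefficients on both sides of $p^2 = 1 + D q^2$ must agree, we get $\LC(p)^2 = \LC(D)\,\LC(q)^2$, so $\LC(D) = \left(\LC(p)/\LC(q)\right)^2$ is a square in $\K$. Finally, to see $D$ is not a square in $\K[X]$: if $D = E^2$ with $E \in \K[X]$, then the Pell relation becomes $p^2 - (Eq)^2 = 1$, i.e.\ $(p - Eq)(p + Eq) = 1$ in $\K[X]$. Since $\K[X]^\times = \K^\times$, both factors are non-zero constants, so $p - Eq$ and $p+Eq$ are constants, hence their sum $2p$ is constant; as $\Char \K \neq 2$ this forces $p$ constant, and then $Eq = \tfrac{1}{2}\big((p+Eq)-(p-Eq)\big)$ is constant too, so $q$ is constant (using $E \neq 0$, which holds since $D$ is non-constant). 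But $q$ constant and non-zero together with $p$ constant makes $D = (p^2-1)/q^2$ constant, again a contradiction.

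I do not expect any serious obstacle here: the only point requiring a little care is justifying that the leading term of $D\,q^2$ is not cancelled by the $+1$ — which is immediate from $\deg(D q^2) \geq 1 > 0 = \deg 1$ — and the repeated use of $\Char \K \neq 2$ and $\K[X]^\times = \K^\times$. The argument is entirely elementary, relying only on degree and leading-coefficient bookkeeping in the polynomial identity $p^2 - Dq^2 = 1$.
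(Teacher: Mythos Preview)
Your proof is correct and follows essentially the same approach as the paper: comparing degrees in $p^2 - Dq^2 = 1$ to get $\deg D = 2(\deg p - \deg q)$ and $\LC(D) = (\LC(p)/\LC(q))^2$, then factoring $(p-Eq)(p+Eq)=1$ in $\K[X]$ to rule out $D$ being a square. The only minor difference is that the paper concludes the non-square case by deducing $q=0$ (contradicting non-triviality) rather than $D$ constant, but this is cosmetic.
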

\begin{proof}
By the hypotheses \(D\) non-constant and \(q \neq 0\), we have \(\deg (D \, q^2) > 0\). Then \(p^2\) must cancel out the non-constant terms, hence \(\deg p^2 = \deg (D \, q^2)\) which implies \(\deg D = 2 (\deg p - \deg q)\) and that \(\LC(D) = \ifrac{\LC(p)^2}{\LC(q)^2}\) is a square.

Finally, we show that \(D\) is not a square in \(\K[X]\): It is obvious that for \(D = 1\), i.e. \(p^2 - q^2 = (p-q)(p+q) = 1\) there are only constant solutions because \(\units{\K[X]} = \units{\K}\). So if \(D = E^2\) with \(E \in \K[X] \setminus \K\), then for any solution \((p, q)\) we must have \(p, E \, q\) constant which implies \(q = 0\). 
\end{proof}
So these three conditions are necessary (but not sufficient) for the existence of non-trivial solutions.\footnote{The situation in characteristic \(2\) is however completely different, see Section \ref{sec:orgfe2a066} in the Appendix.}

\section{Multiplication law and unit-norm equation}
\label{sec:org6aa78e0}
We assume from now on that \(D\) has even degree \(2d\), is not a square, but \(\LC(D)\) is a square in \(K\) (for example \(1\) if \(D\) is monic).

The set of solutions \(\sol{D}\) (including trivial solutions) of \eqref{pell} carries an abelian group structure\footnote{This group is a twisted \(\G_m\). We can see \(D(X) \, Q^2 = P^2 - 1\) as a twist of \(Q^2 = P^2 - 1\) by the (hyper)elliptic curve \(Y^2 = D(X)\), via \((P, Q) \mapsto (P, Y \, Q)\). Of course \(Q^2 = P^2 - 1\) written as \(P^2 - Q^2 = 1\) is isomorphic to \(\G_m\). See \cite{hazama-1997-pell-equations-polynomials} for more details.} via the multiplication
\begin{equation*}
(p, q) * (p', q') = (p \, p' + D \, q \, q', p \, q' + p' \, q)
\end{equation*}
which comes from the map
\begin{equation*}
\sol{D} \longto \units{\HER}, \qquad (p,q) \mapsto p + q \, \sqrt{D}
\end{equation*}
which is an (injective) group homomorphism (see Section \ref{sec:orgc9b1f16} below).

Note that \((p, q) * (p, -q) = (p^2 - D \, q^2, 0) = (1,0)\) for any Pell solution, so \((1,0)\) is the neutral element, and \((p, -q)\) is the inverse of \((p, q)\).

Actually, we will not really work with \eqref{pell}. To study the structure of the solution set, it is far more convenient to relax to the unit-norm equation (see \cite{hellegouarch-mcquillan-1987-unites-de-certains} for a general treatment)
\begin{equation}
\label{pellu}
  % \tag{P}
  p^2 - D \, q^2 = \omega \in \units{\K}
\end{equation}
where \(\omega\) is an arbitrary unit of \(\K\).
Clearly, any Pell solution satisfies also this equation. The converse does of course not hold, but from a non-trivial solution of \eqref{pellu} we can recover a non-trivial solutions of \eqref{pell}:
\begin{prop}
Suppose \eqref{pellu} has a non-trivial solution \((p,q) \in \K[X]^2\) (with \(q \neq 0\)). Then \(D\) is Pellian.
\end{prop}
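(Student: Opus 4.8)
The plan is to manufacture a non-trivial solution of the Pell equation \eqref{pell} out of the given non-trivial solution of the unit-norm equation \eqref{pellu} by ``squaring'' in the group and then dividing out the unit. Concretely, I would set
\[
p' = \frac{p^2 + D\,q^2}{\omega}, \qquad q' = \frac{2\,p\,q}{\omega},
\]
which lie in \(\K[X]\) because \(\omega \in \units{\K}\). The first step is to record the algebraic identity
\[
(p^2 + D\,q^2)^2 - D\,(2\,p\,q)^2 = (p^2 - D\,q^2)^2 = \omega^2 ;
\]
dividing through by \(\omega^2\) yields \(p'^2 - D\,q'^2 = 1\). Equivalently, this is just the multiplicativity of the ``norm'' \((p,q) \mapsto p^2 - D\,q^2\) (the homomorphism \(\solu{D} \to \units{\K}\) whose kernel is \(\sol{D}\)), applied to \((p,q) * (p,q)\) and rescaled by \((\omega^{-1}, 0)\).

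It then remains to check that \((p', q')\) is a \emph{non-trivial} Pell solution, i.e. that \(q' \neq 0\). Since \(\Char \K \neq 2\) and \(\omega \neq 0\), this reduces to showing \(p \neq 0\) and \(q \neq 0\); the latter is part of the hypothesis. For the former, if \(p = 0\) then \(-D\,q^2 = \omega\) would be a non-zero constant, contradicting \(\deg(D\,q^2) \geq \deg D = 2d > 0\). Hence \(q' \neq 0\), so \((p', q')\) witnesses that \(D\) is Pellian.

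There is essentially no real obstacle here; the computation is routine and the only point deserving (minor) care is the non-triviality of the constructed solution, which is precisely where the standing assumptions \(\Char \K \neq 2\) and \(D\) non-constant are used. One could also present the construction purely group-theoretically, via the exact sequence \(1 \to \sol{D} \to \solu{D} \xrightarrow{\ \mathrm{norm}\ } \units{\K}\), observing that \((p,q)*(p,q)\) has norm \(\omega^2\) and therefore \(\bigl((p,q)*(p,q)\bigr) * (\omega^{-1}, 0) \in \sol{D}\) is non-trivial whenever \((p,q)\) is.
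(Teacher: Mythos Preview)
Your argument is correct and essentially identical to the paper's: both construct $(p',q') = \omega^{-1}\,(p^2 + D\,q^2,\; 2\,p\,q)$ and verify it is a non-trivial solution of \eqref{pell}, with the paper phrasing this via the group law $(\omega^{-1},0)*(p,q)*(p,q)$ and you via the explicit norm identity. The non-triviality check is also the same in spirit; the paper just refers back to the earlier degree observation rather than repeating it.
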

\begin{proof}
The multiplication law from above generalises to \eqref{pellu}, with \((p, q) * (p, -q) = (\omega, 0)\), hence
\begin{equation*}
(p, q) * (p, q) * (p, -q) * (p, -q) = (\omega^2, 0).
\end{equation*}
Set
\begin{equation*}
(p', q') = (\inv\omega, 0) * (p, q) * (p, q) = \inv\omega \cdot (p^2 + D \, q^2, 2 \, p \, q),
\end{equation*}
so that \((p', q')\) remains in \(\K[X]\) and is clearly a solution of \eqref{pell}. As observed in the proof of Proposition \ref{pell-necessary-conditions}, \(q \neq 0\) implies \(p \neq 0\), hence \(\inv\omega \, 2 \, p \, q \neq 0\), so \((p', q')\) is a non-trivial Pell solution.
\end{proof}

From now on, we refer also to \eqref{pellu} as the \emph{Pell equation}, and mostly forget about \eqref{pell}. We denote by \(\solu{D}\) the set of all solutions of \eqref{pellu}. We will see that for the purposes of this thesis, it is more natural to work with the unit-norm equation.
\section{Units of hyperelliptic function fields}
\label{sec:orgc9b1f16}
The quadratic field extension \(\HEF\) of \(\K(X)\) is called a hyperelliptic function field -- specifically it is the function field of the hyperelliptic curve \(\CCa : Y^2 = D(X)\) which we will study in more detail in Chapter \ref{sec:org99a8e17}.
The subring \(\HER\) of \(\HEF\) is the integral closure of \(\K[X]\), describing the regular functions on the affine curve. For now, we show that the units of \(\HER\) correspond to solutions of the Pell equation \eqref{pellu}. See also \cite{hellegouarch-mcquillan-1987-unites-de-certains} for generalisations to other algebraic functions.

\begin{thm}
The map
\begin{equation*}
\pi: \solu{D} \longto \units{\HER}, \quad (p,q) \mapsto p + q\, \sqrt{D}
\end{equation*}
is bijective, and via the multiplication \(*\) on \(\solu{D}\) gives an isomorphism of abelian groups.
\end{thm}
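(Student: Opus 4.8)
The plan is to establish the three required properties of $\pi$ in turn: that it lands in $\units{\HER}$, that it is a group homomorphism for $*$, and that it is bijective. First I would check that $\pi$ is well-defined. Given $(p,q) \in \solu{D}$, so that $p^2 - D\,q^2 = \omega \in \units{\K}$, the element $x = p + q\,\sqrt{D}$ has conjugate $\sigma(x) = p - q\,\sqrt{D}$ under the involution $\sigma : \sqrt{D} \mapsto -\sqrt{D}$, and $x \cdot \sigma(x) = p^2 - D\,q^2 = \omega$. Since $x, \sigma(x) \in \HER$ (they are polynomial combinations of $1$ and $\sqrt{D}$, which is integral over $\K[X]$) and their product is a nonzero constant, $x$ is a unit of $\HER$ with inverse $\omega^{-1}\sigma(x)$. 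That $\pi$ intertwines $*$ with multiplication in $\HER$ is the direct computation $(p + q\sqrt{D})(p' + q'\sqrt{D}) = (pp' + Dqq') + (pq' + p'q)\sqrt{D}$, matching the definition of $*$; injectivity is immediate because $1, \sqrt{D}$ are linearly independent over $\K(X)$, so $p + q\sqrt{D} = p' + q'\sqrt{D}$ forces $(p,q) = (p',q')$.

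The substance of the proof is surjectivity, i.e. showing every unit of $\HER$ arises this way. I would take $x \in \units{\HER}$. Since $\HER$ is the integral closure of $\K[X]$ in $\HEF = \K(X)(\sqrt{D})$ and $\{1, \sqrt{D}\}$ is a $\K(X)$-basis, write $x = p + q\sqrt{D}$ with $p, q \in \K(X)$ a priori. The first point to settle is that integrality forces $p, q \in \K[X]$: because $D$ is not a square, the minimal polynomial of $x$ over $\K(X)$ is $T^2 - 2pT + (p^2 - Dq^2)$ (or $x \in \K(X)$, handled separately), so integrality of $x$ over $\K[X]$ means $2p \in \K[X]$ and $p^2 - Dq^2 \in \K[X]$; using $\Char \K \neq 2$ gives $p \in \K[X]$, and then $Dq^2 \in \K[X]$ with $D$ squarefree-enough in the relevant sense — more carefully, one argues at each irreducible factor or uses that $\K[X]$ is integrally closed — yields $q \in \K[X]$. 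The second point: $x$ being a \emph{unit} means $x^{-1} \in \HER$ as well, so $\sigma(x) = x \cdot (x\sigma(x))^{-1} \cdot (x\sigma(x))$; more directly, $N(x) := x\,\sigma(x) = p^2 - Dq^2 \in \K[X]$, and since $\sigma(x)$ is also a unit of $\HER$ (apply the automorphism $\sigma$ of $\HER$), $N(x)$ is a unit of $\HER$ lying in $\K[X]$. A unit of $\HER$ that is a polynomial must be a nonzero constant: its norm down to $\K[X]$ is $N(x)^2$, a unit of $\K[X] = \K[X]$, forcing $N(x) \in \units{\K}$ since $\units{\K[X]} = \units{\K}$. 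Hence $(p,q) \in \solu{D}$ with $\pi(p,q) = x$.

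The main obstacle I anticipate is the surjectivity argument, specifically the clean deduction that $x \in \units{\HER}$ forces $p^2 - Dq^2 \in \units{\K}$ rather than merely being a unit in $\HER$ or a polynomial; the key lever is that $\units{\HER} \cap \K[X] = \units{\K}$, which follows from taking field norms $\HEF \to \K(X)$ and using that $\K[X]$ is a PID with unit group $\units{\K}$. A secondary technical nuisance is justifying $p, q \in \K[X]$ from integrality — this is where $\Char \K \neq 2$ and the precise structure of $\HER$ as $\K[X] \oplus \K[X]\sqrt{D}$ (valid since $D$ is squarefree, or with a mild correction otherwise) enter; if the excerpt's setup only assumes $D$ non-square rather than squarefree one must be slightly more careful, but for the Pell equation the relevant $D$ can be taken squarefree without loss, or one works with $\K[X][\sqrt{D}]$ directly. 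Once these two points are in hand, bijectivity and the homomorphism property combine to give the stated group isomorphism.
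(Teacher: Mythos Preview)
Your proposal is correct and follows essentially the same approach as the paper. The paper is slightly more streamlined: it invokes directly that \(\HER\) is a free \(\K[X]\)-module with basis \((1,\sqrt{D})\) (so your integrality detour to force \(p,q\in\K[X]\) is unnecessary), and for the unit-norm step it simply applies the norm identity to both \(\phi\) and \(1/\phi\) to see that \(p^2 - D\,q^2\) and its reciprocal both lie in \(\K[X]\), hence in \(\units{\K[X]} = \units{\K}\).
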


Observe that there is a single non-trivial \(\K(X)\)-automorphism \(\sigma\) of \(\HEF\),  defined by \(\sigma(\sqrt{D}) = -\sqrt{D}\).

\begin{proof}
Actually, we defined \(*\) as the pullback under \(\pi\) of the multiplication on \(\HER\), so clearly
\begin{equation*}
\pi(\phi * \psi) = \pi(\phi) * \pi(\psi) \text{ for all } \phi, \psi \in \solu{D}.
\end{equation*}
And by the identity
\begin{equation*}
(p + q \, \sqrt{D} )(p - q \, \sqrt{D}) = p^2 - D \, q^2 = \omega \in \units \K
\end{equation*}
it follows that \(\im \pi \subset \units \HER\), so \(\pi\) is well defined.

Recall that we assume that \(D\) is not a square, so the ring \(\HER\) is a free rank 2 module over \(\K[X]\) with basis \((1, \sqrt{D})\): this implies that \(\pi\) is injective.

It remains to check that \(\pi\) is also surjective:
Let \(\phi= p + q \, \sqrt{D} \in \units{\HER}\) with \(p, q \in \K[X]\). Then we have
\begin{equation*}
\phi \cdot \sigma(\phi) = (p + q \, \sqrt{D})(p - q \, \sqrt{D}) = p^2 - D \, q^2 \in \K[X]
\end{equation*}
Applying the same argument to the inverse \(1/\phi\), we find \(p^2 - D \, q^2 \in \units{\K[X]} = \units \K\), so \((p,q)\) is a solution of \eqref{pellu}. This proves that \(\pi\) is surjective.
\end{proof}
\begin{rem}
Observe that the trivial solutions of \eqref{pellu} correspond precisely to the elements of \(\units{\K}\).
\end{rem}

\section{Laurent series and valuation}
\label{sec:org7391b36}
Define the field of Laurent series over \(\K\)
\begin{equation*}
\laurinx \K = \left\{\left. \lseries{N}{t}{n} \;\right|\; N \in \Z, t_n \in \K \right\}.
\end{equation*}
It contains \(\K[X]\) and its fraction field \(\K(X)\). Note that \(\laurinx \K\) is the completion of \(\K(X)\) with respect to the discrete valuation \(\ios = \ord_\infty\) (the zero-order at infinity), defined by
\begin{equation*}
\io{f} = \ord_\infty(f) = -N \text{ where } f = \lseries{N}{t}{n}, \; f_N \neq 0.
\end{equation*}

\begin{rem}
\label{rem-poly-no-poles}
For example if \(f \in \K[X]\), then \(\io{f} = - \deg f\). Moreover,
\begin{equation}
\label{poly-no-poles}
\io{f} > 0 \text{ and } \quad f \in \K[X] \text{ implies } f = 0.
\end{equation}
\end{rem}

There is a truncation operation which takes a Laurent series and returns a polynomial, essential for the continued fraction process:
\begin{defi}
\label{define-laurent-truncation}
For \(\alpha = \lseries{N}{t}{n} \in \laurinx \K\), we define the \emph{truncation} (or \emph{principal part})
\begin{equation*}
\gauss{\alpha} = \begin{cases}
0 & \text{ if } \io{\alpha} >0, \text{ i.e. } N < 0 \\
\poly{N}{t} & \text{ if } \io{\alpha} \leq 0, \text{ i.e. } N \geq 0
\end{cases}
\end{equation*}
as the \emph{polynomial part} of \(\alpha\).
\end{defi}

\begin{rem}
\label{truncation-unique}
We could also define \(\gauss{\alpha}\) as the \emph{unique} \(a \in \K[X]\) satisfying \(\io{\alpha - a} > 0\) -- unicity is a consequence of Remark \ref{rem-poly-no-poles}.
\end{rem}

\begin{rem}
\label{truncation-of-sum}
The preceding remark implies for \(\alpha, \beta \in \laurinx{\K}\) that \(\gauss{\alpha + \beta} = \gauss{\alpha} + \gauss{\beta}\).
\end{rem}

\begin{rem}
\label{truncation-of-rational}
Recall that \(\K[X]\) is Euclidean with respect to \(\deg\). So for \(p, q \in \K[X]\) with \(q \neq 0\) there exist \(a, r \in \K[X]\) satisfying \(p = a \, q + r\) and \(\deg r < \deg q\). Then
\begin{equation*}
\frac{p}{q} - a = \frac{r}{q} \text{ with } \io{\ifrac{r}{q}} > 0
\end{equation*}
implies \(\gauss{\ifrac{p}{q}} = a\), and moreover \(a, r\) are uniquely determined, again by Remark \ref{truncation-unique}.
\end{rem}

We now explain how to compute \(\sqrt{D}\) as a Laurent series in \(\inv X\):
\begin{prop}
\label{laurent-sqrt-d}
Let \(D \in \K[X]\) with \(\deg D = 2d\) and \(\LC(D) \in \K\) a square. Then \(\sqrt{D} \in \laurinx \K\), so \(D\) is a square in \(\laurinx \K\).
\end{prop}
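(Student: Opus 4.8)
The plan is to peel off the leading term of $D$ and reduce the statement to extracting a square root of a power series of the form $1+E$ with $\io{E}>0$, and then to produce that square root — either by the binomial series or by solving for its coefficients one at a time.

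\emph{Reduction.} Since $\LC(D)$ is a square, write $\LC(D)=c^2$ with $c\in\units{\K}$ and factor $D = c^2 X^{2d}\,(1+E)$, where $E = \sum_{j=1}^{2d} \frac{d_{2d-j}}{c^2}\,X^{-j}\in\laurinx\K$ (with $d_i$ the coefficient of $X^i$ in $D$) has $\io{E}\geq 1>0$. As $cX^d\in\K[X]\subset\laurinx\K$ already squares to $c^2X^{2d}$, it suffices to exhibit a square root of $1+E$ in $\laurinx\K$.

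\emph{Construction of $\sqrt{1+E}$.} Set $B = \sum_{k\geq 0}\binom{1/2}{k}\,E^k$. This is legitimate: the coefficients $\binom{1/2}{k}$ lie in $\Z[1/2]$ (up to sign and a power of $2$ they are Catalan numbers), hence define elements of $\K$ because $\Char\K\neq 2$; and the series converges in $\laurinx\K$, which is complete for $\io{\cdot}$ since it is the completion of $\K(X)$ at $\ord_\infty$, because $\io{\binom{1/2}{k}E^k}\geq k\,\io{E}\to\infty$. The formal identity $\bigl(\sum_{k\geq 0}\binom{1/2}{k}T^k\bigr)^2 = 1+T$ holds in $\Z[1/2][[T]]$, and substituting the topologically nilpotent element $T=E$ is permitted, so $B^2 = 1+E$. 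Hence $\sqrt{D} := cX^d\,B\in\laurinx\K$ satisfies $(\sqrt D)^2 = D$, i.e. $D$ is a square in $\laurinx\K$. (If one prefers to avoid the binomial series, one can instead build a putative root $A = cX^d + a_{d-1}X^{d-1} + a_{d-2}X^{d-2}+\cdots$ recursively: comparing coefficients of $X^{2d-k}$ in $A^2=D$ gives, for $k\geq 1$, the relation $2c\,a_{d-k} = d_{2d-k} - \sum_{s=1}^{k-1} a_{d-s}\,a_{d-k+s}$, whose right-hand side involves only previously determined coefficients, and which is solvable for $a_{d-k}$ because $2c\in\units{\K}$; one then checks that the resulting $A$ lies in $\laurinx\K$ and that $A^2=D$ by construction.)

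\emph{Where the hypotheses enter, and the only delicate point.} There is no deep obstacle here; the two assumptions are exactly what the construction consumes. That $\LC(D)$ is a square is needed to get started — to solve $a_d^2 = d_{2d}$ in $\K$, equivalently to pull out $cX^d$; that $\Char\K\neq 2$ is used at every subsequent step to divide by $2c$, equivalently to make sense of $\binom{1/2}{k}$. The one thing to be careful about is that the process is infinite, which is why I invoke the completeness of $\laurinx\K$ with respect to $\io{\cdot}$ to guarantee that $B$ (respectively the series $A$) really defines an element of $\laurinx\K$; this is the step I would single out as the crux of a clean write-up, even though it is standard.
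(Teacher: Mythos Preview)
Your proof is correct and follows essentially the same approach as the paper: factor out $c^2X^{2d}$ and use the binomial series $\sum_k \binom{1/2}{k} E^k$ for the remaining factor $1+E$ with $\io{E}>0$. Your write-up is in fact more careful about the convergence and the role of $\Char\K\neq 2$ than the paper's, and the alternative recursive construction you sketch is a nice bonus.
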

\begin{proof}
Let \(D = \poly{2d}{d}\), where \(d_{2d}\) is a square in \(\K\). Hence we may reduce to the case \(d_{2d} = 1\), and write
\begin{equation*}
D = X^{2d} \, (1 + f(X)) \text{ where } f(X) = d_{2d-1} \, \inv X + \dots + d_0 \, X^{-2d}.
\end{equation*}
Of course \(X^{2d}\) is a square in \(\laurinx \K\), and because \(\io{f(X)} > 0\), we find that
\begin{equation*}
\sqrt{1+f(X)} = \sum_{n=0}^\infty \binom{1/2}{n} \, f(X)^n
\end{equation*}
converges in \(\laurinx{\K}\), so also \((1 + f(X))\) is a square.
\end{proof}

\begin{defi}
\label{choose-sqrt-d}
We choose once and for all one square root of \(D\), and denote it by \(\sqrt{D}\). We also set 
\(A = \gauss{\sqrt{D}}\). For example, if \(D\) is monic of degree \(2d\), then we choose \(\sqrt{D} = X^d + \dots\). 
\end{defi}
\pagebreak
\begin{prop}
\label{completion-of-square}
We have \(\deg A = \frac{1}{2} \deg D\), and \(\deg (D - A^2) < \deg A\).
\end{prop}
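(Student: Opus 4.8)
The plan is to read off both assertions directly from the construction of $\sqrt{D}$ as a Laurent series in Proposition \ref{laurent-sqrt-d}, together with the characterisation of the truncation $A = \gauss{\sqrt{D}}$ given in Remark \ref{truncation-unique}. First, for the degree of $A$: by definition $A = \gauss{\sqrt{D}}$, so by Remark \ref{truncation-unique} it is the unique polynomial with $\io{\sqrt{D} - A} > 0$. From the explicit expansion $\sqrt{D} = X^d\sqrt{1+f(X)}$ with $\io{f(X)} > 0$, the leading term of $\sqrt{D}$ is $X^d$ (taking $\LC(D) = 1$ after the reduction in that proof, or $\sqrt{\LC(D)}\,X^d$ in general), so $\io{\sqrt{D}} = -d$; hence the polynomial part $A$ has $\deg A = d = \tfrac12\deg D$, and moreover $\io{\sqrt{D} - A} > 0$ means $\sqrt{D} - A = c\,\inv X + \dots$ for some coefficients, i.e.\ the degree drops strictly below $0$ after subtracting $A$.

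Second, for the estimate $\deg(D - A^2) < \deg A$: write $\sqrt{D} = A + \beta$ where $\beta = \sqrt{D} - A$ satisfies $\io{\beta} > 0$ by the above. Squaring gives $D = A^2 + 2A\beta + \beta^2$, so
\[
D - A^2 = 2A\beta + \beta^2 = (2A + \beta)\,\beta.
\]
Now apply the valuation $\ios$, which is additive on products: $\io{D - A^2} = \io{2A+\beta} + \io{\beta}$. Since $\deg A = d$ and $\io{\beta} > 0 \geq -d = \io{A}$, the term $2A$ dominates (note $\Char\K \neq 2$, so $2A \neq 0$), giving $\io{2A+\beta} = \io{A} = -d$; hence $\io{D - A^2} = -d + \io{\beta} > -d$, that is $\deg(D - A^2) < d = \deg A$.

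The argument is essentially bookkeeping with the valuation $\ios$, so there is no real obstacle; the only point requiring a little care is justifying $\io{2A + \beta} = \io{A}$, which uses the ultrametric inequality in the strict form (valuations of the two summands are unequal, so the minimum is attained without cancellation) together with $\Char\K \neq 2$ to ensure $2A$ does not vanish. One should also be mild about the normalisation $\LC(D) = 1$ versus a general square leading coefficient, but since $A$ and $\sqrt{D}$ merely acquire a factor $\sqrt{\LC(D)} \in \units\K$ this affects none of the degree computations.
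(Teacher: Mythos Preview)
Your proof is correct and follows essentially the same route as the paper: write $\sqrt{D} = A + \varepsilon$ with $\io{\varepsilon} > 0$, square, and estimate $\io{D - A^2}$ via the ultrametric inequality. The paper computes $\io{2A\varepsilon + \varepsilon^2}$ directly as a sum, while you factor it as $(2A+\beta)\beta$; this is a cosmetic difference, and your explicit remarks on $\Char\K \neq 2$ and on the normalisation of $\LC(D)$ are welcome but not essential.
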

\begin{proof}
As \(\ios\) is a valuation, clearly \(- \deg D = \ios{D} = 2 \, \ios{\sqrt{D}} < 0\), hence \(-\deg A = \ios{A} = \ios{\gauss{\sqrt{D}}} = \ios{\sqrt{D}}\) which implies the first claim.

Moreover, we can write
\begin{equation}
\label{sqrt-d-A-plus-eps}
\sqrt{D} = A + \varepsilon \text{ with } \varepsilon \in \laurinx \K \text{ and } \io{\varepsilon} > 0.
\end{equation}
So
\begin{equation*}
D = A^2 + 2 \, A \, \varepsilon + \varepsilon^2
\end{equation*}
where of course
\begin{equation*}
\io{2 \, A \, \varepsilon + \varepsilon^2} = \min(\io{A}, \io{\varepsilon}) + \io{\varepsilon} = \io{A} + \io{\varepsilon} > \io{A}
\end{equation*}
implies the second claim.
\end{proof}

We can rephrase this as
\begin{lemma}[Completion of the square]
\label{completion-of-square-lemma}
There exist \(A, \Omega \in \K[X]\) with \(\deg \Omega < \deg A = \frac{1}{2} \, \deg D\) satisfying
\begin{equation*}
D = A^2 + \Omega
\end{equation*}
where \(A\) is unique up to a factor \(-1\).
\end{lemma}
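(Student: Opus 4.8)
The plan is to split the statement into existence and uniqueness, obtaining existence for free from Proposition \ref{completion-of-square} and then proving uniqueness by an elementary degree comparison.

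For existence I would simply take the pair coming from the fixed square root: set $A = \gauss{\sqrt{D}}$ (Definition \ref{choose-sqrt-d}) and $\Omega = D - A^2$. By definition of the truncation $A \in \K[X]$, and then $\Omega = D - A^2 \in \K[X]$ as well. Proposition \ref{completion-of-square} gives precisely $\deg A = \tfrac12 \deg D = d$ and $\deg \Omega = \deg(D - A^2) < \deg A$, so $(A,\Omega)$ satisfies all the requirements; one could also avoid the Laurent series here by an explicit long-division/induction on degrees, but quoting the proposition is cleaner.

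For uniqueness, suppose $D = A^2 + \Omega = \tilde A^2 + \tilde\Omega$ with both pairs satisfying the degree bounds, so $\deg A = \deg \tilde A = d$ and $\deg \Omega, \deg \tilde\Omega < d$. Since $\deg \Omega < d < 2d = \deg D$, comparing leading terms gives $\LC(A)^2 = \LC(A^2) = \LC(D)$, and likewise $\LC(\tilde A)^2 = \LC(D)$; hence $\LC(\tilde A) = \pm\LC(A)$. Replacing $\tilde A$ by $-\tilde A$ if necessary — which changes neither $\tilde A^2$ nor $\tilde\Omega$ — we may assume $\LC(\tilde A) = \LC(A)$. Then $\deg(A - \tilde A) < d$, whereas $\LC(A + \tilde A) = 2\,\LC(A) \neq 0$ because $\Char \K \neq 2$, so $\deg(A + \tilde A) = d$. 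Now
\[
(A - \tilde A)(A + \tilde A) = A^2 - \tilde A^2 = \tilde\Omega - \Omega
\]
has degree $< d$, which forces $\deg(A - \tilde A) < 0$, i.e. $A = \tilde A$. Hence the original $\tilde A$ equals $\pm A$, and consequently $\tilde\Omega = D - \tilde A^2 = D - A^2 = \Omega$ is determined too.

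The whole argument is essentially routine; the only place demanding a little care is the leading-coefficient bookkeeping — first reducing the sign ambiguity so that the top terms of $A$ and $\tilde A$ agree, and then using $\Char \K \neq 2$ to know $A + \tilde A$ still has degree $d$ — before the degree comparison can be run. I do not expect any genuine obstacle beyond this.
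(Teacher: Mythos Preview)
Your proof is correct. The paper itself treats this lemma as a direct rephrasing of Proposition~\ref{completion-of-square} and gives no separate proof; in particular it does not spell out the uniqueness argument, which is implicitly meant to follow from the uniqueness of the truncation $\gauss{\sqrt{D}}$ (Remark~\ref{truncation-unique}) applied to the two Laurent-series square roots $\pm\sqrt{D}$. Your existence argument is exactly the paper's, while your uniqueness proof takes a slightly different, purely polynomial route via the factorisation $(A-\tilde A)(A+\tilde A)=\tilde\Omega-\Omega$ and degree counting---this is more self-contained, avoiding any appeal to Laurent series, and makes explicit where $\Char\K\neq 2$ enters.
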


\begin{rem}
Note that the lemma also holds if \(D\) is a square.
\end{rem}

\begin{rem}
\label{rem-q=1-pell-solution}
If \(\deg \Omega = 0\), then clearly \((A, 1)\) is a solution of the Pell equation \eqref{pellu}.
\end{rem}
\section{Group structure of Pell solutions}
\label{sec:org4f39766}
We apply the definitions of the previous section directly to study the structure of the Pell solutions. The group of solutions of \eqref{pellu} is essentially cyclic:

\begin{prop}
If \(D\) is not Pellian, then \(\sol{D} = \{ \pm 1 \}\) and  \(\solu{D} = \units{K}\). But if \(D\) is Pellian, then
\begin{equation*}
\sol{D} \iso \{\pm 1\} \times \Z \quad \text{ and } \quad \solu{D} \iso \units K \times \Z.
\end{equation*}
\end{prop}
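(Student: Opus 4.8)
The plan is to transport everything through the group isomorphism $\pi \colon \solu{D} \to \units{\HER}$ of the previous theorem, so that the statement becomes a description of $\units{\HER}$ together with its subgroup of norm‑$1$ units. First I would fix the embedding $\HEF \hookrightarrow \laurinx{\K}$ sending the abstract $\sqrt{D}$ to the Laurent series $\sqrt{D}$ of Definition~\ref{choose-sqrt-d}; then $\ord_\infty$ restricts to a valuation on $\HEF$, so $v := \ord_\infty \colon \units{\HER} \to \Z$ is a group homomorphism. Under $\pi$ the subgroup $\sol{D} \subseteq \solu{D}$ corresponds to $\{\,\phi \in \units{\HER} : \phi\,\sigma(\phi) = 1\,\}$, since $\pi(p,q) = p + q\sqrt{D}$ and $(p + q\sqrt{D})(p - q\sqrt{D}) = p^{2} - D\,q^{2}$.

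The one substantial step is the claim that $\ker v = \units{\K}$. The inclusion $\supseteq$ is clear. For $\subseteq$, take $\phi = p + q\sqrt{D} \in \units{\HER}$ with $v(\phi) = 0$; since $\phi\,\sigma(\phi) = p^{2} - D\,q^{2} \in \units{\K[X]} = \units{\K}$ (as in the proof of the units theorem) and $\ord_\infty$ of a nonzero constant is $0$, also $v(\sigma(\phi)) = -v(\phi) = 0$. Hence $2p = \phi + \sigma(\phi)$ and $2q\sqrt{D} = \phi - \sigma(\phi)$ both have $\ord_\infty \geq 0$. But $\ord_\infty(q\sqrt{D}) = -\deg q - d$ with $d \geq 1$, so $\ord_\infty(q\sqrt{D}) \geq 0$ forces $q = 0$ by \eqref{poly-no-poles}; then $\phi = p$ with $\ord_\infty(p) = 0$, i.e. $\deg p = 0$, so $\phi \in \units{\K}$. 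Consequently $v$ induces an injection $\units{\HER}/\units{\K} \hookrightarrow \Z$, so this quotient is trivial or infinite cyclic.

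Now split into cases. If $D$ is not Pellian, then $\solu{D}$ has no non-trivial solution at all — a non-trivial solution of \eqref{pellu} would make $D$ Pellian by the Proposition preceding this one — so $\solu{D}$ consists exactly of the trivial solutions $(c,0)$ with $c \in \units{\K}$, giving $\solu{D} = \units{\K}$, and $\sol{D} = \{(\pm1,0)\} = \{\pm1\}$ since then $c^{2} = 1$. If $D$ is Pellian, choose a non-trivial Pell solution; its image $\psi \in \units{\HER}$ is not in $\units{\K}$ (otherwise its $q$‑coordinate would vanish), so $v(\psi) \neq 0$ by the claim and $v(\units{\HER}) = m\Z$ with $m \geq 1$. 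Picking $\phi_{0} \in \units{\HER}$ with $v(\phi_{0}) = m$, the subgroups $\units{\K}$ and $\langle\phi_{0}\rangle$ meet only in $1$ (as $\phi_{0}^{n} \in \units{\K} \iff nm = 0 \iff n = 0$) and together generate $\units{\HER}$ (as every $\phi$ has $v(\phi) \in m\Z$), whence $\units{\HER} = \units{\K} \times \langle\phi_{0}\rangle$ and $\solu{D} \iso \units{\K} \times \Z$. The identical argument applied to $v$ restricted to $\sol{D}$, whose kernel is $\sol{D} \cap \units{\K} = \{\pm1\}$ and whose image is again a nonzero subgroup of $\Z$, gives $\sol{D} \iso \{\pm1\} \times \Z$.

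The only real obstacle is the claim $\ker v = \units{\K}$; everything after it is the routine fact that an abelian‑group extension of $\Z$ splits. The two points to be careful about are that $\ord_\infty$ is taken through the fixed embedding into $\laurinx{\K}$ (so that $\ord_\infty(\sqrt{D}) = -d$, compatibly with $\deg A = d$ from Proposition~\ref{completion-of-square}), and that the trivial solutions of \eqref{pellu} are precisely $\{(c,0) : c \in \units{\K}\}$, so that ``$\solu{D} = \units{\K}$'' really is the non-Pellian conclusion.
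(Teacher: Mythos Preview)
Your proof is correct and follows essentially the same approach as the paper: define the valuation homomorphism $\ord_\infty$ on $\units{\HER}$ (the paper writes $o(p,q) = \ord_\infty(p + \sqrt{D}\,q)$ directly on $\solu{D}$), identify its kernel with the trivial solutions, and read off the structure. Your kernel computation via $2q\sqrt{D} = \phi - \sigma(\phi)$ is a minor variant of the paper's (which bounds $\ord_\infty(p)$ instead), and you spell out the splitting explicitly where the paper just invokes ``standard theorems about group homomorphisms''.
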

\begin{proof}
We use that \(\solu{D} \iso \units{\HER}\). By Proposition \ref{laurent-sqrt-d}, we can embed \(\HER\) into \(\laurinx \K\), and define
\begin{equation*}
o(p,q) = \io{p + \sqrt{D} \, q} \text{ for } (p, q) \in \solu{D}.
\end{equation*}
This defines a group homomorphism \(o : \solu{D} \to \Z\). The kernel is made precisely of the trivial solutions:
\begin{equation*}
\io{p} = \io{p + \sqrt{D} \, q + p - \sqrt{D} \, q} \geq \min\left( \io{p + \sqrt{D} \, q}, \io{p - \sqrt{D} \,q}\right)
\end{equation*}
and
\begin{equation*}
\io{p + \sqrt{D} \, q} + \io{p - \sqrt{D} \, q} = 0
\end{equation*}
so \(\io{p + \sqrt{D}} = 0\) implies \(\deg p = - \io{p} \leq 0\), hence \(q = 0\).

If \(D\) is not Pellian, then the image of \(o\) is \(0\). But if \(D\) is Pellian, then the image of \(o\) is isomorphic to \(\Z\).

We can of course restrict \(o\) to \(\sol{D}\), and then the kernel becomes \(\{(\pm 1, 0) \} \iso \{\pm 1\}\).

The structure of \(\sol{D}\) and \(\solu{D}\) now follows from standard theorems about group homomorphisms.
\end{proof}

\medskip

We conclude our discussion of the polynomial Pell equation with the following observation:
\begin{cor}
\label{deg-2-always-pellian}
If \(\deg D = 2\) and the leading coefficient \(\LC(D)\) is a square, then \(D\) is \emph{always} Pellian (unless it is square).
\end{cor}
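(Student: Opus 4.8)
The plan is to invoke the completion-of-the-square lemma and read off a Pell solution directly. Since $\deg D = 2$ and $\LC(D)$ is a square, Lemma \ref{completion-of-square-lemma} (whose proof rests on Proposition \ref{laurent-sqrt-d}, and this is exactly where the square leading coefficient is used) furnishes $A, \Omega \in \K[X]$ with
\[
D = A^2 + \Omega, \qquad \deg \Omega < \deg A = \tfrac12 \deg D = 1.
\]
Thus $A$ is a genuine linear polynomial (in particular $A \notin \K$) and $\Omega$ is a constant.

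Next I would dispose of the degenerate case: if $\Omega = 0$, then $D = A^2$ is a square in $\K[X]$, which is precisely the excluded situation (and consistent with Proposition \ref{pell-necessary-conditions}, since a square is never Pellian). So we may assume $\Omega \neq 0$, i.e. $\Omega \in \units{\K}$. Then $A^2 - D \cdot 1^2 = -\Omega \in \units{\K}$, so the pair $(A, 1)$ is a solution of the unit-norm equation \eqref{pellu}; this is exactly the observation of Remark \ref{rem-q=1-pell-solution}. Crucially, its second component is $q = 1 \neq 0$, so it is a \emph{non-trivial} solution.

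Finally I would feed this into the proposition that recovers a non-trivial solution of \eqref{pell} from a non-trivial solution of \eqref{pellu}: applying that construction with $\omega = -\Omega$ yields the explicit Pell solution $(p', q') = -\Omega^{-1} \cdot (A^2 + D,\, 2A)$, whose second component $q' = -2A/\Omega$ is nonzero because $A$ is non-constant and $\Char \K \neq 2$. Hence $D$ is Pellian. I do not expect any real obstacle here; the only points requiring a little care are keeping the unit-norm equation \eqref{pellu} distinct from the classical Pell equation \eqref{pell}, and noting explicitly that $\deg A = 1$ is what makes the solution non-trivial rather than one of the trivial solutions in $\units{\K}$.
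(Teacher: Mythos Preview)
Your proof is correct and follows essentially the same approach as the paper: invoke the completion-of-the-square lemma to get $\deg\Omega < \deg A = 1$, then apply Remark~\ref{rem-q=1-pell-solution} to exhibit $(A,1)$ as a solution of~\eqref{pellu}. You are simply more explicit than the paper in separating out the degenerate case $\Omega = 0$ and in passing from~\eqref{pellu} back to~\eqref{pell}; the paper absorbs these into its standing conventions (it already declared it would treat~\eqref{pellu} as ``the Pell equation'' and established earlier that a non-trivial solution of~\eqref{pellu} yields one of~\eqref{pell}).
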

\begin{proof}
By Lemma \ref{completion-of-square-lemma}, in this case \(\deg \Omega < \deg A = 1\) so forcefully \(\deg \Omega = 0\), and Remark \ref{rem-q=1-pell-solution} says that \((A, 1)\) is a Pell solution.
\end{proof}
\chapter{Rational approximations}
\label{sec:orgbc894a6}
As mentioned before, the existence of non-trivial solutions is not guaranteed for the polynomial Pell equation. But one observes that the Pell solutions produce very good rational approximations for \(\sqrt{D}\) (as in the numerical case). This chapter introduces two notions of rational approximation: convergents and best-approximations. We will study in this chapter how they are related to each other and to the non-trivial Pell solutions. Their complete classification is however best understood with the help of continued fractions, to be discussed later in Section \ref{sec:orga882f91}.

For our purposes, it is convenient to keep track of common factors in the numerator and denominator of the rational approximation. \emph{Instead} of \(\K(X)\), our candidate set for rational approximations is the set of tuples representing quotients
\begin{equation*}
\Batest{\K} = \{ (p,q) \in \K[X]^2 \mid q \neq 0 \}.
\end{equation*}
We loosely refer to \(p\) as the \emph{numerator} and to \(q\) as the \emph{denominator}, in spirit of the obvious map \(\Batest{\K} \longto \K(X), \; (p,q) \mapsto p/q\).

For \(r, p , q \in \K[X]\) with \(r, q \neq 0\) we also write
\begin{equation*}
r \cdot (p, q) = (r \, p, r \, q).
\end{equation*}

With this terminology established, we can begin the study of different types of approximations. Of course, we are using the valuation \(\ios = \ord_\infty\) (the zero-order at infinity) introduced in Section \ref{sec:org7391b36} to measure how well we can approximate any Laurent series in \(\laurinx \K\).

\section{Convergents}
\label{sec:org32d755d}
A classical type of rational approximation is given by the convergents. They arise very naturally from the continued fraction expansion -- we will see details later in Chapter \ref{sec:org172eb73}. For now, we give a different characterisation in the spirit of the famous Dirichlet Lemma. This definition also shows immediately that the convergents are a special case of Padé approximations.

\begin{defi}
Let \(\alpha \in \laurinx \K\). A tuple \((p, q) \in \Batest{\K}\) is called a \emph{convergent} of \(\alpha\) over \(\K[X]\) if it satisfies
\begin{equation}
\label{convergent-condition}
% \tag{C}
\io{p - \alpha \, q} > \deg q.
\end{equation}
We denote the set of all convergents by \(\Coset{\alpha}{\K}\).
\end{defi}

\begin{rem}
\label{rem-convergents-existence}
It can easily be seen that convergents exist:
The condition \eqref{convergent-condition} is a linear condition on the coefficients of \(p\) and \(q\). Clearly \(p\) removes the coefficients of \(X^n\) for \(n \geq 0\) in \(\alpha \, q\); then only the coefficients of \(X^{-1}, \dots, X^{-\deg q}\) need to vanish, which can be accomplished by choosing the \(1 + \deg q\) coefficients of \(q\) appropriately. See Section \ref{sec:org542977d} for more details.
\end{rem}

\begin{rem}
\label{convergent-cancel-factor}
Suppose \(r, p, q \in \K[X]\). Then
\begin{equation*}
r \cdot (p, q) \in \Coset{\alpha}{\K} \implies (p, q) \in \Coset{\alpha}{\K}
\end{equation*}
because \(0 \geq \io{r}\) implies
\begin{equation*}
\io{p - \alpha \, q} \geq \io{r \, p - \alpha \, r \, q} > \deg(r \, q) \geq \deg q.
\end{equation*}
Note that the implication in the converse direction does not hold in general because multiplication with \(r\) decreases \(\ord\) and increases \(\deg\).
\end{rem}
In principle, one could for any convergent \((p, q)\) assume that \(p\) and \(q\) are coprime, and identify it with the fraction. This might improve the approximation quality, however it turns out that the common factors help to understand the reduction of convergents modulo a prime better (to be discussed in Chapter \ref{sec:orgd5f1900}).

Anyway the common factor usually has a small and controllable degree:
\begin{prop}
\label{convergent-common-factor-degree}
Let \((p, q) \in \Batest{\K}\) and \(r  \in \mino{\K[X]}\). Suppose
\begin{equation*}
\io{p - \alpha \, q} = \xi + \deg q.
\end{equation*}
Then \(r \cdot (p, q) \in \Coset{\alpha}{\K}\) is a convergent \IFF \(\deg r < \xi/2\).

In particular, suppose \(r' \in \mino{\K[X]}\) with \(\deg r \leq \deg r'\). Then \(r' \cdot (p, q) \in \Coset{\alpha}{\K}\) implies \(r \cdot (p, q) \in \Coset{\alpha}{\K}\).
\end{prop}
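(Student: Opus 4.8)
The plan is to compute the order $\ord_\infty$ of $r \cdot (p,q) = (rp, rq)$ against $\alpha$ and read off exactly when the convergent inequality \eqref{convergent-condition} holds. Since $\ord_\infty$ is a valuation and $r \in \K[X]$ is a nonzero polynomial, we have $\io{rp - \alpha\, rq} = \io{r(p - \alpha q)} = \io{r} + \io{p - \alpha q} = -\deg r + \xi + \deg q$, while $\deg(rq) = \deg r + \deg q$. Therefore $r \cdot (p,q)$ is a convergent precisely when
\begin{equation*}
-\deg r + \xi + \deg q > \deg r + \deg q,
\end{equation*}
i.e. when $\xi > 2 \deg r$, equivalently $\deg r < \xi/2$. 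This proves the first assertion. Note one should remark that $\xi$ is well-defined as an integer here (it equals $\io{p-\alpha q} - \deg q$), and that the case $r$ constant, $\deg r = 0$, recovers the earlier observation in Remark \ref{convergent-cancel-factor} when $(p,q)$ itself is already a convergent.

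For the "in particular" clause, suppose $r, r' \in \mino{\K[X]}$ with $\deg r \leq \deg r'$ and $r' \cdot (p,q) \in \Coset{\alpha}{\K}$. By the equivalence just established applied to $r'$, we get $\deg r' < \xi/2$. Since $\deg r \leq \deg r' < \xi/2$, the same equivalence applied to $r$ gives $r \cdot (p,q) \in \Coset{\alpha}{\K}$.

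There is essentially no obstacle here; the only thing to be slightly careful about is that the statement presupposes $(p,q)$ is fixed with $\io{p-\alpha q} = \xi + \deg q$ for a specific integer $\xi$, so both directions of the iff and the monotonicity statement are just unwinding this single valuation identity. No genericity or coprimality hypothesis on $(p,q)$ is needed, and $\alpha$ can be an arbitrary element of $\laurinx{\K}$.
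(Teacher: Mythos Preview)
Your proof is correct and essentially identical to the paper's: both compute $\io{rp - \alpha\, rq} - \deg(rq) = \xi - 2\deg r$ and observe this is positive iff $\deg r < \xi/2$, with the second part following by monotonicity. The only minor remark is that $\xi$ need not be a finite integer (if $\alpha \in \K(X)$ and $p = \alpha q$ then $\xi = \infty$), a case the paper notes separately; your argument still goes through since then every $r$ satisfies $\deg r < \xi/2$.
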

\begin{rem}
Note that the Proposition holds also when \(\xi = \infty\) -- but that happens only for \(\alpha \in \K(X)\).
\end{rem}
\begin{proof}
In order for \(r \cdot (p, q)\) to be a convergent, the following expression must be positive:
\begin{equation}
\label{convergent-factor-approx-quality}
\io{r\,p - \alpha\, r\,q} - \degb{r \, q} = \io{r} + \io{p - \alpha \, q} - \deg r - \deg q = \xi - 2 \, \deg r.
\end{equation}
The second part of the Proposition follows immediately.
\end{proof}
\begin{rem}
The above \eqref{convergent-factor-approx-quality} also suggests that for \((p, q)\) coprime we have the optimal relative approximation quality: higher is better.
\end{rem}

\begin{rem}
Let \(\alpha \in \laurinx \K\), set \(a = \gauss{\alpha}\). Then \((a, 1) \in \Coset{\alpha}{\K}\) because
\(\io{a - \alpha} > 0 = \deg 1\).
\end{rem}

\begin{prop}
\label{convergent-q-determines-p}
If \((p, q) \in \Coset{\alpha}{\K}\) is a convergent, then \(p\) is uniquely determined by \(q\) via \(p = \gauss{\alpha \, q}\).
\end{prop}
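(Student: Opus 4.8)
The statement to prove is Proposition \ref{convergent-q-determines-p}: if $(p,q) \in \Coset{\alpha}{\K}$, then $p = \gauss{\alpha q}$.

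\medskip

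The plan is to extract the claim from the defining inequality \eqref{convergent-condition} together with the characterisation of the polynomial part given in Remark \ref{truncation-unique}. First I would observe that since $(p,q)$ is a convergent, we have $\io{p - \alpha q} > \deg q \geq 0$, so in particular $\io{p - \alpha q} > 0$; equivalently $\io{\alpha q - p} > 0$. Now $p \in \K[X]$, so the uniqueness statement in Remark \ref{truncation-unique} — which says that $\gauss{\beta}$ is the unique polynomial $a \in \K[X]$ with $\io{\beta - a} > 0$ — applies with $\beta = \alpha q$ and $a = p$, yielding $p = \gauss{\alpha q}$.

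\medskip

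The only point requiring a word of care is that $\alpha q \in \laurinx \K$ so that $\gauss{\alpha q}$ is defined, and that $\deg q \geq 0$ so the strict inequality $\io{p - \alpha q} > \deg q$ does force $\io{p - \alpha q} > 0$; both are immediate from $(p,q) \in \Batest{\K}$ (so $q \neq 0$, hence $\deg q \geq 0$) and the fact that $\laurinx \K$ is a ring. There is no real obstacle here — the proposition is essentially a restatement of Remark \ref{truncation-unique} once one notices that the convergent condition is stronger than $\io{\alpha q - p} > 0$. One could alternatively phrase the argument through Remark \ref{truncation-of-rational} by writing $p = aq + r$ with $\deg r < \deg q$, but invoking the uniqueness of the polynomial part directly is cleaner.
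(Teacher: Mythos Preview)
Your proof is correct and matches the paper's own argument essentially verbatim: the paper also derives $\io{p - \alpha q} > \deg q \geq 0$ and then invokes Remark~\ref{truncation-unique}. Your additional remarks about $\alpha q \in \laurinx \K$ and $\deg q \geq 0$ are fine elaborations, but nothing more is needed.
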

\begin{proof}
This follows immediately from \(\io{p - \alpha \, q} > \deg q \geq 0\), and Remark \ref{truncation-unique} characterising \(\gauss{\cdot}\).
\end{proof}
\section{Pell solutions are convergents}
\label{sec:org0d42630}
Let us for a moment return to the polynomial Pell equation, and show that the non-trivial Pell solutions (up to conjugate) are convergents of \(\sqrt{D}\). Obviously, not all convergents of \(\sqrt{D}\) need to be Pell solutions.

\begin{prop}
\label{weak-pell-solutions-are-convergents}
Let \((p, q) \in \Batest{\K}\) and \(p^2 - D \, q^2 = \Omega\). Then the inequality
\begin{equation}
\label{general-weak-pell-condition}
\deg \Omega < \tfrac{1}{2} \deg D
\end{equation}
holds \IFF either \((p, q) \in \Coset{\sqrt{D}}{\K}\) or \((p, -q) \in \Coset{\sqrt{D}}{\K}\) is a convergent of \(\sqrt{D}\).

In particular, if \((p, q) \in \solu{D}\) is a Pell solution with \(q \neq 0\), then one of \((p, q), (p, -q)\) is a convergent of \(\sqrt{D}\).
\end{prop}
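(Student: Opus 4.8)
The plan is to relate the identity $p^2 - D\,q^2 = \Omega$ to the order at infinity of $p - \sqrt{D}\,q$ and $p + \sqrt{D}\,q = \sigma(p - \sqrt{D}\,q)$, and then read off the convergent condition \eqref{convergent-condition}. Write $\phi = p - \sqrt{D}\,q$ and $\psi = p + \sqrt{D}\,q$, so that $\phi\,\psi = \Omega$ and hence
\begin{equation*}
\io{\phi} + \io{\psi} = \io{\Omega} = -\deg\Omega.
\end{equation*}
Also $\psi - \phi = 2\sqrt{D}\,q$, and since $\Char\K \neq 2$ this gives $\io{\psi - \phi} = \io{\sqrt{D}} + \io{q} = -d - \deg q$, where $2d = \deg D$.

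First I would do the forward direction. Assume $\deg\Omega < d$. Then $\io{\phi} + \io{\psi} = -\deg\Omega > -d$. Since $\io{\psi - \phi} = -d - \deg q \le -d < \io{\phi} + \io{\psi}$, the two orders $\io{\phi}$ and $\io{\psi}$ must differ (if they were equal, say both $= v$, then $2v > -d$, yet $\io{\psi - \phi} \ge \min(\io{\phi},\io{\psi}) = v > -d/2 \ge -d$... one has to be a little careful, but the ultrametric inequality applied to $\psi = \phi + (\psi-\phi)$ forces $\io{\psi} = \io{\psi - \phi} = -d - \deg q$ once we know $\io{\phi} > \io{\psi-\phi}$). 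The clean way: since $\io{\phi} + \io{\psi} > -d = \io{\psi - \phi} + \deg q$... instead let me argue directly that exactly one of $\io{\phi}, \io{\psi}$ exceeds $-d-\deg q = \io{\psi - \phi}$. By the ultrametric property, $\min(\io{\phi}, \io{\psi}) \le \io{\psi - \phi} = -d - \deg q$, with equality unless $\io{\phi} = \io{\psi}$; and $\io{\phi} = \io{\psi}$ is impossible because their sum $-\deg\Omega$ is $> -d > -2d - 2\deg q$, no — because their sum $-\deg\Omega$ would be even... The robust route is: WLOG (swapping $q \leftrightarrow -q$, i.e. $\phi \leftrightarrow \psi$) assume $\io{\phi} \ge \io{\psi}$. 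Then $\io{\psi} = \io{\psi - \phi} = -d - \deg q$ by the ultrametric equality case, so $\io{\phi} = -\deg\Omega - \io{\psi} = -\deg\Omega + d + \deg q > \deg q$, which is exactly \eqref{convergent-condition} for $(p,q)$.

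Next the converse: suppose $(p,q) \in \Coset{\sqrt{D}}{\K}$, i.e. $\io{\phi} > \deg q \ge 0$. Then $\io{\psi} = \io{\phi + (\psi - \phi)} = \io{\psi - \phi} = -d - \deg q$ since $\io{\phi} > \deg q \ge 0 > -d - \deg q$. Hence $-\deg\Omega = \io{\phi} + \io{\psi} > \deg q - d - \deg q = -d$, giving $\deg\Omega < d$; the case $(p,-q) \in \Coset{\sqrt{D}}{\K}$ is symmetric via $\phi \leftrightarrow \psi$. The final "in particular" is immediate: a Pell solution has $\Omega = \omega \in \units\K$, so $\deg\Omega = 0 < d$ (using $d \ge 1$, which holds since $\deg D = 2d$ is even and positive as $D$ is non-constant — indeed $D$ non-square forces $d \ge 1$), and we apply the equivalence.

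The main obstacle is the bookkeeping in the forward direction — making rigorous that $\deg\Omega < d$ forces $\io{\phi} \ne \io{\psi}$ so that the ultrametric equality case applies; I would handle this cleanly by the WLOG reduction $\phi \leftrightarrow \psi$ together with the observation that $\io{\phi} = \io{\psi}$ would make $\io{\psi-\phi} \ge \io{\phi} = -\tfrac12\deg\Omega > -\tfrac d2 > -d - \deg q$, contradicting $\io{\psi - \phi} = -d - \deg q$. Everything else is a direct application of the non-archimedean property of $\ord_\infty$ and Proposition \ref{laurent-sqrt-d} to make sense of $\sqrt{D} \in \laurinx\K$.
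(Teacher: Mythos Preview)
Your proof is correct and follows essentially the same approach as the paper: factor $\Omega = (p - \sqrt{D}\,q)(p + \sqrt{D}\,q)$, use the ultrametric property on the difference $2\sqrt{D}\,q$ to determine the smaller of the two orders, and read off the convergent condition from the sum $\io{\phi}+\io{\psi}=-\deg\Omega$. The paper organizes the bookkeeping a little more tightly by isolating the identity $\io{p+\sqrt{D}\,q}=\io{\sqrt{D}\,q}$ once (your equation $\io{\psi}=-d-\deg q$) and reusing it in both directions, but the substance is identical.
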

\begin{proof}
Let us begin with some observation useful to both directions of implication. Note that
\begin{equation}
\label{omega-order-factors}
\io{\Omega} = \io{p^2 - D \, q^2} = \io{p + \sqrt{D} \, q} + \io{p - \sqrt{D} \, q}.
\end{equation}
And if \(\io{p - \sqrt{D} \, q} > 0\), the ultrametric inequality and \(\io{\sqrt{D} \, q} \leq 0\) imply
\begin{equation}
\label{cv-plus-bigger-minus}
\io{p + \sqrt{D} \, q} = \min\left(\io{2 \, \sqrt{D} \, q}, \io{p - \sqrt{D} \,q} \right) = \io{\sqrt{D} \, q} \leq 0.
\end{equation}

Now assume that \((p, q) \in \Coset{\sqrt{D}}{\K}\) is a convergent, hence \(\io{p - \sqrt{D} \,q} > \deg{q} \geq 0\). Then \eqref{omega-order-factors} and \eqref{cv-plus-bigger-minus} yield 
\begin{equation*}
\io{\Omega}  > \deg q + \io{\sqrt{D} \, q} = \io{\sqrt{D}}
\end{equation*}
which implies \(\deg \Omega < \tfrac{1}{2} \deg D\).

\medskip

For the other direction, assume that \((p, q)\) satisfies \eqref{general-weak-pell-condition}, hence \(\io{\Omega} > \io{\sqrt{D}} \geq \io{\sqrt{D} \, q}\). Without loss of generality, we may further assume \(\io{p - \sqrt{D} \, q} \geq  \io{p + \sqrt{D} \, q}\). It follows
\begin{equation*}
\io{\Omega} > \io{2 \, \sqrt{D} \,q} = \io{p + \sqrt{D} \, q - (p - \sqrt{D} \, q)} \geq \io{p + \sqrt{D} \, q} 
\end{equation*}
so by \eqref{omega-order-factors} \(\io{p - \sqrt{D} \, q} > 0\), which in turn implies \eqref{cv-plus-bigger-minus}. Using \eqref{omega-order-factors} again, we arrive at
\begin{multline*}
\io{p - \sqrt{D} \, q} = \io{\Omega} - \io{p + \sqrt{D} \, q} \\
= \io{\Omega} - \io{\sqrt{D} \,q} > -\io{q} = \deg q
\end{multline*}
as desired.
\end{proof}

\section{The universal property of best-approximation}
\label{sec:orgd9280fe}
The convergents have a useful universal property: they are in some sense the optimal approximations that we can find. For a discussion about where this particular universal property comes from, see \cite{khintchine-1956-kettenbruche}. See also \cite{cassels-1957-introduction-to-diophantine} where the continued fraction process for real numbers is defined using best-approximations.\footnote{The polynomial case is even simpler than the integer case treated there: because the absolute value (corresponding to the valuation \(\ios\)) is non-archimedean, there are no intermediate fractions to worry about.}

As we did with the convergents, we modify our definition so that it allows common factors; and we prefer a category theoretic style of universal property.

\begin{defi}
Let \(\alpha \in \laurinx \K\). A tuple \((p,q) \in \Batest{\K}\) is called a \emph{best-approximation} (of second type) in \(\K[X]\), if for every other tuple \((p',q') \in \Batest{\K}\) satisfying
\begin{equation}
\label{bestapproxcondition}
% \tag{B}
\io{p' - \alpha \, q'} \geq \io{p - \alpha \, q} \text{ and } \deg{q'} \leq \deg{q}
\end{equation}
we have \(\ifrac{p'}{q'} = \ifrac{p}{q}\).

We denote by \(\Baset{\alpha}{\K}\) the set of all best-approximations of \(\alpha\).
\end{defi}

\begin{rem}
\label{best-approx-cancel-factor}
If \((p, q) \in \Batest{\K}\) and \(r, r' \in \mino{\K[X]}\) with \(\deg r' \leq \deg r\) (for example \(r' = 1\)), then
\begin{equation*}
r \cdot (p, q) \in \Baset{\alpha}{\K} \implies r' \cdot (p, q) \in \Baset{\alpha}{\K}.
\end{equation*}
because
\begin{equation*}
\io{r' \, p - \alpha \, r' \,q} \geq \io{r \, p - \alpha \, r \, q} \text{ and } \degb{r' \, q} \leq \degb{r \, q}.
\end{equation*}
\end{rem}

So without loss of generality, one \emph{could} assume that for a best-approximation \((p, q)\), we have \(p\) and \(q\) coprime. This could also be enforced by changing the phrasing of the definition slightly, as is in fact usually done in the literature. However, in that case, \eqref{bestapproxcondition} becomes harder to satisfy because removing a common (non-constant) factor decreases \(\deg q\) and increases \(\io{p - \alpha \, q}\).

As alluded to before, when studying the reduction of convergents modulo a prime, it is useful to allow common factors. The notion of best-approximation presented here gives even more freedom for such common factors than our notion of convergent. We can indeed find best-approximations \((p, q)\) for arbitrary \(\deg q\), which may not be possible with convergents (see Section \ref{sec:orga882f91}). This simplifies their classification, and hence the classification of convergents.
\medskip

Before we investigate the relation between convergents and best-approximations, let us show that there are not so many best-approximations:
\begin{prop}
\label{best-approx-for-given-degree}
Let \((p, q) \in \Batest{\K}\) coprime and \(r \in \mino{\K[X]}\). Suppose \(r \cdot (p, q) \in \Baset{\alpha}{\K}\) is a best-approximation.

Then any (other) best-approximation \((p', q') \in \Baset{\alpha}{\K}\) with \(\deg{q'} = \degb{r \, q}\) has the shape
\begin{equation*}
(p', q') = r' \cdot (p, q) \text{ where } r' \in \K[X], \; \deg r = \deg r'.
\end{equation*}
\end{prop}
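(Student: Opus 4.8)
The plan is to show that any best-approximation $(p', q')$ with $\deg q' = \deg(r\,q)$ must have the \emph{same} quotient $p'/q' = p/q$, and then deduce the stated shape from coprimality of $(p,q)$. First I would apply the defining universal property of best-approximations twice. Since $r \cdot (p, q) \in \Baset{\alpha}{\K}$, testing it against $(p', q')$ requires comparing $\io{p' - \alpha q'}$ with $\io{r p - \alpha r q}$. So the crucial preliminary step is to establish that $\io{p' - \alpha q'} = \io{r p - \alpha r q}$ whenever both are best-approximations of the same denominator degree: if one were strictly larger, the universal property (applied in the appropriate direction, using $\deg q' = \deg(r\,q)$) would force the quotients to coincide anyway, so in all cases we may run the argument. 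Concretely, from $r\cdot(p,q) \in \Baset{\alpha}{\K}$ and $\deg q' \le \deg(r q)$ together with $\io{p' - \alpha q'} \ge \io{r p - \alpha r q}$ we would get $p'/q' = p/q$; and symmetrically, using $(p',q') \in \Baset{\alpha}{\K}$, the reverse inequality on $\ord$ together with $\deg(r q) \le \deg q'$ gives the same conclusion. Since for any two tuples one of the two $\ord$-inequalities holds, in every case $p'/q' = p/q$.

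Next, having $p'/q' = p/q$ with $(p, q)$ coprime, write $p' = r' p$ and $q' = r' q$ for some $r' \in \K(X)$; coprimality of $p, q$ together with $p', q' \in \K[X]$ forces $r' \in \K[X]$ (this is the standard "primitive representative" argument: $q \mid q'$ in $\K[X]$ because $q'p = p'q$ and $\gcd(p,q)=1$). Then $\deg q' = \deg(r\,q)$ immediately yields $\deg r' = \deg r$, and $r' \ne 0$ since $q' \ne 0$, so $r' \in \mino{\K[X]}$ has the required degree. This gives exactly $(p', q') = r' \cdot (p, q)$ with $\deg r' = \deg r$.

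The main obstacle is the bookkeeping in the first step: one has to be careful that the universal property as stated (condition \eqref{bestapproxcondition} with \emph{both} $\io{p' - \alpha q'} \ge \io{p - \alpha q}$ \emph{and} $\deg q' \le \deg q$) can actually be invoked. The degree condition $\deg q' = \deg(r q)$ gives equality, hence both $\le$ inequalities between the denominator degrees for free; so the only real content is the dichotomy on the $\ord$-values, and one should phrase it as: either $\io{p' - \alpha q'} \ge \io{r p - \alpha r q}$, in which case $r\cdot(p,q)$ being a best-approximation applies directly, or the reverse strict inequality holds, in which case $(p',q')$ being a best-approximation applies. Either way $p'/q' = p/q$. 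After that, everything is the routine coprimality argument. I do not expect to need any property of $\alpha$ beyond what is already packaged into the notion of best-approximation, nor anything about convergents — this proposition is purely about the internal structure of $\Baset{\alpha}{\K}$.
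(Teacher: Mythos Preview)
Your proof is correct and follows essentially the same approach as the paper: use the dichotomy on $\ord$-values together with the equality $\deg q' = \deg(rq)$ to invoke the best-approximation property in the appropriate direction and conclude $p'/q' = p/q$, then use coprimality of $(p,q)$ to extract the polynomial factor $r'$ of the correct degree. The paper's version is more terse but the argument is the same.
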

\begin{proof}
Because \((p', q'), r \cdot (p, q) \in \Baset{\alpha}{\K}\) with \(\deg q' = \degb{r \, q}\), at least one of
\begin{equation*}
\io{p' - \alpha \, q'} \geq \io{r\,p - \alpha \, r\,q} \text{ or } \io{p' - \alpha \, q'} \leq \io{r\,p - \alpha \, r\,q}
\end{equation*}
must be satisfied. Together with \(\deg q' = \degb{r \, q}\) this implies \(\frac{p'}{q'} = \frac{r\,p}{r\,q} = \frac{p}{q}\) by the best-approximation property of either \(r \cdot (p, q)\) or \((p', q')\).

Finally because we assume \(p, q\) are coprime, there exists \(r' \in \K[X]\) with \(q' = r' \, q\) and \(p' = r' \, p\).
\end{proof}
This proposition has two important consequences:
\begin{cor}
For any best-approximation \((p, q) \in \Baset{\alpha}{\K}\), the numerator \(p\) is uniquely determined by the denominator \(q\).
\end{cor}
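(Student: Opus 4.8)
The plan is to deduce this from Proposition \ref{best-approx-for-given-degree} together with the observation, already established in Remark \ref{best-approx-cancel-factor}, that every best-approximation is obtained from a \emph{coprime} best-approximation by multiplication with a non-zero polynomial factor. So first I would write a given best-approximation $(p,q) \in \Baset{\alpha}{\K}$ in the form $r \cdot (p_0, q_0)$ with $p_0, q_0$ coprime and $r \in \mino{\K[X]}$; by Remark \ref{best-approx-cancel-factor} the coprime tuple $(p_0, q_0)$ is itself a best-approximation (taking $r' = 1$). Then I want to show that $p$ is determined by $q$, which amounts to showing that $p_0$ is determined by $q_0$ (since $r = q/q_0$ recovers the common factor from $q$ alone, and then $p = r \, p_0$).

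The key step is therefore to show: for a \emph{coprime} best-approximation $(p_0, q_0)$, the numerator $p_0$ is determined by $q_0$. Here I would invoke Proposition \ref{best-approx-for-given-degree} with $r = 1$: any best-approximation $(p', q')$ with $\deg q' = \deg q_0$ has the shape $r' \cdot (p_0, q_0)$ with $\deg r' = \deg 1 = 0$, i.e. $r' \in \units{\K}$ is a non-zero constant. In particular, if $(p_0', q_0)$ is another coprime best-approximation with the same denominator $q_0$, then $(p_0', q_0) = r' \cdot (p_0, q_0)$ forces $q_0 = r' \, q_0$, hence $r' = 1$ and $p_0' = p_0$. More directly: once we know $(p', q')$ with $\deg q' = \deg q_0$ must equal $r' \cdot (p_0, q_0)$ with $r'$ constant, the fraction $p'/q' = p_0/q_0$ is determined, and since a best-approximation's numerator and denominator are only defined up to simultaneous scaling anyway, the statement "$p$ is determined by $q$" should be read (and proved) as: any two best-approximations with the same $q$ have the same $p$.

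Putting it together: given best-approximations $(p_1, q)$ and $(p_2, q)$ with identical denominator $q$, write $q = r \, q_0$ with $(p_0, q_0)$ a coprime best-approximation extracted from one of them (they have the same coprime "core" by Proposition \ref{best-approx-for-given-degree} applied at degree $\deg q$, since both equal a constant-degree multiple of the core). Then $p_1 = r_1 \, p_0$ and $p_2 = r_2 \, p_0$ with $\deg r_1 = \deg r_2 = \deg r$; but $r_1 \, q_0 = q = r_2 \, q_0$ gives $r_1 = r_2$, hence $p_1 = p_2$.

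I expect the main (minor) obstacle to be purely expository: making precise what "$p$ is uniquely determined by $q$" means when tuples are only considered up to the scaling $r \cdot (p,q) = (r\,p, r\,q)$, and checking that the coprime core of a best-approximation is genuinely unique — this is exactly what Proposition \ref{best-approx-for-given-degree} delivers, so there is no real difficulty, only the need to phrase the corollary carefully so that it is both true and non-vacuous.
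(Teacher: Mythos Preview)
Your proposal is correct and follows exactly the route the paper intends: the corollary is stated immediately after Proposition \ref{best-approx-for-given-degree} as one of its ``two important consequences'' with no separate proof given, and your argument (write $(p,q) = r \cdot (p_0,q_0)$ coprime, apply the proposition at degree $\deg q$, and cancel $q_0$) is the natural way to spell this out. Your worry about scaling is unnecessary, since in $\Batest{\K}$ tuples are not identified up to constants, so ``$p$ determined by $q$'' means precisely what you prove in the last paragraph: $(p_1,q),(p_2,q) \in \Baset{\alpha}{\K}$ forces $p_1 = p_2$.
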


\begin{cor}
\label{best-approx-coprime-unicity}
Given an integer \(n \geq 0\), there exists up to a constant factor at most one best-approximation \((p, q)\) with \(\deg {q} = n\) and \(p, q\) coprime.
\end{cor}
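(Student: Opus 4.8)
The plan is to deduce Corollary \ref{best-approx-coprime-unicity} directly from Proposition \ref{best-approx-for-given-degree} (or rather its first consequence together with the argument used in that proposition). Suppose $(p,q)$ and $(p',q')$ are both best-approximations of $\alpha$ with $\deg q = \deg q' = n$ and with $p,q$ coprime and $p',q'$ coprime. I want to show $(p',q') = c \cdot (p,q)$ for some $c \in \units{\K}$.

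First I would apply Proposition \ref{best-approx-for-given-degree} with $r = 1$: since $1 \cdot (p,q) = (p,q) \in \Baset{\alpha}{\K}$ and $p,q$ are coprime, the proposition tells us that any other best-approximation $(p',q')$ with $\deg q' = \deg q = n$ has the form $(p',q') = r' \cdot (p,q)$ with $r' \in \K[X]$ and $\deg r' = \deg 1 = 0$, i.e. $r' \in \units{\K}$ is a nonzero constant. That is exactly the claim: up to a constant factor there is at most one such $(p,q)$. (If no best-approximation with $\deg q = n$ and coprime coordinates exists, the statement is vacuously true, which is why it is phrased with ``at most one''.)

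The only subtlety — and the one place I would be slightly careful — is that Proposition \ref{best-approx-for-given-degree} is stated for an $r \cdot (p,q)$ with $(p,q)$ coprime, and one must check the hypotheses genuinely cover the case $r = 1$: indeed $1 \in \mino{\K[X]}$ and $1 \cdot (p,q) = (p,q)$, so there is nothing to check. I would also note that the conclusion ``$(p',q') = r'\cdot(p,q)$ with $\deg r' = \deg r$'' forces $r'$ to be a unit when $r = 1$, and that a unit factor does not change coprimality, so the reduction is consistent. No real obstacle here; this corollary is essentially a restatement of the preceding proposition specialised to $r = 1$, and the proof is a single line once that specialisation is made explicit.

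$\square$
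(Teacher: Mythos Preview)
Your proof is correct and is exactly the intended argument: the paper states this as an immediate corollary of Proposition \ref{best-approx-for-given-degree}, and specialising that proposition to $r = 1$ is precisely how it is meant to be read.
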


We proceed to show that best-approximations generalise the convergents.
\begin{prop}
\label{best-approx-common-factor-degree}
Let \((p, q) \in \Batest{\K}\) and \(r \in \mino{\K[X]}\). Suppose
\begin{equation*}
\io{p - \alpha \, q} = \xi + \deg q.
\end{equation*}
Then \(\deg r < \xi\) implies \(r \cdot (p, q) \in \Baset{\alpha}{\K}\) is a best-approximation.
\end{prop}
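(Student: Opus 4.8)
The goal is to show that if $(p,q) \in \Batest{\K}$ satisfies $\io{p-\alpha q} = \xi + \deg q$ and $r \in \mino{\K[X]}$ has $\deg r < \xi$, then $r\cdot(p,q)$ is a best-approximation. So I would start by fixing an arbitrary competitor $(p',q') \in \Batest{\K}$ with
\[
\io{p' - \alpha q'} \geq \io{r\,p - \alpha\, r\,q} \quad\text{and}\quad \deg q' \leq \deg (r\,q),
\]
and I must deduce $\ifrac{p'}{q'} = \ifrac{r\,p}{r\,q} = \ifrac{p}{q}$. The natural device is to consider the ``cross term'' $p' q - p q'$ (or $p'(rq) - (rp)q'$, which is $r$ times the former) and bound its order at infinity from two sides.

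**Key steps.** First I would rewrite $p - \alpha q = (p - \alpha q)$ and $p' - \alpha q' = (p'-\alpha q')$ and form the combination
\[
p' q - p q' = q(p' - \alpha q') - q'(p - \alpha q),
\]
so that the offending $\alpha$-terms cancel. Then I estimate $\io{q(p'-\alpha q')}$ and $\io{q'(p-\alpha q)}$ using the hypotheses. For the first: $\io{q} = -\deg q$ and $\io{p'-\alpha q'} \geq \io{rp - \alpha rq} = \io{r} + \xi + \deg q \geq \xi + \deg q - \deg r > \deg q$ isn't quite what I want directly — I should instead track things relative to $\deg q'$. Let me set it up cleanly: since $\deg q' \le \deg(rq) = \deg r + \deg q$ and $\io{p'-\alpha q'} \ge \io{r}+\xi+\deg q = \xi + \deg q - \deg r$, I get
\[
\io{q(p'-\alpha q')} \ge -\deg q + \xi + \deg q - \deg r = \xi - \deg r,
\]
while
\[
\io{q'(p-\alpha q)} = -\deg q' + \xi + \deg q \ge -(\deg r + \deg q) + \xi + \deg q = \xi - \deg r.
\]
Hence by the ultrametric inequality $\io{p'q - pq'} \ge \xi - \deg r$. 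On the other hand, if $p'q - pq' \ne 0$ it is a nonzero polynomial, and I need an upper bound on its order, i.e. a lower bound on its degree being obstructed — but a nonzero polynomial only gives $\io{p'q-pq'} \le 0$. Combined with $\xi - \deg r > 0$ (since $\deg r < \xi$), this forces $p'q - pq' = 0$, i.e. $\ifrac{p'}{q'} = \ifrac{p}{q}$, which is exactly the best-approximation property of $r\cdot(p,q)$.

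**Main obstacle.** The only subtlety is being careful with the two-sided inequality bookkeeping: making sure the estimate on $\io{q'(p-\alpha q)}$ uses $\deg q' \le \deg r + \deg q$ in the right direction, and checking the degenerate case $\alpha \in \K(X)$ where $\xi = \infty$ (there $p - \alpha q = 0$ and everything is immediate). I should also note the edge case $p' q - p q' = 0$ needs $q' \ne 0$ to conclude equality of fractions, which is part of the definition of $\Batest{\K}$. Beyond that the argument is a one-line ultrametric estimate once the combination $p'q - pq' = q(p'-\alpha q') - q'(p-\alpha q)$ is written down; I expect no real difficulty, and this mirrors the degree bound already proved for convergents in Proposition \ref{convergent-common-factor-degree}, just with the weaker ``$> $ zero'' threshold replaced appropriately.
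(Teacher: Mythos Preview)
Your proposal is correct and is essentially the same argument as the paper's proof. The paper packages the identity $p\,q' - p'\,q = q'(p-\alpha q) - q(p'-\alpha q')$ as a $2\times 2$ determinant computation, but the substance is identical: both bound $\io{p\,q' - p'\,q} \geq \xi - \deg r > 0$ via the ultrametric inequality and then invoke Remark~\ref{rem-poly-no-poles} to conclude the polynomial vanishes.
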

Putting  \(r = 1\) with \(\deg r = 0 < \xi\) by definition of convergents, we get:
\begin{cor}
\label{convergents-are-best-approx}
Every convergent is a best-approximation: \(\Coset{\alpha}{\K} \subset \Baset{\alpha}{\K}\).
\end{cor}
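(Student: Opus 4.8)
The plan is to prove Proposition \ref{best-approx-common-factor-degree} directly from the definition of best-approximation, using the universal property as a comparison principle: given a competitor $(p', q')$ satisfying \eqref{bestapproxcondition} with respect to $r \cdot (p, q)$, I must show $\ifrac{p'}{q'} = \ifrac{p}{q}$. The key numerical input is the hypothesis $\io{p - \alpha \, q} = \xi + \deg q$ together with $\deg r < \xi$, which by the computation already performed in \eqref{convergent-factor-approx-quality} gives $\io{r\,p - \alpha\, r\,q} = \xi - 2\deg r + \degb{r\,q}$, still strictly exceeding $\degb{r\,q}$ since $\deg r < \xi$; so $r \cdot (p,q)$ is at least a convergent, which is a good sanity check but not yet the claim.

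First I would take an arbitrary $(p', q') \in \Batest{\K}$ with $\io{p' - \alpha \, q'} \geq \io{r\,p - \alpha\,r\,q}$ and $\deg q' \leq \degb{r\,q}$, and aim to bound $\io{p'q - q'p}$ from below in a way that forces this polynomial (note $p'q - q'p \in \K[X]$) to vanish, via Remark \ref{rem-poly-no-poles}. The standard trick is the identity $p'q - q'p = q(p' - \alpha q') - q'(p - \alpha q)$, whose right-hand side has $\ord_\infty$ at least $\min\bigl(\io{q} + \io{p' - \alpha q'},\ \io{q'} + \io{p - \alpha q}\bigr)$. Plugging in $\io{q} = -\deg q$, $\io{q'} = -\deg q' \geq -\degb{r\,q}$, the two inequalities above, and the formula $\io{p - \alpha q} = \xi + \deg q$, I expect both terms in the minimum to come out strictly positive precisely because of the slack $\xi - \deg r > 0$ (the first term becomes $\geq \io{q} + \io{r\,p-\alpha r\,q} = -\deg q + \xi - \deg r + \deg r + \deg q = \xi > 0$ after cancelling, roughly; the second is $\geq -\degb{r\,q} + \xi + \deg q = \xi - \deg r > 0$). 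Then $\io{p'q - q'p} > 0$ with $p'q - q'p \in \K[X]$ forces $p'q - q'p = 0$, i.e. $\ifrac{p'}{q'} = \ifrac{p}{q}$, which is exactly the best-approximation condition.

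The main obstacle — really the only place requiring care — is bookkeeping the ultrametric estimates so that the strict inequality $\io{p'q - q'p} > 0$ actually emerges, and in particular making sure the degree of $r$ enters with the correct sign in both branches of the minimum; the condition $\deg r < \xi$ (as opposed to the weaker $\deg r < \xi/2$ from the convergent case) is exactly what is needed because the best-approximation comparison is against the already-inflated tuple $r\cdot(p,q)$, so one copy of $\deg r$ is already "spent". Once that is set up, the corollary $\Coset{\alpha}{\K} \subset \Baset{\alpha}{\K}$ is immediate: a convergent $(p,q)$ has $\xi > 0$ by \eqref{convergent-condition}, so $r = 1$ satisfies $\deg r = 0 < \xi$, and the proposition applies.

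\begin{proof}
Let $(p', q') \in \Batest{\K}$ be any tuple satisfying
\begin{equation*}
\io{p' - \alpha \, q'} \geq \io{r\,p - \alpha\,r\,q} \quad\text{and}\quad \deg q' \leq \degb{r\,q};
\end{equation*}
we must show $\ifrac{p'}{q'} = \ifrac{p}{q}$. Recall from \eqref{convergent-factor-approx-quality} that
\begin{equation*}
\io{r\,p - \alpha\,r\,q} = \io{r} + \io{p - \alpha\,q} = -\deg r + \xi + \deg q.
\end{equation*}
Consider the polynomial $p'\,q - q'\,p \in \K[X]$ and the identity
\begin{equation*}
p'\,q - q'\,p = q\,(p' - \alpha\,q') - q'\,(p - \alpha\,q).
\end{equation*}
Estimating each term with the ultrametric inequality, using $\io{q} = -\deg q$ and $\io{q'} = -\deg q' \geq -\degb{r\,q} = -\deg r - \deg q$, we get
\begin{equation*}
\io{q\,(p' - \alpha\,q')} \geq -\deg q + (-\deg r + \xi + \deg q) = \xi - \deg r
\end{equation*}
and
\begin{equation*}
\io{q'\,(p - \alpha\,q)} \geq (-\deg r - \deg q) + (\xi + \deg q) = \xi - \deg r.
\end{equation*}
Hence $\io{p'\,q - q'\,p} \geq \xi - \deg r > 0$ by the hypothesis $\deg r < \xi$. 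Since $p'\,q - q'\,p \in \K[X]$, Remark \ref{rem-poly-no-poles} forces $p'\,q - q'\,p = 0$, i.e. $\ifrac{p'}{q'} = \ifrac{p}{q}$. This shows $r \cdot (p, q) \in \Baset{\alpha}{\K}$.
\end{proof}

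\begin{proof}[Proof of Corollary \ref{convergents-are-best-approx}]
If $(p, q) \in \Coset{\alpha}{\K}$, then by \eqref{convergent-condition} we have $\io{p - \alpha\,q} = \xi + \deg q$ with $\xi > 0$. Taking $r = 1$, the condition $\deg r = 0 < \xi$ is satisfied, so Proposition \ref{best-approx-common-factor-degree} gives $(p, q) = 1 \cdot (p, q) \in \Baset{\alpha}{\K}$.
\end{proof}
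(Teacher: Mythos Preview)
Your proof is correct and follows essentially the same approach as the paper: both establish Proposition \ref{best-approx-common-factor-degree} by bounding $\io{p'\,q - q'\,p}$ from below via the ultrametric inequality applied to the identity $p'\,q - q'\,p = q(p' - \alpha q') - q'(p - \alpha q)$, obtaining $\io{p'\,q - q'\,p} \geq \xi - \deg r > 0$ and hence $p'\,q = q'\,p$. The paper packages this identity as a $2\times 2$ determinant computation, but the estimates and the deduction of the corollary (take $r=1$, so $\deg r = 0 < \xi$) are identical to yours.
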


With Corollary \ref{weak-pell-solutions-are-convergents} this implies also:
\begin{cor}
\label{pell-solutions-are-bestapprox}
For every non-trivial solution \((p, q)\) of the Pell equation \eqref{pellu}, either \((p, q)\) or \((p, -q)\) is a best-approximation of \(\sqrt{D}\).
\end{cor}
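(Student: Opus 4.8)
The plan is to simply chain together two facts already established in the excerpt. First I would invoke Proposition \ref{weak-pell-solutions-are-convergents} (the ``Pell solutions are convergents'' statement): since $(p,q) \in \solu{D}$ is non-trivial, we have $q \neq 0$, and writing $\Omega = p^2 - D\,q^2 \in \units{\K}$ we get $\deg \Omega = 0 < \tfrac12 \deg D$ (using $\deg D = 2d \geq 2$, which holds because $D$ is non-constant by our standing assumptions). Hence the hypothesis \eqref{general-weak-pell-condition} is satisfied, and the proposition yields that one of $(p,q)$ or $(p,-q)$ lies in $\Coset{\sqrt{D}}{\K}$.

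Then I would apply Corollary \ref{convergents-are-best-approx}, which says $\Coset{\alpha}{\K} \subset \Baset{\alpha}{\K}$ for every $\alpha \in \laurinx \K$, in particular for $\alpha = \sqrt{D}$. Combining the two, whichever of $(p,q)$, $(p,-q)$ is a convergent of $\sqrt{D}$ is therefore a best-approximation of $\sqrt{D}$, which is exactly the claim.

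There is essentially no obstacle here: the content of the corollary is entirely carried by the two cited results, so the ``proof'' is a one-line deduction. The only point requiring the tiniest care is checking that the degree inequality $\deg \Omega < \tfrac12 \deg D$ really does follow from $\Omega$ being a unit — which is immediate since a non-zero unit of $\K$ has degree $0$ and $\tfrac12 \deg D \geq 1$.
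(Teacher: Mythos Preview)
Your proof is correct and matches the paper's approach exactly: the paper states this corollary immediately after Corollary~\ref{convergents-are-best-approx} with the remark ``With Corollary~\ref{weak-pell-solutions-are-convergents} this implies also,'' i.e.\ it chains Proposition~\ref{weak-pell-solutions-are-convergents} with Corollary~\ref{convergents-are-best-approx} just as you do. Your check that $\deg\Omega = 0 < \tfrac12\deg D$ is the only detail needed, and it is fine.
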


\begin{proof}[Proof of Proposition \ref{best-approx-common-factor-degree}]
Let \((p', q') \in \Batest{\K}\) satisfy
\begin{equation*}
\deg{q'} \leq \degb{r \, q} \text{ and } \io{p' - \alpha \, q'} \geq \io{r} + \io{p - \alpha \, q}.
\end{equation*}
Now
\begin{equation*}
\det \mfour{p}{p'}{q}{q'} = \det \mfour{1}{-\alpha}{0}{1} \mfour{p}{p'}{q}{q'} = \det \mfour{p-\alpha\,q}{p'-\alpha \, q'}{q}{q'}
\end{equation*}
and taking the valuation \(\ios\) we get
\begin{multline*}
\io{p \, q' - p' \, q} \geq \min\left(\io{q'} + \io{p-\alpha\,q}, \io{q} + \io{p'-\alpha \, q'} \right) \\
\geq \io{r} + \io{q} + \io{p - \alpha \, q} = \xi - \deg r > 0.
\end{multline*}
But \(p \, q' - p' \, q \in \K[X]\), so it must be \(0\). This implies \(\ifrac{p'}{q'} = \ifrac{p}{q} = \ifrac{r \, p}{r \, q}\) as desired.
\end{proof}

\begin{rem}
Note that unlike Proposition \ref{convergent-common-factor-degree}, this is only a sufficient condition. It is not necessary: if we start with \((p, q)\) with \(\xi > 1\) (for example \(\xi = 2\)), then multiplying with \(r\) of maximal degree (for example \(\deg r = 1\)), we obtain a best-approximation \((p', q')= r \cdot (p, q)\) with \(\xi' \leq 0\) (in the example \(\xi' = 0\)). Then \(r' = 1\) does not satisfy \(\deg r' < \xi'\), even though \((p', q')\) is a best-approximation.
\end{rem}

\medskip

We conclude our study of best-approximations by investigating their ordering. Indeed we expect that increasing the ``height'' of the convergent (i.e. \(\deg q\)) should also increase the approximation quality:
\begin{prop}
\label{compare-best-approx-strict}
Let \((p, q), (p', q') \in \Baset{\alpha}{\K}\) different best-approximations, i.e. \(\ifrac{p}{q} \neq \ifrac{p'}{q'}\). Then
\begin{equation*}
\deg{q} < \deg{q'} \iff \io{p - \alpha \, q} < \io{p' - \alpha \, q'}.
\end{equation*}
\end{prop}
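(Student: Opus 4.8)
The plan is to prove the two implications by exploiting the best-approximation property together with the determinant (cross-ratio) identity that already appeared in the proof of Proposition \ref{best-approx-common-factor-degree}. Write $\eta = \io{p - \alpha \, q}$ and $\eta' = \io{p' - \alpha \, q'}$, and suppose without loss of generality $\deg q \leq \deg q'$; since the two fractions are distinct we actually have $\deg q < \deg q'$ after ruling out equality (if $\deg q = \deg q'$, applying the best-approximation property of $(p,q)$ to $(p',q')$ and vice versa forces $\ifrac p q = \ifrac{p'}{q'}$, a contradiction). So one direction reduces to showing: $\deg q < \deg q'$ implies $\eta < \eta'$.

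For that, I would argue by contradiction: suppose $\eta \geq \eta'$. Then the pair $(p', q')$ satisfies $\io{p' - \alpha q'} = \eta' \leq \eta = \io{p - \alpha q}$ and $\deg q' \geq \deg q$; reversing the roles, I want to feed $(p, q)$ into the best-approximation property of $(p', q')$. Indeed $(p,q)$ satisfies $\io{p - \alpha q} = \eta \geq \eta' = \io{p' - \alpha q'}$ and $\deg q \leq \deg q'$, which is exactly condition \eqref{bestapproxcondition} for $(p',q')$; hence $\ifrac p q = \ifrac{p'}{q'}$, contradicting the hypothesis that they are different best-approximations. This gives $\eta < \eta'$, proving the forward implication $\deg q < \deg q' \Rightarrow \eta < \eta'$.

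For the reverse implication, assume $\eta < \eta'$ and suppose for contradiction that $\deg q \geq \deg q'$. If $\deg q > \deg q'$, the forward implication just proved (with the roles of the two approximations swapped) gives $\eta' < \eta$, a contradiction. If $\deg q = \deg q'$, then $(p',q')$ satisfies $\io{p' - \alpha q'} = \eta' > \eta = \io{p - \alpha q}$ and $\deg q' \leq \deg q$, so \eqref{bestapproxcondition} applies to $(p,q)$ and forces $\ifrac p q = \ifrac{p'}{q'}$, again contradicting distinctness. Hence $\deg q < \deg q'$, completing the equivalence.

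I do not expect a real obstacle here; the statement is essentially a formal consequence of the definition of best-approximation, and the only thing to be careful about is the bookkeeping of which tuple plays the role of the ``test'' tuple $(p',q')$ in \eqref{bestapproxcondition} — one must check that the inequalities line up in the right direction in each case, and handle the boundary case $\deg q = \deg q'$ separately (where the strictness of one of the two valuation inequalities is what drives the contradiction). Everything else is just the observation that the two conditions in \eqref{bestapproxcondition} are, up to swapping the two tuples, symmetric.
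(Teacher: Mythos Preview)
Your proof is correct and uses essentially the same approach as the paper: both exploit that the best-approximation property of $(p',q')$ rules out the conjunction $\io{p-\alpha q}\geq\io{p'-\alpha q'}$ and $\deg q\leq\deg q'$ when the fractions differ, then argue by contrapositive. Your write-up is just more verbose (the WLOG paragraph already renders the separate reverse-direction argument redundant, and the determinant identity you announce in the plan is never actually used), but the logic is the same.
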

\begin{proof}
By the universal property, the statement
\begin{equation}
\label{best-approx-condition-reverse}
\io{p - \alpha \, q} \geq \io{p' - \alpha \, q'} \text { and } \deg{q} \leq \deg{q'}
\end{equation}
is false under the hypothesis of the fractions being different.

So if \(\deg q < \deg q'\), necessarily the first inequality must not hold, giving the \(\Rightarrow\) part.

Conversely, if \(\io{p - \alpha \, q} > \io{p' - \alpha \, q'}\), then the second inequality is false, i.e. \(\deg q > \deg q'\). But this is clearly the \(\Leftarrow\) part, with the roles of \((p, q)\) and \((p', q')\) swapped.
\end{proof}

If we restrict to coprime approximations, we don't even need strict inequalities:
\begin{prop}
\label{compare-best-approx-coprime}
Let \((p, q), (p', q') \in \Baset{\alpha}{\K}\) where \(p, q\) and \(p', q'\) respectively are coprime. Then
\begin{equation*}
\deg{q} \leq \deg{q'} \iff \io{p - \alpha \, q} \leq \io{p' - \alpha \, q'}.
\end{equation*}
\end{prop}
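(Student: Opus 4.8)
The plan is to reduce the non-strict biconditional for coprime best-approximations to the strict version already proved in Proposition \ref{compare-best-approx-strict}, using the fact that coprime best-approximations are essentially determined by their denominator degree (Corollary \ref{best-approx-coprime-unicity}). The only work is to handle the boundary case where one of the two inequalities is an equality.

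First I would dispose of the case \(\ifrac{p}{q} = \ifrac{p'}{q'}\): since both pairs are coprime, they then differ only by a constant factor, so \(\deg q = \deg q'\) and \(\io{p - \alpha\,q} = \io{p' - \alpha\,q'}\), and both sides of the claimed equivalence hold trivially. So assume from now on \(\ifrac{p}{q} \neq \ifrac{p'}{q'}\); in particular \(q \neq q'\) up to constants. Now I prove the two implications. For \(\Rightarrow\): suppose \(\deg q \leq \deg q'\). If \(\deg q < \deg q'\), then Proposition \ref{compare-best-approx-strict} gives \(\io{p-\alpha\,q} < \io{p'-\alpha\,q'}\), which is stronger than what we need. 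If \(\deg q = \deg q'\), then by Corollary \ref{best-approx-coprime-unicity} (at most one coprime best-approximation of each denominator degree, up to constant) we would get \(\ifrac{p}{q} = \ifrac{p'}{q'}\), contradicting our assumption; so this subcase cannot occur, and the implication holds vacuously in it. The converse \(\Leftarrow\) is symmetric: assuming \(\io{p-\alpha\,q} \leq \io{p'-\alpha\,q'}\), the strict case \(<\) yields \(\deg q < \deg q'\) by Proposition \ref{compare-best-approx-strict}, while the equality case \(\io{p-\alpha\,q} = \io{p'-\alpha\,q'}\) would force, together with whichever of \(\deg q \leq \deg q'\) or \(\deg q' \leq \deg q\) holds, a violation of the universal property \eqref{best-approx-condition-reverse} unless the fractions coincide — again contradicting our standing assumption. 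Hence \(\deg q \leq \deg q'\).

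The mildly delicate point — the ``main obstacle'', though it is a small one — is making sure the equality boundary is genuinely excluded rather than swept under the rug: one must invoke coprimality at exactly the right moment, either through Corollary \ref{best-approx-coprime-unicity} or directly through the universal property of best-approximation, to rule out two distinct coprime best-approximations sharing a denominator degree or an approximation order. Once that is pinned down, everything else is a direct appeal to Proposition \ref{compare-best-approx-strict}. Alternatively, one could give a fully self-contained argument by running the determinant/valuation estimate from the proof of Proposition \ref{best-approx-common-factor-degree} again, but invoking the already-established strict comparison is cleaner and shorter.
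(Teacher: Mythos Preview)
Your proof is correct and follows essentially the same approach as the paper: split on whether \(p/q = p'/q'\), handle the equal-fractions case via coprimality, and reduce the other case to Proposition~\ref{compare-best-approx-strict}. The paper's version is slightly slicker in the second case: once \(p/q \neq p'/q'\), the strict biconditional alone already yields the non-strict one by trichotomy (if \(\deg q = \deg q'\) then neither strict inequality holds, forcing \(\io{p-\alpha q} = \io{p'-\alpha q'}\)), so the separate appeal to Corollary~\ref{best-approx-coprime-unicity} is not needed there.
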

\begin{proof}
This is also covered by Proposition \ref{compare-best-approx-strict}, unless \(p/q = p'/q'\). But in this case, the best-approximations differ only by a constant factor, so both inequalities actually become equalities.
\end{proof}
\section{A linear system for computing convergents}
\label{sec:org542977d}
This thesis contains three different proofs for the existence of convergents of arbitrary approximation quality. There is a geometric argument to be explained in Chapter \ref{sec:org99a8e17}. The most elegant approach uses the continued fraction expansion, and yields a complete classification of convergents and best-approximations at the same time; it is one of the main goals of Chapter \ref{sec:org172eb73}. But here, we give an elementary proof which uses only some linear algebra and other results from this chapter.

We describe a linear system which allows to compute the convergents, alluded to already in Remark \ref{rem-convergents-existence}. This already demonstrates the existence of convergents. We will also use these results in Chapter \ref{sec:org56e7cb7} to produce estimates for the projective height of the convergents.

See also \cite{platonov-2014-number-theoretic-properties}, where a version of this linear system with additional conditions/rows is used to determine the existence of Pell solutions.

\medskip

From Proposition \ref{convergent-q-determines-p} we know \(p = \gauss{\alpha \, q}\) which gives a linear condition on the coefficients of \(p\). Moreover, from the Cauchy product formula, it is clear that every coefficient of \(\alpha \, q\) is a linear expression in the coefficients of \(q\). And \eqref{convergent-condition} requires just finitely many coefficients of \(p - \alpha \, q\) to vanish, so this produces a linear condition on the coefficients of \(q\) as well.

We make this more precise now, and start by fixing notation:

Write \(\alpha = \lseries{N}{A}{j}\) (with \(A_N \neq 0\), so \(\io{\alpha} = -N\)), and \(q = \poly{n}{Q}, \; p = \poly{n+N}{P}\). The \(A_n\) are given, and we are solving for \(P_{n+N}, \dots, P_0, Q_n, \dots, Q_0\), a total of \(N + 2n + 2\) unknowns. For simplicity, we assume \(N \geq 0\), but the argument works for negative \(N\) as well. We get
\begin{equation*}
  \begin{array}{lllllll}
    \alpha \, q -p = &  X^{n+N}   & (-P_{n+N}     &+ A_N \, Q_n) \\
                     &+ X^{n+N-1} & (-P_{n+N - 1} &+ A_{N-1} \, Q_n  &+A_N \, Q_{n-1}) \\
                     &            & \quad \vdots \\
                     &+ X^{n}     & (-P_n         &+ A_{0} \, Q_{n}  & \dots &+ A_n \, Q_0) \\
                     &+ X^0       & (-P_0         &+ A_{-n} \, Q_n   & \dots &+ A_{0} \, Q_0) \\
                     &            & \quad \vdots \\
                     &+ X^{-n}    & (             &+ A_{-2n} \, Q_n & \dots & + A_{-n} \, Q_0) \\
                     &+ \dots
  \end{array}
\end{equation*}
and the condition \(\io{\alpha \, q - p} > \deg q = n\) means that at the very least the coefficients of \(X^{n+N}, \dots, X^{-n}\) vanish. We count a total of \(N + 2n + 1\) conditions linear in the \(P_i\) and \(Q_i\).

So the matrix describing the linear system has \(N + 2n + 2\) columns and \(N + 2n + 1\) rows; the right part (and also the left) on its own has the shape of a Toeplitz matrix:\footnote{Or the shape of a Hankel matrix if we reverse the ordering of the columns.}
\begin{equation}
\label{convergents-condition-matrix}
\pqmatrix_n = \left( \begin{array}{ccccc|ccc}
-1 &       &        &        & 0  & A_N      &        & 0\\
   &\ddots &        &        &    & A_{N-1}  & \ddots \\
   &       & \ddots &        &    & \vdots   & \ddots & A_{N} \\
   &       &        & \ddots &    & \vdots   & \ddots & \vdots \\
0  &       &        &        & -1 & A_{-n}   & \dots  & A_{0} \\ \hline
   &       &        &        &    & A_{-n-1} & \dots  & A_{-1} \\
   &       & 0      &        &    & \vdots   & \ddots & \vdots \\
   &       &        &        &    & A_{-2n}  & \dots  & A_{-n}
\end{array} \right)
% \cdot
%  \left( \begin{array}{c}
% P_{n+N} \\ \vdots \\ P_0 \\ Q_n \\ \vdots \\ Q_0
% \end{array} \right)
% = 0
\end{equation}
\pagebreak
\begin{prop}
Every non-zero element of \(\ker \pqmatrix_n\) yields a convergent \((p, q)\). As always \(\ker \pqmatrix_n \neq 0\), this implies that for any \(\alpha \not \in \K(X)\) there exist convergents with arbitrarily high \(\io{p - \alpha \, q}\).
\end{prop}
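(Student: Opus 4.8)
The plan is to build a convergent straight out of a kernel vector using only the definition \eqref{convergent-condition}, and to produce the kernel vector from a dimension count. First I would take a non-zero $v \in \ker \pqmatrix_n$, read off its entries as $P_{n+N}, \dots, P_0$ and $Q_n, \dots, Q_0$, and set $p = \poly{n+N}{P}$, $q = \poly{n}{Q}$. Unwinding the construction of $\pqmatrix_n$, the equation $\pqmatrix_n v = 0$ says exactly that the coefficients of $X^{n+N}, \dots, X^{-n}$ in $\alpha\,q - p$ all vanish; the coefficients of $X^m$ with $m > n+N$ vanish automatically, since $\alpha$ has no term above $X^N$ and $\deg q \le n$ (while $\deg p \le n+N$), and the coefficients of $X^m$ with $m \le -n-1$ are unconstrained. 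Hence $\io{\alpha\,q - p} \ge n+1 > n$.

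Next I would check $q \ne 0$, which is needed for $(p,q) \in \Batest{\K}$: if $q = 0$ then $\alpha\,q - p = -p \in \K[X]$ satisfies $\io{-p} > n \ge 0$, so \eqref{poly-no-poles} forces $p = 0$ and hence $v = 0$, contradicting the choice of $v$. With $q \ne 0$ and $\deg q \le n$ this yields $\io{p - \alpha\,q} > n \ge \deg q$, which is precisely \eqref{convergent-condition}, so $(p,q) \in \Coset{\alpha}{\K}$.

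Finally, $\pqmatrix_n$ has $N + 2n + 1$ rows and $N + 2n + 2$ columns, so $\ker \pqmatrix_n \ne 0$; running the construction for each $n$ gives convergents $(p,q)$ with $\io{p - \alpha\,q} > n$ for $n$ as large as we like, and when $\alpha \notin \K(X)$ each such order is a finite integer, so the approximation quality is genuinely unbounded. (For $N < 0$ the indexing shifts but the number of unknowns still exceeds the number of equations by exactly one.) I do not expect a real obstacle: the only point needing care is verifying that the finitely many equations encoded by $\pqmatrix_n$, together with the automatic vanishing of the top coefficients, do force $\io{\alpha\,q - p} > n$, plus the one-line check that $q \ne 0$; the rest is the dimension count.
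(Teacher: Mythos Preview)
Your proposal is correct and follows essentially the same approach as the paper: read off $(p,q)$ from a kernel vector, argue that $q=0$ would force $p=0$ (the paper does this by looking at the $-I$ block of $\pqmatrix_n$, you do it via \eqref{poly-no-poles}, which is equivalent), and use the dimension count to get a non-trivial kernel. Your write-up is in fact a bit more explicit than the paper's about why the coefficients above $X^{n+N}$ vanish automatically and why $\io{p-\alpha q}>n\ge\deg q$, but the underlying argument is the same.
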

\begin{proof}
From the discussion above, it is evident that an element of the kernel gives polynomials \((p, q)\) which are a convergent of \(\alpha\) as soon as \(q \neq 0\). But if an element of \(\ker \pqmatrix_n\) has all \(Q_i = 0\), then clearly it follows that also all \(P_i = 0\). So we only need to avoid the zero element. And elementary linear algebra tells as that \(\ker \pqmatrix_n \neq 0\) because there are more columns than rows.

If \(\alpha \in \K(X)\), then of course at some point \(\io{p - \alpha \, q} = \infty\), so the approximation quality can no longer be improved.
\end{proof}

Note that for a single \(\pqmatrix_n\), we do not get different convergents:
\begin{prop}
If \((p, q)\) and \((p', q')\) correspond to non-zero kernel elements, then \(p/q = p'/q'\).
\end{prop}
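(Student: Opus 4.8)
The plan is to observe that each non-zero kernel element actually produces a \emph{convergent} of $\alpha$, and then to run the standard determinant argument — exactly the one used in the proof of Proposition \ref{best-approx-common-factor-degree} — to force $p \, q' - p' \, q = 0$.

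First I would record that a non-zero element of $\ker \pqmatrix_n$ cannot have $q = 0$: if all the $Q_i$ vanish, the relations read off from $\pqmatrix_n$ immediately give all $P_i = 0$, so the kernel vector would be zero. Hence $q \neq 0$ and likewise $q' \neq 0$, and by the discussion preceding the Proposition both $(p, q)$ and $(p', q')$ lie in $\Coset{\alpha}{\K}$. In particular $\io{p - \alpha \, q} > n \geq \deg q$ and $\io{p' - \alpha \, q'} > n \geq \deg q'$, so also $\io{q} \geq -n$ and $\io{q'} \geq -n$.

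Next I would compute, just as in Proposition \ref{best-approx-common-factor-degree},
\begin{equation*}
p \, q' - p' \, q = \det \mfour{p}{p'}{q}{q'} = \det \mfour{p - \alpha \, q}{p' - \alpha \, q'}{q}{q'},
\end{equation*}
and take the valuation $\ios$ of both sides:
\begin{equation*}
\io{p \, q' - p' \, q} \geq \min\bigl( \io{q'} + \io{p - \alpha \, q},\ \io{q} + \io{p' - \alpha \, q'} \bigr) \geq -n + (n+1) = 1 > 0.
\end{equation*}
Since $p \, q' - p' \, q \in \K[X]$, Remark \ref{rem-poly-no-poles} (the implication \eqref{poly-no-poles}) forces $p \, q' - p' \, q = 0$, i.e. $p/q = p'/q'$.

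There is essentially no obstacle here; the only subtlety worth stating is that the two convergents need not have $\deg q = \deg q' = n$ exactly (the kernel may contain vectors with $Q_n = 0$), but the weaker bounds $\io{q}, \io{q'} \geq -n$ are all the argument uses, so this causes no trouble. Alternatively, one could derive the claim from Corollary \ref{convergents-are-best-approx} together with the uniqueness in Corollary \ref{best-approx-coprime-unicity}, but the direct determinant computation above is shorter and self-contained.
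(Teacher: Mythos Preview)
Your proof is correct. Both your argument and the paper's rest on the same ultrametric determinant trick, but you apply it directly to any two kernel vectors, whereas the paper takes a slightly more roundabout route: it first picks a kernel element $(p,q)$ with \emph{minimal} $\io{p-\alpha q}$, upgrades $X^{\xi-1}\cdot(p,q)$ to a best-approximation via Proposition~\ref{best-approx-common-factor-degree}, and then invokes the universal property of best-approximations to force $p'/q' = p/q$ for every other $(p',q')$ in the kernel. Your version is shorter and avoids the detour through best-approximations entirely, since the bare inequalities $\io{p-\alpha q} > n$ and $\deg q \leq n$ already suffice for the determinant to vanish; the paper's approach, on the other hand, illustrates how the best-approximation framework absorbs such arguments once it is in place.
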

\begin{proof}
Let \((p_i, q_i)\) for \(i=1,\dots,r\) correspond to a basis of \(\ker \pqmatrix_n\). Then for any \((p, q)\) corresponding to a solution, we get
\begin{equation*}
(p, q) = \sum_{i=1}^r \eta_i \cdot (p_i, q_i) \text{ where } \eta_i \in \K
\end{equation*}
and hence
\begin{equation*}
\io{p - \alpha \, q} = \io{\sum_{i=1}^r \eta_i \, (p_i - \alpha \, q_i)} \geq \min_{i=1,\dots,r} \left(\io{p_i - \alpha \, q_i}\right)
\end{equation*}
so there exists \((p, q)\) in the kernel with \(\io{p - \alpha \, q}\) minimal. Write \(\io{p - \alpha \, q} = \xi + \deg q > n\). By Proposition \ref{best-approx-common-factor-degree} also \(X^{\xi - 1} \cdot (p, q)\) is a best-approximation.\footnote{Here we profit already from allowing common factors for best-approximations.} And by minimality of \(\io{p - \alpha \, q}\), we have for every \((p', q')\) in the kernel 
\begin{equation*}
\io{p' - \alpha \, q'} \geq \io{p - \alpha \, q} \geq \io{X^{\xi-1} \, (p - \alpha \, q)}
\end{equation*}
and moreover \(\deg q' \leq n \leq \degb{X^{\xi -1} \, q}\) which implies \(p'/q' = p/q\).
\end{proof}

We can also compute the dimension of the kernel (i.e. the rank of \(\pqmatrix_n\)):
\begin{prop}
\label{convergent-linear-matrix-full-rank}
There exists \((p, q)\) in the kernel with \(p\) and \(q\) coprime.

If \(\io{p - \alpha\, q} = \xi + \deg q\), then
\begin{equation*}
\dim \ker \pqmatrix_n = \min(1 + \floor{(\xi-1)/2}_\Z, 1 + n - \deg q, \xi + \deg q - n)
\end{equation*}
where \(\floor{\cdot}_\Z\) denotes the next lowest integer. So if \(\xi \leq 2\) or \(n = \deg q\), the matrix \(\pqmatrix_n\) has full rank.
\end{prop}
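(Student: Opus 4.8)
The plan is to build on the two immediately preceding propositions: they already give that $\ker\pqmatrix_n\neq0$ and that every non-zero kernel vector represents the \emph{same} rational function. So I would start by fixing any non-zero $(p^*,q^*)\in\ker\pqmatrix_n$ (hence $q^*\neq0$), and letting $(p,q)$ be the coprime pair, unique up to a constant, with $p/q=p^*/q^*$. Since $p/q$ is in lowest terms and $q^*\in\K[X]$, we have $q\mid q^*$, so $q^*=r^*q$, $p^*=r^*p$ for some $r^*\in\K[X]$. From $\deg q^*\le n$ we get $\deg q\le n$, and from $\io{p^*-\alpha\,q^*}>n$ together with $\io{r^*}=-\deg r^*\le0$ we get $\io{p-\alpha\,q}\ge\io{p^*-\alpha\,q^*}>n\ge0$. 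By Remark \ref{truncation-unique} this forces $p=\gauss{\alpha\,q}$, so (as $\deg(\alpha q)\le N+\deg q\le N+n$) the pair $(p,q)$ fits into the coordinates of the linear system and lies in $\ker\pqmatrix_n$; this establishes the first assertion. Writing $\io{p-\alpha\,q}=\xi+\deg q$, the same computation shows $\xi$ does not depend on the chosen kernel vector, and $\xi\ge1$ because $(p,q)$ is a convergent.

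Next I would describe the full kernel. By the preceding proposition (same fraction for all kernel vectors) and coprimality of $(p,q)$, every non-zero $(p',q')\in\ker\pqmatrix_n$ equals $r\cdot(p,q)$ for a unique $r\in\K[X]$, $r\neq0$. Conversely $r\cdot(p,q)$ lies in $\ker\pqmatrix_n$ precisely when its coordinates exist and the prescribed coefficients of $\alpha\,rq-rp$ vanish, i.e.\ when $\deg(rq)\le n$ and $\io{r(p-\alpha\,q)}>n$, which translate into $\deg r\le n-\deg q$ and $\deg r\le\xi+\deg q-n-1$. Hence $\ker\pqmatrix_n=\{\,r\cdot(p,q):r\in\K[X],\ \deg r\le s\,\}$ with $s=\min(n-\deg q,\ \xi+\deg q-n-1)$, so $\dim\ker\pqmatrix_n=s+1=\min(1+n-\deg q,\ \xi+\deg q-n)$.

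Finally, to recover the stated three-term formula I would observe that the two non-negative integers $n-\deg q$ and $\xi+\deg q-n-1$ sum to $\xi-1$, so their minimum is at most $\floor{(\xi-1)/2}_\Z$; thus inserting the redundant term $1+\floor{(\xi-1)/2}_\Z$ into the minimum leaves its value unchanged and produces the displayed expression (this is also consistent with Proposition \ref{convergent-common-factor-degree}, since kernel vectors are convergents and $r\cdot(p,q)$ is a convergent only for $\deg r<\xi/2$). The ``full rank'' claim then follows at once: $\pqmatrix_n$ has one more column than rows, so full rank means $\dim\ker=1$; if $\xi\le2$ then $\floor{(\xi-1)/2}_\Z=0$ forces $\dim\ker\le1$, and if $n=\deg q$ then the term $1+n-\deg q=1$ does, while $\dim\ker\ge1$ always. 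The one place that needs genuine care is the first paragraph — verifying that the coprime representative really is a kernel vector (correct degree bounds, $p=\gauss{\alpha q}$) and that $\xi$ is intrinsic; beyond that the argument is linear algebra plus the one-line integer inequality. (When $\alpha\in\K(X)$ one may have $\xi=\infty$, and the formula is read with the obvious convention, exactly as in the remark following Proposition \ref{convergent-common-factor-degree}.)
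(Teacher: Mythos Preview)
Your proposal is correct and follows essentially the same route as the paper: obtain a coprime kernel element, then parametrise the whole kernel as $r\cdot(p,q)$ subject to the degree and approximation constraints. The only cosmetic difference is that the paper lists three constraints on $\deg r$ (including $\deg r<\xi/2$ from Proposition~\ref{convergent-common-factor-degree}) and reads off the three-term minimum directly, whereas you derive only the two genuinely independent constraints and then observe that the third term $1+\floor{(\xi-1)/2}_\Z$ is redundant since $(n-\deg q)+(\xi+\deg q-n-1)=\xi-1$; your observation in fact explains why the paper's extra constraint never bites.
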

\begin{proof}
Removing a common factor decreases \(\deg q\) and increases \(\io{p - \alpha \, q}\), so the existence of any solutions implies the existence of a coprime solution. Of course, by the previous Proposition, we can produce all other solutions by adding back a common factor \(r\), with has to satisfy \(\deg r \leq n - \deg q\), \(\deg r < \xi/2\), and also
\begin{equation*}
\io{r} + \io{p - \alpha \, q} = \io{r} + \xi + \deg q > n
\end{equation*}
which is equivalent to \(\deg r < \xi + \deg q -n\).
\end{proof}

These results hold for any \(\alpha \in \laurinx \K\), even if \(\alpha \in \K(X)\).

With Cramer's rule we can compute an element of the kernel:
\begin{rem}
\label{rem-cramers-rule}
Denote by \(\det \pqmatrix_n(i)\) the \(i\)th minor obtained by striking the \(i\)th column. Then
\begin{multline}
\label{toeplitz-matrix-convergents}
\left(P_{n+N}, \dots, P_0, Q_n, \dots, Q_0\right) = \\
\left(\det \pqmatrix_n(1), - \det \pqmatrix_n(2), \det \pqmatrix_n(3), \dots \right. \\  \dots, (-1)^{N+2n} \det \pqmatrix_n(N+2n+1), \\ \left. (-1)^{N+2n+1} \det \pqmatrix_n(N+2n+2) \right)
\end{multline}
is an element of the kernel. If \(\pqmatrix_n\) has full rank, then it is clearly non-zero.
\end{rem}

These formulas present an alternative to computing convergents via the continued fraction, and we will later show that the convergents obtained in this way are actually optimally normalised (see Proposition \ref{prop-convergents-hankel-determinants-normalised}).

\chapter{A (hyper)elliptic curve}
\label{sec:org99a8e17}
In this chapter, we describe the (hyper)elliptic curve corresponding to a given polynomial Pell equation. We additionally assume that \(D\) is square-free, to avoid complications and so that we may work with the Jacobian of the curve.\footnote{If \(D\) is not square-free, we have to use generalised Jacobians instead. See \cite{zannier-2016-hyper-contin-fract} on how generalised Jacobians relate to the Pell equation and continued fractions, and \cite{serre-1988-algebraic-groups-class} for an introduction to generalised Jacobians.}

We also explain how the convergents give rise to principal divisors of particular shape (Lemma \ref{convergent-divisor-lemma}), and this gives rise to the torsion condition for \(D\) being Pellian (Theorem \ref{thm-pellian-iff-torsion}).

Most of the results of this chapter have long been known, probably already to Abel \cite{abel-1826-ueber-integ-differ} and Chebyshev \cite{chebyshev-1857-sur-integration}, albeit not in our modern mathematical language. More recent publications are \cite{adams-razar-1980-multiples-points-on} for elliptic curves, or \cite{berry-1990-periodicity-continued-fractions} for arbitrary genus.

As in the previous chapters, we assume that \(\K\) is a field of characteristic not \(2\).

\section{Defining the (hyper)elliptic curve}
\label{sec:orgf4393ef}
Let \(D \in \K[X] \setminus \K\) \emph{square-free} with even degree \(2(g+1)\) and \(\LC(D)\) a square in \(\K\). Then
\begin{equation*}
\CCa : Y^2 = D(X)
\end{equation*}
defines an affine (plane) curve over \(\K\) of genus \(g\).

\begin{prop}
\label{affine-curve-is-smooth-and-normal}
The curve \(\CCa\) is smooth and normal in \(\A^2_\K\).
\end{prop}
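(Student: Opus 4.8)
The plan is to verify smoothness by the Jacobian criterion and then deduce normality. First I would write $F(X,Y) = Y^2 - D(X) \in \K[X,Y]$, so that $\CCa$ is the vanishing locus of $F$. A point $P = (x,y)$ on the affine plane curve is singular precisely when $F(x,y) = 0$ together with both partial derivatives vanishing: $\partial F/\partial Y = 2Y$ and $\partial F/\partial X = -D'(X)$. Since $\Char \K \neq 2$, the condition $\partial F/\partial Y = 0$ forces $y = 0$; then $F(x,0) = 0$ gives $D(x) = 0$, and $\partial F/\partial X = 0$ gives $D'(x) = 0$. Hence a singular point would correspond to a common root $x$ (in $\closure\K$) of $D$ and $D'$, i.e. a multiple root of $D$. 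But $D$ is assumed square-free, so $\gcd(D, D') = 1$ and no such $x$ exists. Therefore $\CCa$ has no singular points and is smooth over $\K$.

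For normality, I would argue that a smooth variety over a field is normal; more concretely, since $F = Y^2 - D(X)$ is irreducible in $\K[X,Y]$ (as $D$ is not a square in $\K[X]$, by Proposition \ref{pell-necessary-conditions} or directly, $Y^2 - D$ is irreducible over the fraction field $\K(X)$ by degree reasons and Gauss's lemma), the coordinate ring $\K[\CCa] = \K[X,Y]/(F)$ is an integral domain of dimension $1$. Smoothness implies that all its local rings at maximal (equivalently, at all) primes are regular, hence regular local rings, hence integrally closed; a domain all of whose localizations at maximal ideals are integrally closed is itself integrally closed in its fraction field. Thus $\K[\CCa]$ is integrally closed, i.e. $\CCa$ is normal. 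Alternatively, for a $1$-dimensional Noetherian domain one may invoke Serre's criterion: $(R_0)$ is automatic and $(S_1)$ holds since the ring is Cohen--Macaulay (a hypersurface), so regularity in codimension $1$ — which is exactly smoothness here, as there are no singular points at all — gives normality directly.

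I do not expect any serious obstacle: the only subtlety is making sure one uses $\Char \K \neq 2$ (so that $\partial F / \partial Y = 2Y$ genuinely forces $Y = 0$) and the square-freeness of $D$ (so that $D$ and $D'$ share no root). One should also be slightly careful about whether "smooth" is meant geometrically; since the argument above produces no singular points even over $\closure\K$ — the common-root condition for $D, D'$ fails over the algebraic closure precisely because $D$ is square-free — the curve is in fact geometrically smooth, and normality follows over $\K$. If one prefers to avoid the Jacobian criterion over a non-algebraically-closed field, the same computation can be run after base change to $\closure\K$ and then descended, but this is not necessary.
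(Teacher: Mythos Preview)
Your proof is correct. The smoothness argument via the Jacobian criterion is essentially identical to the paper's. For normality, however, you take a different route: you invoke the general principle that smooth varieties over a field are regular, hence normal (via the fact that regular local rings are integrally closed). The paper instead gives a direct, hands-on verification that \(\K[X,Y]/(Y^2 - D)\) is integrally closed: given \(p + Yq\) in the fraction field that is integral, its conjugate \(p - Yq\) is also integral, whence \(2p\) and \(p^2 - Dq^2\) are integral over \(\K[X]\); using \(\Char \K \neq 2\) and \(D\) square-free one concludes \(p, q \in \K[X]\). Your approach is cleaner and more conceptual, and generalises immediately; the paper's approach is more elementary and self-contained, avoiding any appeal to the theory of regular local rings, and has the side benefit of exhibiting explicitly why the hypotheses \(\Char \K \neq 2\) and \(D\) square-free enter into normality (not just smoothness).
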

\begin{proof}
The curve is defined by the equation
\begin{equation*}
F = Y^2  - D(X).
\end{equation*}
Applying the Jacobian criterion we calculate
\begin{equation*}
\pdiff{X}{F} = - \pdif_{X} D(X) = D'(X) \qquad
\pdiff{Y}{F} = 2 \, Y
\end{equation*}
which are never simultaneously \(0\) because \(D\) square-free implies that \(D\) and \(D'\) are coprime.

For normality, we need to show that if \(p + Y \, q \in \Fr(\K[X,Y]/\spann{Y^2 - D(X)}) = \K(X)[Y]/\spann{Y^2 - D(X)}\) is integral, it is already contained in \(\K[X,Y]/\spann{Y^2 - D(X)}\), i.e. \(p, q \in \K[X]\) are polynomials. Recall that the integral closure is a subring of the fraction field, and \(p + Y \, q\) integral implies that the conjugate \(p - Y \, q\) is integral as well. It follows that \(2 \, p\) and \(p^2 - D \, q^2\) are integral. As we assume \(\Char \K \neq 2\), this implies \(p\) and also \(D \, q^2\) are integral over \(\K[X,Y]/\spann{Y^2 - D(X)}\), so in particular over the subring \(\K[X]\). As \(D\) is square-free, it follows that \(p, q \in \K[X]\) as desired, and \(\CCa\) is normal.
\end{proof}

\begin{rem}
If \(\deg D = 2\), then \(\closure{\CCa} \subset \P^2_\K\) remains smooth at infinity, so it is isomorphic to \(\P^1\) (see Proposition 7.4.1 in \cite{liu-2002-algebraic-geometry-arithmetic}).

But if \(\deg D > 2\), then \(\closure{\CCa} \subset \P^2_\K\) has a singularity at infinity (easily verified with the Jacobian criterion).
\end{rem}

We build a smooth projective model for \(\CCa\), as in Lemma III.1.7 of \cite{miranda-1995-algebraic-curves-riemann}:

Define the curve 
\begin{equation*}
\CCinf : V^2 = D^\flat(U) = U^{2(g+1)} D(1/U)
\end{equation*}
where \(D^\flat(U)\) is a polynomial of degree at most \(2(g+1)\) -- its coefficients are those of \(D\) in reverse order. Note that \(D^\flat(0) \neq 0\) because \(\deg D = 2(g+1)\), and by Proposition \ref{affine-curve-is-smooth-and-normal} the curve \(\CCinf\) is smooth in \(\A^2_\K\). 

The relations \(X \, U = 1\) and \(U^{g+1} \, Y = V\) (respectively \(X^{g+1} \, V = Y\)) describe a birational map between \(\CCa\) and \(\CCinf\) which is an isomorphism outside of \(U = 0\) and \(X = 0\). So we may glue \(\CCa\) and \(\CCinf\) together to obtain a curve \(\CC\). This simply adds two points \(O_\pm\) with \(U = 0\) to \(\CCa\), the points at infinity.

\begin{prop}
\label{projective-curve-is-smooth-and-normal}
The curve \(\CC\), glued together from \(\CCa\) and \(\CCinf\) is a normal smooth projective curve over \(\K\).
\end{prop}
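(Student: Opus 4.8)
The plan is to verify that $\CC$ is normal, smooth, and projective by combining local information from the two affine charts $\CCa$ and $\CCinf$ with a check of what happens at the two new points $O_\pm$.

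First I would address projectivity. Since $\CC$ is obtained by gluing the two affine curves $\CCa$ and $\CCinf$ along the open locus where $X \neq 0$ (equivalently $U \neq 0$), and this gluing adds precisely the two points $O_\pm$ lying over $U=0$, I would exhibit $\CC$ as a closed subvariety of some projective space — for instance by using that $\CCinf$ is itself an affine curve whose projective closure picks up the points at infinity of $\CCa$, or more cleanly by noting $\CC$ maps finitely (degree $2$) onto $\P^1_\K$ via $X$ (resp. $U$) and a curve admitting a finite morphism to a projective variety is projective. Alternatively one can observe that $\CC$ is a complete curve (it is covered by two affine curves with the gluing being along a dense open, and no point is ``missing'' since the only place $\CCa$ fails to be complete is at infinity, which is exactly where the two charts $O_\pm$ sit), and a complete nonsingular curve is projective.

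Next I would handle smoothness and normality. On the open set $\CCa$ this is Proposition \ref{affine-curve-is-smooth-and-normal}, and on the open set $\CCinf$ it follows from the same proposition applied to $D^\flat$, which is legitimate precisely because $D^\flat(0) \neq 0$ (so $D^\flat$ is, up to the unit $\LC$, a square-free polynomial of even degree $2(g+1)$ with square leading coefficient, after checking $D^\flat$ is square-free — which holds since $D$ is square-free and has no zero at $0$, so $D^\flat$ has nonzero constant term and its roots are the reciprocals of the roots of $D$). Since $\{\CCa, \CCinf\}$ is an open cover of $\CC$, and smoothness and normality are local properties, $\CC$ is smooth and normal everywhere; in particular it is smooth and normal at $O_\pm$ because those points lie in the chart $\CCinf$ (where $U=0$, $V^2 = D^\flat(0) \neq 0$, giving two distinct points, each a smooth point of $\CCinf$).

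The main obstacle — really the only substantive point — is making the gluing and projectivity argument rigorous: one must check that the birational map given by $XU = 1$, $U^{g+1}Y = V$ genuinely is an isomorphism of the open subschemes $\{X \neq 0\} \subset \CCa$ and $\{U \neq 0\} \subset \CCinf$ (so that the gluing is well-defined and separated), and that the resulting scheme is of finite type, separated, and complete over $\K$. I expect to invoke the valuative criterion of properness, or simply cite the standard construction (as in Lemma III.1.7 of \cite{miranda-1995-algebraic-curves-riemann} over $\C$, or Proposition 7.4.1 and the surrounding material in \cite{liu-2002-algebraic-geometry-arithmetic} in general), to conclude that $\CC$ is a normal smooth projective curve over $\K$.
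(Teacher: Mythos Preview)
Your proposal is correct and follows essentially the same approach as the paper: smoothness and normality are verified locally on the two affine charts via Proposition \ref{affine-curve-is-smooth-and-normal}, and projectivity is obtained from a finite morphism to a projective variety. The only cosmetic difference is the target of that finite morphism---the paper maps $\CC$ onto the singular closure $\closure{\CCa} \subset \P^2_\K$ to deduce properness and then cites Remark 3.3.33(1) in \cite{liu-2002-algebraic-geometry-arithmetic}, whereas you use the degree-$2$ cover of $\P^1_\K$ (which the paper records separately as Proposition \ref{curve-is-hyper-elliptic}); both routes work equally well.
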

\begin{proof}
Normality and smoothness of \(\CC\) are local conditions, hence they follow from Proposition \ref{affine-curve-is-smooth-and-normal} applied to \(\CCa\) and \(\CCinf\).

We get a finite morphism \(\CC \to \closure{\CCa} \subset \P^2_\K\), hence \(\CC\) is proper over \(\K\). As \(\CC\) is an algebraic variety, this implies by Remark 3.3.33 (1) in \cite{liu-2002-algebraic-geometry-arithmetic} that it is projective.
\end{proof}

There is an involution \(\sigma\) defined by \(X \mapsto X, \; Y \mapsto -Y\), or \(U \mapsto Y, \; V \mapsto -V\). By abuse of notation, we also consider it as an automorphism of the function field \(\K(X,Y)\). If we quotient \(\CC\) by the group \(\{1, \sigma\}\), we find that \(\CC\) is (hyper)elliptic (we use Definition 7.4.7 from \cite{liu-2002-algebraic-geometry-arithmetic} which is essentially the content of the following proposition):

\begin{prop}
\label{curve-is-hyper-elliptic}
There is finite morphism \(\pi : \CC \to \P^1\) of degree \(2\) defined by \((x,y) \mapsto (x:1)\) on \(\CCa\) and \(\pi(O_\pm) = (1:0)\). For \(g = 1\), the curve \(\CC\) is elliptic, and for \(g \geq 2\) it is hyperelliptic.
\end{prop}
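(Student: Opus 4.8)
The plan is to verify each assertion directly from the construction of $\CC$ as the gluing of $\CCa$ and $\CCinf$. First I would check that the map $\pi$ is well-defined on the overlap: on $\CCa \setminus \{X = 0\}$ the point $(x,y)$ corresponds to $(u,v) = (1/x, y/x^{g+1})$ on $\CCinf$, and $(x:1) = (1 : 1/x) = (1:u)$, so the two local formulas agree; at the points $O_\pm$ (where $u = 0$) the formula on $\CCinf$ gives $(1 : u) = (1:0)$, matching the stated value $\pi(O_\pm) = (1:0)$. So $\pi$ glues to a morphism $\CC \to \P^1$.

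Next I would argue that $\pi$ is finite of degree $2$. Finiteness can be seen either from properness of $\CC$ over $\K$ (already established in Proposition \ref{projective-curve-is-smooth-and-normal}) together with quasi-finiteness of $\pi$ — each fibre is finite since $Y^2 = D(X)$ has at most two solutions for fixed $X$, and the fibre over $(1:0)$ is exactly $\{O_+, O_-\}$ — or more algebraically by observing that $\pi$ corresponds to the degree-$2$ field extension $\K(X,Y)/\K(X)$, which is the function field inclusion, and a finite morphism between smooth projective curves has degree equal to the degree of this extension. Since $D$ is not a square, $Y \notin \K(X)$, so $[\K(X,Y):\K(X)] = 2$, giving $\deg \pi = 2$.

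For the final sentence, the genus of $\CC$ is $g$ by construction (this is how the curve was set up, with $\deg D = 2(g+1)$, and one may confirm it via Riemann–Hurwitz applied to $\pi$, counting the $2(g+1)$ ramification points coming from the roots of $D$, all of odd ramification since $D$ is square-free, and checking $O_\pm$ are unramified because $D^\flat(0) \neq 0$). Then by definition a smooth projective curve of genus $1$ with a rational point — here $O_+$ — is an elliptic curve, and a smooth projective curve of genus $g \geq 2$ admitting a degree-$2$ morphism to $\P^1$ is by definition hyperelliptic; the morphism $\pi$ exhibits exactly this. So the classification follows once the degree of $\pi$ and the genus are pinned down.

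The main obstacle I expect is the verification of finiteness of $\pi$ at the points at infinity: one must make sure the gluing really does produce a morphism that is proper (not merely a rational map) and that no further points need to be added, i.e. that $\CC$ is genuinely the smooth projective model. This is handled by the earlier propositions showing $\CC$ is smooth, normal, and projective, so in practice the argument reduces to bookkeeping with the two affine charts and the transition relations $XU = 1$, $U^{g+1}Y = V$; the only subtlety is confirming that the chart $\CCinf$ contributes precisely the two missing points $O_\pm$ and that $\pi$ takes the stated value there. Everything else is routine once $\deg \pi = 2$ and the genus count are in place.
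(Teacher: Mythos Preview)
Your proof is correct and follows essentially the same approach as the paper: define $\pi$ on both charts, check compatibility via $xu=1$, observe it is finite of degree $2$, and invoke the definition of elliptic/hyperelliptic. The paper's own proof is much terser---it asserts finiteness and degree $2$ as ``clear'' without your additional justification via properness plus quasi-finiteness or the function-field degree, and it does not spell out the genus computation---so your version simply fills in details the paper leaves implicit.
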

\begin{proof}
The map \(\pi\) is defined on \(\CCa\) via \((x, y) \mapsto (x:1)\), and on \(\CCinf\) via \((u, v) \mapsto (1:u)\). Clearly the definitions are compatible on the intersection (because there we have \(x \, u = 1\)). It is also clear that \(\pi\) is a finite morphism of degree \(2\) which means that \(\CC\) is elliptic for \(g = 1\) and hyperelliptic for \(g \geq 2\).
\end{proof}

\section{Divisors and the Jacobian variety}
\label{sec:org40007f2}
We recall some basic notions about divisors and the Jacobian variety now. For more details, consult your favourite algebraic geometry book, for instance \cite{hartshorne-1977-algebraic-geometry}, \cite{goertz-wedhorn-2010-algebraic-geometry-i} or \cite{liu-2002-algebraic-geometry-arithmetic}. For the rest of the chapter, we work over the algebraic closure \(\closure \K\) to avoid complications.
\subsection{Divisors}
\label{sec:orgeef8812}
For any \(P \in \CC(\closure{\K})\), there is a discrete valuation
\begin{equation*}
\ord_P : \units{\closure{\K}(X,Y)} \to \Z,
\end{equation*}
the zero-order of \(P\) of a function on \(\CC\). In fact, all non-trivial discrete \(\closure\K\)-valuations (up to equivalence) on \(\closure\K(X,Y)\) arise in this way.

By the \emph{group of divisors} \(\DIV(\CC)\) we understand the free abelian group generated by all points of \(\CC(\closure\K)\) (we mark divisors in \textbf{bold}). For every divisor
\begin{equation*}
\di{D} = \divsum n_P \, \pd{P}, \text{ where } n_P \in \Z
\end{equation*}
we define the \emph{degree}
\begin{equation*}
\deg \di{D} = \divsum n_P.
\end{equation*}
A divisor is called \emph{effective} if \(n_P \geq 0\) for all \(P\).

For every element \(f \in \units{\closure\K(X,Y)}\), only finitely many \(\ord_P f\) are non-zero, so we can define the \emph{divisor of \(f\)} as
\begin{equation*}
\Div f = \divsum (\ord_P f) \, \pd{P}.
\end{equation*}
The divisors arising in this way are called \emph{principal divisors}, and they all have degree \(0\). So there is group homomorphism \(\Div : \units{\closure\K(X,Y)} \to \DIV^0(\CC)\) where \(\DIV^0(\CC)\) denotes the divisors of degree \(0\).

Recall that the Jacobian \(\Jac\) of \(\CC\) is an abelian variety of dimension \(g\). If \(g = 1\) (i.e. \(\deg D = 4\)), the curve \(\CC\) is an elliptic curve (Corollary 7.4.5 in \cite{liu-2002-algebraic-geometry-arithmetic}), and it is isomorphic to its Jacobian.

The \(\closure\K\)-rational points of the \emph{Jacobian} can be seen as the cokernel of the divisor map, more precisely the quotient
\begin{equation*}
\Jac = \Jac(\CC) = \DIV^0(\CC) / \im \Div,
\end{equation*}
with the projection \(\DIV^0(\CC) \to \Jac\). By abuse of language, we call both the algebraic variety and its set of \(\closure\K\)-rational points ``Jacobian''.

We write a divisor class in the Jacobian as
\begin{equation*}
\jdi{D} = \divsum n_P \, \j{P}.
\end{equation*}

\subsection{Order functions}
\label{sec:org85f4b43}
If we restrict \(\ord_{O_\pm}\) to \(\closure\K(X)\), it becomes exactly \(\ord_\infty = \ios\) from Section \ref{sec:org7391b36}. As mentioned before, there are precisely two embeddings of \(\closure\K(X,Y)\) into the completion \(\laurinx{\closure\K}\). To distinguish them properly, we set for \(p, q \in \closure\K(X)\)
\begin{equation*}
\ord_{O_+}(p + Y \, q) = \io{p + \sqrt{D} \, q}, \quad \ord_{O_-}(p + Y \, q) = \io{p - \sqrt{D} \, q}
\end{equation*}
for a fixed choice of \(\sqrt{D}\) (see Definition \ref{choose-sqrt-d}). Apart from this, the roles of \(O_+\) and \(O_-\) are essentially interchangeable (by the involution \(\sigma\)).

In a similar way, one may compute order functions for a finite point \(P = (x, y) \in \CCa\) by choosing an uniformiser. Sending \(X\) to \(T + x\) gives a homomorphism \(\closure\K(X) \to \laurent{\closure\K}{T}\), and if \(y \neq 0\), one may compute \(\sqrt{D(T+x)}\) in \(\laurent{\closure\K}{T}\) with the constant coefficient \(y\) determining the choice of square root. Sending \(Y\) to \(\sqrt{D(T+x)}\) then establishes a homomorphism \(\HEF \to \laurent{\closure\K}{T}\) with \(\ord_P\) corresponding to \(\ord_{T=0}\).

If \(y = 0\), one sends instead \(X\) to \(T^2 + x\), to ensure \(\sqrt{D(T^2+x)} \in \laurent{\closure\K}{T}\). Because the latter has odd \(\ord_{T=0}\), the choice of root does not matter, and one obtains as before the correspondence between \(\ord_P\) and \(\ord_{T=0}\). Note that in this case for \(f \in \closure\K(X)\) the zero-order \(\ord_P(f)\) is always \emph{even}.

\subsection{Embedding the curve in the Jacobian}
\label{sec:orgdaae3ac}
Choosing the base point \(O_+\) (a natural choice here, but any other point on \(\CC\) would do as well), define the map \(j :\CC \to \Jac\) via \(P \mapsto \j{P} - \j{O_+}\) which is an embedding for \(g \geq 1\) (see Theorem A8.1.1 in \cite{hindry-silverman-2000-diophantine-geometry}). Actually, for \(g = 1\), when \(\CC\) is an elliptic curve, it is an isomorphism of curves, determined uniquely by the choice of the base point.

Of course, we can extend \(j : \DIV(\CC) \to \Jac\) as a homomorphism of groups (using that \(\DIV(\CC)\) is a free group on \(\CC\)).

For each \(r \geq 0\), we may also define a subvariety of \(\Jac\)
\begin{equation*}
W_r = j(\CC) + \dots + j(\CC) \quad (r \text{ copies})
\end{equation*}
remarking \(W_g = \Jac\) (see again Theorem A8.1.1 in \cite{hindry-silverman-2000-diophantine-geometry}), while the Theta divisor \(\Theta = W_{g-1}\) forms a proper subvariety which depends on the embedding \(j\). We will use this divisor with the Weil height machine later, and likewise the canonical divisor.

\begin{prop}
The canonical divisor \(\canondiv\) on \(\CC\) is represented by
\begin{equation*}
\Div\left( \frac{\dif X}{Y} \right) = (g-1) \, ( \pd{O_+} + \pd{O_-} ). 
\end{equation*}
\end{prop}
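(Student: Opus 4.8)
The plan is to compute the divisor of the differential $\dif X/Y$ directly, point by point, using the order functions set up in Section \ref{sec:org85f4b43}. First I would record the divisor of $Y$ and of $\dif X$ separately and then subtract. Since $Y^2 = D(X)$ and $D$ is square-free of degree $2(g+1)$, the function $Y$ vanishes to order $1$ at each of the $2(g+1)$ ramification points $(e_i, 0)$ (where $D(e_i) = 0$), and at the two points at infinity $O_\pm$ it has a pole: from $U^{g+1} Y = V$ with $V(O_\pm) \neq 0$ and $\ord_{O_\pm} U = 1$ we get $\ord_{O_\pm} Y = -(g+1)$. Hence
\begin{equation*}
\Div(Y) = \sum_{i=1}^{2(g+1)} \pd{(e_i,0)} - (g+1)\,(\pd{O_+} + \pd{O_-}),
\end{equation*}
which indeed has degree $0$ as a sanity check.

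Next I would compute $\Div(\dif X)$. On the affine chart, $X - x$ is a uniformiser at any finite point $P = (x,y)$ with $y \neq 0$, so $\dif X$ has no zero or pole there; at a ramification point $P = (e_i, 0)$, the discussion in Section \ref{sec:org85f4b43} shows $T$ with $X = T^2 + e_i$ is a uniformiser, so $\dif X = 2T\,\dif T$ vanishes to order $1$. At infinity, using $X = 1/U$ we have $\dif X = -U^{-2}\,\dif U$, and since $U$ is a uniformiser at $O_\pm$ this gives $\ord_{O_\pm}(\dif X) = -2$. Therefore
\begin{equation*}
\Div(\dif X) = \sum_{i=1}^{2(g+1)} \pd{(e_i,0)} - 2\,(\pd{O_+} + \pd{O_-}),
\end{equation*}
again of degree $2(g+1) - 4 = 2g - 2 = \deg \canondiv$, as it must be for a canonical divisor.

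Subtracting, the ramification points cancel and I obtain
\begin{equation*}
\Div\!\left(\frac{\dif X}{Y}\right) = \Div(\dif X) - \Div(Y) = -2\,(\pd{O_+} + \pd{O_-}) + (g+1)\,(\pd{O_+} + \pd{O_-}) = (g-1)\,(\pd{O_+} + \pd{O_-}),
\end{equation*}
which is the claimed representative; it is effective (for $g \geq 1$) of degree $2g-2$, consistent with being canonical. To be fully rigorous one still has to justify that this divisor actually represents the canonical class $\canondiv$ — i.e. that $\dif X/Y$ is a nonzero global rational differential and hence its divisor lies in the canonical class by definition; this is immediate since the canonical class is exactly the class of the divisor of any nonzero meromorphic differential. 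The main obstacle, and the only place demanding real care, is the local computation of orders at the ramification points $(e_i,0)$ and at $O_\pm$: one must correctly use the ``even order'' phenomenon at Weierstrass points (the factor $2T\,\dif T$) and the reverse-coordinate description of the chart $\CCinf$ at infinity. Everything else is bookkeeping, and the degree checks at each stage give confidence that the local exponents are right.
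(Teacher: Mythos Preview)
Your proof is correct, but it takes a different route than the paper. You compute \(\Div(Y)\) and \(\Div(\dif X)\) separately and explicitly at every point, including a direct local chart computation at \(O_\pm\) via \(X = 1/U\), and then subtract. The paper instead argues directly for the quotient: from \(2Y\,\dif Y = D'(X)\,\dif X\) and the coprimality of \(D, D'\) it concludes that \(\dif X/Y\) has neither zeros nor poles at finite points, and then determines the orders at \(O_\pm\) without any local computation there, using only that the divisor is \(\sigma\)-invariant and must have total degree \(2g-2\). Your approach is more hands-on and self-contained (it does not invoke Riemann--Roch for the degree or the \(\sigma\)-symmetry), while the paper's argument is shorter and sidesteps the explicit chart at infinity at the cost of using those global inputs.
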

\begin{proof}
From Riemann-Roch one deduces easily that \(\deg \canondiv = 2(g-1)\). It is also clear that we obtain the canonical divisor class by computing the divisor of any differential on \(\CC\).

Now outside of infinity, the sheaf of differentials is clearly generated by \(\dif X\) and \(\dif Y\) which enjoy the relation
\begin{equation*}
2 \, Y \, \dif Y = D'(X) \, \dif X
\end{equation*}
obtained by differentiating the equation of the curve. This tells us that outside of \(D(X) = 0\), the sheaf of differentials is generated by \(\dif X\), while outside of \(D'(X) = 0\) it is generated by \(\dif Y\). Moreover we see that \(\dif X\) vanishes only on \(Y = 0\) (i.e. \(D(X) = 0\)), while \(\dif Y\) vanishes only on \(D'(X) = 0\).

It follows that \(\frac{\dif X}{Y}\) has poles and zeroes only at infinity. As the divisor of this differential is invariant under the involution \(\sigma\) (which changes the differential only by a factor \(-1\)), and has to have degree \(2(g-1)\), we obtain the above formula.
\end{proof}

\section{Divisors of convergents}
\label{sec:org5685823}
Given a rational approximation \((p, q)\), it is very natural to build the function \(p - Y\, q\) and study its divisor. For the convergents, we will see that this divisor describes how the multiples of the divisor at infinity
\begin{equation}
\label{divisor-at-infinity}
% \tag{O}
\OO = \pd{O_+} - \pd{O_-} \in \DIV^0(\CC)
\end{equation}
are represented as sums of \(g\) points, i.e. as elements of \(W_g = \Jac\). Note that \(\OO\) is actually a \(\K\)-rational divisor, so \(\j{\OO}\) is a \(\K\)-rational point of \(\Jac\).

\begin{prop}
\label{order-finpt-polynomial}
Let \(p, q \in \closure\K(X)\) and \(\phi_\pm = p \pm Y \, q \neq 0\). \TFAE
\begin{enumerate}
\item \(p, q \in \closure\K[X]\)
\item For all \(P \neq O_\pm\) holds \(\ord_P \phi_+ \geq 0\)
\item For all \(P \neq O_\pm\) holds \(\ord_P \phi_- \geq 0\)
\end{enumerate}
\end{prop}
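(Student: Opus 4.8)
The plan is to prove the cycle of implications $(1) \Rightarrow (2) \Rightarrow (3) \Rightarrow (1)$, using the description of the order functions from Section \ref{sec:org85f4b43} and the fact that the affine curve $\CCa$ is normal (Proposition \ref{affine-curve-is-smooth-and-normal}). The key point is that ``$\ord_P \phi \geq 0$ for all finite points $P$'' is exactly the statement that $\phi$ lies in the integral closure of $\closure\K[X]$ inside $\HEF$, i.e. in $\closure\K[X,Y]/\spann{Y^2 - D(X)}$, which by normality equals the set of elements $p + Y\,q$ with $p, q \in \closure\K[X]$.

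First, for $(1) \Rightarrow (2)$: if $p, q \in \closure\K[X]$, then $\phi_+ = p + Y\,q$ is a polynomial function on the affine curve $\CCa$, hence regular at every finite point, so $\ord_P \phi_+ \geq 0$ for all $P \neq O_\pm$. This is immediate once one recalls from Section \ref{sec:org85f4b43} that $\ord_P$ at a finite point corresponds to $\ord_{T=0}$ under a homomorphism sending $X$ to $T + x$ (or $T^2 + x$) and $Y$ to the appropriate square root, under which a polynomial in $X$ and $Y$ maps to a power series with no negative terms. The implication $(1) \Rightarrow (3)$ is identical with $\phi_-$ in place of $\phi_+$ (or follows by applying the involution $\sigma$, which swaps $\phi_+$ and $\phi_-$ and permutes the finite points).

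Next, $(2) \Rightarrow (1)$ (and symmetrically $(3) \Rightarrow (1)$): suppose $\ord_P \phi_+ \geq 0$ for all $P \neq O_\pm$. Since the involution $\sigma$ maps a finite point $P$ to another finite point $\sigma(P)$ and $\sigma(\phi_+) = \phi_-$, we get $\ord_P \phi_- = \ord_{\sigma(P)} \phi_+ \geq 0$ for all finite $P$ as well. Hence $\phi_+ + \phi_- = 2p$ and $\phi_+ \phi_- = p^2 - D\,q^2$ both have non-negative order at every finite point; being elements of $\closure\K(X)$, and having no poles at any finite point of $\CCa$ — equivalently at any point of $\A^1$ in the base, since the map $\pi$ is finite — they must lie in $\closure\K[X]$ (a rational function in one variable with no finite poles is a polynomial; here one uses that $\CCa \to \A^1$ is finite so a pole of a function on $\A^1$ would force a pole upstairs). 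As $\Char \closure\K \neq 2$ this gives $p \in \closure\K[X]$ and then $D\,q^2 \in \closure\K[X]$; since $D$ is square-free, $q \in \closure\K[X]$ as well — this is precisely the normality argument already carried out in the proof of Proposition \ref{affine-curve-is-smooth-and-normal}, so I would simply cite that computation. Finally, $(3) \Rightarrow (2)$ (or directly $(3) \Rightarrow (1)$) is the same argument applied to $\phi_-$, again using $\sigma$.

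The only mild subtlety — and the step I would be most careful about — is the claim that a function in $\closure\K(X)$ with $\ord_P \geq 0$ at every finite point $P$ of $\CCa$ is a polynomial in $X$. One must check that this really pins down the behaviour over all of $\A^1$ and not just over the branch locus: since $\pi : \CC \to \P^1$ is finite and the finite points of $\CCa$ lie over the affine line, every point of $\A^1$ has a preimage in $\CCa$ (other than $O_\pm$), and the valuation $\ord_{T=0}$ at a point above $x_0$ restricts on $\closure\K(X)$ to (a positive multiple of) the valuation at $x_0$; so non-negativity upstairs forces non-negativity downstairs, hence no finite poles, hence a polynomial. After that, everything reduces to the normality/square-freeness bookkeeping already done earlier in the excerpt.
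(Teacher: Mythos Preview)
Your proof is correct and follows essentially the same strategy as the paper: use the involution $\sigma$ to pass between conditions (2) and (3), then combine $\phi_+$ and $\phi_-$ to extract information about $p$ and $q$ separately. The only minor difference is that the paper uses the \emph{difference} $\phi_+ - \phi_- = 2Yq$ and a parity argument at ramification points (namely $\ord_P(Y)=1$ while $\ord_P(q)$ is even), whereas you use the \emph{norm} $\phi_+\phi_- = p^2 - Dq^2$ and then invoke the square-freeness/normality computation from Proposition~\ref{affine-curve-is-smooth-and-normal}. Both routes are equally short; yours has the pleasant feature of literally reducing to the normality statement already proved, while the paper's avoids the product and works additively throughout.
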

\begin{proof}
\begin{enumerate}
\item and 3. are clearly equivalent because \(\ord_P \phi_+ = \ord_{\sigma(P)} \phi_-\) for all \(P\).
\end{enumerate}

Together, they imply 1.:
\begin{align*}
\ord_P(p) = \ord_P(2\,p) &= \ord_P(\phi_+ + \phi_-) \geq \min( \ord_P(\phi_+), \ord_P(\phi_-)) \geq 0 \\
\ord_P(Y\,q) = \ord_P(2\,Y\,q) &= \ord_P(\phi_+ - \phi_-) \geq \min( \ord_P(\phi_+), \ord_P(\phi_-)) \geq 0
\end{align*}
so clearly \(p\) has no poles outside infinity, hence it is a polynomial. If \(\ord_P(Y) \neq 0\), then \(\ord_P(Y) = 1\) because \(D\) is square-free. But at the same time \(\ord_P(q)\) must be even (see Section \ref{sec:org85f4b43}). This shows that \(\ord_P(q) \geq 0\), and that \(q\) has no poles outside infinity which means it is a polynomial.

Conversely 1. implies 2.: if \(p, q \in \closure\K[X]\), then \(\ord_P(p) \geq 0\), \(\ord_P(q) \geq 0\) and of course \(\ord_P(Y) \geq 0\) for all \(P \neq O_\pm\). Hence
\begin{equation*}
\ord_P(\phi_\pm) = \ord_P(p \pm Y \, q) \geq \min(\ord_P(p), \ord_P(Y) + \ord_P(q)) \geq 0
\end{equation*}
as desired.
\end{proof}

\begin{lemma}
\label{convergent-divisor-lemma}
Let \(p, q \in \closure\K(X)\), and \(\phi_\pm = p \pm Y \, q \neq 0\). Set \(m = \deg p\). Then \((p, q) \in \Coset{\sqrt{D}}{\closure\K}\) (it is a convergent of \(\sqrt{D}\)) \IFF \(m > 0\) and there exists \(0 \leq r \leq \min(g, m)\) and \(P_1, \dots, P_r \in \CCa\) such that
\begin{equation}
\label{convergent-divisor-equation}
\Div \phi_- = -m \, \pd{O_-} + (m-r) \, \pd{O_+} + \pd{P_1} + \dots + \pd{P_r}.
\end{equation}
We call \(\Div \phi_-\) a \emph{convergent divisor}.
\end{lemma}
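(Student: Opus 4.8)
The plan is to relate the convergent condition $\io{\phi_-} = \io{p - \sqrt D\, q} > \deg q$ to the orders of $\phi_-$ at the two points at infinity, and then use that $(p,q)$ being a polynomial pair (Proposition \ref{order-finpt-polynomial}) forces $\Div\phi_-$ to be effective away from infinity. First I would record the basic order computations: since $\deg p = m$ and the convergent inequality together with Proposition \ref{convergent-q-determines-p} force $\deg(\sqrt D\,q) = \deg p = m$ (the leading terms of $p$ and $\sqrt D\, q$ must cancel for $\io{p-\sqrt D\,q}$ to be positive, let alone $>\deg q$), we get $\ord_{O_+}\phi_- = \io{p - \sqrt D\, q}$ and, using $\phi_+ = p + \sqrt D\,q$ with $\ord_{O_-}\phi_- = \io{p+\sqrt D\, q} = -m$ since the leading terms now add rather than cancel. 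Here I would invoke \eqref{cv-plus-bigger-minus}-style reasoning from the proof of Proposition \ref{weak-pell-solutions-are-convergents}: once $\io{p - \sqrt D\, q} > 0$, one has $\io{p + \sqrt D\, q} = \io{\sqrt D\, q} = -m$, hence $\ord_{O_-}\phi_- = -m$.

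Next, the degree-zero condition $\deg \Div\phi_- = 0$ combined with Proposition \ref{order-finpt-polynomial} (which applies since $p, q \in \closure\K[X]$, so $\ord_P\phi_- \geq 0$ for all finite $P$) shows that the ``finite part'' of $\Div\phi_-$ is an effective divisor $\pd{P_1} + \dots + \pd{P_r}$ of some degree $r \geq 0$, and $\Div\phi_- = \ord_{O_+}(\phi_-)\,\pd{O_+} - m\,\pd{O_-} + \pd{P_1} + \dots + \pd{P_r}$ with $\ord_{O_+}(\phi_-) - m + r = 0$, i.e. $\ord_{O_+}\phi_- = m - r$. Then the convergent inequality $\ord_{O_+}\phi_- = \io{p - \sqrt D\, q} > \deg q$ translates, via $\deg q = m - \deg(\sqrt D) = m - (g+1)$ (using $\deg D = 2(g+1)$ and $\deg\sqrt D = g+1$ from Proposition \ref{completion-of-square}), into $m - r > m - (g+1)$, that is $r \leq g$. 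Since the finite part is effective we also get $r \geq 0$, and $r \leq m$ is automatic because $\ord_{O_+}\phi_- = m - r \geq 0$ would follow from $\phi_-$ having no pole at $O_+$ — but I should check this: $m > 0$ plus the inequality chain actually gives $m - r = \io{p-\sqrt D\,q} > \deg q \geq 0$, so $r < m$, hence $r \leq \min(g,m)$ as claimed. This gives the forward direction, including $m > 0$ which follows since $\deg q \geq 0$ forces $\io{p - \sqrt D\, q} > 0$, and then the cancellation of leading terms forces $m = \deg(\sqrt D\, q) \geq g + 1 > 0$.

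For the converse, suppose \eqref{convergent-divisor-equation} holds with $0 \leq r \leq \min(g,m)$ and $m > 0$. Effectiveness of the finite part together with $\ord_{O_-}\phi_- = -m < 0$ and Proposition \ref{order-finpt-polynomial} gives $p, q \in \closure\K[X]$; reading off $\ord_{O_+}\phi_- = m - r \geq 0$ and $\ord_{O_-}\phi_- = -m$ we recover $\deg(p \pm Y q)$-type information: $\ord_{O_-}\phi_- = \io{p + \sqrt D\, q} = -m$ forces $\deg(\sqrt D\, q) = m$ (so $\deg q = m - (g+1)$), while $\ord_{O_+}\phi_- = \io{p - \sqrt D\, q} = m - r > m - (g+1) \geq \deg q$ since $r \leq g$. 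Hence $(p, q)$ satisfies the convergent condition \eqref{convergent-condition}. The main obstacle I anticipate is bookkeeping the two cases of leading-term behaviour ($p$ and $\sqrt D\, q$ cancel at $O_+$ but reinforce at $O_-$) cleanly and making sure the relation $\deg q = m - (g+1)$ is justified rather than assumed — this is where I would lean on Propositions \ref{convergent-q-determines-p} and \ref{completion-of-square}, and on the ultrametric computation already carried out in the proof of Proposition \ref{weak-pell-solutions-are-convergents}. A minor subtlety is that when $r = 0$ the divisor $\pd{P_1} + \dots + \pd{P_r}$ is empty, which the statement already allows.
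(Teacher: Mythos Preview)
Your proposal is correct and follows essentially the same approach as the paper: reduce to polynomial $p,q$ via Proposition~\ref{order-finpt-polynomial}, compute $\ord_{O_\pm}\phi_-$ using the ultrametric argument from Proposition~\ref{weak-pell-solutions-are-convergents}, use the degree-zero property of $\Div\phi_-$ to identify $\ord_{O_+}\phi_- = m-r$, and translate the convergent inequality into $r \leq g$ via $m = \deg q + g + 1$. The paper's proof is slightly terser (it treats both directions simultaneously by noting that $\ord_{O_+}\phi_- \geq 0$ holds under either hypothesis), but the content is the same.
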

\begin{proof}
By Proposition \ref{order-finpt-polynomial} we can clearly restrict to the case \(p, q \in \closure\K[X]\) as the divisor in \eqref{convergent-divisor-equation} allows only poles at infinity, and convergents are always made of polynomials. The rest of the proof boils down to distinguishing the points at infinity and calculating \(r\).

Obviously \(\ord_{O_+} \phi_- = \io{p - \sqrt{D} \, q} \geq 0\) holds for both conditions and implies
\begin{multline*}
\ord_{O_-} \phi_- = \ord_{O_+} \phi_+ = \io{p + \sqrt{D} \, q} \\= \min(\io{p}, \io{p- \sqrt{D}\, q}) = \io{p} = -m.
\end{multline*}
Similarly, \(m = \deg q + g + 1\) (see also the proof of Proposition \ref{weak-pell-solutions-are-convergents}).

Now \(P_1, \dots, P_r\) are the finite zeroes of \(\phi_-\), accounted for with multiplicities. Of course \(\Div \phi_-\) has degree \(0\), hence
\begin{equation*}
\io{p - \sqrt{D} \, q} = \ord_{O_+} \phi_- = (m-r) = \deg q + g + 1 - r.
\end{equation*}
This is \(> \deg q\) (i.e. \((p, q) \in \Coset{\sqrt{D}}{\closure\K}\)) \IFF \(r \leq g\), so we have the desired equivalence.
\end{proof}

We will give a slight generalisation (extending to other elements of the function field) later in Section \ref{sec:org4e4485e}, to illustrate the connection with the continued fraction.

\begin{rem}
\label{rem-convergent-lemma-jacobian}
In the Jacobian, we can write this divisor relation as
\begin{equation*}
m \cdot j(O_-) = j(P_1) + \dots + j(P_r).
\end{equation*}
\end{rem}

\begin{rem}
\label{convergent-omega-degree}
With the notation from Proposition \ref{weak-pell-solutions-are-convergents}, we get \(\deg \Omega = r\) because
\begin{equation*}
\io{\Omega} = \ord_{O_+}(\phi_+ \cdot \phi_-) = \ord_{O_+}(\phi_+) + \ord_{O_+}(\phi_-) = -m + (m-r) = -r.
\end{equation*}
In the same proposition, the condition to obtain a convergent (up to sign of \(q\)) was \(r = \deg \Omega < \frac{1}{2} \deg D = g+1\) which matches the above lemma.
\end{rem}

\begin{prop}
For every \(n \in \N\) there exists \(\phi_n \in \mino{\closure\K(X,Y)}\) such that
\begin{equation*}
\Div \phi_n = -m \, \pd{O_-} + (m-r) \, \pd{O_+} + \pd{P_1} + \dots + \pd{P_r}
\end{equation*}
with \(m \geq n\), \(r \leq \min(g,m)\) and \(P_1, \dots, P_r \in \CCa\).
\end{prop}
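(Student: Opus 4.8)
\textit{Proof proposal.} The idea is that this proposition is just the divisor-theoretic repackaging of the fact that $\sqrt{D}$ admits convergents of arbitrarily high approximation quality, the translation being supplied by Lemma \ref{convergent-divisor-lemma}. So the first step is to record that $\sqrt{D} \notin \closure{\K}(X)$: since $D$ is square-free of positive even degree it is not a square in $\closure{\K}[X]$, hence (by integral closedness of $\closure{\K}[X]$) not a square in $\closure{\K}(X)$, so its chosen square root is irrational over $\closure{\K}(X)$. Consequently, by the existence of convergents of arbitrary quality established in Chapter \ref{sec:orgbc894a6} (for instance via the linear system $\pqmatrix_N$ of Section \ref{sec:org542977d}, whose kernel is always non-zero), for every bound $N$ there is a convergent $(p,q) \in \Coset{\sqrt{D}}{\closure{\K}}$ of $\sqrt{D}$ with $\io{p - \sqrt{D}\,q} > N$.

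Given $n$, I would take such a convergent with $N = n$ and put $\phi_n = p - Y\,q$, noting $\phi_n \neq 0$ because $\phi_n = 0$ would force $Y = p/q \in \closure{\K}(X)$, which is impossible. Writing $m = \deg p$, I then apply Lemma \ref{convergent-divisor-lemma} to $(p,q)$: since $(p,q)$ is a convergent of $\sqrt{D}$, the lemma yields $m > 0$, an integer $0 \le r \le \min(g,m)$ and points $P_1,\dots,P_r \in \CCa$ with
\[
\Div \phi_n = -m\,\pd{O_-} + (m-r)\,\pd{O_+} + \pd{P_1} + \dots + \pd{P_r}.
\]
Finally, reading the order at $O_+$ exactly as in the proof of that lemma gives $m - r = \ord_{O_+}\phi_n = \io{p - \sqrt{D}\,q} > n$, and since $r \ge 0$ this forces $m \ge m - r > n$, so in particular $m \ge n$, completing the verification of all the required properties.

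I do not expect any genuine obstacle here, as every ingredient is already available; the only point worth checking is that the numerator degree $m = \deg p$ grows in step with the approximation quality, and this is immediate from $m - r = \io{p - \sqrt{D}\,q}$ together with $0 \le r \le g$.
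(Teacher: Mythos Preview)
Your argument is correct: you appeal to the linear-algebra existence of convergents from Section \ref{sec:org542977d} and then translate via Lemma \ref{convergent-divisor-lemma}. Every step checks out, including the inequality $m \geq m-r = \io{p - \sqrt{D}\,q} > n$.

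The paper, however, takes the opposite route. It proves the proposition directly from Riemann--Roch: for the divisor $\di{D}_n = (n+g)\,\pd{O_-} - n\,\pd{O_+}$ of degree $g$, Riemann--Roch gives $\ell(\di{D}_n) \geq 1$, so there is a nonzero $\phi_n$ with $\Div\phi_n + \di{D}_n \geq 0$; writing out the effective part and absorbing any occurrences of $O_\pm$ among the resulting $g$ points yields the stated shape with $m \geq n$ and $r \leq \min(g,m)$. The point of doing it this way is that the very next corollary reads ``we now have another proof for the existence of convergents'': the paper wants this proposition to be an \emph{independent} geometric source of convergents, via Lemma \ref{convergent-divisor-lemma} in the reverse direction. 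Your proof, while valid, would make that corollary circular, since you are feeding the linear-system existence result in rather than deriving it.
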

\begin{proof}
For \(n \in \N\) define the divisor
\begin{equation*}
\di{D}_n = (n+g) \, \pd{O_-} - n \, \pd{O_+}.
\end{equation*}
which has degree \(\deg \di{D}_n = g\). Then the Riemann-Roch theorem (see Theorem IV.1.3 in \cite{hartshorne-1977-algebraic-geometry}) implies
\begin{equation*}
\dim \{ \phi \in \mino{\closure\K(X,Y)} \mid \Div \phi + \di{D}_n \geq 0 \} \geq \deg \di{D}_n - g + 1 = 1
\end{equation*}
so there exists \(\phi_n\) with \(\Div \phi_n \geq -\di{D}_n\). More precisely, we get
\begin{equation*}
\Div \phi_n = -(n+g) \, \pd{O_-} + n \pd{O_+} + \pd{P_1} + \dots + \pd{P_g}
\end{equation*}
where \(P_i \in \CC\) (possibly \(O_\pm\)). We can write this as
\begin{equation*}
\Div \phi_n = -m \pd{O_-} + (m-r) \pd{O_+} + \pd{P_{i_1}} + \dots + \pd{P_{i_r}},
\end{equation*}
cancelling out any \(O_-\) among the \(P_i\) (hence \(m \geq n + g - g = n\)) and absorbing any \(O_+\) from the \(P_i\) (hence \(m -r \geq n + g \geq 0\)). And of course \(r \leq g\).
\end{proof}

Via the above lemma, we now have another proof for the existence of convergents:
\begin{cor}
\(\sqrt{D}\) has convergents \((p, q)\) of arbitrarily high \(\deg p\) (or \(\deg q\)).
\end{cor}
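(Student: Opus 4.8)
The plan is to deduce the corollary from the Proposition just proved, together with Lemma \ref{convergent-divisor-lemma}. Fix an integer $n \in \N$. By that Proposition there is $\phi_n \in \mino{\closure\K(X,Y)}$ with
\[
\Div \phi_n = -m \, \pd{O_-} + (m-r) \, \pd{O_+} + \pd{P_1} + \dots + \pd{P_r},
\]
where $m \geq n$, $0 \leq r \leq \min(g,m)$ and $P_1, \dots, P_r \in \CCa$. I would write $\phi_n = p - Y\,q$ with $p, q \in \closure\K(X)$ (uniquely, since $(1,Y)$ is a $\closure\K(X)$-basis of $\closure\K(X,Y)$). Since $m - r \geq 0$ and the $P_i$ are affine points, $\ord_P \phi_n \geq 0$ for every $P \neq O_+, O_-$, so Proposition \ref{order-finpt-polynomial} yields $p, q \in \closure\K[X]$.

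Next I would read off the coefficients of $\pd{O_-}$ and $\pd{O_+}$ in the divisor: $\ord_{O_-}\phi_n = \io{p + \sqrt{D}\,q} = -m$ and $\ord_{O_+}\phi_n = \io{p - \sqrt{D}\,q} = m - r \geq 0$. First, $q \neq 0$: otherwise $\phi_n = p$ would force $\io{p + \sqrt{D}\,q} = \io{p - \sqrt{D}\,q}$, i.e. $-m = m-r$, so $r = 2m$, which with $r \leq \min(g,m) \leq m$ gives $m = 0$, against $m \geq n \geq 1$. Second, applying $\ord_{O_+}$ to $2p = \phi_n + \sigma(\phi_n)$ and using that the two values $m-r$ and $-m$ are distinct (as in the proof of Lemma \ref{convergent-divisor-lemma}), one gets $\io p = \min(-m, m-r) = -m$, that is $\deg p = m$. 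Hence $\Div \phi_n$ is precisely a convergent divisor in the sense of Lemma \ref{convergent-divisor-lemma}, with this same $r \leq \min(g,m)$ and with $m = \deg p > 0$; the lemma then gives $(p,q) \in \Coset{\sqrt{D}}{\closure\K}$, a convergent of $\sqrt{D}$ with $\deg p = m \geq n$. Since furthermore $m = \deg q + g + 1$, also $\deg q = m - g - 1 \geq n - g - 1$. As $n$ was arbitrary, $\sqrt{D}$ has convergents with $\deg p$, and hence $\deg q$, as large as we please.

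I expect no real obstacle: all the substance is in the Proposition (a Riemann--Roch argument) and in Lemma \ref{convergent-divisor-lemma}. The one step meriting a moment's care is the passage from the Riemann--Roch function $\phi_n$ to the normal form $p - Y\,q$ of a convergent divisor --- concretely, the verification that the parameter $m$ in $\Div\phi_n$ coincides with $\deg p$ --- which is exactly the short pole-order comparison at $O_\pm$ carried out above, and already contained in the proof of Lemma \ref{convergent-divisor-lemma}.
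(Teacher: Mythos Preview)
Your proof is correct and follows exactly the route the paper intends: the corollary is stated immediately after the Riemann--Roch proposition with the words ``Via the above lemma, we now have another proof for the existence of convergents,'' and the lemma in question is Lemma \ref{convergent-divisor-lemma}. You have simply unpacked that one-line pointer, carefully verifying that the $m$ in $\Div\phi_n$ really is $\deg p$ (which is needed since the lemma \emph{defines} $m$ as $\deg p$) and that $q\neq 0$; these checks are implicit in the paper but your explicit treatment is entirely appropriate.
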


\begin{thm}
\label{thm-pellian-iff-torsion}
The Pell equation \eqref{pellu} has a non-trivial solution \IFF \(\j{\OO}\) is a torsion point in the Jacobian \(\Jac\) of \(\CC\).
\end{thm}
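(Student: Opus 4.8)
The plan is to translate both implications into the existence of a rational function on $\CC$ whose divisor is a nonzero integer multiple of $\OO = \pd{O_+} - \pd{O_-}$; recall that $\j{\OO}$ is torsion exactly when such a function exists, since $\Jac = \DIV^0(\CC)/\im\Div$.

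First suppose $D$ is Pellian and fix a non-trivial solution $(p,q)$ of \eqref{pellu} with $q \neq 0$. By Proposition \ref{weak-pell-solutions-are-convergents} one of $(p,q)$, $(p,-q)$ is a convergent of $\sqrt{D}$, and replacing $q$ by $-q$ if necessary we may assume $(p,q)$ itself is. Apply Lemma \ref{convergent-divisor-lemma} to $\phi_- = p - Y\,q$: since $\Omega = p^2 - D\,q^2 = \omega \in \units{\K}$ is a nonzero constant, Remark \ref{convergent-omega-degree} gives $r = \deg\Omega = 0$, so \eqref{convergent-divisor-equation} collapses to $\Div\phi_- = m\,\pd{O_+} - m\,\pd{O_-} = m\,\OO$ with $m = \deg p = \deg q + g + 1 \geq 1$. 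Passing to divisor classes yields $m\,\j{\OO} = 0$, so $\j{\OO}$ is torsion.

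Conversely, assume $m\,\j{\OO} = 0$ for some integer $m \geq 1$. Then $m\,\OO$ is principal, say $m\,\OO = \Div\phi$ with $\phi \in \units{\closure{\K}(X,Y)}$. Its support lies in $\{O_+, O_-\}$, so $\phi$ has neither zeros nor poles on $\CCa$; writing $\phi = p + Y\,q$, Proposition \ref{order-finpt-polynomial} forces $p, q \in \closure{\K}[X]$. Since $\sigma$ interchanges $O_+$ and $O_-$, the divisor of $\sigma(\phi)$ equals $m\,\pd{O_-} - m\,\pd{O_+} = -\Div\phi$, hence $\phi\,\sigma(\phi) = p^2 - D\,q^2$ has trivial divisor and is therefore a nonzero constant $\omega \in \units{\closure{\K}}$. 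Moreover $q \neq 0$, for otherwise $\phi = p \in \closure{\K}(X)$ would be $\sigma$-invariant with $\sigma$-invariant divisor, contradicting that $m\,\OO$ is $\sigma$-anti-invariant and nonzero. Thus $(p,q)$ is a non-trivial solution of \eqref{pellu} over $\closure{\K}$.

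It remains to descend the solution to $\K$, which I expect to be the only genuine subtlety — the two implications above being routine divisor bookkeeping resting on Lemma \ref{convergent-divisor-lemma} and Proposition \ref{order-finpt-polynomial}. Two functions with the same divisor differ by a constant, so $\phi$ is determined by $m\,\OO$ up to a factor in $\units{\closure{\K}}$; since $\OO$ is $\K$-rational (indeed $O_\pm$ are $\K$-rational, their coordinate $V$ satisfying $V^2 = \LC(D)$, a square in $\K$), every Galois conjugate $\tau(\phi)$ again has divisor $m\,\OO$, whence $\tau(\phi) = c_\tau\,\phi$ with $c_\tau \in \units{\closure{\K}}$. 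The map $\tau \mapsto c_\tau$ is a $1$-cocycle, hence a coboundary by Hilbert's Theorem 90, so after rescaling $\phi$ by a suitable constant we may take it Galois-invariant; then $p, q \in \K[X]$ and $(p,q)$ is the desired non-trivial solution of \eqref{pellu} over $\K$. Alternatively, one may deduce this descent from the essential uniqueness of the minimal Pell solution implicit in $\solu{D} \iso \units{\K} \times \Z$.
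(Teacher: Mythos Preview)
Your argument is correct and follows the same route as the paper: both directions rest on Lemma~\ref{convergent-divisor-lemma} (or its constituents, Proposition~\ref{order-finpt-polynomial} together with the computation $\phi\,\sigma(\phi)=p^2-D\,q^2$), with Remark~\ref{convergent-omega-degree} supplying $r=\deg\Omega=0$ in the Pellian case. The one substantive difference is your final descent paragraph. The paper sidesteps this entirely: at the start of Section~\ref{sec:org40007f2} it declares that the remainder of the chapter works over $\closure{\K}$, so Theorem~\ref{thm-pellian-iff-torsion} and its proof are implicitly stated over the algebraic closure, and the converse direction simply invokes Lemma~\ref{convergent-divisor-lemma} to read off a convergent $(p,q)\in\Coset{\sqrt{D}}{\closure{\K}}$ with $r=0$. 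Your Hilbert~90 argument (using the $\K$-rationality of $\OO$) is a correct and worthwhile addition if one wants the statement over the original field $\K$; the paper simply does not address it.
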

\begin{proof}
From Proposition \ref{weak-pell-solutions-are-convergents} we know that the Pell solutions (up to conjugation) form a subset of the convergents. By Remark \ref{convergent-omega-degree} it is precisely the non-trivial Pell solutions for which we have \(r = 0\) in Lemma \ref{convergent-divisor-lemma}. 

By Remark \ref{rem-convergent-lemma-jacobian}, this implies \(m \, \j{\OO} = m \, j(O_-) = 0\) with \(m > 0\). In other words, \(\j{\OO}\) is a torsion point in the Jacobian \(\Jac\).

Conversely, if \(\j{\OO}\) is torsion, then there exists some function \(\phi\) with divisor \(\Div \phi = m \, \left( \pd{O_+} - \pd{O_-}\right)\) and \(m > 0\). By Lemma \ref{convergent-divisor-lemma}, we have \(\phi = p - Y \, q\) where \((p, q)\) is a convergent (actually a Pell solution because \(r = 0\)).
\end{proof}
\begin{rem}
Recall that the genus \(g\) corresponds to the dimension of the Jacobian.
For \(g = 0\), the Jacobian is the trivial group, hence \(\j{\OO}\) is trivially torsion. Hence \(D\) is always Pellian as observed before in Corollary \ref{deg-2-always-pellian}.
\end{rem}

\begin{rem}
\label{finite-field-torsion-bound}
If the base field \(\K\) is finite, i.e. \(\K = \F_q\) with \(q\) some prime power, the \(\K\)-rational points of the Jacobian form a finite group.
The Hasse-Weil interval (conjectured by E. Artin in his thesis, then proved by Hasse for elliptic curves \cite{hasse-1936-zur-theorie-1,hasse-1936-zur-theorie-2,hasse-1936-zur-theorie-3}, and generalised by Weil to higher genus curves in \cite{weil-1949-numbers-solutions-equations}) then provides the following bounds for the number of elements of the Jacobian:
\begin{equation*}
\ord( \j{\OO} ) \in [(\sqrt{q} - 1)^{2g}, (\sqrt{q} + 1)^{2g}].
\end{equation*}
\end{rem}
Note that \(\Jac(\F_q)\) can be cyclic (for elliptic curves, see for example \cite{gupta-murty-1990-cyclicity-generation-points}), so we cannot hope to improve this bound for the point \(\j{\OO}\).

\begin{rem}
If \(\K\) is not finite, then as mentioned in the introduction, this torsion condition allows to demonstrate the scarcity of Pellian polynomials. The polynomials of degree \(2d\), after some normalisation, form an affine variety of dimension \(2d-2\). The Pellian polynomials are then contained in a denumerable union of subvarieties of dimension at most \(d-1\), corresponding to the possible torsion orders. See Section 12.2.2 in \cite{zannier-2014-pell-survey} for details.
\end{rem}

\chapter{Continued fractions}
\label{sec:org172eb73}
In this chapter, we develop the theory of polynomial continued fractions, to build a solid foundation for the specialization questions that form the main results of this thesis. Beginning with formal continued fractions, moving on to convergence questions in \(\laurinx \K\) and a classification of the best-approximations, we conclude with a discussion of periodic continued fractions and reducedness which is relevant mostly for hyperelliptic continued fractions.

The first sections reiterate well-known facts about continued fractions in modern language. Already Abel \cite{abel-1826-ueber-integ-differ} and Chebyshev \cite{chebyshev-1857-sur-integration} worked with this type of polynomial continued fractions which they adapted from the numerical continued fraction expansion for square roots. Indeed there are not many differences with the theory of continued fractions for real numbers.

Most results in this chapter may already be found the literature, albeit presented differently. The formal definitions of continued fractions can be found in classical books on continued fractions, for example \cite{perron-1954-lehre-von-den}, \cite{perron-1957-lehre-von-den}, \cite{khintchine-1956-kettenbruche} and others. For polynomial continued fractions, see \cite{abel-1826-ueber-integ-differ}, \cite{berry-1990-periodicity-continued-fractions}, \cite{schmidt-2000-continued-fractions-diophantine} or the survey paper \cite{poorten-tran-2000-quasi-elliptic-integrals}.

As before \(\K\) is a field of characteristic not \(2\).

\section{Finite continued fractions}
\label{sec:orgb81a872}
For our formal continued fractions, we begin by using a double index notation, as this should make some calculations much clearer and precise. We will drop the first index once we no longer need it.
\enlargethispage{1cm}

\begin{defi}
\label{def-finite-continued-fraction}
Let \(m, n \in \Z, n \geq 0\). The expression
\begin{equation*}
\alpha_{m,n} = [a_m, a_{m+1}, \dots, a_{m+n}] = a_{m} + \dfrac{1}{a_{m+1} + \dfrac{1}{ \ddots + \dfrac{1}{a_{m+n}}}}
\end{equation*}
where we consider the \(a_i\) as free variables is called a \emph{finite continued fraction}. We define it recursively by
\begin{equation*}
\alpha_{m,0} = a_m \quad \text{ and } \quad \alpha_{m,n} = a_m + \frac{1}{\alpha_{m+1,n-1}} \text{ for } n \geq 1
\end{equation*}
respectively
\begin{equation*}
[a_m] = a_m \quad \text{ and } \quad [a_m, a_{m+1}, \dots, a_{m+n}] = a_m + \frac{1}{[a_{m+1}, \dots, a_{m+n}]} \text{ for } n \geq 1
\end{equation*}
in the square bracket notation.
\end{defi}

\begin{rem}
\label{cf-concatenation-rule}
By induction one obtains also for \(l \geq 1\)
\begin{equation*}
\alpha_{m,n} = [a_m, a_{m+1}, \dots, a_{m+l-1}, \alpha_{m+l,n-l}],
\end{equation*}
so the concatenation of \([a_m, a_{m+1}, \dots, a_{m+l-1}]\) and \([a_{m+l}, \dots, a_{m+n}]\) is the same as inserting the second at the end of the first finite continued fraction:
\begin{equation*}
[a_m, a_{m+1}, \dots, a_{m+n}] = [a_m, a_{m+1}, \dots, a_{m+l-1}, [a_{m+l}, \dots, a_{m+n}]].
\end{equation*}
\end{rem}

\section{Continued fractions and matrix products}
\label{sec:org932a99d}
Clearly a continued fraction \(\alpha_{m,n}\) can be seen as an element of \(\P^1(\Zamn)\), where the empty continued fraction corresponds to \([\,] = \frac{1}{0} \in \P^1\). This motivates the following viewpoint:

We can think of a finite continued fractions as a map on \(\P^1\), via
\begin{equation*}
x \in \P^1 \mapsto [a_m, \dots, a_{m+n}, x] \in \P^1.
\end{equation*}

We can relate such a map to the natural (left) action of \(\GL{2}{\Zai}\) on \(\P^1\) via Moebius transformations:
\begin{equation*}
x \mapsto \frac{a \, x + b}{c \, x + d} \corresponds \mfour{a}{b}{c}{d}.
\end{equation*}
Then clearly
\begin{equation*}
x \mapsto [a_m, x] = a_m + \frac{1}{x} \corresponds \mcf{a_m}.
\end{equation*}
As concatenation is the same as composition, this extends to
\begin{equation*}
x \mapsto [a_m, \dots, a_{m+n}, x] \corresponds \mcf{a_m} \cdots \mcf{a_{m+n}}.
\end{equation*}

By multiplying out these matrices, we can canonically compute the numerator and denominator of the fraction represented by a finite continued fraction.
\begin{prop}
\label{definition-convergents-matrix}
For every \(m, n \in \Z, n \geq -1\), there exist polynomials \(p_{m,n}, q_{m,n} \in \Zamn\) such that
\begin{equation}
\label{convergents-matrix}
\mcf{a_m} \cdots \mcf{a_{m+n}} = \mfour{p_{m,n}}{p_{m,n-1}}{q_{m,n}}{q_{m,n-1}},
\end{equation}
satisfying \(\ifrac{p_{m,n}}{q_{m,n}} = \alpha_{m,n}\).
\end{prop}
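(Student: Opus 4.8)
The plan is to establish the matrix identity \eqref{convergents-matrix} by induction on $n$ (with $m$ a fixed parameter), \emph{defining} the entries $p_{m,n}$ and $q_{m,n}$ along the way through the familiar three-term recurrence, and then to read off $\ifrac{p_{m,n}}{q_{m,n}} = \alpha_{m,n}$ from the M\"obius-transformation dictionary recorded just above (concatenation is composition).

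For the base of the induction take $n=-1$: the product $\mcf{a_m}\cdots\mcf{a_{m+n}}$ is empty, hence equal to $\mfour{1}{0}{0}{1}$, so I set $p_{m,-1}=1,\ q_{m,-1}=0$ and, one index lower, $p_{m,-2}=0,\ q_{m,-2}=1$ (these serve only to populate the right-hand column of the target matrix). For $n=0$ the product is $\mcf{a_m}=\mfour{a_m}{1}{1}{0}$, which is exactly $\mfour{p_{m,0}}{p_{m,-1}}{q_{m,0}}{q_{m,-1}}$ once I put $p_{m,0}=a_m,\ q_{m,0}=1$. For the inductive step I append the last factor on the right: by associativity
\[
\mcf{a_m}\cdots\mcf{a_{m+n}} \;=\; \Bigl(\mcf{a_m}\cdots\mcf{a_{m+n-1}}\Bigr)\,\mcf{a_{m+n}},
\]
and the bracketed product has $n$ factors with indices from $m$ to $m+n-1$, so the induction hypothesis rewrites it as $\mfour{p_{m,n-1}}{p_{m,n-2}}{q_{m,n-1}}{q_{m,n-2}}$. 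Multiplying by $\mfour{a_{m+n}}{1}{1}{0}$ yields
\[
\mfour{a_{m+n}\,p_{m,n-1}+p_{m,n-2}}{p_{m,n-1}}{a_{m+n}\,q_{m,n-1}+q_{m,n-2}}{q_{m,n-1}},
\]
so defining $p_{m,n}=a_{m+n}\,p_{m,n-1}+p_{m,n-2}$ and $q_{m,n}=a_{m+n}\,q_{m,n-1}+q_{m,n-2}$ makes \eqref{convergents-matrix} hold at level $n$; plainly $p_{m,n},q_{m,n}\in\Zamn$. This closes the induction.

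It remains to evaluate the fraction. I invoke the correspondence recorded before the proposition: $\mcf{a_m}\cdots\mcf{a_{m+n}}$ represents, as a M\"obius transformation on $\P^1$, the map $x\mapsto[a_m,\dots,a_{m+n},x]$. Evaluating at $x=\infty=(1:0)$, the matrix sends $(1:0)$ to its first column $(p_{m,n}:q_{m,n})$, while $[a_m,\dots,a_{m+n},\infty]=[a_m,\dots,a_{m+n}]=\alpha_{m,n}$ since $[a_{m+n},\infty]=a_{m+n}+\tfrac{1}{\infty}=a_{m+n}$ and Remark \ref{cf-concatenation-rule} absorbs the trailing $\infty$; hence $(p_{m,n}:q_{m,n})=\alpha_{m,n}$ in $\P^1$, as claimed (for $n=-1$ this is the convention $\ifrac{1}{0}=[\,]$). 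One could instead run a second induction from $\alpha_{m,n}=a_m+1/\alpha_{m+1,n-1}$ using the left factorisation $\mcf{a_m}\cdot(\mcf{a_{m+1}}\cdots\mcf{a_{m+n}})$, but that forces one first to match the $p,q$ arising from the left- and right-factorisations, so the M\"obius argument is more economical. The whole proof is bookkeeping; the only point needing care is the double-index convention, in particular seeding $p_{m,-1},q_{m,-1},p_{m,-2},q_{m,-2}$ so that the right column of \eqref{convergents-matrix} is already correct at every stage.
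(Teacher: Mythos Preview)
Your proof is correct and follows essentially the same approach as the paper: both use the M\"obius/matrix dictionary and evaluate at $x=(1:0)$ to obtain $\ifrac{p_{m,n}}{q_{m,n}}=\alpha_{m,n}$. The only organizational difference is that you establish \eqref{convergents-matrix} by explicit induction (deriving the three-term recurrence along the way), whereas the paper defines $p_{m,n},q_{m,n}$ directly as the matrix entries and identifies the second column by observing $\mcf{a_{m+n}}\,\tfrac{0}{1}=\tfrac{1}{0}$, postponing the recurrence to a separate corollary.
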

\begin{proof}
Take \(x = [\,] = \frac{1}{0}\) the empty continued fraction, then we define
\begin{equation*}
\frac{p_{m,n}}{q_{m,n}} := \mcf{a_m} \cdots \mcf{a_{m+n}} \frac{1}{0} = [a_m, \dots, a_{m+n}, [\,]] = \alpha_{m,n}
\end{equation*}
where clearly \(p_{m,n}, q_{m,n} \in \Zamn\) because the matrix entries are in that ring. Also note that \(\mcf{a_{m+n}} \dfrac{0}{1} = \dfrac{1}{0}\), so
\begin{equation*}
\mcf{a_m} \cdots \mcf{a_{m+n}} \frac{0}{1} = \mcf{a_m} \cdots \mcf{a_{m+n-1}} \frac{1}{0} = \frac{p_{m,n-1}}{q_{m,n-1}}.
\end{equation*}
\end{proof}

\begin{rem}
Transposing the matrix \(\mfour{p_{m,n}}{p_{m,n-1}}{q_{m,n}}{q_{m,n-1}}\) corresponds to reversing the ordering of the variables \(a_m, \dots, a_{m+n}\); and \(p_{m,n-1}\) depends only on \(a_m, \dots, a_{m+n-1}\), so one easily deduces that \(q_{m,n}\) is independent of \(a_m\): it follows \(q_{m,n} \in \Z[a_{m+1}, \dots, a_{m+n}]\).
\end{rem}

By taking the determinants of the matrix product, we get
\begin{cor}
\label{canonical-convergent-coprime}
For fixed \(m\) and \(n\), we have the relation
\begin{equation}
\label{canonical-convergent-determinant}
p_{m,n} \, q_{m,n-1} - q_{m,n} \, p_{m,n-1} = (-1)^{n+1}.
\end{equation}
Consequently, the \(p_{m,n}\) and \(q_{m,n}\) are coprime.
\end{cor}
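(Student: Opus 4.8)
The plan is simply to take determinants on both sides of the matrix identity \eqref{convergents-matrix} of Proposition \ref{definition-convergents-matrix}. Each factor $\mcf{a_{m+i}}$ is the matrix $\mfour{a_{m+i}}{1}{1}{0}$, whose determinant is $-1$. The left-hand side of \eqref{convergents-matrix} is a product of exactly $n+1$ such factors (for the indices $m, m+1, \dots, m+n$), so by multiplicativity of the determinant its determinant equals $(-1)^{n+1}$. On the other hand, $\det \mfour{p_{m,n}}{p_{m,n-1}}{q_{m,n}}{q_{m,n-1}} = p_{m,n}\,q_{m,n-1} - q_{m,n}\,p_{m,n-1}$. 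Equating the two expressions gives \eqref{canonical-convergent-determinant} at once; the argument is uniform in $n \geq -1$, the case $n=-1$ corresponding to the empty product (identity matrix, determinant $1 = (-1)^0$).

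For the coprimality assertion, observe that \eqref{canonical-convergent-determinant} exhibits a $\Zamn$-linear combination of $p_{m,n}$ and $q_{m,n}$ equal to $(-1)^{n+1}$, which is a unit of $\Zamn$. Hence any common divisor of $p_{m,n}$ and $q_{m,n}$ in $\Zamn$ divides this unit, and is therefore itself a unit; since $\Zamn$ is a polynomial ring over $\Z$ its units are only $\pm 1$, so $p_{m,n}$ and $q_{m,n}$ are coprime. The same reasoning shows they remain coprime under any specialization of the variables $a_i$, e.g. to polynomials in $\K[X]$, because \eqref{canonical-convergent-determinant} is an identity in $\Zamn$ and is preserved by ring homomorphisms.

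There is essentially no obstacle here; the only point requiring a little care is the bookkeeping of the number of matrix factors — it is $n+1$ rather than $n$, which is precisely what produces the sign $(-1)^{n+1}$ in \eqref{canonical-convergent-determinant} — and, for the coprimality conclusion, the (trivial) remark that a polynomial ring over $\Z$ has no units beyond $\pm 1$, so that "divides a unit" indeed forces "is a unit".
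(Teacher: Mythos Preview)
Your proof is correct and follows exactly the approach the paper takes: the corollary is stated immediately after the sentence ``By taking the determinants of the matrix product, we get'', so the paper's intended argument is precisely the determinant computation you carry out, together with the standard B\'ezout-type consequence for coprimality.
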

This holds even if we assign values to the \(a_i\).
The sequences in \(n\) of the \(p_{m,n}\) and \(q_{m,n}\) may also be computed independently:
\begin{cor}
The \(p_{m,n}\) and \(q_{m,n}\) satisfy the recursion relations
\begin{equation}
\label{canonical-convergent-recursion}
\begin{aligned}
p_{m,n} &= a_{m+n} \, p_{m,n-1} + p_{m,n-2} \text{ for } n \geq 0, &p_{m,-1} &= 1, & p_{m,-2} &= 0, \\
q_{m,n} &= a_{m+n} \, q_{m,n-1} + q_{m,n-2} \text{ for } n \geq 1, &q_{m,0}  &= 1, & q_{m,-1} &= 0.
\end{aligned}
\end{equation}
\end{cor}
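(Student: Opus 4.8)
The plan is to read the recursions off directly from the matrix product identity \eqref{convergents-matrix} of Proposition \ref{definition-convergents-matrix}, by peeling off the last factor. For $n \geq 0$ one writes, using associativity of matrix multiplication,
\begin{equation*}
\mfour{p_{m,n}}{p_{m,n-1}}{q_{m,n}}{q_{m,n-1}} = \mcf{a_m} \cdots \mcf{a_{m+n}} = \left( \mcf{a_m} \cdots \mcf{a_{m+n-1}} \right) \mcf{a_{m+n}} = \mfour{p_{m,n-1}}{p_{m,n-2}}{q_{m,n-1}}{q_{m,n-2}} \mfour{a_{m+n}}{1}{1}{0},
\end{equation*}
where the last matrix is the $2\times 2$ form of $\mcf{a_{m+n}}$ and the middle equality applies the matrix formula at level $n-1$ (legitimate since $n-1 \geq -1$). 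Multiplying out the right-hand side and comparing entries column by column then yields
\begin{align*}
p_{m,n} &= a_{m+n}\, p_{m,n-1} + p_{m,n-2}, & q_{m,n} &= a_{m+n}\, q_{m,n-1} + q_{m,n-2},
\end{align*}
the second column giving only the tautologies $p_{m,n-1}=p_{m,n-1}$ and $q_{m,n-1}=q_{m,n-1}$. This is the single computation involved, and it is immediate.

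It then remains to fix the initial data and the ranges of validity. For $n = -1$ the product $\mcf{a_m}\cdots\mcf{a_{m+n}}$ is empty, hence the identity matrix, so $p_{m,-1}=1$, $p_{m,-2}=0$, $q_{m,-1}=0$ (and incidentally $q_{m,-2}=1$). Substituting into the displayed recursions at $n=0$ recovers $p_{m,0}=a_m$ and $q_{m,0}=1$, consistent with $\mcf{a_m}=\mfour{a_m}{1}{1}{0}$; so the $p$-recursion holds for all $n \geq 0$. The $q$-recursion at $n=0$ would additionally require the value $q_{m,-2}$, which is not among the stated initial conditions, so one only asserts it for $n \geq 1$, and there the derivation above applies verbatim (it used only $n \geq 0$ to form the product).

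There is essentially no obstacle: the statement is a bookkeeping consequence of associativity of matrix multiplication applied to \eqref{convergents-matrix}. The one point deserving a word of care is the asymmetry in the lower bounds ($n \geq 0$ for $p$ versus $n \geq 1$ for $q$), which stems solely from which initial values one is prepared to posit; one could instead set $q_{m,-2}=1$ and state both recursions uniformly for $n \geq 0$.
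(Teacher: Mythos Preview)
Your proof is correct and takes essentially the same approach as the paper: the corollary is stated immediately after the matrix identity \eqref{convergents-matrix} without an explicit proof, precisely because it follows by peeling off the last factor $\mcf{a_{m+n}}$ and comparing entries, exactly as you do. Your discussion of the initial conditions and the asymmetry in the ranges for $p$ and $q$ is accurate and more detailed than what the paper spells out.
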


\section{Infinite continued fractions}
\label{sec:org5c8f70a}
To give sense to infinite continued fraction, we need some topology. In our case, we use \(\K[X]\) with the previously defined (non-archimedean) absolute valuation \(\ios = \ord_\infty\) (see Section \ref{sec:org7391b36}). We assume that all \(a_n \in \K[X]\). Then the \(\alpha_{m,n}\) are contained in \(\K(X)\), and we can hope to find a limit in the completion \(\laurinx \K\).

\begin{defi}
We define the \emph{infinite continued fraction}
\begin{equation*}
\alpha_m = \alpha_{m,\infty} = [a_m, a_{m+1}, \dots] = \limn \alpha_{m,n}
\end{equation*}
if the limit exists.
\end{defi}

From now on, we assume that all \(a_n \in \K[X]\), and search for a sufficient condition for the convergence of  \(\folge{\alpha_{m,n}}{n}\).

\begin{prop}
\label{canonical-convergents-degree-and-lc}
If \(\deg a_n \geq 1\) holds for all \(n \geq m+1\), then 
\begin{align}
\label{}
\label{convergent-deg-growth}
\deg{p_{m,n}} &= \sum_{j=0}^n \deg{a_{m+j}}, & \deg{q_{m,n}} &= \sum_{j=1}^n \deg{a_{m+j}}, \\
\label{convergents-leading-coeff}
\LC(p_{m,n}) &= \prod_{j=0}^n \LC(a_{m+j}), & \LC(q_{m,n}) &= \prod_{j=1}^n \LC(a_{m+j}).
\end{align}
\end{prop}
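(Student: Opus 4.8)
The plan is to prove the two pairs of formulas simultaneously by induction on $n$, using the recursion relations \eqref{canonical-convergent-recursion}. The base case is immediate: for $n=0$ we have $p_{m,0}=a_m$ and $q_{m,0}=1$, so the degree and leading-coefficient formulas hold trivially (the empty sum is $0$ and the empty product is $1$, matching $\deg q_{m,0}=0$ and $\LC(q_{m,0})=1$); one should also remark on $n=1$ for $q$, where $q_{m,1}=a_{m+1}$ directly from the recursion. Note that the hypothesis $\deg a_n\geq 1$ for $n\geq m+1$ is what makes $\deg q_{m,n}$ strictly increasing for $n\geq 1$, and similarly for $p_{m,n}$ once $n\geq 1$ (for $n=0$ we only know $\deg p_{m,0}=\deg a_m\geq 0$, which is fine since the formula just reads $\deg p_{m,0}=\deg a_m$).

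For the inductive step, suppose the formulas hold for indices up to $n-1$. From $p_{m,n}=a_{m+n}\,p_{m,n-1}+p_{m,n-2}$, the key point is a degree comparison: by the induction hypothesis
\begin{equation*}
\deg(a_{m+n}\,p_{m,n-1}) = \deg a_{m+n} + \sum_{j=0}^{n-1}\deg a_{m+j} = \sum_{j=0}^{n}\deg a_{m+j},
\end{equation*}
whereas $\deg p_{m,n-2}=\sum_{j=0}^{n-2}\deg a_{m+j}$, which is strictly smaller because the omitted terms $\deg a_{m+n}$ and $\deg a_{m+n-1}$ are each $\geq 1$ (here $n\geq 2$, so $m+n-1\geq m+1$). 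Hence the leading term of $p_{m,n}$ comes entirely from $a_{m+n}\,p_{m,n-1}$, giving both the degree formula and, taking leading coefficients, $\LC(p_{m,n})=\LC(a_{m+n})\,\LC(p_{m,n-1})=\prod_{j=0}^n\LC(a_{m+j})$. The same argument applied to $q_{m,n}=a_{m+n}\,q_{m,n-1}+q_{m,n-2}$ works verbatim, using that $\deg q_{m,n-1}=\sum_{j=1}^{n-1}\deg a_{m+j}$ strictly exceeds $\deg q_{m,n-2}$; one needs $n\geq 2$ so that the relevant degrees $\deg a_{m+n},\deg a_{m+n-1}$ with indices $\geq m+1$ contribute, and the cases $n=0,1$ for $q$ are handled in the base case.

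There is no serious obstacle here; the only thing to be careful about is the bookkeeping of which base cases are needed (the recursion for $q$ starts at $n=1$ with different initial data than that for $p$, which starts at $n=0$), and making sure that when invoking "$\deg a_{m+n-1}\geq 1$" the index $m+n-1$ is indeed $\geq m+1$, i.e. that we are in the range $n\geq 2$ where the strict inequality between the two summands on the right of the recursion is guaranteed. I would simply verify $n=0$ and $n=1$ by hand and then run the induction for $n\geq 2$.
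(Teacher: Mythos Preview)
Your proposal is correct and takes essentially the same approach as the paper. The paper packages the induction as a separate lemma about the generic recursion \(b_{-1}=0,\;b_0=1,\;b_n=a_n b_{n-1}+b_{n-2}\), proving \(\deg b_n=\sum_{j=1}^n\deg a_j\) and \(\LC(b_n)=\prod_{j=1}^n\LC(a_j)\), and then applies it once for \(q_{m,\cdot}\) and once (with an index shift) for \(p_{m,\cdot}\); your direct induction on both sequences with the degree comparison \(\deg(a_{m+n}p_{m,n-1})>\deg p_{m,n-2}\) is the same argument without the abstraction.
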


The proposition is a consequence of the following lemma:
\begin{lemma}
Let \(\folge{a_n}{n}\) a sequence in \(\K[X]\), with \(\deg{a_n} \geq 1\) for all \(n \geq 1\). Define a sequence \(\left(b_n\right)_{n \geq -1}\) via
\begin{equation}
\label{canon-convergent-generalised-recursion-relation}
b_{-1} = 0, \quad b_0 = 1, \quad b_{n} = a_n \, b_{n-1} + b_{n-2} \text{ for } n \geq 1
\end{equation}
Then \(\deg{b_n}\) is strictly increasing and for \(n \geq 0\)
\begin{align*}
\deg{b_n} &= \sum_{j=1}^n \deg{a_j}, & \LC(b_n) &= \prod_{j=1}^n \LC(a_j).
\end{align*}
\end{lemma}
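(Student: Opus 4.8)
The plan is to prove both assertions by a single induction on $n \geq 0$, exploiting the recursion \eqref{canon-convergent-generalised-recursion-relation} together with the hypothesis $\deg a_j \geq 1$ for all $j \geq 1$. The base cases are immediate: for $n = 0$ we have $b_0 = 1$, so $\deg b_0 = 0 = \sum_{j=1}^0 \deg a_j$ and $\LC(b_0) = 1 = \prod_{j=1}^0 \LC(a_j)$ (empty sum and empty product), and the strict-increase claim will be anchored once we also record $\deg b_{-1} = \deg 0$, handled separately below. For $n = 1$, $b_1 = a_1 b_0 + b_{-1} = a_1$, so $\deg b_1 = \deg a_1 \geq 1 > 0 = \deg b_0$, and $\LC(b_1) = \LC(a_1)$, as desired.

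For the inductive step, assume the formulas hold for all indices up to $n-1$ (with $n \geq 2$), and in particular that $\deg b_{n-2} < \deg b_{n-1}$. From $b_n = a_n b_{n-1} + b_{n-2}$ and $\deg a_n \geq 1$ we get
\begin{equation*}
\deg(a_n b_{n-1}) = \deg a_n + \deg b_{n-1} \geq 1 + \deg b_{n-1} > \deg b_{n-1} > \deg b_{n-2},
\end{equation*}
so the term $a_n b_{n-1}$ strictly dominates $b_{n-2}$ in degree. Hence $\deg b_n = \deg a_n + \deg b_{n-1}$ and $\LC(b_n) = \LC(a_n)\LC(b_{n-1})$; substituting the inductive formulas for $\deg b_{n-1}$ and $\LC(b_{n-1})$ gives $\deg b_n = \sum_{j=1}^n \deg a_j$ and $\LC(b_n) = \prod_{j=1}^n \LC(a_j)$. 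Moreover $\deg b_n = \deg a_n + \deg b_{n-1} > \deg b_{n-1}$ since $\deg a_n \geq 1$, which closes the strict-monotonicity claim for $n \geq 1$; the step from $b_{-1}$ to $b_0$ is the trivial observation $\deg b_{-1} = -\infty < 0 = \deg b_0$ (or one simply states the monotonicity for $n \geq 0$, which is all that is used).

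There is no real obstacle here; the only point requiring a little care is bookkeeping with the conventions for the empty sum/product at $n = 0$ and the treatment of $b_{-1} = 0$ (whose degree is $-\infty$, so it never interferes with the dominance argument). Once the lemma is established, Proposition \ref{canonical-convergents-degree-and-lc} follows by applying it twice: once to the shifted sequence $(a_{m+n})_{n}$ to obtain the statements for $q_{m,n}$ (noting $q_{m,-1} = 0$, $q_{m,0} = 1$ match the initial data of $(b_n)$), and once to the sequence obtained by prepending $a_m$ — equivalently, observing that $p_{m,n}$ satisfies the same recursion with $p_{m,-1} = 1$, $p_{m,-2} = 0$, so the same argument (with the degree/leading-coefficient sums now starting at $j = 0$) applies, using $\deg a_m$ possibly $0$ but this only shifts the base case and does not affect the dominance step for $n \geq 1$.
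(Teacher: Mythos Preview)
Your proof is correct and follows essentially the same approach as the paper: an induction on $n$ using that $\deg(a_n b_{n-1}) > \deg b_{n-2}$ forces the dominant term in the recursion, yielding both the degree and leading-coefficient formulas together with the strict monotonicity. The only minor difference is that the paper starts the induction at $n=0$ with the single observation $\deg b_{-1} < \deg b_0 = 0$, whereas you also spell out the $n=1$ case explicitly; the extra paragraph deducing Proposition~\ref{canonical-convergents-degree-and-lc} is not part of the lemma's proof in the paper but is accurate commentary.
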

\begin{proof}
We prove the statement by induction on \(n\). For \(n = 0\) we clearly have \(\deg{b_{-1}} < \deg{b_0} = 0\) and \(\LC(b_0) = 1\). For the induction step, note that by hypothesis
\(\deg{b_{n-2}} < \deg{b_{n-1}} < \degb{a_n \, b_{n-1}}\), so \eqref{canon-convergent-generalised-recursion-relation} implies
\begin{equation*}
\deg{b_n} = \degb{a_n \, b_{n-1} + b_{n-2}} = \deg{a_n} + \deg{b_{n-1}} = \deg{a_n} + \sum_{j=1}^{n-1} \deg{a_j} %= \sum_{j=1}^n \deg{a_j},
\end{equation*}
as desired, and clearly \(\deg{b_{n-1}} < \deg{b_{n}}\). It follows
\begin{equation*}
\LC(b_n) = \LC(a_{n} \, b_{n-1}) = \LC(a_n) \, \prod_{j=1}^{n-1} \LC(a_j).
\end{equation*}
\end{proof}

We can now answer the question about the convergence of infinite continued fractions:
\begin{prop}
\label{cf-cauchy-convergence}
Suppose \(\deg a_n \geq 1\) holds for \(n \geq m+1\). Then \(\folge{\alpha_{m,n}}{n}\) is a Cauchy sequence and converges in \(\laurinx{\K}\). We denote the limit by \(\alpha_m = \alpha_{m,\infty} = [a_m, a_{m+1}, a_{m+2}, \dots]\).
\end{prop}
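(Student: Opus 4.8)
The plan is to show that $(\alpha_{m,n})_n$ is Cauchy by estimating $\ios(\alpha_{m,n+1}-\alpha_{m,n})$ and showing it tends to $+\infty$. Since $\laurinx{\K}$ is complete with respect to $\ios$, convergence follows. The key tool is the matrix identity \eqref{convergents-matrix} together with the determinant relation \eqref{canonical-convergent-determinant}.

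First I would write, for $n\ge 0$,
\begin{equation*}
\alpha_{m,n} = \frac{p_{m,n}}{q_{m,n}},
\end{equation*}
so that
\begin{equation*}
\alpha_{m,n+1} - \alpha_{m,n} = \frac{p_{m,n+1}}{q_{m,n+1}} - \frac{p_{m,n}}{q_{m,n}} = \frac{p_{m,n+1}\,q_{m,n} - p_{m,n}\,q_{m,n+1}}{q_{m,n+1}\,q_{m,n}} = \frac{(-1)^{n+2}}{q_{m,n+1}\,q_{m,n}},
\end{equation*}
using Corollary \ref{canonical-convergent-coprime} (shifted in the second index). Taking the valuation $\ios$ and recalling that $\ios(f) = -\deg f$ for polynomials (Remark \ref{rem-poly-no-poles}), I get
\begin{equation*}
\ios(\alpha_{m,n+1} - \alpha_{m,n}) = \deg q_{m,n+1} + \deg q_{m,n}.
\end{equation*}
By Proposition \ref{canonical-convergents-degree-and-lc} (valid since $\deg a_j \ge 1$ for $j \ge m+1$), $\deg q_{m,n} = \sum_{j=1}^n \deg a_{m+j}$ is strictly increasing in $n$ and unbounded; in particular $\deg q_{m,n} \ge n$, so $\ios(\alpha_{m,n+1}-\alpha_{m,n}) \to \infty$ as $n \to \infty$.

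To conclude that the sequence is Cauchy, I would use the ultrametric inequality: for $n' > n$,
\begin{equation*}
\ios(\alpha_{m,n'} - \alpha_{m,n}) \ge \min_{n \le k < n'} \ios(\alpha_{m,k+1} - \alpha_{m,k}) \ge \deg q_{m,n+1} + \deg q_{m,n},
\end{equation*}
which tends to $\infty$ with $n$. Hence $(\alpha_{m,n})_n$ is Cauchy in $\laurinx{\K}$, and since $\laurinx{\K}$ is the completion of $\K(X)$ with respect to $\ios$ (Section \ref{sec:org7391b36}), it is complete, so the limit $\alpha_m = \alpha_{m,\infty}$ exists in $\laurinx{\K}$. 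There is no serious obstacle here; the only mild subtlety is making sure the determinant relation is invoked with the correct shift of indices (it is $p_{m,n+1}q_{m,n} - q_{m,n+1}p_{m,n} = (-1)^{n+2}$, i.e. the relation \eqref{canonical-convergent-determinant} with $n$ replaced by $n+1$), and noting that $q_{m,n} \ne 0$ for all $n \ge 0$ so that the fractions make sense — this follows from $\deg q_{m,n} \ge 0$ with equality only at $n=0$ where $q_{m,0}=1$.
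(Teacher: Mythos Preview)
Your proof is correct and follows essentially the same approach as the paper: both use the determinant identity \eqref{canonical-convergent-determinant} to compute $\alpha_{m,n}-\alpha_{m,n-1} = (-1)^{n+1}/(q_{m,n-1}q_{m,n})$, invoke Proposition \ref{canonical-convergents-degree-and-lc} to see that $\ios$ of this difference tends to $\infty$, and then use the non-archimedean property to conclude Cauchy. The paper is slightly more terse (it simply remarks that in the non-archimedean setting consecutive differences going to zero already implies Cauchy), while you spell out the ultrametric telescoping explicitly, but the argument is the same.
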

\begin{proof}
Dividing \eqref{canonical-convergent-determinant} by \(q_{m,n-1} \cdot q_{m,n}\) implies
\begin{equation*}
\alpha_{m,n} - \alpha_{m,n-1} = \frac{p_{m,n}}{q_{m,n}} - \frac{p_{m,n-1}}{q_{m,n-1}} = \frac{(-1)^{n+1}}{q_{m,n-1} \cdot q_{m,n}},
\end{equation*}
hence
\begin{equation}
\label{convergent-fraction-difference}
\io{\alpha_{m,n} - \alpha_{m,n-1}} = \deg q_{m,n} + \deg q_{m,n-1} \geq 2n-1
\end{equation}
by Proposition \ref{canonical-convergents-degree-and-lc}. This means the ``distance'' between \(\alpha_{m,n}\) and \(\alpha_{m,n-1}\) converges to \(0\) as \(n \to \infty\). Because we are working with a non-archimedean valuation, this already implies that \(\folge{\alpha_{m,n}}{n}\) is a Cauchy sequence.
\end{proof}

So a continued fraction (with non-constant coefficients \(a_n \in \K[X]\)) produces an element of \(\laurinx \K\) (actually a sequence of elements of \(\laurinx \K\)). In the next section, we will reverse the process and produce a continued fraction for every element of \(\laurinx \K\), thus establishing a bijection between \(\laurinx \K\) and (a subset of) continued fractions over \(\K[X]\).
\section{Continued fraction process}
\label{sec:org34409a6}
We now define a process which produces a continued fraction for elements of \(\laurinx \K\), using the truncation \(\gauss{\cdot}\) from Definition \ref{define-laurent-truncation}. This is mostly analogous to classical continued fractions over \(\Z\), but slightly nicer because here we have a unique truncation operation, and we avoid ambiguity as for example with \([2] = 2 = 1 + \frac{1}{1} = [1, 1]\) in the integer case.

\begin{defi}
Let \(\alpha \in \laurinx \K\). We define the \emph{complete quotients} of \(\alpha\) as the (possibly finite) sequence
\begin{equation}
\label{cf-complete-quotients}
\alpha_0 = \alpha,  \quad \alpha_{n+1} = \frac{1}{\alpha_n - \gauss{\alpha_n}} \quad \text{ for } n \geq 0 \text{ and } \alpha_n \not\in \K[X].
\end{equation}
One defines also the \emph{partial quotients} \(a_n = \gauss{\alpha_n}\) whenever the corresponding complete quotient is defined. As \(\alpha_n = a_n + \inv{\alpha_{n+1}}\), this clearly gives rise to a (finite or infinite) continued fraction
\begin{equation*}
\CF(\alpha) = [a_0, a_1, \dots].
\end{equation*}
\end{defi}

\begin{rem}
\label{cf-absolute-values}
By definition of \(\gauss{\cdot}\) we have always \(\io{\alpha_n - \gauss{\alpha_n}} > 0\) which implies \(\io{\alpha_{n+1}} < 0\) whenever \(\alpha_{n+1}\) is defined. Then \(\io{a_{n+1}} = \io{\alpha_{n+1}} < 0\) which means \(\deg a_{n+1} \geq 1\). So if \(\CF(\alpha)\) is an infinite continued fraction, it converges by Proposition \ref{cf-cauchy-convergence}.
\end{rem}

The Euclidean algorithm works also in the ring \(\K[X]\), establishing a complete correspondence between finite continued fraction and rational functions.
\begin{prop}
\label{cf-euclidean-algorithm}
The continued fraction \(\CF(\alpha)\) is finite \IFF \(\alpha \in \K(X)\).
\end{prop}
\begin{proof}
If \(\CF(\alpha)\) is finite, it produces an element of \(\K(X)\), and obviously \(\alpha = \CF(\alpha)\).

Conversely, assume \(\alpha \in \K(X)\). Write \(\alpha = \frac{r_0}{r_1}\) with \(r_0, r_1 \in \K[X]\) and \(r_1 \neq 0\). In fact, we can write \(\alpha_n = \frac{r_n}{r_{n+1}}\) whenever defined, with \(r_n, r_{n+1} \in \K[X]\).

By Remark \ref{truncation-of-rational} we write \(r_n = a_n \, r_{n+1} + r_{n+2}\) where \(\deg{r_{n+2}} < \deg{r_{n+1}}\) because \(a_n = \gauss{\ifrac{r_n}{r_{n+1}}}\). Hence
\begin{equation*}
\alpha_n = \frac{r_n}{r_{n+1}} = a_n + \frac{r_{n+2}}{r_{n+1}} = a_n + \frac{1}{\alpha_{n+1}}.
\end{equation*}
So in this case, the continued fraction process corresponds to the Euclidean algorithm which is well known to terminate in a finite number of steps; so eventually \(r_{n+1} = 0\) for some \(n\) which means that \(\alpha_n \in \K[X]\) and that consequently \(\CF(\alpha)\) is finite.
\end{proof}
\section{Canonical convergents and classification of best-approximations}
\label{sec:orga882f91}
\begin{defi}
The sequence of \emph{canonical convergents} of \(\alpha\) is defined by
\begin{equation*}
(p_n, q_n) = (p_{0,n}, q_{0,n}) \in \Batest{\K} \quad \text{ for } n \geq -1
\end{equation*}
where we plug the partial quotients into the formulas from Section \ref{sec:orgb81a872}. If \(\CF(\alpha)\) is finite, this sequence is also finite.
\end{defi}
Note that Corollary \ref{canonical-convergent-coprime} implies that \(p_n\) and \(q_n\) are coprime for a given \(n\).
And the canonical convergents are in fact convergents, and we have precise information about their approximation quality:
\begin{prop}
\label{cf-expansion-yields-convergents}
Let \(n \geq 0\). Then unless \(\alpha = \ifrac{p_n}{q_n}\),
\begin{equation*}
\io{p_n - \alpha \, q_n} = \deg{q_{n+1}} = \deg a_{n+1} + \deg q_n > \deg q_n,
\end{equation*}
so \((p_n, q_n) \in \Coset{\alpha}{\K}\).
\end{prop}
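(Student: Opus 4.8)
The plan is to relate $\alpha$ to its complete quotient $\alpha_{n+1}$ via the matrix identity of Proposition~\ref{definition-convergents-matrix}, and then compute the valuation $\io{p_n - \alpha\,q_n}$ directly. First I would note that the hypothesis is exactly what makes $\alpha_{n+1}$ available: since $(p_n,q_n)$ is the $n$-th canonical convergent, the complete quotients $\alpha_0,\dots,\alpha_n$ all exist, i.e.\ $\alpha_0,\dots,\alpha_{n-1}\notin\K[X]$; and if moreover $\alpha_n\in\K[X]$ then $\CF(\alpha)=[a_0,\dots,a_n]$ is finite, whence $\alpha=p_n/q_n$ by Proposition~\ref{cf-euclidean-algorithm}. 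So under the assumption $\alpha\neq p_n/q_n$ we have $\alpha_n\notin\K[X]$, hence $\alpha_{n+1}$ is defined and, by Remark~\ref{cf-absolute-values}, $\io{\alpha_{n+1}}=\io{a_{n+1}}=-\deg a_{n+1}\leq-1$. Iterating $\alpha_k=a_k+1/\alpha_{k+1}$ together with Definition~\ref{def-finite-continued-fraction} (equivalently the concatenation rule of Remark~\ref{cf-concatenation-rule}), I get $\alpha=[a_0,a_1,\dots,a_n,\alpha_{n+1}]$; and since the identity $[a_0,\dots,a_n,x]=\tfrac{p_n\,x+p_{n-1}}{q_n\,x+q_{n-1}}$ of \eqref{convergents-matrix} is a polynomial identity in the free variables $a_0,\dots,a_n,x$, it specialises to $x=\alpha_{n+1}$, giving $\alpha=\tfrac{p_n\,\alpha_{n+1}+p_{n-1}}{q_n\,\alpha_{n+1}+q_{n-1}}$ once we know the denominator is non-zero.

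Next I would compute $\io{q_n\,\alpha_{n+1}+q_{n-1}}$. By Proposition~\ref{canonical-convergents-degree-and-lc} the degrees of the $q_n$ are strictly increasing (with $q_{-1}=0$, so $\deg q_{-1}=-\infty$, in the case $n=0$), hence $\io{q_{n-1}}=-\deg q_{n-1}>-\deg q_n=\io{q_n}>\io{q_n}+\io{\alpha_{n+1}}=\io{q_n\,\alpha_{n+1}}$. The two summands have distinct $\ios$-values, so the ultrametric inequality gives $\io{q_n\,\alpha_{n+1}+q_{n-1}}=\io{q_n\,\alpha_{n+1}}=-\deg q_n-\deg a_{n+1}$, a finite integer; in particular $q_n\,\alpha_{n+1}+q_{n-1}\neq 0$, which justifies the fractional form of $\alpha$ above.

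Now I substitute this expression for $\alpha$ and clear denominators, using the determinant relation \eqref{canonical-convergent-determinant} from Corollary~\ref{canonical-convergent-coprime}:
\[
p_n - \alpha\,q_n = \frac{p_n\,(q_n\,\alpha_{n+1}+q_{n-1}) - q_n\,(p_n\,\alpha_{n+1}+p_{n-1})}{q_n\,\alpha_{n+1}+q_{n-1}} = \frac{p_n\,q_{n-1}-q_n\,p_{n-1}}{q_n\,\alpha_{n+1}+q_{n-1}} = \frac{(-1)^{n+1}}{q_n\,\alpha_{n+1}+q_{n-1}}.
\]
Taking $\io{\cdot}$ and using the valuation computed above yields $\io{p_n-\alpha\,q_n}=\deg q_n+\deg a_{n+1}$, which equals $\deg q_{n+1}$ again by Proposition~\ref{canonical-convergents-degree-and-lc}; since $\deg a_{n+1}\geq 1$ this is $>\deg q_n$, so $(p_n,q_n)$ satisfies \eqref{convergent-condition} and lies in $\Coset{\alpha}{\K}$.

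I expect the only point requiring care to be the passage $\alpha=[a_0,\dots,a_n,\alpha_{n+1}]$ and the attendant Moebius formula: one must check that the matrix computation of Proposition~\ref{definition-convergents-matrix}, stated for free variables over a polynomial ring, remains valid after substituting the Laurent series $\alpha_{n+1}\in\laurinx\K$, and that the resulting denominator $q_n\,\alpha_{n+1}+q_{n-1}$ does not vanish — which is precisely why I carry out the valuation estimate before invoking the fractional form. The boundary case $n=0$ (where $q_{-1}=0$, $q_0=1$) needs no separate argument: the same chain gives $p_0-\alpha\,q_0=-1/\alpha_1$, hence $\io{p_0-\alpha\,q_0}=-\io{\alpha_1}=\deg a_1=\deg q_1>0=\deg q_0$.
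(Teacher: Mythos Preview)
Your proof is correct and rests on the same Moebius relation \(\alpha=[a_0,\dots,a_n,\alpha_{n+1}]\) as the paper's. The computational route differs slightly: after writing \(\alpha=\tfrac{p_n\alpha_{n+1}+p_{n-1}}{q_n\alpha_{n+1}+q_{n-1}}\), you clear denominators and apply the determinant relation \eqref{canonical-convergent-determinant} to obtain \(p_n-\alpha\,q_n=\tfrac{(-1)^{n+1}}{q_n\alpha_{n+1}+q_{n-1}}\), then read off the valuation of the single denominator using Proposition~\ref{canonical-convergents-degree-and-lc}. The paper instead inverts the matrix to get \(\alpha_{n+1}=-\tfrac{p_{n-1}-\alpha q_{n-1}}{p_n-\alpha q_n}\) and telescopes \(\prod_{j=0}^n\alpha_{j+1}=\tfrac{(-1)^{n+1}}{p_n-\alpha q_n}\), summing \(\io{\alpha_j}=-\deg a_j\). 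Your argument is a bit more direct (no telescoping, only one denominator to estimate) and makes the nonvanishing of \(q_n\alpha_{n+1}+q_{n-1}\) explicit before dividing; the paper's telescoping has the advantage of yielding the formula \eqref{complete-quotient-convergent-quotient} along the way, which is reused later in the text.
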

\begin{rem}
If \(\alpha = \ifrac{p_n}{q_n}\), then clearly \(\io{p_n - \alpha \, q_n} = \infty > \deg q_n\) and obviously \((p_n, q_n) \in \Coset{\alpha}{\K}\).
\end{rem}
\begin{proof}
Unless \(\CF(\alpha) = [a_0, \dots, a_n]\) is finite of length exactly \(n+1\) which directly implies \(p_n - \alpha \, q_n = 0\) by the Proposition \ref{cf-euclidean-algorithm}, we have
\begin{equation*}
\alpha = [a_0, \dots, a_{n}, \alpha_{n+1}]
\quad \text{ i.e. }
\alpha = \mfour{p_n}{p_{n-1}}{q_n}{q_{n-1}} \cdot \alpha_{n+1}.
\end{equation*}
Multiplying with the inverse matrix, we get the important formula
\begin{equation}
\label{complete-quotient-convergent-quotient}
% \tag{Q}
\alpha_{n+1} = (-1)^{n+1} \, \mfour{q_{n-1}}{-p_{n-1}}{-q_n}{p_n} \cdot \alpha= -\frac{p_{n-1} - \alpha \, q_{n-1}}{p_n - \alpha \, q_n}.
\end{equation}
Recall that \(p_{-1} = 1, q_{-1} = 0\), so a telescoping product yields
\begin{equation*}
(-1)^{n+1} \prod_{j=0}^{n} \alpha_{j+1} = \prod_{j=0}^{n} \frac{p_{j-1} - \alpha \, q_{j-1}}{p_j - \alpha \, q_j} = \frac{1}{p_n - \alpha \, q_n}.
\end{equation*}
Taking valuations, note that \(\io{\alpha_j} = \io{a_j} = - \deg a_j\) for \(j \geq 1\), hence
\begin{equation*}
\io{p_n - \alpha \, q_n} = -\sum_{j=1}^{n+1} \io{\alpha_j} = \sum_{j=1}^{n+1} \deg a_j = \deg a_{n+1} + \deg q_n = \deg q_{n+1},
\end{equation*}
the last two equalities being a consequence of Proposition \ref{canonical-convergents-degree-and-lc}.
\end{proof}

\pagebreak
\begin{prop}
The continued fraction of \(\alpha\) represents \(\alpha\) as an element of \(\laurinx \K\), i.e.
\begin{equation*}
\alpha = \CF(\alpha) \text{ in } \laurinx \K.
\end{equation*}
\end{prop}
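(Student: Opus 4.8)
The plan is to reduce everything to the approximation estimate already established in Proposition~\ref{cf-expansion-yields-convergents}, combined with the fact that $\ios$ makes $\laurinx\K$ a complete valued field, hence Hausdorff, so that limits are unique.

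First I would dispose of the finite case. If $\CF(\alpha) = [a_0,\dots,a_n]$ is finite, then by Proposition~\ref{cf-euclidean-algorithm} we have $\alpha \in \K(X)$, and the continued fraction process is nothing but the Euclidean algorithm; unwinding $\alpha_n = \gauss{\alpha_n} \in \K[X]$ back up through $\alpha_j = a_j + \inv{\alpha_{j+1}}$ yields $\alpha = [a_0,\dots,a_n] = \ifrac{p_n}{q_n}$ as an identity in $\K(X)$. Hence $\alpha = \CF(\alpha)$ with no convergence issue.

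For the infinite case, I would show that the sequence of canonical convergents $\alpha_{0,n} = \ifrac{p_n}{q_n}$ converges to $\alpha$ in $\laurinx\K$. Note first that $\alpha \neq \ifrac{p_n}{q_n}$ for every $n$, since otherwise $\alpha \in \K(X)$ would force $\CF(\alpha)$ finite by Proposition~\ref{cf-euclidean-algorithm}. By Remark~\ref{cf-absolute-values} every $a_j$ with $j \geq 1$ has degree $\geq 1$, so $\deg q_{n+1} \geq n+1$ by Proposition~\ref{canonical-convergents-degree-and-lc}; combined with Proposition~\ref{cf-expansion-yields-convergents} and $\alpha_{0,n} - \alpha = \ifrac{(p_n - \alpha\,q_n)}{q_n}$ this gives
\[
\io{\alpha_{0,n} - \alpha} = \io{p_n - \alpha\,q_n} + \deg q_n \geq \io{p_n - \alpha\,q_n} = \deg q_{n+1} \geq n+1,
\]
which tends to $\infty$ as $n \to \infty$; thus $\alpha_{0,n} \to \alpha$. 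On the other hand, $\alpha_{0,n} \to \CF(\alpha)$ by the very definition of the infinite continued fraction, this limit existing by Proposition~\ref{cf-cauchy-convergence}. Uniqueness of limits in $\laurinx\K$ then forces $\alpha = \CF(\alpha)$.

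I do not anticipate a genuine obstacle here: the statement is essentially bookkeeping on top of Proposition~\ref{cf-expansion-yields-convergents}. The only points needing a little care are treating the finite case as a bona fide identity in $\K(X)$ rather than a limit, and observing that the two sequences being compared both live in $\laurinx\K$, so that uniqueness of limits legitimately applies.
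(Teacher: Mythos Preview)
Your proposal is correct and follows essentially the same approach as the paper: split into the finite case (handled via Proposition~\ref{cf-euclidean-algorithm}) and the infinite case, where Proposition~\ref{cf-expansion-yields-convergents} gives $\io{p_n - \alpha\,q_n} = \deg q_{n+1} \to \infty$, forcing $\ifrac{p_n}{q_n} \to \alpha$. The paper compresses this into one line; your version just spells out the limit argument and the uniqueness step explicitly.
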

\begin{proof}
For \(\alpha \in \K(X)\), this is was mentioned in the proof of Proposition \ref{cf-euclidean-algorithm}. Otherwise, \(\alpha \not \in \K(X)\), and from Proposition \ref{cf-expansion-yields-convergents} we conclude \(\alpha = \limn \frac{p_n}{q_n} = \CF(\alpha)\) as \(\limn \deg{q_n} = \infty\).
\end{proof}

With this information about the approximation quality of the canonical convergents, we can now give a complete classification of the best-approximations.

\begin{prop}[Classification of best-approximations]
\label{cf-best-approx-classification}
Let \(\alpha \in \laurinx \K \setminus \K(X)\), and \((p, q) \in \Baset{\alpha}{\K}\) a best-approximation. Then there exist a unique \(n \in \N_0\) and \(r \in \mino{\K[X]}\) with \(\deg r < \deg a_{n+1}\) such that
\begin{equation*}
(p, q) = r \cdot (p_n, q_n).
\end{equation*}
In particular, if \(p\) and \(q\) are coprime, then \(r \in \units \K\).

Moreover, if \((p', q') = r' \cdot (p_{n'}, q_{n'}) \in \Baset{\alpha}{\K}\) is another best-approximation with \(\deg q < \deg q'\), then \(n \leq n'\).
\end{prop}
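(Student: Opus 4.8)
The plan is to pin down, for the given best-approximation $(p,q)$, the unique index $n \in \N_0$ with $\deg q_n \le \deg q < \deg q_{n+1}$. Such an $n$ exists because $\alpha \notin \K(X)$ makes $\CF(\alpha)$ infinite (Proposition \ref{cf-euclidean-algorithm}), so by Proposition \ref{canonical-convergents-degree-and-lc} the degrees $\deg q_m$ form a strictly increasing sequence starting at $\deg q_0 = 0$ and tending to $\infty$, while $\deg q \ge 0$; strict monotonicity makes $n$ (and then $r$) unique. The whole proposition then reduces to showing $\ifrac{p}{q} = \ifrac{p_n}{q_n}$: since $p_n, q_n$ are coprime (Corollary \ref{canonical-convergent-coprime}) and $q \neq 0$, this forces $(p,q) = r \cdot (p_n, q_n)$ for a unique $r \in \mino{\K[X]}$ with $\deg r = \deg q - \deg q_n$, which lies in $[0, \deg a_{n+1})$ because $\deg q_{n+1} = \deg q_n + \deg a_{n+1}$; and if $p, q$ are coprime then $r$ divides their (unit) gcd, so $r \in \units{\K}$.

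To establish $\ifrac{p}{q} = \ifrac{p_n}{q_n}$ I would split according to the size of $\io{p - \alpha q}$ (which is finite, as $\alpha \notin \K(X)$). If $\io{p - \alpha q} \le \deg q_n$, then since $(p_n, q_n) \in \Coset{\alpha}{\K}$ Proposition \ref{cf-expansion-yields-convergents} gives $\io{p_n - \alpha q_n} = \deg q_{n+1} > \deg q_n \ge \io{p - \alpha q}$ together with $\deg q_n \le \deg q$, so the defining universal property of the best-approximation $(p,q)$, applied to the competitor $(p_n, q_n)$, yields $\ifrac{p_n}{q_n} = \ifrac{p}{q}$. If instead $\io{p - \alpha q} > \deg q_n$, I would use the identity $p\,q_n - q\,p_n = (p - \alpha q)\,q_n - q\,(p_n - \alpha q_n)$, obtained — as in the proof of Proposition \ref{best-approx-common-factor-degree} — by multiplying $\mfour{p}{p_n}{q}{q_n}$ on the left by $\mfour{1}{-\alpha}{0}{1}$ and comparing determinants. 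Taking the valuation $\ios$ and using the ultrametric inequality bounds $\io{p\,q_n - q\,p_n}$ below by $\min(\io{p-\alpha q} - \deg q_n,\ \deg q_{n+1} - \deg q)$; the first term is positive by the case hypothesis and the second because $\deg q < \deg q_{n+1}$, so $\io{p\,q_n - q\,p_n} > 0$, and since $p\,q_n - q\,p_n \in \K[X]$ it must vanish (Remark \ref{rem-poly-no-poles}), giving $\ifrac{p}{q} = \ifrac{p_n}{q_n}$ once more.

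For the final monotonicity clause, write $(p', q') = r' \cdot (p_{n'}, q_{n'})$ with $\deg r' < \deg a_{n'+1}$, so $\deg q' < \deg q_{n'+1}$. If $n \ge n' + 1$, then by strict monotonicity of the $\deg q_m$ we would have $\deg q \ge \deg q_n \ge \deg q_{n'+1} > \deg q'$, contradicting $\deg q < \deg q'$; hence $n \le n'$.

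I expect the only real decision point to be the choice of the dichotomy on $\io{p - \alpha q}$: it is set up precisely so that one branch is settled purely by the defining universal property of best-approximations and the other purely by the determinant/degree bookkeeping, each then a one-line computation. Everything else — existence and uniqueness of $n$, extracting $r$ and bounding $\deg r$, the coprime case, and the monotonicity clause — is routine once $\ifrac{p}{q} = \ifrac{p_n}{q_n}$ is in hand.
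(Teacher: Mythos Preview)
Your proof is correct, but it follows a different path from the paper's. The paper argues by \emph{existence plus uniqueness}: it first invokes Proposition~\ref{best-approx-common-factor-degree} to show that for every possible value of $\deg q$ one can manufacture a best-approximation of the form $r\cdot(p_n,q_n)$ with $\deg r<\deg a_{n+1}$, and then appeals to Proposition~\ref{best-approx-for-given-degree} to conclude that \emph{every} best-approximation of that degree must be of this shape. Your route is more direct: you fix $n$ by the degree of $q$ and then establish $p/q=p_n/q_n$ via a dichotomy, one branch handled by the universal property and the other by the determinant identity (which is essentially the computation inside the proof of Proposition~\ref{best-approx-common-factor-degree}). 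The paper's argument is more modular, cashing in on propositions already proved; yours is self-contained but re-derives part of that machinery inline. Note, incidentally, that your first branch is in fact vacuous: once $p/q=p_n/q_n$ one has $\io{p-\alpha q}=\deg q_{n+1}-\deg r>\deg q_n$, so the case $\io{p-\alpha q}\le\deg q_n$ never occurs for a best-approximation in this degree range --- though of course this does not affect correctness.
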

\begin{proof}
With the sufficient condition for a best-approximation from Proposition \ref{best-approx-common-factor-degree} applied to \((p_n, q_n)\) and \(\xi = \deg a_{n+1}\), we see that for every possible \(\deg q\) we can produce a best-approximation of the shape \(r \cdot (p_n, q_n)\), with any \(r \in \K[X]\) satisfying \(0 \leq \deg r < \deg a_{n+1}\). Then by Proposition \ref{best-approx-for-given-degree}, all best-approximations have this shape. Because \(p_n\) and \(q_n\) are always coprime, and \(\deg q_n\) is strictly increasing in \(n\), no canonical convergent can be written as a multiple of another, so \(n\) must be unique.

Finally, the monotony result is obvious from \(\deg{q_n} \leq \degb{r \, q_n} < \deg q_{n+1}\).
\end{proof}
\begin{rem}
If \(\alpha \in \K(X)\), this argument works just as well, except for the last canonical convergent. However, if we put ``\(\deg a_{n+1} = \infty\)'', the statement trivially holds even for the last canonical convergent.
\end{rem}

For completeness, we also give the analogue for convergents (applying Proposition \ref{convergent-common-factor-degree} instead of Proposition \ref{best-approx-common-factor-degree}):
\begin{cor}
\label{cf-convergent-classification}
Let \(\alpha \in \laurinx \K \setminus \K(X)\), and \((p, q) \in \Coset{\alpha}{\K}\) a convergent. Then there exist \(n \in \N_0\) and \(r \in \mino{\K[X]}\) with \(\deg r < \frac{1}{2} \deg a_{n+1}\) such that
\begin{equation*}
(p, q) = r \cdot (p_n, q_n),
\end{equation*}
and if \(p\) and \(q\) are coprime, then \(r \in \units \K\).
\end{cor}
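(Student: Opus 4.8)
The plan is to mimic the proof of the best-approximation classification (Proposition \ref{cf-best-approx-classification}), but feed in the sharper degree bound for convergents. First I would invoke Corollary \ref{convergents-are-best-approx}, which says $\Coset{\alpha}{\K} \subset \Baset{\alpha}{\K}$, so the given convergent $(p,q)$ is in particular a best-approximation. Proposition \ref{cf-best-approx-classification} then already yields a (unique) $n \in \N_0$ and $r \in \mino{\K[X]}$ with $(p,q) = r \cdot (p_n, q_n)$ and $\deg r < \deg a_{n+1}$. The only thing left is to improve $\deg r < \deg a_{n+1}$ to $\deg r < \tfrac{1}{2} \deg a_{n+1}$.

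For that I would apply Proposition \ref{convergent-common-factor-degree} directly to the canonical convergent $(p_n, q_n)$. Proposition \ref{cf-expansion-yields-convergents} gives $\io{p_n - \alpha\,q_n} = \deg a_{n+1} + \deg q_n$, i.e. the quantity $\xi$ of Proposition \ref{convergent-common-factor-degree} is exactly $\deg a_{n+1}$ here. Since by hypothesis $r \cdot (p_n, q_n) = (p,q)$ is a convergent, the ``only if'' direction of Proposition \ref{convergent-common-factor-degree} forces $\deg r < \xi/2 = \tfrac12 \deg a_{n+1}$, as claimed. (One should note this uses $\alpha \notin \K(X)$ only to guarantee $\xi < \infty$; if $\alpha \in \K(X)$ the last convergent must be handled separately, but that case is excluded by assumption.)

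Finally, for the coprime case: we have $p = r\,p_n$ and $q = r\,q_n$, and Corollary \ref{canonical-convergent-coprime} tells us $p_n$ and $q_n$ are coprime. If moreover $p$ and $q$ are coprime, then $r$ divides $\gcd(p,q) = 1$, so $r \in \units{\K[X]} = \units{\K}$.

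I do not expect any genuine obstacle here: the statement is essentially Proposition \ref{cf-best-approx-classification} with ``best-approximation'' replaced by ``convergent'' throughout, and the two supporting degree lemmas (Proposition \ref{best-approx-common-factor-degree} versus Proposition \ref{convergent-common-factor-degree}) differ only by the factor $\tfrac12$ in the degree bound. The one point to be careful about is to apply Proposition \ref{convergent-common-factor-degree} to $(p_n, q_n)$ (where $\xi = \deg a_{n+1}$ is exact and finite) rather than to $(p,q)$ itself, and to record the coprimality of $p_n, q_n$ when deducing $r \in \units{\K}$.
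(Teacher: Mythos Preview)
Your proposal is correct and follows exactly the approach the paper indicates: it reduces to the best-approximation classification (Proposition \ref{cf-best-approx-classification}) and then sharpens the degree bound on $r$ by invoking Proposition \ref{convergent-common-factor-degree} with $\xi = \deg a_{n+1}$, which is precisely what the paper's parenthetical ``applying Proposition \ref{convergent-common-factor-degree} instead of Proposition \ref{best-approx-common-factor-degree}'' is pointing to.
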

\section{Multiplication of a continued fraction by a constant}
\label{sec:org976a8fd}
One nice feature of polynomial continued fractions is that it is possible to multiply them with a constant factor. In \cite{schmidt-2000-continued-fractions-diophantine}, there is even a generalisation of this identity which holds also for non-constant factors. We limit ourselves to constants, however.

\begin{prop}
\label{cf-scalar-multiplication}
Let \(\mu \in \units \K\). Then
\begin{equation*}
\mu \, [a_0, a_1, a_2, a_3, \dots] = [\mu \, a_0, \inv \mu \, a_1, \mu \, a_2, \inv \mu \, a_3, \dots ].
\end{equation*}
\end{prop}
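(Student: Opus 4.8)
The plan is to recognise the right-hand side as \emph{exactly} the continued fraction produced by applying the continued fraction process of Section~\ref{sec:org34409a6} to $\mu\,\alpha$ itself, so that the identity becomes a tautology once one knows that $\CF$ of an element represents that element. The single fact needed to make this work is that truncation commutes with multiplication by a unit constant:
\[
\gauss{\mu\,\beta} = \mu\,\gauss{\beta} \qquad \text{for all } \beta\in\laurinx\K,\ \mu\in\units\K.
\]
This is immediate from the characterisation of $\gauss{\cdot}$ in Remark~\ref{truncation-unique}: since $\mu$ is a non-zero constant we have $\io{\mu}=0$, hence $\io{\mu\beta - \mu\gauss{\beta}} = \io{\beta-\gauss{\beta}} > 0$, and $\mu\gauss{\beta}\in\K[X]$, so $\mu\gauss{\beta}$ is the (unique) polynomial part of $\mu\beta$.

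Next I would track the complete quotients. Write $\alpha_n$, $a_n$ for the complete and partial quotients of $\alpha$, set $\beta = \mu\,\alpha$, and denote by $\beta_n$, $b_n$ those of $\beta$. I claim $\beta_n = \mu^{(-1)^n}\alpha_n$ for every $n$ for which $\alpha_n$ is defined, by induction on $n$. The base case is $\beta_0 = \mu\,\alpha_0$. For the step, note first that $\alpha_n$ is a polynomial precisely when $\beta_n$ is (as $\mu$ is a unit), so the process for $\beta$ stops exactly when the one for $\alpha$ does; and assuming $\beta_n = \mu^{(-1)^n}\alpha_n$,
\[
b_n = \gauss{\beta_n} = \mu^{(-1)^n}\gauss{\alpha_n} = \mu^{(-1)^n} a_n, \qquad
\beta_{n+1} = \frac{1}{\beta_n - b_n} = \frac{1}{\mu^{(-1)^n}(\alpha_n - a_n)} = \mu^{(-1)^{n+1}}\alpha_{n+1}.
\]
Hence $\CF(\mu\,\alpha) = [\mu\, a_0,\ \inv\mu\, a_1,\ \mu\, a_2,\ \dots]$, and since $\mu\,\alpha = \CF(\mu\,\alpha)$ in $\laurinx\K$ this is precisely the asserted identity. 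The same computation works verbatim in the finite case $\alpha\in\K(X)$.

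An equivalent route, closer to the matrix formalism of Section~\ref{sec:org932a99d}, is to push a diagonal matrix through the product $\mcf{a_0}\cdots\mcf{a_n}$ using the one-line identities
\[
\mfour{\mu}{0}{0}{1}\mcf{a} = \mcf{\mu\, a}\mfour{1}{0}{0}{\mu}, \qquad
\mfour{1}{0}{0}{\mu}\mcf{a} = \mcf{\inv\mu\, a}\mfour{\mu}{0}{0}{1};
\]
iterating them moves $\mfour{\mu}{0}{0}{1}$ from the far left to the far right, rescaling $a_k$ by $\mu^{(-1)^k}$ along the way, and since a diagonal matrix fixes $\frac{1}{0}\in\P^1$, evaluating the resulting matrix identity at $\frac{1}{0}$ yields $\mu\,[a_0,\dots,a_n] = [\mu\, a_0,\dots,\mu^{(-1)^n}a_n]$; one then passes to the limit, using that multiplication by the constant $\mu$ preserves $\ios$ and is therefore continuous on $\laurinx\K$, together with Proposition~\ref{cf-cauchy-convergence} to guarantee the right-hand side converges. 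I expect no genuine obstacle in either argument: the only points demanding a little care are the alternating exponents $(-1)^n$ (equivalently, the alternating diagonal factor), and the trivial-but-necessary remark that rescaling by a unit constant changes neither degrees nor $\ios$, so that the right-hand side really is a well-formed, convergent continued fraction.
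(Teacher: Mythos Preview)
Your proposal is correct. The paper's proof is precisely your second route: it pushes the diagonal matrix through the product using the identities you state (the paper writes them as $\mdi{\mu}{1}\mcf{a}=\mcf{\mu a}\mdi{1}{\mu}$ and $\mdi{1}{\mu}\mcf{a}=\mcf{\inv\mu a}\mdi{\mu}{1}$), obtains the finite identity $\mu\,[a_0,\dots,a_n]=[\mu a_0,\inv\mu a_1,\dots]$ by evaluating at $[\,]=\tfrac{1}{0}$, and leaves the passage to the limit implicit.

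Your first route via the complete quotients is a genuinely different and somewhat cleaner argument: once you know $\gauss{\mu\beta}=\mu\gauss{\beta}$, the induction $\beta_n=\mu^{(-1)^n}\alpha_n$ shows directly that the continued fraction \emph{process} applied to $\mu\alpha$ outputs the claimed partial quotients, so no separate limit step is needed (it is absorbed into the already-proved fact $\mu\alpha=\CF(\mu\alpha)$). The only tacit point in this approach is that for an arbitrary sequence $(a_n)$ with $\deg a_n\ge1$ for $n\ge1$, the limit $\alpha=[a_0,a_1,\dots]$ indeed has $\CF(\alpha)$ recovering exactly those $a_n$; this is immediate from $\io{[a_{n+1},a_{n+2},\dots]}<0$, but worth a word. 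The matrix argument, by contrast, works at the level of finite formal continued fractions and so applies verbatim without first identifying the sequence as coming from the CF process.
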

\begin{proof}
Again, it is convenient to think of the continued fraction as a product of matrices:
\begin{align*}
\mdi{\mu}{1} \mcf{a} &= \mfour{\mu \, a}{\mu}{1}{} = \mcf{\mu \, a} \mdi{1}{\mu}, \\
\mdi{1}{\mu} \mcf{a} &= \mfour{a}{1}{\mu}{} = \mcf{\invfrac{a}{\mu}} \mdi{\mu}{1}.
\end{align*}
As multiplication by \(\mu\) corresponds to \(\mdi{\mu}{1}\) and division by \(\mu\) corresponds to \(\mdi{1}{\mu}\), we obtain for \(n\) even
\begin{equation*}
\mdi{\mu}{1} \mcf{a_0}  \cdots  \mcf{a_n} = \mcf{\mu \, a_0} \mcf{\invfrac{a_1}{\mu}} \cdots \mcf{\mu \, a_n} \mdi{1}{\mu}
\end{equation*}
and for \(n\) odd
\begin{equation*}
\mdi{\mu}{1} \mcf{a_0} \cdots  \mcf{a_n} = \mcf{\mu \, a_0} \mcf{\invfrac{a_1}{\mu}} \cdots \mcf{\invfrac{a_n}{\mu}} \mdi{\mu}{1}
\end{equation*}
so the corresponding map would be for \(n\) even
\begin{equation*}
x \mapsto [\mu \, a_0, \invfrac{a_1}{\mu}, \dots, \mu \, a_n, \invfrac{x}{\mu}]
\end{equation*}
and for \(n\) odd
\begin{equation*}
x \mapsto [\mu \, a_0, \invfrac{a_1}{\mu}, \dots, \invfrac{a_n}{\mu}, \mu \, x]
\end{equation*}
as desired -- because for the empty continued fraction, we have \(\mu \cdot [\,] = \frac{\mu}{0} = \frac{1}{0} = [\,]\).
\end{proof}
\section{Periodic continued fractions}
\label{sec:org3afbb4f}
For classical continued fractions, it is a well-known result that continued fractions of quadratics are always periodic. As in the real case, a periodic polynomial continued fraction must be quadratic. However, a continued fraction of a quadratic need not be periodic in the polynomial case. For \(\sqrt{D}\) this in fact happens \IFF \(D\) is Pellian which we will prove in Section \ref{sec:org8d34125}.

Indeed periodicity gives a solution of \eqref{pellu} with \(\omega = \pm 1\) (this follows from \eqref{cf-sn-pell-eq}). But if the base field \(\K\) is very small, allowing arbitrary \(\omega\) may give a solution with smaller \(\deg q\). So one should not merely study periodicity, but periodicity up to a constant factor. We call this \emph{quasi-periodicity} (sometimes it is also called pseudo-periodicity in the literature). For the continued fraction of \(\sqrt{D}\), the period and the quasi-period are tightly linked, and one induces the other.

Later in Chapter \ref{sec:org5f9d2ce}, we will also see that quasi-periodicity is the more relevant notion for studying reductions of the continued fraction modulo a prime.

\subsection{Periods}
\label{sec:org8b4d61a}
\begin{defi}
\label{cf-periodicity-partial-quotients}
The (infinite) continued fraction \(\alpha_m = \alpha_{m,\infty} = [a_m, a_{m+1}, \dots]\) is said to be \emph{periodic} if for some \(m' \geq m\) there exists \(l \in \N\) (the minimal such \(l\) is called the \emph{period length}) such that
\begin{equation*}
\forall n \geq m': \; a_{n} = a_{n+l}
\end{equation*}
If \(m' = m\), the continued fraction is called \emph{pure periodic}, i.e. there is no preperiod. For compact notation, we usually write ``\(\CF(\alpha_m)\) is (pure) periodic''.
\end{defi}

From a computational view, this definition is somewhat problematic because there is an infinite number of conditions to check. Fortunately, this can be reduced to a single condition on the complete quotients.

\begin{prop}
\label{cf-periodicity-complete-quotients}
\TFAE
\begin{enumerate}
\item The continued fraction \(\CF(\alpha_m)\) is periodic.
\item There exist \(m' \geq m\) and \(l \in \N\) such that \(\alpha_{m'} = \alpha_{m'+l}\).
\item There exist \(m' \geq m\) and \(l \in \N\) such that for all \(n \geq m': \; \alpha_n = \alpha_{n+l}\).
\end{enumerate}
\end{prop}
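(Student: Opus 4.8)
This is a triple equivalence; the natural route is the cycle $(1) \Rightarrow (2) \Rightarrow (3) \Rightarrow (1)$. First I would record a preliminary remark that is used throughout: since periodicity is only defined for an \emph{infinite} continued fraction, Proposition \ref{cf-euclidean-algorithm} gives $\alpha_m \notin \K(X)$, so the continued fraction process never terminates; every complete quotient $\alpha_n$ with $n \geq m$ is defined, lies outside $\K[X]$, and satisfies $\gauss{\alpha_n} = a_n$. This is what lets me apply the truncation operator and the recursion \eqref{cf-complete-quotients} freely.

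The implications $(2) \Rightarrow (3)$ and $(3) \Rightarrow (1)$ are short and purely mechanical. For $(3) \Rightarrow (1)$ I would simply apply $\gauss{\cdot}$ to $\alpha_n = \alpha_{n+l}$, obtaining $a_n = a_{n+l}$ for all $n \geq m'$, which is Definition \ref{cf-periodicity-partial-quotients}. For $(2) \Rightarrow (3)$ I would induct on $n \geq m'$: the base case $n = m'$ is the hypothesis, and if $\alpha_n = \alpha_{n+l}$ then also $a_n = \gauss{\alpha_n} = \gauss{\alpha_{n+l}} = a_{n+l}$, whence
\[
\alpha_{n+1} = \frac{1}{\alpha_n - a_n} = \frac{1}{\alpha_{n+l} - a_{n+l}} = \alpha_{(n+1)+l}
\]
by \eqref{cf-complete-quotients}, completing the step.

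The only implication requiring the machinery built earlier is $(1) \Rightarrow (2)$, and this is where I expect the one subtle point. The idea is that the complete quotients of $\alpha_n$ are exactly $\alpha_n, \alpha_{n+1}, \dots$ with partial quotients $a_n, a_{n+1}, \dots$, so by the proposition stating $\alpha = \CF(\alpha)$ (just after Proposition \ref{cf-expansion-yields-convergents}) one has the genuine identity $\alpha_n = [a_n, a_{n+1}, \dots]$ in $\laurinx \K$. Given periodicity, $a_{m'+l+k} = a_{m'+k}$ for all $k \geq 0$, so $[a_{m'}, a_{m'+1}, \dots]$ and $[a_{m'+l}, a_{m'+l+1}, \dots]$ are the \emph{same} formal continued fraction; being limits of the same Cauchy sequence of finite convergents (Proposition \ref{cf-cauchy-convergence}), they agree in $\laurinx \K$, i.e. $\alpha_{m'} = \alpha_{m'+l}$. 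The main obstacle here is not deep but a matter of care: one must not argue purely formally with partial quotients, but genuinely pass through the convergence statement in $\laurinx \K$ and the identity $\alpha = \CF(\alpha)$ in order to know that equal partial-quotient sequences force equal complete quotients.
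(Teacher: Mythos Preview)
Your proof is correct and follows essentially the same route as the paper: the same cycle $(1)\Rightarrow(2)\Rightarrow(3)\Rightarrow(1)$, with $(3)\Rightarrow(1)$ via truncation, $(2)\Rightarrow(3)$ by induction on the recursion \eqref{cf-complete-quotients}, and $(1)\Rightarrow(2)$ by invoking $\alpha_n = [a_n,a_{n+1},\dots]$ together with periodicity of the partial quotients. Your version is if anything slightly more explicit about the convergence step underlying $(1)\Rightarrow(2)$, but the argument is the same.
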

\begin{proof}
Because \(a_n = \gauss{\alpha_n}\), 3. directly implies 1.

On the other hand, \(\alpha_{m'}\) is uniquely determined by \(a_{m'}, a_{{m'}+1}, \dots\), and by periodicity of the \(a_n\) one obtains
\begin{equation*}
\alpha_{{m'}+l} = [a_{{m'}+l}, a_{{m'}+l+1}, \dots] = [a_{m'}, a_{{m'}+1}, \dots] = \alpha_{m'}.
\end{equation*}
so 1. implies 2.

But through the continued fraction process, \(\alpha_{n+1}\) is uniquely determined by \(\alpha_n\) for every \(n\), so
\begin{equation*}
\alpha_n = \alpha_{n+l} \implies \alpha_{n+1} = \alpha_{n+l+1}.
\end{equation*}
and by the induction principle, 2. implies 3.
\end{proof}

\subsection{Quasi-periods}
\label{sec:org7617d8f}
We now generalise periodicity to quasi-periodicity which is essentially periodicity up to a unit factor. For cleaner notation, we first define
\begin{equation*}
\parity{n} = (-1)^n = \begin{cases}
1 & \text{ if } n \text{ is even,}\\
-1 & \text{ if } n \text{ is odd}
\end{cases}.
\end{equation*}

\begin{defi}
\label{cf-quasi-periodicity-partial-quotients}
The (infinite) continued fraction \(\alpha_{m}\) is called \emph{quasi-periodic}, if there exists \(m' \geq m\), \(\mu \in \units \K\) and \(l > 0\) (if minimal, called the \emph{quasi-period length}) such that
\begin{equation*}
\forall n \geq m': \; a_{n} = \mu^{\parity{n}} \, a_{n+l}.
\end{equation*}
If \(m' = m\), then it is called \emph{pure quasi-periodic}.
\end{defi}
\begin{rem}
Any periodic continued fraction is also quasi-periodic, with \(\mu = 1\). See below for a partial converse.
\end{rem}

\begin{rem}
\label{quasi-period-length-ideal}
It should be obvious that the \(l \in \Z\) such that \(\alpha_n / \alpha_{n+l} \in \units\K\) form an ideal, and the (quasi-)period length is the positive generator of it.

In particular, the period length must be a multiple of the quasi-period length.
\end{rem}

We also have a complete analogue to Proposition \ref{cf-periodicity-complete-quotients}:
\begin{prop}
\label{cf-quasi-periodicity-complete-quotients}
\TFAE
\begin{enumerate}
\item The continued fraction \(\CF(\alpha_m)\) is quasi-periodic.
\item There exist \(m' \geq m\), \(\mu \in \units \K\) and \(l > 0\) such that \(\alpha_{m'} = \mu^{\parity{m'}} \, \alpha_{m'+l}\).
\item There exist \(m' \geq m\), \(\mu \in \units \K\) and \(l > 0\) such that for all \(n \geq m' : \; \alpha_{n} = \mu^{\parity{n}} \, \alpha_{n+l}\).
\end{enumerate}
\end{prop}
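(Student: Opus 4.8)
The plan is to mimic the proof of Proposition \ref{cf-periodicity-complete-quotients}, carrying the unit factor $\mu$ through at each step. There are three implications to establish: $3 \Rightarrow 1$, $1 \Rightarrow 2$, and $2 \Rightarrow 3$.

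First I would prove $3 \Rightarrow 1$: this is immediate, since $a_n = \gauss{\alpha_n}$ and the truncation is $\K$-linear (Remark \ref{truncation-of-sum} together with the obvious $\gauss{\mu \, \beta} = \mu \, \gauss{\beta}$ for $\mu \in \units\K$), so $\alpha_n = \mu^{\parity{n}} \alpha_{n+l}$ forces $a_n = \mu^{\parity{n}} a_{n+l}$ for all $n \geq m'$, which is exactly quasi-periodicity of the partial quotients.

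Next, $1 \Rightarrow 2$: assuming $a_n = \mu^{\parity{n}} a_{n+l}$ for all $n \geq m'$, I want to compare $\alpha_{m'}$ and $\alpha_{m'+l}$. Since $\alpha_{m'} = [a_{m'}, a_{m'+1}, \dots]$ and $\alpha_{m'+l} = [a_{m'+l}, a_{m'+l+1}, \dots]$, I would invoke Proposition \ref{cf-scalar-multiplication} (multiplication of a continued fraction by a constant): if $m'$ is even, then $\mu \cdot \alpha_{m'+l} = \mu \cdot [a_{m'+l}, a_{m'+l+1}, \dots] = [\mu \, a_{m'+l}, \inv\mu \, a_{m'+l+1}, \dots] = [a_{m'}, a_{m'+1}, \dots] = \alpha_{m'}$, using that $\parity{m'+j} = \parity{j}$ when $m'$ is even so the signs line up with the pattern $\mu, \inv\mu, \mu, \dots$ in Proposition \ref{cf-scalar-multiplication}; if $m'$ is odd one gets $\inv\mu \cdot \alpha_{m'+l} = \alpha_{m'}$ by the same computation with the roles of $\mu$ and $\inv\mu$ swapped. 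In both cases this reads $\alpha_{m'} = \mu^{\parity{m'}} \alpha_{m'+l}$, which is statement 2.

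Finally, $2 \Rightarrow 3$: from $\alpha_{m'} = \mu^{\parity{m'}} \alpha_{m'+l}$ I need to propagate the relation to all larger indices. Here the recursion $\alpha_{n+1} = \inv{(\alpha_n - \gauss{\alpha_n})}$ behaves well under scaling by a unit: if $\alpha_n = \mu^{\parity{n}} \alpha_{n+l}$, then $\gauss{\alpha_n} = \mu^{\parity{n}} \gauss{\alpha_{n+l}}$, hence $\alpha_n - \gauss{\alpha_n} = \mu^{\parity{n}}(\alpha_{n+l} - \gauss{\alpha_{n+l}})$, and taking reciprocals gives $\alpha_{n+1} = \mu^{-\parity{n}} \alpha_{n+l+1} = \mu^{\parity{n+1}} \alpha_{n+1+l}$ since $-\parity{n} = \parity{n+1}$. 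By induction starting at $n = m'$, statement 3 follows for all $n \geq m'$.

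I expect the only real subtlety to be bookkeeping the sign $\parity{n}$ versus $\parity{m'+n}$ when applying Proposition \ref{cf-scalar-multiplication} in the $1 \Rightarrow 2$ step — one has to check the parity of the shift $m'$ matches the alternating pattern of $\mu$ and $\inv\mu$ correctly, and handle the even and odd cases of $m'$ separately. Everything else is a routine transcription of the proof of Proposition \ref{cf-periodicity-complete-quotients} with the unit factor inserted, using that $\gauss{\cdot}$ commutes with multiplication by elements of $\units\K$.
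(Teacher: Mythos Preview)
Your proof is correct and follows exactly the approach the paper takes: the paper's proof simply notes that, using Proposition \ref{cf-scalar-multiplication}, the identity $\gauss{\mu\,\alpha} = \mu\,\gauss{\alpha}$ for $\mu \in \units\K$, and the implication $\alpha_n = \mu\,\alpha_{n+l} \Rightarrow \alpha_{n+1} = \inv\mu\,\alpha_{n+l+1}$, the argument is completely analogous to that of Proposition \ref{cf-periodicity-complete-quotients}. You have spelled out precisely those three implications in detail, with the parity bookkeeping done correctly.
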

\begin{proof}
Using Proposition \ref{cf-scalar-multiplication}, and \(\gauss{\mu \, \alpha} = \mu \, \gauss{\alpha}\) for \(\mu \in \units \K\), and
\begin{equation*}
\alpha_n = \mu \, \alpha_{n+l} \implies \alpha_{n+1} = \inv \mu \,\alpha_{n+l+1},
\end{equation*}
the proof is completely analogous to the one of Proposition \ref{cf-periodicity-complete-quotients}.
\end{proof}

\begin{prop}
\label{odd-quasi-period-implies-periodic}
If \(\CF(\alpha_m)\) is quasi-periodic with \emph{odd} quasi-period length \(l\) and \(\mu \neq 1\), then \(\CF(\alpha_m)\) is also periodic with period length \(2 \, l\).
\end{prop}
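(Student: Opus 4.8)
The plan is to work with the complete quotients rather than the partial quotients. First I would invoke Proposition \ref{cf-quasi-periodicity-complete-quotients} to rephrase the hypothesis as: there exist \(m' \geq m\), \(\mu \in \units\K\) with \(\mu \neq 1\), and the minimal quasi-period length \(l\) (odd) such that \(\alpha_n = \mu^{\parity{n}}\,\alpha_{n+l}\) for all \(n \geq m'\). The key computation is then to iterate this identity once: since \(n + l \geq m'\) as well, the relation applies again and gives \(\alpha_n = \mu^{\parity{n}+\parity{n+l}}\,\alpha_{n+2l}\). Because \(l\) is odd, \(\parity{n+l} = -\parity{n}\), so the exponent of \(\mu\) vanishes and \(\alpha_n = \alpha_{n+2l}\) for all \(n \geq m'\). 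By Proposition \ref{cf-periodicity-complete-quotients} this already shows that \(\CF(\alpha_m)\) is periodic; what remains is to pin down the period length exactly.

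For the period length, write \(L\) for it. On one hand, the set of integers \(l'\) with \(\alpha_n = \alpha_{n+l'}\) for all sufficiently large \(n\) is a subgroup of \(\Z\) whose positive generator is \(L\) (the same elementary argument as in Remark \ref{quasi-period-length-ideal}), and \(2l\) belongs to it, so \(L \mid 2l\). On the other hand, Remark \ref{quasi-period-length-ideal} says the period length is a multiple of the quasi-period length, so \(l \mid L\). The only divisors of \(2l\) that are multiples of \(l\) are \(l\) and \(2l\), hence \(L \in \{l, 2l\}\). To rule out \(L = l\), I would observe that \(L = l\) forces \(\alpha_n = \alpha_{n+l}\), which combined with \(\alpha_n = \mu^{\parity{n}}\,\alpha_{n+l}\) yields \(\alpha_n = \mu^{\parity{n}}\,\alpha_n\); since \(\io{\alpha_n} < 0\) for \(n \geq 1\) (Remark \ref{cf-absolute-values}) we have \(\alpha_n \neq 0\), so \(\mu^{\parity{n}} = 1\), i.e. \(\mu = 1\), contradicting the hypothesis. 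Therefore \(L = 2l\).

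I do not expect a genuine obstacle: the sign manipulation with \(\parity{n}\) is a one-liner, and the two divisibility facts about \(L\) are immediate from Remark \ref{quasi-period-length-ideal}. The only point to be careful about is conceptual rather than technical — one must not confuse merely exhibiting \(2l\) as \emph{a} period with showing it is the \emph{minimal} one; it is precisely the contradiction argument using \(\mu \neq 1\) that upgrades "\(2l\) is a period" to "\(2l\) is the period length".
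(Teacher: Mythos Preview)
Your proof is correct and follows essentially the same approach as the paper: iterate the quasi-periodicity relation once and use \(\parity{n+l} = -\parity{n}\) for odd \(l\) to cancel the factor \(\mu\). The paper carries this out with the partial quotients \(a_n\) directly from Definition \ref{cf-quasi-periodicity-partial-quotients}, whereas you pass to complete quotients via Proposition \ref{cf-quasi-periodicity-complete-quotients}; either route works. You also add a short argument that the period length is exactly \(2l\) rather than \(l\), which the paper's proof leaves implicit from the assumption \(\mu \neq 1\).
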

\begin{proof}
For all \(n \geq m'\), we have \(a_n = \mu^{\parity{n}} \, a_{n+l}\) and \(a_{n+l} = \mu^{\parity{n+l}} \, a_{n+2 l}\). As \(l\) is odd, we have \(\parity{n+l} = - \parity{n}\) so \(a_n = \mu^{\parity{n}+\parity{n+l}} \; a_{n+2 l} = a_{n+2 l}\).
\end{proof}

\begin{rem}
\label{quasi-period-length-shift-invariant}
The (quasi-)period length was above defined as the minimal \(l\), and does not depend on where the (quasi-)period starts, so two complete quotients \(\alpha_{m_1}\) and \(\alpha_{m_2}\) have the same (quasi-)period length.
\end{rem}

\begin{prop}
If \(\CF(\alpha)\) is quasi-periodic, then \(\alpha \in \laurinx \K\) is quadratic over \(\K(X)\) (it cannot be in \(\K(X)\) because it has an infinite continued fraction).
\end{prop}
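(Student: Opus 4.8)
The plan is to reduce the whole question to the single complete quotient at which quasi-periodicity first occurs, and to write down an explicit quadratic equation satisfied by it over $\K(X)$. First I would invoke Proposition \ref{cf-quasi-periodicity-complete-quotients}~(2) to get an index $m' \geq 0$, a unit $\mu \in \units\K$ and a length $l \geq 1$ with $\alpha_{m'} = \mu^{\parity{m'}}\,\alpha_{m'+l}$; setting $\nu = \mu^{-\parity{m'}} \in \units\K$ this reads $\alpha_{m'+l} = \nu\,\alpha_{m'}$. By Remark \ref{cf-concatenation-rule} we have $\alpha_{m'} = [a_{m'},a_{m'+1},\dots,a_{m'+l-1},\alpha_{m'+l}]$, and Proposition \ref{definition-convergents-matrix} expresses the right-hand side as a Möbius transformation of $\alpha_{m'+l}$ with coefficients in $\K[X]$:
\[
\alpha_{m'} = \frac{p_{m',l-1}\,\alpha_{m'+l} + p_{m',l-2}}{q_{m',l-1}\,\alpha_{m'+l} + q_{m',l-2}}.
\]

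Next I would substitute $\alpha_{m'+l} = \nu\,\alpha_{m'}$ and clear denominators, obtaining
\[
q_{m',l-1}\,\nu\,\alpha_{m'}^2 + \bigl(q_{m',l-2} - p_{m',l-1}\,\nu\bigr)\,\alpha_{m'} - p_{m',l-2} = 0,
\]
a quadratic relation for $\alpha_{m'}$ over $\K(X)$. To see it is a genuine quadratic I need its leading coefficient $q_{m',l-1}\,\nu$ to be non-zero: the partial quotients $a_{m'+1},a_{m'+2},\dots$ all have degree $\geq 1$ by Remark \ref{cf-absolute-values}, so Proposition \ref{canonical-convergents-degree-and-lc} gives $\deg q_{m',l-1} = \sum_{j=1}^{l-1}\deg a_{m'+j}$, a finite quantity, whence $q_{m',l-1} \neq 0$ (for $l=1$ this is simply $q_{m',0}=1$). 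Since a quasi-periodic continued fraction is by definition infinite, $\alpha_{m'} \notin \K(X)$ by Proposition \ref{cf-euclidean-algorithm}; therefore the displayed quadratic is, up to a scalar, the minimal polynomial of $\alpha_{m'}$ over $\K(X)$, and $[\K(X)(\alpha_{m'}):\K(X)] = 2$.

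Finally I would transfer this back to $\alpha$ itself. Applying Proposition \ref{definition-convergents-matrix} to $\alpha = [a_0,\dots,a_{m'-1},\alpha_{m'}]$ yields $\alpha = \frac{p_{m'-1}\,\alpha_{m'} + p_{m'-2}}{q_{m'-1}\,\alpha_{m'} + q_{m'-2}} \in \K(X)(\alpha_{m'})$ (trivially if $m'=0$), so $\K(X) \subseteq \K(X)(\alpha) \subseteq \K(X)(\alpha_{m'})$; as $\alpha \notin \K(X)$ and the outer extension has degree $2$, the middle field must coincide with the right-hand one, and $\alpha$ is quadratic over $\K(X)$.

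All the computations above are routine; the only genuine care points are the bookkeeping of the exponent of $\mu$ when passing from $\alpha_{m'} = \mu^{\parity{m'}}\alpha_{m'+l}$ to $\alpha_{m'+l} = \nu\,\alpha_{m'}$, and verifying that the exhibited quadratic does not degenerate to a linear relation — i.e. that $q_{m',l-1}\neq 0$ — which is what actually forces the degree to be exactly $2$ rather than $1$. I expect this (mild) non-degeneracy check to be the main obstacle.
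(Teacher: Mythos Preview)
Your proof is correct and follows essentially the same route as the paper: both use the matrix/Möbius relation $\alpha_{m'} = [a_{m'},\dots,a_{m'+l-1},\alpha_{m'+l}]$ together with $\alpha_{m'+l} = \nu\,\alpha_{m'}$ to write down the quadratic $q_{m',l-1}\nu\,\alpha_{m'}^2 + (q_{m',l-2}-p_{m',l-1}\nu)\alpha_{m'} - p_{m',l-2} = 0$, and note $q_{m',l-1}\neq 0$. The only cosmetic difference is that the paper phrases the reduction to $\alpha_{m'}$ as ``it suffices to treat the pure quasi-periodic case,'' whereas you spell out explicitly that $\alpha \in \K(X)(\alpha_{m'})$ and conclude by comparing degrees.
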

\begin{proof}
From Section \ref{sec:org932a99d} we know
\begin{equation*}
\alpha_{m} = \mfour{p_{m,n}}{p_{m,n-1}}{q_{m,n}}{q_{m,n-1}} \cdot \alpha_{m+n+1} = \frac{p_{m,n} \, \alpha_{m+n+1} + p_{m,n-1}}{q_{m,n} \, \alpha_{m+n+1} + q_{m,n-1}}
\end{equation*}
so it suffices to treat the case where \(\CF(\alpha)\) is pure quasi-periodic, i.e. \(\alpha_l = \mu \, \alpha\). Then putting \(m = 0\) and \(n = l-1\) the above becomes
\begin{equation*}
\alpha = \mfour{p_{l-1}}{p_{l-2}}{q_{l-1}}{q_{l-2}} \cdot \mu \, \alpha = \frac{p_{l-1} \, \mu \, \alpha + p_{l-2}}{q_{l-1} \, \mu \, \alpha + q_{l-2}}.
\end{equation*}
Multiplying with the denominator, we then obtain
\begin{equation*}
q_{l-1} \, \mu \, \alpha^2 + (q_{l-2} - p_{l-1} \, \mu) \, \alpha - p_{l-2} = 0
\end{equation*}
and of course \(q_{l-1} \neq 0\) so \(\alpha\) is quadratic over \(\K(X)\).
\end{proof}
\section{Reduced complete quotients}
\label{sec:org6896ed2}
Our next goal is to understand the continued fraction expansion of \(\sqrt{D}\) better. We will explain how we can usually go backwards in this continued fraction. This means we can only have very short preperiods (here just \(a_0\) belongs to the preperiod), and allows to show that for \(\CF(\sqrt{D})\), quasi-periodicity is equivalent to periodicity, also in the case of \emph{even} quasi-period length.

In this case, the complete quotients are contained in the quadratic extension \(\HEF\) of \(\K(X)\) contained in \(\laurinx{\K}\). It has precisely one non-trivial \(\K(X)\)-automorphism \(\sigma\) which sends \(\sqrt{D}\) to \(-\sqrt{D}\). As we have chosen \(\sqrt{D} \in \laurinx \K\), we have an embedding of \(\HEF\) into \(\laurinx \K\).

\begin{defi}
\(\alpha \in \HEF\) is said to be \emph{\(\sigma\)-reduced} (with \(\sigma\) as above), if
\begin{equation*}
% \tag{$\sigma$}
\io{\sigma(\alpha)} > 0 > \io{\alpha}.
\end{equation*}
\end{defi}

\begin{rem}
All elements of \(\K(X)\) are invariant under \(\sigma\), so none of them is \(\sigma\)-reduced.
\end{rem}

\begin{prop}
\label{sigma-reduced-no-translations}
Let \(\alpha \in \HEF \setminus \K(X)\). Then there exists at most one \(a \in \K[X]\) such that \(a+\alpha\) is \(\sigma\)-reduced.
\end{prop}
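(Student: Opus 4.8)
The plan is to compare two candidate translates via their $\sigma$-conjugates. Suppose, for $a, b \in \K[X]$, that both $a + \alpha$ and $b + \alpha$ are $\sigma$-reduced; the goal is to deduce $a = b$. First I would apply $\sigma$ to each: since $\sigma$ fixes $\K(X)$ (hence each polynomial) pointwise, $\sigma(a + \alpha) = a + \sigma(\alpha)$ and $\sigma(b + \alpha) = b + \sigma(\alpha)$, and the defining inequality of $\sigma$-reducedness gives $\io{a + \sigma(\alpha)} > 0$ and $\io{b + \sigma(\alpha)} > 0$.

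Next I would subtract these two Laurent series. Their difference is $a - b \in \K[X]$, and the ultrametric inequality yields $\io{a - b} \geq \min\bigl(\io{a + \sigma(\alpha)},\, \io{b + \sigma(\alpha)}\bigr) > 0$. But a polynomial with positive $\ord_\infty$ must vanish by \eqref{poly-no-poles} in Remark \ref{rem-poly-no-poles}, so $a - b = 0$, i.e. $a = b$. This completes the argument.

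I do not expect any genuine obstacle here: the only ``trick'' is to look at the conjugates $a + \sigma(\alpha)$ and $b + \sigma(\alpha)$ rather than at $a + \alpha$ and $b + \alpha$ themselves — after that it is a one-line valuation estimate. Note that only the condition $\io{\sigma(\cdot)} > 0$ of reducedness is used; the other half $0 > \io{\cdot}$ is irrelevant for uniqueness. (The hypothesis $\alpha \notin \K(X)$ is in fact not needed for the statement as phrased — if $\alpha \in \K(X)$ then $\sigma(a + \alpha) = a + \alpha$ and no translate of $\alpha$ is $\sigma$-reduced, so ``at most one'' holds vacuously — but it reflects the setting in which the lemma will be applied.)
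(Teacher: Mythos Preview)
Your proof is correct and follows essentially the same approach as the paper: both use that $\sigma(a+\alpha)=a+\sigma(\alpha)$ and the positivity of $\io{\cdot}$ on the conjugate side to force uniqueness of $a$. The only cosmetic difference is that the paper invokes Remark~\ref{truncation-unique} to identify $a=-\gauss{\sigma(\alpha)}$ directly, whereas you unfold that remark by subtracting two candidates and applying Remark~\ref{rem-poly-no-poles}.
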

\begin{proof}
Clearly \(\sigma(a+\alpha) = a+ \sigma(\alpha)\). Assume \(\io{\sigma(a + \alpha)} = \io{a + \sigma(\alpha)} > 0\), then by Remark \ref{truncation-unique} \(a = - \gauss{\sigma(\alpha)}\) so there is at most one choice for \(a\).
\end{proof}
Note that this choice of \(a\) does not yet guarantee \(\io{a + \alpha} < 0\).

\begin{prop}
\label{sigma-reduced-all-complete-quotients}
If the complete quotient \(\alpha_m\) is \(\sigma\)-reduced, then so is \(\alpha_{m'}\) for all \(m' \geq m\).
\end{prop}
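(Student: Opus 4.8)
The plan is to prove the statement by induction on $m'$, so that everything reduces to a single step: if $\alpha_m$ is $\sigma$-reduced, then $\alpha_{m+1}$ is defined and $\sigma$-reduced. First I would record that a $\sigma$-reduced element is never $\sigma$-invariant, so it cannot lie in $\K(X)$, and in particular $\alpha_m \notin \K[X]$; hence $a_m = \gauss{\alpha_m} \in \K[X]$ and $\alpha_{m+1} = \frac{1}{\alpha_m - a_m}$ is genuinely defined. Since $\io{\alpha_m} < 0$, we have $\io{a_m} = \io{\alpha_m} < 0$, and the continued fraction process gives $\io{\alpha_{m+1}} = -\io{\alpha_m - a_m} < 0$ directly (Remark \ref{cf-absolute-values}), which is one of the two inequalities defining $\sigma$-reducedness.

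For the other inequality, the key point is that $\sigma$ fixes the polynomial $a_m \in \K[X]$, so $\sigma(\alpha_{m+1}) = \frac{1}{\sigma(\alpha_m) - a_m}$. By the $\sigma$-reducedness hypothesis $\io{\sigma(\alpha_m)} > 0$, while $\io{a_m} < 0$; as these two valuations are distinct, the ultrametric inequality becomes an equality, $\io{\sigma(\alpha_m) - a_m} = \min(\io{\sigma(\alpha_m)}, \io{a_m}) = \io{a_m} < 0$. In particular $\sigma(\alpha_m) - a_m \neq 0$, so $\sigma(\alpha_{m+1})$ is well-defined, and $\io{\sigma(\alpha_{m+1})} = -\io{a_m} = \deg a_m \geq 1 > 0$. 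Combined with the previous paragraph this shows $\alpha_{m+1}$ is $\sigma$-reduced; since it is then again not $\sigma$-invariant, the complete-quotient sequence continues and the induction goes through, proving the claim for all $m' \geq m$.

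The argument is short once the bookkeeping is set up, and I do not expect any serious obstacle. The two things to be careful about are the use of the \emph{strict} form of the ultrametric inequality (legitimate precisely because $\io{\sigma(\alpha_m)}$ and $\io{a_m}$ have opposite signs) and the fact that $\sigma$ acts trivially on $a_m$, since $a_m \in \K[X] \subset \K(X)$ and $\sigma$ fixes $\K(X)$ pointwise. No information about $D$ or the field $\HEF$ is needed beyond the existence of this involution.
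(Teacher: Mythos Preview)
Your proof is correct and follows essentially the same route as the paper: reduce by induction to the single step $m' = m+1$, get $\io{\alpha_{m+1}} < 0$ from Remark~\ref{cf-absolute-values}, and use $\sigma(a_m) = a_m$ together with the strict ultrametric inequality (since $\io{\sigma(\alpha_m)} > 0 > \io{a_m}$) to obtain $\io{\sigma(\alpha_{m+1})} = -\io{a_m} > 0$. The paper's version is more terse and writes $-\io{\alpha_m}$ where you write $-\io{a_m}$, but these are equal, and your added remarks about well-definedness are harmless elaboration.
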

\begin{proof}
Using the induction principle, it suffices to treat the case \(m' = m+1\). By Remark \ref{cf-absolute-values}, we automatically have \(\io{\alpha_{m+1}} < 0\). Moreover,
\begin{equation*}
\sigma(\alpha_{m+1}) = \frac{1}{\sigma(\alpha_m) - a_m}
\end{equation*}
and \(\io{a_m} = \io{\alpha_m} < \io{\sigma(\alpha_m)}\) implies \(\io{\sigma(\alpha_{m+1})} = -\io{\alpha_m} > 0\) as desired.
\end{proof}

\pagebreak
\begin{lemma}
\label{sigma-reduced-cfsb}
\(\alpha\) is \(\sigma\)-reduced \IFF \(\cfsb{\alpha}\) is \(\sigma\)-reduced.
\end{lemma}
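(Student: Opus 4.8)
The plan is to reduce the statement to a one-line manipulation of the valuation $\ios$. Recall that $\cfsb{\alpha}$ is obtained from $\alpha$ by conjugating and inverting, say $\cfsb{\alpha} = -1/\sigma(\alpha)$ (any scalar or sign convention is irrelevant for what follows). Since $\alpha$ is assumed to lie in $\HEF \setminus \K(X)$, so does $\cfsb{\alpha}$: the automorphism $\sigma$ preserves this set, and inversion is legitimate because an element of $\HEF \setminus \K(X)$ is nonzero. Hence the notion of $\sigma$-reducedness is meaningful for both $\alpha$ and $\cfsb{\alpha}$, and nothing is vacuous.

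Next I would compute the two orders that enter the definition of $\sigma$-reduced for $\cfsb{\alpha}$. Using only that $\ios$ is a valuation (so $\io{1/x} = -\io{x}$ and $\io{-x} = \io{x}$) and that $\sigma$ is an involutive field automorphism, one gets
\begin{align*}
\io{\cfsb{\alpha}} &= \io{-1/\sigma(\alpha)} = -\io{\sigma(\alpha)}, \\
\io{\sigma(\cfsb{\alpha})} &= \io{-1/\sigma(\sigma(\alpha))} = \io{-1/\alpha} = -\io{\alpha}.
\end{align*}

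Then the equivalence falls out just by negating inequalities: $\alpha$ is $\sigma$-reduced, i.e.\ $\io{\sigma(\alpha)} > 0 > \io{\alpha}$, holds if and only if $-\io{\sigma(\alpha)} < 0 < -\io{\alpha}$, i.e.\ $\io{\cfsb{\alpha}} < 0 < \io{\sigma(\cfsb{\alpha})}$, which is exactly the assertion that $\cfsb{\alpha}$ is $\sigma$-reduced. I do not expect a genuine obstacle here; the only points requiring a little care are that $\sigma(\cfsb{\alpha})$ collapses back to $-1/\alpha$, which uses $\sigma^2 = \mathrm{id}$, and that one must \emph{not} invoke any compatibility of $\sigma$ with $\ios$ — there is none, since $\sigma$ interchanges the two places at infinity, and that is precisely why the definition of $\sigma$-reduced refers to $\io{\alpha}$ and $\io{\sigma(\alpha)}$ as two independent quantities rather than one.
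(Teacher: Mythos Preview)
Your proof is correct and essentially identical to the paper's: both rest on the two identities $\io{\cfsb{\alpha}} = -\io{\sigma(\alpha)}$ and $\io{\sigma(\cfsb{\alpha})} = -\io{\alpha}$, from which the equivalence is immediate by negating the inequalities defining $\sigma$-reduced. The paper's version is terser (it just records those two equalities), while your extra remarks about $\sigma^2=\mathrm{id}$ and the non-compatibility of $\sigma$ with $\ios$ are accurate but not needed.
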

\begin{proof}
This is an immediate consequence of
\begin{align*}
\io{\alpha} &= - \io{\sigma\left(\cfsb{\alpha}\right)}, &
\io{\sigma(\alpha)} &= - \io{\cfsb{\alpha}}.
\end{align*}
\end{proof}

The most useful property of \(\sigma\)-reduced complete quotients is however that we may go backwards in the continued fraction expansion in a unique way:
\begin{prop}
\label{cf-reduced-backwards}
Suppose \(\alpha_1 \in \HEF\) is \(\sigma\)-reduced. Then there exists a unique \(\alpha_0 \in \HEF\) which is \(\sigma\)-reduced and satisfies
\begin{equation*}
\alpha_1 = \frac{1}{\alpha_0 - \gauss{\alpha_0}}.
\end{equation*}
\end{prop}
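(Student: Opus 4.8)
\emph{Proof plan.} The natural move is to look for $\alpha_0$ in the form $\alpha_0 = a + 1/\alpha_1$ with $a = \gauss{\alpha_0} \in \K[X]$, and to pin down the polynomial $a$. First I would record the relevant orders. Since $\alpha_1$ is $\sigma$-reduced we have $\io{\sigma(\alpha_1)} > 0 > \io{\alpha_1}$, so in particular $\alpha_1 \neq \sigma(\alpha_1)$ and hence $\alpha_1 \notin \K(X)$; therefore $\beta := 1/\alpha_1 \in \HEF \setminus \K(X)$ with
$\io{\beta} = -\io{\alpha_1} > 0$ and $\io{\sigma(\beta)} = -\io{\sigma(\alpha_1)} < 0$.
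Because $\io{\beta} > 0$, Remark \ref{truncation-of-sum} gives $\gauss{a + \beta} = a$ for every $a \in \K[X]$, so $\alpha_0 := a + \beta$ automatically satisfies $\alpha_0 - \gauss{\alpha_0} = \beta$, i.e. $\alpha_1 = 1/(\alpha_0 - \gauss{\alpha_0})$. Thus the identity imposes no constraint beyond $\alpha_0 = a + \beta$ for some polynomial $a$; the whole content is that exactly one such $a$ makes $\alpha_0$ $\sigma$-reduced.

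Uniqueness is then immediate: any admissible $\alpha_0$ is $\sigma$-reduced and of the form $\gauss{\alpha_0} + \beta$, and Proposition \ref{sigma-reduced-no-translations} applied to $\beta \in \HEF \setminus \K(X)$ says there is at most one $a \in \K[X]$ with $a + \beta$ $\sigma$-reduced. For existence I would take the candidate produced in the proof of that proposition, namely $a_0 := -\gauss{\sigma(\beta)}$, and set $\alpha_0 := a_0 + \beta \in \HEF$. Then $\sigma(\alpha_0) = a_0 + \sigma(\beta) = \sigma(\beta) - \gauss{\sigma(\beta)}$, which has $\io{\sigma(\alpha_0)} > 0$ directly from the definition of the polynomial part (Remark \ref{truncation-unique}).

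It remains to check $\io{\alpha_0} < 0$, and this is the step the paper explicitly flags as not automatic (the remark just before the statement). Here the hypothesis that $\alpha_1$ \emph{itself} is $\sigma$-reduced enters through $\io{\sigma(\beta)} < 0$: a Laurent series with a pole at infinity has polynomial part of the same order at infinity, so $\io{a_0} = \io{\gauss{\sigma(\beta)}} = \io{\sigma(\beta)} < 0$. Combined with $\io{\beta} > 0$, the ultrametric inequality forces $\io{\alpha_0} = \io{a_0 + \beta} = \io{a_0} < 0$. Hence $\io{\sigma(\alpha_0)} > 0 > \io{\alpha_0}$, i.e. $\alpha_0$ is $\sigma$-reduced, completing the proof. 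The only genuinely delicate point is this compatibility of the two defining inequalities for the forced choice of $a_0$; everything else is bookkeeping with $\io{\cdot}$ and $\gauss{\cdot}$.
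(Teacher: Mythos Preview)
Your proof is correct and complete. The paper takes a different, more structural route: after recording the same uniqueness argument via Proposition~\ref{sigma-reduced-no-translations}, it rewrites the defining relation as
\[
\cfsb{\alpha_0} = \frac{1}{\cfsb{\alpha_1} - a_0},
\]
which exhibits the backward step for $\alpha$ as a \emph{forward} continued-fraction step for $\cfsb{\alpha_1} = -1/\sigma(\alpha_1)$. Then Lemma~\ref{sigma-reduced-cfsb} (the involution $\alpha \mapsto -1/\sigma(\alpha)$ preserves $\sigma$-reducedness) and Proposition~\ref{sigma-reduced-all-complete-quotients} (forward steps preserve $\sigma$-reducedness) do all the work. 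Your argument instead verifies the two defining inequalities for $\alpha_0$ directly from the ultrametric, which is more elementary and self-contained; the paper's approach buys the conceptual picture that going backward in $\CF(\alpha)$ is the same as going forward in the conjugate expansion, a viewpoint it exploits again in Remark~\ref{cf-reduced-backwards-n} and Theorem~\ref{cf-berrys-thm}.
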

\begin{proof}
By Proposition \ref{sigma-reduced-no-translations}, there exists at most one \(a_0 \in \K[X]\) such that \(\alpha_0 = a_0 + \frac{1}{\alpha_1}\) is \(\sigma\)-reduced, namely \(a_0 = \gauss{\cfsb{\alpha_1}}\). Rewriting this to
\begin{equation*}
\cfsb{\alpha_0} = \frac{1}{\cfsb{\alpha_1} - a_0},
\end{equation*}
we see that \(\alpha_0\) is \(\sigma\)-reduced by applying twice Lemma \ref{sigma-reduced-cfsb} and once Proposition \ref{sigma-reduced-all-complete-quotients}.

Finally, as \(\io{\alpha_1} < 0\) it is also clear that \(a_0 = \gauss{\alpha_0}\).
\end{proof}
\begin{rem}
\label{cf-reduced-backwards-n}
Generally, for any \(n\) and \(\alpha_n\) \(\sigma\)-reduced, we have
\begin{equation*}
\cfsb{\alpha_n} = \frac{1}{\cfsb{\alpha_{n+1}} - \gauss{\cfsb{\alpha_{n+1}}}}
\end{equation*}
so also the \(\cfsb{\alpha_n}\) are the complete quotients of some continued fraction expansion, albeit with \(n\) decreasing.
\end{rem}

\begin{lemma}
\label{sigma-reduced-pure-period}
Suppose \(\alpha_m\) is \(\sigma\)-reduced and \(\CF(\alpha_m)\) is (quasi-)periodic, then \(\CF(\alpha_m)\) is pure (quasi-)periodic.
\end{lemma}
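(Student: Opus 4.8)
The plan is to treat the quasi-periodic case directly (the periodic case will be the special case $\mu = 1$) and to exploit the \emph{uniqueness} of the backward step from Proposition \ref{cf-reduced-backwards}. First note that since $\alpha_m$ is $\sigma$-reduced, Proposition \ref{sigma-reduced-all-complete-quotients} makes every complete quotient $\alpha_n$ with $n \geq m$ $\sigma$-reduced. By Proposition \ref{cf-quasi-periodicity-complete-quotients}, quasi-periodicity of $\CF(\alpha_m)$ furnishes $m' \geq m$, a unit $\mu \in \units \K$ and $l > 0$ with $\alpha_n = \mu^{\parity{n}} \, \alpha_{n+l}$ for all $n \geq m'$. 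The goal is to propagate this identity downwards from $n = m'$ to $n = m$; once we have $\alpha_m = \mu^{\parity{m}} \, \alpha_{m+l}$, condition (2) of Proposition \ref{cf-quasi-periodicity-complete-quotients} (with $m'$ replaced by $m$) gives that $\CF(\alpha_m)$ is pure quasi-periodic.

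For the inductive step, assume $\alpha_k = \mu^{\parity{k}} \, \alpha_{k+l}$ for some $k$ with $m+1 \leq k \leq m'$, and set $\nu = \mu^{\parity{k-1}}$, so that $\mu^{\parity{k}} = \inv \nu$. Since $k-1 \geq m$ and $k-1+l \geq m$, both $\alpha_{k-1}$ and $\alpha_{k-1+l}$ are $\sigma$-reduced; moreover $\beta := \nu \, \alpha_{k-1+l}$ is again $\sigma$-reduced, because multiplying by the unit $\nu$ does not change $\io{\cdot}$, so $\io{\sigma(\beta)} = \io{\sigma(\alpha_{k-1+l})} > 0 > \io{\alpha_{k-1+l}} = \io{\beta}$. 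Using $\gauss{\nu \, \gamma} = \nu \, \gauss{\gamma}$ for $\nu \in \units \K$ and the recursion \eqref{cf-complete-quotients} applied to $\alpha_{k-1+l}$, one computes
\[
\frac{1}{\beta - \gauss{\beta}} = \frac{1}{\nu \left(\alpha_{k-1+l} - \gauss{\alpha_{k-1+l}}\right)} = \inv\nu \, \alpha_{k+l} = \mu^{\parity{k}} \, \alpha_{k+l} = \alpha_k ,
\]
the last equality being the induction hypothesis. On the other hand \eqref{cf-complete-quotients} also gives $\alpha_k = \dfrac{1}{\alpha_{k-1} - \gauss{\alpha_{k-1}}}$. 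Thus $\alpha_{k-1}$ and $\beta$ are both $\sigma$-reduced and both map to $\alpha_k$ under the backward step, so the uniqueness in Proposition \ref{cf-reduced-backwards} forces $\alpha_{k-1} = \beta = \mu^{\parity{k-1}} \, \alpha_{k-1+l}$, which is exactly the desired identity one level down.

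Descending from $k = m'$ to $k = m$ yields $\alpha_m = \mu^{\parity{m}} \, \alpha_{m+l}$, hence $\CF(\alpha_m)$ is pure quasi-periodic; specialising to $\mu = 1$ (and invoking Proposition \ref{cf-periodicity-complete-quotients} in place of Proposition \ref{cf-quasi-periodicity-complete-quotients}) gives the statement for ordinary periodicity. There is no genuine obstacle here; the only point requiring care is the index bookkeeping, namely checking that each complete quotient entering the argument — $\alpha_{k-1}$, $\alpha_{k-1+l}$, $\alpha_k$ and $\alpha_{k+l}$ — has index $\geq m$ and is therefore $\sigma$-reduced, which holds throughout because $k \geq m+1$ and $l \geq 1$.
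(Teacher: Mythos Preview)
Your proof is correct and follows essentially the same approach as the paper: both descend the relation $\alpha_n = \mu^{\parity{n}}\,\alpha_{n+l}$ from the given starting index down to $m$ by invoking the uniqueness of the $\sigma$-reduced backward step in Proposition~\ref{cf-reduced-backwards}. Your version is slightly more explicit in verifying that $\beta = \mu^{\parity{k-1}}\alpha_{k-1+l}$ is $\sigma$-reduced and in tracking the index bounds, but the argument is the same.
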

\begin{proof}
We use Propositions \ref{cf-periodicity-complete-quotients} and \ref{cf-quasi-periodicity-complete-quotients} here.

Suppose \(n > m, l \in \N\) and \(\mu \in \units{\K}\) (where \(\mu = 1\) in the case of periodicity) with
\(\alpha_{n} = \mu^{\parity{n}} \, \alpha_{n+l}\). By Proposition \ref{sigma-reduced-all-complete-quotients}, \(\alpha_{n-1}, \alpha_n, \alpha_{n+l-1}, \alpha_{n+l}\) are all \(\sigma\)-reduced, and we have
\begin{multline*}
\alpha_n = \frac{1}{\alpha_{n-1} - a_{n-1}} = \mu^{\parity{n}} \, \alpha_{n+l} \\ = \mu^{\parity{n}} \, \frac{1}{\alpha_{n+l-1} - a_{n+l-1}} = \frac{1}{\mu^{\parity{n-1}} \, \alpha_{n+l-1} - \mu^{\parity{n-1}} \, a_{n+l-1}}.
\end{multline*}
With \(\gauss{\mu^{\parity{n-1}} \, \alpha_{n+l-1}} = \mu^{\parity{n-1}} \, a_{n+l-1}\), Proposition \ref{cf-reduced-backwards} implies \(\alpha_{n-1} = \mu^{\parity{n-1}} \, \alpha_{n+l-1}\) as desired, and we may repeat this argument until we arrive at \(\alpha_{m} = \mu^{\parity{m}} \, \alpha_{m+l}\).
\end{proof}

\begin{thm}
\label{cf-berrys-thm}
Suppose \(\alpha \in \HEF\) is \(\sigma\)-reduced and has polynomial trace \(\alpha + \sigma(\alpha) \in \K[X]\). If \(\CF(\alpha)\) is quasi-periodic, it is even pure (quasi-)periodic.
\end{thm}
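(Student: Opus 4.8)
The plan is to combine Lemma~\ref{sigma-reduced-pure-period} with a palindrome phenomenon for the partial quotients that is forced by the polynomial trace, and which replaces the telescoping argument of Proposition~\ref{odd-quasi-period-implies-periodic} in the case of even quasi-period length. First, since \(\alpha=\alpha_0\) is \(\sigma\)-reduced (hence not in \(\K(X)\), so \(\CF(\alpha_0)\) is infinite) and \(\CF(\alpha_0)\) is quasi-periodic, Lemma~\ref{sigma-reduced-pure-period} gives that \(\CF(\alpha_0)\) is already \emph{pure} quasi-periodic; reading this off the complete quotients via Proposition~\ref{cf-scalar-multiplication} yields \(\mu\in\units\K\) and \(l>0\) with \(\alpha_0=\mu\,\alpha_l\). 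It remains to upgrade pure quasi-periodicity to (pure) periodicity.

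The key point I would isolate is the following: if \(\beta\in\HEF\setminus\K(X)\) is \(\sigma\)-reduced and \(\beta+\sigma(\beta)\in\K[X]\), then, since \(\io{\sigma(\beta)}>0\), the polynomial trace equals its own polynomial part, \(\beta+\sigma(\beta)=\gauss{\beta}+\gauss{\sigma(\beta)}=\gauss{\beta}\); hence the next complete quotient of \(\CF(\beta)\) is \(\dfrac{1}{\beta-\gauss{\beta}}=\dfrac{-1}{\sigma(\beta)}=\cfsb{\beta}\). Applying this with \(\beta=\alpha_0\) gives \(\alpha_1=\cfsb{\alpha_0}\), and therefore (as \(\cfsb{\cdot}\) is an involution on \(\HEF\)) \(\cfsb{\alpha_1}=\alpha_0\). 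Now extend the continued fraction of \(\alpha_0\) backwards by iterating Proposition~\ref{cf-reduced-backwards}, producing \(\sigma\)-reduced complete quotients \(\alpha_n\) for all \(n\in\Z\) with \(a_n=\gauss{\alpha_n}\); by Remark~\ref{cf-reduced-backwards-n} the chain \(\cfsb{\alpha_1},\cfsb{\alpha_0},\cfsb{\alpha_{-1}},\dots\) consists of successive complete quotients, with partial quotients \(a_0,a_{-1},a_{-2},\dots\) (here one uses \(\cfsb{\alpha_{n+1}}=a_n-\sigma(\alpha_n)\), so \(\gauss{\cfsb{\alpha_{n+1}}}=a_n\)). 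Since \(\cfsb{\alpha_1}=\alpha_0\) has negative order at infinity, this chain \emph{is} \(\CF(\alpha_0)\); comparing with \(\CF(\alpha_0)=[a_0,a_1,a_2,\dots]\) and using uniqueness of the continued fraction expansion yields the palindromy \(a_k=a_{-k}\) for all \(k\in\Z\).

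Next I would run the same argument at index \(l\). The complete quotient \(\alpha_l\) is \(\sigma\)-reduced by Proposition~\ref{sigma-reduced-all-complete-quotients}, and it has polynomial trace because \(\alpha_l=\mu^{-1}\alpha_0\), so \(\alpha_l+\sigma(\alpha_l)=\mu^{-1}(\alpha_0+\sigma(\alpha_0))\in\K[X]\). Its backward complete quotients are simply \(\alpha_{l-1},\alpha_{l-2},\dots\) from the bi-infinite sequence above, so the argument delivers a second palindromy, now centred at \(l\): \(a_{l+k}=a_{l-k}\) for all \(k\in\Z\). Combining the two symmetries, for every \(m\in\Z\) we get \(a_{m+2l}=a_{l+(l+m)}=a_{l-(l+m)}=a_{-m}=a_{m}\). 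Hence the partial quotients are \(2l\)-periodic from \(n=0\) on, so \(\CF(\alpha)\) is pure periodic (which in particular reproves pure quasi-periodicity, and subsumes Proposition~\ref{odd-quasi-period-implies-periodic} in the odd case).

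The main obstacle — and the only place the trace hypothesis is genuinely needed — is the even-quasi-period case: when \(l\) is even one cannot pass from \(a_n=\mu^{\parity n}a_{n+l}\) to periodicity by telescoping, and an extra input is required. The trace hypothesis supplies it by pinning down \(\alpha_1=-1/\sigma(\alpha_0)=\cfsb{\alpha_0}\), which ``reverses'' the continued fraction and produces a centre of symmetry; performing this at the two indices \(0\) and \(l\) gives two independent centres, whose composition is translation by \(2l\). The points needing care are: verifying that \(\alpha_l\) inherits the polynomial-trace property (it does, being a \(\units\K\)-multiple of \(\alpha_0\)); and verifying that the backwards chain really is an infinite continued fraction representing \(\alpha_0\), which holds because every \(\alpha_n\) (\(n\in\Z\)) is \(\sigma\)-reduced and hence has negative order at infinity, so all the \(a_n\) with \(n\le 0\) are non-constant.
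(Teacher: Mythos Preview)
Your proof is correct and shares its backbone with the paper's: both hinge on the observation that the polynomial trace forces \(\alpha_1=\cfsb{\alpha_0}\), and both extend the sequence of complete quotients to negative indices via \(\alpha_n=\cfsb{\alpha_{1-n}}\) using Proposition~\ref{cf-reduced-backwards} and Remark~\ref{cf-reduced-backwards-n}. The endgame differs slightly. You observe that \(\alpha_l=\mu^{-1}\alpha_0\) inherits the polynomial trace, obtain a second palindromic centre \(a_{l+k}=a_{l-k}\), and compose the two reflections to produce the translation \(a_{m+2l}=a_m\). The paper instead keeps only the centre at \(0\), pushes the quasi-periodicity relation down to negative indices, and reads off \(\alpha_\QPL=\cfsb{\alpha_{1-\QPL}}=\mu^{\parity{\QPL}}\alpha_0\), whence \(\mu\,\mu^{\parity{\QPL}}=1\) and \(\alpha_0=\alpha_{2\QPL}\). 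Your two-reflections argument is a pleasant geometric packaging and yields the palindromy (the paper's Proposition~\ref{palindromic-period}) as a by-product; the paper's route gives the extra information \(\mu=\pm1\) when \(\QPL\) is even. One small citation slip: the passage from pure quasi-periodicity to \(\alpha_0=\mu\,\alpha_l\) is Proposition~\ref{cf-quasi-periodicity-complete-quotients}, not Proposition~\ref{cf-scalar-multiplication}.
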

\begin{proof}
Lemma \ref{sigma-reduced-pure-period} already implies that \(\CF(\alpha)\) is pure quasi-periodic, and once we prove it is periodic, it is automatically pure periodic. For odd quasi-period length, the general Proposition \ref{odd-quasi-period-implies-periodic} already yields periodicity. For even quasi-period length, a bit more work is required.

From \(\gauss{f} = f\) for \(f \in \K[X]\) and \(\io{\sigma(\alpha)} > 0\) we obtain
\begin{equation*}
\alpha + \sigma(\alpha) = \gauss{\alpha + \sigma(\alpha)} = \gauss{\alpha} = a_0
\end{equation*}
so \(\alpha - a_0 = - \sigma(\alpha)\) which implies
\begin{equation*}
\alpha_1 = \cfsb{\alpha_0} \text{ and thus } \alpha_0 = \cfsb{\alpha_1}.
\end{equation*}
In the light of Remark \ref{cf-reduced-backwards-n}, the \(\cfsb{\alpha_n}\), going backwards, are complete quotients of some continued fraction expansion and actually extend \(\CF(\alpha)\) for negative \(n\):
\begin{equation*}
\begin{array}{cccccccc}
\dots & \cfsb{\alpha_3} & \cfsb{\alpha_2} & \cfsb{\alpha_1} & \cfsb{\alpha_0} \\
      &                 &                 & \alpha_0 & \alpha_1 & \alpha_2 & \alpha_3 & \dots
\end{array}
\end{equation*}
So we can define \(\alpha_{n} = \cfsb{\alpha_{1-n}}\) for \(n \leq 1\), with all \(\alpha_n\) \(\sigma\)-reduced, and by Lemma \ref{sigma-reduced-pure-period} the quasi-periodicity extends towards \(-\infty\) as well.

Denote by \(\QPL\) the quasi-period length of \(\CF(\alpha)\), so we may write
\begin{align*}
\alpha_0 &= \mu \, \alpha_\QPL, &
\alpha_\QPL &= \mu^{\parity{\QPL}} \, \alpha_{2\QPL}, &
\alpha_{1-\QPL} &= \mu^{\parity{1-\QPL}} \, \alpha_1.
\end{align*}
It follows
\begin{equation*}
\alpha_\QPL = \cfsb{\alpha_{1-\QPL}} = \frac{1}{\mu^{\parity{1-\QPL}}} \, \cfsb{\alpha_1} = \mu^{\parity{\QPL}} \, \alpha_0
\end{equation*}
and further \(\alpha_0 = \mu \, \mu^{\parity{\QPL}} \, \alpha_0\). Hence \(\mu \, \mu^{\parity{\QPL}} = 1\) (if \(\QPL\) is even, this means \(\mu = \pm 1\)), and then \(\alpha_0 = \mu \, \alpha_\QPL = \mu \, \mu^{\parity{\QPL}} \, \alpha_{2\QPL} = \alpha_{2\QPL}\), so \(\CF(\alpha)\) is periodic (with period length \(\QPL\) or \(2\QPL\)).
\end{proof}
\begin{rem}
This shows that the involution \(x \mapsto \cfsb{x}\) acts as a reflection with centre \(1/2\) on the \(\Z\)-series of \(\alpha_n\) (\(n \mapsto 1-n\) on the indices).
\end{rem}

\begin{rem}
\label{period-of-sqrt-d}
Obviously \(\sqrt{D}\) is not \(\sigma\)-reduced. However \(\alpha = A + \sqrt{D}\) (recall that \(A = \gauss{\sqrt{D}}\)) is \(\sigma\)-reduced, and \(\sqrt{D} - \gauss{\sqrt{D}} = \alpha - \gauss{\alpha}\), so
\begin{equation*}
\CF(\sqrt{D}) = [A, a_1, a_2, \dots]
\end{equation*}
differs from \(\CF(\alpha)\) only in the first complete (and partial) quotient. This means that if \(\CF(\sqrt{D})\) is quasi-periodic, it is almost pure periodic, and the preperiod has length \(1\) and consists just of \(A\).
\end{rem}

This reversibility of the continued fraction process also implies that the period must be a palindrome:
\begin{prop}
\label{palindromic-period}
Let \(\alpha \in \HEF\) \(\sigma\)-reduced with \(\alpha + \sigma(\alpha) \in \K[X]\). Let \(\QPL\) the quasi-period length.
\begin{itemize}
\item If \(\QPL\) is even, then \(\CF(\alpha)\) has actually period length \(\QPL\), and the period is palindromic, i.e.
\begin{equation*}
\CF(\alpha) = \left[\overline{a_0, a_1, \dots, a_{\QPL/2}, \dots, a_1}\right]
\end{equation*}
\item If \(\QPL\) is odd, then \(\CF(\alpha)\) has a ``quasi-palindromic'' quasi-period, i.e.
\begin{equation*}
\CF(\alpha) = \left[\overline{a_0, a_1, \dots, a_{(\QPL-1)/2}, \mu^{\pm 1} \, a_{(\QPL-1)/2}, \mu^{\mp 1} \, a_{(\QPL-3)/2}, \dots, \mu \, a_1}\right]
\end{equation*}
\end{itemize}
\end{prop}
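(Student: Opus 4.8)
The plan is to exploit the reflection symmetry already established in the proof of Theorem \ref{cf-berrys-thm}: the map $x \mapsto \cfsb{x}$ sends $\alpha_n$ to $\alpha_{1-n}$ on the bi-infinite $\Z$-indexed sequence of $\sigma$-reduced complete quotients. Since $\alpha = \alpha_0$ is $\sigma$-reduced with polynomial trace, we have $\alpha_0 - a_0 = -\sigma(\alpha_0)$, hence $\alpha_1 = \cfsb{\alpha_0}$, and by Proposition \ref{cf-reduced-backwards} and Remark \ref{cf-reduced-backwards-n} the sequence extends to $\alpha_n := \cfsb{\alpha_{1-n}}$ for $n \leq 1$, all $\sigma$-reduced, still satisfying the continued fraction recursion, and still quasi-periodic (extending to $-\infty$ by Lemma \ref{sigma-reduced-pure-period}). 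The idea is to read off the partial quotients $a_n = \gauss{\alpha_n}$ for negative $n$ in terms of the $a_n$ for positive $n$, and then combine this with quasi-periodicity $\alpha_n = \mu^{\parity n}\,\alpha_{n+\QPL}$ to fold the two descriptions together.

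First I would record the local version of $\alpha_1 = \cfsb{\alpha_0}$: since the whole sequence is just one $\sigma$-reduced continued fraction shifted, Remark \ref{cf-reduced-backwards-n} gives $\alpha_{n+1} = \cfsb{\alpha_{-n}}$ for \emph{all} $n$, i.e. $\cfsb{\alpha_{m}} = \alpha_{1-m}$ for all $m \in \Z$. Applying $\gauss{\cdot}$ and using $\gauss{\cfsb{\cdot}}$-compatibility from Remark \ref{cf-reduced-backwards-n}/Proposition \ref{sigma-reduced-no-translations}, one gets $a_{1-m} = \gauss{\cfsb{\alpha_m}}$. Now I would feed in quasi-periodicity. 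In the even case, the proof of Theorem \ref{cf-berrys-thm} already showed $\mu\,\mu^{\parity{\QPL}} = 1$, so $\mu = \pm 1$ and the continued fraction is genuinely periodic of period length $\QPL$; moreover it established $\alpha_{\QPL} = \mu^{\parity{\QPL}}\alpha_0 = \mu\,\alpha_0$ and, running the reflection, $\alpha_{1-m} = \cfsb{\alpha_m}$ together with $\alpha_{m+\QPL} = \mu^{\parity m}\alpha_m$ lets me identify $\alpha_{1-m}$ with a unit multiple of $\alpha_{m'}$ for an appropriate $m'$ in $\{0,\dots,\QPL\}$. Concretely, shifting indices by $\QPL$ and reflecting, $\cfsb{\alpha_m} = \alpha_{1-m} = \mu^{\pm}\,\alpha_{\QPL+1-m}$, and taking $\gauss{\cdot}$ yields $a_m$ and $a_{\QPL+1-m}$ agreeing up to the unit $\mu^{\pm 1}$; with $\mu = \pm 1$ and the parity bookkeeping this collapses to $a_m = a_{\QPL+1-m}$ for the relevant range, which is exactly the palindrome $[\,\overline{a_0, a_1, \dots, a_{\QPL/2}, \dots, a_1}\,]$ (note the reflection $m \mapsto \QPL+1-m$ fixes no integer when $\QPL$ is even, and $a_0$ sits outside the reflected block, which is why the period is written starting at $a_0$ with the palindromic tail $a_1,\dots,a_{\QPL/2},\dots,a_1$).

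For the odd case the same computation applies, except that now $\mu\,\mu^{\parity{\QPL}} = \mu^{0}\cdot\mu = \mu$ need not equal $1$ — indeed if $\mu \neq 1$ the continued fraction has period $2\QPL$ by Proposition \ref{odd-quasi-period-implies-periodic} — so the relation $a_m = \mu^{\pm 1} a_{\QPL+1-m}$ retains the unit factor, giving the quasi-palindrome $[\,\overline{a_0, a_1, \dots, a_{(\QPL-1)/2}, \mu^{\pm 1} a_{(\QPL-1)/2}, \mu^{\mp 1} a_{(\QPL-3)/2}, \dots, \mu\, a_1}\,]$; here the middle of the quasi-period lands between the two indices $(\QPL-1)/2$ and $(\QPL+1)/2$, which is why two ``central'' terms $a_{(\QPL-1)/2}$ and $\mu^{\pm1}a_{(\QPL-1)/2}$ appear, and the alternating powers of $\mu$ as one moves outward come from the $\parity{m}$ in $\alpha_{m+\QPL} = \mu^{\parity m}\alpha_m$. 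The main obstacle I expect is purely bookkeeping: pinning down exactly which exponent of $\mu$ (i.e. $+1$ versus $-1$, hence the $\pm$ and $\mp$ in the statement) attaches to each reflected partial quotient, since this depends on the parity of the starting index of the quasi-period and on whether one reflects across $\QPL$ or $\QPL+1$; I would handle this by fixing the pure-quasi-periodic normalisation $\alpha_0 = \mu\,\alpha_{\QPL}$ from Theorem \ref{cf-berrys-thm} and carefully tracking parities through the single identity $\gauss{\mu^{\parity j}\alpha_j} = \mu^{\parity j}\gauss{\alpha_j}$, rather than trying to be clever.
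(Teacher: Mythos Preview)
Your overall strategy is the same as the paper's: combine the reflection $\alpha_n \leftrightarrow \cfsb{\alpha_{1-n}}$ from Theorem~\ref{cf-berrys-thm} with quasi-periodicity. But there is an index slip that becomes a real gap in the even case.

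The missing ingredient is the explicit identity $a_n = a_{-n}$. The paper gets it from
\[
\alpha_n = \cfsb{\alpha_{1-n}} = -\sigma\!\left(\tfrac{1}{\alpha_{1-n}}\right) = -\sigma(\alpha_{-n} - a_{-n}) = a_{-n} + \tfrac{1}{\cfsb{\alpha_{-n}}} = a_{-n} + \tfrac{1}{\alpha_{n+1}},
\]
so $a_n = \gauss{\alpha_n} = a_{-n}$. Combined with quasi-periodicity this gives $a_n = \mu^{\parity n} a_{\QPL - n}$, i.e.\ the reflection is $n \mapsto \QPL - n$, \emph{not} $m \mapsto \QPL + 1 - m$ as you write. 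Your line ``taking $\gauss{\cdot}$ yields $a_m$ and $a_{\QPL+1-m}$ agreeing'' is the point where the derivation goes off by one: from $\cfsb{\alpha_m} = \alpha_{1-m} = \mu^{\pm}\alpha_{\QPL+1-m}$ you only get $a_{1-m} = \mu^{\pm} a_{\QPL+1-m}$, and you still need $a_{1-m} = a_{m-1}$ to land in the positive-index range.

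This matters because for even $\QPL$ the correct reflection $n \mapsto \QPL - n$ has the integer fixed point $n = \QPL/2$, and the paper uses precisely $a_{\QPL/2} = \mu^{\parity{\QPL/2}} a_{\QPL/2}$ to force $\mu = 1$. Your reflection $m \mapsto \QPL+1-m$ has no integer fixed point for even $\QPL$, so you cannot extract $\mu = 1$ this way; and Theorem~\ref{cf-berrys-thm} only gives $\mu^2 = 1$, which leaves $\mu = -1$ open. With $\mu = -1$ one would have $a_n = (-1)^n a_{\QPL - n}$ and period length $2\QPL$, not $\QPL$, so your claim that ``the continued fraction is genuinely periodic of period length $\QPL$'' and that ``the parity bookkeeping collapses to $a_m = a_{\QPL+1-m}$'' is not yet justified. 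Once you insert the identity $a_n = a_{-n}$ and use the fixed point at $\QPL/2$, the rest of your outline (including the odd case) goes through exactly as in the paper.
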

\begin{rem}
In the second case, either \(\mu = 1\), or the period length \(2 \QPL\) is even. Then we can apply the first case for the period instead of the quasi-period to get a palindromic period.
\end{rem}
\begin{proof}
Recall how we defined the negative complete quotients, hence for any \(n \in \Z\)
\begin{equation*}
\alpha_{n} = \cfsb{\alpha_{1-n}} = \sigma\left(- \frac{1}{\alpha_{1-n}}\right) = \sigma\left(\alpha_{-n} - a_{-n}\right)
= a_{-n} + \frac{1}{\cfsb{\alpha_{-n}}} = a_{-n} + \frac{1}{\alpha_{n+1}},
\end{equation*}
the crux of which is \(a_{n} = \gauss{\alpha_{n}} = a_{-n}\).

Using quasi-periodicity, we then obtain
\begin{equation*}
a_n = a_{-n} = \mu^{\parity{-n}} \, a_{\QPL-n} = \mu^{\parity{n}} \, a_{\QPL-n}
\end{equation*}
and developing this for \(n \leq \QPL/2\) we obtain
\begin{equation*}
a_0 = \mu \, a_\QPL, \quad
a_1 = \inv\mu \, a_{\QPL-1}, \quad
a_2 = \mu \, a_{\QPL-2}, \quad \dots
\end{equation*}
until for \(\QPL\) odd we arrive at \(a_{(\QPL-1)/2} = \mu^{\parity{(\QPL-1)/2}} \, a_{(\QPL+1)/2}\) and for \(\QPL\) even we arrive at \(a_{\QPL/2} = \mu^{\parity{\QPL/2}} \, a_{\QPL/2}\) which also implies \(\mu = 1\).
\end{proof}
\chapter{Computation of hyperelliptic continued fractions}
\label{sec:orge1a60a6}
We now give formulas for computing the continued fraction expansion for quadratic Laurent series. Optimising these formulas is not only useful for computing and studying examples, but it also serves to illustrate the connection between the Pell equation and periodicity of the continued fraction. Of particular interest is that everything can be expressed as operations on polynomials.

We assume as usual that \(D\) is non-square of degree \(2d\) and that \(\LC(D)\) is a square in \(\K\), a field of characteristic not \(2\). Recall that we defined the polynomial part \(A = \gauss{\sqrt{D}}\).

It is well-known that the complete quotients of \(\sqrt{D}\) can be written as \(\alpha_n =  (r_n + \sqrt{D})/s_n\) with \(r_n, s_n \in \K[X]\) of bounded degree. We can slightly improve upon this representation by writing \(r_n = A + \text{terms of lower degree}\). This seems to be a new result:
\begin{thm}
\label{thm-optimised-sqrt-cq-representation}
Let \(\alpha = \sqrt{D}\). The complete quotients of \(\alpha\) can be written as
\begin{equation}
\label{quadratic-cq-representation}
\alpha_n = \frac{A + t_n + \sqrt{D}}{s_n} \quad \text{ for } n \geq 1
\end{equation}
where \(t_n, s_n \in \K[X]\) with
\begin{equation}
\label{sigma-reduced-degree-condition-in-prop}
\deg t_n < \deg s_n < \deg A
\end{equation}
for \(n \geq 1\). Moreover, there are the following recursion formulas for \(t_n\) and \(s_n\):
\begin{equation}
\label{quadratic-cq-recursion-formulas}
t_{n} + t_{n+1} = a_n \, s_n - 2 \, A, \quad s_{n} \, s_{n+1} = D - (A+t_{n+1})^2,
\end{equation}
initialised with \(t_0 = -A\) and \(s_0 = 1\). Finally \(\deg s_n = 0\) for \(n \geq 1\) \IFF \(\CF(\alpha)\) is periodic and the quasi-period length \(\QPL\) divides \(n\). 
\end{thm}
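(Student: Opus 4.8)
The plan is to establish the representation \eqref{quadratic-cq-representation} together with the recursion \eqref{quadratic-cq-recursion-formulas} by induction on $n\ge0$ (starting from the given data $t_0=-A$, $s_0=1$, which satisfy $\alpha_0=\sqrt D=(A+t_0+\sqrt D)/s_0$), then read off the degree bounds \eqref{sigma-reduced-degree-condition-in-prop} from $\sigma$-reducedness, and finally deduce the criterion for $\deg s_n=0$. For the induction step, set $\varepsilon=\sqrt D-A$, so $\io{\varepsilon}>0$ by Lemma \ref{completion-of-square-lemma}, and assume $\alpha_n=(A+t_n+\sqrt D)/s_n$ with $t_n,s_n\in\K[X]$, $s_n\ne0$ and $s_n\mid D-(A+t_n)^2$ (true for $n=0$ since $s_0=1$, and preserved by the recursion). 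Writing $\alpha_n=(2A+t_n)/s_n+\varepsilon/s_n$ with $\io{\varepsilon/s_n}=\io{\varepsilon}+\deg s_n>0$, Remark \ref{truncation-of-sum} gives $a_n=\gauss{\alpha_n}=\gauss{(2A+t_n)/s_n}$, and the Euclidean division $2A+t_n=a_n\,s_n-t_{n+1}$ (Remark \ref{truncation-of-rational}) is exactly the first recursion $t_n+t_{n+1}=a_n\,s_n-2A$. A short computation of $\alpha_{n+1}=1/(\alpha_n-a_n)$, rationalising the denominator, gives $\alpha_{n+1}=s_n\,(A+t_{n+1}+\sqrt D)/(D-(A+t_{n+1})^2)$; since $A+t_{n+1}\equiv-(A+t_n)\pmod{s_n}$ one gets $s_n\mid D-(A+t_{n+1})^2$, so $s_{n+1}:=(D-(A+t_{n+1})^2)/s_n$ is a polynomial, nonzero because $D$ is not a square, and this is the second recursion. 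The representation is unique, since $1/s_n$ is the $\sqrt D$-coordinate of $\alpha_n$ in the $\K(X)$-basis $(1,\sqrt D)$, which determines $s_n$ and then $t_n$.

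For $n\ge1$ the complete quotient $\alpha_n$ is $\sigma$-reduced: by Remark \ref{period-of-sqrt-d} the element $A+\sqrt D$ is $\sigma$-reduced and $\CF(A+\sqrt D)$ shares the complete quotients $\alpha_1,\alpha_2,\dots$ with $\CF(\sqrt D)$, so Proposition \ref{sigma-reduced-all-complete-quotients} applies. Since $\io{\sigma(\alpha_n)}>0>\io{\alpha_n}$, we have $\io{\alpha_n}=\io{\alpha_n-\sigma(\alpha_n)}=\io{2\sqrt D/s_n}=-\deg A+\deg s_n$, which forces $\deg s_n<\deg A$; and from $\sigma(\alpha_n)=(t_n-\varepsilon)/s_n$ with $\io{\varepsilon}>0$ we get, when $t_n\ne0$, $\io{\sigma(\alpha_n)}=-\deg t_n+\deg s_n>0$, hence $\deg t_n<\deg s_n$ (trivially so when $t_n=0$). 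This is \eqref{sigma-reduced-degree-condition-in-prop}. The same bounds can alternatively be carried through the induction directly, using that $D-(A+t_{n+1})^2$ has degree $\deg A+\deg t_{n+1}$ when $t_{n+1}\ne0$.

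For the last assertion, \eqref{sigma-reduced-degree-condition-in-prop} shows that $\deg s_n=0$ \IFF $s_n\in\units\K$, in which case also $t_n=0$, i.e. $\alpha_n=s_n^{-1}(A+\sqrt D)$ is a $\K$-multiple of $\alpha':=A+\sqrt D$, the $0$-th complete quotient of $\CF(\alpha')$ (whose complete quotients agree with $\alpha_1,\alpha_2,\dots$ from index $1$ on). If $\deg s_n=0$ for some $n\ge1$, then $\alpha'=s_n\,\alpha_n$ where $\alpha_n$ is the $n$-th complete quotient of $\CF(\alpha')$, so by Proposition \ref{cf-quasi-periodicity-complete-quotients} (with $m'=0$, $\mu=s_n$, $l=n$) $\CF(\alpha')$ is quasi-periodic and $n$ lies in the ideal of Remark \ref{quasi-period-length-ideal}, hence $\QPL\mid n$; moreover $\alpha'$ is $\sigma$-reduced with polynomial trace $\alpha'+\sigma(\alpha')=2A$, so Theorem \ref{cf-berrys-thm} upgrades quasi-periodicity to (pure) periodicity, and therefore $\CF(\sqrt D)$ is periodic. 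Conversely, if $\CF(\sqrt D)$ is periodic with quasi-period length $\QPL\mid n$, then $\CF(\alpha')$ is pure quasi-periodic by Theorem \ref{cf-berrys-thm}, so $\alpha'_k=\mu^{\parity{k}}\,\alpha'_{k+\QPL}$ for all $k\ge0$ (Proposition \ref{cf-quasi-periodicity-complete-quotients}); iterating this relation finitely many times from $k=0$ yields $\alpha'=\mu^{e}\,\alpha'_n$ for some $e\in\Z$, hence $\alpha_n=\mu^{-e}(A+\sqrt D)$, and by uniqueness of the representation $s_n=\mu^{e}\in\units\K$, i.e. $\deg s_n=0$. The main obstacle is exactly this last step: the complete quotients of $\sqrt D$ do not themselves have polynomial trace, so one must route through the $\sigma$-reduced element $A+\sqrt D$ in order to invoke Theorem \ref{cf-berrys-thm}, and one must keep careful track of the unit factors $\mu^{\parity{k}}$ when iterating the quasi-period relation; the degree bookkeeping in the first two paragraphs is finicky but routine.
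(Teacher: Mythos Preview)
Your proof is correct and follows essentially the same route as the paper: establish the representation and recursion by induction (the paper packages this as Proposition~\ref{prop-quadr-repr-rs} plus the substitution $t_n=r_n-A$), read off the degree bounds from $\sigma$-reducedness (the paper isolates this as Proposition~\ref{sigma-reduced-degree-prop}), and handle the periodicity criterion by passing to $A+\sqrt D$ and invoking Theorem~\ref{cf-berrys-thm} together with Proposition~\ref{cf-quasi-periodicity-complete-quotients} and Remark~\ref{quasi-period-length-ideal}. One small expository slip: in your closing remark, the obstacle to applying Theorem~\ref{cf-berrys-thm} directly to $\sqrt D$ is not that its trace fails to be polynomial (it is $0$), but that $\sqrt D$ is not $\sigma$-reduced; the detour through $A+\sqrt D$ is needed for that reason.
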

Note that \(\alpha_n\) being \(\sigma\)-reduced is equivalent to \eqref{sigma-reduced-degree-condition-in-prop} by Proposition \ref{sigma-reduced-degree-prop}.
\begin{cor}
\label{cor-pq-degree-periodicity}
The complete quotients satisfy \(\io{\alpha_n} \geq \io{\sqrt{D}}\), so for the partial quotients we have
\begin{equation*}
1 \leq \deg a_n \leq \deg A
\end{equation*}
with equality  \(\deg a_n = \deg A = d\) for \(n \geq 1\) \IFF the continued fraction \(\CF(\sqrt{D})\) is periodic, and the quasi-period length \(\QPL\) divides \(n\).
\end{cor}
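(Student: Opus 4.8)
The plan is to read everything off Theorem \ref{thm-optimised-sqrt-cq-representation} together with the basic facts about the valuation $\ios$ and the degrees of partial quotients. First I would establish $\io{\alpha_n} \geq \io{\sqrt{D}}$: using the representation \eqref{quadratic-cq-representation}, $\alpha_n = (A + t_n + \sqrt{D})/s_n$, and since $\deg t_n < \deg A = d = \deg \sqrt{D}$ (as formal Laurent series orders, $\io{A+t_n} = -d = \io{\sqrt D}$) the numerator has $\ios = -d$, while $\io{s_n} = -\deg s_n$ with $\deg s_n < d$ by \eqref{sigma-reduced-degree-condition-in-prop}. Hence $\io{\alpha_n} = -d + \deg s_n \geq -d = \io{\sqrt D}$, with equality exactly when $\deg s_n = 0$. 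Then $\deg a_n = -\io{a_n} = -\io{\alpha_n} = d - \deg s_n$ (using Remark \ref{cf-absolute-values}, which gives $\io{a_n} = \io{\alpha_n}$), so $1 \leq \deg a_n \leq d = \deg A$, the lower bound coming from $\deg s_n \leq \deg A - 1$ and the upper bound from $\deg s_n \geq 0$. This already proves the displayed inequality and shows $\deg a_n = \deg A$ holds for a given $n \geq 1$ if and only if $\deg s_n = 0$.

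It then remains to identify ``$\deg s_n = 0$'' with ``$\CF(\sqrt D)$ is periodic and $\QPL \mid n$'', but this is precisely the last assertion of Theorem \ref{thm-optimised-sqrt-cq-representation}, which I am entitled to assume. So the corollary is essentially a restatement: for $n \geq 1$, $\deg a_n = \deg A = d \iff \deg s_n = 0 \iff \CF(\sqrt D)$ periodic and $\QPL \mid n$. I would spell out the one subtlety, namely that $\deg a_n = d$ for some single $n \geq 1$ already forces periodicity (because it forces $\deg s_n = 0$ for that $n$), and conversely that once periodic, the condition pins down exactly the multiples of the quasi-period.

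There is no real obstacle here; the only care needed is the bookkeeping between the order function $\ios$ on $\laurinx\K$ and the polynomial degrees, i.e. checking that the orders of $A+t_n+\sqrt D$ and of $s_n$ are genuinely $-d$ and $-\deg s_n$ respectively, with no cancellation of leading terms in $A + t_n + \sqrt D$. Since $\deg t_n < d$ and $\sqrt D = X^d + \cdots$ while $A = X^d + \cdots$ have the same leading term, the numerator equals $2X^d + (\text{lower order})$, so its leading coefficient is $2 \neq 0$ as $\Char \K \neq 2$; thus no cancellation occurs and $\io{A+t_n+\sqrt D} = -d$ exactly. With that checked, the rest is immediate from Theorem \ref{thm-optimised-sqrt-cq-representation}.
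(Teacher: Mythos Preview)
Your proof is correct and follows essentially the same route as the paper: the paper cites Corollary~\ref{maximal-degree-implies-quasi-period} (whose proof uses Proposition~\ref{sigma-reduced-degree-prop} to get $\io{\alpha_n} = \deg s_n - \deg A$, exactly the computation you carry out by hand), derives $\deg a_n = \deg A - \deg s_n$, and then invokes the last assertion of Theorem~\ref{thm-optimised-sqrt-cq-representation} for the periodicity equivalence. The only cosmetic difference is that you recompute the order of $\alpha_n$ explicitly rather than citing the intermediate results; your care about the leading term $2\,\LC(\sqrt{D})\,X^d$ not vanishing (using $\Char \K \neq 2$) is the content of the ultrametric computation inside Proposition~\ref{sigma-reduced-degree-prop}.
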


\enlargethispage{1cm}
In fact, we show more generally:
\begin{thm}
\label{thm-quadratic-laurent-series-cf-representation}
Let \(\alpha \in \laurinx \K\) any Laurent series quadratic over \(\K(X)\). Then for a suitable \(D\) depending only on \(\alpha\), the complete quotients \(\alpha_n\) may also be written as in \eqref{quadratic-cq-representation}, where \(t_n\) and \(s_n\) follow the recursion formulas \eqref{quadratic-cq-recursion-formulas}.

Moreover, there exists \(N \geq 0\), such that \(t_n\) and \(s_n\) satisfy \eqref{sigma-reduced-degree-condition-in-prop} for all \(n \geq N\).
\end{thm}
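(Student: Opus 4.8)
The plan is to realise $D$ as a discriminant of $\alpha$ and then run the standard ``$P$--$Q$'' recursion for quadratic continued fractions, adapted to the normalisation $r_n = A + t_n$ that appears in \eqref{quadratic-cq-representation}.

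First I would fix $D$. Since $\alpha$ is quadratic over $\K(X)$, a minimal polynomial of $\alpha$, cleared of denominators, has the shape $u\,T^2 + v\,T + w$ with $u,v,w \in \K[X]$ and $u \neq 0$. Put $D := v^2 - 4\,u\,w \in \K[X]$ and $\sqrt D := 2\,u\,\alpha + v \in \laurinx{\K}$, so that $\alpha = (-v + \sqrt D)/(2u)$. Then $D$ is not a square in $\K[X]$ (otherwise $\alpha \in \K(X)$), while $D = (2u\alpha+v)^2$ is a square in $\laurinx{\K}$; as $\ios$ is a valuation this forces $\deg D$ even and $\LC(D)$ a square, so the framework of Proposition \ref{completion-of-square} applies and I set $A = \gauss{\sqrt D}$, $\deg A = d$. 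The minimal polynomial being unique up to a scalar, $D$ is determined by $\alpha$ up to a square in $\units\K$, which changes nothing below.

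Next I would prove by induction on $n \geq 0$ that $\alpha_n = (r_n + \sqrt D)/s_n$ for some $r_n, s_n \in \K[X]$ with $s_n \neq 0$ and $s_n \mid D - r_n^2$. The base case is the displayed form of $\alpha_0 = \alpha$ (with $s_0 = 2u$, $r_0 = -v$, $D - r_0^2 = -4uw$). For the step, writing $\sqrt D = A + \varepsilon$ with $\io{\varepsilon} > 0$, hence $\io{\varepsilon/s_n} > 0$, Remarks \ref{truncation-of-sum} and \ref{truncation-of-rational} give $a_n = \gauss{\alpha_n} = \gauss{(r_n + A)/s_n} \in \K[X]$; then
\begin{equation*}
\alpha_{n+1} = \frac{1}{\alpha_n - a_n} = \frac{s_n}{\sqrt D - r_{n+1}}, \qquad r_{n+1} := a_n\,s_n - r_n \in \K[X],
\end{equation*}
and rationalising the denominator yields $\alpha_{n+1} = (r_{n+1} + \sqrt D)/s_{n+1}$ with $s_{n+1} := (D - r_{n+1}^2)/s_n$; here $s_n \mid D - r_{n+1}^2$ because $r_{n+1} \equiv -r_n \pmod{s_n}$, and $s_{n+1} \neq 0$ since $D$ is not a square. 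Reading off $r_n + r_{n+1} = a_n s_n$ and $s_n s_{n+1} = D - r_{n+1}^2$ and substituting $t_n := r_n - A$ turns these into exactly the recursion \eqref{quadratic-cq-recursion-formulas}, while $\alpha_n = (A + t_n + \sqrt D)/s_n$ is \eqref{quadratic-cq-representation} (it holds here for all $n \geq 0$).

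Finally I would produce $N$ with \eqref{sigma-reduced-degree-condition-in-prop}; by Proposition \ref{sigma-reduced-degree-prop} this is equivalent to $\alpha_n$ being $\sigma$-reduced for $n \geq N$, and by Proposition \ref{sigma-reduced-all-complete-quotients} it is enough to find one $n \geq 1$ with $\alpha_n$ $\sigma$-reduced. For $n \geq 1$ we already have $\io{\alpha_n} = -\deg a_n \leq -1 < 0$ by Remark \ref{cf-absolute-values}, so only one index with $\io{\sigma(\alpha_n)} > 0$ is needed. From $\sigma(\alpha_n) = (r_n - \sqrt D)/s_n$ together with $s_{n-1} s_n = D - r_n^2$ one gets $\alpha_n\,\sigma(\alpha_n) = -s_{n-1}/s_n$, hence $\io{\alpha_n} + \io{\sigma(\alpha_n)} = \deg s_n - \deg s_{n-1}$; summing over $n = 1, \dots, M$ telescopes to $\deg s_M - \deg s_0 \geq -\deg s_0$ (as $s_M \neq 0$), so $\sum_{n=1}^M \io{\sigma(\alpha_n)} \geq -\deg s_0 + \sum_{n=1}^M \deg a_n \geq M - \deg s_0$. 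Taking $M > \deg s_0$ forces $\io{\sigma(\alpha_n)} > 0$ for some $1 \leq n \leq M$, and that $n$ serves as $N$. The middle induction is routine bookkeeping; the one step needing an idea is this last one — the identity $\alpha_n\sigma(\alpha_n) = -s_{n-1}/s_n$ makes the orders of the conjugate complete quotients telescope against $\deg s_n$, and pairing this with the trivial bound $\deg a_n \geq 1$ forces a $\sigma$-reduced complete quotient to appear. I expect this averaging argument to be the crux; after it, Propositions \ref{sigma-reduced-all-complete-quotients} and \ref{sigma-reduced-degree-prop} close the proof.
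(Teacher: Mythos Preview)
Your proof is correct. The first half (choosing $D$ as the discriminant and running the $r_n,s_n$ induction with the divisibility $s_n \mid D - r_n^2$) is exactly what the paper does in Proposition~\ref{prop-quadr-repr-rs} and the formulas \eqref{cf-ats-recursion}.

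The second half, however, takes a genuinely different route. The paper (Proposition~\ref{cf-compute-eventually-sigma-reduced}) argues by tracking the degrees of $t_n$ and $s_n$ directly through the recursion: from $\deg t_{n+1} < \deg s_n < \deg(a_n s_n) = \deg(2A + t_n + t_{n+1})$ one sees that whenever $\deg t_n \geq \deg A$ the degree of $t$ drops by at least $2$, so after finitely many steps $\deg t_n < \deg A$; then one or two more steps using $s_n s_{n+1} = D - (A+t_{n+1})^2$ force $\deg s_{n+1} < \deg A$ and $\deg t_{n+1} < \deg s_{n+1}$. Your argument instead exploits the identity $\alpha_n\,\sigma(\alpha_n) = -s_{n-1}/s_n$, turns the orders into a telescoping sum $\sum_{n=1}^M \io{\sigma(\alpha_n)} = \deg s_M - \deg s_0 + \sum \deg a_n$, and uses $\deg a_n \geq 1$ to force some $\io{\sigma(\alpha_n)} > 0$. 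This is slicker and more conceptual; the paper's degree-chasing is more hands-on but yields a slightly different effective bound, $N \leq 3 + (\deg t_1 - \deg A)/2$, versus your $N \leq \deg s_0 + 1$. Both are perfectly fine proofs of the same statement.
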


The theorem also gives a more elementary proof of periodicity over finite fields:
\begin{cor}
\label{cor-finite-field-always-periodic}
If the base field \(\K\) is finite, any Laurent series quadratic over \(\K(X)\) has a periodic continued fraction expansion.
\end{cor}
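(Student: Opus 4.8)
The plan is to combine Theorem~\ref{thm-quadratic-laurent-series-cf-representation} with a pigeonhole argument. By that theorem there is a polynomial $D$ (depending only on $\alpha$, with $A = \gauss{\sqrt{D}}$ fixed) and an index $N \geq 0$ such that for every $n \geq N$ the complete quotient can be written as $\alpha_n = (A + t_n + \sqrt{D})/s_n$ with $t_n, s_n \in \K[X]$ satisfying $\deg t_n < \deg s_n < \deg A$, as in \eqref{quadratic-cq-representation} and \eqref{sigma-reduced-degree-condition-in-prop}. In particular, once $n \geq N$, the complete quotient $\alpha_n$ is completely determined by the pair $(t_n, s_n)$, since $A$ and the chosen $\sqrt{D}$ are fixed.

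First I would observe that, $\K = \F_q$ being finite, there are only finitely many polynomials in $\K[X]$ of degree $< \deg A$ — at most $q^{\deg A}$ of them — hence only finitely many admissible pairs $(t_n, s_n)$ with $\deg t_n < \deg s_n < \deg A$. Therefore the assignment $n \mapsto (t_n, s_n)$, defined for all $n \geq N$, cannot be injective: there exist integers $m' \geq N$ and $l > 0$ with $(t_{m'}, s_{m'}) = (t_{m'+l}, s_{m'+l})$, and consequently $\alpha_{m'} = \alpha_{m'+l}$. This is exactly condition~(2) of Proposition~\ref{cf-periodicity-complete-quotients}, so $\CF(\alpha)$ is periodic, which is the assertion. (Using Lemma~\ref{sigma-reduced-pure-period} one could even describe the preperiod, since the $\alpha_n$ with $n \geq N$ are $\sigma$-reduced by Proposition~\ref{sigma-reduced-degree-prop}, but periodicity is all that is asked.)

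There is essentially no hard step here; the only points requiring care are that the bounded-degree representation is available for \emph{all} sufficiently large $n$ — which is precisely the last sentence of Theorem~\ref{thm-quadratic-laurent-series-cf-representation} — and that $\alpha \notin \K(X)$ (automatic since $\alpha$ is quadratic over $\K(X)$), so that the continued fraction is genuinely infinite and the complete quotients $\alpha_n$ are all defined, making the pigeonhole over $n \geq N$ meaningful.
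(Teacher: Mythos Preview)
Your proof is correct and follows essentially the same approach as the paper: the paper's proof (given in Section~\ref{sec:org4b81157}) likewise argues that only finitely many $\sigma$-reduced complete quotients $(A+t+\sqrt{D})/s$ exist over a finite field, so the tail of the continued fraction must repeat, and Proposition~\ref{cf-periodicity-complete-quotients} yields periodicity. The paper additionally works out the exact count of admissible pairs (Proposition~\ref{naive-period-length-bound}), but this is a refinement rather than a different argument.
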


Using this representation of the complete quotients of \(\sqrt{D}\), and our accumulated knowledge about the convergents, we also recover Abel's result from \cite{abel-1826-ueber-integ-differ}:
\begin{thm}[Abel 1826]
\label{thm-pellian-iff-cf-periodic}
\(D\) is Pellian \IFF \(\CF(\sqrt{D})\) is periodic.
\end{thm}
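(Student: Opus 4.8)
The plan is to connect periodicity of $\CF(\sqrt{D})$ with the existence of a non-trivial Pell solution by using the machinery of $\sigma$-reduced complete quotients together with the classification of Pell solutions as convergents. I would work with $\alpha = A + \sqrt{D}$, which is $\sigma$-reduced (Remark \ref{period-of-sqrt-d}) and has $\CF(\sqrt{D}) = [A, a_1, a_2, \dots]$ differing from $\CF(\alpha)$ only in the initial term, so periodicity of one is equivalent to periodicity of the other.

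For the direction ``periodic $\Rightarrow$ Pellian'': if $\CF(\sqrt{D})$ is periodic, then by Remark \ref{period-of-sqrt-d} and Theorem \ref{cf-berrys-thm}, $\CF(\alpha)$ is pure periodic, so $\alpha_{\QPL} = \mu\,\alpha_0$ for some $\mu \in \units{\K}$ and the quasi-period length $\QPL$. Equivalently, by Theorem \ref{thm-optimised-sqrt-cq-representation}, $\deg s_{\QPL} = 0$, i.e. $s_{\QPL} \in \units{\K}$. Then the recursion formula $s_n\, s_{n+1} = D - (A+t_{n+1})^2$ at a suitable index, combined with the completion-of-square relation, forces $p^2 - D\,q^2 \in \units{\K}$ for the convergent $(p,q) = (p_{\QPL-1}, q_{\QPL-1})$; more directly, using \eqref{cf-sn-pell-eq} (the relation between $s_n$ and the norm of $p_{n-1} + q_{n-1}\sqrt{D}$) one reads off that $(p_{\QPL-1}, q_{\QPL-1})$ solves \eqref{pellu} with $q_{\QPL-1} \neq 0$, so $D$ is Pellian.

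For the converse ``Pellian $\Rightarrow$ periodic'': suppose $(p,q)$ is a non-trivial solution of \eqref{pellu}, so $p^2 - D\,q^2 = \Omega \in \units{\K}$ with $q \neq 0$. By Proposition \ref{weak-pell-solutions-are-convergents} (with $\deg \Omega = 0 < d$), one of $(p,q), (p,-q)$ is a convergent of $\sqrt{D}$; by Corollary \ref{cf-convergent-classification} this convergent is $r \cdot (p_n, q_n)$ for some $n$ and some $r \in \units{\K}$, hence up to sign $(p,q)$ is a canonical convergent $(p_n, q_n)$. Now I compute the norm $\Omega = p_n^2 - D\,q_n^2$ in terms of $s_{n+1}$ via the representation \eqref{quadratic-cq-representation} and the relation \eqref{cf-sn-pell-eq}: since $\Omega \in \units{\K}$ we get $\deg s_{n+1} = 0$, so by Theorem \ref{thm-optimised-sqrt-cq-representation} the quasi-period length divides $n+1$ and $\CF(\sqrt{D})$ is periodic.

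The main obstacle is bookkeeping the exact relationship between the norm $p_n^2 - D\,q_n^2$ of a canonical convergent and the polynomial $s_{n+1}$ from the representation of the complete quotients — essentially establishing and correctly applying the identity referenced as \eqref{cf-sn-pell-eq}, including getting the degrees and the unit factors right (and handling the harmless discrepancy between $\sqrt{D}$ and $\alpha = A + \sqrt{D}$ at index $0$). Once that identity is in hand, both implications reduce to the statement ``$\deg s_n = 0$ for some $n \geq 1$ iff $\CF(\sqrt{D})$ is periodic'', which is exactly the last assertion of Theorem \ref{thm-optimised-sqrt-cq-representation}, so no further work is needed.
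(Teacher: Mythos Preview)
Your proposal is correct and follows essentially the same route as the paper's own proof: both directions hinge on the identity $s_n = (-1)^n\,(p_{n-1}^2 - D\,q_{n-1}^2)$ (which the paper derives in the proof itself via the Moebius relation \eqref{cf-moebius-pell-denom}), together with the characterisation of periodicity as $\deg s_n = 0$ from Theorem \ref{thm-optimised-sqrt-cq-representation} and, for the converse, the classification of convergents (Proposition \ref{weak-pell-solutions-are-convergents} and Corollary \ref{cf-convergent-classification}). The ``main obstacle'' you flag is exactly what the paper spends its proof establishing, so you have identified the same structure.
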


We shall prove these results in the first part of this chapter. The second part then explores some further consequences.

\section{Representing complete quotients with polynomials}
\label{sec:org95cb74d}
We begin by reiterating the formulas for hyperelliptic continued fraction expansions which can (with varying level of detail) be already found in \cite{abel-1826-ueber-integ-differ}, \cite{berry-1990-periodicity-continued-fractions} and \cite{poorten-tran-2000-quasi-elliptic-integrals}.

Let \(\alpha \in \laurinx \K\) be quadratic over \(\K(X)\), satisfying \(s \, \alpha^2 - 2\, r \, \alpha + w = 0\) where \(r, s, w \in \K[X]\). The discriminant \(4 \, D = 4 \, (r^2 - s\,w)\) yields \(D\), for which we choose a square root \(\sqrt{D}\). Then we write
\begin{equation}
\label{cf-quadratic-normalised-representation}
\alpha = \frac{r + \sqrt{D}}{s}
\end{equation}
after possibly multiplying \(r, s, w\) with \(-1\) to accommodate our choice of \(\sqrt{D}\). Note that here holds \(s \div D - r^2\) which is crucial for the following computations. This allows a common factor in \(r\) and \(s\) which then must divide \(D\) as well.

Clearly \(\alpha\) is determined by the polynomials \(r, s, D\) and our choice of \(\sqrt{D}\). For example for \(\alpha = \sqrt{D}\) we just put \(r = 0\), \(s = 1\) and \(w = - D\).

All complete quotients of a given \(\alpha\) can be written in this way; all of them with the same discriminant \(D\):
\begin{prop}
\label{prop-quadr-repr-rs}
The complete quotients of \(\alpha\) as in \eqref{cf-quadratic-normalised-representation} have for all \(n \geq 0\) the form
\begin{equation}
\label{quadr-repr-rs}
\alpha_n = \frac{r_n + \sqrt{D}}{s_n}, \quad \text{ where } s_n \div (D - r_n^2) \text{ and } r_n, s_n \in \K[X].
\end{equation}
\end{prop}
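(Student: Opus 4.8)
The plan is to argue by induction on $n$, constructing $r_n$ and $s_n$ explicitly as one runs the continued fraction process, and checking at each step that the divisibility $s_n \div (D - r_n^2)$ is preserved. For $n = 0$ one simply takes $r_0 = r$ and $s_0 = s$: the hypothesis built into \eqref{cf-quadratic-normalised-representation} is exactly $s_0 \div (D - r_0^2)$. Observe also that since $D$ is not a square, $D - f^2 \neq 0$ for every $f \in \K[X]$, so none of the $s_n$ produced below will be zero; and since a representation $\alpha_n = (r_n + \sqrt D)/s_n$ forces $\alpha_n \notin \K(X)$ (as $\sqrt D \notin \K(X)$), the complete quotient $\alpha_{n+1}$ is always defined and the process never terminates.

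For the induction step, assume $\alpha_n = (r_n + \sqrt D)/s_n$ with $r_n, s_n \in \K[X]$, $s_n \neq 0$ and $s_n \div (D - r_n^2)$, and put $a_n = \gauss{\alpha_n} \in \K[X]$. Rationalising the quadratic denominator gives
\begin{equation*}
\alpha_{n+1} = \frac{1}{\alpha_n - a_n} = \frac{s_n}{(r_n - a_n s_n) + \sqrt D} = \frac{s_n\big((a_n s_n - r_n) + \sqrt D\big)}{D - (a_n s_n - r_n)^2},
\end{equation*}
using $\big((r_n - a_n s_n) + \sqrt D\big)\big((a_n s_n - r_n) + \sqrt D\big) = D - (a_n s_n - r_n)^2$. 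Set $r_{n+1} = a_n s_n - r_n \in \K[X]$. The key point is the congruence $r_{n+1} \equiv -r_n \pmod{s_n}$, whence $r_{n+1}^2 \equiv r_n^2 \pmod{s_n}$ and therefore $D - r_{n+1}^2 \equiv D - r_n^2 \equiv 0 \pmod{s_n}$ by the induction hypothesis. So $s_{n+1} := (D - r_{n+1}^2)/s_n$ is a well-defined nonzero polynomial in $\K[X]$, we have $\alpha_{n+1} = (r_{n+1} + \sqrt D)/s_{n+1}$, and the relation $s_n\, s_{n+1} = D - r_{n+1}^2$ gives $s_{n+1} \div (D - r_{n+1}^2)$ for free. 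This closes the induction.

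The computation also records the recursions $r_{n+1} = a_n s_n - r_n$ and $s_{n+1} = (D - r_{n+1}^2)/s_n$, which are the starting point for the sharper normalisation in Theorem \ref{thm-optimised-sqrt-cq-representation} (writing $r_n = A + t_n$ with degree bounds). I do not expect a genuine obstacle: the only step needing care is the divisibility, and it is immediate from $r_{n+1} \equiv -r_n \pmod{s_n}$ together with $s_n \div (D - r_n^2)$; everything else is the routine rationalisation of a degree-two denominator.
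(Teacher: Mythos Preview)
Your proof is correct and follows essentially the same induction as the paper: rationalise $\alpha_n - a_n$, set $r_{n+1} = a_n s_n - r_n$ and $s_{n+1} = (D - r_{n+1}^2)/s_n$, and verify the divisibility via $r_{n+1} \equiv -r_n \pmod{s_n}$ (the paper expands the square instead, but it is the same check). Your additional remarks on non-termination and $s_n \neq 0$ are welcome clarifications not spelled out in the paper's version.
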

\begin{proof}
We prove this using complete induction. For \(n = 0\) we may take \(r_0 = r\) and \(s_0 = s\) which satisfy the desired conditions by hypothesis.

Suppose \eqref{quadr-repr-rs} holds for \(n\). Then write
\begin{equation*}
\frac{1}{\alpha_{n+1}} = \alpha_n - a_n
= \left( \frac{r_n + \sqrt{D}}{s_n} - a_n \right) \left( \fracsame{-r_n + \sqrt{D} + a_n \, s_n} \right)
= \frac{ D - (r_n - a_n \, s_n)^2}{s_n \, \left(a_n \, s_n - r_n + \sqrt{D}\right)}
\end{equation*}
and note that
\begin{equation*}
D - r_{n+1}^2 = D - (a_n \, s_n - r_n)^2 = D - a_n^2 \, s_n^2 + 2\, a_n \, s_n \, r_n - r_n^2
\end{equation*}
so by induction hypothesis \(s_n \div D - r_n^2\), this is divisible by \(s_n\) and we can set
\begin{align}
\label{cf-rs-recursion}
r_{n+1} &= a_n \, s_n - r_n, &
s_{n+1} &= \frac{D - r_{n+1}^2}{s_n}.
\end{align}
with \(r_{n+1}, s_{n+1} \in \K[X]\) and moreover \(s_{n+1} \div D - r_{n+1}^2\).
This concludes the induction step.
\end{proof}
\begin{rem}
It should be quite obvious that the discriminant does not change for the complete quotients. After all, the discriminant is invariant under the natural action of \(\GL{2}{\K(X)}\) by linear change of variables on bilinear forms in two variables over \(\K(X)\). Such a bilinear form gives of course a minimal polynomial for a quadratic \(\alpha\). But advancing in the continued fraction expansion can exactly be expressed in terms of this action, as seen in Section \ref{sec:org932a99d}.
\end{rem}

Berry (and Abel for \(\deg D = 4\)) give further simplifications of these formulas, see \cite{berry-1990-periodicity-continued-fractions} and \cite{abel-1826-ueber-integ-differ}. We prefer to perform simplifications of a different kind. And we still need to explain how to compute the \(a_n\) from our representation. 

We may rewrite \eqref{quadr-repr-rs} as
\begin{equation}
\label{quadr-repr-ats}
\alpha_n = \frac{A + t_n + \sqrt{D}}{s_n}
\end{equation}
by setting \(t_n = r_n - A\). The recursion formulas \eqref{cf-rs-recursion} then obviously change to
\begin{equation}
\label{cf-ats-recursion}
\begin{aligned}
t_0 &= r - A, & t_{n+1} &= a_n \, s_n - 2 \, A - t_n, \\
s_0 &= s,     & s_{n+1} &= \frac{D - A^2 - 2 \, A \, t_{n+1} - t_{n+1}^2}{s_n}.
\end{aligned}
\end{equation}

This already proves the first half of Theorem \ref{thm-quadratic-laurent-series-cf-representation}.

\begin{prop}
\label{cf-ats-bound-t1}
We can compute \(t_{n+1}\) and \(a_n\) with a single polynomial division, i.e.
\begin{equation*}
(2 \, A + t_n)= a_n \, s_n - t_{n+1} \text{ with } \deg t_{n+1} < \deg s_n.
\end{equation*}
\end{prop}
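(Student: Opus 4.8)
The plan is to unwind the definition $a_n = \gauss{\alpha_n}$ by combining the polynomial representation \eqref{quadr-repr-ats} of the complete quotient with the decomposition $\sqrt{D} = A + \varepsilon$ from \eqref{sqrt-d-A-plus-eps}, where $\io{\varepsilon} > 0$. Substituting into \eqref{quadr-repr-ats} gives
\begin{equation*}
\alpha_n = \frac{A + t_n + \sqrt{D}}{s_n} = \frac{(2\,A + t_n) + \varepsilon}{s_n},
\end{equation*}
so the claim reduces to identifying the polynomial part of this Laurent series.

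First I would perform the Euclidean division of $2\,A + t_n$ by $s_n$ in $\K[X]$ — legitimate because $s_n \neq 0$ by Proposition \ref{prop-quadr-repr-rs} — writing $2\,A + t_n = b\,s_n + r$ with $b, r \in \K[X]$ and $\deg r < \deg s_n$ (so $r = 0$ in the degenerate case $\deg s_n = 0$). This yields $\alpha_n = b + (r + \varepsilon)/s_n$.

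The key step is to check that $(r + \varepsilon)/s_n$ has strictly positive order at infinity. Indeed $\io{r} = -\deg r > -\deg s_n = \io{s_n}$ (trivially also when $r = 0$), and $\io{\varepsilon} > 0 \geq -\deg s_n = \io{s_n}$ since $\deg s_n \geq 0$, so the ultrametric inequality gives $\io{r + \varepsilon} > \io{s_n}$, hence $\io{(r+\varepsilon)/s_n} > 0$. By the uniqueness characterisation of $\gauss{\cdot}$ in Remark \ref{truncation-unique}, this forces $a_n = \gauss{\alpha_n} = b$: the partial quotient is precisely the quotient of the division.

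Finally I would read off the recursion \eqref{cf-ats-recursion}, namely $t_{n+1} = a_n\,s_n - 2\,A - t_n = b\,s_n - (2\,A + t_n) = -r$, which immediately gives $2\,A + t_n = a_n\,s_n - t_{n+1}$ and $\deg t_{n+1} = \deg r < \deg s_n$. I do not expect any serious obstacle here; the only points needing a little care are the degenerate case $\deg s_n = 0$ and keeping track that $s_n \neq 0$ so that the division makes sense.
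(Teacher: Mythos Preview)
Your proof is correct and follows essentially the same approach as the paper: both substitute $\sqrt{D} = A + \varepsilon$ into the representation \eqref{quadr-repr-ats}, reduce the computation of $a_n = \gauss{\alpha_n}$ to the polynomial division of $2A + t_n$ by $s_n$, and identify $-t_{n+1}$ with the remainder via \eqref{cf-ats-recursion}. The only cosmetic difference is that the paper first invokes the additivity of $\gauss{\cdot}$ (Remark~\ref{truncation-of-sum}) to strip off $\varepsilon/s_n$ and then appeals to Remark~\ref{truncation-of-rational}, whereas you perform the division first and verify $\io{(r+\varepsilon)/s_n} > 0$ directly via the ultrametric inequality and Remark~\ref{truncation-unique}.
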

\begin{proof}
Recall from \eqref{sqrt-d-A-plus-eps} that \(\sqrt{D} = A + \varepsilon\) with \(\io{\varepsilon} > 0\). 
The equality follows directly from the formula for \(t_{n+1}\) in \eqref{cf-ats-recursion}, it remains to check \(\deg t_{n+1} < \deg s_n\). Using \(\gauss{\varepsilon} = 0\) and Remark \ref{truncation-of-sum} (\(\gauss{\cdot}\) is a homomorphism with respect to \(+\)) we find
\begin{equation*}
a_n = \gauss{\alpha_n} = \gauss{ \frac{A + t_n + \sqrt{D}}{s_n} } = \gauss{\frac{2 \, A +  t_n + \varepsilon}{s_n}} = \gauss{\frac{2 \, A + t_n}{s_n}}.
\end{equation*}
So by Remark \ref{truncation-of-rational} (taking \(\gauss{\cdot}\) of rational functions corresponds to polynomial division) \(-t_{n+1}\) must the remainder of the polynomial division of \(2 \, A + t_n\) by \(s_n\).
\end{proof}
\section{Complete quotients are eventually \(\sigma\)-reduced}
\label{sec:org780327e}
The representation \eqref{quadr-repr-ats} also gives a very simple way to check if some complete quotient is \(\sigma\)-reduced:
\begin{prop}
\label{sigma-reduced-degree-prop}
\(\alpha = \frac{A + t + \sqrt{D}}{s}\) is \(\sigma\)-reduced \IFF
\begin{equation}
\label{sigma-reduced-degree}
\deg t < \deg s < \deg A,
\end{equation}
and in this case \(\io{\alpha} = \deg s - \deg A\).
\end{prop}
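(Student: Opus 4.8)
The plan is to reduce everything to the decomposition $\sqrt{D} = A + \varepsilon$ with $\io{\varepsilon} > 0$ from \eqref{sqrt-d-A-plus-eps}, together with $\deg A = d = \tfrac{1}{2}\deg D$ (Proposition \ref{completion-of-square}) and the basic fact $\io{f} = -\deg f$ for nonzero $f \in \K[X]$ (Remark \ref{rem-poly-no-poles}). Since $\sigma(\sqrt{D}) = -\sqrt{D}$, I would first rewrite
\[
\alpha = \frac{2A + t + \varepsilon}{s}, \qquad \sigma(\alpha) = \frac{t - \varepsilon}{s},
\]
and record the $(t,s)$-independent identity $\alpha - \sigma(\alpha) = \dfrac{2\sqrt{D}}{s}$, so that $\io{\alpha - \sigma(\alpha)} = \io{2\sqrt{D}} - \io{s} = \deg s - \deg A$ no matter what $t$ and $s$ are. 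This last identity will drive both the value of $\io{\alpha}$ and one of the degree inequalities.

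For the direction \eqref{sigma-reduced-degree} $\Rightarrow$ ``$\alpha$ is $\sigma$-reduced'', assume $\deg t < \deg s < \deg A$. Since $\Char \K \neq 2$, $2A$ still has degree $d$, so $\deg(2A+t) = d$ (as $\deg t < d$), and adding $\varepsilon$ with $\io{\varepsilon} > 0 > -d$ leaves $\io{2A + t + \varepsilon} = -d$; dividing by $s$ gives $\io{\alpha} = \deg s - \deg A < 0$. For $\sigma(\alpha)$ I would compute $\io{t - \varepsilon}$: if $t \neq 0$ then $\io{t} = -\deg t \le 0 < \io{\varepsilon}$ forces $\io{t - \varepsilon} = -\deg t$, hence $\io{\sigma(\alpha)} = \deg s - \deg t > 0$; if $t = 0$ then $\io{\sigma(\alpha)} = \io{\varepsilon} + \deg s > 0$ outright. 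Either way $\io{\sigma(\alpha)} > 0 > \io{\alpha}$, so $\alpha$ is $\sigma$-reduced, and the value $\io{\alpha} = \deg s - \deg A$ is exactly the ``in this case'' claim.

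For the converse I would assume $\io{\sigma(\alpha)} > 0 > \io{\alpha}$. Because these two valuations have opposite signs they are distinct, so the ultrametric equality case yields $\io{\alpha - \sigma(\alpha)} = \min(\io{\alpha}, \io{\sigma(\alpha)}) = \io{\alpha}$; comparing with the $(t,s)$-independent value above gives $\io{\alpha} = \deg s - \deg A$, and since this is negative, $\deg s < \deg A$. Then from $\io{\sigma(\alpha)} = \io{t - \varepsilon} + \deg s > 0$ I get $\io{t - \varepsilon} > -\deg s$; if $t \neq 0$ then $\io{t - \varepsilon} = -\deg t$ as before, so $\deg t < \deg s$, and if $t = 0$ this is trivial. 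This establishes \eqref{sigma-reduced-degree}.

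None of the steps is hard; the only points requiring care are using $\Char\K \neq 2$ so that $2A$ does not drop degree, and the degenerate case $t = 0$ (with $s$ possibly constant), where one must not blindly write $\io{t} = -\deg t$. I do not expect any genuine obstacle beyond this bookkeeping.
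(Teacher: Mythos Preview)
Your proof is correct and follows essentially the same approach as the paper: both compute $\io{\sigma(\alpha)}$ from $\sigma(\alpha) = (t-\varepsilon)/s$ (splitting into $t=0$ and $t\neq 0$), and both extract $\io{\alpha} = \deg s - \deg A$ by comparing $\alpha$ against $2\sqrt{D}/s$. The one cosmetic difference is that you package the latter step via the identity $\alpha - \sigma(\alpha) = 2\sqrt{D}/s$ and the ultrametric equality for distinct valuations, whereas the paper writes $\alpha = (2\sqrt{D} + t - \varepsilon)/s$ and applies the ultrametric equality directly to the numerator; these are the same computation.
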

\begin{proof}
With \(A - \sqrt{D} = - \varepsilon\) we note that
\begin{equation*}
\io{\sigma(\alpha)} = \io{\ifracBb{A+t - \sqrt{D}}{s}} = \io{t - \varepsilon} - \io{s} = \io{t - \varepsilon} + \deg s.
\end{equation*}
Hence \(0 < \io{\sigma(\alpha)}\) is equivalent to \(\deg t < \deg s\): In the case \(t = 0\), using \(\io{\varepsilon} > 0\) we have \(\io{\sigma(\alpha)} = \io{\varepsilon} + \deg s > 0\) \IFF we have \(s \neq 0\), i.e. \(\deg s > - \infty = \deg 0\).
If on the other hand \(t \neq 0\), then \(\io{t - \varepsilon} = \io{t} = - \deg t\), hence \(\io{\sigma(\alpha)} = \deg s - \deg t\).

So for the rest of the proof, we can assume \(\io{\sigma(\alpha)} > 0\).

We may write
\begin{multline*}
\io{\alpha} = \io{\ifracBb{A+t+\sqrt{D}}{s}} = \io{2 \sqrt{D} + t - \varepsilon} - \io{s} \\ \geq \min\left(\io{2 \sqrt{D}}, \io{t-\varepsilon}\right) + \deg s.
\end{multline*}
If \(\alpha\) is \(\sigma\)-reduced, then \(0 > \io{\alpha} = \io{2 \sqrt{D}} + \deg s\) because \(\io{t -\varepsilon} + \deg s > 0\). Hence \(\deg s < \deg A = -\io{\sqrt{D}}\).

Conversely, if \(\deg s < \deg A\), then \(\io{\alpha} = \io{2 \sqrt{D}} + \deg s < 0\) by the ultrametric ``equality''.

With \(\io{2 \sqrt{D}} = \io{A} = -\deg A\), we also showed \(\io{\alpha} = \deg s - \deg A\).
\end{proof}

An immediate and important consequence is that the degrees of the partial quotients of a \(\sigma\)-reduced \(\alpha\) are always bounded uniformly -- once we show that every continued fraction of a quadratic \(\alpha\) eventually becomes \(\sigma\)-reduced, this means all partial quotients have bounded degree.
\begin{cor}
\label{maximal-degree-implies-quasi-period}
Suppose \(\alpha\) as above is \(\sigma\)-reduced, and \(a = \gauss{\alpha}\). Then \(0 < \deg a \leq \deg A\).

Moreover, if \(\deg{a} = \deg A\), then there exists \(\mu \in \units \K\) such that \(\alpha = \mu \, (A + \sqrt{D})\).
\end{cor}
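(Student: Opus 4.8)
The plan is to read off everything directly from Proposition \ref{sigma-reduced-degree-prop} together with the definition of the truncation operator. First I would note that since $\alpha$ is $\sigma$-reduced we have $\io{\alpha} < 0$, so by Definition \ref{define-laurent-truncation} the polynomial part $a = \gauss{\alpha}$ consists exactly of the terms of $\alpha$ of non-negative $X$-degree; in particular its leading term is the same as that of $\alpha$, so $\deg a = -\io{\alpha}$. Proposition \ref{sigma-reduced-degree-prop} computes $\io{\alpha} = \deg s - \deg A$, hence $\deg a = \deg A - \deg s$. Since $s \neq 0$ (it appears in the denominator of $\alpha$) we have $\deg s \geq 0$, and $\sigma$-reducedness gives $\deg s < \deg A$ through \eqref{sigma-reduced-degree}. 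Combining these two bounds on $\deg s$ yields $0 < \deg a \leq \deg A$, which is the first assertion.

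For the second assertion I would argue as follows. If $\deg a = \deg A$, then $\deg A - \deg s = \deg A$ forces $\deg s = 0$, so $s$ is a non-zero constant, say $s = \inv\mu$ with $\mu \in \units \K$. But \eqref{sigma-reduced-degree} also gives $\deg t < \deg s = 0$, which is only possible if $t = 0$. Therefore $\alpha = \frac{A + \sqrt{D}}{s} = \mu\,(A + \sqrt{D})$, exactly as claimed.

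I do not expect a genuine obstacle here: the statement is essentially an unwinding of Proposition \ref{sigma-reduced-degree-prop}. The only point that deserves a moment's care is the identity $\deg \gauss{\alpha} = -\io{\alpha}$, which is valid precisely because $\io{\alpha} < 0$, so that the truncation discards only strictly negative powers of $X$ and leaves the top (positive-degree) terms of $\alpha$ untouched; for a $\sigma$-reduced $\alpha$ this is automatic. Everything else is bookkeeping with the degree inequalities \eqref{sigma-reduced-degree}.
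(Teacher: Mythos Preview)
Your proof is correct and follows essentially the same approach as the paper's: both read off $\io{\alpha} = \deg s - \deg A$ from Proposition~\ref{sigma-reduced-degree-prop}, convert to $\deg a = \deg A - \deg s$ via $\io{a} = \io{\alpha}$, and then use the bounds $0 \leq \deg s < \deg A$ from \eqref{sigma-reduced-degree}; the second part is handled identically by forcing $\deg s = 0$ and hence $t = 0$. Your extra remark justifying $\deg\gauss{\alpha} = -\io{\alpha}$ is a welcome clarification but not a different argument.
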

\begin{proof}
From \(\alpha\) being \(\sigma\)-reduced, the preceding proposition yields
\begin{equation*}
0 > \io{\alpha} = \deg s - \deg A \geq - \deg A.
\end{equation*}
But \(\io{\alpha} = \io{a} = - \deg a\), hence \(0 < \deg a \leq \deg A\).

Additionally, if \(\deg a = \deg A\) this means \(\deg s = 0\) and thus \(t = 0\). So we get \(\mu = \inv s \in \units \K\).
\end{proof}

The second half of Theorem \ref{thm-quadratic-laurent-series-cf-representation} follows from
\begin{prop}
\label{cf-compute-eventually-sigma-reduced}
Let \(\alpha \in \laurinx \K\) quadratic over \(\K(X)\). Then there exist \(N \in \N\) such that for all \(n \geq N\), the complete quotient \(\alpha_n\) is \(\sigma\)-reduced.
\end{prop}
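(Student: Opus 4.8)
The plan is to exploit the recursion formulas \eqref{cf-ats-recursion} together with the bound from Proposition \ref{cf-ats-bound-t1}, which already guarantees $\deg t_{n+1} < \deg s_n$ for all $n \geq 0$. So from the very start, two of the three conditions in \eqref{sigma-reduced-degree} are automatic for $n \geq 1$: we always have $\deg t_n < \deg s_{n-1}$, and more to the point, $\deg t_{n+1} < \deg s_n$. The only thing that can fail is $\deg s_n < \deg A$. Hence the whole problem reduces to showing: for $n$ large enough, $\deg s_n < \deg A$.

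First I would record that $\deg s_n$ cannot keep growing: from $s_n s_{n+1} = D - (A + t_{n+1})^2$ (the second formula in \eqref{quadratic-cq-recursion-formulas}) and the bound $\deg t_{n+1} < \deg s_n$, one controls $\deg\bigl(D - (A+t_{n+1})^2\bigr)$. If $\deg s_n \geq \deg A$, then since $\deg t_{n+1} < \deg s_n$ we cannot have $A + t_{n+1}$ cancel the top of $D$ (as $\deg(A+t_{n+1}) = \deg A$ when $\deg t_{n+1}<\deg A$, or $\deg(A+t_{n+1})=\deg t_{n+1}<\deg s_n$ otherwise), so $\deg\bigl(D-(A+t_{n+1})^2\bigr) = 2\deg A$ or $= 2\deg t_{n+1} < 2\deg s_n$; either way $\deg s_{n+1} = \deg\bigl(D-(A+t_{n+1})^2\bigr) - \deg s_n \leq \max(2\deg A - \deg s_n,\ \deg s_n)$, and the sequence $\deg s_n$ is non-increasing once it is $\geq \deg A$ — in fact I expect it to strictly decrease until it drops below $\deg A$, unless it stabilizes at exactly $\deg s_n = \deg A$ for all large $n$. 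The remaining task is to rule out that exceptional stable case.

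The main obstacle is therefore exactly this: excluding $\deg s_n = \deg A$ for all large $n$ (equivalently $\deg a_n = \deg A$ and $t_n$ of small degree but $s_n$ genuinely of degree $\deg A$, which forces... actually $\deg s_n \le \deg A$ with equality only at periodicity). Here I would argue by a height/degree-drop or a valuation argument: the map advancing the continued fraction strictly decreases some auxiliary quantity, or invoke that the $\alpha_n$ all lie in a fixed finite-dimensional family (bounded by $\deg s_n$, $\deg t_n$), so if they never become $\sigma$-reduced the sequence must eventually repeat a complete quotient, making $\CF(\alpha)$ quasi-periodic; then one uses Proposition \ref{sigma-reduced-all-complete-quotients} backwards (via Proposition \ref{cf-reduced-backwards}) — but a cleaner route, and the one I would pursue, is: suppose $\deg s_n \ge \deg A$ for infinitely many $n$; combined with the non-increase this forces $\deg s_n = \deg A$ eventually, hence by the equality case of Proposition \ref{sigma-reduced-degree-prop} (or Corollary \ref{maximal-degree-implies-quasi-period}) each such $\alpha_n$ equals $\mu_n(A + \sqrt D)$, which is already $\sigma$-reduced, contradiction. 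Wait — more carefully: $\deg s_n = \deg A$ with $\deg t_n < \deg s_n$ does not immediately give $\sigma$-reduced, since we need the \emph{strict} inequality $\deg s_n < \deg A$. So instead I would show the strictly decreasing behaviour: whenever $\deg s_n \geq \deg A$, one has $\deg s_{n+1} < \deg s_n$. Indeed if $\deg s_n > \deg A$ then $\deg(A+t_{n+1}) \le \max(\deg A, \deg t_{n+1}) < \deg s_n$, so $\deg\bigl(D - (A+t_{n+1})^2\bigr) = \deg D = 2\deg A$ if $\deg t_{n+1}<\deg A$ (else $=2\deg t_{n+1}$), and in all subcases $\deg s_{n+1} = \deg(D-(A+t_{n+1})^2) - \deg s_n < 2\deg A - \deg A = \deg A < \deg s_n$. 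And if $\deg s_n = \deg A$, a slightly finer look (the leading terms of $A^2$ and $D$ agree since $\deg(D-A^2)<\deg A$ by Proposition \ref{completion-of-square}, so $D - (A+t_{n+1})^2 = (D - A^2) - 2At_{n+1} - t_{n+1}^2$ has degree $< 2\deg A$ ... provided $\deg t_{n+1} < \deg A$; if $\deg t_{n+1} \geq \deg A$ that's impossible since $\deg t_{n+1}<\deg s_n = \deg A$) gives $\deg s_{n+1} < \deg A$, so $\alpha_{n+1}$ is $\sigma$-reduced. Thus the integer sequence $\deg s_n$ strictly decreases as long as it is $\geq \deg A$, hence after finitely many steps we reach some $N$ with $\deg s_N < \deg A$, and then $\alpha_N$ — and by Proposition \ref{sigma-reduced-all-complete-quotients} all subsequent $\alpha_n$ — are $\sigma$-reduced.

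\begin{rem}
The argument shows moreover that $N$ is effectively bounded: $\deg s_0$ and $\deg A$ determine how many steps it takes for $\deg s_n$ to drop below $\deg A$, so $N \leq \max(0,\ \deg s_0 - \deg A + 1)$.
\end{rem}
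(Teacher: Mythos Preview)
Your overall strategy---show that $\deg s_n$ strictly decreases while $\geq \deg A$, hence eventually drops below---is sound and close in spirit to the paper's argument. But there is a genuine gap: you conflate the bound $\deg t_{n+1} < \deg s_n$ from Proposition~\ref{cf-ats-bound-t1} with the bound $\deg t_{n+1} < \deg s_{n+1}$ that is actually required in \eqref{sigma-reduced-degree}. These are not the same, and at the very transition step you care about they differ. Concretely: let $N$ be the first index with $\deg s_N < \deg A$, so $\deg s_{N-1} \geq \deg A$. If $t_N \neq 0$ and $\deg t_N < \deg A$, then $s_{N-1} s_N = (D-A^2) - 2At_N - t_N^2$ has degree exactly $\deg A + \deg t_N$, giving $\deg t_N = \deg s_{N-1} + \deg s_N - \deg A \geq \deg s_N$. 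So $\alpha_N$ is \emph{not} $\sigma$-reduced. The fix is cheap but not stated: once $\deg s_N < \deg A$, you have $\deg t_{N+1} < \deg s_N < \deg A$, and the same degree relation then yields $\deg s_{N+1} = \deg A + \deg t_{N+1} - \deg s_N < \deg A$ together with $\deg t_{N+1} < \deg s_{N+1}$ (the latter is equivalent to $\deg s_N < \deg A$). Thus $\alpha_{N+1}$ is $\sigma$-reduced. This is exactly the extra step the paper carries out; your effective bound in the remark is accordingly off by one.

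There is also a smaller slip in the case $\deg s_n > \deg A$, subcase $\deg t_{n+1} > \deg A$: there $\deg\bigl(D-(A+t_{n+1})^2\bigr) = 2\deg t_{n+1}$, not $\leq 2\deg A$, so your inequality ``$< 2\deg A - \deg A = \deg A$'' does not follow. You do still get $\deg s_{n+1} = 2\deg t_{n+1} - \deg s_n < \deg s_n$, so strict decrease survives and your eventual conclusion $\deg s_N < \deg A$ for some $N$ is correct---just not in a single step. The paper avoids this subcase entirely by first proving $\deg t_n < \deg A$ for large $n$ (it drops by at least $2$ each step while $\geq \deg A$), and only then turns to $\deg s_n$.
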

\begin{proof}
Using Proposition \ref{sigma-reduced-degree-prop}, this boils down to an analysis of the degrees of \(t_n\) and \(s_n\).

From Proposition \ref{cf-ats-bound-t1} follows \(\deg t_{n+1} < \deg s_n\). Recall from Remark \ref{cf-absolute-values} that we have \(\deg a_n \geq 1\) for \(n \geq 1\), hence
\begin{multline*}
\deg t_{n+1} < \deg s_n < \degb{a_n \, s_n} = \degb{2 \, A + t_n + t_{n+1}} \\
\leq \max(\deg A, \deg t_n, \deg t_{n+1}) = \max(\deg A, \deg t_n).
\end{multline*}
So if \(\deg t_n \geq \deg A\) then \(\deg t_{n+1} + 2 \leq \deg t_n\). Then after a finite number of steps we must have \(\deg t_{n+j} < \deg A\) (actually \(\deg t_{n+j} + 2 \leq \deg A\)). And if \(\deg t_n < \deg A\), then clearly also \(\deg t_{n+1} < \deg A\) (actually \(\deg t_{n+1} + 2 \leq \deg A\)).

So we may now assume \(\deg t_n < \deg A\) for all \(n\) large enough.

Next, if \(t_{n+1} = 0\), then \(s_n \, s_{n+1} = D - A^2\) and \(\degb{D - A^2} < \deg A\) (see Proposition \ref{completion-of-square}). This implies \(\deg s_n + \deg s_{n+1} < \deg A\), so clearly \(\deg s_{n+1} < \deg A\), and trivially \(-\infty = \deg t_{n+1} < \deg s_{n+1}\), hence \(\alpha_{n+1}\) is \(\sigma\)-reduced.

If on the other hand \(t_{n+1} \neq 0\), then \(s_n \, s_{n+1} = D - A^2 - 2 \, A \, t_{n+1} - t_{n+1}^2\) and thus
\begin{multline*}
\deg s_n + \deg s_{n+1} = \max( \degb{D - A^2}, \deg A + \deg t_{n+1}, 2 \, \deg t_{n+1}) \\ = \deg A + \deg t_{n+1} < \deg A + \deg s_n
\end{multline*}
implies \(\deg s_{n+1} < \deg A\). If moreover \(\deg s_n < \deg A\) (if not, consider \(s_{n+2}\) and \(s_{n+1}\) instead), we also get \(\deg t_{n+1} < \deg s_{n+1}\) and so \(\alpha_{n+1}\) is \(\sigma\)-reduced.

All subsequent complete quotients then remain \(\sigma\)-reduced by Proposition \ref{sigma-reduced-all-complete-quotients}.
\end{proof}
\begin{rem}
From the proof, we easily deduce an effective bound for \(N\). The degree of \(t_n\) decreases by at least \(2\) in every step from \(t_1\), so at most \(\ifracBb{\deg t_1 - \deg A}{2}\) steps are required to arrive at \(\deg t_n < \deg A\). From there, we need only one or two additional steps to arrive at a \(\sigma\)-reduced complete quotient. So \(N \leq 3 + \ifracBb{\deg t_1 - \deg A}{2}\). This demonstrates the effectivity in Theorem \ref{thm-quadratic-laurent-series-cf-representation}.
\end{rem}

The \(\sigma\)-reduced case allows even simpler computation of the partial quotient:
\begin{rem}
If \(\alpha_n\) is \(\sigma\)-reduced, then we may use polynomial division of \(2 \, A\) by \(s_n\) to compute \(t_{n+1}\) (improving minimally upon \ref{cf-ats-bound-t1}):
\begin{equation*}
2 \, A = a_n \, s_n - (t_n + t_{n+1}),
\end{equation*}
as both \(\deg t_n, \deg t_{n+1} < \deg s_n\).
\end{rem}

\section{Periodicity and Pell equation}
\label{sec:org8d34125}
Let us now check the theorems given at the beginning of this chapter.

\begin{proof}[Proof of Theorem \ref{thm-optimised-sqrt-cq-representation}]
We expand upon Remark \ref{period-of-sqrt-d}, and work with \(A + \sqrt{D}\) instead of \(\sqrt{D}\). This changes only \(a_0\) and \(\alpha_0\). Of course \(A + \sqrt{D}\) has \(t_0 = 0\) and \(s_0 = 1\) which shows again (now using Proposition \ref{sigma-reduced-degree-prop}) that it is \(\sigma\)-reduced, hence also all complete quotients \(\alpha_n\) with \(n \geq 1\) are \(\sigma\)-reduced.

Then Theorem \ref{thm-optimised-sqrt-cq-representation} simply combines \eqref{quadr-repr-ats}, \eqref{cf-ats-recursion} (which follow from Proposition \ref{prop-quadr-repr-rs}) and Proposition \ref{sigma-reduced-degree-prop}.

Additionally, Theorem \ref{cf-berrys-thm} implies that \(\CF(\sqrt{D})\) is periodic \IFF \(\CF(A + \sqrt{D})\) is pure quasi-periodic, and both continued fraction have the same quasi-period length \(\QPL\). With Proposition \ref{cf-quasi-periodicity-complete-quotients} and Corollary \ref{maximal-degree-implies-quasi-period} it follows that \(\alpha_n = \frac{A + \sqrt{D}}{s_n}\) with \(s_n \in \units \K\) (i.e. \(\deg s_n = 0\)) holds \IFF \(\QPL \div n\) from minimality of the quasi-period length \(\QPL\).
\end{proof}

We give a few more details for
\begin{proof}[Proof of Corollary \ref{cor-pq-degree-periodicity}]
The degree inequalities were stated already in Corollary \ref{maximal-degree-implies-quasi-period} and follow from \(\deg a_n = \deg A - \deg s_n\). The corollary also says that \(\deg a_n = \deg A\) implies pure quasi-periodicity of \(\CF(A + \sqrt{D})\).
\end{proof}

\begin{proof}[Proof of Theorem \ref{thm-pellian-iff-cf-periodic}]
Set \(\alpha = \sqrt{D}\), and recall from Section \ref{sec:org932a99d} that (for \(n
\geq 1\))
\begin{equation*}
\sqrt{D} = \mfour{p_{n-1}}{p_{n-2}}{q_{n-1}}{q_{n-2}} \, \alpha_n \iff \alpha_n = (-1)^n \, \mfour{q_{n-2}}{-p_{n-2}}{-q_{n-1}}{p_{n-1}} \, \sqrt{D}
\end{equation*}
which we rewrite as
\begin{multline}
\label{cf-moebius-pell-denom}
\alpha_n
= \frac{q_{n-2} \, \sqrt{D} - p_{n-2}}{p_{n-1} - q_{n-1} \, \sqrt{D}}
= \frac{q_{n-2} \, \sqrt{D} - p_{n-2}}{p_{n-1} - q_{n-1} \, \sqrt{D}} \cdot \fracsame{p_{n-1} + q_{n-1} \, \sqrt{D}} \\
= \frac{D \, q_{n-1} \, q_{n-2} - p_{n-1} \, p_{n-2} + \sqrt{D} \left(p_{n-1} \, q_{n-2} - p_{n-2} \, q_{n-1}\right)}{p_{n-1}^2 - D \, q_{n-1}^2} \\
= \frac{(-1)^n \, (\dots) + \sqrt{D}}{(-1)^n \left(p_{n-1}^2 - D \, q_{n-1}^2\right)}
\end{multline}
so
\begin{equation}
\label{cf-sn-pell-eq}
s_n = {(-1)^n \left(p_{n-1}^2 - D \, q_{n-1}^2\right)}.
\end{equation}
Recall Theorem \ref{cf-berrys-thm} which states that periodicity and quasi-periodicity are equivalent in the current situation. So by Corollary \ref{maximal-degree-implies-quasi-period} (proved just above), it follows that \(\CF(\sqrt{D})\) is periodic \IFF for some \(n \geq 1\) we have \(\deg s_n = 0\) which means \((p_{n-1}, q_{n-1})\) solves the Pell equation \eqref{pellu}. 

On the other hand, we know that Pell solutions are  convergents (Proposition \ref{weak-pell-solutions-are-convergents}) and from the classification of convergents (Proposition \ref{cf-convergent-classification}) follows that every non-trivial solution of \eqref{pellu} has the shape \((p, q) = \mu \cdot (p_m, q_m)\) for some \(m \geq 0\) with \(\mu \in \units \K\) (because for a Pell solution \(p, q\) are coprime). This implies that \((p_m, q_m)\) likewise solves \eqref{pellu}, and then \(\deg s_{m+1} = 0\).
\end{proof}

\section{Torsion order and period length}
\label{sec:org61dfa9f}
Recall the notation from Chapter \ref{sec:org99a8e17}, and assume again that \(D\) is square-free. With \(2(g+1) = \deg D\), we get the following inequalities between the torsion order and the quasi-period length:
\begin{prop}
\label{prop-bounds-torsion-period-length}
Suppose \(\j{\OO} \in \Jac\) is torsion of order precisely \(m\), and let \(\QPL\) the quasi-period length of \(\CF(\sqrt{D})\). Then for \(g \geq 1\) we have the inequality\footnote{Note that the case \(g=0\) can easily be treated using Corollary \ref{deg-2-always-pellian}. See also Section \ref{sec:org46de0ba}.}
\begin{equation*}
g + \QPL \leq m \leq 1 + g \, \QPL
\end{equation*}
which for \(g = 1\) becomes the equality \(m = \QPL + 1\).
\end{prop}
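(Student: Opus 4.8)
The plan is to identify the torsion order $m$ with the degree of the numerator of the fundamental (minimal) Pell solution, and then to read that degree off the partial quotients of $\CF(\sqrt{D})$.

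Since $\j{\OO}$ is torsion, $D$ is Pellian by Theorem~\ref{thm-pellian-iff-torsion}, hence $\CF(\sqrt{D})$ is periodic by Theorem~\ref{thm-pellian-iff-cf-periodic}; write $\QPL$ for its quasi-period length. I claim $m=\deg p_{\QPL-1}$. One inclusion is the continued-fraction side: as in the proof of Theorem~\ref{thm-pellian-iff-cf-periodic} (via Corollary~\ref{cf-convergent-classification}), the non-trivial solutions of \eqref{pellu} are exactly the $\mu\cdot(p_n,q_n)$ with $\mu\in\units{\K}$ and $\QPL\mid n+1$; since $\deg p_n=\sum_{j=0}^n\deg a_j$ is strictly increasing (Proposition~\ref{canonical-convergents-degree-and-lc}), the one of smallest numerator degree is $(p_{\QPL-1},q_{\QPL-1})$. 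The other inclusion is the divisor side: for any non-trivial Pell solution $(p,q)$ we have $\deg\Omega=0$, so $r=0$ in Lemma~\ref{convergent-divisor-lemma} (Remark~\ref{convergent-omega-degree}) and $\Div(p-Y\,q)=(\deg p)\,\OO$; hence $(\deg p)\,\j{\OO}=0$ and $m\mid\deg p$. Conversely, since $m\,\OO$ is principal, a function with that divisor, written as $p-Y\,q$ with $p,q$ polynomials (Proposition~\ref{order-finpt-polynomial}), is by Lemma~\ref{convergent-divisor-lemma} a convergent with $r=0$, i.e.\ a non-trivial Pell solution with $\deg p=m$. So the minimal numerator degree among non-trivial Pell solutions equals $m$, and comparison with the previous description gives $\deg p_{\QPL-1}=m$.

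The rest is bookkeeping. By Proposition~\ref{canonical-convergents-degree-and-lc},
\[ m=\deg p_{\QPL-1}=\sum_{j=0}^{\QPL-1}\deg a_j=\deg A+\sum_{j=1}^{\QPL-1}\deg a_j, \]
with $\deg A=g+1$. By Corollary~\ref{cor-pq-degree-periodicity}, for $j\ge 1$ one has $1\le\deg a_j\le g+1$, and $\deg a_j=g+1$ precisely when $\QPL\mid j$; hence $1\le\deg a_j\le g$ for $1\le j\le\QPL-1$. Summing these $\QPL-1$ terms gives $\QPL-1\le\sum_{j=1}^{\QPL-1}\deg a_j\le g(\QPL-1)$, so
\[ g+\QPL=(g+1)+(\QPL-1)\le m\le(g+1)+g(\QPL-1)=1+g\QPL. \]
For $g=1$ the two bounds both equal $1+\QPL$, forcing $m=\QPL+1$.

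The main obstacle is the middle step: obtaining the exact equality $m=\deg p_{\QPL-1}$ rather than a mere divisibility. This needs the divisor-theoretic description of Pell solutions (Lemma~\ref{convergent-divisor-lemma} and Remark~\ref{rem-convergent-lemma-jacobian}) to be matched carefully with the continued-fraction description (Corollary~\ref{cf-convergent-classification}), in particular verifying that the fundamental Pell solution is the convergent of index exactly $\QPL-1$ and that its numerator degree is the torsion order. Once that is in place, the inequalities follow purely from the degree estimates of Corollary~\ref{cor-pq-degree-periodicity}.
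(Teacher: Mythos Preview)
Your proof is correct and follows essentially the same approach as the paper: identify $m=\deg p_{\QPL-1}$ via the correspondence between Pell solutions, convergent divisors, and the quasi-period, then bound $\sum_{j=1}^{\QPL-1}\deg a_j$ using $1\le\deg a_j\le g$ from Corollary~\ref{cor-pq-degree-periodicity}. The paper compresses the identification step into a single sentence referring back to the proofs of Theorems~\ref{thm-pellian-iff-torsion} and~\ref{thm-pellian-iff-cf-periodic}, whereas you spell out both directions (that the minimal Pell numerator degree equals $m$, and that it is attained at index $\QPL-1$); this extra care is welcome but does not change the substance of the argument.
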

\begin{proof}
Combining the knowledge from the proofs of Theorems \ref{thm-pellian-iff-torsion} and \ref{thm-pellian-iff-cf-periodic}, we know that the minimal \(n\) such that \eqref{convergent-divisor-equation} is satisfied with \(r = 0\) by \((p_{n-1}, q_{n-1})\) is exactly \(n = \QPL\), with \(m = \deg p_{n-1}\). So this \(m\) must be the torsion order of \(\j{\OO}\).

We then calculate, using \(1 \leq \deg a_i \leq g\) for \(i=1, \dots, l-1\) which holds by Corollary \ref{cor-pq-degree-periodicity},
\begin{align*}
m = \deg p_{l-1} = \deg a_0 + \deg q_{l-1} = g+1 + \deg q_{l-1} &\leq g+1 + (l-1) g = 1 + l \, g \\
& \geq g+1 + l-1 = l + g
\end{align*}
which yields the desired inequality. Clearly it collapses to an equality for \(g = 1\).
\end{proof}

So bounding the period length is as hard as bounding torsion.
\section{Period lengths over finite fields}
\label{sec:org4b81157}
We now give an (elementary) proof of Corollary \ref{cor-finite-field-always-periodic}, by showing that over a finite base field \(\K\) there are only finitely many possibilities for the \(\sigma\)-reduced complete quotients. As these form the tail of every continued fraction of a quadratic Laurent series, this means any repetition immediately implies periodicity. Of course we have to avoid characteristic \(2\) again.

\begin{prop}
\label{naive-period-length-bound}
Let \(\K = \F_q\) a finite field of odd characteristic, and recall that \(\deg D = 2d\). Then for a fixed \(D\), there are precisely
\begin{equation}
\label{sr-naive-period-bound}
\frac{q^{2 d }-1}{q+1}
\end{equation}
\(\sigma\)-reduced expressions of type \(\ifracBb{A + t + \sqrt{D}}{s}\).
\end{prop}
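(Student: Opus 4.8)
The plan is to reduce the statement to a plain enumeration of pairs of polynomials. First I would invoke Proposition \ref{sigma-reduced-degree-prop} (together with $\deg A = \tfrac12 \deg D = d$, from Proposition \ref{completion-of-square}) to translate the $\sigma$-reduced condition on $\frac{A+t+\sqrt D}{s}$ into the purely numerical condition $\deg t < \deg s < d$ on the pair $(t,s) \in \K[X]^2$. Thus the set of $\sigma$-reduced expressions of the stated shape is exactly the image of $\{(t,s) : \deg t < \deg s < d\}$ under the map $(t,s) \mapsto \frac{A+t+\sqrt D}{s}$.

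Next I would check that this map is injective, so that counting expressions is the same as counting pairs. This is immediate: if $\alpha = \frac{A+t+\sqrt D}{s}$, then applying the nontrivial automorphism $\sigma$ of $\HEF$ gives $\sigma(\alpha) = \frac{A+t-\sqrt D}{s}$, whence $s = 2\sqrt D/(\alpha - \sigma(\alpha))$ and then $t = s\,(\alpha+\sigma(\alpha))/2 - A$ are recovered from $\alpha$ alone; here $\alpha - \sigma(\alpha) = 2\sqrt D/s \neq 0$ since $D$ is not a square, $s \neq 0$ and $\Char\K \neq 2$.

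It then remains to count pairs $(t,s)$ with $\deg t < \deg s < d$. I would stratify by $e = \deg s$, which ranges over $0, 1, \dots, d-1$: there are $(q-1)\,q^e$ polynomials $s$ of degree exactly $e$ (one of the $q-1$ nonzero leading coefficients, $e$ arbitrary lower coefficients) and $q^e$ polynomials $t$ of degree $< e$ (for $e=0$ this forces $t=0$, consistently with $q^0 = 1$). Summing the geometric series $\sum_{e=0}^{d-1}(q-1)\,q^{2e} = (q-1)\cdot\frac{q^{2d}-1}{q^2-1}$ gives precisely $\frac{q^{2d}-1}{q+1}$.

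There is essentially no serious obstacle: the statement is bookkeeping once the $\sigma$-reducedness criterion is in place. The only points worth flagging are that we are counting \emph{all} formal expressions of the given shape subject to the degree inequalities, not merely those actually occurring as complete quotients of $\CF(\sqrt D)$ — imposing the extra divisibility $s \mid D - (A+t)^2$ from Proposition \ref{prop-quadr-repr-rs} would yield a strictly smaller number, but this over-count is exactly what is wanted, since the purpose is to bound the (quasi-)period length from above by a pigeonhole argument on repetitions of $\sigma$-reduced complete quotients — and that the degenerate stratum $e=0$, consisting of the $q-1$ expressions $\mu\,(A+\sqrt D)$ with $\mu \in \units\K$, must be included.
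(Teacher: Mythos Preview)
Your proposal is correct and follows essentially the same approach as the paper: stratify by $e = \deg s$ and sum the geometric series $\sum_{e=0}^{d-1}(q-1)q^{2e}$. You add an explicit injectivity check for $(t,s) \mapsto \frac{A+t+\sqrt D}{s}$ and an explicit invocation of Proposition~\ref{sigma-reduced-degree-prop}, both of which the paper leaves implicit, but the core argument is identical.
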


\begin{rem}
Note that the above counting does not yet take into account that we usually have the additional condition \(s \div D- r^2\). This further limits the number of possible complete quotients.
\end{rem}

\begin{proof}
For fixed \(e = \deg s\), there are \((q-1) \, q^e\) possibilities for \(s\), and as \(\deg t < \deg s\), there are \(q^e\) possibilities for \(t\). Summing over \(e\), we compute
\begin{equation*}
\sum_{e=0}^{d-1} (q-1) q^e \; q^e = (q-1) \, \frac{q^{2 d}- 1}{q^2 - 1} = \frac{q^{2 d }-1}{q+1}
\end{equation*}
using the formula for geometric sums.
\end{proof}

\begin{rem}
The above \eqref{sr-naive-period-bound} gives an elementary bound for the period length. Using our knowledge about quasi-periods, we could improve it further dividing by \(2/(q-1)\).
\end{rem}

But anyway we already have a far better bound for for the torsion order in the Jacobian (under the assumption that \(D\) is square-free), see Remark \ref{finite-field-torsion-bound}.

Then we can do much better:
\begin{cor}
If \(D\) is square-free, the quasi-period length is bounded by
\begin{equation*}
\QPL \leq m - g \leq (\sqrt{q} + 1)^{2g} - g.
\end{equation*}
\end{cor}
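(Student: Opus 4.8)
The plan is to simply chain together Proposition~\ref{prop-bounds-torsion-period-length} with the Hasse--Weil bound of Remark~\ref{finite-field-torsion-bound}. First I would note that, since we assume $D$ square-free and $\K = \F_q$ is finite, the group of $\F_q$-rational points of $\Jac$ is finite (Remark~\ref{finite-field-torsion-bound}), so $\j{\OO}$ is a torsion point; let $m = \ord(\j{\OO})$. By Theorem~\ref{thm-pellian-iff-torsion} this means $D$ is Pellian, and by Theorem~\ref{thm-pellian-iff-cf-periodic} the continued fraction $\CF(\sqrt{D})$ is periodic, so the quasi-period length $\QPL$ is well-defined and finite (this is of course also Corollary~\ref{cor-finite-field-always-periodic}). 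We may assume $g \geq 1$, the case $g = 0$ (i.e. $\deg D = 2$) being trivial or handled via Corollary~\ref{deg-2-always-pellian}.

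Next, Proposition~\ref{prop-bounds-torsion-period-length} gives the inequality $g + \QPL \leq m$, i.e. $\QPL \leq m - g$, which is the first claimed bound. Finally, Remark~\ref{finite-field-torsion-bound} states that $m = \ord(\j{\OO}) \leq (\sqrt{q} + 1)^{2g}$, so
\begin{equation*}
\QPL \leq m - g \leq (\sqrt{q} + 1)^{2g} - g
\end{equation*}
as desired.

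There is essentially no obstacle here: the corollary is a direct consequence of the two quoted results, and the only point requiring a word of care is confirming that $\QPL$ makes sense at all (periodicity over a finite field), which is exactly why one invokes Theorems~\ref{thm-pellian-iff-torsion} and~\ref{thm-pellian-iff-cf-periodic} (or Corollary~\ref{cor-finite-field-always-periodic}) at the outset. One could also remark that this improves on the elementary bound~\eqref{sr-naive-period-bound}, since the Hasse--Weil estimate $(\sqrt{q}+1)^{2g}$ is roughly $q^g$ rather than $q^{2d} = q^{2(g+1)}$.
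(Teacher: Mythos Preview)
Your proof is correct and follows exactly the approach the paper intends: combine the inequality $g + \QPL \leq m$ from Proposition~\ref{prop-bounds-torsion-period-length} with the Hasse--Weil bound $m \leq (\sqrt{q}+1)^{2g}$ from Remark~\ref{finite-field-torsion-bound}. The paper states the corollary without an explicit proof precisely because it is this immediate combination, so your write-up is actually more detailed than the paper's own treatment.
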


\section{Divisors of complete quotients}
\label{sec:org4e4485e}
We now wish to expand upon the results of Section \ref{sec:org5685823}, and make the connection between the convergent divisors and the continued fraction more explicit. This will be useful later to give an additional viewpoint on the reduction of continued fractions. See also \cite{berry-1990-periodicity-continued-fractions}, where it is shown that quasi-periodicity of arbitrary elements of \(\HEF \setminus \K(X)\) is equivalent to \(D\) being Pellian.

Recall the notation from Chapter \ref{sec:org99a8e17}, and the additional assumption that \(D\) is square-free.
Let \(\alpha = \frac{r + w \, Y}{s} \in \K(X,Y)\) an arbitrary element of the function field of the (hyper)elliptic curve \(\CC\) with \(r, s, w \in \K[X]\). Put \(\alpha_0 = \frac{r + w \, \sqrt{D}}{s} \in \laurinx \K\). We may assume \(\io{\alpha_0} \leq 0\), otherwise we simply pass to the inverse of \(\alpha\). 
We also require \(w, s \neq 0\) and  may of course assume \(\gcd(r, s, w) = 1\).

Then the finite poles of \(\alpha\) are zeroes of \(s\). So the divisor has the shape
\begin{equation*}
\Div \alpha = -\pd{Q_1} - \dots - \pd{Q_h} + \dots, \quad \text{} Q_i \in \CCa
\end{equation*}
with \(h \leq 2 \, \deg s\) and other poles only at infinity (the points \(O_\pm\)). 

We now generalise Lemma \ref{convergent-divisor-lemma} about the divisors of convergents of \(\sqrt{D}\) to rational functions on \(\CC\):

\begin{prop}
\label{general-convergent-divisor-lemma}
Let \((p, q) \in \Coset{\alpha_0}{\K}\) a convergent, then
\begin{equation}
\label{general-convergent-divisor-equation}
\Div(p - \alpha \, q) = -m \pd{O_-} - \pd{Q_1} - \dots - \pd{Q_h} + (m+h-e) \, \pd{O_+} + \pd{P_1} + \dots + \pd{P_e}
\end{equation}
where \(P_i \in \CCa\), \(m \geq 0\) and \(0 \leq e < h - \ord(\alpha_0) \leq h + m\).
\end{prop}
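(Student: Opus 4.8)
The plan is to imitate the proof of Lemma \ref{convergent-divisor-lemma}, tracking the extra finite poles $Q_1,\dots,Q_h$ coming from the denominator $s$ of $\alpha$. First I would write $\phi_- = p - \alpha\,q$ and its conjugate $\phi_+ = \sigma(\phi_-) = p - \sigma(\alpha)\,q$, and note $\sigma(\alpha) = (r - w\,Y)/s$, so that $\phi_+\,\phi_- = (p - \alpha q)(p - \sigma(\alpha) q)$ lies in $\K(X)$ and in fact equals $p^2 - 2\,(r/s)\,p\,q + (r^2 - D\,w^2)/s^2 \cdot q^2$; since $s \mid D - (r/w)^2\cdot(\dots)$ — more precisely since the minimal polynomial of $\alpha$ has coefficients that are honest polynomials after clearing $s$, as exploited in Proposition \ref{prop-quadr-repr-rs} — the only finite poles of $\phi_-$ are among the zeroes of $s$, i.e.\ the $Q_i$, each appearing with multiplicity $1$ in $\Div\phi_-$ (a zero of $s$ at a branch point contributes via the $Y$-term; away from branch points $\alpha$ genuinely has a simple pole). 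This gives the $-\pd{Q_1} - \dots - \pd{Q_h}$ part and shows all remaining poles sit at $O_\pm$.

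Next I would analyse the two points at infinity. Using the embeddings $\ord_{O_\pm}$ from Section \ref{sec:org85f4b43}, we have $\ord_{O_+}\phi_- = \io{p - \alpha_0\,q}$, which is $> \deg q \ge 0$ because $(p,q)$ is a convergent, so $O_+$ is a zero. Then, exactly as in Lemma \ref{convergent-divisor-lemma}, $\ord_{O_-}\phi_- = \ord_{O_+}\phi_+ = \io{p - \sigma(\alpha_0)\,q}$; since $\io{\phi_+} = \io{p} = -\deg p$ unless there is cancellation, and since $\io{\phi_-} > 0$ forces $\io{\phi_+} = \io{2\,p - \phi_-} = \min(\io{2p}, \io{\phi_-})$, one gets $\ord_{O_-}\phi_- = -m$ where $m = \deg p \ge 0$ (here I would double-check that $\io{\alpha_0}\le 0$, which we assumed, guarantees $\deg p$ is the right quantity; the convergent condition together with $\deg(\alpha_0 q) \ge \deg q$ gives $m = \deg p \ge \io{\alpha_0} + \deg q$, whence $m \ge \io{\alpha_0}$, matching the claimed bounds). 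Write $P_1,\dots,P_e$ for the finite zeroes of $\phi_-$ counted with multiplicity.

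Finally, the count of $e$ and the inequality come from $\deg \Div \phi_- = 0$: the total pole order is $m + h$, distributed as $(m+h-e)$ zeros at $O_+$, plus $e$ finite zeros, so the coefficient of $\pd{O_+}$ is $(m+h-e)$; this is $\ge 0$ exactly when $e \le m+h$, giving the outer bound $e < h - \ord(\alpha_0) \le h + m$ once we note $m \ge -\ord(\alpha_0) = \io{\alpha_0}$… wait, $\io{\alpha_0} \le 0$ so $-\io{\alpha_0} \ge 0$ and I should instead read off $e$ from $\ord_{O_+}\phi_- = \io{p - \alpha_0 q}$: indeed $m + h - e = \io{p-\alpha_0 q} > \deg q = \deg p - \deg a_0 = m - \io{\alpha_0}$ (using the convergent-degree relations), so $e < h + \io{\alpha_0} = h - \ord(\alpha_0)$, and $\ord(\alpha_0) = \io{\alpha_0} \le 0$ gives $h - \ord(\alpha_0) \le h + m$ since $m \ge -\io{\alpha_0} \ge 0$. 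The main obstacle I anticipate is the bookkeeping at the branch points of $s$: verifying that each zero of $s$ contributes multiplicity exactly $1$ to the pole divisor of $\phi_-$ (rather than $2$), which uses that $\ord_P(Y) = 1$ and $\ord_P(q)$ is even at a branch point $P$, exactly as in the proof of Proposition \ref{order-finpt-polynomial}; the rest is degree arithmetic parallel to Lemma \ref{convergent-divisor-lemma}.
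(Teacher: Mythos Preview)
Your overall approach is the same as the paper's: locate the finite poles, compute the orders at $O_\pm$, then read off $e$ from $\deg\Div\phi=0$. Two points need correction.

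First, you overcomplicate the finite-pole step. You do not need the product $\phi_+\phi_-$, nor any branch-point analysis, nor the claim that each $Q_i$ appears with multiplicity exactly $1$. The paper simply notes that for any finite $P$ one has $\ord_P(p)\ge 0$ and $\ord_P(q)\ge 0$, hence
\[
\ord_P(\phi)\ge\min\bigl(\ord_P(p),\ord_P(\alpha)+\ord_P(q)\bigr)\ge\min(0,\ord_P(\alpha)),
\]
so the finite pole divisor of $\phi$ is bounded by that of $\alpha$, i.e.\ by $\pd{Q_1}+\dots+\pd{Q_h}$. If the actual pole order of $\phi$ at some $Q_j$ is smaller, this is absorbed by allowing some $P_i$ to equal $Q_j$; the paper says this explicitly. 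Your worry that this is ``the main obstacle'' is therefore misplaced.

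Second, you have sign slips in the inequality. In the paper's notation $\ord=\ord_\infty=\ios$, so $\ord(\alpha_0)=\io{\alpha_0}$, not its negative. From $m=\deg p$ and $\io{p}=\io{\alpha_0 q}$ one gets $\deg q = m + \io{\alpha_0}$ (equivalently $m-\deg a_0$), not $m-\io{\alpha_0}$ as you wrote. Then $m+h-e=\ord_{O_+}\phi>\deg q=m+\io{\alpha_0}$ gives $e<h-\io{\alpha_0}=h-\ord(\alpha_0)$, and since $m=\deg q-\io{\alpha_0}\ge -\io{\alpha_0}$ this is $\le h+m$. Once you fix these signs, your argument is the paper's.
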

\begin{proof}
Set \(\phi = p - \alpha \, q\). Any finite poles (i.e. in \(\CCa\)) must be among the \(Q_i\) because \(\ord_P(p) \geq 0\) and  \(\ord_P(q) \geq 0\) imply
\begin{equation*}
\ord_P(\phi) \geq \min\left(\ord_P(p), \ord_P(\alpha) + \ord_P(q)\right) \geq \min(0, \ord_P(\alpha)).
\end{equation*}
From \((p, q)\) being a convergent, we know that \(\ord_{O_+}(\phi) = \io{\phi} > \deg q \geq 0\). As in \eqref{cv-plus-bigger-minus}, this implies with \(\io{\alpha_0 \, q} \leq 0\) that
\begin{equation*}
\ord_{O_-}(\phi) = \io{p + \alpha_0 \, q} = \io{p} = -\deg p = \io{\alpha_0} + \io{q}.
\end{equation*}
Hence \(m = - \ord_{O_-} \geq 0\).

With all possible poles determined, we can write \(\Div(\phi)\) as in \eqref{general-convergent-divisor-equation}, where possibly some of the \(P_i \in \CCa\) coincide with some \(Q_j\). The divisor must have degree \(0\), so  \(\ord_{O_+}(\phi) = m + h -e\), and 
\begin{equation*}
\deg q < m + h - e = \deg q - \io{\alpha_0} + h - e
\end{equation*}
implies \(e < h - \io{\alpha_0}\).
\end{proof}

We can make this even more precise for the canonical convergents \((p_n, q_n)\):
\begin{cor}
\label{cor-canonical-convergent-general-divisor}
Let \(\phi_n = p_n - \alpha \, q_n\), then
\begin{equation*}
\Div \phi_n = -(\deg p_n) \pd{O_-} - \pd{Q_1} - \dots - \pd{Q_h} + (\deg q_{n+1}) \pd{O_+} + \pd{P_1^n} + \dots + \pd{P_{e_n}^n}
\end{equation*}
where \(P^n_{i} \in \CCa\) (perhaps some coincide with a \(Q_j\)) and 
\begin{equation*}
e_n = \deg a_0 - \deg a_{n+1} + h \leq \deg a_0 + h - 1.
\end{equation*}
\end{cor}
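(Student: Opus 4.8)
The plan is to read off the claimed formula directly from Proposition \ref{general-convergent-divisor-lemma}, by pinning down the three numerical parameters it leaves implicit in the special case of a canonical convergent.

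First I would observe that, since $w \neq 0$ and $D$ is not a square, $\alpha_0 = (r + w\sqrt{D})/s \notin \K(X)$, so $\CF(\alpha_0)$ is infinite and each canonical convergent $(p_n, q_n)$, $n \ge 0$, is a genuine convergent of $\alpha_0$ by Proposition \ref{cf-expansion-yields-convergents}. Hence Proposition \ref{general-convergent-divisor-lemma} applies to $\phi_n = p_n - \alpha\,q_n$ and gives
\begin{equation*}
\Div \phi_n = -m_n \pd{O_-} - \pd{Q_1} - \dots - \pd{Q_h} + (m_n + h - e_n)\,\pd{O_+} + \pd{P_1^n} + \dots + \pd{P_{e_n}^n}
\end{equation*}
with $P_i^n \in \CCa$, $m_n \ge 0$ and $0 \le e_n < h - \io{\alpha_0}$. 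As in the proof of that proposition, $m_n = -\ord_{O_-}(\phi_n) = \io{p_n} = \deg p_n$.

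Next I would compute the multiplicity at $O_+$. By the order function conventions of Section \ref{sec:org85f4b43}, $\ord_{O_+}(\phi_n) = \io{p_n - \alpha_0\,q_n}$, which by Proposition \ref{cf-expansion-yields-convergents} equals $\deg q_{n+1} = \deg a_{n+1} + \deg q_n$ (here it matters that $\alpha_0 \notin \K(X)$, so that this order is finite). Comparing with the display, $m_n + h - e_n = \deg q_{n+1}$, and then, using $\deg p_n = \deg a_0 + \deg q_n$ from Proposition \ref{canonical-convergents-degree-and-lc},
\begin{equation*}
e_n = \deg p_n + h - \deg q_{n+1} = (\deg a_0 + \deg q_n) + h - (\deg a_{n+1} + \deg q_n) = \deg a_0 - \deg a_{n+1} + h.
\end{equation*}
Since $\deg a_{n+1} \ge 1$ by Remark \ref{cf-absolute-values}, this yields $e_n \le \deg a_0 + h - 1$. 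Substituting $m_n = \deg p_n$ and $m_n + h - e_n = \deg q_{n+1}$ back into the display gives exactly the asserted divisor.

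I do not expect any real obstacle: the statement is a direct specialisation of Proposition \ref{general-convergent-divisor-lemma}. The only points needing a moment's care are the identification of the place valuation $\ord_{O_+}$ with the Laurent-series valuation $\io{\cdot}$ under the embedding $Y \mapsto \sqrt{D}$ (provided by Section \ref{sec:org85f4b43}) and the remark that $\alpha_0 \notin \K(X)$, which is what makes the approximation-quality identity of Proposition \ref{cf-expansion-yields-convergents} available uniformly in $n$.
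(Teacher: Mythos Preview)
Your proof is correct and follows essentially the same route as the paper: apply Proposition~\ref{general-convergent-divisor-lemma} to the canonical convergent, identify the pole order at $O_-$ as $\deg p_n$, the zero order at $O_+$ as $\deg q_{n+1}$ via Proposition~\ref{cf-expansion-yields-convergents}, and solve for $e_n$ using the degree relations $\deg q_{n+1} = \deg q_n + \deg a_{n+1}$ and $\deg p_n = \deg q_n + \deg a_0$. The paper's proof is simply a terse one-line version of your computation; you are a bit more explicit about why $\alpha_0 \notin \K(X)$ and why Proposition~\ref{cf-expansion-yields-convergents} therefore gives a finite value, which is a fair point to make.
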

\begin{proof}
We obtain the formula for \(e_n\) from 
\begin{equation*}
% \ord{O_+}(\phi_n) = 
\deg q_{n+1} = \deg q_n + \deg a_{n+1} = \deg p_n + h - e_n = \deg q_n + \deg a_0 + h - e_n,
\end{equation*}
because the principal divisor \(\phi_n\) has degree \(0\).
\end{proof}

Via \eqref{cf-moebius-pell-denom}, we can now calculate the divisors of the complete quotients (thinking \(Y = \sqrt{D}\)):
\begin{cor}
Write \(\mathbf P^n = \pd{P^n_1} + \dots + \pd{P^n_{e_n}}\), then
\begin{multline*}
\Div \alpha_n = \Div\left( - \ifrac{\phi_{n-2}}{\phi_{n-1}} \right) \\
= (\deg p_{n-1}-\deg p_{n-2}) \, \pd{O_-} + (\deg q_{n-1} - \deg q_{n}) \, \pd{O_+} + \mathbf P^{n-2} - \mathbf P^{n-1} \\ % \sum_{i=1}^{e_{n-2}} \pd{P^{n-2}_{i}} - \sum_{i=1}^{e_{n-1}} \pd{P^{n-1}_{i}} \\
= (\deg a_{n-1}) \, \pd{O_-} + (-\deg a_n) \, \pd{O_+} + \mathbf P^{n-2} - \mathbf P^{n-1} % \sum_{i=1}^{e_{n-2}} \pd{P^{n-2}_{i}} - \sum_{i=1}^{e_{n-1}} \pd{P^{n-1}_{i}}.
\end{multline*}
\end{cor}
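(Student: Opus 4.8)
The plan is to reduce everything to a single algebraic identity relating $\alpha_n$ to consecutive $\phi_k$'s, and then to subtract two instances of the divisor formula already proved in Corollary~\ref{cor-canonical-convergent-general-divisor}. Throughout I write $\phi_k = p_k - \alpha\,q_k$, so that $\phi_0, \phi_1, \dots$ are exactly the functions appearing in Corollary~\ref{cor-canonical-convergent-general-divisor}, together with the convention $\phi_{-1} = p_{-1} - \alpha\,q_{-1} = 1$.

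First I would recall the matrix relation from Section~\ref{sec:org932a99d}, namely $\alpha = \mfour{p_{n-1}}{p_{n-2}}{q_{n-1}}{q_{n-2}} \cdot \alpha_n$. Inverting this Moebius transformation and using $p_{n-1}\,q_{n-2} - p_{n-2}\,q_{n-1} = (-1)^n$ from \eqref{canonical-convergent-determinant}, one gets
\begin{equation*}
\alpha_n = -\frac{p_{n-2} - \alpha\,q_{n-2}}{p_{n-1} - \alpha\,q_{n-1}} = -\frac{\phi_{n-2}}{\phi_{n-1}},
\end{equation*}
which is just \eqref{complete-quotient-convergent-quotient} with a shifted index. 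Since $\alpha$ (equivalently $\alpha_0$) is quadratic over $\K(X)$, no canonical convergent equals $\alpha$, so $\phi_{n-2}$ and $\phi_{n-1}$ are nonzero and $\alpha_n \in \mino{\closure\K(X,Y)}$, so its divisor is defined. Applying the homomorphism $\Div$ gives $\Div \alpha_n = \Div \phi_{n-2} - \Div \phi_{n-1}$.

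Next I would substitute the explicit expression for $\Div \phi_{n-2}$ and $\Div \phi_{n-1}$ from Corollary~\ref{cor-canonical-convergent-general-divisor}. The key observation is that the finite poles $-\pd{Q_1} - \dots - \pd{Q_h}$ are the poles of $\alpha$ itself, hence the integer $h$ and the points $Q_1, \dots, Q_h$ are the \emph{same} for every index; therefore these terms cancel in the difference. What survives is
\begin{equation*}
\Div \alpha_n = (\deg p_{n-1} - \deg p_{n-2})\,\pd{O_-} + (\deg q_{n-1} - \deg q_{n})\,\pd{O_+} + \mathbf P^{n-2} - \mathbf P^{n-1}.
\end{equation*}
Finally, since $\alpha_0 \notin \K(X)$, Remark~\ref{cf-absolute-values} gives $\deg a_k \geq 1$ for $k \geq 1$, so the degree recursion of Proposition~\ref{canonical-convergents-degree-and-lc} yields $\deg p_{n-1} - \deg p_{n-2} = \deg a_{n-1}$ and $\deg q_{n-1} - \deg q_{n} = -\deg a_n$, giving the displayed formula; the smallest indices are handled by the conventions $\phi_{-1} = 1$, $\deg p_{-1} = \deg q_0 = 0$, $\mathbf P^{-1} = 0$, which are consistent with these equalities.

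I do not expect a genuine obstacle: the argument is one algebraic identity plus a subtraction of two already-established divisor formulas. The only point deserving an explicit sentence is the cancellation of the $\pd{Q_i}$ — one must stress that $h$ and the $Q_i$ are intrinsic to $\alpha$ and therefore independent of the convergent index, which is immediate from the statement of Corollary~\ref{cor-canonical-convergent-general-divisor}.
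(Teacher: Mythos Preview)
Your proposal is correct and follows essentially the same approach as the paper: the paper simply states this corollary as following ``via \eqref{cf-moebius-pell-denom}'' (which is the identity $\alpha_n = -\phi_{n-2}/\phi_{n-1}$) from Corollary~\ref{cor-canonical-convergent-general-divisor}, and your write-up spells out exactly this subtraction, including the cancellation of the $\pd{Q_i}$ and the degree identities.
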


Note how
\begin{align*}
\ord_{O_+}(\alpha_n) &= \io{\alpha_n} = \io{a_n} = - \deg a_n,  \\
\ord_{O_-}(\alpha_n) &= -\io{\cfsb{\alpha_n}} = -\io{a_{n-1}} = \deg a_{n-1}.
\end{align*}
This aligns with the observations in Section \ref{sec:org6896ed2}, in particular Remark \ref{cf-reduced-backwards-n} about the ``conjugate'' continued fraction expansion.

So the \(Q_i\) can no longer be seen directly in this divisor, but of course they could appear hidden among the \(P^{n-1}_i, P^{n-2}_i\).

\medskip

Let us now restrict to the case \(w = 1\) and \(s \div D - r^2\). This implies \(h \leq \deg s\) because now it is impossible for both a point and its conjugate to appear as a pole, and a self-conjugate point can appear at most as a pole of order \(1\) (assuming that \(D\) is square-free). 

If \(s \in \units\K\), then there are no finite poles, and we are essentially in the situation of Lemma \ref{convergent-divisor-lemma}.

\begin{rem}
\label{rem-general-convergent-divisor-translated-multiples}
Using \(\io{\alpha_0} \leq 0\), we may also assume that \(\deg r \leq d = \frac{1}{2} \deg D\) (otherwise we could subtract some multiple of \(s\) from \(r\) which does not change the subsequent complete quotients). This implies \(\io{\alpha_0} \geq \io{\sqrt{D}} - \io{s}\), so \(e < - \io{\alpha_0} \leq d + h - \deg s \leq d\) and hence \(e \leq g\), so we get
\begin{equation}
\label{jacobian-translate-point-multiples}
j(Q_1) + \dots + j(Q_h) + m \, j(O_-) = j(P_1) + \dots + j(P_e).
\end{equation}
We are thus representing a translate of the multiples of \(\OO\) as a sum of at most \(g\) points in the Jacobian.
\end{rem}

\begin{rem}
\label{convergent-divisor-rationality}
The divisor \(\pd{P_1} + \dots + \pd{P_e}\) is usually going to be a \(\K\)-rational divisor. Be aware that this does not mean that the \(P_i\) are defined over \(\K\). However they are defined over a field extension of degree at most \(e\) over \(\K\). So if \(e = 1\), the single point \(P_1\) is going to be defined over \(\K\). We will make use of this later in Sections \ref{sec:org9851fc8} and \ref{sec:org53fcc2b}.
\end{rem}

\chapter{Specialization of continued fractions}
\label{sec:orgd5f1900}
The first goal of this chapter is to explain and recover a theorem of van der Poorten (see Theorem 1 in \cite{poorten-1998-formal-power-series}, Theorem 2.1 in \cite{poorten-1999-reduction-continued-fractions} and Theorem 6 in \cite{poorten-2001-non-periodic-continued}) stating that the convergents of some \(\alpha\) modulo a prime number \(\pp\)  all arise by normalising and reducing the original convergents of \(\alpha\) (which is a Laurent series with rational coefficients).  

Here we actually prove this theorem (as Theorem \ref{convergent-reduction-surjective}) in the general setting of Laurent series defined over a discrete valuation ring (or its fraction field), once some natural conditions are satisfied.

Before we look at the convergents, we however need to understand what we mean by reducing convergents, and likewise continued fractions. For the latter, this immediately leads to a notion of good or bad reduction of polynomial continued fractions. In the case of good reduction of a continued fraction, van der Poorten's theorem becomes trivial, using the classification of convergents described in Chapter \ref{sec:org172eb73}. This suggests that the bad reduction case is more interesting. 

Understanding the reduction of the convergents also helps to understand reduction of the continued fraction better, and we will look at some simple cases at the end of the chapter. This goes already toward the calculation of the Gauss norms of the partial quotients and convergents. These will be further analysed for square roots of polynomials in the next chapter.

\section{Specialization of Laurent series}
\label{sec:org46a3df3}
\subsection{Discrete valuation rings}
\label{sec:orgb3fd29f}
We fix a discrete valuation ring \(\O\) with its unique (principal) maximal ideal \(\mm\). It produces two fields: the \emph{fraction field} \(K = \Fr(\O)\) and the \emph{residue field} \(k = \O/\mm\). In order to apply the theory from the preceding chapters, we require that \(\Char k \neq 2\), which implies \(\Char K \neq 2\) as well.

We denote the (non-archimedean) valuation of \(\O\) by \(\nu_0 : K \surject \Z \cup \{ \infty \}\). Recall that it satisfies
\begin{itemize}
\item \(\nu_0(x) = \infty \iff x = 0\),
\item \(\nu_0(x \, y) = \nu_0(x) + \nu_0(y)\) for all \(x, y \in \units K\),
\item \(\nu_0(x + y) \geq \min(\nu_0(x), \nu_0(y))\) for all \(x, y \in K\).
\end{itemize}

In the last point, we can replace ``\(\geq\)'' with ``\(=\)'' if \(\nu_0(x) \neq \nu_0(y)\).

Moreover we choose an uniformising parameter \(\uni\) (a generator of the maximal ideal \(\mm\) in \(\O\)), with satisfies \(\nu_0(\uni) = 1\). Recall
\begin{equation}
\label{dvr-valuation-defi}
\begin{aligned}
\O &= \{ x \in K \mid \nu_0(x) \geq 0 \},\\
\mm = \spann{\uni} &= \{x \in K \mid \nu_0(x) > 0 \},\\
\units \O &= \{ x \in K \mid \nu_0(x) = 0 \}.
\end{aligned}
\end{equation}

We get the reduction map \(\Redm : \O \to \O/\mm = k\); we usually write \(\Red{x} = \RedM{x}\) for more compact notation.

\begin{rem}
Note that by choosing a discrete non-archimedean valuation \(\nu_0\) on a given field \(K\), we get a discrete valuation ring \(\O\) through \eqref{dvr-valuation-defi}.
\end{rem}

For example, starting with \(K = \Q\) and some odd integer prime \(\pp\) with its corresponding \(\pp\)-adic valuation \(\nu_\pp\), one gets the localisation \(\O = \Z_{\spann{\pp}}\) of \(\Z\) at \(\pp\). In this case, the residue field \(k = \F_\pp\) is finite.

Another example would be \(K = \C(t)\) with a zero-order \(\ord_{t=t_0}\) (for some \(t_0 \in \C\)). Then \(\O = \C[t]_{\spann{t-t_0}}\) is a localisation of \(\C[t]\) at the prime ideal \(\spann{t-t_0}\), and \(t-t_0\) is a uniformising parameter. The residue field is now \(k = \C\), hence infinite. In this example we could actually replace \(\C\) by any field (of characteristic not \(2\)), even a finite field. The latter would make the residue field finite again.

\subsection{Gauss norms}
\label{sec:orge83be0b}
It is natural to extend such a valuation to polynomials; for absolute values this is called a \emph{Gauss norm}. In fact, we can extend the valuation even to a subset of Laurent series.

\begin{defi}
Define \(\nu : \laurinx K \to \Z \cup \{+\infty, -\infty\}\) by setting for \(u \in \laurinx K\), with \(u_n \in K\):
\begin{equation*}
\nub{u} = \nub{\lseries{N}{u}{n}} = \inf \{ \nu_0(u_n) \mid n \in \Z, n \leq N \}.
\end{equation*}
To avoid \(\nub{u} = -\infty\), we restrict to the subring
\begin{equation*}
\laurinx K_\nu = \{ u \in \laurinx K \mid \text{the } \nu_0(u_n) \text{ are bounded from below} \}.
\end{equation*}
\end{defi}
\begin{rem}
If \(x \in K\), note that because \(\nu_0\) is non-archimedean, \(u(x)\) converges \IFF \(\nu_0(u_n \, x^n) = \nu_0(u_n) + n \, \nu_0(x) \to +\infty\) as \(n \to \infty\). The boundedness condition ensures that \(u(x)\) converges for every \(x \in \mm\) (with \(\nu_0(x) > 0\)).
\end{rem}

\begin{prop}
\label{bounded-laurent-valuation}
\(\laurinx K_\nu\) is a ring, and the extended \(\nu\) is a discrete non-archimedean valuation on it.
\end{prop}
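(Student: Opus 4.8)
The plan is to verify the three valuation-ring axioms for $\nu$ restricted to $\laurinx K_\nu$, after first checking that this set is closed under addition and multiplication. First I would observe that $\laurinx K_\nu$ is closed under addition: if $u = \lseries{N}{u}{n}$ and $v = \lseries{M}{v}{n}$ have coefficient valuations bounded below by $c$ and $c'$ respectively, then every coefficient of $u+v$ has valuation $\geq \min(c,c')$ by the ultrametric inequality for $\nu_0$, so $u + v \in \laurinx K_\nu$ and moreover $\nub{u+v} \geq \min(\nub{u}, \nub{v})$. This simultaneously disposes of closure under addition and the non-archimedean inequality.

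Next I would treat multiplication, which is the main obstacle. The coefficients of a product $uv$ are given by the Cauchy product $(uv)_k = \sum_{i+j=k} u_i v_j$; the point is that although this is in general an infinite sum over pairs with $i+j=k$, the Laurent-series condition $\ord_\infty$ being bounded (i.e. $i \le N$, $j \le M$) makes it a \emph{finite} sum for each $k$, so $(uv)_k$ is a well-defined element of $K$ with $\nu_0((uv)_k) \geq \min_{i+j=k} (\nu_0(u_i) + \nu_0(v_j)) \geq \nub{u} + \nub{v}$. Hence $uv \in \laurinx K_\nu$ and $\nub{uv} \geq \nub{u} + \nub{v}$. For the reverse inequality (multiplicativity), the standard argument applies: let $i_0$ be the smallest index with $\nu_0(u_{i_0}) = \nub{u}$ and $j_0$ the smallest index with $\nu_0(v_{j_0}) = \nub{v}$ (these exist because the valuations are bounded below and integer-valued, and because $u, v$ are nonzero Laurent series with finitely many terms of any given valuation below a fixed bound — more carefully, one uses that for a nonzero $u$ there is a \emph{largest} index $N$ with $u_N \neq 0$, and then among the finitely many... no: one instead picks $i_0$ minimal achieving the infimum, which exists since the set of indices achieving any particular value is bounded above by $N$ and the values are integers bounded below). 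Then the coefficient $(uv)_{i_0+j_0} = u_{i_0} v_{j_0} + \sum u_i v_j$ where every other term in the sum has either $i < i_0$ or $j < j_0$; if $i < i_0$ then $\nu_0(u_i) > \nub{u}$ by minimality, and if $i > i_0$ then $j < j_0$ forcing $\nu_0(v_j) > \nub{v}$, so in all cases the extra terms have valuation strictly greater than $\nub{u} + \nub{v}$, whence $\nu_0((uv)_{i_0+j_0}) = \nub{u} + \nub{v}$ exactly. This gives $\nub{uv} = \nub{u} + \nub{v}$.

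Finally I would record the remaining axioms: $\nub{u} = +\infty$ iff all coefficients vanish iff $u = 0$; the image of $\nu$ on $\units{(\laurinx K_\nu)}$ is all of $\Z$ since $\nu(\uni^m) = m$ for the uniformiser $\uni \in \O$; and discreteness is immediate from the value group being $\Z$. I would also note that $\laurinx K_\nu$ contains $\O[X]$ and more generally all of $\laurinx \O$ with $\nu \geq 0$ there, and that $1 \in \laurinx K_\nu$ with $\nu(1) = 0$, so it is a (commutative, unital) subring of $\laurinx K$. The only genuinely delicate point is the existence of the minimal indices $i_0, j_0$ achieving the infimum in the multiplicativity argument — this is where the boundedness-below hypothesis defining $\laurinx K_\nu$ is essential, as without it the infimum need not be attained and $\nu$ could fail to be multiplicative or even well-defined.
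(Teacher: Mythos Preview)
Your overall strategy is the same as the paper's, and most of the steps are fine. There is, however, a genuine slip in the multiplicativity argument: you choose $i_0$ and $j_0$ to be the \emph{minimal} indices achieving the infimum, but for Laurent series in $X^{-1}$ no such minimal index need exist. The coefficient indices range over $n \leq N$ and extend to $-\infty$; the set $\{n : \nu_0(u_n) = \nub{u}\}$ is bounded above (by $N$) but can be unbounded below. For a concrete counterexample, take $u = \tfrac{X}{X-1} = \sum_{n\leq 0} X^{n}$: every coefficient equals $1$, so every index achieves the infimum $\nub{u}=0$ and there is no minimal one. Your own parenthetical justification (``the set of indices achieving any particular value is bounded above by $N$'') establishes exactly the wrong bound for the purpose at hand.

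The fix is simply to reverse the choice: pick $i_0, j_0$ \emph{maximal} with $\nu_0(u_{i_0}) = \nub{u}$ and $\nu_0(v_{j_0}) = \nub{v}$, which exist precisely because the indices are bounded above. Then in the Cauchy sum for $(uv)_{i_0+j_0}$, any other pair $(i,j)$ with $i+j=i_0+j_0$ has either $i>i_0$ (forcing $\nu_0(u_i)>\nub{u}$ by maximality) or $j>j_0$ (forcing $\nu_0(v_j)>\nub{v}$), and the argument goes through. This is exactly what the paper does. The boundedness-below hypothesis on $\nu$ is still essential, but its role is to guarantee that the infimum is \emph{attained} at all (values lie in $\Z$ bounded below), not to supply a minimal index.
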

\begin{proof}
It suffices to check that \(\nu\) satisfies the usual properties of an ultrametric valuation on \(\laurinx K_\nu\). Then \(\laurinx K_\nu\) is automatically a ring (using the same arguments which show that \(\O\) defined as in \eqref{dvr-valuation-defi} is a ring).

It is also obvious that \(\nu\) is discrete because we take an infimum of a subset of \(\Z\) bounded from below.

Clearly, we have \(\nub{u} = \infty\) \IFF \(u = 0\).

Take \(u,v \in \laurinx K_\nu\) with
\begin{equation*}
u = \lseries{N}{u}{n}, \quad v = \lseries{M}{v}{m},
\end{equation*}

For the ultrametric inequality, let
\begin{equation*}
u + v = w = \lseries{\max(N,M)}{w}{l}.
\end{equation*}
Without loss of generality, one may assume \(N=M\), and then \(w_n = u_n + v_n\) for all \(n \leq N\):
\begin{multline*}
\nub{w} = \inf\{\nuOb{u_n + v_n} \mid n \leq N\}
\geq \inf\{\min(\nuOb{u_n}, \nuOb{v_n}) \mid n \leq N\} \\
= \min\left( \inf\{\nuOb{u_n} \mid n \leq N\}, \inf\{\nuOb{v_n} \mid n \leq N\} \right)
= \min(\nub{u}, \nub{v}).
\end{multline*}

For multiplicativity, let
\begin{equation*}
u\, v = w = \lseries{(N+M)}{w}{l}.
\end{equation*}
As \(\nu\) is invariant under multiplication with powers of \(X\), we may assume \(N=M=0\). From the definition of the Cauchy product
\begin{equation}
\label{cauchy-product-u-v-eq-w}
w_l = \sum_{n+m=l} u_n \, v_m
\end{equation}
it is obvious that \(\nub{w} \geq \nub{u} + \nub{v}\) must hold:
\begin{multline*}
\nu(w) = \inf\{\nu_0(w_l) \mid l \leq 0\} \geq \inf\left\{ \min(\nu_0(u_n) + \nu_0(v_m) \mid n + m = l ) \mid l \leq 0 \right\} \\
\geq \inf\left\{ \min(\nu(u) + \nu(v) \mid n + m = l ) \mid l \leq 0 \right\}
\geq \nu(u) + \nu(v).
\end{multline*}

Because \(\nu_0\) is discrete on \(K\), there exist \(n_0, m_0\) such that
\begin{equation*}
\nub{u} = \nuOb{u_{n_0}} \text{ and } \nub{v} = \nuOb{v_{m_0}}
\end{equation*}
and of course, we may choose \(n_0\) and \(m_0\) maximal. Then
\begin{equation*}
w_{n_0 + m_0} = \sum_{n+m = n_0 + m_0} u_n \, v_m = \sum_{n+m = n_0 + m_0, \atop n > n_0} u_n \, v_m  + u_{n_0} \, v_{m_0} + \sum_{n+m = n_0 + m_0, \atop m > m_0}  u_n \, v_m.
\end{equation*}
We have \(\nuOb{u_n} > \nub{u}\) for all terms with \(n > n_0\), hence the absolute value of the left sum is \(> \nub{u} + \nub{v}\). And we have \(\nuOb{v_m} > \nub{v}\) for all terms with \(m > m_0\), hence the absolute value of the right sum is \(> \nub{u} + \nub{v}\).

However, the middle term has absolute value \(\nuOb{u_{n_0}} + \nuOb{v_{m_0}} = \nub{u} + \nub{v}\), implying \(\nuOb{w_{n_0 + m_0}} = \nub{u} + \nub{v}\). It follows \(\nub{u\,v} \leq \nub{u} + \nub{v}\), and hence \(\nub{u\,v} = \nub{u} + \nub{v}\).
\end{proof}
\begin{rem}
Clearly, \(K \subset K[X] \subset \laurinx K_\nu\). For all \(x \in K\) we have \(\nu(x) = \nu_0(x)\), so henceforth we refer to \(\nu_0\) also as \(\nu\).
\end{rem}
\begin{rem}
Of course also \(\laurinx \O \subset \laurinx K_\nu\). Applying the reduction map on each coefficient, it extends naturally to
\begin{equation*}
\Redm : \O[X] \surject k[X], \qquad \Redm : \laurinx \O \surject \laurinx k.
\end{equation*}
For convenience, we use the same notation, including \(\Red{x} = \RedM{x}\) for \(x \in \laurinx \O\), and say that we \emph{reduce mod \(\nu\)} or \emph{specialize at \(\nu\)}.

In analogue to \eqref{dvr-valuation-defi}, we obviously get
\begin{align*}
% \O &= \{ u \in K \mid \nub{u} \geq 0 \} &
% \mm &= \{ u \in K \mid \nub{u} > 0 \} \\
\O[X] &= \{ u \in K[X] \mid \nub{u} \geq 0 \}, &
\mm[X] &= \{ u \in K[X] \mid \nub{u} > 0 \}, \\
\laurinx \O &= \{ u \in \laurinx K \mid \nub{u} \geq 0 \}, &
\laurinx \mm &= \{ u \in \laurinx K \mid \nub{u} > 0 \}.
\end{align*}
It is straightforward to check that \(\mm[X]\) respectively \(\laurinx \mm\) are the kernels of the (surjective) reduction map on \(\O[X]\) respectively \(\laurinx \O\) (consider the valuations of the coefficients of \(u\)). As \(k[X]\) is an integral domain, this implies that \(\mm[X]\) is a prime ideal of \(\O[X]\). And as \(\laurinx k\) is even a field, the ideal \(\laurinx \mm\) is a maximal ideal of \(\laurinx \O\).

Both \(\mm[X]\) and \(\laurinx \mm\) are obviously principal ideals in their respective ring, with generator \(\uni\) (the uniformising parameter of \(\nu_0\)).
\end{rem}
\begin{rem}
The fraction field of \(\laurinx \O\) is \(\laurinx K\). However \(\laurinx \O\) is \emph{not} a discrete valuation ring. It is not even a local ring, because \(\nu(u) = 0\) is not a sufficient condition for having \(u \in \units{\laurinx \O}\) (see Corollary \ref{laurent-inverse-bounded-corollary} below).

For example \(u = \pi + \inv X\) is not in \(\laurinx \mm\), but neither is it a unit of \(\laurinx \O\).
\end{rem}

\begin{defi}
We say for \(u \in \laurinx K\) that
\begin{itemize}
\item \(u\) is \emph{unbounded} if \(\nub{u} = -\infty\) i.e. \(u \not \in \laurinx K_\nu\),
\item \(u\) is \emph{bounded} if \(\nub{u} \neq -\infty\) i.e. \(u \in \laurinx K_\nu\),
\item \(u\) \emph{has \negval} if \(\nub{u} < 0\), in particular if it is unbounded,
\item \(u\) \emph{has \posval} if \(\nub{u} > 0\).
\end{itemize}
\end{defi}

For example, if \(u \in K[X]\) is a polynomial, it has \negval \IFF at least one of its coefficients has \negval; and it has \posval \IFF all its coefficients are either \(0\) or have \posval. Note the different logical operations: For \negval, we have \textbf{or}, for \posval we have \textbf{and}.

Let us now investigate how far away \(\laurinx K_\nu\) is from being a field (and \(\laurinx \O\) from being a discrete valuation ring). For example, the Laurent polynomial \(1 + u_{-1} \, \inv X\) with \(\nub{u_{-1}} < 0\) does not have a bounded inverse:
\begin{prop}
\label{laurent-inverse-bounded}
Let \(u \in \laurinx K_\nu\) with \(u_0 = \LC(u) \neq 0\) (so \(u \neq 0\)). Then \(\inv u \in \laurinx K_\nu\) if and only if \(\nub{u} = \nub{u_0}\).
\end{prop}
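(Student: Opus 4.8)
The plan is to reduce at once to the normalised case $u_0 = 1$ and then treat the two implications separately: the forward one (``$\inv u$ bounded $\Rightarrow \nu(u) = \nu(u_0)$'') by a short valuation computation, and the backward one by summing a geometric series. For the normalisation: since $u_0 = \LC(u) \in \units K$, replacing $u$ by $u/u_0$ shifts every coefficient valuation by the constant $\nu_0(u_0)$; in particular $u/u_0 \in \laurinx K_\nu$, the inverse $u_0\,\inv u$ of $u/u_0$ lies in $\laurinx K_\nu$ if and only if $\inv u$ does, and the condition $\nu(u) = \nu_0(u_0)$ becomes $\nu(u/u_0) = 0$. So I may assume $u = 1 + u_{-1}X^{-1} + u_{-2}X^{-2} + \cdots$, so that $\ord_\infty(u) = 0$, and the statement to prove is that $\inv u \in \laurinx K_\nu$ if and only if $\nu(u) = 0$, i.e. if and only if $u \in \laurinx \O$.

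For the forward implication, assume $\inv u \in \laurinx K_\nu$. From $u \cdot \inv u = 1$ and the multiplicativity of $\nu$ on the ring $\laurinx K_\nu$ (Proposition \ref{bounded-laurent-valuation}) we get $\nu(u) + \nu(\inv u) = 0$. Since $\ord_\infty(\inv u) = -\ord_\infty(u) = 0$, comparing the $X^0$-coefficients in $u\cdot\inv u = 1$ gives $\LC(\inv u) = u_0^{-1}$, whence $\nu(\inv u) \leq \nu_0(\LC(\inv u)) = -\nu_0(u_0) = 0$; therefore $\nu(u) = -\nu(\inv u) \geq 0$, and since $u_0$ is among the coefficients entering the infimum defining $\nu(u)$ we also have $\nu(u) \leq \nu_0(u_0) = 0$, so $\nu(u) = 0$. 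I remark that this argument needs no normalisation at all: in general it yields $\nu(u) = -\nu(\inv u) \geq \nu_0(\LC(\inv u)) = \nu_0(u_0)$, which together with the trivial inequality $\nu(u) \leq \nu_0(u_0)$ already proves the ``only if'' direction directly.

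For the converse, suppose $\nu(u) = 0$, that is, all $u_{-m} \in \O$. I would write $u = 1 - f$ with $f = -\sum_{m\geq 1} u_{-m} X^{-m} \in \laurinx \O$ and $\ord_\infty(f) \geq 1$. Because $\laurinx K$ is a field the inverse $\inv u$ exists there, and from $(1-f)\sum_{j=0}^{J} f^j = 1 - f^{J+1}$ together with $\ord_\infty(f^{J+1}) = (J+1)\,\ord_\infty(f) \to \infty$ it follows that $\inv u = \sum_{j\geq 0} f^j$. Each $f^j$ lies in $\laurinx \O$ with $\ord_\infty(f^j)\geq j$, so for every $l$ only the finitely many terms with $j \leq l$ contribute to the coefficient of $X^{-l}$ in $\inv u$, which is therefore a finite $\O$-linear combination and hence lies in $\O$; thus $\inv u \in \laurinx \O \subset \laurinx K_\nu$.

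None of these steps presents a serious obstacle; they are all elementary valuation bookkeeping. The one subtlety worth flagging is that multiplicativity of $\nu$ is guaranteed on $\laurinx K_\nu$ but not on all of $\laurinx K$, which is exactly why the forward implication must take $\inv u \in \laurinx K_\nu$ as a hypothesis rather than deducing a contradiction from an unbounded inverse, and why it is the boundedness of the tails $f^{J+1}$ and of the partial sums that legitimises the geometric-series computation in the converse.
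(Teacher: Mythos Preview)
Your proof is correct and follows essentially the same route as the paper's: normalise so that the leading coefficient is $1$ (the paper also divides by $X^{-\ord u}$, which you tacitly assume when writing $u = 1 + u_{-1}X^{-1} + \cdots$ but do not justify), then for the forward direction use multiplicativity of $\nu$ on $\laurinx K_\nu$ together with $\nu(\inv u) \leq \nu_0(\LC(\inv u))$, and for the converse expand $\inv u$ as a geometric series whose partial sums have coefficients in $\O$. Your parenthetical remark on the unnormalised forward direction contains a sign slip---it should read $-\nu(\inv u) \geq -\nu_0(\LC(\inv u)) = \nu_0(u_0)$---but the main (normalised) argument is unaffected.
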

\begin{proof}
If \(u\) has a bounded inverse, we have \(\LC(\inv u) = \ifrac{1}{u_0}\) with \(\nub{\ifrac{1}{u_0}} \geq \nub{\inv u} = -\nub{u}\), hence \(\nub{u} \geq \nub{u_0}\). But by definition \(\nub{u_0} \geq \nub{u}\), so it follows \(\nub{u_0} = \nub{u}\).

Conversely, assume \(\nub{u_0} = \nub{u}\); dividing \(u\) by \(u_0\) and \(X^{-\ord u}\) (both are bounded), we may without loss of generality write \(u = 1 - v\) for \(v \in \laurinx \O\) with \(\nub{v} \geq 0\) and \(\io{v} > 0\) (so actually \(v \in \powerseriesinvx \O\) is a power series in \(\inv X\) without constant coefficient). Then
\begin{equation*}
\inv{u} = \frac{1}{1-v} = \sum_{j=0}^\infty {v}^j = \lseries{0}{w}{m}
\end{equation*}
converges in \(\laurinx K\). Only finitely many \({v}^j\) (always with \(\nub{v^j} \geq 0\)) contribute to each \(w_m\), so clearly \(\nub{w_m} \geq 0\) for all \(m\), and \(\inv u\) is bounded.
\end{proof}
\begin{cor}
\label{laurent-inverse-bounded-corollary}
Let \(u \in \laurinx \O \setminus \{0\}\). Then \(\inv u \in \laurinx \O\) (i.e. \(u \in \units{\laurinx \O}\)) \IFF \(\LC(u) \in \units \O\).
\end{cor}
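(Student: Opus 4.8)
The plan is to derive this corollary directly from Proposition \ref{laurent-inverse-bounded} together with the multiplicativity of $\nu$ on $\laurinx K_\nu$ established in Proposition \ref{bounded-laurent-valuation}. Write $u_0 = \LC(u)$, which is nonzero since $u \neq 0$. Two elementary facts are used repeatedly: that $u \in \laurinx\O$ means exactly $\nub{u} \geq 0$, and that by the very definition of $\nu$ on Laurent series one always has $\nub{u} \leq \nu(u_0)$; moreover $\LC(u) \in \units\O$ is the same as $\nu(u_0) = 0$.

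For the forward implication, suppose $\inv u \in \laurinx\O$. Then $\inv u$ is in particular bounded, so Proposition \ref{laurent-inverse-bounded} yields $\nub{u} = \nu(u_0)$. From $u \in \laurinx\O$ we get $\nu(u_0) = \nub{u} \geq 0$, while from $\inv u \in \laurinx\O$ together with $\nub{\inv u} = -\nub{u} = -\nu(u_0)$ (multiplicativity of $\nu$) we get $\nu(u_0) \leq 0$. Hence $\nu(u_0) = 0$, i.e. $\LC(u) \in \units\O$.

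For the converse, suppose $\LC(u) \in \units\O$, i.e. $\nu(u_0) = 0$. Combining $0 \leq \nub{u}$ with $\nub{u} \leq \nu(u_0) = 0$ gives $\nub{u} = \nu(u_0) = 0$, so Proposition \ref{laurent-inverse-bounded} applies and $\inv u \in \laurinx K_\nu$ is bounded. Finally $\nub{\inv u} = -\nub{u} = 0 \geq 0$ shows $\inv u \in \laurinx\O$, as required.

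I do not expect any real obstacle once Proposition \ref{laurent-inverse-bounded} is available: the whole argument is the bookkeeping of the inequalities $\nub{u} \geq 0$ and $\nub{u} \leq \nu(\LC(u))$. The one point worth stating carefully is that $\nub{u} = 0$ by itself does \emph{not} force $\LC(u) \in \units\O$ — witness the example $u = \pi + \inv X$ from the remark preceding Proposition \ref{laurent-inverse-bounded} — which is precisely why one must invoke that proposition rather than merely multiplicativity of $\nu$.
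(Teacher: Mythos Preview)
Your proof is correct and follows essentially the same approach as the paper's: both directions use Proposition~\ref{laurent-inverse-bounded} together with the multiplicativity of $\nu$ to squeeze $\nu(\LC(u)) = \nu(u) = 0$. The only difference is cosmetic ordering of the inequalities (the paper first gets $\nu(u)=0$ and then invokes the proposition, you invoke the proposition first), and your closing remark about $u=\pi+\inv X$ is a nice touch that the paper places elsewhere.
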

\begin{proof}
If \(\inv u \in \laurinx \O\), then both \(\nu(u) \geq 0\) and \(-\nu(u) = \nub{\inv u} \geq 0\) hence \(\nu(u) = 0\). By Proposition \ref{laurent-inverse-bounded} follows \(\nu(\LC(u)) = 0\), i.e. \(\LC(u) \in \units \O\).

Conversely, if \(\LC(u) \in \units \O\), then \(\nu(\LC(u)) = 0\) and so we clearly have \(\nu(u) = 0\). Then Proposition \ref{laurent-inverse-bounded} implies \(\inv u \in \laurinx K_\nu\). With \(\nub{\inv u} = 0\) we obtain \(\inv u \in \laurinx \O\) as desired. 
\end{proof}

\subsection{Criterion for bounded square roots}
\label{sec:org22171ff}

In the next chapter, we will be particularly interested in the specialization of Laurent series which are square roots of polynomials. Proposition \ref{laurent-sqrt-d} already describes how to construct square roots that lie in \(\laurinx K\), we now give additional conditions which are sufficient to have the square root lie in \(\laurinx K_\nu\).

For a counterexample, take \(u = 1 + u_{-1} \, \inv X\) where \(u_{-1} \in K, \; \nub{u_{-1}} < 0\): then it is easy to see that \(\nub{u} = -\infty\).

\begin{prop}
\label{laurent-sqrt-bounded}
Let \(u \in \laurinx K_\nu\) such that \(\sqrt{u} \in \laurinx K\) and \(\nub{u} = \nub{\LC(u)}\). Then \(\sqrt{u} \in \laurinx K_\nu\), i.e. \(\sqrt{u}\) is bounded.
\end{prop}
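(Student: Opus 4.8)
The plan is to reduce to the normalised situation and then invoke Proposition \ref{laurent-inverse-bounded} together with the explicit binomial series from the proof of Proposition \ref{laurent-sqrt-d}. First I would use that multiplication by a power of \(X\) and by a non-zero constant of \(K\) does not affect boundedness: since \(\nu(u) = \nu(\LC(u))\), after dividing \(u\) by \(\LC(u)\) and by \(X^{\ord u}\) I may assume \(u = 1 + v\) with \(v \in \laurinx \O\), \(\io{v} > 0\); here the hypothesis \(\nu(u) = \nu(\LC(u))\) is exactly what guarantees that \(1/\LC(u)\) clears all the denominators, so that the rescaled \(u\) lies in \(\laurinx \O\) with \(\nu(u) = 0\) and constant coefficient \(1\). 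Note that dividing by \(\LC(u)\) does not obstruct having a square root in \(\laurinx K\), since \(\LC(u)\) is a square in \(K\) (it is the square of \(\LC(\sqrt u)\)), and likewise \(X^{\ord u}\) has even order so \(X^{-\ord u}\) is a square; so we still have \(\sqrt{1+v} \in \laurinx K\), and it suffices to prove \(\sqrt{1+v}\) is bounded.

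Next I would write, as in the proof of Proposition \ref{laurent-sqrt-d},
\begin{equation*}
\sqrt{1+v} = \sum_{n=0}^\infty \binom{1/2}{n} v^n,
\end{equation*}
which converges in \(\laurinx K\) because \(\io{v} > 0\). The key point is that each \(v^n \in \laurinx \O\) (as \(\laurinx \O\) is a ring and \(v \in \laurinx \O\)), and that each binomial coefficient \(\binom{1/2}{n}\) is \emph{not} necessarily in \(\O\) — this is where the argument needs care. For \(\nu_0\) the localisation of \(\Z\) at an odd prime the \(2\)-adic denominators in \(\binom{1/2}{n}\) are harmless, and for a geometric valuation (residue characteristic \(0\)) they vanish entirely; in general one observes that \(\Char k \neq 2\) is assumed throughout, so \(2 \in \units \O\), hence \(1/2 \in \O\) and therefore \(\binom{1/2}{n} \in \O\) for every \(n\) (it is a \(\Z[1/2]\)-combination, in fact a polynomial in \(1/2\) with integer coefficients divided by \(n!\); more simply, \(2^{2n-1}\binom{1/2}{n} \in \Z\), and \(2\) is a unit). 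Consequently each term \(\binom{1/2}{n} v^n\) lies in \(\laurinx \O\). Fixing a coefficient index \(m\), only finitely many \(v^n\) contribute to the coefficient of \(X^m\) in \(\sqrt{1+v}\) (because \(\io{v^n} \geq n\) grows), and each contributes an element of \(\O\); hence the coefficient of \(X^m\) lies in \(\O\), i.e. has valuation \(\geq 0\). This holds for all \(m\), so \(\nu(\sqrt{1+v}) \geq 0\), in particular \(\sqrt{1+v} \in \laurinx K_\nu\), and undoing the normalisation gives \(\sqrt u \in \laurinx K_\nu\).

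The only genuine obstacle is the integrality of the binomial coefficients \(\binom{1/2}{n}\) over \(\O\); everything else is bookkeeping with the ultrametric inequality and the fact, already used repeatedly in the excerpt, that only finitely many terms of the series affect any given Laurent coefficient. I expect the write-up to be short: isolate the claim \(\binom{1/2}{n} \in \O\) (using \(2 \in \units\O\)) as a one-line remark, then run the convergence-and-coefficientwise argument exactly as in the second half of the proof of Proposition \ref{laurent-inverse-bounded}.
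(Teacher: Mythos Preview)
Your proposal is correct and follows essentially the same approach as the paper's proof: normalise to \(u = 1+v\) with \(v \in \laurinx\O\) and \(\io{v} > 0\), expand via the binomial series, observe that \(\binom{1/2}{n} \in \O\) because \(2 \in \units\O\) (the paper cites Lemma~\ref{lemma-binomial-half} for the fact \(2^{2n-1}\binom{1/2}{n}\in\Z\)), and conclude coefficientwise since only finitely many terms contribute at each index. Your added remark that \(\LC(u)\) and \(X^{\ord u}\) are themselves squares, so the normalisation preserves the existence of a square root, is a nice clarification the paper leaves implicit.
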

\begin{proof}
Recall that \(u_0 = \LC(u)\) must be a square, and \(\io{u}\) must be even. We may thus divide \(u\) by \(u_0\) and an appropriate even power of \(X\) (because both are squares and bounded), and assume \(u = 1+v\) where \(v \in \powerseriesinvx \O\), i.e. \(\nub{v} \geq 0\), and \(\io{v} > 0\).

Hence
\begin{equation*}
\sqrt{u} = \sqrt{1+v} = \sum_{j=0}^{\infty} \binom{1/2}{j} \, {v}^j = \lseries{0}{w}{m}
\end{equation*}
converges in \(\laurinx K\). By the hypothesis \(\Char k \neq 2\), we have \(\nub{2} = 0\), so \(\nub{\binom{1/2}{j}} \geq 0\) (see also Lemma \ref{lemma-binomial-half}). As \(\lim_{j\to\infty} \io{{v}^j} = \lim_{j\to\infty} j \, \io{v} = -\infty\), only a finite number of \(\binom{1/2}{j} \, {v}^j\), each having \(\nub{\cdot} \geq 0\), influence each \(w_m\). Hence \(\nub{w_m} \geq 0\) for all \(m\), and \(\sqrt{u}\) is bounded.
\end{proof}

\section{Specialization of polynomial continued fractions}
\label{sec:org890b4d5}
Given \(\alpha \in \laurinx \O\), we can on the one hand see it as element of \(\laurinx K\), or reduce it to \(\Red{\alpha} \in \laurinx k\). For each, one gets a continued fraction over \(K[X]\) respectively \(k[X]\). If one is \emph{lucky}, then \(\CF(\alpha)\) has all ``data'' defined over \(\O\), so one can apply \(\Redm\), and ask: do \(\CF\) and \(\Redm\) commute?

The answer is yes, so the obstacle lies in \(\CF(\alpha)\) not having all data defined over \(\O\).
\medskip

Let us fix notations for the rest of the chapter:
Let \(\alpha \in \laurinx \O\) with \(\LC(\alpha) \in \units \O\) and \(\io{\alpha} \leq 0\), so that \(\alpha\) has a non-zero polynomial part. It has a continued fraction expansion \(\CF(\alpha)\) over \(K[X]\), with complete quotients \(\alpha_n \in \laurinx K\), partial quotients \(a_n \in K[X]\) and canonical convergents \((p_n, q_n) \in K[X]^2\), satisfying
\begin{align*}
\alpha &= [a_0, a_1, \dots],&
\alpha_n &= [a_n, a_{n+1}, \dots],&
p_n/q_n &= [a_0, \dots, a_n].
\end{align*}

For the \emph{specialization}, we set \(\gamma = \Red{\alpha} \in \laurinx k\). The condition \(\LC(\alpha) \in \units \O\) ensures \(\io{\gamma} = \io{\alpha} \leq 0\). Of course \(\gamma\) has a continued fraction expansion \(\CF(\gamma)\) with complete quotients denoted \(\gamma_n \in \laurinx k\) and partial quotients denoted \(c_n \in k[X]\). The canonical convergents of \(\gamma\) are written as \((u_n, v_n) \in k[X]^2\) to distinguish them easily, and they satisfy
\begin{align*}
\gamma &= [c_0, c_1, \dots],&
\gamma_m &= [c_m, c_{m+1}, \dots],&
u_m/v_m &= [c_0, \dots, c_m].
\end{align*}
\subsection{Good reduction}
\label{sec:org3baa196}
To answer the question about ``commuting'', we want to apply the reduction map on the complete quotients, motivating the following definition:
\begin{defi}
\label{def-cf-good-reduction}
We say that \(\CF(\alpha)\) has \emph{good reduction} at \(\nu\) if for all \(n \geq 0\)
\begin{equation*}
\alpha_n \in \laurinx \O \text{ and } \Red{\alpha_n} = \gamma_n.
\end{equation*}
\end{defi}

It turns out the second condition is a consequence of the first, and that it is also possible to describe good reduction in terms of the partial quotients:
\begin{thm}
\label{cf-good-red-partial-quotients}
\TFAE
\begin{enumerate}
\item \(\CF(\alpha)\) has good reduction.
\item \(\alpha_n \in \laurinx \O\) for all \(n \geq 0\).
\item \(a_n \in \O[X]\) and \(\LC(a_n) = \LC(\alpha_n) \in \units \O\) for all \(n \geq 0\).
\item \(\deg a_n = \deg c_n\) for all \(n \geq 0\).
\end{enumerate}
\end{thm}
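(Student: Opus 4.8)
The plan is to prove the four conditions equivalent by establishing the cycle (1) $\Rightarrow$ (2) $\Rightarrow$ (3) $\Rightarrow$ (4) $\Rightarrow$ (2) (or a similar loop), since (1) $\Rightarrow$ (2) is trivial by Definition \ref{def-cf-good-reduction}. The conceptual heart is the step (2) $\Rightarrow$ (1), i.e. showing that once all complete quotients are bounded, their reductions are automatically the complete quotients of $\gamma = \Red{\alpha}$. The key tool is that reduction $\Redm : \laurinx\O \surject \laurinx k$ is a ring homomorphism and, crucially, that it commutes with the truncation operator $\gauss{\cdot}$: if $u \in \laurinx\O$ then $\gauss{u} \in \O[X]$ and $\Red{\gauss{u}} = \gauss{\Red{u}}$ — this is immediate from the coefficientwise definition of $\gauss{\cdot}$ in Definition \ref{define-laurent-truncation}. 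One must also check that $\io{\Red{u}} = \io{u}$ when $\LC(u) \in \units\O$, which follows from Corollary \ref{laurent-inverse-bounded-corollary} and the fact that the leading coefficient does not vanish modulo $\mm$.

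For (2) $\Rightarrow$ (3): assuming $\alpha_n \in \laurinx\O$ for all $n$, I would argue that $\LC(\alpha_n) \in \units\O$ for every $n$. Indeed $\alpha_{n+1} = (\alpha_n - a_n)^{-1}$, so $\alpha_{n+1} \in \laurinx\O$ combined with Corollary \ref{laurent-inverse-bounded-corollary} forces $\LC(\alpha_n - a_n) \in \units\O$; but by Remark \ref{cf-absolute-values} we have $\io{\alpha_n - a_n} > 0$, so $\LC(\alpha_n - a_n)$ is the coefficient of some negative power of $X$ in $\alpha_n$, hence $\LC(\alpha_n - a_n) \in \units\O$ shows in particular $\alpha_n \not\in \mm[X]$-type degeneracy. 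Then $a_n = \gauss{\alpha_n}$ and, since $\alpha_n \in \laurinx\O$ with $\io{\alpha_n} \leq 0$, we get $a_n \in \O[X]$ with $\deg a_n = -\io{\alpha_n}$ and $\LC(a_n) = \LC(\alpha_n)$; and $\LC(\alpha_n) \in \units\O$ follows from $\alpha_n = a_{n-1} + \alpha_n$'s relation or directly: if $n \geq 1$, $\io{\alpha_n} = \io{a_n} < 0$ and $\LC(\alpha_n) = \LC(a_n)$, which must be a unit because $\alpha_n = 1/(\alpha_{n-1} - a_{n-1})$ and the latter's leading coefficient (that of the power series $\alpha_{n-1} - a_{n-1}$) inverts to $\LC(\alpha_n)$, staying in $\O$ by hypothesis and having unit inverse by Corollary \ref{laurent-inverse-bounded-corollary}. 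For $n = 0$ we use the standing hypothesis $\LC(\alpha) \in \units\O$.

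For (3) $\Rightarrow$ (4): if $a_n \in \O[X]$ with $\LC(a_n) \in \units\O$, then $\Red{a_n} \in k[X]$ has $\deg \Red{a_n} = \deg a_n$. I would then show by induction on $n$ that $c_n = \Red{a_n}$ and $\gamma_n = \Red{\alpha_n}$: the base case is $\gamma_0 = \Red{\alpha_0} = \gamma$; for the inductive step, $c_n = \gauss{\gamma_n} = \gauss{\Red{\alpha_n}} = \Red{\gauss{\alpha_n}} = \Red{a_n}$ using commutation of $\Redm$ with $\gauss{\cdot}$, and then $\gamma_{n+1} = (\gamma_n - c_n)^{-1} = (\Red{\alpha_n - a_n})^{-1} = \Red{(\alpha_n - a_n)^{-1}} = \Red{\alpha_{n+1}}$, where the middle step uses that $\alpha_n - a_n$ has $\LC$ a unit (so its inverse is bounded by Corollary \ref{laurent-inverse-bounded-corollary} and reduction commutes with inversion of units). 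This simultaneously gives $\deg a_n = \deg c_n$ (closing (4)) and $\Red{\alpha_n} = \gamma_n$ (closing back to (1)), so in fact (3) already implies (1). Finally (4) $\Rightarrow$ (2): knowing only $\deg a_n = \deg c_n$, I would run the same induction but tracking valuations — the degree equality propagates through the recursion $\alpha_{n+1} = (\alpha_n - a_n)^{-1}$ and forces no loss of boundedness, essentially because a prime appearing in a denominator of some $a_n$ would, via the matrix product description of convergents in Section \ref{sec:org932a99d}, force a degree drop somewhere downstream.

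The main obstacle I anticipate is the careful bookkeeping in (4) $\Rightarrow$ (2): translating the purely degree-theoretic hypothesis back into a boundedness statement, since this is the direction where one genuinely has to rule out cancellation phenomena like those in \eqref{intro-eq-cf-mult-const}. The cleanest route is probably contrapositive: if some $\alpha_n$ is first unbounded (or has negative valuation), examine the recursion and show $\LC(\alpha_n - a_n)$ fails to be a unit, producing either $\deg a_{n+1} < $ the expected value or an outright inconsistency with $\deg c_{n+1}$. The commutation $\Red{\gauss{u}} = \gauss{\Red{u}}$ and Corollary \ref{laurent-inverse-bounded-corollary} are the two workhorses; everything else is induction.
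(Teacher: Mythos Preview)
Your proposal is correct and largely parallels the paper's argument, but there is one genuine methodological difference worth noting. For the implication (3) $\Rightarrow$ (2), the paper does \emph{not} run the direct induction you describe; instead it uses the convergents: from (3) it first shows (Proposition \ref{cf-good-red-convergents-bounded}) that $p_{m,n}/q_{m,n} \in \laurinx\O$, and then recovers $\alpha_m \in \laurinx\O$ coefficient by coefficient via the approximation $\io{p_{m,n}/q_{m,n} - \alpha_m} > 2\deg q_{m,n} \to \infty$. Your route is more direct and arguably cleaner: you carry $\alpha_n \in \laurinx\O$ along in the same induction that establishes $\gamma_n = \Red{\alpha_n}$, using that $\LC(a_{n+1}) \in \units\O$ forces $\LC(\alpha_n - a_n) = \LC(a_{n+1})^{-1} \in \units\O$ and hence $\alpha_{n+1} \in \units{\laurinx\O}$ by Corollary \ref{laurent-inverse-bounded-corollary}. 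The paper's approach, on the other hand, foreshadows the later importance of convergents in the reduction theory (Theorem \ref{convergent-reduction-surjective}). Both are valid; yours is self-contained, the paper's is structurally suggestive.

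One small indexing slip in your (4) $\Rightarrow$ (2): if $n$ is minimal with $\alpha_n \notin \laurinx\O$, the coefficient that fails to be a unit is $\LC(\alpha_{n-1} - a_{n-1})$, not $\LC(\alpha_n - a_n)$, since you need $\alpha_{n-1} \in \laurinx\O$ to reduce it and compare $\io{\gamma_{n-1} - c_{n-1}}$ with $\io{\alpha_{n-1} - a_{n-1}}$. The paper packages exactly this contrapositive as Proposition \ref{bad-reduction-minimal-pole}, concluding $\deg c_n > \deg a_n$ at that first bad index.
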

\begin{rem}
For \(n=0\) we had \(\LC(\alpha_0) \in \units \O\) as a hypothesis.
\end{rem}
We begin to prove the theorem with the following observation:
\begin{rem}
If \(\alpha_n \in \laurinx \O\), then clearly \(a_n = \gauss{\alpha_n} \in \O[X]\).
\end{rem}

Next, let us show that \(a_n \in \O[X]\) cannot be a sufficient condition for good reduction:
\begin{prop}
\label{cf-good-red-leading-coeffs}
Let \(n \geq 0\). If \(\alpha_n \in \laurinx \O\), then \(\alpha_{n+1} \in \laurinx \O\) \IFF \(\LC(a_{n+1}) = \LC(\alpha_{n+1}) \in \units \O\).
\end{prop}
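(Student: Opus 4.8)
The plan is to reduce the whole statement to the inversion criterion for bounded Laurent series, namely Corollary \ref{laurent-inverse-bounded-corollary}. The observation that makes this work is that $\alpha_{n+1}$ is, by \eqref{cf-complete-quotients}, exactly the inverse of $\beta := \alpha_n - \gauss{\alpha_n}$, so the question ``is $\alpha_{n+1}$ bounded?'' is literally the question ``is $\beta^{-1}$ bounded?'', and Corollary \ref{laurent-inverse-bounded-corollary} answers this purely in terms of $\LC(\beta)$.

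First I would verify that $\beta$ is a legitimate input for that corollary. Since $\alpha_n \in \laurinx\O$ means that all coefficients of $\alpha_n$ lie in $\O$, its polynomial part $a_n = \gauss{\alpha_n}$ lies in $\O[X] \subset \laurinx\O$, hence $\beta = \alpha_n - a_n \in \laurinx\O$; and $\beta \neq 0$ because $\alpha_{n+1}$ is defined, i.e. $\alpha_n \notin K[X]$. By the definition of the truncation we also have $\io{\beta} > 0$, and therefore $\io{\alpha_{n+1}} = \io{\beta^{-1}} < 0$. Next I would do the bookkeeping on leading coefficients: working inside the field $\laurinx K$, multiplicativity of the leading coefficient applied to $\beta \cdot \beta^{-1} = 1$ gives $\LC(\alpha_{n+1}) = \LC(\beta^{-1}) = \LC(\beta)^{-1}$; and since $\io{\alpha_{n+1}} < 0$, the polynomial part $a_{n+1} = \gauss{\alpha_{n+1}}$ is a nonzero polynomial with the same leading term as $\alpha_{n+1}$, so $\LC(a_{n+1}) = \LC(\alpha_{n+1})$. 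Thus the three conditions ``$\LC(\beta) \in \units\O$'', ``$\LC(\alpha_{n+1}) \in \units\O$'' and ``$\LC(a_{n+1}) = \LC(\alpha_{n+1}) \in \units\O$'' are equivalent. Feeding $\beta \in \laurinx\O \setminus \{0\}$ into Corollary \ref{laurent-inverse-bounded-corollary} then yields $\alpha_{n+1} = \beta^{-1} \in \laurinx\O$ if and only if $\LC(\beta) \in \units\O$, which is exactly the asserted equivalence.

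I do not expect a serious obstacle here once Proposition \ref{laurent-inverse-bounded} and Corollary \ref{laurent-inverse-bounded-corollary} are in hand — essentially all the genuine work has already gone into building $\laurinx K_\nu$ and characterising its units. The only two points that need care are the ones above: checking that the polynomial part $a_n$ really has coefficients in $\O$ (so that subtracting it does not push $\beta$ out of $\laurinx\O$), and keeping the logical direction of the leading-coefficient condition straight, since what is being characterised is the unit condition on $\LC(\beta)$, equivalently on its inverse $\LC(\alpha_{n+1})$, and \emph{not} the weaker condition $\nu(\beta) = 0$.
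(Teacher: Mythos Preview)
Your proof is correct and follows essentially the same approach as the paper: both set $\beta = \alpha_n - a_n = \alpha_{n+1}^{-1} \in \laurinx\O$ and apply Corollary~\ref{laurent-inverse-bounded-corollary} to it, using that $\LC(\beta)^{-1} = \LC(\alpha_{n+1})$. You spell out a few more details (e.g.\ that $a_n \in \O[X]$ and that $\LC(a_{n+1}) = \LC(\alpha_{n+1})$), but the core argument is identical.
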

\begin{proof}
By Definition \ref{cf-complete-quotients}, we have \(\inv{\alpha_{n+1}} = \alpha_n - a_n \in \laurinx \O\), and clearly \(\LC(\inv{\alpha_{n+1}}) = \inv{ \LC(\alpha_{n+1})} \in \units \O\) \IFF \(\LC(\alpha_{n+1}) \in \units \O\).

Then the statement follows from Corollary \ref{laurent-inverse-bounded-corollary} applied to \(u = \inv{\alpha_{n+1}}\).
\end{proof}

This allows to show that the second condition in Definition \ref{def-cf-good-reduction} is an automatic consequence of the first condition:
\begin{prop}
\label{cf-good-red-second-condition-redundant}
Let \(n \geq 0\). If \(\alpha_n, \alpha_{n+1} \in \laurinx \O\) and \(\Red{\alpha_n} = \gamma_n\), then \(\Red{\alpha_{n+1}} = \gamma_{n+1}\).
\end{prop}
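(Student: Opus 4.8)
The plan is to reduce the statement to a direct computation with the recurrence $\alpha_{n+1} = 1/(\alpha_n - a_n)$, exploiting that reduction $\Redm$ is a ring homomorphism $\laurinx\O \to \laurinx k$ and that $\CF(\gamma)$ is governed by the analogous recurrence $\gamma_{n+1} = 1/(\gamma_n - c_n)$. First I would observe that since $\alpha_n \in \laurinx\O$, the partial quotient $a_n = \gauss{\alpha_n}$ lies in $\O[X]$, so we may apply $\Redm$ to it; and the truncation operation $\gauss{\cdot}$ commutes with reduction, i.e. $\Red{\gauss{\alpha_n}} = \gauss{\Red{\alpha_n}} = \gauss{\gamma_n} = c_n$. (This last equality uses the hypothesis $\Red{\alpha_n} = \gamma_n$, together with Definition~\ref{define-laurent-truncation} and the fact that $\Redm$ acts coefficientwise, so it sends the polynomial part of a Laurent series to the polynomial part of its reduction.) Hence $\Red{a_n} = c_n$.

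Next, from $\inv{\alpha_{n+1}} = \alpha_n - a_n$ I would apply $\Redm$ (a ring homomorphism on $\laurinx\O$, as both $\alpha_n, a_n \in \laurinx\O$) to get $\Red{\inv{\alpha_{n+1}}} = \Red{\alpha_n} - \Red{a_n} = \gamma_n - c_n = \inv{\gamma_{n+1}}$. Now I invoke the hypothesis $\alpha_{n+1} \in \laurinx\O$: by Corollary~\ref{laurent-inverse-bounded-corollary} applied to $u = \inv{\alpha_{n+1}}$ (which lies in $\laurinx\O$ with bounded inverse $\alpha_{n+1} \in \laurinx\O$), we have $\LC(\inv{\alpha_{n+1}}) \in \units\O$, so $\Red{\inv{\alpha_{n+1}}} = \inv{\gamma_{n+1}}$ has nonzero leading coefficient, which means $\inv{\gamma_{n+1}} \neq 0$ and its reduction behaves multiplicatively with respect to inversion. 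Concretely, since $\Redm$ is a ring homomorphism and $\inv{\alpha_{n+1}} \cdot \alpha_{n+1} = 1$, reducing gives $\inv{\gamma_{n+1}} \cdot \Red{\alpha_{n+1}} = 1$, whence $\Red{\alpha_{n+1}} = \gamma_{n+1}$, as desired.

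The only subtle point — and the step I would single out as the main obstacle — is justifying that $\Red{\inv{\alpha_{n+1}}} = \inv{\gamma_{n+1}}$ really is the reciprocal of a well-defined complete quotient of $\gamma$, i.e. that $\gamma_{n+1}$ is actually defined and equals $1/(\gamma_n - c_n)$. This requires knowing $\gamma_n \notin k[X]$, equivalently $\inv{\gamma_{n+1}} = \gamma_n - c_n \neq 0$; but this follows because $\inv{\alpha_{n+1}} \neq 0$ has leading coefficient a unit (as $\alpha_{n+1} \in \laurinx\O$ forces $\LC(\alpha_{n+1})\in\units\O$ by Corollary~\ref{laurent-inverse-bounded-corollary}, hence $\LC(\inv{\alpha_{n+1}})\in\units\O$ too), so its reduction $\gamma_n - c_n$ is nonzero with $\io{\gamma_n - c_n} = \io{\inv{\alpha_{n+1}}} < 0$; thus indeed $\gamma_n \notin k[X]$, the complete quotient $\gamma_{n+1}$ exists, and the continued fraction process for $\gamma$ proceeds exactly one more step in parallel with that of $\alpha$. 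Everything else is bookkeeping: $\Redm$ is coefficientwise, hence commutes with $+$, with multiplication (Proposition~\ref{bounded-laurent-valuation} gives the ring structure), and with $\gauss{\cdot}$.
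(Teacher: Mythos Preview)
Your proof is correct and follows essentially the same approach as the paper: deduce \(\Red{a_n} = c_n\) from \(\Red{\alpha_n} = \gamma_n\), observe that \(\alpha_{n+1} \in \units{\laurinx\O}\) so that reduction commutes with inversion, and then reduce the recurrence \(\inv{\alpha_{n+1}} = \alpha_n - a_n\). One small slip: you write \(\io{\inv{\alpha_{n+1}}} < 0\), but in fact \(\io{\inv{\alpha_{n+1}}} = \io{\alpha_n - a_n} > 0\); this does not affect your argument, since the only thing you need there is \(\gamma_n - c_n \neq 0\), which you correctly derive from \(\LC(\inv{\alpha_{n+1}}) \in \units\O\).
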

\begin{proof}
Clearly \(\Red{\alpha_n} = \gamma_n\) implies \(\Red{a_n} = c_n\), and by Propositions \ref{laurent-inverse-bounded} and \ref{cf-good-red-leading-coeffs} we have \(\alpha_{n+1} \in \units{\laurinx \O}\). Hence
\begin{equation*}
\inv{\gamma_{n+1}} = \gamma_n - c_n = \Red{\alpha_n} - \Red{a_n} = \Red{\inv{\alpha_{n+1}}} = \inv{\Red{\alpha_{n+1}}}
\end{equation*}
which implies \(\gamma_{n+1} = \Red{\alpha_{n+1}}\) as desired.
\end{proof}

\begin{rem}
\label{good-reduction-preserve-degrees}
If \(\LC(\alpha_n) \in \units \O\) and \(\Red{\alpha_n} = \gamma_n\), we have \(\io{\alpha_n} = \io{\gamma_n} \leq 0\) (\(< 0\) for \(n \geq 1\)), and hence \(\deg a_n = \deg c_n\).
\end{rem}

Let us now describe good reduction in terms of the partial quotients; for this we first have a look at the convergents:
\begin{prop}
\label{cf-good-red-convergents-bounded}
Let \(n \geq 0\) and suppose \(a_j \in \O[X]\) for \(j = 0, \dots, n\) and \(\LC(a_j) \in \units \O\) for \(j = 1,\dots, n\). Then \(p_n, q_n \in \O[X]\) and moreover \(\ifrac{p_n}{q_n} \in \laurinx \O\).
\end{prop}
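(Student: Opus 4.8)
The statement to prove is Proposition~\ref{cf-good-red-convergents-bounded}: under the hypotheses $a_j \in \O[X]$ for $j=0,\dots,n$ and $\LC(a_j)\in\units\O$ for $j=1,\dots,n$, we have $p_n,q_n\in\O[X]$ and $\ifrac{p_n}{q_n}\in\laurinx\O$. The plan is to prove the first part by an easy induction using the recursion relations, and the second part by combining $q_n\in\O[X]$ with the fact that $\LC(q_n)\in\units\O$ (which follows from Proposition~\ref{canonical-convergents-degree-and-lc}) and then invoking Corollary~\ref{laurent-inverse-bounded-corollary} after expressing $p_n/q_n$ as a Laurent series.

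First I would handle $p_n,q_n\in\O[X]$. Since $\O[X]$ is a ring, this is immediate from the recursion \eqref{canonical-convergent-recursion}: $p_n = a_n\,p_{n-1}+p_{n-2}$ and $q_n = a_n\,q_{n-1}+q_{n-2}$, with base cases $p_{-1}=1$, $p_{-2}=0$, $q_0=1$, $q_{-1}=0$, all in $\O[X]$, and each $a_j\in\O[X]$ by hypothesis. So a straightforward induction on $n$ gives $p_n,q_n\in\O[X]$.

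Next, for $\ifrac{p_n}{q_n}\in\laurinx\O$, I would first observe $\LC(q_n)\in\units\O$. Indeed, by Proposition~\ref{canonical-convergents-degree-and-lc} applied with $m=0$ (the hypothesis $\deg a_j\geq 1$ for $j\geq 1$ holds because $\LC(a_j)\in\units\O$ in particular forces $a_j\neq 0$ of the appropriate degree --- more carefully, one should note that for a complete-quotient continued fraction $\deg a_j\geq 1$ always holds for $j\geq 1$ by Remark~\ref{cf-absolute-values}, so the proposition applies), we get $\LC(q_n)=\prod_{j=1}^n \LC(a_j)\in\units\O$ since each factor is a unit. Now write $p_n/q_n = \gauss{p_n/q_n} + (p_n/q_n - \gauss{p_n/q_n})$; by Remark~\ref{truncation-of-rational} the polynomial part equals the quotient $a$ in $p_n = a\,q_n + r$, and the remainder term $r/q_n$ has $\ios(r/q_n)>0$. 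Since $\LC(q_n)\in\units\O$, Corollary~\ref{laurent-inverse-bounded-corollary} (after normalising $q_n$ by its leading coefficient and a power of $X$) shows $1/q_n\in\laurinx\O$, hence $r/q_n\in\laurinx\O$ because $r\in\O[X]$. Alternatively, and more cleanly, I would apply Proposition~\ref{laurent-inverse-bounded} directly to $q_n$: since $\nu(\LC(q_n))=0=\nu(q_n)$, the inverse $1/q_n$ is bounded, and then $p_n/q_n = p_n\cdot(1/q_n)$ is a product of two elements of $\laurinx\O$ (using $\nu(p_n)\geq 0$ since $p_n\in\O[X]$), hence lies in $\laurinx\O$.

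The main (minor) obstacle is just bookkeeping about which hypothesis guarantees $\deg a_j\geq 1$ so that Proposition~\ref{canonical-convergents-degree-and-lc} is applicable for the leading-coefficient formula of $q_n$; this is handled by recalling that partial quotients of any continued fraction produced by the continued fraction process satisfy $\deg a_j\geq 1$ for $j\geq 1$ (Remark~\ref{cf-absolute-values}). Everything else is a one-line induction plus a direct appeal to Proposition~\ref{laurent-inverse-bounded}/Corollary~\ref{laurent-inverse-bounded-corollary}. No serious difficulty is expected.
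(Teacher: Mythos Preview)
Your proposal is correct and follows essentially the same approach as the paper: the recursion relations give $p_n,q_n\in\O[X]$, the leading-coefficient product formula \eqref{convergents-leading-coeff} yields $\LC(q_n)\in\units\O$, and then Corollary~\ref{laurent-inverse-bounded-corollary} gives $q_n\in\units{\laurinx\O}$ so that $p_n/q_n\in\laurinx\O$. Your remark about needing $\deg a_j\geq 1$ for $j\geq 1$ (via Remark~\ref{cf-absolute-values}) to apply the leading-coefficient formula is a small point the paper leaves implicit.
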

\begin{proof}
The statement \(p_n, q_n \in \O[X]\) follows directly from the recursion formulas for the canonical convergents \eqref{canonical-convergent-recursion}. And the product formula for the leading coefficients \eqref{convergents-leading-coeff} implies
\begin{equation*}
\nub{\LC(q_n)} = \sum_{j=1}^n \nub{\LC(a_j)}.
\end{equation*}
But then \(\nub{\LC(a_j)} = 0\) for \(j = 1, \dots, n\) implies \(\nub{\LC(q_n)} = \nub{q_n} = 0\). So by Corollary \ref{laurent-inverse-bounded-corollary} we have \(q_n \in \units{\laurinx{\O}}\), hence \(\ifrac{p_n}{q_n} \in \laurinx{\O}\).
\end{proof}

We conclude this section by proving the equivalence of the alternative characterisations of good reduction.
\begin{proof}[Proof of Theorem \ref{cf-good-red-partial-quotients}]
Equivalence of 1. and 2. is a consequence of Proposition \ref{cf-good-red-second-condition-redundant} above. 

Next, 2. implies 3. by Proposition \ref{cf-good-red-leading-coeffs}.

Conversely, 3. implies 2.: Let \(m \geq 0\) and recall that \(\io{\ifrac{p_{m,n}}{q_{m,n}} - \alpha_m} > 2 \, \deg{q_{m,n}}\) from Proposition \ref{cf-expansion-yields-convergents}. Moreover, we have \(\ifrac{p_{m,n}}{q_{m,n}} \in \laurinx \O\) by Proposition \ref{cf-good-red-convergents-bounded}, so the \emph{first} coefficients of \(\alpha_m\) are also in \(\O\). As \(\limn \deg{q_{m,n}} = \infty\), we cover all coefficients, and thus \(\alpha_m \in \laurinx \O\).
\pagebreak

Next,  1. and 3. imply \(\Red{\alpha_n} = \gamma_n\), hence \(\Red{a_n} = c_n\) and \(\LC(a_n) \in \units \O\). The latter is equivalent to \(\deg a_n = \deg \Red{a_n}\), so we get \(\deg a_n = \deg c_n\). 

Finally 4. implies 2.: by Proposition \ref{bad-reduction-minimal-pole} (below, but independent of this theorem) there exists \(n\) with \(\deg a_n < \deg c_n\) if 2. is violated.
\end{proof}

\begin{rem}
So continued fraction expansion and specialization commute as soon as the partial quotients are defined over \(\O\) and do not ``drop degree'' on reduction, or even simpler, the degrees of the partial quotients match.
\end{rem}

Theorem \ref{thm-vdp-intro} of van der Poorten becomes almost trivial in this case:
\begin{cor}
\label{cf-good-red-convergents-reduction}
If \(\CF(\alpha)\) has good reduction, then for all \(n \geq 0\) we have \(u_n = \Red{p_n}\) and \(v_n = \Red{q_n}\) which by the classification of convergents (Proposition \ref{cf-convergent-classification}) implies that all convergents of \(\gamma\) are obtained by reducing convergents of \(\alpha\).
\end{cor}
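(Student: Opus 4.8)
The plan is to first prove the convergent identities $u_n = \Red{p_n}$ and $v_n = \Red{q_n}$ by induction on $n$, and then read off the surjectivity statement from the classification of convergents (Corollary \ref{cf-convergent-classification}).

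For the identities, I would invoke Theorem \ref{cf-good-red-partial-quotients}: good reduction gives $a_n \in \O[X]$ with $\LC(a_n) = \LC(\alpha_n) \in \units\O$ and $\Red{\alpha_n} = \gamma_n$ for all $n$. Since the leading coefficient of $a_n = \gauss{\alpha_n}$ is a unit, reduction preserves its degree, so $\Red{a_n}$ is the polynomial part of $\Red{\alpha_n} = \gamma_n$, i.e.\ $\Red{a_n} = c_n$. By Proposition \ref{cf-good-red-convergents-bounded} we have $p_n, q_n \in \O[X]$, so the reductions $\Red{p_n}, \Red{q_n} \in k[X]$ make sense; now apply the ring homomorphism $\Redm\colon \O[X] \to k[X]$ to the recursion \eqref{canonical-convergent-recursion}. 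The base cases ($\Red{p_{-1}} = 1 = u_{-1}$, $\Red{p_0} = \Red{a_0} = c_0 = u_0$, $\Red{q_{-1}} = 0 = v_{-1}$, $\Red{q_0} = 1 = v_0$) are immediate, and the inductive step is forced by $\Red{a_n} = c_n$ together with the matching recursion for $(u_n, v_n)$. This yields $u_n = \Red{p_n}$ and $v_n = \Red{q_n}$ for every $n \geq 0$.

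For the final assertion, take any convergent $(u,v) \in \Coset{\gamma}{k}$. By Corollary \ref{cf-convergent-classification} there are $n \in \N_0$ and $r \in \mino{k[X]}$ with $\deg r < \tfrac12 \deg c_{n+1}$ and $(u,v) = r \cdot (u_n, v_n)$. I would lift $r$ to some $\tilde{r} \in \O[X]$ of the same degree (lifting the coefficients, keeping the leading one a unit). Because good reduction gives $\deg a_{n+1} = \deg c_{n+1}$, we have $\deg \tilde{r} < \tfrac12 \deg a_{n+1}$, so Corollary \ref{cf-convergent-classification} makes $\tilde{r} \cdot (p_n, q_n)$ a convergent of $\alpha$; reducing it gives $\Red{\tilde{r}} \cdot (\Red{p_n}, \Red{q_n}) = r \cdot (u_n, v_n) = (u, v)$, which is the claimed form.

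I do not expect a serious obstacle: the whole argument is the convergent recursion pushed through the reduction homomorphism, plus an appeal to the already-established classification of convergents. The one point needing care is the compatibility $\Red{\gauss{\alpha_n}} = c_n$ — and, dually, the existence of a lift $\tilde{r}$ of unchanged degree — both of which hinge on $\LC(\alpha_n)$, respectively $\LC(r)$, being a unit so that reduction does not collapse the top coefficient.
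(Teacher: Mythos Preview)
Your proof is correct and follows essentially the same approach as the paper: the paper phrases the first part as ``\(p_n, q_n\) are polynomials in \(\Z[a_0,\dots,a_n]\), so substituting \(c_j = \Red{a_j}\) gives \((u_n,v_n)\)'', which is just a compact restatement of your induction along the recursion \eqref{canonical-convergent-recursion}; the second part (lifting the factor \(r\) to \(\tilde r\) of the same degree and using \(\deg a_{n+1} = \deg c_{n+1}\)) is identical.
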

\begin{proof}
We can think of \(p_n\) and \(q_n\) as polynomials in \(\Z[a_0, \dots, a_n]\) (see Proposition \ref{definition-convergents-matrix}). Of course \(u_n\) and \(v_n\) are obtained by replacing \(a_j\) with \(c_j\) in those polynomials. But \(c_j = \Red{a_j}\) for all \(j \geq 0\), so \((u_n, v_n) = (\Red{p_n}, \Red{q_n})\).

An arbitrary convergent of \(\gamma\) has perhaps an additional polynomial factor in \(k[X]\) which we can however lift to a polynomial of same degree in \(K[X]\). Because we have \(\deg a_{n+1} = \deg c_{n+1}\), multiplying \((p_n, q_n)\) with this polynomial still produces a convergent of \(\alpha\).
\end{proof}
\subsection{Bad reduction}
\label{sec:orgee082e8}
\begin{defi}
\label{def-cf-bad-reduction}
The opposite of good reduction of \(\CF(\alpha)\) is obviously \emph{bad reduction} of \(\CF(\alpha)\), by which we mean that there exists \(n \geq 1\) such that \(\alpha_n \not\in \laurinx \O\) (i.e. \(\nub{\alpha_n} < 0\), so there is a coefficient with \negval).
\end{defi}

The results for good reduction are still useful in this case, for example Propositions \ref{cf-good-red-leading-coeffs} and \ref{cf-good-red-second-condition-redundant} can be applied until we arrive at the complete quotient with bad reduction. They should also give an initial idea of what could go wrong in the case of bad reduction.

\begin{prop}
\label{bad-reduction-minimal-pole}
Suppose \(\CF(\alpha)\) has bad reduction and let \(n\) minimal with \(\alpha_n \not\in \laurinx \O\). Then in fact \(\nub{\LC(\alpha_n)} < 0\), i.e. \(\alpha_n\) has \negval in the leading coefficient.

If \(\gamma_n\) is defined, then \(\deg c_n > \deg a_n\) and \(\alpha_n\) is unbounded.
\end{prop}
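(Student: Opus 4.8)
The plan is to exploit the recursion $\inv{\alpha_n} = \alpha_{n-1} - a_{n-1}$ together with the minimality of $n$. Since $n$ is minimal with $\alpha_n \notin \laurinx \O$, we have $\alpha_{n-1} \in \laurinx \O$, hence $a_{n-1} = \gauss{\alpha_{n-1}} \in \O[X]$, so $\inv{\alpha_n} = \alpha_{n-1} - a_{n-1} \in \laurinx \O$. Moreover $\io{\inv{\alpha_n}} > 0$ by the definition of the continued fraction process (Remark \ref{cf-absolute-values}), so $\inv{\alpha_n}$ is a power series in $\inv X$ with positive order, in particular its leading coefficient is $0$ and $\LC$ is not the relevant quantity; instead I should look at the first non-zero coefficient. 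The key point is: $\alpha_n \notin \laurinx \O$ means $\nub{\alpha_n} < 0$, so by Proposition \ref{laurent-inverse-bounded} applied to $u = \inv{\alpha_n}$ (which is bounded, with $\LC$ replaced by its first non-zero coefficient $x$), the inverse $\alpha_n$ is bounded if and only if $\nub{\inv{\alpha_n}} = \nu(x)$. Since $\alpha_n$ is \emph{not} bounded, we get $\nub{\inv{\alpha_n}} < \nu(x)$, which forces $\nu(x) > 0$, i.e. the first non-zero coefficient of $\inv{\alpha_n}$ lies in $\mm$. But then $\Red{\inv{\alpha_n}}$ has strictly higher order at infinity than $\inv{\alpha_n}$, which translates into a statement about degrees after taking reciprocals.

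More carefully: write $\inv{\alpha_n} = \alpha_{n-1} - a_{n-1}$, a bounded power series in $\inv X$ of positive order $\io{\inv{\alpha_n}} = \deg a_n$ (since $\io{\alpha_n} = \io{a_n} = -\deg a_n$). Let $x$ be its leading (first non-zero) coefficient; then $\LC(\alpha_n) = \inv x$. The claim $\nub{\LC(\alpha_n)} < 0$ is equivalent to $\nu(x) > 0$, i.e. $x \in \mm$. I would argue this by contradiction: if $\nu(x) = 0$, then by Proposition \ref{laurent-inverse-bounded} (with $u = \inv{\alpha_n}$, noting $\nub{u} \le \nu(x) = 0$ and we'd need to rule out $\nub{u} < 0$) we would conclude $\alpha_n = \inv u \in \laurinx K_\nu$; combined with $\nub{\alpha_n} = -\nub{u} \ge 0$ once $\nub u = 0$, this gives $\alpha_n \in \laurinx \O$, contradicting the choice of $n$. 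One subtlety: I must first establish $\nub{\inv{\alpha_n}} = 0$, i.e. that $\inv{\alpha_n}$ has no \negval; this follows because $\inv{\alpha_n} = \alpha_{n-1} - a_{n-1}$ with both terms in $\laurinx \O$ (using minimality of $n$ and the Remark that $\alpha_{n-1} \in \laurinx\O \implies a_{n-1} \in \O[X]$). So $\nub{\inv{\alpha_n}} \ge 0$, and if $\nu(x) = 0$ then equality holds in Proposition \ref{laurent-inverse-bounded}'s hypothesis and $\alpha_n$ is bounded with $\nub{\alpha_n} = 0$ — contradiction. Hence $\nu(x) = \nu(\LC(\inv{\alpha_n})) > 0$... wait, more precisely $\nub{\inv{\alpha_n}} = 0 < \nu(x)$ is impossible since $\nu(x) \ge \nub{\inv{\alpha_n}}$; the correct dichotomy from Proposition \ref{laurent-inverse-bounded} is that $\alpha_n$ bounded $\iff \nub{\inv{\alpha_n}} = \nu(x)$, so $\alpha_n$ unbounded forces $\nub{\inv{\alpha_n}} < \nu(x)$, i.e. $0 < \nu(x)$, giving $\nub{\LC(\alpha_n)} = -\nu(x) < 0$ as desired. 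The "$\alpha_n$ unbounded" (not merely "not in $\laurinx\O$") then follows from the same Proposition: $\alpha_n \in \laurinx K_\nu$ would require $\nub{\inv{\alpha_n}} = \nu(x)$, already excluded.

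For the statement about $\gamma_n$: assume $\gamma_n$ is defined. We have $\Red{\inv{\alpha_n}} = \Red{\alpha_{n-1} - a_{n-1}} = \gamma_{n-1} - c_{n-1} = \inv{\gamma_n}$, using that $\alpha_{n-1} \in \laurinx\O$ reduces to $\gamma_{n-1}$ (this needs $\Red{\alpha_{n-1}} = \gamma_{n-1}$, which holds by iterating Proposition \ref{cf-good-red-second-condition-redundant} from $n=0$, since all $\alpha_j \in \laurinx\O$ for $j < n$ and $\LC(\alpha_j) \in \units\O$ — the latter because $\alpha_{j+1} \in \laurinx\O$ forces it via Proposition \ref{cf-good-red-leading-coeffs}). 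Now $\inv{\alpha_n}$ has order $\deg a_n$ at infinity, but its leading coefficient $x$ lies in $\mm$, so $\Red{\inv{\alpha_n}} = \inv{\gamma_n}$ has order strictly greater than $\deg a_n$ at infinity; equivalently $\io{\inv{\gamma_n}} > \deg a_n$, hence $\deg c_n = -\io{\gamma_n} = \io{\inv{\gamma_n}} > \deg a_n$.

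The main obstacle I anticipate is the bookkeeping around Proposition \ref{laurent-inverse-bounded}: that proposition is phrased for $u$ with $\LC(u) \ne 0$, whereas our $\inv{\alpha_n}$ is a power series with $\io{\inv{\alpha_n}} > 0$ so its "leading coefficient" in the sense of the lowest-index nonzero Laurent coefficient is what matters, not the coefficient of $X^0$. I would handle this by multiplying $\inv{\alpha_n}$ by the appropriate power $X^{\deg a_n}$ (which is a unit of $\laurinx \O$ and does not affect $\nu$), reducing to the normalised situation where the proposition applies directly, and then transferring the conclusion back. Everything else is a routine chase through the valuation axioms and the already-established commutation of reduction with the continued fraction process on the earlier, good-reduction part of the expansion.
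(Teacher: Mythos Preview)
Your approach matches the paper's proof: both use minimality of $n$ to get $\inv{\alpha_n} = \alpha_{n-1} - a_{n-1} \in \laurinx \O$, then argue that $\alpha_n \notin \laurinx \O$ forces the leading coefficient $x = \LC(\inv{\alpha_n})$ to lie in $\mm$ (the paper cites Proposition~\ref{cf-good-red-leading-coeffs} directly; you unpack it via Corollary~\ref{laurent-inverse-bounded-corollary}); both then use good reduction up to $\alpha_{n-1}$ to get $\Red{\inv{\alpha_n}} = \inv{\gamma_n}$ and read off $\deg c_n > \deg a_n$ from the jump in $\ios$.

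One small gap: your argument for $\alpha_n$ \emph{unbounded} at the end of the second paragraph is not yet complete. You say that $\alpha_n \in \laurinx K_\nu$ would require $\nub{\inv{\alpha_n}} = \nu(x)$, ``already excluded'', but at that point you have only established $\nub{\inv{\alpha_n}} \geq 0$ and $\nu(x) > 0$; nothing yet rules out $\nub{\inv{\alpha_n}} = \nu(x) > 0$. (Your ``wait'' comment that $\nub{\inv{\alpha_n}} = 0 < \nu(x)$ is ``impossible since $\nu(x) \ge \nub{\inv{\alpha_n}}$'' is backwards: that inequality is compatible with, not contradicted by, $0 < \nu(x)$.) What is missing is precisely $\nub{\inv{\alpha_n}} = 0$, and this genuinely requires the hypothesis that $\gamma_n$ is defined: then $\Red{\inv{\alpha_n}} = \gamma_{n-1} - c_{n-1} \neq 0$, hence $\nub{\inv{\alpha_n}} = 0 < \nu(x)$, and Proposition~\ref{laurent-inverse-bounded} gives unboundedness. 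This is exactly how the paper argues it, and it is why the proposition places the unboundedness conclusion under the ``if $\gamma_n$ is defined'' clause. You have all the ingredients in your third paragraph; they just need to be assembled for this final step.
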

\begin{proof}
The first statement is an immediate consequence of Proposition \ref{cf-good-red-leading-coeffs}: by minimality \(\alpha_{n-1} \in \laurinx \O\), so \(\nub{\LC(\alpha_n)} \neq 0\). But \(\nub{\LC(\alpha_n)} > 0\) is impossible because \(\inv{\LC(\alpha_n)} = \LC(\inv{\alpha_n}) = \LC(\alpha_{n-1} - a_{n-1}) \in \O\).

Now assume \(\gamma_n\) is defined:
As we have \(\alpha_0, \dots, \alpha_{n-1} \in \laurinx \O\) (we could say we have ``good reduction up to \(\alpha_{n-1}\)''), we certainly have \(\Red{\alpha_{n-1}} = \gamma_{n-1}\) using Proposition \ref{cf-good-red-second-condition-redundant} inductively. But by Proposition \ref{laurent-inverse-bounded} we have \(\LC(\alpha_{n-1} - a_{n-1}) \in \mm\), hence 
\begin{equation*}
\deg a_n = - \io{\alpha_{n}} = \io{\alpha_{n-1} - a_{n-1}} < \io{\gamma_{n-1} - c_{n-1}} = -\io{\gamma_n} = \deg c_n.
\end{equation*}

In particular \(\gamma_{n-1} - c_{n-1} \neq 0\) which implies \(\nub{\alpha_{n-1} - a_{n-1}} = 0\). But as the leading coefficient is in \(\mm\), Proposition \ref{laurent-inverse-bounded} implies that the inverse \(\alpha_n\) is unbounded.
\end{proof}

\begin{rem}
If we are using the computation scheme with \(t_n\) and \(s_n\) from Chapter \ref{sec:orge1a60a6} and we are already in the \(\sigma\)-reduced case, the \negval in the leading coefficient of \(\alpha_n\) corresponds to \posval in the leading coefficient of \(s_n\).

Unless \(\gamma\) is rational,\footnote{In the case \(\alpha = \sqrt{D}\) the reduction \(\gamma = \sqrt{\Red{D}}\) clearly is rational \IFF \(\Red{D}\) is a square.} \(\gamma_n\) is of course always defined.
\end{rem}

\subsection{Reduction and normalisation of continued fractions}
\label{sec:orgea2cbbe}
We can extend the reasoning of this section also to an arbitrary Laurent series \(\alpha \in \laurinx K_\nu\), as long as \(\alpha\) is bounded and satisfies \(\nu(\alpha) = \nu(\LC(\alpha))\) and \(\io{\alpha} \leq 0\). If these requirements are met, we can just divide \(\alpha\) by \(\LC(\alpha)\), or some \(g \in \units K\) with \(\nu(g) = \nu(\alpha)\). For the new series, we can apply the above results.

Of course reduction here must always be preceded by normalisation. But for example the existence of unbounded complete quotients is invariant under normalisation (see Proposition \ref{cf-scalar-multiplication} about multiplying a continued fraction with a constant), and is characteristic for bad reduction.

We will revisit these issues later, first we need to study the reduction of the convergents in more detail.

\section{Normalisation and reduction of convergents}
\label{sec:orgd1cf6c9}
In the case of good reduction of the continued fraction, we were able to simply reduce the canonical convergents. In the case of bad reduction of the continued fraction, we cannot expect the canonical convergents to be polynomials defined over \(\O\), so we need to normalise them first.

In other words, we wish to extend the reduction map in a useful way to all of \(K[X]\) (or even \(\laurinx K\)) by normalising to valuation \(0\) before reducing. Of course, extending the reduction map \(\O \to k\)  in this way from \(\O\) to \(K\) is not so useful. But for polynomials and Laurent series, there are usually several coefficients, so thinking projectively makes sense. For obvious reasons, this works only for bounded Laurent series.

\begin{defi}
Let \(u \in \mino{\laurinx{K}_\nu}\), and recall that \(\uni\) is a uniformising parameter of \(\O\) satisfying \(\nu(\uni) = 1\). Define the \emph{normalisation} \(\normal{u}\) for \(u\) as
\begin{equation*}
\normal{u} = \uni^{-\nu(u)} \, u \in \laurinx \O.
\end{equation*}
Clearly, \(\nu(\normal{u}) = 0\). For completeness, we also set \(\normal{0} = 0\).

If \(u \in K\), then \(\normal{u} \in \O\), and if \(u \in K[X]\), then \(\normal{u} \in \O[X]\).

We denote the composition of reduction and normalisation by
\begin{equation*}
\Redn{u} = \RedM{\normal{u}}.
\end{equation*}
\end{defi}

Before we start normalising convergents, we need to check that the normalisation factor is the same for the numerator and the denominator -- otherwise we are unable to normalise the convergent as a whole:
\begin{prop}
\label{normalise-convergents-preparation}
Suppose \(\io{\RedM{\alpha}} \leq 0\), and let \((p, q) \in \Batest{K}\) a rational approximation with \(\io{p - \alpha \, q} > 0\). Set \(g = \uni^{\nu(q)} \in K\). 

Then  \((p, q) = g \cdot(\normal{p}, \normal{q})\) and in particular \(\nub{p} = \nub{q} = \nub{g}\).
\end{prop}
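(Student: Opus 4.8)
The plan is to reduce the whole statement to the single claim $\nu(p) = \nu(q)$. Granting that, I would set $g = \uni^{\nu(q)} \in \units K$ (as in the statement), so that $\normal{q} = \uni^{-\nu(q)} q = q/g$ and, using $\nu(p) = \nu(q)$, also $\normal{p} = \uni^{-\nu(p)} p = p/g$; hence $(p,q) = g \cdot (\normal{p}, \normal{q})$, and $\nub{p} = \nu(p) = \nu(q) = \nub{q}$, while $\nub{g} = \nu(\uni^{\nu(q)}) = \nu(q)$ as well.

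To establish $\nu(p) = \nu(q)$, I would first note that the hypothesis $\io{\RedM{\alpha}} \leq 0$ already presupposes $\alpha \in \laurinx{\O}$ and forces $\nu(\alpha) = 0$: otherwise every coefficient of $\alpha$ lies in $\mm$, so $\RedM{\alpha} = 0$ and $\io{\RedM{\alpha}} = +\infty$. Since $\io{p - \alpha \, q} > 0$, Remark \ref{truncation-of-sum} applied to $p$ and $\alpha \, q - p$ (together with Remark \ref{truncation-unique}) gives $p = \gauss{\alpha \, q}$. Writing $\normal{q} = q/g \in \O[X]$ (which has $\nu(\normal{q}) = 0$) and using that $\gauss{\cdot}$ commutes with multiplication by the constant $g \in K$, this rewrites as $p = g \, \gauss{\alpha \, \normal{q}}$, so $\nu(p) = \nu(q) + \nu(\gauss{\alpha \, \normal{q}})$. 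Thus it remains to show $\nu(\gauss{\alpha \, \normal{q}}) = 0$.

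For that last point I would use that $\alpha \, \normal{q} \in \laurinx{\O}$ with $\nu(\alpha \, \normal{q}) = 0$, and that reduction commutes with the polynomial-part operation on $\laurinx{\O}$ (both act coefficient-wise), so $\RedM{\gauss{\alpha \, \normal{q}}} = \gauss{\RedM{\alpha}\cdot\RedM{\normal{q}}}$. Both $\RedM{\alpha}$ and $\RedM{\normal{q}}$ are nonzero in the field $\laurinx{k}$, and
\[
\io{\RedM{\alpha}\cdot\RedM{\normal{q}}} = \io{\RedM{\alpha}} + \io{\RedM{\normal{q}}} \leq 0,
\]
since $\io{\RedM{\alpha}} \leq 0$ by hypothesis and $\RedM{\normal{q}}$ is a nonzero polynomial, so $\io{\RedM{\normal{q}}} = -\deg\RedM{\normal{q}} \leq 0$. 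Hence $\gauss{\RedM{\alpha}\cdot\RedM{\normal{q}}} \neq 0$, i.e.\ $\RedM{\gauss{\alpha \, \normal{q}}} \neq 0$, i.e.\ $\nu(\gauss{\alpha \, \normal{q}}) = 0$ (which incidentally also shows $p \neq 0$).

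The main obstacle — or rather, the one place where the hypothesis is genuinely used — is precisely the condition $\io{\RedM{\alpha}} \leq 0$: it is what guarantees that the $\nu$-value of $\alpha \, \normal{q}$ is attained already in its polynomial part and survives the truncation $\gauss{\cdot}$, a phenomenon that fails for a general bounded Laurent series (for instance $\uni + X^{-1}$ has $\nu = 0$ but polynomial part $\uni$ with $\nu = 1$). Everything else is routine bookkeeping with the valuation $\nu$ of Proposition \ref{bounded-laurent-valuation} and with the fact that $\RedM{\cdot}$ is a ring homomorphism on $\laurinx{\O}$.
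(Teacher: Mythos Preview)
Your proof is correct and follows essentially the same route as the paper: both factor $p = g\,\gauss{\alpha\,\normal{q}}$ and then argue that $\gauss{\alpha\,\normal{q}}$ has $\nu = 0$ by reducing and using $\io{\RedM{\alpha}\cdot\RedM{\normal{q}}} \leq 0$. The only cosmetic difference is that you invoke the commutation of $\RedM{\cdot}$ with $\gauss{\cdot}$ directly, whereas the paper phrases the same step via $\io{p' - \alpha\,\normal{q}} > 0 \Rightarrow \io{\RedM{p'} - \RedM{\alpha}\,\RedM{\normal{q}}} > 0$ and the ultrametric equality.
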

\begin{proof}
By definition, we have \(q = g \, \normal{q}\), and \(\nub{q} = \nub{g}\).
The condition \(\io{p - \alpha \, q} > 0\) implies \(p = -\gauss{\alpha \, q} = - g \, \gauss{\alpha \, \normal{q}}\).
Let \(p' = - \gauss{\alpha \, \normal{q}} \in \O[X]\) with \(p = g \, p'\).

It remains to show \(p' = \normal{p}\):
Indeed \(\io{p' - \alpha \, \normal{q}} > 0\) implies \(\io{\RedM{p'} - \RedM{\alpha} \, \RedM{\normal{q}}} > 0\). But \(\io{\RedM{\alpha}\, \RedM{\normal{q}}} \leq 0\) by hypothesis, so also \(\io{\RedM{p'}} \leq 0\). This means \(\RedM{p'} \neq 0\), or \(\nub{p'} = 0\), hence \(p' = \normal{p}\) as desired.
\end{proof}

\begin{cor}
\label{normalise-convergents}
Every convergent and best-approximation \((p, q) \in \Baset{\alpha}{\K}\) (in particular the canonical convergents \((p_n, q_n)\)) satisfies \(\nub{p} = \nub{q}\).

Setting \(g_n = \uni^{\nu(q_n)}\) we get \((p_n, q_n) = g_n \cdot (\normal{p_n}, \normal{q_n})\).
\end{cor}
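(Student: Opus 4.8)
The plan is to obtain this as an easy packaging corollary of Proposition \ref{normalise-convergents-preparation}. That proposition already does the work: any tuple $(p,q) \in \Batest{K}$ for which $\io{\RedM{\alpha}} \leq 0$ and $\io{p - \alpha\,q} > 0$ automatically satisfies $(p,q) = \uni^{\nu(q)}\cdot(\normal p, \normal q)$ and $\nub p = \nub q = \nub{\uni^{\nu(q)}}$. So the whole task reduces to checking its two hypotheses for convergents and best-approximations.

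The hypothesis $\io{\RedM{\alpha}} \leq 0$ is free: the standing assumptions $\LC(\alpha) \in \units\O$ and $\io{\alpha} \leq 0$ fixed at the start of the chapter give $\io{\RedM{\alpha}} = \io{\gamma} = \io{\alpha} \leq 0$. For the hypothesis $\io{p - \alpha\,q} > 0$, a convergent satisfies it by definition, since $\io{p - \alpha\,q} > \deg q \geq 0$; and since $\Coset{\alpha}{K} \subseteq \Baset{\alpha}{K}$ by Corollary \ref{convergents-are-best-approx} it is enough to handle an arbitrary best-approximation $(p,q)$. Here I would argue by contradiction: if $\io{p - \alpha\,q} \leq 0$, consider the competitor $(p',q') = (\gauss{\alpha\,q},\,q) \in \Batest{K}$ (admissible because $q \neq 0$). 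Then $\io{p' - \alpha\,q'} = \io{\gauss{\alpha\,q} - \alpha\,q} > 0 \geq \io{p - \alpha\,q}$ and $\deg q' = \deg q$, so the universal property of the best-approximation $(p,q)$ forces $p'/q' = p/q$, i.e. $p = \gauss{\alpha\,q}$, contradicting $\io{p - \alpha\,q} \leq 0$. Hence $\io{p - \alpha\,q} > 0$ for every best-approximation, in particular for every convergent; and the canonical convergents $(p_n,q_n)$ are convergents by Proposition \ref{cf-expansion-yields-convergents} (trivially so in the degenerate case $\alpha = p_n/q_n$).

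With both hypotheses verified, applying Proposition \ref{normalise-convergents-preparation} with $g = \uni^{\nu(q)}$ gives at once $\nub p = \nub q = \nub g$ and $(p,q) = g\cdot(\normal p,\normal q)$; specialising to $(p_n,q_n)$ and setting $g_n = \uni^{\nu(q_n)}$ yields the last assertion. I do not expect any genuine obstacle: the statement is bookkeeping, and the only slightly non-routine point is the short contradiction argument for best-approximations, where one must take care to invoke the defining property of best-approximation in the correct direction and to note that $(\gauss{\alpha\,q},q)$ is a legitimate competitor.
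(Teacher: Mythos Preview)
Your proposal is correct and follows exactly the route the paper intends: the corollary is stated in the paper without explicit proof, as an immediate consequence of Proposition~\ref{normalise-convergents-preparation}. Your contradiction argument for best-approximations (using the competitor $(\gauss{\alpha q}, q)$) is a clean way to verify the hypothesis $\io{p-\alpha q}>0$ that the paper leaves implicit; alternatively one could invoke the classification Proposition~\ref{cf-best-approx-classification} to write $(p,q)=r\cdot(p_n,q_n)$ with $\deg r<\deg a_{n+1}$ and compute directly, but your approach is more self-contained.
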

\begin{rem}
For \(n = -1\) we have \(q_{-1} = 0\) and \(p_{-1} = 1\). We just set \(g_{-1} = 1\), as no normalisation is required.
\end{rem}

We finally state and prove the generalised version of Theorem \ref{thm-vdp-intro} on the reduction of convergents by van der Poorten. First we check that convergents remain convergents after reduction.
\begin{prop}
\label{convergent-reduction}
Let \((p, q) \in \Coset{\alpha}{K}\) a convergent. Then \(\io{\Redn{p} - \gamma \, \Redn{q}} > \deg q \geq \deg \Redn{q}\), so \((\Redn{p}, \Redn{q}) \in \Coset{\gamma}{k}\) is also a convergent.
\end{prop}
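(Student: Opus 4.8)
The plan is to transport the convergence relation from $(p,q)$ to the normalised convergent $(\normal p,\normal q)$ and then apply the coefficientwise reduction homomorphism. Since $(p,q)$ is a convergent we have $\io{p-\alpha\,q}>\deg q\ge 0$, so in particular $\io{p-\alpha\,q}>0$ and the hypotheses of Proposition \ref{normalise-convergents-preparation} are met (recall $\io{\gamma}=\io{\alpha}\le 0$ because $\LC(\alpha)\in\units\O$). Hence, writing $g=\uni^{\nu(q)}\in K\setminus\{0\}$, we get $(p,q)=g\cdot(\normal p,\normal q)$ with $\normal p,\normal q\in\O[X]$ and $\nu(\normal p)=\nu(\normal q)=0$; in particular $\Redn q=\Redm(\normal q)\neq 0$, so $(\Redn p,\Redn q)\in\Batest{k}$ is a genuine rational approximation.

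First I would divide the identity $p-\alpha\,q=g\,(\normal p-\alpha\,\normal q)$ by the nonzero constant $g$: as $\io g=0$ this yields $\io{\normal p-\alpha\,\normal q}=\io{p-\alpha\,q}>\deg q$. Because $\normal p,\normal q\in\O[X]\subset\laurinx\O$ and $\alpha\in\laurinx\O$, the difference $\normal p-\alpha\,\normal q$ lies in $\laurinx\O$, so I may apply the reduction ring homomorphism $\Redm\colon\laurinx\O\to\laurinx k$ to obtain
\begin{equation*}
\Redm(\normal p-\alpha\,\normal q)=\Redm(\normal p)-\Redm(\alpha)\,\Redm(\normal q)=\Redn p-\gamma\,\Redn q .
\end{equation*}
Then I would invoke two elementary facts about reduction: it cannot decrease the order at infinity (killing coefficients only pushes the leading term further towards $X=\infty$), so $\io{\Redn p-\gamma\,\Redn q}\ge\io{\normal p-\alpha\,\normal q}$; and it cannot increase the degree of a polynomial, so $\deg\Redn q\le\deg\normal q=\deg q$. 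Chaining these gives
\begin{equation*}
\io{\Redn p-\gamma\,\Redn q}\ge\io{\normal p-\alpha\,\normal q}>\deg q\ge\deg\Redn q ,
\end{equation*}
which is exactly condition \eqref{convergent-condition} certifying $(\Redn p,\Redn q)\in\Coset{\gamma}{k}$.

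There is no real obstacle here; the statement is essentially bookkeeping. The only points requiring care are the three sanity checks invoked above: that the normalisation factor is common to numerator and denominator (this is precisely Proposition \ref{normalise-convergents-preparation}, resp.\ Corollary \ref{normalise-convergents}), that $\normal p-\alpha\,\normal q$ is genuinely bounded so that the reduction homomorphism on $\laurinx\O$ applies, and that $\Redn q\neq 0$ so that the reduced pair really belongs to $\Batest{k}$. (The degenerate case $\alpha\in K(X)$, where $\io{p-\alpha\,q}$ may be $+\infty$, is subsumed without change, since then $\normal p-\alpha\,\normal q=0$ and $\io 0=+\infty>\deg q$.)
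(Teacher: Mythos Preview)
Your proof is correct and follows essentially the same approach as the paper: both rely on the two elementary facts that reduction can only increase $\ios$ and only decrease degree, then chain $\io{\Redn{p}-\gamma\,\Redn{q}}\ge\io{\normal p-\alpha\,\normal q}=\io{p-\alpha\,q}>\deg q\ge\deg\Redn q$. Your write-up is in fact more careful than the paper's, which compresses the normalisation step and the check $\Redn q\neq 0$ into a single line; you make explicit why $\normal p-\alpha\,\normal q\in\laurinx\O$ so that $\Redm$ legitimately applies.
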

\begin{proof}
The important observation is that for \(\beta \in \laurinx \O\) one has \(\io{\Red{\beta}} \geq \io{\beta}\), and for \(b \in \O[X]\) one has \(\deg \Red{b} \leq \deg b\), hence
\begin{equation*}
\io{\Redn{p} - \gamma \, \Redn{q}} \geq \io{p - \alpha \, q} > \deg{q} \geq \deg {\Redn{q}}.
\end{equation*}
\end{proof}

We restrict now to the conveniently enumerated canonical convergents. We find:
\begin{cor}
\label{definition-convergent-reduction-map-lambda}
The reduction of a (normalised) convergent remains a convergent. In particular, there exists a (unique) map \(\lambda : \N_0 \to \N_0\) defined by
\begin{equation*}
\ifrac{\Redn{p_n}}{\Redn{q_n}} = \ifrac{u_{\lambda(n)}}{v_{\lambda(n)}}.
\end{equation*}
More precisely, for each \(n\) there exists \(h_n \in \mino{k[X]}\) such that
\begin{equation*}
\Redn{p_n} = h_n \, u_{\lambda(n)}, \qquad \Redn{q_n} = h_n \, v_{\lambda(n)}.
\end{equation*}
\end{cor}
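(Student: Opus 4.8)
The plan is to assemble Corollary \ref{definition-convergent-reduction-map-lambda} from the pieces already in place, essentially by invoking the classification of convergents (Corollary \ref{cf-convergent-classification}) for $\gamma$. First I would recall from Proposition \ref{convergent-reduction} that for every $n \geq 0$ the reduced normalised convergent $(\Redn{p_n}, \Redn{q_n})$ is a genuine convergent of $\gamma$ over $k[X]$, i.e. lies in $\Coset{\gamma}{k}$; here one uses that $\io{\Red{\alpha}} = \io{\gamma} \leq 0$ (guaranteed by the standing hypothesis $\LC(\alpha) \in \units\O$, see Remark \ref{good-reduction-preserve-degrees}), so that the relevant degree inequality survives reduction. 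I should also note $\Redn{q_n} \neq 0$: by Corollary \ref{normalise-convergents} the pair $(p_n, q_n)$ normalises as a whole with $\nu(\normal{q_n}) = 0$, hence $\Redn{q_n} = \RedM{\normal{q_n}} \neq 0$, so $(\Redn{p_n}, \Redn{q_n})$ is a legitimate element of $\Batest{k}$ and the fraction $\Redn{p_n}/\Redn{q_n} \in k(X)$ makes sense.

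Next, one must separate the two cases $\gamma \in k(X)$ and $\gamma \notin k(X)$. If $\gamma \notin k(X)$, then Corollary \ref{cf-convergent-classification} applies directly: every convergent of $\gamma$, in particular $(\Redn{p_n}, \Redn{q_n})$, is of the form $h_n \cdot (u_m, v_m)$ for some $m \in \N_0$ and some $h_n \in \mino{k[X]}$; since $u_m, v_m$ are coprime (Corollary \ref{canonical-convergent-coprime}) and $\deg v_m$ is strictly increasing in $m$, the index $m$ is uniquely determined by the fraction $\Redn{p_n}/\Redn{q_n}$, and we set $\lambda(n) := m$. This simultaneously yields the existence and uniqueness of $\lambda$ and the factor $h_n$ with $\Redn{p_n} = h_n u_{\lambda(n)}$, $\Redn{q_n} = h_n v_{\lambda(n)}$. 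If instead $\gamma \in k(X)$, the continued fraction $\CF(\gamma)$ is finite by Proposition \ref{cf-euclidean-algorithm}; the classification still picks out a well-defined canonical convergent index for every convergent of $\gamma$ (using the convention ``$\deg c_{m+1} = \infty$'' for the last one, as in the remark following Proposition \ref{cf-best-approx-classification}), so $\lambda(n)$ and $h_n$ are again well-defined — this edge case just needs a sentence.

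The uniqueness of $\lambda$ as a map $\N_0 \to \N_0$ then follows because the assignment $n \mapsto \lambda(n)$ is forced coordinate-wise: for each fixed $n$ the value $\lambda(n)$ is the unique index $m$ with $\Redn{p_n}/\Redn{q_n} = u_m/v_m$, and such an $m$ exists and is unique by the previous paragraph. Finally I would remark that $h_n$ is itself uniquely determined once $\lambda(n)$ is, since $v_{\lambda(n)} \neq 0$ forces $h_n = \Redn{q_n}/v_{\lambda(n)}$, a polynomial because $v_{\lambda(n)} \mid \Redn{q_n}$ by the classification.

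I do not expect a serious obstacle here: the corollary is genuinely a corollary, and all the content sits in Proposition \ref{convergent-reduction} and Corollary \ref{cf-convergent-classification}. The only point requiring a little care is the bookkeeping around $\gamma \in k(X)$ versus $\gamma \notin k(X)$ and the verification that $\Redn{q_n} \neq 0$ so that we really do land in $\Batest{k}$ rather than having a degenerate denominator; both are quick once one cites Corollary \ref{normalise-convergents}.
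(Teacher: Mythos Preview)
Your proposal is correct and follows the same approach as the paper: cite Proposition \ref{convergent-reduction} to know that $(\Redn{p_n},\Redn{q_n})$ is a convergent of $\gamma$, then invoke the classification of convergents (Corollary \ref{cf-convergent-classification}) to extract the unique index $\lambda(n)$ and factor $h_n$. The paper's own proof is a single sentence pointing to Corollary \ref{cf-convergent-classification}; you have simply fleshed out the bookkeeping (the $\gamma \in k(X)$ edge case, $\Redn{q_n}\neq 0$, uniqueness of $h_n$) that the paper leaves implicit.
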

\begin{proof}
The map \(\lambda\) is well defined: every convergent of \(\gamma\) is a multiple of a unique canonical convergent of \(\gamma\) by Corollary \ref{cf-convergent-classification}.
\end{proof}
Here one has to be careful, though: the factor \(h_n\) need \emph{not be constant}! We will investigate this closer for some special cases later. See also Example \ref{ex-cfp2-zero-pattern-deg6} in Section \ref{sec:org0b837d5}, where non-constant \(h_n\) in fact occur. 

\pagebreak
This possibility of non-constant factors make the following less obvious because \(\deg \Redn{q_n}\) may not be non-decreasing:
\begin{prop}
\label{reduced-convergents-increasing}
The map \(\lambda\) is non-decreasing (it need not be increasing).
\end{prop}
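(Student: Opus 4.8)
The statement to prove is that $\lambda$ is non-decreasing. The natural approach is to combine the approximation-quality information for the canonical convergents of $\gamma$ (from Proposition \ref{cf-expansion-yields-convergents}, applied to $\gamma$) with the inequality relating the approximation quality of $(p_n,q_n)$ and of its reduction (from the proof of Proposition \ref{convergent-reduction}), plus the monotonicity of $\deg q_n$ in $n$. I would fix $n \geq 0$ and show $\lambda(n) \leq \lambda(n+1)$.

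First I would record the basic order estimates. Since $(p_n,q_n)$ and $(p_{n+1},q_{n+1})$ are canonical convergents of $\alpha$, Proposition \ref{cf-expansion-yields-convergents} gives $\io{p_n - \alpha\,q_n} = \deg q_{n+1} > \deg q_n$, and in particular $\deg q_n \leq \deg q_{n+1}$. By Proposition \ref{convergent-reduction} applied to $(p_n,q_n)$, the pair $(\Redn{p_n},\Redn{q_n})$ is a convergent of $\gamma$; by Corollary \ref{definition-convergent-reduction-map-lambda} it equals $h_n \cdot (u_{\lambda(n)},v_{\lambda(n)})$ with $h_n \in k[X]\setminus\{0\}$, so $\deg v_{\lambda(n)} \leq \deg \Redn{q_n} \leq \deg q_n$ (the last inequality because reduction cannot raise the degree of a polynomial). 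The same holds with $n$ replaced by $n+1$.

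Next comes the key comparison. I want to show $\deg v_{\lambda(n)} \leq \deg q_n < \deg q_{n+1}$ forces, via the classification, that $\lambda(n) \leq \lambda(n+1)$; but this needs a lower bound on $\deg v_{\lambda(n+1)}$ in terms of $\deg q_{n+1}$, which is \emph{not} automatic since $h_{n+1}$ may be non-constant. The right tool is the \emph{approximation quality}: from the proof of Proposition \ref{convergent-reduction} we have $\io{\Redn{p_{n+1}} - \gamma\,\Redn{q_{n+1}}} \geq \io{p_{n+1}-\alpha\,q_{n+1}} = \deg q_{n+2} > \deg q_{n+1} > \deg q_n \geq \io{p_n - \alpha\,q_n} $ — wait, that last comparison goes the wrong way, so instead I would argue directly with the canonical convergents of $\gamma$: by Proposition \ref{cf-expansion-yields-convergents} for $\gamma$, one has $\io{u_m - \gamma\,v_m} = \deg v_{m+1} > \deg v_m$, and both quantities $\io{u_m - \gamma\,v_m}$ and $\deg v_m$ are strictly increasing in $m$. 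Therefore $\lambda(n) \leq \lambda(n+1)$ is equivalent to $\io{u_{\lambda(n)} - \gamma\,v_{\lambda(n)}} \leq \io{u_{\lambda(n+1)} - \gamma\,v_{\lambda(n+1)}}$. Now $\io{u_{\lambda(n)} - \gamma\,v_{\lambda(n)}} = \io{\Redn{p_n} - \gamma\,\Redn{q_n}} - \deg h_n \leq \io{\Redn{p_n} - \gamma\,\Redn{q_n}}$, and similarly with a lower bound involving $h_{n+1}$ going the wrong direction — so the clean route is: use $\deg v_{\lambda(n)} \leq \deg q_n \leq \deg q_{n+1}$, and separately note that if $\lambda(n) > \lambda(n+1)$ then $\deg v_{\lambda(n)} > \deg v_{\lambda(n+1)}$, and I must derive a contradiction.

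To close that gap cleanly, I would instead invoke the \emph{best-approximation} monotonicity: since $(\Redn{p_{n+1}},\Redn{q_{n+1}})$ is a convergent hence a best-approximation of $\gamma$ with $\deg \Redn{q_{n+1}} \geq \deg v_{\lambda(n+1)}$, and it has $\io{\Redn{p_{n+1}}-\gamma\,\Redn{q_{n+1}}} \geq \io{p_{n+1}-\alpha\,q_{n+1}} > \deg q_{n+1} \geq \deg q_n \geq \io{p_n - \alpha\,q_n} \geq \io{\Redn{p_n}-\gamma\,\Redn{q_n}} = \io{u_{\lambda(n)}-\gamma\,v_{\lambda(n)}} + \deg h_n \geq \io{u_{\lambda(n)}-\gamma\,v_{\lambda(n)}}$ — so $\io{u_{\lambda(n+1)}-\gamma\,v_{\lambda(n+1)}} \geq \io{\Redn{p_{n+1}}-\gamma\,\Redn{q_{n+1}}} - \deg h_{n+1}$; this term with $h_{n+1}$ is the obstruction. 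The honest fix is to apply Proposition \ref{compare-best-approx-strict} to the two best-approximations $(u_{\lambda(n)},v_{\lambda(n)})$ and $(u_{\lambda(n+1)},v_{\lambda(n+1)})$ of $\gamma$ directly: by that proposition $\lambda(n) \leq \lambda(n+1)$ holds iff $\io{u_{\lambda(n)}-\gamma v_{\lambda(n)}} \leq \io{u_{\lambda(n+1)} - \gamma v_{\lambda(n+1)}}$, and I reduce to establishing this order-inequality, for which I use that $\io{u_{\lambda(n)} - \gamma v_{\lambda(n)}} = \io{\Redn{p_n} - \gamma \Redn{q_n}} - \deg h_n$ is \emph{controlled from above} by $\io{p_n - \alpha q_n} + (\text{something})$ — and here the genuinely hard point, which I would isolate as a lemma, is showing $\io{\Redn{p_n} - \gamma\,\Redn{q_n}}$ is itself non-decreasing in $n$, equivalently that the reduced convergents inherit enough monotonicity despite the stray factors $h_n$. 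I expect that lemma — controlling how $\deg h_n$ can fluctuate — to be the main obstacle; everything else is bookkeeping with the strictly-increasing sequences $\deg v_m$ and $\io{u_m - \gamma v_m}$ and Proposition \ref{compare-best-approx-strict}.
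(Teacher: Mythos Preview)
Your proposal never actually proves the statement: you cycle through several attempts and end by deferring to a lemma---that \(\io{\Redn{p_n} - \gamma\,\Redn{q_n}}\) is non-decreasing in \(n\)---which you neither prove nor show would suffice. In fact that lemma is \emph{false}. In the paper's own Example~\ref{ex-cfp2-zero-pattern-deg6} (reduction mod \(3\), see Table~\ref{cf2-mod3-degrees-table}) one has \(\lambda(6)=\lambda(7)=5\) with \(\deg h_6=0\) but \(\deg h_7=1\), and \(\deg v_6=9\); hence
\[
\io{\Red{\vartheta_6}} = \deg v_6 = 9, \qquad \io{\Red{\vartheta_7}} = -\deg h_7 + \deg v_6 = 8,
\]
so the quantity you hoped would be monotone actually drops. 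The fluctuation of \(\deg h_n\) that you flagged as ``the main obstacle'' is not a technicality you can lemma away---it genuinely breaks the approach.

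The paper's proof sidesteps this entirely with a case split on \(\deg \Redn{q_n}\) versus \(\deg \Redn{q_{n'}}\) for \(n<n'\). If \(\deg \Redn{q_n} \le \deg \Redn{q_{n'}}\), the monotonicity clause of Proposition~\ref{cf-best-approx-classification} (applied to \(\gamma\)) gives \(\lambda(n)\le\lambda(n')\) directly. If instead \(\deg \Redn{q_n} \ge \deg \Redn{q_{n'}}\)---the case where your argument stalled---combine the convergent inequality \(\io{\Redn{p_n}-\gamma\,\Redn{q_n}} > \deg\Redn{q_n} \ge \deg\Redn{q_{n'}}\) with the stronger bound \(\io{\Redn{p_{n'}}-\gamma\,\Redn{q_{n'}}} > \deg q_{n'} > \deg q_n \ge \deg\Redn{q_n}\) (the strict middle inequality coming from \(n<n'\) upstairs, before reduction). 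Then the determinant \(\Redn{p_n}\,\Redn{q_{n'}} - \Redn{p_{n'}}\,\Redn{q_n}\) is a polynomial with \(\ord>0\), hence zero, so \(\ifrac{\Redn{p_n}}{\Redn{q_n}} = \ifrac{\Redn{p_{n'}}}{\Redn{q_{n'}}}\) and \(\lambda(n)=\lambda(n')\). The idea you were missing is that in the ``bad'' degree ordering one proves \emph{equality} of the fractions rather than trying to order them.
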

\begin{proof}
Let \(n < n'\) and set \(m = \lambda(n), m' = \lambda(n')\), hence \(\deg{q_n} < \deg{q_{n'}}\).

If \(\deg \Redn{q_n} \leq \deg \Redn{q_{n'}}\), Proposition \ref{cf-best-approx-classification} (Classification of best-approximations) for \(\gamma\) implies directly \(m \leq m'\).

If however \(\deg{\Redn{q_n}} \geq \deg{\Redn{q_{n'}}}\), then
\begin{align*}
  \io{\Redn{p_n} - \gamma \, \Redn{q_n}} &> \deg{\Redn{q_n}} \geq \deg{\Redn{q_{n'}}}, \\
  \io{\Redn{p_{n'}} - \gamma \, \Redn{q_{n'}}} &> \deg{q_{n'}} > \deg{q_n}  \geq \deg{\Redn{q_n}}.
\end{align*}
Eliminating \(\gamma\), one obtains
\begin{multline*}
\io{\Redn{p_n} \, \Redn{q_{n'}} - \Redn{p_{n'}} \, \Redn{q_n}} =
\io{(\Redn{p_{n}} - \gamma \, \Redn{q_{n}}) \, \Redn{q_{n'}} - (\Redn{p_{n'}} - \gamma \, \Redn{q_{n'}}) \, \Redn{q_{n}}} \\
\geq \min\left(\io{\Redn{p_n} - \gamma \, \Redn{q_n}} + \io{\Redn{q_{n'}}},
           \io{\Redn{p_{n'}} - \gamma \, \Redn{q_{n'}}} + \io{\Redn{q_n}} \right) > 0
\end{multline*}
which implies \(\ifrac{\Redn{p_n}}{\Redn{q_n}} = \ifrac{\Redn{p_{n'}}}{\Redn{q_{n'}}}\), hence \(m = m'\).
\end{proof}
\begin{rem}
\label{best-approx-reduction-strictly-increasing-q}
If \(m < m'\), then Proposition \ref{cf-best-approx-classification} immediately implies \(\deg{\Redn{q_n}} < \deg{\Redn{q_{n'}}}\).
\end{rem}

Now we are ready to prove that the map \(\lambda\) is in fact surjective, a result which appeared first \cite{poorten-1999-reduction-continued-fractions}, and with a slightly different proof in \cite{poorten-1999-reduction-continued-fractions} and \cite{poorten-2001-non-periodic-continued}. \footnote{Note that van der Poorten speaks of good reduction only for the hyperelliptic curve, not for the continued fraction.} Unfortunately, both proofs are somewhat confusing, perhaps because van der Poorten does not include an argument why the map \(\lambda\) should be non-decreasing. He already seems to assume that property in his implicit definition of \(\lambda\), where he uses an elaborate enumeration scheme.\footnote{Van der Poorten does not explicitly define the map \(\lambda\) as we do it here.}
\begin{thm}
\label{convergent-reduction-surjective}
All the (coprime) convergents of \(\gamma\) arise as reductions of convergents of \(\alpha\). In other words, the map \(\lambda : \N_0 \to \N_0\) is surjective.
Moreover, if \(n = \min \inv\lambda(m)\), then \(\deg {v_m} = \deg{q_n}\).
\end{thm}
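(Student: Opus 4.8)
The plan is to combine the monotonicity of $\lambda$ (Proposition \ref{reduced-convergents-increasing}) with a single "squeeze" on degrees that at one stroke forces $\lambda$ to increase by at most $1$ at each step and pins down $\deg v_m$. First I would settle the base case: since $(p_0,q_0)=(\gauss{\alpha},1)$ and $\LC(\alpha)\in\units\O$, one has $\Redn{q_0}=1=v_0$ and $\Redn{p_0}=\Red{\gauss{\alpha}}=\gauss{\Red{\alpha}}=c_0=u_0$, so $\lambda(0)=0$ and $\deg v_0=0=\deg q_0$. I would also record that $\deg q_{n+1}=\sum_{j=1}^{n+1}\deg a_j\to\infty$ by Proposition \ref{canonical-convergents-degree-and-lc}. (I treat $\CF(\alpha)$ as infinite; if $\alpha\in K(X)$ the same estimates apply up to the last convergent. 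Note $\CF(\gamma)$ may still be finite, of length $M$ say, which needs only a small extra remark at the end.)

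The core step is: fix $n$, suppose $\lambda(n+1)>\lambda(n)$, and write $m_0=\lambda(n)$, $\lambda(n+1)=m_0+j$ with $j\ge 1$; I claim $j=1$ and $\deg v_{m_0+1}=\deg q_{n+1}$. Since $(\Redn{p_n},\Redn{q_n})=h_n\cdot(u_{m_0},v_{m_0})$ (Corollary \ref{definition-convergent-reduction-map-lambda}) and $m_0$ is not the last index of $\CF(\gamma)$, Proposition \ref{cf-expansion-yields-convergents} applied to $\gamma$ gives
\[\io{\Redn{p_n}-\gamma\,\Redn{q_n}}=\io{h_n}+\io{u_{m_0}-\gamma v_{m_0}}=-\deg h_n+\deg v_{m_0+1},\]
while Proposition \ref{convergent-reduction} together with Proposition \ref{cf-expansion-yields-convergents} for $\alpha$ gives $\io{\Redn{p_n}-\gamma\,\Redn{q_n}}\ge\io{p_n-\alpha\,q_n}=\deg q_{n+1}$, whence $\deg v_{m_0+1}=\deg h_n+\io{\Redn{p_n}-\gamma\,\Redn{q_n}}\ge\deg q_{n+1}$. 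On the other hand $\Redn{q_{n+1}}=h_{n+1}\,v_{m_0+j}$ and reduction never raises the degree, so $\deg v_{m_0+j}\le\deg\Redn{q_{n+1}}\le\deg q_{n+1}$. Since $\deg v_i$ is strictly increasing in $i$ (each $\deg c_i\ge 1$), the chain $\deg q_{n+1}\le\deg v_{m_0+1}\le\deg v_{m_0+j}\le\deg q_{n+1}$ collapses to equalities, forcing $j=1$ and $\deg v_{m_0+1}=\deg q_{n+1}$. As $\lambda(n)=m_0<m_0+1=\lambda(n+1)$, the index $n+1$ is $\min\inv\lambda(m_0+1)$, which (with the base case for $m=0$) yields the asserted identity $\deg v_m=\deg q_n$ for $n=\min\inv\lambda(m)$.

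It then remains to deduce surjectivity. Because $\lambda$ is non-decreasing, $\lambda(0)=0$, and consecutive values differ by $0$ or $1$, its image is an initial segment $\{0,1,\dots,\sup_n\lambda(n)\}$ of $\N_0$; I must show it is all of $\N_0$ (resp. all of $\{0,\dots,M\}$ if $\CF(\gamma)$ is finite). If $\lambda$ were eventually constant equal to $m^{\ast}$ with $c_{m^{\ast}+1}$ still present — automatic when $\CF(\gamma)$ is infinite, and in the finite case it forces $m^{\ast}<M$, which also keeps $c_{m^{\ast}+1}$ in range — then for all large $n$ one would have $\io{\Redn{p_n}-\gamma\,\Redn{q_n}}=-\deg h_n+\deg v_{m^{\ast}+1}\le\deg v_{m^{\ast}+1}$, bounded, contradicting $\io{\Redn{p_n}-\gamma\,\Redn{q_n}}\ge\deg q_{n+1}\to\infty$. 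Hence $\lambda$ is surjective; and since every convergent of $\gamma$ is a constant multiple of a canonical convergent (Corollary \ref{cf-convergent-classification}) and $(u_m,v_m)=\inv{h_n}\cdot(\Redn{p_n},\Redn{q_n})$ for any $n\in\inv\lambda(m)$, all convergents of $\gamma$ arise by normalising and reducing convergents of $\alpha$.

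The only genuinely delicate point — the "hard part" — is bookkeeping with the correction factors $h_n$ of Corollary \ref{definition-convergent-reduction-map-lambda}, which need not be constant: $\deg h_n$ enters the approximation-quality identity for $\gamma$ with a sign opposite to the one with which it constrains $\deg\Redn{q_n}$, so the squeeze above must be arranged precisely so that the $h_n$'s cancel at the right moment (they appear as $+\deg h_n$ in the lower bound for $\deg v_{m_0+1}$ and are simply discarded in the upper bound for $\deg v_{m_0+j}$). A secondary nuisance is preventing the degenerate cases ($\alpha\in K(X)$, or $\Red{\alpha}=\gamma$ rational so that $\CF(\gamma)$ is finite) from spoiling the "$\to\infty$" argument, which I would handle by the remarks indicated above.
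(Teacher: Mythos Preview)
Your argument is correct and is essentially the paper's own proof: the same squeeze $\deg q_{n+1}\le \deg v_{m_0+1}\le \deg v_{m_0+j}\le \deg q_{n+1}$, derived from the approximation quality of $(u_{m_0},v_{m_0})$ together with $\deg\Redn{q_{n+1}}\le\deg q_{n+1}$, is exactly what the paper uses to rule out gaps, and your ``not eventually constant'' step is the paper's ``finite fibres'' observation rephrased. If anything you are slightly more careful about the degenerate cases ($\alpha$ or $\gamma$ rational) than the paper, which tacitly assumes $\CF(\gamma)$ is infinite.
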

\begin{proof}
First, we show that \(\lambda\) has finite fibres. Indeed, for \(n \geq 0\) and \(m = \lambda(n)\) we have by definition of \(\lambda\)
\begin{equation*}
\Redn{p_n} = h_n \, u_m, \quad \Redn{q_n} = h_n \, v_m \text{ where } h_n \in \mino{k[X]},
\end{equation*}
hence \(\deg{q_{n+1}} \leq \deg{v_{m+1}}\):
\begin{multline}
\label{convergent-reduction-quality-improvement}
\deg{v_{m+1}} \geq \io{h_n} + \deg{v_{m+1}} = \io{h_n} + \io{u_{m} - \gamma \, v_{m}} \\
= \io{\Redn{p_{n}} - \gamma \, \Redn{q_{n}}} \geq \io{p_{n} - \alpha \, q_{n}} = \deg{q_{n+1}}
\end{multline}
Here we use Proposition \ref{cf-expansion-yields-convergents} about the approximation quality of the canonical convergents \((u_m, v_m)\) and \((p_n, q_n)\) (first and last equality).

Now we know that \(\limn \deg{q_{n+1}} = \infty\) so for fixed \(m\) there can only by finitely many \(n\) which satisfy the inequality.

Because we know that \(\lambda\) is monotonous, we can prove its surjectivity by checking that there are no gaps in the image.

There is no gap at the start because \(v_0 = 1\) and \(q_0 = 1\) imply \(\lambda(0) = 0\).

For \(n \geq 0\), we either have \(\lambda(n) = \lambda(n+1)\) in which case there is no gap.

Otherwise \(m = \lambda(n) < \lambda(n+1) = m'\), and we need to show \(m' = m+1\). Again, by definition of \(\lambda\)
\begin{equation*}
\Redn{p_{n+1}} = h_{n+1} \, u_{m'}, \quad \Redn{q_{n+1}} = h_{n+1} \, v_{m'} \text{ where } h_{n+1} \in \mino{k[X]}.
\end{equation*}
and in particular
\begin{equation*}
\deg{v_{m'}} \leq \deg{h_{n+1}} + \deg{v_{m'}} = \deg{\Redn{q_{n+1}}} \leq \deg{q_{n+1}}.
\end{equation*}
But from \(m+1 \leq m'\) and \eqref{convergent-reduction-quality-improvement} follows also
\begin{equation*}
\deg{q_{n+1}} \leq \deg{v_{m+1}} \leq \deg{v_{m'}},
\end{equation*}
so these are actually equalities, and as desired \(m' = \lambda(n+1) = m+1 = \lambda(n) + 1\), so there is no gap. Note that \(n+1\) is the minimal element of the fibre \(\inv \lambda(m')\), and we have shown \(\deg q_{n+1} = \deg{v_{\lambda(n+1)}}\).
\end{proof}
\begin{rem}
\label{convergent-reduction-minimal-maximal-coprime}
Observe that \(\deg q_{n+1} = \deg v_{m+1}\) implies \(\deg h_{n+1} = 0\), and from \eqref{convergent-reduction-quality-improvement} also \(\deg h_n = 0\). Hence both for the minimal and maximal fibre element, the reduced convergent remains coprime.
\end{rem}
\begin{cor}
\label{cor-lambda-degree-sum}
Suppose that \(\inv\lambda(m) = \{ n, \dots, n+l \}\). Then
\begin{equation}
\label{eq-lambda-degree-sum}
\deg c_{m+1} = \sum_{i=1}^{l+1} \deg a_{n+i} = \deg a_{n+1} + \dots + \deg a_{n+l+1}.
\end{equation}
\end{cor}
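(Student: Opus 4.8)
\textit{Proof plan.} The plan is to reduce everything to two facts already available: the degree growth formula for canonical convergents (Proposition \ref{canonical-convergents-degree-and-lc}), which gives $\deg q_{n+1} = \deg a_{n+1} + \deg q_n$ and likewise for the $v_m$, and the ``moreover'' part of Theorem \ref{convergent-reduction-surjective}, which says that if $n_0 = \min \inv\lambda(m_0)$ then $\deg v_{m_0} = \deg q_{n_0}$.

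First I would identify the minimal elements of the two relevant fibres. By hypothesis $\inv\lambda(m) = \{n, \dots, n+l\}$, so $n = \min \inv\lambda(m)$. For the fibre over $m+1$: since $\lambda(n+l) = m$ and $n+l+1 \notin \inv\lambda(m)$, monotonicity of $\lambda$ (Proposition \ref{reduced-convergents-increasing}) forces $\lambda(n+l+1) > m$, and the no-gap argument in the proof of Theorem \ref{convergent-reduction-surjective} (applied with $n' = n+l$) gives exactly $\lambda(n+l+1) = m+1$. Together with $\lambda(n+l) = m < m+1$ this shows $n+l+1 = \min \inv\lambda(m+1)$.

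Now Theorem \ref{convergent-reduction-surjective} yields $\deg v_m = \deg q_n$ and $\deg v_{m+1} = \deg q_{n+l+1}$. Applying Proposition \ref{canonical-convergents-degree-and-lc} to both sequences,
\begin{equation*}
\deg c_{m+1} = \deg v_{m+1} - \deg v_m = \deg q_{n+l+1} - \deg q_n = \sum_{i=1}^{l+1} \deg a_{n+i},
\end{equation*}
which is \eqref{eq-lambda-degree-sum}.

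There is no genuine obstacle here; the argument is pure bookkeeping once Theorem \ref{convergent-reduction-surjective} is in hand. The only point requiring a moment's care is the identification $n+l+1 = \min \inv\lambda(m+1)$, which rests entirely on the fact — established inside the proof of Theorem \ref{convergent-reduction-surjective} — that $\lambda$ increases by at most $1$ at each step, so that the fibres are consecutive blocks of integers with no gaps in the image.
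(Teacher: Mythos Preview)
Your proof is correct and follows the same approach as the paper: identify $n$ and $n+l+1$ as the minimal elements of the fibres over $m$ and $m+1$, invoke Theorem \ref{convergent-reduction-surjective} to get $\deg q_n = \deg v_m$ and $\deg q_{n+l+1} = \deg v_{m+1}$, then subtract using the degree growth formula \eqref{convergent-deg-growth}. You spell out the justification for $n+l+1 = \min \inv\lambda(m+1)$ more carefully than the paper does, but the substance is identical.
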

\begin{proof}
Both \(n\) and \(n+l+1\) are the minimal elements of their respective fibres, hence \(\deg q_n = \deg v_m\) and \(\deg q_{n+l+1} = \deg v_{m+1}\). The degree formula for the convergents \eqref{convergent-deg-growth} then gives the desired relation between the degrees of the partial quotients.
\end{proof}

If the reduction is not rational, we also get an additional criterion for good reduction:
\begin{prop}
\label{cf-good-reduction-lambda-bijective}
If \(\gamma \not\in k(X)\), the map \(\lambda\) is bijective \IFF \(\CF(\alpha)\) has good reduction.
\end{prop}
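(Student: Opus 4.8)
The plan is to recast the statement in terms of injectivity. We already know that $\lambda$ is surjective (Theorem \ref{convergent-reduction-surjective}) and non-decreasing (Proposition \ref{reduced-convergents-increasing}), so $\lambda$ is bijective if and only if it is strictly increasing, i.e. if and only if every fibre $\inv\lambda(m)$ is a singleton. I would prove the two implications separately.

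\emph{Good reduction $\Rightarrow$ $\lambda$ bijective.} By Theorem \ref{cf-good-red-partial-quotients}, good reduction gives $a_j \in \O[X]$ with $\LC(a_j) \in \units\O$ for all $j$, so by the product formula \eqref{convergents-leading-coeff} we get $\nu(q_j) = \sum_{i=1}^{j} \nu(\LC(a_i)) = 0$; hence $q_j$, and likewise $p_j$, is already normalised, so $\Redn{q_j} = \Red{q_j}$ and $\Redn{p_j} = \Red{p_j}$. By Corollary \ref{cf-good-red-convergents-reduction} these reductions equal $v_j$ and $u_j$, which are coprime; since by definition of $\lambda$ (Corollary \ref{definition-convergent-reduction-map-lambda}) $\ifrac{\Redn{p_j}}{\Redn{q_j}} = \ifrac{u_{\lambda(j)}}{v_{\lambda(j)}}$ and distinct canonical convergents of $\gamma$ have distinct values, this forces $\lambda(j) = j$. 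Thus $\lambda = \mathrm{id}$.

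\emph{$\lambda$ bijective $\Rightarrow$ good reduction.} I argue contrapositively: assume $\CF(\alpha)$ has bad reduction and let $n \geq 1$ be minimal with $\alpha_n \notin \laurinx\O$. For $j \leq n-1$ we have $\alpha_j \in \laurinx\O$, and (by Proposition \ref{cf-good-red-leading-coeffs}, using the hypothesis $\LC(\alpha_0) \in \units\O$ for $j=0$) also $\LC(\alpha_j) \in \units\O$; exactly as in the previous paragraph this yields $\nu(q_j) = 0$, $\Redn{q_j} = \Red{q_j} = v_j$ coprime with $\deg v_j = \deg q_j$, and $\lambda(j) = j$. In particular $\deg v_{n-1} = \deg q_{n-1}$ and $\lambda(n-1) = n-1$. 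Now $\gamma \notin k(X)$ forces $\CF(\gamma)$ to be infinite (Proposition \ref{cf-euclidean-algorithm}), so $\gamma_n$ is defined and Proposition \ref{bad-reduction-minimal-pole} gives $\deg c_n > \deg a_n$. Combining this with $\Redn{q_n} = h_n v_{\lambda(n)}$, whence $\deg v_{\lambda(n)} \leq \deg \Redn{q_n} \leq \deg q_n$, and the degree formula \eqref{convergent-deg-growth},
\begin{multline*}
\deg v_{\lambda(n)} \leq \deg q_n = \deg q_{n-1} + \deg a_n \\ < \deg q_{n-1} + \deg c_n = \deg v_{n-1} + \deg c_n = \deg v_n.
\end{multline*}
Since $m \mapsto \deg v_m$ is strictly increasing this gives $\lambda(n) < n$, and together with $\lambda(n) \geq \lambda(n-1) = n-1$ we obtain $\lambda(n) = n-1 = \lambda(n-1)$; hence $\lambda$ is not injective.

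\emph{Main obstacle.} Every step is an immediate consequence of results already established, so there is no real analytic difficulty; the one point requiring attention is invoking the hypothesis $\gamma \notin k(X)$ at exactly the right place — it is used only to guarantee that $\gamma_n$ is defined, so that the strict inequality $\deg c_n > \deg a_n$ from Proposition \ref{bad-reduction-minimal-pole} is available. Without it $\gamma$ could be rational, $\gamma_n$ need not exist, and the displayed degree comparison (hence the collapse $\lambda(n)=\lambda(n-1)$) could fail.
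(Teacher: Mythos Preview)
Your proof is correct. The forward direction is the same as the paper's (both rest on Corollary \ref{cf-good-red-convergents-reduction}); your preliminary computation of $\nu(q_j)$ is slightly redundant, and strictly speaking \eqref{convergents-leading-coeff} gives $\nu(\LC(q_j))=0$, with $\nu(q_j)=0$ then following from $q_j\in\O[X]$.

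For the converse the two arguments diverge. The paper proceeds directly: $\lambda=\mathrm{id}$ together with Theorem \ref{convergent-reduction-surjective} forces $\deg q_n=\deg v_n$ for all $n$, hence $\deg a_n=\deg c_n$ for all $n$, and then condition (4) of Theorem \ref{cf-good-red-partial-quotients} yields good reduction. You instead argue contrapositively, locating the minimal $n$ with $\alpha_n\notin\laurinx\O$ and using Proposition \ref{bad-reduction-minimal-pole} to produce the fibre collapse $\lambda(n)=\lambda(n-1)$. Both routes hinge on the same degree inequality $\deg c_n>\deg a_n$; the paper's version is shorter because it simply chains already-packaged characterisations, while yours is more explicit about where the failure occurs and makes the role of the hypothesis $\gamma\notin k(X)$ visible at exactly the step where it is needed.
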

\begin{proof}
First observe that by Proposition \ref{reduced-convergents-increasing}, the map \(\lambda\) is bijective \IFF it is the identity.

If \(\CF(\alpha)\) has good reduction, Corollary \ref{cf-good-red-convergents-reduction} implies that \(\lambda\) is the identity.

Conversely, if \(\lambda\) is the identity, then from Theorem \ref{convergent-reduction-surjective} we obtain \(\deg q_n = \deg v_n\) for all \(n\), which in turn implies \(\deg a_n = \deg c_n\) for all \(n\). Then by Theorem \ref{cf-good-red-partial-quotients} \(\CF(\alpha)\) has good reduction.
\end{proof}

\medskip

We conclude this section by pointing out that while the canonical convergents are usually not normalised, the convergents we get as solutions of the linear system in Section \ref{sec:org542977d} are in fact optimally normalised (even independently of the valuation):

\begin{prop}
\label{prop-convergents-hankel-determinants-normalised}
Let \(\alpha \in \laurinx \O\) and suppose that \(\gamma = \Red{\alpha} \neq 0\). Let \(n\) such that \(\pqmatrix_n\) has full rank, and let \((p, q)\) correspond to an element of the kernel computed from the minors of \(\pqmatrix_n\) as in Remark \ref{rem-cramers-rule}.

Then \(p, q \in \O[X]\). Moreover, if \(\deg q = \deg \Redn{q}\), we have \(\nu(q) = 0\).
\end{prop}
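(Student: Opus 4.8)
The statement has two parts. First, that $p, q \in \O[X]$: this is immediate. By Remark \ref{rem-cramers-rule}, the coefficients $P_i$ and $Q_i$ are (up to sign) the minors $\det \pqmatrix_n(j)$ of the matrix $\pqmatrix_n$, whose entries are the coefficients $A_j$ of $\alpha$ together with entries $0, -1$. Since $\alpha \in \laurinx \O$, all $A_j \in \O$, so every entry of $\pqmatrix_n$ lies in $\O$; a determinant of a matrix over $\O$ lies in $\O$, hence all $P_i, Q_i \in \O$, i.e. $p, q \in \O[X]$. In particular $\nu(q) \geq 0$ automatically, so the remaining content is to show $\nu(q) = 0$, i.e. that $q$ has \emph{not} all its coefficients in $\mm$, under the hypothesis $\deg q = \deg \Redn{q}$.

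The plan for the second part is to reduce $\pqmatrix_n$ modulo $\mm$ and apply the same construction to $\gamma = \Red{\alpha}$. The key observation is that $\Red{\pqmatrix_n}$ is exactly the matrix $\pqmatrix_n$ built from $\gamma$ (its entries are $\Red{A_j}$ together with $0, -1$), because reduction is a ring homomorphism commuting with the formation of these Toeplitz-type matrices. Since $\gamma \neq 0$ and $\io{\gamma} = \io{\alpha}$ (the leading coefficient situation: actually we only need $\gamma \neq 0$ so that $\gamma \notin$ has some convergent structure), the analysis of Section \ref{sec:org542977d} applies to $\gamma$: the kernel of $\Red{\pqmatrix_n}$ is non-zero, and the minor vector $(\det \Red{\pqmatrix_n}(1), -\det \Red{\pqmatrix_n}(2), \dots)$ is an element of $\ker \Red{\pqmatrix_n}$, giving a convergent $(\Red p', \Red q')$ of $\gamma$ — which is precisely $(\Red p, \Red q)$ since minors commute with reduction ($\det \Red{\pqmatrix_n}(j) = \Red{\det \pqmatrix_n(j)}$). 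So $\Red q$ is, up to sign, the $q$-part of the Cramer solution for $\gamma$ at level $n$.

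Now suppose for contradiction that $\nu(q) > 0$, i.e. $\Red q = 0$ and hence also $\Red p = 0$ (the $P_i$ are determined by the $Q_i$ once $q=0$, or directly: all minors reduce to $0$). Then $\Red{\pqmatrix_n}$ has all these particular minors vanishing. The point to exploit is that $\pqmatrix_n$ has full rank by hypothesis, so one of its maximal minors $\det \pqmatrix_n(j_0)$ is a \emph{unit} — wait, that is not automatic, it need only be non-zero in $\O$. So instead I would argue via degrees: the degree of the $q$ produced by the Cramer rule for $\alpha$ at level $n$ equals (generically) $n$, and dropping of this degree upon reduction is governed by whether $\LC(q) = \pm\det\pqmatrix_n(N+2n+2) \in \mm$ or more coefficients vanish. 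The hypothesis $\deg q = \deg \Redn q$ says exactly that $\LC(q) \notin \mm$, since $\normal q = \uni^{-\nu(q)} q$ and its reduction has degree $\deg q$ iff $\nu(\LC(q)) = \nu(q)$. Combined with $\nu(q) = \min_i \nu(Q_i) \le \nu(\LC(q))$ and $\nu(\LC(q)) = \nu(q)$, we would like to conclude $\nu(q) = \nu(\LC(q))$ — but we want $\nu(q) = 0$ outright. The clean route: $\deg q = \deg \Redn q$ forces $\Red{\normal q}$ to have the same degree as $q$; but $\Red{\normal q}$ is, up to a unit constant, the $q$-vector of the Cramer solution for $\gamma$ (by the minor-commutation above, after dividing by $\uni^{\nu(q)}$), hence is a genuine convergent of $\gamma$ of denominator-degree $n$; by Proposition \ref{convergent-linear-matrix-full-rank} applied to $\gamma$, when $\deg v = n$ (the maximal possible) the matrix has full rank and the Cramer solution for $\gamma$ is non-zero — but it equals $\Red{(\text{Cramer solution for }\alpha)} = (\Red p, \Red q)$ up to scaling, forcing $\Red q \ne 0$, i.e. $\nu(q) = 0$. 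The main obstacle is bookkeeping the normalisation constants: one must check that $\uni^{-\nu(q)}$ applied to the integral Cramer vector for $\alpha$ reduces to a \emph{nonzero} scalar multiple of the Cramer vector for $\gamma$, which rests on the commutation $\det \Red{\pqmatrix_n}(j) = \Red{\det\pqmatrix_n(j)}$ together with the hypothesis that this degree does not drop. I would organise the write-up around that single commutation identity and the degree hypothesis, invoking Proposition \ref{convergent-linear-matrix-full-rank} for $\gamma$ to get non-vanishing.
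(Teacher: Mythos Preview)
Your overall plan matches the paper's: reduce the matrix, note that maximal minors commute with $\Redm$ so the Cramer vector for $\gamma$ is exactly $(\Red p,\Red q)$, and then argue via full rank of $\Red{\pqmatrix_n}$. The first part ($p,q\in\O[X]$) is fine and identical to the paper.

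However, your ``clean route'' for the second part is circular. You assert that $\Redn q=\Red{\normal q}$ is, up to a unit constant, the $q$-part of the Cramer solution for $\gamma$. But the Cramer vector for $\gamma$ is $(\Red p,\Red q)$ (this is precisely the minor-commutation you noted), whereas $\Redn q=\Red{\uni^{-\nu(q)}q}$. If $\nu(q)>0$ then $\Red q=0$ while $\Redn q\neq 0$, so the two are \emph{not} related by a unit of $k$. Identifying $\Redn q$ with the Cramer vector for $\gamma$ already presupposes $\nu(q)=0$, which is the conclusion. You flag this yourself as ``the main obstacle'' at the end, but you do not resolve it.

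The paper sidesteps this by arguing in the other order: from the hypothesis $\deg q=\deg\Redn q$ together with the fact that $(\Redn p,\Redn q)$ is a genuine (nonzero) element of $\ker\Red{\pqmatrix_n}$, it first deduces that $\Red{\pqmatrix_n}$ has full rank; only then does it conclude that the Cramer vector for $\gamma$, namely $(\Red p,\Red q)$, is nonzero, whence $\nu(q)=0$. Your appeal to Proposition~\ref{convergent-linear-matrix-full-rank} for $\gamma$ is the right instinct, but note that its criterion ``$n=\deg q$'' refers to the \emph{coprime} kernel element, not to an arbitrary kernel element such as $(\Redn p,\Redn q)$, and you have not established $\deg q=n$ either.

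A smaller slip: ``$\deg q=\deg\Redn q$ says exactly that $\LC(q)\notin\mm$'' is not right. It says $\nu(\LC(q))=\nu(q)$, which is weaker; the two coincide only once $\nu(q)=0$ is known.
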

\begin{proof}
By hypothesis, the coefficients of the Laurent series \(\alpha\) are in \(\O\). The minors of \(\pqmatrix_n\) are polynomials in these coefficients, so clearly the coefficients of \(p\) and \(q\) are in \(\O\) too (recall that we need full rank so they do not all vanish).

The coefficients of \(\gamma\) are obtained by reducing those of \(\alpha\), hence the kernel elements of \(\Red{\pqmatrix_n}\) correspond to convergents of \(\gamma\). For example there is \((\Redn{p}, \Redn{q})\), and then \(\deg q = \deg \Redn{q}\) implies that \(\Red{\pqmatrix_n}\) has full rank as well, so we may compute a convergent using the minors. But of course the reduction map \(\Redm\) is a ring homomorphism, so this convergent is exactly \((\Red{p}, \Red{q})\), with \(\Red{q} \neq 0\). Then clearly \(\nu(q) = 0\).
\end{proof}

\section{Calculating valuations}
\label{sec:org7a10a6e}
Once we understand the structure of \(\lambda\) and the reduction of convergents thanks to Theorem \ref{convergent-reduction-surjective}, we can go further and attempt to compute the valuations (Gauss norms) for the partial quotients \(a_n\), the canonical convergents \(q_n\) and often even for the complete quotients \(\alpha_n\). In the next chapter, we will see how there arise rather simple patterns in the case \(\alpha = \sqrt{D}\) with \(\deg D = 4\). For now, we remain in the general case which makes things a bit more complicated. However we will thus understand better the obstacles for generalising the degree \(4\) case.
\subsection{Relating complete quotients with convergents}
\label{sec:org826607c}
In the following, we always assume \(\gamma = \Red{\alpha} \not\in k(X)\).
\begin{prop}
\label{introduce-theta-n}
Define for \(n \geq -1\)
\begin{equation}
\label{define-theta-n}
\vartheta_n = \normal{p_n} - \alpha \, \normal{q_n}.
\end{equation}
Then \(\vartheta_n \in \laurinx \O\) with \(\nub{\vartheta_n} = 0\), and \(\io{\vartheta_n} = \deg q_{n+1}\).

With \(g_n = \uni^{-\nu(q_n)}\), we may then write
\begin{equation}
\label{cf-alphan-moebius-relation-normalised}
% \tag{Q}
\alpha_{n} = - \frac{g_{n-2} \, \vartheta_{n-2}}{g_{n-1} \, \vartheta_{n-1}}
\end{equation}
as a quotient of elements of \(\laurinx \O\) up to a normalisation factor.
\end{prop}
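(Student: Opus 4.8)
The plan is to establish the three claims about $\vartheta_n$ in turn and then deduce the Möbius relation \eqref{cf-alphan-moebius-relation-normalised} by combining the known formula \eqref{complete-quotient-convergent-quotient} with the normalisation identity from Corollary \ref{normalise-convergents}.

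First I would handle $\nub{\vartheta_n} = 0$. Since $(p_n,q_n)$ is a convergent, Corollary \ref{normalise-convergents} gives $(p_n,q_n) = g_n \cdot (\normal{p_n},\normal{q_n})$ with $\nub{\normal{p_n}} = \nub{\normal{q_n}} = 0$, so in particular $\normal{p_n},\normal{q_n} \in \laurinx \O$ and $\vartheta_n = \normal{p_n} - \alpha \, \normal{q_n} \in \laurinx \O$ because $\alpha \in \laurinx \O$; hence $\nub{\vartheta_n} \geq 0$. For the reverse inequality I would look at the leading coefficient: by Proposition \ref{convergent-q-determines-p} we have $p_n = \gauss{\alpha \, q_n}$, so $\io{\vartheta_n} = \io{\normal{p_n} - \alpha\,\normal{q_n}} > 0$, which means $\vartheta_n$ is a power series in $\inv X$ with no constant term, and its coefficients are (up to sign) the tail coefficients of $\alpha \, \normal{q_n}$. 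Since $\nub{\normal{q_n}} = 0$ and $\nu(\LC(\alpha)) = 0$, and $\Red{\alpha}$ is not rational (so does not truncate), the product $\Red{\alpha}\,\Red{\normal{q_n}}$ is a genuine non-terminating Laurent series; its coefficients below degree $0$ cannot all vanish, so $\Red{\vartheta_n} \neq 0$, giving $\nub{\vartheta_n} = 0$. This is the step I expect to require the most care — one must use $\gamma \notin k(X)$ to rule out the degenerate case where all the relevant coefficients reduce to zero, and phrase it cleanly via $\Red{\vartheta_n} = \Red{\normal{p_n}} - \gamma \, \Red{\normal{q_n}}$ together with $\io{\Red{\vartheta_n}} > 0$ and the fact that a non-rational $\gamma$ has convergents $(\Redn{p_n},\Redn{q_n})$ of unbounded approximation order.

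Next, the order claim $\io{\vartheta_n} = \deg q_{n+1}$ is immediate: $\io{\vartheta_n} = \io{\normal{p_n} - \alpha\,\normal{q_n}} = \io{p_n - \alpha\,q_n} - \nu$-independent shift $= \io{p_n - \alpha\,q_n}$ since multiplying by the scalar $g_n^{-1} = \uni^{-\nu(q_n)} \in \units K$ does not change $\ord_\infty$, and Proposition \ref{cf-expansion-yields-convergents} gives $\io{p_n - \alpha\,q_n} = \deg q_{n+1}$ (the case $\alpha = p_n/q_n$ being excluded since $\alpha \notin K(X)$ here, as $\gamma \notin k(X)$ forces infinite continued fractions).

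Finally, for \eqref{cf-alphan-moebius-relation-normalised} I would start from formula \eqref{complete-quotient-convergent-quotient}, namely $\alpha_n = -\dfrac{p_{n-2} - \alpha\,q_{n-2}}{p_{n-1} - \alpha\,q_{n-1}}$ (shifting the index $n+1 \mapsto n$ there), and substitute $p_j - \alpha\,q_j = g_j\,\vartheta_j$ for $j = n-2, n-1$, which comes directly from $(p_j,q_j) = g_j\cdot(\normal{p_j},\normal{q_j})$. This yields exactly $\alpha_n = -\dfrac{g_{n-2}\,\vartheta_{n-2}}{g_{n-1}\,\vartheta_{n-1}}$, a quotient of the elements $\vartheta_{n-2},\vartheta_{n-1} \in \laurinx\O$ (each of valuation $0$) up to the scalar factor $g_{n-2}/g_{n-1} \in \units K$, as claimed.
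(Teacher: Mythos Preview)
Your proposal is correct and follows essentially the same route as the paper: you establish $\vartheta_n \in \laurinx\O$ from $\normal{p_n},\normal{q_n}\in\O[X]$ and $\alpha\in\laurinx\O$, obtain $\nub{\vartheta_n}=0$ from $\Red{\vartheta_n}=\Redn{p_n}-\gamma\,\Redn{q_n}\neq 0$ using $\gamma\notin k(X)$ and $\Redn{q_n}\neq 0$, read off $\io{\vartheta_n}=\deg q_{n+1}$ from Proposition~\ref{cf-expansion-yields-convergents} since scalar multiplication leaves $\ord_\infty$ unchanged, and derive \eqref{cf-alphan-moebius-relation-normalised} by substituting $p_j-\alpha q_j=g_j\vartheta_j$ into \eqref{complete-quotient-convergent-quotient}. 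The paper's proof is more terse on the $\Red{\vartheta_n}\neq 0$ step (it simply states that $\gamma\notin k(X)$ implies this), whereas you spell out the reason; your somewhat roundabout phrasing about ``non-terminating Laurent series'' and ``unbounded approximation order'' could be replaced by the one-line observation that $\Redn{p_n}\in k[X]$ while $\gamma\,\Redn{q_n}\notin k[X]$.
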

\begin{rem}
Note that \(\vartheta_{-1} = 1\) and \(\vartheta_0 = a_0 - \alpha\).
\end{rem}
\begin{proof}
By definition of normalisation, we have \(\normal{p_n}, \normal{q_n} \in \O[X]\), and \(\Redn{q_n} \neq 0\). Of course \(p_n - \alpha \, q_n = g_n \, \vartheta_n\), so \(\io{\vartheta_n} = \deg q_{n+1}\) is an immediate consequence of Proposition \ref{cf-expansion-yields-convergents} and \(\uni \in K\).

As we assume \(\alpha \in \laurinx \O\), this implies \(\vartheta_n \in \laurinx \O\). Moreover, \(\gamma = \Red{\alpha} \not\in k(X)\) implies \(\Red{\vartheta_n} \neq 0\), hence \(\nub{\vartheta_n} = 0\).

Finally, from Proposition \ref{definition-convergents-matrix} we obtain (see also \eqref{complete-quotient-convergent-quotient})
\begin{equation*}
\alpha_n = \frac{q_{n-2} \, \alpha - p_{n-2}}{-q_{n-1} \, \alpha + p_{n-1}} = -\frac{g_{n-2} \, (\normal{p_{n-2}} - \alpha \, \normal{q_{n-2}})}{g_{n-1} \, (\normal{p_{n-1}} - \alpha \, \normal{q_{n-1}})} = - \frac{g_{n-2} \, \vartheta_{n-2}}{g_{n-1} \, \vartheta_{n-1}}.
\end{equation*}
\end{proof}

So in order to understand whether \(\alpha_n\) is bounded, we need a criterion for when the \(\vartheta_n\) have a bounded inverse:
\begin{prop}
\label{spec-theta-lc-valuation}
\TFAE
\begin{itemize}
\item \(\inv{\vartheta_n} \in \laurinx K_\nu\),
\item \(\vartheta_n \in \units{\laurinx \O}\),
\item \(\io{\vartheta_n} = \io{\Red{\vartheta_n}}\),
\item \(\nub{\LC(\vartheta_n)} = 0\),
\end{itemize}
\end{prop}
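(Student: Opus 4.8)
\textbf{Proof plan for Proposition \ref{spec-theta-lc-valuation}.}

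The plan is to prove the four conditions equivalent by a short cycle of implications, leaning on the results already established for bounded Laurent series and their inverses, chiefly Proposition \ref{laurent-inverse-bounded} and Corollary \ref{laurent-inverse-bounded-corollary}. The starting point is that by Proposition \ref{introduce-theta-n} we already know $\vartheta_n \in \laurinx \O$ with $\nub{\vartheta_n} = 0$, and that $\Red{\vartheta_n} \neq 0$ since $\gamma \not\in k(X)$; so $\vartheta_n$ is a nonzero element of $\laurinx \O$ whose reduction is nonzero. This is exactly the setting of Corollary \ref{laurent-inverse-bounded-corollary}, which says $\vartheta_n \in \units{\laurinx \O}$ if and only if $\LC(\vartheta_n) \in \units \O$.

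First I would show the first two conditions are equivalent. If $\inv{\vartheta_n} \in \laurinx K_\nu$, then since $\nub{\vartheta_n} = 0$ we have $\nub{\inv{\vartheta_n}} \leq 0$; but also $\nub{\inv{\vartheta_n}} \geq -\nub{\vartheta_n} = 0$ is not automatic, so instead I argue directly: $1 = \vartheta_n \cdot \inv{\vartheta_n}$ forces $0 = \nub{\vartheta_n} + \nub{\inv{\vartheta_n}}$ by multiplicativity of $\nu$ on $\laurinx K_\nu$ (Proposition \ref{bounded-laurent-valuation}), hence $\nub{\inv{\vartheta_n}} = 0$ and $\inv{\vartheta_n} \in \laurinx \O$, i.e. $\vartheta_n \in \units{\laurinx \O}$. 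The converse is trivial. Next, the equivalence of the second and fourth conditions is precisely Corollary \ref{laurent-inverse-bounded-corollary}: $\vartheta_n \in \units{\laurinx \O} \iff \LC(\vartheta_n) \in \units \O \iff \nub{\LC(\vartheta_n)} = 0$, the last step because $\LC(\vartheta_n) \in K$ and $\LC(\vartheta_n) \in \units \O \iff \nu(\LC(\vartheta_n)) = 0$.

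It remains to fold in the third condition, $\io{\vartheta_n} = \io{\Red{\vartheta_n}}$. Write $\vartheta_n = \sum_{j \leq \io{\vartheta_n}}^{\text{(indexing by degree)}} (\vartheta_n)_j X^{j}$ with $\LC(\vartheta_n) = (\vartheta_n)_{\io{\vartheta_n}}$ its leading coefficient; then $\Red{\vartheta_n}$ has $\ord_\infty$ strictly smaller than $\vartheta_n$ precisely when $\Red{\LC(\vartheta_n)} = 0$ in $k$, i.e. when $\nub{\LC(\vartheta_n)} > 0$; and in general $\io{\Red{\vartheta_n}} \leq \io{\vartheta_n}$ always holds since reduction can only kill leading coefficients. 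So $\io{\vartheta_n} = \io{\Red{\vartheta_n}} \iff \Red{\LC(\vartheta_n)} \neq 0 \iff \nub{\LC(\vartheta_n)} = 0$, connecting the third condition to the fourth. Combining the three equivalences closes the cycle. I do not expect any real obstacle here; the only point requiring a little care is the bookkeeping in the third equivalence — one must note that $\nub{\vartheta_n} = 0$ guarantees $\LC(\vartheta_n) \in \O$ so that $\Red{\LC(\vartheta_n)}$ makes sense, and that lower-order coefficients being reduced away never changes $\ord_\infty$ since $\ord_\infty$ is governed by the top (leading) term.
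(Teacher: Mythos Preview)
Your approach is essentially the same as the paper's: both use Proposition~\ref{introduce-theta-n} to get $\vartheta_n \in \laurinx \O$ with $\nub{\vartheta_n} = 0$, then invoke Proposition~\ref{laurent-inverse-bounded}/Corollary~\ref{laurent-inverse-bounded-corollary} for the equivalence with $\nub{\LC(\vartheta_n)} = 0$, and finally note that the leading coefficient surviving reduction is exactly the condition $\io{\vartheta_n} = \io{\Red{\vartheta_n}}$.

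One slip to fix: you have the inequality direction reversed. Since $\ord_\infty$ is the zero-order at infinity, killing a leading coefficient under reduction \emph{increases} $\ord_\infty$, so the general inequality is $\io{\Red{\vartheta_n}} \geq \io{\vartheta_n}$, with strict inequality precisely when $\Red{\LC(\vartheta_n)} = 0$. Your stated equivalence $\io{\vartheta_n} = \io{\Red{\vartheta_n}} \iff \nub{\LC(\vartheta_n)} = 0$ is still correct, so this is only a cosmetic error in the surrounding discussion. (Relatedly, your indexing $\sum_{j \leq \io{\vartheta_n}}$ with leading coefficient at index $\io{\vartheta_n}$ should be $\sum_{j \leq -\io{\vartheta_n}}$ with leading coefficient at index $-\io{\vartheta_n}$.)
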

\begin{proof}
By the previous Proposition, we have \(\vartheta_n \in \laurinx \O\) and \(\nub{\vartheta_n} = 0\). So by Proposition \ref{laurent-inverse-bounded} the inverse is bounded \IFF
\begin{equation*}
\nub{\LC(\vartheta_n)} = 0 \iff \Red{\LC(\vartheta_n)} \neq 0 \iff \io{\vartheta_n} = \io{\Red{\vartheta_n}}.
\end{equation*}
Finally, it is clear that if the inverse is bounded, then \(\nu(\inv{\vartheta_n}) = 0\), so it is in \(\laurinx \O\).
\end{proof}
\begin{rem}
Of course \(\inv{\vartheta_n} \in \laurinx K_\nu\) implies via \eqref{cf-alphan-moebius-relation-normalised} that also \(\alpha_{n+1} \in \laurinx K_\nu\).
\end{rem}

We use this to show that there are always infinitely many bounded complete quotients:
\begin{prop}
\label{cf-infinitely-bounded-complete-quotients}
Let \(m \in \N\), and set \(n = \min \inv\lambda(m)\). Then \(\inv{\vartheta_{n-1}} \in \laurinx K_\nu\), hence \(\alpha_n \in \laurinx K_\nu\).
\end{prop}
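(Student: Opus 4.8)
The plan is to exploit the characterisation of boundedness of $\inv{\vartheta_{n-1}}$ from Proposition \ref{spec-theta-lc-valuation}, namely that $\inv{\vartheta_{n-1}} \in \laurinx K_\nu$ \IFF $\io{\vartheta_{n-1}} = \io{\Red{\vartheta_{n-1}}}$. So the task reduces to showing that the order at infinity of $\vartheta_{n-1}$ does not drop under reduction, precisely when $n$ is the \emph{minimal} element of the fibre $\inv\lambda(m)$.

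First I would recall that $\io{\vartheta_{n-1}} = \deg q_n$ by Proposition \ref{introduce-theta-n}, and that $\Red{\vartheta_{n-1}} = \Redn{p_{n-1}} - \gamma \, \Redn{q_{n-1}}$ (since $\vartheta_{n-1} \in \laurinx \O$ and reduction is a ring homomorphism). Now by Corollary \ref{definition-convergent-reduction-map-lambda} we have $\Redn{p_{n-1}} = h_{n-1} \, u_{\lambda(n-1)}$ and $\Redn{q_{n-1}} = h_{n-1} \, v_{\lambda(n-1)}$. Write $m' = \lambda(n-1)$; since $n = \min\inv\lambda(m)$ and $\lambda$ is non-decreasing (Proposition \ref{reduced-convergents-increasing}), we have $m' = \lambda(n-1) < m = \lambda(n)$, hence $m' \leq m - 1$, i.e. $m' + 1 \leq m$. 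The key point is then that $n$ being the minimal fibre element of $m$ forces $n-1$ to be the \emph{maximal} fibre element of $m'$: indeed $\lambda(n-1) = m' < m = \lambda(n)$, so $n-1$ and $n$ lie in different fibres, making $n-1$ maximal in $\inv\lambda(m')$.

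Next I would invoke Remark \ref{convergent-reduction-minimal-maximal-coprime}, which says that for the maximal fibre element $n-1$ of $m'$ we have $\deg h_{n-1} = 0$, so $h_{n-1}$ is a constant. Therefore $\io{\Red{\vartheta_{n-1}}} = \io{u_{m'} - \gamma \, v_{m'}}$ (the constant contributes nothing to $\ord$), and by Proposition \ref{cf-expansion-yields-convergents} applied to $\gamma$ this equals $\deg v_{m'+1}$. On the other hand, since $n-1$ is the maximal element of its fibre and $n$ the minimal of the next, Corollary \ref{cor-lambda-degree-sum} (or directly the argument at the end of the proof of Theorem \ref{convergent-reduction-surjective}, together with $\lambda(n-1) + 1 = m$ possibly needing a small check) gives $\deg q_n = \deg v_m$; and since $m = \min\{\text{image values}\} \geq m'+1$ with $\lambda$ hitting every value, in the step from the maximal element $n-1$ of fibre $m'$ to the minimal element $n$ of the next fibre $m$ we actually have $m = m'+1$, whence $\deg v_{m'+1} = \deg v_m = \deg q_n = \io{\vartheta_{n-1}}$. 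Thus $\io{\vartheta_{n-1}} = \io{\Red{\vartheta_{n-1}}}$, which by Proposition \ref{spec-theta-lc-valuation} yields $\inv{\vartheta_{n-1}} \in \laurinx K_\nu$, and then \eqref{cf-alphan-moebius-relation-normalised} shows $\alpha_n = -g_{n-2} \vartheta_{n-2} / (g_{n-1} \vartheta_{n-1})$ is bounded as a product/quotient of bounded elements (note $\vartheta_{n-2} \in \laurinx \O$ always, and the $g_i \in \units K$ can be absorbed since normalisation does not affect boundedness).

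The main obstacle I anticipate is the bookkeeping around fibres of $\lambda$: specifically nailing down that passing from the maximal element of fibre $m'$ to the minimal element of the next fibre increments $\lambda$ by exactly one (no gaps), and that this is compatible with $n = \min\inv\lambda(m)$ — this is where the surjectivity argument of Theorem \ref{convergent-reduction-surjective} and the ``no gaps'' reasoning must be reused carefully. Once the indices are aligned, everything else is a direct application of the already-established order formulas and the coprimality statement at the extremal fibre elements.
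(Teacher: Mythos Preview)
Your proposal is correct and follows essentially the same route as the paper's proof: both reduce the question to checking $\io{\vartheta_{n-1}} = \io{\Red{\vartheta_{n-1}}}$ via Proposition \ref{spec-theta-lc-valuation}, use that $n-1$ is the maximal element of its fibre so $h_{n-1}$ is constant (Remark \ref{convergent-reduction-minimal-maximal-coprime}), and then match $\deg v_m = \deg q_n$ via Theorem \ref{convergent-reduction-surjective}. The only difference is cosmetic: the paper simply asserts $\lambda(n-1) = m-1$ as a direct consequence of the no-gaps argument in Theorem \ref{convergent-reduction-surjective}, whereas you spell out this step explicitly.
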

\begin{proof}
With Theorem \ref{convergent-reduction-surjective} follows from \(n\) being minimal in the fibre \(\inv\lambda(m)\) that \(\deg q_n = \deg v_m\), and \(\lambda(n-1) = m-1\). By Remark \ref{convergent-reduction-minimal-maximal-coprime}, we moreover know \(\Redn{q_{n-1}} = h_{n-1} \, v_{m-1}\) with \(h_{n-1} \in k\), hence
\begin{equation*}
\io{\Red{\vartheta_{n-1}}} = \io{\Redn{p_{n-1}} - \gamma \, \Redn{q_{n-1}}} = \io{u_{m-1} - \gamma \, v_{m-1}} = \deg v_m = \deg q_n = \ios{\vartheta_{n-1}},
\end{equation*}
so Proposition \ref{spec-theta-lc-valuation} implies that \(\vartheta_{n-1}\) has bounded inverse. Then \eqref{cf-alphan-moebius-relation-normalised} implies that \(\alpha_n\) is bounded.
\end{proof}
Note that the condition for \(\alpha_n\) bounded we give here is only sufficient, but not necessary.

\subsection{Fibre analysis of \(\lambda\)}
\label{sec:org84e8497}
Using the Lemmata for estimating valuations in quotients of Laurent/power series from Section \ref{sec:org5f19ed2} in the appendix, we now attack the problem of computing valuations by doing case analysis for the different sizes of the fibres of \(\lambda\), and the degrees of the partial quotients. This is successful mostly when we can read off the valuations (Gauss norms) from the leading coefficients.

The simplest case is the following, we get information on everything (recall that \(\nu(g_n) = \nu(q_n)\) for all \(n \geq 0\)):
\begin{prop}[Single element fibre]
\label{prop-single-element-fibre-analysis}
Let \(m \in \N\) such that \(\inv \lambda(m) = \{n\}\) has a single element. Then \(\alpha_{n+1}\) is bounded and
\begin{equation}
\label{eq-single-fibre-alpha-val}
\nu(\alpha_{n+1}) = \nu(\LC(\alpha_{n+1})) = \nu(a_{n+1}) =  \nu(g_{n-1}) - \nu(g_{n}).
\end{equation}
The normalised complete quotient reduces to
\begin{equation}
\label{eq-1elem-red-cq}
\Redn{\alpha_{n+1}} = \frac{h_{n-1}}{h_n} \, \gamma_{m+1} \quad \text{ with } h_{n-1}, h_n \in \units k,
\end{equation}
hence \(\deg a_{n+1} = \deg c_{m+1}\).

For the corresponding convergent we have
\begin{equation*}
\nu(g_{n+1}) = \nu(q_{n+1}) = \nu(\LC(q_{n+1})) = \nu(g_{n-1}).
\end{equation*}
\end{prop}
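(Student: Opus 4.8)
The plan is to analyse the single-element fibre $\inv\lambda(m) = \{n\}$ by combining what Theorem \ref{convergent-reduction-surjective} tells us about the minimal and maximal elements of fibres — which here coincide, since the fibre is a singleton — with the relation \eqref{cf-alphan-moebius-relation-normalised} expressing $\alpha_{n+1}$ as a normalised quotient of $\vartheta_{n-1}$ and $\vartheta_n$. First I would record that because $\{n\}$ is a fibre and $\{n+1\}$ is either the start of the next fibre or sits inside it, the set-up places us in the situation of Remark \ref{convergent-reduction-minimal-maximal-coprime}: $n$ is simultaneously the minimal and maximal element of $\inv\lambda(m)$, so $\deg q_n = \deg v_m$ and $\deg q_{n} = \deg v_{m}$ force $h_n \in \units k$ (constant correction factor), and likewise $n$ being maximal in its fibre, together with $n-1$ being maximal in $\inv\lambda(m-1)$ — which it is, since $n$ is the minimum of the next fibre — gives $h_{n-1} \in \units k$ as well. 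At that point $\Redn{q_{n-1}} = h_{n-1} v_{m-1}$ and $\Redn{q_n} = h_n v_m$ with both $h$'s in $\units k$, so the argument of Proposition \ref{cf-infinitely-bounded-complete-quotients} applies verbatim to show $\vartheta_{n-1}$ has bounded inverse, hence $\alpha_{n+1}$ is bounded; the same computation shows $\io{\Red{\vartheta_{n-1}}} = \io{\vartheta_{n-1}}$ and, because the fibre over $m$ is a singleton forcing $\deg q_{n+1} = \deg v_{m+1}$, also $\io{\Red{\vartheta_n}} = \io{\vartheta_n}$, so $\vartheta_n \in \units{\laurinx\O}$ too by Proposition \ref{spec-theta-lc-valuation}.

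Next I would extract the valuation of $\alpha_{n+1}$ directly from \eqref{cf-alphan-moebius-relation-normalised}: since $\vartheta_{n-1}, \vartheta_n \in \units{\laurinx\O}$ we get $\nu(\alpha_{n+1}) = \nu(g_{n-2}) - \nu(g_{n-1})$ — wait, the indices must be read off \eqref{cf-alphan-moebius-relation-normalised} with $n$ replaced by $n+1$, giving $\alpha_{n+1} = - g_{n-1}\vartheta_{n-1}/(g_n\vartheta_n)$, hence $\nu(\alpha_{n+1}) = \nu(g_{n-1}) - \nu(g_n)$, which is precisely \eqref{eq-single-fibre-alpha-val}. Because $\vartheta_{n-1}, \vartheta_n$ are units of $\laurinx\O$, their reductions are nonzero and $\io{\cdot}$ is preserved, so the normalisation $\normal{\alpha_{n+1}} = \uni^{-\nu(\alpha_{n+1})}\alpha_{n+1} = -\normal{\vartheta_{n-1}}/\normal{\vartheta_n}$ reduces to $-\Red{\vartheta_{n-1}}/\Red{\vartheta_n}$. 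Identifying $\Red{\vartheta_{n-1}} = h_{n-1}(u_{m-1} - \gamma v_{m-1}) = h_{n-1}\,\gamma_m^{-1}\cdot(\text{sign})$ via the telescoping relation \eqref{complete-quotient-convergent-quotient} applied to $\gamma$, and similarly for $\Red{\vartheta_n}$ with index $m$, yields $\Redn{\alpha_{n+1}} = (h_{n-1}/h_n)\gamma_{m+1}$, which is \eqref{eq-1elem-red-cq}; taking $\io{\cdot}$ of both sides (valid since $h_{n-1}/h_n \in \units k$) gives $\deg a_{n+1} = \deg c_{m+1}$. Finally $\nu(a_{n+1}) = \nu(\alpha_{n+1})$ follows because $a_{n+1} = \gauss{\alpha_{n+1}}$ and $\gauss{\cdot}$ does not change the valuation when the leading coefficient carries it, i.e. $\nu(\LC(\alpha_{n+1})) = \nu(\alpha_{n+1})$, which we have already established; likewise $\nu(\LC(\alpha_{n+1})) = \nu(\alpha_{n+1})$ is listed as part of the claim and is immediate from boundedness of $\inv{\alpha_{n+1}}$'s numerator structure plus Proposition \ref{laurent-inverse-bounded} reasoning.

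For the convergent statement, I would use the recursion $q_{n+1} = a_{n+1} q_n + q_{n-1}$. Taking valuations, $\nu(q_{n+1}) \geq \min(\nu(a_{n+1}) + \nu(q_n), \nu(q_{n-1}))$; substituting $\nu(a_{n+1}) = \nu(g_{n-1}) - \nu(g_n) = \nu(q_{n-1}) - \nu(q_n)$ makes $\nu(a_{n+1}) + \nu(q_n) = \nu(q_{n-1})$, so both terms on the right equal $\nu(q_{n-1})$ and the ultrametric inequality alone only gives $\nu(q_{n+1}) \geq \nu(q_{n-1})$. To get equality I would instead pass to leading coefficients and reductions: $\deg a_{n+1} = \deg c_{m+1}$ together with $\deg q_n = \deg v_m$, $\deg q_{n+1} = \deg v_{m+1}$ (singleton fibre) shows that no degree drop occurs, so $\Redn{q_{n+1}}$ is — up to the constant $h$ — $v_{m+1}$, and chasing the leading coefficient through the recursion using \eqref{convergents-leading-coeff} pins down $\nu(\LC(q_{n+1})) = \nu(\LC(q_{n-1})) = \nu(g_{n-1})$, whence $\nu(q_{n+1}) = \nu(\LC(q_{n+1})) = \nu(g_{n-1}) = \nu(g_{n+1})$. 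The main obstacle I anticipate is precisely this last equality: the naive ultrametric estimate from the convergent recursion is not sharp, so one genuinely has to control the leading coefficient of $q_{n+1}$, which means carefully tracking how the normalisation factors $g_n$ interact with the leading-coefficient product formula and confirming that the ``top'' contribution $a_{n+1}q_n$ does not cancel against $q_{n-1}$ — this is where the singleton-fibre hypothesis (forcing the degree bookkeeping to be exact) does the real work, via Remark \ref{best-approx-reduction-strictly-increasing-q} and the coprimality noted in Remark \ref{convergent-reduction-minimal-maximal-coprime}.
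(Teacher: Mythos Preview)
Your approach is essentially the paper's: both $n$ and $n+1$ are minimal in their respective fibres, so Proposition \ref{cf-infinitely-bounded-complete-quotients} gives $\vartheta_{n-1},\vartheta_n\in\units{\laurinx\O}$, and everything about $\alpha_{n+1}$ then drops out of \eqref{cf-alphan-moebius-relation-normalised}. Your derivation of \eqref{eq-single-fibre-alpha-val} and \eqref{eq-1elem-red-cq} is fine.

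There is one wobble in the last paragraph. You write $\nu(\LC(q_{n+1})) = \nu(\LC(q_{n-1})) = \nu(g_{n-1})$, but the second equality is not justified: $n-1$ is only the \emph{maximal} element of $\inv\lambda(m-1)$, and Remark \ref{convergent-reduction-minimal-maximal-coprime} only tells you $h_{n-1}$ is constant, not that $\deg\Redn{q_{n-1}}=\deg q_{n-1}$. If $\inv\lambda(m-1)$ has several elements, one genuinely has $\nu(\LC(q_{n-1}))>\nu(g_{n-1})$. The clean route (and the paper's) goes through $q_n$, not $q_{n-1}$: since $n$ is minimal in its fibre, Theorem \ref{convergent-reduction-surjective} gives $\deg\Redn{q_n}=\deg q_n$, hence $\nu(\LC(q_n))=\nu(q_n)=\nu(g_n)$, and then the product formula \eqref{convergents-leading-coeff} yields
\[
\nu(\LC(q_{n+1}))=\nu(\LC(a_{n+1}))+\nu(\LC(q_n))=(\nu(g_{n-1})-\nu(g_n))+\nu(g_n)=\nu(g_{n-1}).
\]
Combined with $\deg\Redn{q_{n+1}}=\deg q_{n+1}$ (from $n+1$ minimal in its fibre) this gives $\nu(q_{n+1})=\nu(\LC(q_{n+1}))=\nu(g_{n-1})$, as required. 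So your diagnosis that ``the singleton-fibre hypothesis does the real work via degree bookkeeping'' is exactly right; you just applied it at the wrong index.
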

\begin{proof}
Both \(n\) and \(n+1\) are the minimal elements of their fibres, so Proposition \ref{cf-infinitely-bounded-complete-quotients} implies that both \(\vartheta_{n-1}, \vartheta_{n} \in \units{\laurinx \O}\). Hence \(\alpha_{n+1}\) is bounded, and \eqref{eq-single-fibre-alpha-val} follows from \eqref{cf-alphan-moebius-relation-normalised} and \(\nu(\vartheta_{n-1}) = \nu(\vartheta_{n}) = 0\).

Normalising and reducing \(\alpha_{n+1}\), we get
\begin{equation*}
\Redn{\alpha_{n+1}} = \RedM{\frac{g_{n}}{g_{n-1}} \, \alpha_{n+1}} = -\frac{\Red{\vartheta_{n-1}}}{\Red{\vartheta_{n}}} = - \frac{h_{n-1} \, (u_{m-1} - \gamma \, v_{m-1})}{h_{n} \, (u_{m} - \gamma \, v_{m})} = \frac{h_{n-1}}{h_n} \, \gamma_{m+1}.
\end{equation*}
Here \(h_{n-1}, h_n \in \units k\) by Remark \ref{convergent-reduction-minimal-maximal-coprime}.

Again using that \(n\) and \(n+1\) are minimal in their fibres, Theorem \ref{convergent-reduction-surjective} implies \(\deg \Redn{q_n} = \deg q_n\) and \(\deg \Redn{q_{n+1}} = \deg q_{n+1}\). This means \(\nu(g_n) = \nu(q_n) = \nu(\LC(q_n))\) and
\begin{equation*}
\nu(g_{n+1}) = \nu(q_{n+1}) = \nu(\LC(q_{n+1})) = \nu(\LC(a_{n+1})) + \nu(\LC(q_n)) = \nu(g_{n-1}).
\end{equation*}
For \(\deg a_{n+1} = \deg c_{m+1}\) see also Corollary \ref{cor-lambda-degree-sum}.
\end{proof}

If there is more than one element in the fibre, we can say a few things in general. However boundedness of the complete quotients cannot be determined a priori, except for the first and last complete quotient. But even if the complete quotients are bounded, the reduction of the normalisation is \emph{never} a complete quotient of \(\gamma\) as in the single element case of Proposition \ref{prop-single-element-fibre-analysis} above.
\begin{prop}[Multiple element fibre]
\label{prop-multiple-element-fibre-analysis}
Let \(m \in \N\) such that \(\inv \lambda(m) = \{n, n+1, \dots, n+l \}\) has \(l \geq 2\) elements. Then \(\alpha_{n+1}\) is unbounded and \(\alpha_{n+l+1}\) is bounded. The \(\alpha_{n+i+1}\) for \(1 \leq i < l\) can be bounded \emph{or} unbounded.

If some \(\alpha_{n+i+1}\) (for \(1 \leq i \leq l\)) is bounded, the reduction of the normalised complete quotient is a rational function (and a polynomial for \(i = l\), as \(h_{n+l} \in \units k\)):
\begin{equation*}
\Redn{\alpha_{n+i+1}} = - \frac{h_{n+i-1}}{h_{n+i}}.
\end{equation*}
In particular \(\io{\Redn{\alpha_{n+l+1}}} = - \deg h_{n+l-1}\). In this case, we also get
\begin{equation}
\label{eq-multiple-fibre-alpha-val}
\nu(\LC(\alpha_{n+i+1})) \geq \nu(a_{n+i+1}) \geq \nu(\alpha_{n+i+1}) = \nu(g_{n+i-1}) - \nu(g_{n+i}),
\end{equation}
and thus
\begin{equation}
\label{eq-multiple-fibre-qn-val}
\nu(\LC(q_{n+i+1})) \geq \nu(q_{n+i+1}) \geq \nu(g_{n+i-1}).
\end{equation}
\end{prop}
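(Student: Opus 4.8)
The plan is to read everything off the normalised Möbius relation $\alpha_{j+1} = -\,g_{j-1}\vartheta_{j-1}/(g_j\vartheta_j)$ of Proposition~\ref{introduce-theta-n}, combined with the reduction formula $\Red{\vartheta_j} = h_j\,(u_{\lambda(j)} - \gamma\,v_{\lambda(j)})$ from Corollary~\ref{definition-convergent-reduction-map-lambda} and the unit criterion of Proposition~\ref{spec-theta-lc-valuation}, namely $\vartheta_j \in \units{\laurinx \O}$ iff $\io{\vartheta_j} = \io{\Red{\vartheta_j}}$. First I would record the relevant orders: $\vartheta_j \in \laurinx \O$ with $\nub{\vartheta_j}=0$ and $\io{\vartheta_j} = \deg q_{j+1}$ by Proposition~\ref{introduce-theta-n}, while $\io{\Red{\vartheta_j}} = -\deg h_j + \deg v_{\lambda(j)+1}$ by Corollary~\ref{definition-convergent-reduction-map-lambda} and Proposition~\ref{cf-expansion-yields-convergents}. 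Applying Theorem~\ref{convergent-reduction-surjective} and Remark~\ref{convergent-reduction-minimal-maximal-coprime} to the minimal and maximal elements of the fibre gives $\deg q_n = \deg v_m$, $\deg q_{n+l+1} = \deg v_{m+1}$ and $h_{n-1}, h_n, h_{n+l}, h_{n+l+1} \in \units k$; and Corollary~\ref{cor-lambda-degree-sum} yields $\deg v_{m+1} = \deg v_m + \deg c_{m+1} = \deg q_n + \sum_{i=1}^{l+1}\deg a_{n+i} = \deg q_{n+l+1}$, so in particular $\deg v_{m+1} > \deg q_{j+1}$ for all $n \le j \le n+l-1$.

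Next I would treat the two extreme complete quotients. For $j = n$ the factor $h_n$ is constant, hence $\io{\Red{\vartheta_n}} = \deg v_{m+1} > \deg q_{n+1} = \io{\vartheta_n}$, so $\vartheta_n \notin \units{\laurinx \O}$ and $\inv{\vartheta_n}$ is unbounded by Proposition~\ref{laurent-inverse-bounded} and Corollary~\ref{laurent-inverse-bounded-corollary}; since $\vartheta_{n-1} \in \units{\laurinx \O}$ by Proposition~\ref{cf-infinitely-bounded-complete-quotients}, the relation forces $\alpha_{n+1}$ to be unbounded (were it bounded, $\inv{\vartheta_n} = \inv{\vartheta_{n-1}}\cdot(\vartheta_{n-1}\inv{\vartheta_n})$ would be bounded). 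Dually, for $j = n+l$ the factor $h_{n+l}$ is constant and $\io{\Red{\vartheta_{n+l}}} = \deg v_{m+1} = \deg q_{n+l+1} = \io{\vartheta_{n+l}}$, so $\vartheta_{n+l} \in \units{\laurinx \O}$, and $\alpha_{n+l+1}$, being a unit scalar times $\vartheta_{n+l-1}\,\inv{\vartheta_{n+l}}$ with both factors in $\laurinx \O$, is bounded. For a strictly intermediate index $n+i$ with $1 \le i < l$ the degree count no longer decides the matter: whether $\vartheta_{n+i}$ is a unit hinges on whether $\deg h_{n+i}$ equals $\deg v_{m+1} - \deg q_{n+i+1}$, which depends on the detailed reduction of the convergents rather than on the fibre; so one cannot do better than state that both cases occur, as witnessed by Example~\ref{ex-cfp2-zero-pattern-deg6}.

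For the remaining quantitative claims I would assume $\alpha_{n+i+1}$ ($1 \le i \le l$) is bounded. Since $\laurinx K_\nu$ is a valuation ring (Proposition~\ref{bounded-laurent-valuation}) and $\vartheta_{n+i-1} = (\vartheta_{n+i-1}/\vartheta_{n+i})\cdot\vartheta_{n+i}$ with all three factors bounded of valuation $0$, we get $\nu(\vartheta_{n+i-1}/\vartheta_{n+i}) = 0$ and hence $\nu(\alpha_{n+i+1}) = \nu(g_{n+i-1}) - \nu(g_{n+i})$; combined with $\LC(\alpha_{n+i+1}) = \LC(a_{n+i+1})$ and the fact that $a_{n+i+1} = \gauss{\alpha_{n+i+1}}$ uses only coefficients of $\alpha_{n+i+1}$, this gives \eqref{eq-multiple-fibre-alpha-val}. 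Normalising with $g_j = \uni^{\nu(q_j)}$ yields $\normal{\alpha_{n+i+1}} = -\vartheta_{n+i-1}/\vartheta_{n+i}$, and applying the ring homomorphism $\Redm$ — legitimate because $\Red{\vartheta_{n+i}}$ is a nonzero element of the field $\laurinx k$ — together with $\lambda(n+i-1) = \lambda(n+i) = m$ (so the common factor $u_m - \gamma\,v_m$ cancels) gives $\Redn{\alpha_{n+i+1}} = -h_{n+i-1}/h_{n+i}$, whence $\io{\Redn{\alpha_{n+l+1}}} = -\deg h_{n+l-1}$ on taking $i = l$ and using $h_{n+l} \in \units k$. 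Finally, substituting $\nu(a_{n+i+1}) \ge \nu(g_{n+i-1}) - \nu(g_{n+i})$ into the recursion $q_{n+i+1} = a_{n+i+1}\,q_{n+i} + q_{n+i-1}$, with $\nu(q_j) = \nu(g_j)$, produces $\nu(q_{n+i+1}) \ge \nu(g_{n+i-1})$, and $\nu(\LC(q_{n+i+1})) \ge \nu(q_{n+i+1})$ is trivial, giving \eqref{eq-multiple-fibre-qn-val}.

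The main obstacle is the boundedness bookkeeping in the middle step: one must keep straight the difference between a $\vartheta_j$ that is merely bounded and one that is an honest unit of $\laurinx \O$, and one has to accept that for strictly intermediate indices this is genuinely undetermined by the fibre data, so the ``can be bounded \emph{or} unbounded'' clause has to be substantiated by an example rather than by a proof. Everything else is routine valuation arithmetic in $\laurinx K_\nu$ and formal manipulation of the reduction homomorphism.
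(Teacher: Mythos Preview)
Your proof is correct and follows essentially the same approach as the paper: both read everything off the normalised Möbius relation \eqref{cf-alphan-moebius-relation-normalised}, use Proposition~\ref{cf-infinitely-bounded-complete-quotients} and Proposition~\ref{spec-theta-lc-valuation} to decide which $\vartheta_j$ are units via the degree comparison $\io{\vartheta_n} = \deg q_{n+1} < \deg v_{m+1} = \io{\Red{\vartheta_n}}$, and finish with the recurrence $q_{n+i+1} = a_{n+i+1}\,q_{n+i} + q_{n+i-1}$. Your write-up is in places more explicit than the paper's (e.g.\ the contrapositive argument for $\alpha_{n+1}$ unbounded and the justification that $\Redm$ respects the quotient because $\Red{\vartheta_{n+i}} \neq 0$), but the structure is the same.
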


\begin{proof}
Here \(n\) and \(n+l+1\) are minimal in their fibre, so \(\vartheta_{n-1}, \vartheta_{n+l} \in \units{\laurinx \O}\) by Proposition \ref{cf-infinitely-bounded-complete-quotients}; and \(h_{n-1}, h_n, h_{n+l}\) are constant by Remark \ref{convergent-reduction-minimal-maximal-coprime}. Moreover, Theorem \ref{convergent-reduction-surjective} tells us that \(\deg q_n = \deg v_m\) and \(\deg q_{n+l+1} = \deg v_{m+1}\), from which we deduce
\begin{equation*}
\deg a_{n+1} + \dots + \deg a_{n+l+1} = \deg c_{m+1}
\end{equation*}
as in Corollary \ref{cor-lambda-degree-sum}.

Observe that \(\vartheta_n\) has an unbounded inverse because
\begin{equation*}
\io{\vartheta_n} = \deg q_{n+1} = \deg q_n + \deg a_{n+1} < \deg q_{n} + \deg c_{m+1} = \deg v_{m+1} = \io{\Red{\vartheta_n}}.
\end{equation*}
Hence \(\alpha_{n+1}\) is unbounded.
But \(\alpha_{n+l+1}\) is of course bounded by Proposition \ref{cf-infinitely-bounded-complete-quotients}, even if it need not have a bounded inverse. For the complete quotients in between, we cannot a priori say anything.

But assume that \(\alpha_{n+i+1}\) (where \(1 \leq i \leq l\)) \emph{is bounded}. Then it follows
\begin{equation*}
\Redn{\alpha_{n+i+1}} = \RedM{\frac{g_{n+i}}{g_{n+i-1}} \, \alpha_{n+i+1}} = -\frac{\Red{\vartheta_{n+i-1}}}{\Red{\vartheta_{n+i}}} = - \frac{h_{n+i-1} \, (u_{m} - \gamma \, v_{m})}{h_{n+i} \, (u_{m} - \gamma \, v_{m})} = -\frac{h_{n+i-1}}{h_{n+i}}.
\end{equation*}
As always \(\nu(\vartheta_i) = 0\), we may deduce \eqref{eq-multiple-fibre-alpha-val} directly from \eqref{cf-alphan-moebius-relation-normalised}, with the inequalities obvious from the definition of \(\nu\) on polynomials and Laurent series as infimum over the coefficients. With the recurrence relation \eqref{canonical-convergent-recursion}, we then get (again only in the bounded case)
\begin{equation*}
\nu(\LC(q_{n+i+1})) \geq \nu(q_{n+i+1}) \geq \min\left(\nu(a_{n+i+1}) + \nu(q_{n+i}), \nu(q_{n+i-1})\right) \geq \nu(g_{n+i-1}).
\end{equation*}
\end{proof}

Observe that \(\io{\Redn{\alpha_{n+l+1}}} = - \deg h_{n+l-1}\), while \(\io{\alpha_{n+l+1}} = -\deg a_{n+l+1} \neq 0\). Its inverse, and hence \(\alpha_{n+l}\), can be bounded only if \(h_{n+l-1}\) is non-constant.

\pagebreak

For a fibre with just two elements, we can under the simplest conditions precisely calculate the valuations:
\begin{prop}[Two element fibre]
\label{prop-two-element-fibre-analysis}
Let \(m \in \N\) such that \(\inv \lambda(m) = \{ n, n+1 \}\) has two elements. Then \(\alpha_{n+1}\) is unbounded, but \(\alpha_{n+2}\) is bounded, with
\begin{equation*}
\Redn{\alpha_{n+2}} = - \frac{h_{n+1}}{h_{n}}, \quad \text{ where } h_{n}, h_{n+1} \in \units k.
\end{equation*}
If moreover \(\deg a_{n+2} = 1\), then for the partial quotients we have
\begin{align}
\label{eq-2elem-pq1}
\nu(a_{n+1}) &= \nu(g_{n-1}) - \nu(g_n) - (1+\deg a_{n+1}) \, \nu(\LC(\vartheta_n)), \\
\label{eq-2elem-pq1-lc}
\nu(\LC(a_{n+1})) &= \nu(g_{n-1}) - \nu(g_n) - \nu(\LC(\vartheta_n)), \\
\label{eq-2elem-pq2}
\nu(\alpha_{n+2}) = \nu(a_{n+2}) &= \nu(g_n) - \nu(g_{n+1}), \\
\label{eq-2elem-pq2-lc}
\nu(\LC(a_{n+2})) &= \nu(g_n) - \nu(g_{n+1})  + \nu(\LC(\vartheta_n)), 
\end{align}
and for the convergents we have
\begin{align}
\label{eq-2elem-conv1}
\nu(g_{n+1}) = \nu(q_{n+1}) &= \nu(g_{n-1}) - (1+\deg a_{n+1}) \, \nu(\LC(\vartheta_n)), \\
\nu(\LC(q_{n+1})) &= \nu(g_{n-1}) - \nu(\LC(\vartheta_n)), \\
\label{eq-2elem-conv2}
\nu(g_{n+2}) = \nu(q_{n+2}) = \nu(\LC(q_{n+2})) &= \nu(g_n) + (1+\deg a_{n+1}) \, \nu(\LC(\vartheta_n)).
\end{align}
\end{prop}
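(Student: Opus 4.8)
The plan is to specialise the Multiple element fibre proposition (Proposition \ref{prop-multiple-element-fibre-analysis}) to the case $l=1$ and then to squeeze out exact values using the extra hypothesis $\deg a_{n+2}=1$. First I would invoke Proposition \ref{prop-multiple-element-fibre-analysis} directly: it already gives that $\alpha_{n+1}$ is unbounded, that $\alpha_{n+2}$ is bounded, and that $\Redn{\alpha_{n+2}} = -h_{n+1}/h_n$ with $h_n, h_{n+1}\in\units k$ (here both $n$ and $n+1$ are minimal in their fibres, so Remark \ref{convergent-reduction-minimal-maximal-coprime} makes both correction factors constant). From Theorem \ref{convergent-reduction-surjective} applied to the minimal fibre elements $n$ and $n+2$ we also get $\deg q_n = \deg v_m$ and $\deg q_{n+2} = \deg v_{m+1}$, hence $\deg a_{n+1}+\deg a_{n+2} = \deg c_{m+1}$, which combined with $\deg a_{n+2}=1$ pins down $\deg a_{n+1} = \deg c_{m+1}-1$.

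The next step is the heart of the argument: compute $\io{\Red{\vartheta_n}}$ and $\nub{\LC(\vartheta_n)}$ precisely. We know $\vartheta_{n-1}, \vartheta_{n+1}\in\units{\laurinx\O}$ (minimal fibre elements, Proposition \ref{cf-infinitely-bounded-complete-quotients}) while $\vartheta_n$ has $\nub{\vartheta_n}=0$ but an \emph{unbounded} inverse, so $\nub{\LC(\vartheta_n)}<0$; this single unknown $\nub{\LC(\vartheta_n)}$ is what all the formulas are expressed in terms of. Now use \eqref{cf-alphan-moebius-relation-normalised}: $\alpha_{n+1} = -g_{n-1}\vartheta_{n-1}/(g_n\vartheta_n)$ and $\alpha_{n+2} = -g_n\vartheta_n/(g_{n+1}\vartheta_{n+1})$. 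For the second, since $\vartheta_{n+1}$ is a unit and $\deg a_{n+2}=1$, the complete quotient $\alpha_{n+2}$ has $\io{\alpha_{n+2}}=-1$ and its leading coefficient is, up to units of $\O$, the quotient of $\LC(\vartheta_n)$ by $\LC(\vartheta_{n+1})$; this is where one reads off \eqref{eq-2elem-pq2} and \eqref{eq-2elem-pq2-lc}, using the Lemmata of Section \ref{sec:org5f19ed2} on valuations of quotients of Laurent series (the fact that $\deg a_{n+2}=1$ means there are only two coefficients of $\alpha_{n+2}$ to track, so the infimum defining $\nu$ is attained in a controllable way). For the first relation, $\vartheta_{n-1}$ is a unit so $\nu(\alpha_{n+1}) = \nu(g_{n-1})-\nu(g_n) - \nu(\vartheta_n)$ with $\nu(\vartheta_n)=0$, giving $\nu(\LC(a_{n+1}))$; the non-leading coefficients of $\alpha_{n+1}$ involve the \emph{lowest} coefficient contributions of $1/\vartheta_n$, whose valuation is governed by $\nub{\LC(\vartheta_n)}$ times the relevant exponent, yielding the $(1+\deg a_{n+1})\nub{\LC(\vartheta_n)}$ term in \eqref{eq-2elem-pq1}. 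Then \eqref{eq-2elem-pq1-lc} is the leading-coefficient version where only one factor of $\nub{\LC(\vartheta_n)}$ enters.

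Finally, the convergent formulas follow from the recursion \eqref{canonical-convergent-recursion} and the leading-coefficient product formula \eqref{convergents-leading-coeff}, tracking $\nu$ and $\nu\circ\LC$ through $q_{n+1} = a_{n+1}q_n + q_{n-1}$ and $q_{n+2} = a_{n+2}q_{n+1}+q_n$; the key input is that $\nu(g_{n-1})=\nu(q_{n-1})=\nu(\LC(q_{n-1}))$ and $\nu(g_{n+2})=\nu(q_{n+2})=\nu(\LC(q_{n+2}))$ because $n-1$... wait, rather because $n$ and $n+2$ are minimal fibre elements so $\deg\Redn{q_n}=\deg q_n$ and $\deg\Redn{q_{n+2}}=\deg q_{n+2}$, whence $\nu$ and $\nu\circ\LC$ agree there; at the intermediate index $n+1$ they differ exactly by $\nub{\LC(\vartheta_n)}$, which is the source of the discrepancy between \eqref{eq-2elem-conv1} and the line below it. The main obstacle will be the bookkeeping in the middle step: carefully justifying, via the quotient-valuation lemmas, that the infima defining $\nu(a_{n+1})$ and $\nu(a_{n+2})$ are attained precisely where claimed — in particular that no unexpected cancellation lowers the valuation further — which is exactly where the hypothesis $\deg a_{n+2}=1$ is essential, since for higher degree there would be additional intermediate coefficients (and additional unknowns) obstructing an exact formula.
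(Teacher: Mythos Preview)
Your overall architecture matches the paper's proof exactly: specialise Proposition~\ref{prop-multiple-element-fibre-analysis} to $l=1$, then use the quotient representation \eqref{cf-alphan-moebius-relation-normalised} together with the Laurent-series valuation lemmas of Section~\ref{sec:org5f19ed2}, and finish with the recursion \eqref{canonical-convergent-recursion} and leading-coefficient formula \eqref{convergents-leading-coeff}. Two points, however, need fixing.

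First, a sign error: you write $\nu(\LC(\vartheta_n))<0$, but since $\vartheta_n\in\laurinx\O$ every coefficient has valuation $\geq 0$; the unbounded inverse (Proposition~\ref{spec-theta-lc-valuation}) means precisely that $\nu(\LC(\vartheta_n))>0$, i.e.\ the leading coefficient lies in $\mm$. (Relatedly, $n+1$ is the \emph{maximal} element of its fibre, not minimal; Remark~\ref{convergent-reduction-minimal-maximal-coprime} still gives $h_{n+1}\in\units k$, but for the right reason.) Also, writing ``$\nu(\alpha_{n+1})=\nu(g_{n-1})-\nu(g_n)-\nu(\vartheta_n)$'' is meaningless since $\alpha_{n+1}$ is unbounded; you mean the leading-coefficient valuation there.

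Second, and more substantively, the crucial translation of the hypothesis $\deg a_{n+2}=1$ into the hypotheses of the lemmas is only gestured at. What the paper makes explicit is that $\io{\Red{\vartheta_n}} = \deg v_{m+1} = \io{\vartheta_n}+\deg a_{n+2}$, so $\deg a_{n+2}=1$ says that \emph{exactly one} leading coefficient of $\vartheta_n$ vanishes under reduction: $\LC(\vartheta_n)\in\mm$ while the \emph{next} coefficient of $\vartheta_n$ lies in $\units\O$. That is precisely the hypothesis ``$a_0\in\mm$, $a_1\in\units\O$'' of Proposition~\ref{inverse-drop1-valuation-lemma}, which then yields $\nu(b_j)=-(j+1)\,\nu(\LC(\vartheta_n))$ for the coefficients of $\vartheta_{n-1}/\vartheta_n$, hence \eqref{eq-2elem-pq1} and \eqref{eq-2elem-pq1-lc}; and it is the hypothesis ``$c_0\in\mm$, $c_1,a_0\in\units\O$'' for the first part of Lemma~\ref{cauchy-valuation-lemma} applied to $\vartheta_n/\vartheta_{n+1}$, which gives \eqref{eq-2elem-pq2} and \eqref{eq-2elem-pq2-lc}. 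Your plan will go through once you insert this step; without it, the claim that the infima are ``attained precisely where claimed'' is an assertion rather than an argument.
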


\begin{proof}
The first part follows from Proposition \ref{prop-multiple-element-fibre-analysis}.
Here \(n\) and \(n+2\) are minimal in their fibre, so \(\vartheta_{n-1}, \vartheta_{n+1} \in \units{\laurinx \O}\), and \(h_{n-1}, h_n, h_{n+1}\) are all constant.

Now \(\io{\vartheta_n} = \deg q_{n+1}\), but \(\io{\Red{\vartheta_n}} = \deg v_{m+1} = \io{\vartheta_n} + \deg a_{n+2}\) because \(\deg c_{m+1} = \deg a_{n+1} + \deg a_{n+2}\). So the first \(\deg a_{n+2}\) coefficients of \(\vartheta_n\) vanish after reduction, and when assuming \(\deg a_{n+2} = 1\) we can apply the results of section \ref{sec:org5f19ed2} to compute the valuations. In particular note that \(\nu(\LC(\vartheta_n)) > 0\), while the next coefficient of \(\vartheta_n\) is in \(\units \O\).

With Proposition \ref{inverse-drop1-valuation-lemma} on the valuations of a quotient of Laurent series, we easily compute \eqref{eq-2elem-pq1} and \eqref{eq-2elem-pq1-lc} from the quotient presentation \eqref{cf-alphan-moebius-relation-normalised} of \(\alpha_{n+1}\). Of course \(a_{n+1}\) contains precisely the first \(1 + \deg a_{n+1}\) coefficients of \(\alpha_{n+1}\).

Then \eqref{eq-2elem-pq1} allows to compute
\begin{equation*}
\nu(a_{n+1} \, q_{n}) = \nu(g_{n-1}) - (1 + \deg a_{n+1}) \, \nu(\LC(\vartheta_{n})) < \nu(q_{n-1}).
\end{equation*}
This implies \eqref{eq-2elem-conv1} via \(q_{n+1} = a_{n+1} \, q_n + q_{n-1}\) and the ultrametric ``equality''. As \(n\) is minimal in the fibre, we have \(\deg q_n = \deg \Redn{q_n}\) and hence \(\nu(g_n) = \nu(q_n) = \nu(\LC(q_n))\), so
\begin{equation*}
\nu(\LC(q_{n+1})) = \nu(\LC(q_n)) + \nu(\LC(a_{n+1})) = \nu(g_{n-1}) - \nu(\LC(\vartheta_n)).
\end{equation*}

On the other hand, the first part of Lemma \ref{cauchy-valuation-lemma} applied to \eqref{cf-alphan-moebius-relation-normalised} gives \eqref{eq-2elem-pq2} and \eqref{eq-2elem-pq2-lc} -- there are just two coefficients in \(a_{n+2}\). By Theorem \ref{convergent-reduction-surjective}, we also know that \(\deg \Redn{q_{n+2}} = \deg q_{n+2}\), so we can compute the valuation of the convergent via the leading coefficient:
\begin{multline*}
\nu(g_{n+2}) = \nu(q_{n+2}) = \nu(\LC(q_{n+2})) = \nu(\LC(q_{n+1})) + \nu(\LC(a_{n+2})) \\
= \nu(g_{n-1})  - \nu(\LC(\vartheta_n)) + \nu(g_n) - \nu(g_{n+1}) + \nu(\LC(\vartheta_n))  \\
= \nu(g_n) + (1 + \deg a_{n+1}) \, \nu(\LC(\vartheta_{n})).
\end{multline*}
\end{proof}

\begin{rem}
The \(h_n\) and also the quotients \(h_{n-1}/h_n\) do not seem to follow any larger (obvious) patterns. If they are all constants, we locally -- in ``areas'' with only single element fibres -- observe patterns as in Proposition \ref{cf-scalar-multiplication}. But that is an unsurprising consequence of \eqref{eq-1elem-red-cq}.
\end{rem}

\begin{rem}
For \(\deg a_{n+2} > 1\), there is more than one coefficient of \(\vartheta_n\) that vanishes, and our reasoning which essentially boils down to geometric series arguments, breaks down. If we wanted to treat for example fibres \(\inv\lambda(m) = \{n, n+1, n+2\}\) with three elements, we get additional complications, as \(h_{n+1}\) can now be non-constant.
\end{rem}

\medskip

We have seen that the reduction of the normalisation of a bounded complete quotient of \(\alpha\) yields a complete quotient of \(\gamma\) \IFF we are at a single element fibre of \(\lambda\). Otherwise, it becomes a rational (or even polynomial) function.

We have also seen that the \(g_n\) do not change at the single element fibres. We will later investigate this closer for \(\CF(\sqrt{D})\) with \(\deg D = 4\) (see Theorem \ref{thm-genus1-zero-patterns}).

\chapter{Specialization of hyperelliptic continued fractions}
\label{sec:org5f9d2ce}
We now apply and extend the reduction theory for continued fractions from the previous chapter to square roots. After briefly treating reduction of periodic continued fractions, we finally prove Theorem \ref{thm-intro-infinite-poles-rationals} from the introduction, after rephrasing it to include number fields.
We go on to study the valuations more closely for \(\deg D = 4\) which leads to Theorem \ref{thm-intro-genus1-unbounded-gauss-norm} about unbounded valuations, also from the introduction.

We also explain how reduction of abelian varieties is related with the reduction of continued fractions via the reduction of the divisors of the convergents. This leads to a well-known effective method for testing if \(D\) is Pellian by reducing modulo two primes.
We conclude with a discussion of specialization of continued fractions, i.e. when the base field is \(\C(t)\).

We continue using the notation from the previous chapter. From now on, let \(D \in \O[X]\) non-square with even degree \(2d\) and \(\LC(D) \in \units \O\) a square. Then of course \(\deg \Red{D} = 2d\) and Proposition \ref{laurent-sqrt-bounded} implies \(\alpha = \sqrt{D} \in \laurinx{\O}\) and \(\gamma = \Red{\alpha} = \sqrt{\Red{D}}\). Recall we also defined \(A = \gauss{\sqrt{D}}\) and note that \(\Red{A} = \gauss{\sqrt{\Red{D}}}\) under the preceding hypotheses.

For example for \(D \in \Z[X]\) we can ask that \(D\) is monic to ensure that \(\sqrt{D} \in \laurinx \Q_{\nu_\pp}\) for every prime number \(\pp \neq 2\).

\section{Reduction of periodic quadratic continued fractions}
\label{sec:orgf6381b8}
In this section, we discuss reduction of periodic \(\CF(\sqrt{D})\). Then all necessary information is contained in finitely many partial quotients, and we can study reduction by looking at this finite data.

First, we check that nothing strange can happen -- we should not be able to reduce to a non-periodic continued fraction. Recall that periodicity of \(\CF(\sqrt{D})\) is equivalent to \(D\) being Pellian (see Theorem \ref{thm-pellian-iff-torsion}).

\begin{prop}
If \(D\) is Pellian, then either \(\Red{D}\) is a square, or it is also Pellian.
\end{prop}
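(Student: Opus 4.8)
The plan is to reduce a suitably normalised non-trivial Pell solution modulo $\nu$. If $\Red D$ is a square we are already in the first alternative, so throughout I assume $\Red D$ is not a square and aim to show that $\Red D$ is Pellian.

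Since $D$ is Pellian, $\eqref{pell}$ has a non-trivial solution $(p,q)\in K[X]^2$ with $q\neq0$, which also solves $\eqref{pellu}$. I would first normalise it: put $e=\min(\nu(p),\nu(q))$ and replace $(p,q)$ by $(\bar p,\bar q)=\uni^{-e}(p,q)\in\O[X]^2$, so that $\min(\nu(\bar p),\nu(\bar q))=0$ and $\bar p^2-D\bar q^2=\bar\omega:=\uni^{-2e}(p^2-Dq^2)\in\units K$. Because $\LC(D)\in\units\O$ we have $\nu(D)=0$, hence $\nu(\bar\omega)\geq0$ by the ultrametric inequality. The case $\nu(\bar\omega)>0$ is excluded quickly: reducing the equation gives $\Red{\bar p}^2=\Red D\,\Red{\bar q}^2$, and since $k[X]$ is a UFD in which $\Red D$ is not a square, this forces $\Red{\bar q}=0$ and then $\Red{\bar p}=0$, against $\min(\nu(\bar p),\nu(\bar q))=0$. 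So $\bar\omega\in\units\O$; in particular $\deg\bar\omega=0<d$, so by Proposition~\ref{weak-pell-solutions-are-convergents} one of $(\bar p,\pm\bar q)$ is a convergent of $\sqrt D$, whence $\deg\bar p=d+\deg\bar q$.

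Reducing the equation now gives $\Red{\bar p}^2-\Red D\,\Red{\bar q}^2=\Red{\bar\omega}\in\units k$. If $\Red{\bar q}\neq0$, this is a non-trivial solution of $\eqref{pellu}$ over $k$, and the argument of Section~\ref{sec:org6aa78e0} (from a non-trivial solution of $\eqref{pellu}$ one recovers one of $\eqref{pell}$) shows $\Red D$ is Pellian, finishing this case.

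The main obstacle, and the only remaining point, is to rule out the degenerate case $\Red{\bar q}=0$. Here $\Red{\bar p}^2=\Red{\bar\omega}$ is a non-zero constant, so $\Red{\bar p}$ is a non-zero constant, while $\deg\bar p=d+\deg\bar q\geq d\geq1$; thus $\nu(\LC(\bar p))\geq1$. Also $\nu(\bar q)\geq1$, and $\LC(\sqrt D)\in\units\O$ since its square is $\LC(D)$. I would then look at $\phi=\bar p+\bar q\sqrt D$ and $\sigma(\phi)=\bar p-\bar q\sqrt D$ in $\laurinx\O$ (recall $\sqrt D\in\laurinx\O$): from $\phi\,\sigma(\phi)=\bar\omega\in\units\O$ both are units of $\laurinx\O$, so by Corollary~\ref{laurent-inverse-bounded-corollary} each has leading coefficient in $\units\O$. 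But $\io{\bar p}=-\deg\bar p=\io{\bar q\sqrt D}$, so the $X^{\deg\bar p}$-coefficients of $\phi$ and of $\sigma(\phi)$ are $\LC(\bar p)\pm\LC(\bar q)\LC(\sqrt D)$, both of valuation $\geq1$; at least one of the two is non-zero and hence equals the corresponding leading coefficient, a contradiction. Therefore $\Red{\bar q}=0$ cannot occur, and $\Red D$ is Pellian.
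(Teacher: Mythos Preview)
Your proof is correct, but it takes a longer route than the paper's. The paper normalises by $\nu(q)$ alone and invokes Corollary~\ref{normalise-convergents} (built on Proposition~\ref{normalise-convergents-preparation}): since every non-trivial Pell solution is, up to a sign on $q$, a convergent of $\sqrt D$, one has $\nu(p)=\nu(q)$, so $\normal q$ automatically satisfies $\nu(\normal q)=0$ and hence $\Redn q\neq0$. The proof then finishes in two lines: if the resulting $\omega$ lies in $\mm$ one reads off that $\Red D$ is a square, and otherwise $(\Redn p,\Redn q)$ is a non-trivial solution over $k$.

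Your normalisation by $e=\min(\nu(p),\nu(q))$ does not immediately give $\Red{\bar q}\neq0$, which is why you need the final paragraph. That argument via $\phi\,\sigma(\phi)=\bar\omega\in\units\O$ and Corollary~\ref{laurent-inverse-bounded-corollary} is correct and rather elegant, but observe that by the same Corollary~\ref{normalise-convergents} your $e$ in fact equals $\nu(q)$, so the ``degenerate case'' $\Red{\bar q}=0$ you work to exclude never actually occurs. In effect your last paragraph is reproving a special instance of Proposition~\ref{normalise-convergents-preparation} by hand; citing that result directly would collapse your argument to the paper's.
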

\begin{proof}
Let \((p, q) \in \solu{D}\). By normalising it, we have also \((\normal{p}, \normal{q}) \in \solu{D}\), with reduction \(\Redn{q} \neq 0\) in \(k[X]\). Of course \({\normal{p}}^2 - D \, {\normal{q}}^2 = \omega \in \O\). If \(\omega \in \mm\), then \({\Redn{p}}^2 - \Red{D} \, {\Redn{q}}^2 = 0\) which implies \(\Red{D}\) is a square.

Otherwise we have \(\omega \in \units \O\), hence \(\Red{\omega} \in \units k\). Then clearly \((\Redn{p}, \Redn{q}) \in \solu{\Red{D}}\) and \(\Red{D}\) is Pellian.
\end{proof}

This proof shows that the degree \(\deg q\) of the minimal solution can only decrease under reduction. This has been exploited by Platonov \cite{platonov-2014-number-theoretic-properties} to produce Jacobians of hyperelliptic curves over \(\Q\) with torsion points of various order. In a previous article together with Petrunin \cite{platonov-petrunin-2012-the-torsion-problem}, he gives \(\Q\)-rational torsion points of orders \(36\) and \(48\). It seems they employ a refined brute force approach for searching Pellian polynomials by checking that \(D\) is Pellian only modulo several primes which speeds up the necessary calculations sufficiently (see also Example \ref{ex-periodic-good-red-2} in Section \ref{sec:orgc648887}).

The following does not even require that \(D\) is Pellian:
\begin{prop}
\label{prop-red-square-bad-reduction}
If \(\Red{D}\) is a square, then \(\CF(\sqrt{D})\) has bad reduction, with \(\alpha_1 \not\in \laurinx \O\).
\end{prop}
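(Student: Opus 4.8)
The plan is to exhibit bad reduction already at the very first complete quotient, i.e.\ to prove the sharper assertion $\alpha_1 \notin \laurinx \O$; by Definition~\ref{def-cf-bad-reduction} this is exactly what ``bad reduction of $\CF(\sqrt D)$'' means. First I would set $\varepsilon := \sqrt D - A = \sqrt D - \gauss{\sqrt D}$. Two elementary observations: $\varepsilon \ne 0$ (otherwise $\sqrt D = A \in K[X]$ and $D = A^2$ would be a square, contrary to hypothesis), and $\varepsilon \in \laurinx \O$ with $\io \varepsilon > 0$, since $\sqrt D \in \laurinx \O$ by Proposition~\ref{laurent-sqrt-bounded} and $A \in \O[X]$. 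Because $D$ is non-square, $\sqrt D \notin K(X)$, so $\CF(\sqrt D)$ is infinite (Proposition~\ref{cf-euclidean-algorithm}) and $\alpha_1 = 1/\varepsilon$ is well defined.

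The key step is to use the hypothesis ``$\Red D$ is a square'' to force $\nu(\varepsilon) > 0$. Under the standing conventions the chosen square root of $\Red D$ is $\sqrt{\Red D} = \Red{\sqrt D}$, and reduction commutes with the truncation $\gauss{\cdot}$ on bounded Laurent series, so $\gauss{\sqrt{\Red D}} = \Red A$. Now if $\Red D = B^2$ with $B \in k[X]$, then in the field $\laurinx k$ the equation $x^2 = B^2$ forces $x = \pm B$, so the Laurent-series square root $\sqrt{\Red D}$ is itself a polynomial and hence equals its own polynomial part $\Red A$. Therefore $\Red\varepsilon = \Red{\sqrt D} - \Red A = \sqrt{\Red D} - \Red A = 0$, i.e.\ $\varepsilon$ lies in the kernel $\laurinx\mm$ of the reduction map on $\laurinx\O$, which means $\nu(\varepsilon) > 0$ (and finite, as $\varepsilon \ne 0$).

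To finish, suppose for contradiction that $\alpha_1 \in \laurinx\O$, so that $\nu(\alpha_1) \ge 0$. Both $\alpha_1$ and $\varepsilon$ then lie in the ring $\laurinx K_\nu$, on which $\nu$ is multiplicative (Proposition~\ref{bounded-laurent-valuation}), so the identity $\alpha_1 \, \varepsilon = 1$ yields $0 = \nu(1) = \nu(\alpha_1) + \nu(\varepsilon) \ge \nu(\varepsilon) > 0$, a contradiction. Hence $\alpha_1 \notin \laurinx\O$, and $\CF(\sqrt D)$ has bad reduction.

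I do not expect a real obstacle; the only point demanding a little care is the bookkeeping around the choices of square roots, namely checking $\sqrt{\Red D} = \Red{\sqrt D}$ and that it is the polynomial $\Red A$. As a sanity check one can run the same argument through the explicit expression $\alpha_1 = (A + \sqrt D)/(D - A^2)$ coming from Theorem~\ref{thm-optimised-sqrt-cq-representation} (with $t_1 = 0$, $s_1 = D - A^2$): there $\nu(D-A^2) > 0$ is precisely the statement $\Red D = \Red{A}^2$, while the numerator $A + \sqrt D$ has degree-$d$ coefficient $2\LC(A)$, a unit since $\Char k \ne 2$ and $\LC(A)^2 = \LC(D) \in \units\O$, so $\nu(A+\sqrt D)=0$ and $\alpha_1$ fails to be in $\laurinx\O$.
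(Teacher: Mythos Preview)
Your proof is correct and follows essentially the same idea as the paper: since $\Red D$ is a square, the reduction $\gamma = \sqrt{\Red D}$ is a polynomial, so $\gamma_0 - c_0 = \Red{\varepsilon} = 0$, which forces $\alpha_1 = 1/\varepsilon$ out of $\laurinx\O$. The paper argues this by noting that $\gamma_1$ does not exist and invoking the good/bad reduction machinery (Proposition~\ref{bad-reduction-minimal-pole}), whereas you carry out the valuation computation directly; your version is more self-contained and arguably pins down the specific claim $\alpha_1 \notin \laurinx\O$ more explicitly, but the underlying observation is the same.
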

\begin{proof}
This is rather obvious because now \(\gamma = \sqrt{\Red{D}} \in k[X]\), so \(c_0 = \gamma_0\) and already \(\gamma_1\) does not exist. So we must have bad reduction of \(\CF(\sqrt{D})\) by Proposition \ref{bad-reduction-minimal-pole}.
\end{proof}

\begin{rem}
\label{rem-reduction-to-square-no-val-info}
If \(\Red{D}\) is square, the map \(\lambda\) has image \(\{0\}\), so there is a single infinite fibre. We neglected to treat this case in Section \ref{sec:org84e8497}. As already \(\Red{a_0 - \alpha_0} = 0\), we do not get much information about the valuations. So we do not know whether \(\alpha_1\) should be bounded or not.
\end{rem}

\begin{rem}
\label{bad-reduction-quasi-period-factor}
Suppose that \(\CF(\sqrt{D})\) is quasi-periodic, with \(\mu \in \units K\) such that \(\alpha_\QPL = \mu \, (A + \sqrt{D})\). Then \(\deg a_{\QPL} = d\) being maximal implies by Proposition \ref{bad-reduction-minimal-pole} that bad reduction of the continued fraction cannot start at \(\QPL\). As \(\nu(A) = \nu(\sqrt{D}) = 0\), we have \(\nu(\mu) = 0\) unless bad reduction of \(\CF(\sqrt{D})\) occurred already before \(\alpha_\QPL\), i.e. somewhere inside the quasi-period.

In particular, this means that \(\mu \in K\) cannot have too many different factors.
\end{rem}

\begin{rem}
\label{rem- reduction-to-square-find-bad-reduction}
If \(k\) has positive characteristic, it is possible that the (quasi-)period length shortens. This is best understood using the geometric viewpoint from Chapter \ref{sec:org99a8e17} and will be analysed later in Section \ref{sec:orgaebd15f}.

Anyway, we can easily determine whether we have good or bad reduction of periodic \(\CF(\alpha)\) by checking whether any of \(\nu(\LC(a_1)), \dots, \nu(\LC(a_{\QPL}))\) is negative (with \(\QPL\) the quasi-period length).

The quasi-period being palindromic (see Proposition \ref{palindromic-period}) also implies that the bad reduction of the continued fraction must start at the latest at \(\frac{\QPL}{2}\) for \(\QPL\) even, or \(\frac{\QPL-1}{2}+2\) for \(\QPL\) odd (in the latter case, we have to account for \(\nu(\mu) \neq 0\)).
\end{rem}

\subsection{Reduction in the \(\deg D = 2\) case}
\label{sec:org46de0ba}
Let us briefly describe  what happens in the case \(\deg D = 2\).

Suppose for simplicity that \(D\) is monic, then we can write \(D = (X+b)^2 + \omega\) with \(b, \omega \in \O\) and \(\omega \neq 0\) so that \(D\) is not a square. Of course \(A = X + b\), and one easily computes
\begin{equation*}
\alpha_0 = \sqrt{D}, \quad \alpha_{2i+1} = \frac{A+\sqrt{D}}{\omega}, \quad \alpha_{2i} = A + \sqrt{D}.
\end{equation*}
So bad reduction occurs \IFF \(\Red{\omega} = 0\), in which case \(\Red{D}\) is a square. Obviously we can reduce the \(\alpha_{2i}\) directly, but the \(\alpha_{2i+1}\) only after normalising. If \(\Red{\omega} = 0\), then the map \(\lambda\) has a single infinite fibre and clearly the \(\Redn{\alpha_n}\) are all polynomials.

The partial quotients are
\begin{equation*}
a_0 = A, \quad a_{2i+1} = \frac{2 \, A}{\omega}, \quad a_{2i} = 2 \, A
\end{equation*}
with Gauss norms
\begin{equation*}
\nu(a_0) = 0, \quad \nu(a_{2i+1}) = - \nu(\omega), \quad \nu(a_{2i}) = 0
\end{equation*}
which of course remain bounded.

As to the convergents, it is easy to see that (\(\ceil{\cdot}_\Z\) is the ceiling function)
\begin{equation*}
\nu(p_n) = \nu(q_n) \geq - \ceil{\frac{n}{2}}_\Z \, \nu(\omega) \text{ and } \nu(\LC(q_n)) = - \ceil{\frac{n}{2}}_\Z \, \nu(\omega)
\end{equation*}
hence \(\nu(q_n) = - \ceil{\frac{n}{2}}_\Z \, \nu(\omega)\) and \(\deg q_n = \deg \Redn{q_n}\). Indeed we expect this in the case of good reduction of \(\CF(\sqrt{D})\).

Otherwise we have bad reduction of \(\CF(\sqrt{D})\), hence \(\Red{A}^2 = \Red{D}\) and \(\nu(\omega) > 0\). Then we also know that \((\Redn{p_n}, \Redn{q_n}) = h_n \, (\Red{A}, 1)\) for all \(n \geq 0\). We can even calculate this: set \(\eta = \pi^{-\nu(\omega)}\) (so that \(\eta/\omega \in \units \O\)). Then for even \(n\) we get \(\inv{g_n} = \eta^{n/2}\) and for odd \(n\) we get \(\inv{g_n} = \eta^{(n+1)/2}\). We calculate for even \(n\):
\begin{align*}
\normal{p_n} &= \eta^{n/2} p_n = 2A \, \eta^{n/2} \, p_{n-1} + \eta^{n/2} \, p_{n-2} = 2A \, \normal{p_{n-1}} + \eta \, \normal{p_{n-2}}, \text{ and similarly } \\
\normal{q_n} &= 2A \, \normal{q_{n-1}} + \eta \, \normal{q_{n-2}}
\end{align*}
which yields
\begin{equation*}
\Redn{p_n} = 2 \Red{A} \, \Redn{p_{n-1}}, \quad \Redn{q_n} = 2 \Red{A} \, \Redn{q_{n-1}}.
\end{equation*}
On the other hand, we get for \(n\) odd
\begin{align*}
\normal{p_n} &= \eta^{(n+1)/2} p_n = \frac{2A}{\omega} \, \eta^{(n+1)/2} p_{n-1}  + \eta^{(n+1)/2} p_{n-2} = 2A \, \frac{\eta}{\omega} \, \normal{p_{n-1}} + \eta \, \normal{p_{n-2}}, \text{ and } \\
\normal{q_n} &= 2A \, \frac{\eta}{\omega} \, \normal{q_{n-1}} + \eta \, \normal{q_{n-2}}
\end{align*}
so
\begin{equation*}
\Redn{p_n} = 2 \Red{A}\, \Red{\eta/\omega} \, \Redn{p_{n-1}}, \quad \Redn{q_n} = 2 \Red{A} \, \Red{\eta/\omega} \, \Redn{q_{n-1}}.
\end{equation*}
It follows that \(h_n\) is \(\Red{A}^n\) times some constant factor depending on \(n\).

\pagebreak

\section{Reduction of non-periodic quadratic continued fractions}
\label{sec:org326a39b}
As before, let \(D \in \O[X]\) non-square with even degree, and \(\LC(D) \in \units \O\) a square, so that \(\alpha = \sqrt{D} \in \laurinx{\O}\), and \(\gamma = \Red{\alpha} = \sqrt{\Red{D}}\). But now, we assume that \(\CF(\sqrt{D})\) is non-periodic. Recall that this requires \(\deg D \geq 4\) (Corollary \ref{deg-2-always-pellian} and Theorem \ref{thm-pellian-iff-cf-periodic}).
\subsection{Reduction to square}
\label{sec:org1ce08e7}
If \(\Red{D}\) is a square, this implies bad reduction of \(\CF(\sqrt{D})\) by Proposition \ref{prop-red-square-bad-reduction}. Then \(\lambda : \N_0 \to \N_0\) has image \(\{0\}\), so we do not get a lot of information from it.

Anyway, for a fixed \(D\), this can happen only for finitely many valuations \(\nu\). From Proposition \ref{completion-of-square} about completion of the square and Proposition \ref{laurent-sqrt-bounded} about boundedness of the square root, it follows that \(\Red{D}\) is a square \IFF \(\nu(D-A^2) > 0\). So this can be checked easily, and concerns only finitely many valuations.

See Example \ref{ex-nonperiodic-to-square-2} in Section \ref{sec:orge6e4f78} for a non-periodic \(\CF(\sqrt{D})\) where \(\Red{D}\) is a square.

\subsection{Reduction to periodic and denominators}
\label{sec:orge7a9aff}
We now study the case where \(\CF(\gamma)\) becomes periodic. This happens automatically if \(k\) is finite, for example with \(D \in \Z[X]\) and reduction modulo some odd prime (see e.g. Corollary \ref{cor-finite-field-always-periodic}). Instead of talking about denominators which is rather vague, we are looking for \negval. 

\begin{lemma}
\label{bad-reduction-to-periodic}
If \(\CF(\gamma)\) is periodic, then infinitely many fibres of \(\lambda\) have at least \(2\) elements. Hence \(\CF(\alpha)\) has bad reduction at \(\nu\), and there exists \(n > 0\) where \(\alpha_n\) has \negval in the leading coefficient.
\end{lemma}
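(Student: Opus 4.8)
The plan is to exploit the mismatch between the degrees of the partial quotients of $\gamma$ and those of $\alpha = \sqrt D$. Since $\CF(\sqrt D)$ is non-periodic, Corollary \ref{cor-pq-degree-periodicity} tells us that $\deg a_n < d$ for all $n \geq 1$; more precisely, $\deg a_n \leq d-1$ for $n \geq 1$ (since periodicity is equivalent to some $\deg a_n = d$). On the other hand, $\CF(\gamma)$ is periodic, so $\gamma = \sqrt{\Red D}$ and by Corollary \ref{cor-pq-degree-periodicity} applied over $k$, infinitely many of the $c_m$ have $\deg c_m = d$ (namely all $m$ with the quasi-period length dividing $m$). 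So there are infinitely many indices $m$ with $\deg c_{m+1} = d$.

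First I would invoke Corollary \ref{cor-lambda-degree-sum}: if $\inv\lambda(m) = \{n, \dots, n+l\}$, then
\begin{equation*}
\deg c_{m+1} = \deg a_{n+1} + \dots + \deg a_{n+l+1}.
\end{equation*}
Now fix one of the infinitely many $m$ with $\deg c_{m+1} = d \geq 2$. If the fibre $\inv\lambda(m)$ were a singleton $\{n\}$, the right-hand side would be just $\deg a_{n+1} \leq d-1 < d$, a contradiction. Hence every such fibre has at least $2$ elements. Since there are infinitely many such $m$, and (by Theorem \ref{convergent-reduction-surjective}) $\lambda$ is surjective with finite fibres, infinitely many fibres of $\lambda$ have at least $2$ elements. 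By Proposition \ref{cf-good-reduction-lambda-bijective} (noting $\gamma = \sqrt{\Red D} \notin k(X)$ because $\Red D$ is non-square — this is the case at hand, $\CF(\gamma)$ being periodic rather than finite), $\lambda$ is not bijective, so $\CF(\alpha)$ has bad reduction at $\nu$.

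Finally, for the last assertion: bad reduction means by Definition \ref{def-cf-bad-reduction} that some $\alpha_n \notin \laurinx\O$ with $n \geq 1$; taking $n$ minimal with this property, Proposition \ref{bad-reduction-minimal-pole} gives directly that $\nu(\LC(\alpha_n)) < 0$, i.e. $\alpha_n$ has negative valuation in its leading coefficient, and $n > 0$. I expect no serious obstacle here — the only point requiring a little care is confirming that $\gamma \notin k(X)$ so that Proposition \ref{cf-good-reduction-lambda-bijective} applies, but this is immediate since a periodic continued fraction is infinite, hence $\gamma$ is irrational over $k(X)$; alternatively one can bypass Proposition \ref{cf-good-reduction-lambda-bijective} entirely and argue directly that a multi-element fibre forces bad reduction via Corollary \ref{cor-lambda-degree-sum} together with Theorem \ref{cf-good-red-partial-quotients}(4), since a fibre $\{n,\dots,n+l\}$ with $l \geq 1$ yields $\deg c_{m+1} > \deg a_{n+1}$.
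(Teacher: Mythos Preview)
Your proposal is correct and follows essentially the same approach as the paper: both exploit the degree mismatch via Corollary \ref{cor-pq-degree-periodicity} (non-periodic $\CF(\sqrt D)$ forces $\deg a_n < d$ for $n\geq 1$, while periodic $\CF(\gamma)$ gives infinitely many $\deg c_m = d$), then apply Corollary \ref{cor-lambda-degree-sum} to see the corresponding fibres cannot be singletons, and finish with Proposition \ref{bad-reduction-minimal-pole}. The only cosmetic difference is that the paper concludes bad reduction directly from the degree criterion (Theorem \ref{cf-good-red-partial-quotients} / Remark \ref{good-reduction-preserve-degrees}) rather than going through Proposition \ref{cf-good-reduction-lambda-bijective}, which you yourself note as an alternative.
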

\begin{proof}
Corollary \ref{cor-pq-degree-periodicity} implies that for all \(n \geq 1\) holds \(\deg a_n < \frac{1}{2} \deg D\), and that there are infinitely many (because of pure periodicity of \(\CF(\Red A + \sqrt{\Red D})\)) \(m \geq 1\) such that \(\deg c_m = \frac{1}{2} \deg \Red{D}\).

However, \(\deg D = \deg \Red{D} = 2 d\), and good reduction of \(\CF(\alpha)\) would by Remark \ref{good-reduction-preserve-degrees} imply that \(\deg a_n = \deg c_n\) for all \(n\). In fact, by Corollary \ref{cor-lambda-degree-sum}, for every \(m \geq 1\) with \(\deg c_m = d\), the fibre \(\inv\lambda(m-1)\)  has more than a single element, so \(\lambda\) is certainly not bijective.

So \(\CF(\alpha)\) must have bad reduction. The statement about \negval then follows from Proposition \ref{bad-reduction-minimal-pole}.
\end{proof}
\begin{rem}
\label{bad-reduction-to-periodic-infinite-poles}
Proposition \ref{prop-multiple-element-fibre-analysis} implies that in the case of bad reduction of \(\CF(\alpha)\) each fibre with more than one element yields an unbounded complete quotient. It follows that there are infinitely many unbounded complete quotients. Compare also Proposition \ref{bad-reduction-minimal-pole}.
\end{rem}

If \(\deg D = 4\), the statement of the Lemma becomes an equivalence:
\begin{prop}
\label{bad-reduction-is-periodic-deg4}
Suppose \(\deg D = 4\) and \(\Red{D}\) non-square. Then \(\CF(\alpha)\) has bad reduction \IFF \(\CF(\gamma)\) is periodic.
\end{prop}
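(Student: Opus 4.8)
The plan is to prove both implications, using the general reduction theory from Chapter \ref{sec:orgd5f1900} together with the special structure of the degree~$4$ case.

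\textbf{The easy direction.} Suppose $\CF(\gamma)$ is periodic. Then Lemma \ref{bad-reduction-to-periodic} applies verbatim (it already asserts that if $\CF(\gamma)$ is periodic then $\CF(\alpha)$ has bad reduction), and nothing more needs to be said. So the real content is the converse.

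\textbf{The hard direction.} Assume $\CF(\alpha)$ has bad reduction, and that $\Red D$ is non-square, so $\gamma = \sqrt{\Red D} \notin k(X)$ and $\CF(\gamma)$ is a genuine infinite continued fraction of a quadratic Laurent series. I want to deduce that $\CF(\gamma)$ is periodic. By Theorem \ref{thm-pellian-iff-cf-periodic} applied over the residue field $k$, this is equivalent to $\Red D$ being Pellian, and by Corollary \ref{cor-pq-degree-periodicity} it is equivalent to $\deg c_m = d = 2$ for some $m \geq 1$. So the goal reduces to: find one $m \geq 1$ with $\deg c_m = 2$.

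The mechanism is the fibre analysis of $\lambda$. Bad reduction of $\CF(\alpha)$ means (Definition \ref{def-cf-bad-reduction} and Proposition \ref{cf-good-reduction-lambda-bijective}, using $\gamma \notin k(X)$) that $\lambda$ is \emph{not} bijective, hence not the identity, hence — being non-decreasing and surjective onto $\N_0$ (Theorem \ref{convergent-reduction-surjective}) — it has at least one fibre $\inv\lambda(m-1)$ with $l+1 \geq 2$ elements, say $\inv\lambda(m-1) = \{n, \dots, n+l\}$ with $l \geq 1$. Now I invoke Corollary \ref{cor-lambda-degree-sum}: $\deg c_m = \deg a_{n+1} + \dots + \deg a_{n+l+1}$. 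Here is where $\deg D = 4$ is used decisively. Since $\CF(\alpha)$ is non-periodic, Corollary \ref{cor-pq-degree-periodicity} gives $1 \leq \deg a_{n+i} \leq d-1 = 1$ for every $i$ with $n+i \geq 1$ (the bound $\deg a_j \le d$ is attained only in the periodic case). Thus every $\deg a_{n+i} = 1$, so $\deg c_m = l+1 \geq 2$. On the other hand $\deg c_m \leq d = 2$ by Corollary \ref{cor-pq-degree-periodicity} applied to $\gamma$ (valid since $\gamma = \sqrt{\Red D}$ with $\deg \Red D = 4$). Hence $l+1 = 2$, i.e. $\deg c_m = 2$, which forces $\CF(\gamma)$ to be periodic with quasi-period length dividing $m$. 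This completes the proof.

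\textbf{Where the care is needed.} The main point to get right is the bookkeeping with indices near $0$: Corollary \ref{cor-pq-degree-periodicity} bounds $\deg a_n$ only for $n \geq 1$, and $\deg a_0 = \deg c_0 = d = 2$, so I must make sure the fibre $\inv\lambda(m-1)$ in question, and all partial quotients $a_{n+1},\dots,a_{n+l+1}$ entering Corollary \ref{cor-lambda-degree-sum}, have strictly positive indices — which is automatic as soon as $m \geq 1$, since then $n+1 \geq 1$. One should also record explicitly why a non-identity non-decreasing surjection $\N_0 \to \N_0$ must have a fibre of size $\geq 2$: if every fibre were a singleton, monotonicity and surjectivity would make $\lambda$ the identity. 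Finally, one should note that the hypothesis $\Red D$ non-square is exactly what rules out the degenerate case of Proposition \ref{prop-red-square-bad-reduction} / Remark \ref{rem-reduction-to-square-no-val-info}, where $\lambda$ has a single infinite fibre and $\CF(\gamma)$ is not defined at all; with that case excluded, the argument above is clean.
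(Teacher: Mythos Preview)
Your proof is correct and follows essentially the same strategy as the paper: locate an index where the reduced partial quotient has degree strictly greater than $1$, then use $\deg c_m \leq d = 2$ to force $\deg c_m = 2$ and hence periodicity. The only difference is in how you find that index: the paper invokes Proposition \ref{bad-reduction-minimal-pole} directly (at the minimal $n$ with $\alpha_n \notin \laurinx\O$ one has $\deg c_n > \deg a_n = 1$), whereas you go through the non-bijectivity of $\lambda$ (Proposition \ref{cf-good-reduction-lambda-bijective}) and the degree identity of Corollary \ref{cor-lambda-degree-sum} to exhibit a fibre of size $\geq 2$ and hence $\deg c_m \geq 2$; this is a slightly longer route to the same destination.
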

\begin{proof}
This extends Lemma \ref{bad-reduction-to-periodic}, it only remains to prove that bad reduction of \(\CF(\alpha)\) implies periodicity. As \(\deg D = 4\), we have \(\deg a_n = 1\) for all \(n \geq 1\). By Proposition \ref{bad-reduction-minimal-pole}, there is a minimal complete quotient \(\alpha_n\) with \(\nu(\alpha_n) < 0\). Because \(\Red{D}\) is non-square, \(\CF(\gamma)\) is infinite and hence the proposition also implies \(\deg c_n > 1\).

Then from Corollary \ref{cor-pq-degree-periodicity} follows \(\deg c_n \leq \frac{1}{2} \deg \Red{D} = 2\), so \(\deg c_n = 2\) and thus \(\CF(\gamma)\) must be periodic.
\end{proof}

\begin{rem}
If the residue field \(k\) is finite (and \(K\) is obviously infinite), then unless \(\Red{D}\) is square, one always has periodic \(\CF(\gamma)\) (see Corollary \ref{cor-finite-field-always-periodic}). So it is impossible to avoid bad reduction of \(\CF(\sqrt{D})\) in that case, for example if the base field \(K\) is a number field.
\end{rem}

\begin{rem}
Lemma \ref{bad-reduction-to-periodic} works only if \(\alpha\) is a square root of a polynomial (or shares a complete quotient with some \(\sqrt{D}\)). As we are interested in periodicity, we may assume that \(\alpha\) is \(\sigma\)-reduced (the complete quotients eventually have this property, see Proposition \ref{cf-compute-eventually-sigma-reduced}). But then \(\deg a_n = \deg A\) is equivalent to \(\alpha_n = \mu \,(A + \sqrt{D})\), so we would necessarily end up in the continued fraction expansion of \(\sqrt{\mu^2 \, D}\).
\end{rem}

\subsection{Primes occurring in infinitely many denominators}
\label{sec:org1fe25a8}
We are now ready to attack the proof of Theorem \ref{thm-intro-infinite-poles-rationals} from the introduction, about a prime occurring in infinitely many denominators of the \(a_n\). We first give a more technical version for a fixed prime, which holds in full generality.

\begin{prop}
\label{fibre-conditions-infinite-poles}
Suppose that there are infinitely many fibres of \(\lambda\) with one element, and infinitely many fibres with at least two elements. Then there exist infinitely many \(n\) with \(\nu(\LC(\alpha_n)) < 0\), i.e. infinitely many complete (and partial) quotients have the ``prime'' as a factor in the denominator of the leading coefficient.
\end{prop}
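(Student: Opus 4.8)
\emph{Approach.} The plan is to turn the statement into a repeated application of the mechanism behind Proposition \ref{bad-reduction-minimal-pole}: a complete quotient can only leave $\laurinx \O$ through its \emph{leading} coefficient. First I would record the following local lemma, proved exactly as Proposition \ref{bad-reduction-minimal-pole}: if $n \geq 1$ satisfies $\alpha_{n-1} \in \laurinx \O$ while $\alpha_n \notin \laurinx \O$, then $\nu(\LC(\alpha_n)) < 0$. Indeed, $\alpha_{n-1} \in \laurinx \O$ gives $a_{n-1} = \gauss{\alpha_{n-1}} \in \O[X]$, hence $\inv{\alpha_n} = \alpha_{n-1} - a_{n-1} \in \laurinx \O \setminus \{0\}$; since $\alpha_n \notin \laurinx \O$, Corollary \ref{laurent-inverse-bounded-corollary} forces $\LC(\inv{\alpha_n}) = \inv{\LC(\alpha_n)} \notin \units \O$, and as it also lies in $\O$ this means $\nu(\LC(\alpha_n)) < 0$. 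Because $\io{\alpha_n} < 0$ gives $\LC(a_n) = \LC(\alpha_n)$, the same holds for $a_n$. So it suffices to produce infinitely many such ``transition indices'' $n$.

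\emph{Producing transitions.} Here I would feed in the fibre analysis of Section \ref{sec:org84e8497}. Each fibre $\inv\lambda(m)$ with at least two elements, with smallest element $n$, gives an \emph{unbounded} $\alpha_{n+1}$ by Proposition \ref{prop-multiple-element-fibre-analysis} (see Remark \ref{bad-reduction-to-periodic-infinite-poles}); as the fibres are disjoint and, by hypothesis, infinitely many of them have $\geq 2$ elements, there are infinitely many indices with $\alpha_n \notin \laurinx \O$. On the other hand, each single-element fibre $\inv\lambda(m) = \{n\}$ gives, by Proposition \ref{prop-single-element-fibre-analysis}, a bounded $\alpha_{n+1}$ with $\nu(\alpha_{n+1}) = \nu(\LC(\alpha_{n+1})) = \nu(g_{n-1}) - \nu(g_n)$ and $\nu(g_{n+1}) = \nu(g_{n-1})$; I would show that $\nu(g_{n-1}) - \nu(g_n) \geq 0$, i.e.\ $\alpha_{n+1} \in \laurinx \O$, for infinitely many of these. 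Granting this, $S = \{ n : \alpha_n \in \laurinx \O \}$ and its complement are both infinite, and a $0/1$-sequence taking both values infinitely often has infinitely many indices $n$ with $n-1 \in S$ and $n \notin S$; by the local lemma each of these gives $\nu(\LC(\alpha_n)) < 0$, which is the claim.

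\emph{Main obstacle.} The delicate point is this last claim, namely that $\alpha_{n+1} \in \laurinx \O$ (not merely bounded — Proposition \ref{cf-infinitely-bounded-complete-quotients} only gives boundedness) at infinitely many single-element fibres. I expect to obtain it by bookkeeping on the normalisation factors: one has $\nu(g_n) = \nu(q_n)$ (Corollary \ref{normalise-convergents}), this is constant across a single-element fibre and non-decreasing from one minimal fibre-element to the next across a multi-element one (Propositions \ref{prop-single-element-fibre-analysis}, \ref{prop-two-element-fibre-analysis}, \ref{prop-multiple-element-fibre-analysis}), while $p_n q_{n-1} - q_n p_{n-1} = (-1)^{n+1}$ (Corollary \ref{canonical-convergent-coprime}) together with $\nu(p_n) = \nu(q_n)$ forces $\nu(q_n) + \nu(q_{n-1}) \leq 0$; note also that at two consecutive single-element fibres $\{n-1\}, \{n\}$ one computes $\nu(\alpha_{n+1}) = -\nu(\alpha_n)$, so at least one of the two is in $\laurinx \O$. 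Combining these with the base case $\alpha_0 = \sqrt{D} \in \laurinx \O$ should pin down the sign at enough spots; making the combinatorics of the $g_n$ clean, in particular for fibres with three or more elements (where intermediate complete quotients may themselves be unbounded), is where the real work lies.
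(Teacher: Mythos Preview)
Your local lemma and the use of multi-element fibres to supply infinitely many indices with $\alpha_n\notin\laurinx\O$ are both correct and match the paper. The gap is in your ``main obstacle'': you set out to prove that $\alpha_{n+1}\in\laurinx\O$ at infinitely many single-element fibres, and propose to do this by monotonicity bookkeeping on $\nu(g_n)$. That target may simply be false, and the bookkeeping you sketch is not right either (for a two-element fibre $\{n,n+1\}$ one has $\nu(g_{n+1})<\nu(g_{n-1})$ while $\nu(g_{n+2})>\nu(g_n)$, so the sequence is not monotone in any useful sense; and $\nu(q_n)+\nu(q_{n-1})\le 0$ from the determinant identity tells you nothing about which side is negative).

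The fix is that you do not need $S$ infinite. At a single-element fibre $\{n\}=\inv\lambda(m)$, Proposition~\ref{prop-single-element-fibre-analysis} already gives $\nu(\LC(\alpha_{n+1}))=\nu(\alpha_{n+1})=\nu(g_{n-1})-\nu(g_n)=:\nu(g)$. Now split: if $\nu(g)<0$ you are done at $n+1$; if $\nu(g)\ge 0$ then $\alpha_{n+1}\in\laurinx\O$, and your own transition lemma applied from $n+1$ (the next exit from $\laurinx\O$ exists because some later multi-element fibre forces an unbounded complete quotient) gives an index $>n$ with negative leading-coefficient valuation. Since the single-element fibres are cofinal, this yields infinitely many such indices. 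The paper does essentially this, but sharpens the case $\nu(g)>0$ by normalising: $g^{-1}\alpha_{n+1}\in\laurinx\O$ has unit leading coefficient, so $g^{-1}(\alpha_{n+1}-a_{n+1})\in\laurinx\O$ has leading coefficient in $\O$, whence $\nu(\LC(g\,\alpha_{n+2}))\le 0$, i.e.\ $\nu(\LC(\alpha_{n+2}))\le -\nu(g)<0$, pinning the negative valuation down at exactly $n+2$. Either route avoids the $g_n$-combinatorics you were bracing for.
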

\begin{proof}
Let \(N \geq 0\). By assumption, there exists \(n \geq N\) such that \(\{n \} = \inv\lambda(m)\) for some \(m\). Then Proposition \ref{prop-single-element-fibre-analysis} implies
\begin{equation}
\label{eq-fibre-conditions-valuation}
\nu(\LC(\alpha_{n+1})) = \nu(\alpha_{n+1}) = \nu(g_{n-1}) - \nu(g_{n}) = \nu(g), \quad \nu(g_{n+1}) = \nu(g_{n-1}),
\end{equation}
where we set \(g = \ifrac{g_{n-1}}{g_{n}}\). Hence \(\inv g \, \alpha_{n+1} \in \laurinx \O\) with leading coefficient in \(\units \O\). As then also \(\inv g \, \alpha_{n+1} - \inv g \, a_{n+1} \in \laurinx \O\), its leading coefficient is in \(\O\), i.e. does not have \negval. We deduce that the first complete quotient \(g \, \alpha_{n+2}\) cannot have \posval in the leading coefficient (compare the proof of Proposition \ref{bad-reduction-minimal-pole}):
\begin{equation*}
\nu(\LC(g \, \alpha_{n+2})) \leq 0 \implies \nu(\LC(\alpha_{n+2})) \leq - \nu(g) = - \nu(\LC(\alpha_{n+1})).
\end{equation*}
So if \(\nu(\LC(\alpha_{n+1})) \neq 0\), either \(\alpha_{n+1}\) or \(\alpha_{n+2}\) has the desired \negval in the leading coefficient.

Otherwise \(\nu(\LC(\alpha_{n+1})) = 0\), so \(\alpha_{n+1} \in \laurinx \O\) and we can reproduce the argument from Proposition \ref{bad-reduction-minimal-pole}: Let \(n' > n\) minimal such that \(\inv\lambda(\lambda(n'))\) has multiple elements. In this case we know \(\nu(\LC(\alpha_{n'+1})) < 0\) by the minimality of \(n'\). So the desired pole is in the leading coefficient of \(\alpha_{n'+1}\).
\end{proof}

\begin{rem}
\label{infinite-poles-after-renormalising}
The proof also illustrates that there are infinitely many partial quotients with \negval, even if we multiply \(\alpha\) with \(\pi^e\) for some \(e \in \Z\) (or some other constant). This merely changes \(g\) in in \eqref{eq-fibre-conditions-valuation}; and recall Proposition \ref{cf-scalar-multiplication} about multiplying a continued fraction with a constant factor. We do not even need to assume \(\alpha \in \laurinx \O\) here, if we define \(\lambda\) appropriately -- it is determined by the sequences \(\deg a_n\) and \(\deg c_m\) which do not change under this multiplication.
\end{rem}

Our results from Section \ref{sec:org84e8497} tell us that multiple element fibres correspond to unbounded complete quotients, and hence bad reduction of the continued fraction. If we want this to occur repeatedly, it is very natural to ask for infinitely many such fibres.

On the other hand, asking also for infinitely many fibres with just a single element is a more technical condition. Right now we cannot avoid this because we do not understand how the valuations behave for multiple element fibres (except for \(\deg D = 4\), to be treated in Theorem \ref{thm-genus1-zero-patterns}). Recall that the complete quotients belonging to multiple element fibres, if at all, reduce to rational functions, about which we have hardly any information (see Proposition \ref{prop-multiple-element-fibre-analysis}).

At least we have a simple criterion that guarantees the existence of infinitely many fibres of \(\lambda\) with just a single element:
\begin{prop}
\label{prop-criterion-infinite-single-element-fibres}
With \(\alpha = \sqrt{D}\), suppose that \(\CF(\alpha)\) is non-periodic, but \(\CF(\gamma)\) is periodic. Let \(\delta = \min\{\deg a_n \mid n \geq 0 \}\). If there exists \(m\) such that \(\deg c_m = \delta\), then \(\lambda : \N_0 \to \N_0\) has infinitely many fibres with a single element.
\end{prop}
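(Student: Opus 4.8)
The plan is to exploit the periodicity of $\CF(\gamma)$ together with the degree information encoded in $\lambda$ via Corollary \ref{cor-lambda-degree-sum}. Recall that since $\CF(\gamma)$ is periodic, Corollary \ref{cor-pq-degree-periodicity} gives $\deg c_m \le d$ for all $m \ge 1$, with $\deg c_m = d$ for infinitely many $m$ (in fact exactly the multiples of the quasi-period length of $\CF(\gamma)$). Meanwhile for $\CF(\alpha)$ we have $1 \le \deg a_n \le d$, but since $\CF(\alpha)$ is non-periodic, $\deg a_n < d$ for all $n \ge 1$ by the same Corollary. The key bookkeeping identity is Corollary \ref{cor-lambda-degree-sum}: if $\inv\lambda(m) = \{n, \dots, n+l\}$ then $\deg c_{m+1} = \deg a_{n+1} + \dots + \deg a_{n+l+1}$.

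First I would argue that there are infinitely many $m$ with $\deg c_{m+1} = \delta$. This is where the hypothesis ``$\deg c_m = \delta$ for some $m$'' is used: by Remark \ref{period-of-sqrt-d} and the pure periodicity of $\CF(\Red A + \sqrt{\Red D})$, once one partial quotient $c_m$ (with $m \ge 1$) achieves the value $\delta$, it recurs periodically, so there are infinitely many such indices. Now take such an $m$ with $m+1$ in this set, i.e. $\deg c_{m+1} = \delta$, and let $\inv\lambda(m) = \{n, \dots, n+l\}$. Then Corollary \ref{cor-lambda-degree-sum} gives
\begin{equation*}
\delta = \deg c_{m+1} = \deg a_{n+1} + \dots + \deg a_{n+l+1} \ge (l+1)\,\delta,
\end{equation*}
because each $\deg a_{n+i} \ge \delta$ by the very definition of $\delta$ as the minimum (note $\delta \ge 1$). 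Hence $l+1 \le 1$, so $l = 0$, meaning $\inv\lambda(m)$ is a single-element fibre.

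Since there are infinitely many admissible values of $m$ and each produces (via the index $m$ with $\deg c_{m+1} = \delta$) a single-element fibre $\inv\lambda(m)$, and since $\lambda$ is non-decreasing with finite fibres (Proposition \ref{reduced-convergents-increasing} and Theorem \ref{convergent-reduction-surjective}), these single-element fibres are distinct, giving infinitely many of them. I expect no serious obstacle here; the only point requiring mild care is confirming that the recurrence of the value $\delta$ among the $c_m$ (for $m \ge 1$) indeed follows from pure (quasi-)periodicity of $\CF(\Red A + \sqrt{\Red D})$ and that $\delta \ge 1$, which holds because all partial quotients $a_n$ with $n \ge 0$ of $\sqrt{D}$ have degree at least $1$ except possibly... in fact $\deg a_0 = d \ge 1$ and $\deg a_n \ge 1$ for $n \ge 1$ by Remark \ref{cf-absolute-values}, so $\delta \ge 1$ as needed. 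One should also observe that $\delta$ could in principle be attained only at $n = 0$; but then one still needs $\deg c_m = \delta$ for some $m$, which by the argument above forces single-element fibres regardless of where $\delta$ is attained among the $a_n$.
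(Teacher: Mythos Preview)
Your proof is correct and follows essentially the same route as the paper: use periodicity of $\CF(\gamma)$ to produce infinitely many $m$ with $\deg c_m = \delta$, then apply Corollary \ref{cor-lambda-degree-sum} together with minimality of $\delta$ to force the corresponding fibre to be a singleton. The paper indexes with $\inv\lambda(m-1)$ where $\deg c_m = \delta$, you index with $\inv\lambda(m)$ where $\deg c_{m+1} = \delta$; these are the same thing. One small point you leave implicit: the hypothetical $m$ with $\deg c_m = \delta$ automatically satisfies $m \geq 1$, because non-periodicity of $\CF(\alpha)$ forces $\delta \leq \deg a_1 < d = \deg c_0$ (Corollary \ref{cor-pq-degree-periodicity}); this also shows your closing worry about $\delta$ being attained only at $n=0$ cannot occur.
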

\begin{proof}
Recall that \(\deg D = 2d\). From Corollary \ref{cor-pq-degree-periodicity} we know that \(\deg a_n < d = \deg a_0 = \deg c_0\) for \(n \geq 1\), so certainly \(m \geq 1\). But the quasi-period of \(\CF(\gamma)\) begins at \(c_1\); this means there are actually infinitely many \(m\) with \(\deg c_m = \delta\).

Then Corollary \ref{cor-lambda-degree-sum} implies for \(\inv\lambda(m-1) = \{n-1,\dots,n-1+l\}\) that \(\deg a_n + \dots + \deg a_{n+l} = \deg c_m = \delta\). Minimality of \(\delta\) forces \(l = 0\), hence the fibre has a single element. It follows that there are infinitely many fibres of \(\lambda\) with a single element.
\end{proof}

\begin{rem}
Note that along the way, we have also proved that there are infinitely many partial quotients with \(\deg a_n = \delta\), so the minimal degree must be assumed infinitely often (however, we used periodicity of \(\CF(\gamma)\) which is in general a rather strong hypothesis).
\end{rem}

\medskip

Now we restrict ourselves to \(K\) being a number field. The ring of integers \(\O_K\) of a number field, while it need not be a unique factorisation domain, has unique factorisations of ideals into prime ideals. Every \(x \in K\) can be written as \(x = \frac{a}{b}\) with \(a \in \O_K\) and \(b \in \N\). In the theorem below, we refer to \(b\) as the denominator.

Each prime ideal \(\PP\) of \(\O_K\) corresponds to a non-archimedean valuation \(\nu_\PP\) on \(K\). By localising \(\O_K\) at \(\PP\), one obtains a discrete valuation ring with a finite residue field. The latter is a finite extension of \(\F_\pp\), where \(\pp\) is the unique prime number (of \(\Z\)) contained in \(\PP\).\footnote{See for example \cite{neukirch-1999-algebraic-number-theory}, Chapter I \S 8. Or any other decent textbook on algebraic number theory.}

\pagebreak
The following generalises Theorem \ref{thm-intro-infinite-poles-rationals} from the introduction:
\begin{thm}
\label{thm-infinite-poles-number-field}
Let \(K\) a number field, suppose that \(D \in K[X]\) is monic, non-square and has even degree, but is not Pellian.

Then for all but finitely prime numbers (in \(\Z\)), the prime \(\pp\) appears in infinitely many \(a_n\) (actually \(\LC(a_n)\)) in a (the) denominator.
\end{thm}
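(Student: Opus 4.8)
The plan is to reduce Theorem~\ref{thm-infinite-poles-number-field} to the per-prime statement in Proposition~\ref{fibre-conditions-infinite-poles}, so the real work is verifying, for all but finitely many rational primes $\pp$, the two hypotheses: that $\lambda$ has infinitely many single-element fibres \emph{and} infinitely many fibres with at least two elements. First I would fix a prime ideal $\PP$ of $\O_K$ over $\pp$, localise to get a discrete valuation ring $\O$ with finite residue field $k$ (a finite extension of $\F_\pp$), and use the running setup: since $D$ is monic, $\LC(D)=1 \in \units\O$, so $\alpha=\sqrt D \in \laurinx\O$ and $\gamma = \sqrt{\Red D}$ by Proposition~\ref{laurent-sqrt-bounded}. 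I first dispose of the finitely many bad primes: exclude $\pp=2$; exclude primes dividing a denominator of a coefficient of $D$ or of the finitely many partial quotients below a certain index; and exclude primes for which $\Red D$ is a square (Proposition~\ref{prop-red-square-bad-reduction} shows these are finitely many, detected by $\nu(D-A^2)>0$ via Proposition~\ref{completion-of-square}). For all remaining $\pp$, $\Red D$ is non-square and $k$ finite, so $\CF(\gamma)$ is periodic by Corollary~\ref{cor-finite-field-always-periodic}; since $D$ is not Pellian, $\CF(\alpha)$ is non-periodic by Theorem~\ref{thm-pellian-iff-cf-periodic}.

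Next I would supply the infinitely-many-multi-element-fibres half: this is exactly Lemma~\ref{bad-reduction-to-periodic}, which says periodicity of $\CF(\gamma)$ (with $\CF(\alpha)$ non-periodic, $\deg D = \deg\Red D$) forces infinitely many fibres of $\lambda$ with $\geq 2$ elements. For the single-element-fibres half I would invoke Proposition~\ref{prop-criterion-infinite-single-element-fibres}: setting $\delta = \min\{\deg a_n \mid n \geq 0\}$, it suffices that some $c_m$ has $\deg c_m = \delta$. This is the step requiring care, because $\delta$ depends on the full (infinite) continued fraction of $\sqrt D$ over $K$ and on $\pp$ only through the degrees $\deg c_m$ of the reduced partial quotients. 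Here I would argue as follows: by Corollary~\ref{cor-pq-degree-periodicity}, $1 \leq \deg a_n \leq d$, so $\delta$ is one of finitely many values; let $n_0$ be the first index with $\deg a_{n_0} = \delta$ (Remark~\ref{minimal-an-degree-effective} makes this effective). For all primes $\pp$ outside a finite set, the continued fraction has good reduction through index $n_0$ — i.e.\ $\deg a_n = \deg c_n$ for $n \leq n_0$ by the leading-coefficient criterion in Theorem~\ref{cf-good-red-partial-quotients}, since only finitely many primes can appear in $\LC(a_1),\dots,\LC(a_{n_0})$ or make one of these drop degree. For such $\pp$ we get $\deg c_{n_0} = \delta$, so Proposition~\ref{prop-criterion-infinite-single-element-fibres} applies.

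With both hypotheses of Proposition~\ref{fibre-conditions-infinite-poles} verified, I conclude that for these $\pp$ there are infinitely many $n$ with $\nu_\PP(\LC(\alpha_n)) < 0$, hence $\nu_\PP(\LC(a_n)) < 0$ since $a_n = \gauss{\alpha_n}$. Finally I would translate this back from the $\PP$-adic valuation to the rational prime $\pp$: $\nu_\PP(\LC(a_n)) < 0$ means $\PP$, and therefore $\pp$, divides a denominator of a coefficient of $a_n$; and the finite exceptional set of ideals $\PP$ corresponds to a finite set of primes $\pp$ (each $\pp$ has finitely many $\PP$ above it, and conversely only the primes we already excluded are involved). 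The main obstacle is the uniformity in the single-element-fibre argument: ensuring that the index $n_0$ realising the minimal degree $\delta$ is \emph{fixed} and that good reduction up to $n_0$ fails for only finitely many $\pp$; this is where the effectivity remark and the fact that $\delta$ takes finitely many possible values are essential, and it is exactly the point where the ``finitely many further primes'' of the introduction enter.
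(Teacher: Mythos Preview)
Your proposal is correct and follows essentially the same route as the paper's proof: reduce to Proposition~\ref{fibre-conditions-infinite-poles} at each prime, obtain the multi-element fibres from Lemma~\ref{bad-reduction-to-periodic} (periodicity of $\CF(\gamma)$ over the finite residue field), and obtain the single-element fibres from Proposition~\ref{prop-criterion-infinite-single-element-fibres} by fixing $n_0$ with $\deg a_{n_0}=\delta$ and excluding the finitely many primes where good reduction fails before $n_0$. The paper phrases the good-reduction-up-to-$n_0$ step via Propositions~\ref{cf-good-red-leading-coeffs} and~\ref{bad-reduction-minimal-pole} rather than Theorem~\ref{cf-good-red-partial-quotients}, but this is only a cosmetic difference.
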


The primes excluded are \(2\) (because the residue field would have characteristic \(2\)), those which already appear in a denominator in \(D\), and those which make \(D \mod\PP\) a square polynomial. Additionally, we may need to exclude a finite number of primes, depending on where the first partial quotient of minimal degree \(\delta\) occurs. This can be made effective, as discussed  below in Remark \ref{minimal-an-degree-effective}.

The Theorem relies on \(\alpha\) being a square root. For other elements of \(K(X, \sqrt{D})\), we may in fact have good reduction of the continued fraction at infinitely many primes, see Section \ref{sec:org53fcc2b} for an example.

\begin{proof}
Removing the finitely many primes with \(\nu_\PP(D) < 0\) (i.e. \(\pp\) is in the denominator of \(D\)), and ignoring the primes \(\PP\) above \(2\), the conditions on \(D\) ensure that \(\sqrt{D} \in \laurinx \O\) for \(\nu = \nu_\PP\). Let us also ignore the \(\PP\) for which \(D - A^2 \in \PP[X]\), i.e. where the reduction \(\Red{D}\) is a square (there are only finitely many, as \(D - A^2 \in K[X]\) is a polynomial).

Of course \(D\) not Pellian means that \(\CF(\sqrt{D})\) is non-periodic. However, the residue field \(k\) is finite for every prime, so \(\CF(\sqrt{\Red{D}})\) must necessarily be periodic. Then Lemma \ref{bad-reduction-to-periodic} implies that we are always in the case of bad reduction (of the continued fraction), and there are infinitely many fibres of \(\lambda\) which have at least two elements. 

In order to apply Proposition \ref{fibre-conditions-infinite-poles}, we use Proposition \ref{prop-criterion-infinite-single-element-fibres}, so we need to check that there exists \(m\) with \(\deg c_m = \delta = \min\{\deg a_n \mid n \geq 0 \}\).

Let \(n_0\) the minimal \(n\) with \(\deg a_n = \delta\) (obviously \(n_0 \geq 1\)). We restrict to primes \(\PP\) for which we have good reduction of \(\CF(\sqrt{D})\) up to \(n_0\), i.e. where \(\alpha_0, \alpha_1, \dots, \alpha_{n_0} \in \laurinx \O\). This excludes only finitely many \(\PP\): we can factor each \(\LC(a_n) \in K\) for \(n=0,\dots,n_0\) into a product (with possibly negative exponents) of prime ideals of \(\O_K\). Of course \(\nu_\PP(\LC(a_n)) < 0\) happens \IFF \(\PP\) appears with a negative exponent in the factorisation. Of these there are obviously just finitely many, and by Proposition \ref{bad-reduction-minimal-pole} the bad reduction of \(\CF(\sqrt{D})\) starts only later for all other primes.

For the remaining primes \(\PP\), the complete quotients up to \(\alpha_{n_0}\) are thus contained in \(\laurinx \O\). By Proposition \ref{cf-good-red-leading-coeffs} and Remark \ref{good-reduction-preserve-degrees} this implies \(\deg c_{n_0} = \deg a_{n_0} = \delta\).

Hence \(\nu_\PP(\LC(a_n)) < 0\) for infinitely many \(n\). If we write \(\LC(a_n) = \ifrac{a}{b}\) with \(a \in \O_K\) and \(b \in \N\), then naturally \(\nu_\PP(a) \geq 0\), hence \(\nu_\PP(b) > 0\). Applying the Norm, \(b \in \N\) must have \(\pp \div b\) as desired.
\end{proof}

Let us briefly discuss effectivity of \(\delta = \min\{ \deg a_n \mid n \geq 0 \}\).
In \cite{zannier-2016-hyper-contin-fract}, it is shown how a Skolem-Mahler-Lech theorem for algebraic groups implies that the sequence of the \(\deg a_n\) is eventually periodic (even if \(\CF(\sqrt{D})\) is not periodic!). While in certain cases it seems possible to obtain a bound for the period length (of the degrees) from this result, there is unfortunately no information on the pre-period. Summing upper bounds for the pre-period and the period would of course produce an upper bound for \(\delta\).

However, the issues with effectivity are actually related to finding \(\max\{ \deg a_n \mid n \geq 1 \}\). For finding the minimal degree, we do not need to know the entire period (of degrees):
\begin{rem}
\label{minimal-an-degree-effective}
Let \(\OOmultszar\) the Zariski closure of \(\{ n \, \OO \mid n \in \Z \}\) in the Jacobian of \(\CC\); we need to find the maximal \(r\) such that \(\OOmultszar \subset W_r\) but \(\OOmultszar \not\subset W_{r-1}\). Using the divisor relations coming from the convergents, explained in Sections \ref{sec:org5685823} and \ref{sec:org4e4485e}, this implies \(\delta = g - r + 1\) for said maximal \(r\) (recall from Section \ref{sec:orgdaae3ac} that \(W_r\) is the \(r\) fold symmetric sum of \(\CC\) embedded in its Jacobian variety).

We can effectively compute \(\OOmultszar\) from a factorisation of the Jacobian as in Theorem 1.2 of \cite{gaudron-remond-2014-polarisations-isogenies} (which extends a deep result of Masser and Wüstholz, \cite{masser-wuestholz-2014-polarization-estimates-abelian}).
As the \(W_r\) can also be effectively represented, we can determine in which of the \(W_r, \; (r=1, \dots, g-1)\) our subvariety \(\OOmultszar\) is not contained. Certainly \(\OOmultszar\) is contained in \(W_g = \Jac(\CC)\).
\end{rem}

In practice finding \(\delta\) is not a big issue because we usually immediately find a partial quotient with \(\deg a_n = 1\) (which in fact \emph{must} occur for \(\deg D = 4\) or \(6\) if \(D\) is non-Pellian and non-square; see Theorem 1.3 of \cite{zannier-2016-hyper-contin-fract}, stated below as Theorem \ref{thm-zannier-an-deg-bound}), and then we know that \(\delta = 1\).

Theorem \ref{thm-genus1-zero-patterns} below gives another (similar) proof for the occurrence of a prime in the denominators of infinitely many \(a_n\) in the case \(\deg D = 4\). This relies on being able to control cancellation issues sufficiently, so we do not need the single element fibres to estimate the Gauss norms.

\section{Genus 1 valuation patterns}
\label{sec:orgbb63dc1}
We analyse the case \(\deg D = 4\) much closer now, and will describe how the valuations of the complete quotients, partial quotients and convergents behave in the case of bad reduction at \(\nu\). When studying examples (see Tables \ref{cfr-mod5-valuations-table} and \ref{cfr-mod19-valuations-table} in Section \ref{sec:orgec32865}), one notes that the valuations (Gauss norms) of the partial quotients \(a_n\) are often divisible by \(4\), with alternating signs, while the valuations of the convergents \(q_n\) are always divisible by \(2\), again with alternating signs. Both also exhibit an almost pseudo-periodic behaviour. The theorem below aims to explain these patterns:

\enlargethispage{1cm}
\begin{thm}
\label{thm-genus1-zero-patterns}
Suppose \(\deg D = 4\), and that \(\CF(\sqrt{D})\) is non-periodic, while \(\CF(\sqrt{\Red{D}})\) is periodic with quasi-period \(\QPL\), so that we have bad reduction as shown in Proposition \ref{bad-reduction-is-periodic-deg4}. Then we observe the following:
\begin{itemize}
\item The unbounded complete quotients \(\alpha_n\) are exactly those with
\begin{equation}
\label{deg4-unboundedset-eq}
n \in \unboundedset = \{ j \, (\QPL+1) - 1 \mid j \geq 1 \}.
\end{equation}
\item Defining
\begin{equation*}
f_n = \nu(\LC(\vartheta_{n-1})) \geq 0
\end{equation*}
we have \(f_n > 0\) \IFF \(n \in \unboundedset\) (so \(f_n = 0\) otherwise).
\item Recursively defining \(F_0 = 0\) and \(F_n = -(F_{n-1} + f_n)\), we get formulas for the valuations
\begin{align*}
\nu(a_n) &= 2 \, (F_{n-2} + F_n), & \nu(\LC(a_n)) &= \nu(a_n) + f_{n-1} + f_n, \\
\nu(q_n) &= 2 \, F_n, & \nu(\LC(q_n)) &= \nu(q_n) + f_n.
\end{align*}
\end{itemize}
\end{thm}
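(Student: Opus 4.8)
\textbf{Plan of proof for Theorem \ref{thm-genus1-zero-patterns}.}
The strategy is to combine the structural information about the map $\lambda$ in the degree $4$ case with the valuation bookkeeping already developed in Section \ref{sec:org84e8497}, and then to upgrade the inequalities there to equalities using the very rigid geometry of genus $1$. First I would establish the shape of $\lambda$: since $\deg D = 4$ we have $\deg a_n = 1$ for all $n \geq 1$ by Corollary \ref{cor-pq-degree-periodicity}, and since $\CF(\gamma)$ is periodic with quasi-period $\QPL$ we have $\deg c_m = 2$ precisely when $\QPL \mid m$ and $\deg c_m = 1$ otherwise (again Corollary \ref{cor-pq-degree-periodicity}, applied to $\gamma = \sqrt{\Red D}$ together with Theorem \ref{thm-optimised-sqrt-cq-representation}). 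By Corollary \ref{cor-lambda-degree-sum}, a fibre $\inv\lambda(m)$ of size $l+1$ satisfies $\deg c_{m+1} = \sum_{i=1}^{l+1}\deg a_{n+i}$; since each $\deg a_{n+i}=1$ and $\deg c_{m+1}\in\{1,2\}$, every fibre has either one or two elements, and the two-element fibres occur exactly at the $m$ with $\QPL\mid m+1$. Counting indices then gives the explicit description: $n$ lies in a two-element fibre as its \emph{first} element iff $n \equiv -1 \pmod{\QPL+1}$, which pins down $\unboundedset$ via Proposition \ref{prop-multiple-element-fibre-analysis} (the first element of a $\geq 2$-element fibre yields an unbounded $\alpha_{n+1}$, and single-element fibres yield bounded complete quotients by Proposition \ref{prop-single-element-fibre-analysis}).

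Next I would pin down the $f_n$. By definition $f_n = \nu(\LC(\vartheta_{n-1}))$, and Proposition \ref{spec-theta-lc-valuation} says $f_n = 0$ iff $\vartheta_{n-1}$ has bounded inverse iff $\io{\vartheta_{n-1}} = \io{\Red{\vartheta_{n-1}}}$. The latter order equals $\deg q_n$ on the left and $\deg v_{\lambda(n-1)+1}$ on the right; by Theorem \ref{convergent-reduction-surjective} and Corollary \ref{cor-lambda-degree-sum} these agree exactly when $n$ is the minimal element of its fibre, and differ by $1$ otherwise. Since the non-minimal fibre elements are precisely those $n$ with $n \in \unboundedset$ (the second element of a two-element fibre, reindexed), this gives $f_n > 0 \iff n \in \unboundedset$ and moreover $f_n \in \{0,1\}$ would follow if the order drops by exactly $1$ — but actually one only needs $f_n \geq 0$, which is automatic from $\vartheta_{n-1}\in\laurinx\O$. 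I would remark that $F_n = 2\nu(g_n)$ up to sign: the recursion $F_n = -(F_{n-1}+f_n)$ should match $\nu(g_n)$ evolution, because at a single-element fibre $\nu(g_{n+1}) = \nu(g_{n-1})$ (Proposition \ref{prop-single-element-fibre-analysis}) while across a two-element fibre the change is governed by $\nu(\LC(\vartheta_n))$ as in \eqref{eq-2elem-conv1}--\eqref{eq-2elem-conv2}.

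The core computation is then an induction on $n$ proving the four displayed formulas simultaneously. For $n$ with $\inv\lambda$ a single-element fibre containing $n-1$ or $n$, apply Proposition \ref{prop-single-element-fibre-analysis}: there $f_{n-1}=f_n=0$, the complete quotient is bounded, $\nu(a_n) = \nu(g_{n-2})-\nu(g_{n-1})$, and $\nu(q_n) = \nu(g_{n-2})$; one checks these equal $2(F_{n-2}+F_n)$ and $2F_n$ using the recursion and the induction hypothesis $\nu(g_j) = F_j$ (after fixing the normalisation $F_0 = 0 = \nu(g_0)$, $\vartheta_{-1}=1$ so $f_0 = 0$). The leading-coefficient statements are then immediate since $f_{n-1}=f_n=0$ forces $\nu(\LC) = \nu$ there. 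The interesting step is a two-element fibre $\inv\lambda(m)=\{n,n+1\}$ with $\deg a_{n+2}=1$ (automatic here since $\deg D=4$): apply Proposition \ref{prop-two-element-fibre-analysis}. Formulas \eqref{eq-2elem-pq1}--\eqref{eq-2elem-conv2} express $\nu(a_{n+1})$, $\nu(a_{n+2})$, $\nu(q_{n+1})$, $\nu(q_{n+2})$ and their leading-coefficient versions in terms of $\nu(g_{n-1})$, $\nu(g_n)$, $\nu(g_{n+1})$ and $\nu(\LC(\vartheta_n)) = f_{n+1}$; substituting the inductive values $\nu(g_j)=F_j$ and using $\deg a_{n+1}=1$ so that the factor $(1+\deg a_{n+1})=2$ appears, these collapse to exactly $\nu(a_{n+1}) = 2(F_{n-1}+F_{n+1})$, $\nu(a_{n+2})=2(F_n+F_{n+2})$, $\nu(q_{n+1})=2F_{n+1}$, $\nu(q_{n+2})=2F_{n+2}$, with the $\LC$-corrections being exactly the relevant $f_{n-1}+f_n$ or $f_n$ (all the other $f$'s in the range vanishing).

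\textbf{Main obstacle.} The delicate point is the bookkeeping at the boundary between a two-element fibre and the adjacent single-element fibres — ensuring that the index shifts between ``$n$ indexes a partial quotient $a_n$'', ``$n$ indexes $\vartheta_{n-1}$'', and ``$m=\lambda(n)$ indexes $c_{m+1}$'' are all consistent, and that the $F_n$ recursion genuinely tracks $\nu(g_n)$ through both kinds of fibres without an off-by-one error. In particular one must verify that $f_{n+2}$ (which could be positive if $n+2\in\unboundedset$, i.e. if the next two-element fibre is adjacent) does not interfere, and this requires knowing from the explicit description of $\unboundedset$ that consecutive elements of $\unboundedset$ differ by $\QPL+1 \geq 2$, so two-element fibres are never adjacent. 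Once that separation is in hand, Proposition \ref{prop-two-element-fibre-analysis} applies cleanly at each two-element fibre and Proposition \ref{prop-single-element-fibre-analysis} fills the gaps, and the induction closes.
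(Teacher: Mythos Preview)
Your plan is essentially the paper's proof: first establish the fibre structure of \(\lambda\) (this is exactly Proposition \ref{bad-reduction-periodic-deg4-fibres}), then use Propositions \ref{prop-single-element-fibre-analysis} and \ref{prop-two-element-fibre-analysis} (the latter applicable because \(\deg a_{n+2}=1\) automatically in degree \(4\)) to run the induction, with the separation of two-element fibres guaranteeing clean transitions. Two small bookkeeping corrections for when you write it out: the inductive invariant should be \(\nu(g_n)=2F_n\) (since \(\nu(q_n)=2F_n\) in the statement and \(\nu(g_n)=\nu(q_n)\)), not \(\nu(g_j)=F_j\) nor \(F_n=2\nu(g_n)\); and the \emph{first} element of a two-element fibre sits at \(n\equiv -2\pmod{\QPL+1}\), so that the shifted index \(n+1\) lands in \(\unboundedset\).
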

\begin{rem}
Note that if \(n-1, n \not \in \unboundedset\), we have \(F_{n-2} = F_n\) and thus \(\nu(a_n) = 4 \, F_n\), which explains the divisibility by \(4\).
\end{rem}
\begin{rem}
For higher genus, one probably has to consider other coefficients besides \(f_n\). But it is not at all clear how this generalises.
\end{rem}

The following is the general version of Theorem \ref{thm-intro-genus1-unbounded-gauss-norm} (recall that \(\QPL + 1\) is the torsion order of \(\j{\OOred}\), see Proposition \ref{prop-bounds-torsion-period-length}):
\begin{cor}
\label{cor-genus1-unbounded-gauss-norm}
Under the same hypotheses as Theorem \ref{thm-genus1-zero-patterns}, and additionally assuming the quasi-period \(\QPL\) of \(\CF(\sqrt{\Red{D}})\) is odd, the Gauss norms grow at least linearly (in particular they are unbounded):
\begin{equation*}
(-1)^n \nu(a_{n}) \geq 2 \left( \floor{\frac{n-1}{\QPL+1}}_\Z + \floor{\frac{n+1}{\QPL+1}}_\Z \right), \qquad
(-1)^n \nu(q_{n}) \geq 2 \floor{\frac{n+1}{\QPL+1}}_\Z.
\end{equation*}
\end{cor}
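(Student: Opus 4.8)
The plan is to derive Corollary~\ref{cor-genus1-unbounded-gauss-norm} directly from the explicit formulas in Theorem~\ref{thm-genus1-zero-patterns}. The whole content is an analysis of the integer sequence $F_n$ defined by $F_0 = 0$, $F_n = -(F_{n-1} + f_n)$, together with the fact (also from Theorem~\ref{thm-genus1-zero-patterns}) that $f_n \geq 1$ precisely when $n \in \unboundedset = \{ j(\QPL+1) - 1 \mid j \geq 1\}$ and $f_n = 0$ otherwise. So first I would unfold the recursion: one checks by induction that
\begin{equation*}
F_n = \sum_{i=1}^{n} (-1)^{n-i} \, (-f_i) = (-1)^{n+1} \sum_{i=1}^{n} (-1)^{i} f_i \cdot (-1)\cdots
\end{equation*}
— more cleanly, $(-1)^n F_n = -\sum_{i=1}^n (-1)^i f_i = \sum_{i=1}^n (-1)^{i+1} f_i$. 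Since each $f_i \geq 0$ and $f_i \geq 1$ exactly on $\unboundedset$, the sign-alternating sum $\sum_{i=1}^n (-1)^{i+1} f_i$ is bounded below by counting only the indices $i \in \unboundedset$ with $i \leq n$ and the favourable sign. Here the hypothesis that $\QPL$ is \emph{odd} is essential: the elements of $\unboundedset$ are $\QPL, 2\QPL+1, 3\QPL+2, \dots$, i.e. $j(\QPL+1)-1$, and when $\QPL+1$ is even all these indices $j(\QPL+1)-1$ are odd, so $(-1)^{i+1} = 1$ for every $i \in \unboundedset$. Hence every term $f_i$ with $i \in \unboundedset$, $i \leq n$ contributes non-negatively, while the remaining $f_i$ vanish, giving
\begin{equation*}
(-1)^n F_n = \sum_{i=1}^{n} (-1)^{i+1} f_i \geq \sum_{\substack{i \in \unboundedset \\ i \leq n}} f_i \geq \#\{ j \geq 1 \mid j(\QPL+1) - 1 \leq n\} = \floor{\tfrac{n+1}{\QPL+1}}_\Z,
\end{equation*}
using $f_i \geq 1$ on $\unboundedset$ for the last inequality and the definition of the floor for the count.

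With this lower bound on $(-1)^n F_n$ in hand, the two displayed inequalities of the Corollary follow by substitution into the formulas of Theorem~\ref{thm-genus1-zero-patterns}. For the convergents, $\nu(q_n) = 2 F_n$ immediately gives $(-1)^n \nu(q_n) = 2 (-1)^n F_n \geq 2 \floor{(n+1)/(\QPL+1)}_\Z$. For the partial quotients, $\nu(a_n) = 2(F_{n-2} + F_n)$; note $(-1)^{n-2} = (-1)^n$, so $(-1)^n \nu(a_n) = 2\big( (-1)^{n-2} F_{n-2} + (-1)^n F_n \big) \geq 2\big( \floor{(n-1)/(\QPL+1)}_\Z + \floor{(n+1)/(\QPL+1)}_\Z \big)$, applying the bound once with $n-2$ in place of $n$ (so $(n-2)+1 = n-1$) and once with $n$. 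That is exactly the claimed estimate. Unboundedness then follows because $\unboundedset$ is infinite, so the right-hand sides tend to $+\infty$ along the subsequence where the sign $(-1)^n$ is $+1$, and correspondingly $\nu$ tends to $-\infty$ along the complementary subsequence.

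I do not expect a serious obstacle here: the only subtle point is the parity bookkeeping — making sure that with $\QPL$ odd all elements of $\unboundedset$ land on indices of the same parity, so that their $f_i$-contributions to the alternating sum all carry the good sign and never cancel. I would state that observation as a short lemma ($j(\QPL+1) - 1$ is odd for all $j$ when $\QPL$ is odd, since $\QPL + 1$ is even) before running the induction. One should also remark, for completeness, that when $\QPL$ is even the signs alternate along $\unboundedset$ and this telescoping argument collapses, which is consistent with the discussion in the introduction that the even-$m$/odd-$m$ dichotomy (here phrased via $\QPL$) genuinely changes the behaviour. Finally, I would note that the bound is stated with $\geq$ rather than equality precisely because the $f_i$ on $\unboundedset$ need not equal $1$ and, for $\QPL$ odd, cannot be read off as easily as the locations of the unbounded complete quotients; this is the same cancellation phenomenon flagged in Remark~\ref{thm-intro-genus1-unbounded-gauss-norm}'s surrounding text.
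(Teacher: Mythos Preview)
Your proposal is correct and follows essentially the same approach as the paper: both unfold the recursion for $F_n$ into the alternating sum $(-1)^n F_n = \sum_{i} (-1)^{i+1} f_i$, observe that for odd $\QPL$ every $i \in \unboundedset$ is odd so all contributions carry the sign $+1$, count those indices as $\floor{(n+1)/(\QPL+1)}_\Z$, and then substitute into $\nu(q_n)=2F_n$ and $\nu(a_n)=2(F_{n-2}+F_n)$. Your write-up is a bit more explicit about the parity bookkeeping and the substitution step, but the argument is the same.
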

\begin{proof}
We can easily write \(F_n\) as an alternating sum of the \(f_n\):
\begin{equation*}
F_n = \sum_{j=0}^n (-1)^{n-j+1} \, f_j = \sum_{j \in \unboundedset, \atop j \leq n} (-1)^{n-j+1} \, f_j = (-1)^n \sum_{j \in \unboundedset, \atop j \leq n} (-1)^{j+1} \, f_j.
\end{equation*}
In case \(\QPL\) is odd, for every \(j \in \unboundedset\) we have \(j+1 = i \, (\QPL + 1)\) even. For \(j + 1 \leq n + 1\), we have \(1 \leq i \leq \frac{n+1}{\QPL+1}\). As every \(f_j \geq 1\), this implies \((-1)^n \, F_n \geq \floor{\frac{n+1}{\QPL+1}}_\Z\). With the formulas from the theorem, we get the desired estimates for the Gauss norm.
\end{proof}
\begin{rem}
For even \(\QPL\), it is completely unclear if the \(F_n\) could be bounded. In example calculations, we sometimes observe cancellation, but not always (see Table \ref{cfr-mod19-valuations-table}). As we currently have almost no control over the \(f_n\) for \(n \in \unboundedset\), any result in this direction would be quite surprising.

In some examples, we see almost periodic patterns in the values of the \(f_n\). Usually though, there comes a disturbance in these patterns at some point. We will revisit this issue briefly in Section \ref{sec:org9851fc8}.
\end{rem}

However, we can check that the valuations are negative for infinitely many \(n\) (giving another proof of Theorem \ref{thm-infinite-poles-number-field} for \(\deg D = 4\)):
\begin{cor}
\label{cor-infinite-poles-deg4}
Under the hypotheses of the Theorem \ref{thm-genus1-zero-patterns}, there are infinitely many \(n\) with \(\nu(a_n) < 0\), and infinitely many \(n\) with \(\nu(q_n) < 0\).
\end{cor}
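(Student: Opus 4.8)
The plan is to extract everything from Theorem~\ref{thm-genus1-zero-patterns}. It gives $\nu(q_n)=2F_n$ and $\nu(a_n)=2(F_{n-2}+F_n)$, where $F_0=0$, $F_n=-(F_{n-1}+f_n)$, and $f_n=\nu(\LC(\vartheta_{n-1}))\in\N_0$ satisfies $f_n\geq 1$ for $n\in\unboundedset=\{\,n_j:=j(\QPL+1)-1 \mid j\geq 1\,\}$ and $f_n=0$ otherwise (integrality of $\nu$ upgrades $f_n>0$ to $f_n\geq 1$). So the whole statement reduces to showing $F_n<0$ for infinitely many $n$, and $F_{n-2}+F_n<0$ for infinitely many $n$.

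The key observation is that between two consecutive indices of $\unboundedset$ all $f_n$ vanish, so $F$ merely changes sign at each intermediate step; concretely $F_{n_j}=(-1)^{\QPL+1}F_{n_{j-1}}-f_{n_j}$ for $j\geq 2$, while $F_{n_1}=-f_{n_1}<0$ since $F_n=0$ for $0\leq n<n_1$. First I would treat $\QPL$ odd: then $\QPL+1$ is even, $F_{n_j}=F_{n_{j-1}}-f_{n_j}$, hence $F_{n_j}=-\sum_{i=1}^{j}f_{n_i}\leq -j$, strictly negative and unbounded. Then I would treat $\QPL$ even: now $F_{n_j}+F_{n_{j-1}}=-f_{n_j}<0$, so these two cannot both be $\geq 0$; more precisely, whenever $F_{n_{j-1}}\geq 0$ one has $F_{n_j}=-F_{n_{j-1}}-f_{n_j}<0$, which rules out a tail on which all $F_{n_j}$ are nonnegative. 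Either way there are infinitely many $j$ with $F_{n_j}<0$, and $\nu(q_{n_j})=2F_{n_j}<0$ settles the claim for the $q_n$.

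For the $a_n$ I would exploit the sign alternation inside a block. Assuming $\QPL\geq 2$, for each $j$ with $F_{n_j}<0$ put $n=n_j+2$; since $n_j+1$ and $n_j+2$ lie strictly between $n_j$ and $n_{j+1}=n_j+\QPL+1$, we get $f_{n_j+1}=f_{n_j+2}=0$, hence $F_{n_j+1}=-F_{n_j}$ and $F_{n_j+2}=F_{n_j}$, so $\nu(a_{n_j+2})=2(F_{n_j}+F_{n_j+2})=4F_{n_j}<0$. These indices are distinct, giving infinitely many. The leftover case $\QPL=1$ (torsion order $2$) is a direct computation: $\unboundedset$ is then the set of odd positive integers, $F_{2k+1}=-\sum_{i=1}^{k+1}f_{2i-1}\leq -(k+1)<0$ and $F_{2k}=\sum_{i=1}^{k}f_{2i-1}\geq 0$, so $\nu(q_{2k+1})<0$ and $\nu(a_{2k+1})=2(F_{2k-1}+F_{2k+1})<0$ for all $k\geq 1$.

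The only genuinely delicate point is the even-$\QPL$ case: one cannot argue by a growth bound $\lvert F_n\rvert\to\infty$, because the $f_{n_j}$ enter the recursion $F_{n_j}=-F_{n_{j-1}}-f_{n_j}$ with alternating effective sign and may keep cancelling (this is exactly the phenomenon flagged in the remark after Corollary~\ref{cor-genus1-unbounded-gauss-norm}); one must instead use the qualitative ``no nonnegative tail'' argument. A secondary minor point is that deducing negativity of $\nu(a_n)$ from that of $F_{n_j}$ requires staying inside one alternating block, which forces the small case split at $\QPL=1$.
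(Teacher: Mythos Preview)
Your proof is correct and follows essentially the same approach as the paper: both extract the conclusion from the formulas $\nu(q_n)=2F_n$, $\nu(a_n)=2(F_{n-2}+F_n)$ of Theorem~\ref{thm-genus1-zero-patterns} together with the sign-alternation of $F$ on the stretches between consecutive elements of $\unboundedset$. The paper dispatches odd $\QPL$ by invoking Corollary~\ref{cor-genus1-unbounded-gauss-norm} and then, for even $\QPL\geq 2$, uses the adjacent pair $F_{n-1},F_n$ at each $n\in\unboundedset$ (noting $F_{n-1}+F_n=-f_n<0$) and argues by contradiction that one of $\nu(a_{n+1}),\nu(a_{n+2})$ is negative; you instead track the subsequence $(F_{n_j})_j$ directly and then exhibit the explicit index $n_j+2$ with $\nu(a_{n_j+2})=4F_{n_j}<0$, which is marginally more constructive but forces a separate treatment of $\QPL=1$ (which the paper avoids since odd $\QPL$ is already covered by the preceding corollary).
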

\begin{proof}
The previous Corollary \ref{cor-genus1-unbounded-gauss-norm} gives a stronger statement when the quasi-period length \(\QPL\) of \(\CF(\gamma)\) is odd, so we only need to check the case where \(\QPL\) is even. In particular, this means \(\QPL \geq 2\).

So if we take \(n \in \unboundedset\), this implies (from the structure of \(\unboundedset\)) that \(f_n > 0\) but \(f_{n+1} = f_{n+2} = 0\), hence
\begin{equation*}
F_n = -(F_{n-1} + f_n), \quad F_{n+1} = - F_n = F_{n-1} - f_n, \quad F_{n+2} = - F_{n+1} = F_n.
\end{equation*}
If both \(F_{n-1} \geq 0\) and \(F_{n} \geq 0\), then also \(F_{n-1} + F_{n} = - f_n \geq 0\); but that contradicts our choice of \(n\). So one of \(\nu(q_{n-1}) < 0\) or \(\nu(q_{n}) < 0\) must be satisfied.

Similarly, if both \(F_{n-1} + F_{n+1} = 2 \, F_{n-1} - f_n \geq 0\) and \(F_{n} + F_{n+2} = 2 \, F_n = -2 \, F_{n-1} - 2 \, f_n\geq 0\), then also \(F_{n-1} + F_{n+1} + F_{n} + F_{n+2} = -3 \, f_n \geq 0\); this is again a contradiction. Hence at least one of \(\nu(a_{n+1}) < 0\) or \(\nu(a_{n+2}) < 0\) is satisfied.

As \(\unboundedset\) is infinite, we find infinitely many of these partial quotients and convergents.
\end{proof}

\begin{rem}
\label{red-deg4-coprime-convergents}
For all \(n\), we have \(\Redn{p_n}\) and \(\Redn{q_n}\) coprime because for single and two element fibres we observed that all the \(h_n\) must be constant (see Propositions \ref{prop-single-element-fibre-analysis} and \ref{prop-two-element-fibre-analysis}).
\end{rem}

We begin the proof of Theorem \ref{thm-genus1-zero-patterns} by analysing the fibres of \(\lambda\). For the rest of this section, assume the hypotheses on \(D\) from the Theorem are satisfied.
\begin{prop}
\label{bad-reduction-periodic-deg4-fibres}
The fibres of \(\lambda\) have at most \(2\) elements. The fibres with \(2\) elements are given by
\begin{equation*}
\inv\lambda(j \, \QPL - 1) = \{ j (\QPL + 1) - 2, j (\QPL + 1) - 1 \}, \quad j \geq 1,
\end{equation*}
all other fibres have just one element.
\end{prop}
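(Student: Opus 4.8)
\textbf{Proof proposal for Proposition \ref{bad-reduction-periodic-deg4-fibres}.}

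The plan is to translate everything into statements about the degrees of the partial quotients $\deg a_n$ and $\deg c_m$, using the relation \eqref{eq-lambda-degree-sum} from Corollary \ref{cor-lambda-degree-sum}, and then to exploit the very rigid degree constraints available when $\deg D = 4$. Since $\deg D = 4$ means $d = 2$, Corollary \ref{cor-pq-degree-periodicity} forces $\deg a_n = 1$ for every $n \geq 1$, and likewise $\deg c_m = 1$ for all $m \geq 1$ \emph{unless} $\QPL \mid m$, in which case $\deg c_m = 2$ (recall $\CF(\gamma)$ has quasi-period $\QPL$, and the quasi-period of the $\sigma$-reduced series $\Red A + \sqrt{\Red D}$ starts at index $1$, so the partial quotients of degree $2$ occur exactly at the positive multiples of $\QPL$). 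Also $\deg a_0 = \deg c_0 = 2$. So the whole problem reduces to bookkeeping: a fibre $\inv\lambda(m)$ with $l+1$ elements $\{n,\dots,n+l\}$ satisfies, by Corollary \ref{cor-lambda-degree-sum}, $\deg c_{m+1} = \deg a_{n+1} + \dots + \deg a_{n+l+1}$.

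First I would argue the fibres have at most two elements. Suppose $\inv\lambda(m) = \{n, n+1, n+2, \dots\}$ has at least three elements, with all indices $\geq 1$ (the fibre $\inv\lambda(0) = \{0\}$ is a single element since $q_0 = v_0 = 1$ forces $\lambda(0) = 0$, and no later index can map to $0$ by monotonicity together with $\deg q_{n'} > \deg q_0$ for $n' \geq 1$, cf. Remark \ref{best-approx-reduction-strictly-increasing-q}). Then $\deg c_{m+1} = \deg a_{n+1} + \deg a_{n+2} + \deg a_{n+3} + \dots \geq 1 + 1 + 1 = 3$, since each $\deg a_{n+i} \geq 1$. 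But $\deg c_{m+1} \leq 2$, a contradiction. Hence every fibre has one or two elements. Next, a fibre $\inv\lambda(m) = \{n, n+1\}$ with two elements (so both $n, n+1 \geq 1$) satisfies $\deg c_{m+1} = \deg a_{n+1} + \deg a_{n+2} = 1 + 1 = 2$, which by the degree description of the $c_j$ forces $\QPL \mid (m+1)$, i.e. $m + 1 = j\QPL$, i.e. $m = j\QPL - 1$ for some $j \geq 1$. Conversely, every $m$ with $\deg c_{m+1} = 2$ must come from a multi-element fibre: if $\inv\lambda(m) = \{n\}$ were a single element, Corollary \ref{cor-lambda-degree-sum} would give $\deg c_{m+1} = \deg a_{n+1} = 1$ (as $n+1 \geq 1$, so $n \geq 0$; the case $n = 0$ gives $\deg c_{m+1} = \deg a_1 = 1$ too), contradiction. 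So the two-element fibres are \emph{exactly} those $\inv\lambda(j\QPL - 1)$, $j \geq 1$, and every other fibre is a singleton.

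Finally I would pin down the actual index sets. Since $\lambda$ is surjective (Theorem \ref{convergent-reduction-surjective}) and non-decreasing (Proposition \ref{reduced-convergents-increasing}) with fibres of size $1$ or $2$, and since the two-element fibres sit precisely over $m = \QPL - 1, 2\QPL - 1, 3\QPL - 1, \dots$, I count indices: from $0$ up to and including the minimal element of $\inv\lambda(j\QPL - 1)$, the number of fibres $\inv\lambda(0), \dots, \inv\lambda(j\QPL - 1)$ is $j\QPL$ of them, of which exactly $j$ (namely those over $\QPL-1, \dots, j\QPL-1$) have two elements and the other $j\QPL - j = j(\QPL-1)$ have one; I must be slightly careful whether $\inv\lambda(j\QPL-1)$ is fully counted or just its first element. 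Counting the first elements of all fibres $\inv\lambda(0),\dots,\inv\lambda(j\QPL-2)$ contributes $(j\QPL - 1) - (j-1) \cdot 0$ \dots rather, summing sizes: $\sum_{i=0}^{j\QPL - 2}|\inv\lambda(i)| = (j\QPL - 1) + (j-1)$ since among $i = 0, \dots, j\QPL - 2$ the two-element ones are $i = \QPL - 1, \dots, (j-1)\QPL - 1$, that is $j - 1$ of them. Hence the smallest index in $\inv\lambda(j\QPL - 1)$ is $(j\QPL - 1) + (j - 1) = j(\QPL + 1) - 2$, and since that fibre has two elements, it equals $\{j(\QPL+1) - 2,\ j(\QPL+1) - 1\}$, as claimed. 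I expect the main obstacle to be exactly this last off-by-one counting argument — getting the running total of fibre sizes right and correctly handling the boundary fibre; the degree bookkeeping in the first two paragraphs is routine given Corollaries \ref{cor-pq-degree-periodicity} and \ref{cor-lambda-degree-sum}.
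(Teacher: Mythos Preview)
Your proof is correct and follows essentially the same approach as the paper: both arguments reduce to the degree sequences $\deg a_n = 2,1,1,1,\dots$ and $\deg c_m = 2,1,\dots,1,2,1,\dots$ via Corollary \ref{cor-lambda-degree-sum}, bound fibre sizes by $2$, identify the two-element fibres as those with $\deg c_{m+1}=2$, and then count to locate them; the paper does the counting recursively (``the minimal element increases by $\QPL+1$ each time'') whereas you sum fibre sizes explicitly, but this is the same computation.

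One small remark: your parenthetical assertion that $\inv\lambda(0)=\{0\}$ is actually false when $\QPL=1$ (then $\deg c_1=2$ and $\inv\lambda(0)=\{0,1\}$, consistent with the stated formula at $j=1$). Fortunately this does not affect your argument, because the degree sum in Corollary \ref{cor-lambda-degree-sum} begins at $a_{n+1}$, so even for $n=0$ every term has degree $1$ and your ``at most two elements'' bound goes through uniformly; the separate treatment of $m=0$ is unnecessary and can simply be dropped. Your final counting already handles $\QPL=1$ correctly.
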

\begin{proof}
The degrees of the \(a_n\) are given by the sequence \(2,1,1,1,\dots\), while the degrees of the \(c_n\) are given by the sequence \(2, 1, \dots, 1, 2, 1, \dots, 1,2,1, \dots\) with precisely \(\QPL-1\)  ``1'' between the ``2'' (the quasi-period is determined by the degrees of the partial quotients, see Corollary \ref{maximal-degree-implies-quasi-period}). Recall that for a fibre \(\{n, n+1, \dots, n+l \} = \inv\lambda(m)\) we always have \(\deg a_{n+1} + \dots + \deg a_{n+l+1} = \deg c_{m+1}\) (see Corollary \ref{cor-lambda-degree-sum}). So clearly, we can have at most two elements in a fibre.

The first fibre with two elements, due to \(\deg c_{\QPL} = 2\) by the properties of the quasi-period, is
\begin{equation*}
\inv\lambda(\QPL - 1) = \{ \QPL - 1, \QPL \}.
\end{equation*}
In fact, we generally have \(\deg c_{j \, \QPL} = 2\). In between, there are always \(\QPL - 1\) fibres with a single element, so the minimal element increases by \(\QPL + 1\) each time:
\begin{equation*}
\inv\lambda(j \, \QPL - 1) = \{ \QPL - 1 + (j-1) \, (\QPL + 1), \QPL + (j-1) \, (\QPL + 1) \} = \{ j \, (\QPL +1) - 2, j \, (\QPL + 1) -1 \}.
\end{equation*}
\end{proof}

\begin{proof}[Proof of Theorem \ref{thm-genus1-zero-patterns}]
With our analysis of fibres with one or two elements (Proposition \ref{prop-single-element-fibre-analysis} and \ref{prop-two-element-fibre-analysis}), Proposition \ref{bad-reduction-periodic-deg4-fibres} above implies directly \eqref{deg4-unboundedset-eq}: the unbounded complete quotients come only from the minimal element of the two element fibres (index of course shifted by \(1\)). These results also show that \(\io{\Red{\vartheta_n}} > \io{\vartheta_n}\), i.e. \(f_{n+1} = \nu(\LC(\vartheta_n)) > 0\),  happens just for the minimal element of the two element fibres. As the definition of \(f_n\) corrects for the index shift, it is clear that \(f_n > 0\) happens precisely if \(\alpha_n\) is unbounded.

\medskip

It remains to check the valuation formulas, for which we use a complete induction. Recall that \(\nu(g_n) = \nu(q_n)\).

For \(n = 0\), by our assumption on \(D\) we have \(\nu(\LC(a_0)) = \nu(a_0) = 0\). As \(q_0 = 1\), the valuation formulas are clearly satisfied.

Actually, we should also check \(n = 1\). But the careful reader will find that we use the induction hypothesis for ``\(n-2\)'' only for \(\nu(q_{n-2})\). So we can check \(n = -1\) instead of \(n = 1\).

By convention, we have \(p_{-1} =1, q_{-1} = 0\), so \(\vartheta_{-1} = 1\). So \(\nu(q_{-1}) = \infty\) looks like a problem, but in fact we only need \(\nu(g_{-1}) = 0\). Recall that \(g_{-1} = 1\) is the normalisation factor of the ``canonical convergent'' \((p_{-1}, q_{-1}) = (1, 0)\).

\smallskip

For the induction step, we first check the single element fibre case:

Suppose \(\{ n \} = \inv\lambda(m)\), so we refer to Proposition \ref{prop-single-element-fibre-analysis}. In this case, \(f_n = f_{n+1} = 0\). Hence
\begin{equation*}
\nu(a_{n+1}) = \nu(\LC(a_{n+1})) = \nu(\alpha_{n+1}) \\= \nu(g_{n-1}) - \nu(g_{n}) = 2 (F_{n-1} - F_n) = 2 (F_{n-1} + F_{n+1})
\end{equation*}
which covers \(a_n\) and its leading coefficient.

We also get
\begin{equation*}
\nu(\LC(q_{n+1})) = \nu(q_{n+1}) = \nu(g_{n-1}) = 2 \, F_{n-1} = 2 \, F_{n+1}.
\end{equation*}

\smallskip

Next we verify the valuation formulas for the two element fibre: for \(\{ n, n+1 \} = \inv \lambda(m)\), we use Proposition \ref{prop-two-element-fibre-analysis}. As observed above, \(f_n = f_{n+2} = 0\), but \(f_{n+1} > 0\). We already computed in \eqref{eq-2elem-pq1}
\begin{equation*}
\nu(a_{n+1}) = \nu(g_{n-1}) - \nu(g_n) -2 \, f_{n+1} = 2 (F_{n-1} - F_n - f_{n+1}) = 2 \, (F_{n-1} + F_{n+1}),
\end{equation*}
and also \(\nu(\LC(a_{n+1})) = \nu(a_{n+1}) + f_{n+1}\) as desired. For the convergent, we had
\begin{equation*}
\nu(q_{n+1}) = \nu(g_{n-1}) - 2 f_{n+1} = 2 (F_{n-1} - f_{n+1}) = 2 \, F_{n+1}.
\end{equation*}
Moreover, using \(F_n = - F_{n-1}\), we find
\begin{multline*}
\nu(\LC(q_{n+1})) = \nu(\LC(a_{n+1})) + \nu(\LC(q_{n})) \\ = 2 \, (F_{n-1} + F_{n+1}) + f_{n+1} + 2 \, F_n + f_n 
= 2 \, F_{n+1} + f_{n+1}.
\end{multline*}

For the second element of the fibre, we have (analogous to the calculation for the single element fibre, but using only \(f_{n+2} = 0\))
\begin{equation*}
\nu(a_{n+2}) = \nu(g_n) - \nu(g_{n+1}) = 2 (F_{n} + F_{n+2})
\end{equation*}
and \(\nu(\LC(a_{n+2})) = \nu(a_{n+2}) + f_{n+1}\), as desired. For the convergent, we have
\begin{equation*}
\nu(\LC(q_{n+2})) = \nu(q_{n+2}) = \nu(q_n) + 2 \, f_{n+1} = 2 \, (F_n + f_{n+1}) = -2 \, F_{n+1} = 2 \, F_{n+2}.
\end{equation*}

This concludes the proof of Theorem \ref{thm-genus1-zero-patterns}.
\end{proof}

\enlargethispage{1cm}

\pagebreak

To visualise these formulas better, have a look at the three following tables, with horizontal lines directly before the unbounded complete quotients:

\begin{center}
\begin{tabular}{lrlllrll}
\(n\) & \(\deg a_n\) & \(\deg q_n\) & \(\io{\vartheta_n}\) & \(m\) & \(\deg c_m\) & \(\deg v_m\) & \(\io{\Red{\vartheta_n}}\)\\
\hline
0 & 2 & 0 & 1 & 0 & 2 & 0 & 1\\
1 & 1 & 1 & 2 & 1 & 1 & 1 & 2\\
\(\vdots\) &  &  &  &  &  &  & \\
\(l-1\) & 1 & \(l-1\) & \(l\) & \(l-1\) & 1 & \(l-1\) & \(l+1\)\\
\hline
\(l\) & 1 & \(l\) & \(l+1\) & \(l-1\) &  & \(l-1\) & \(l+1\)\\
\(l+1\) & 1 & \(l+1\) & \(l+2\) & \(l\) & 2 & \(l+1\) & \(l+2\)\\
\(l+2\) & 1 & \(l+2\) & \(l+3\) & \(l+1\) & 1 & \(l+2\) & \(l+3\)\\
\(\vdots\) &  &  &  &  &  &  & \\
\(2l\) & 1 & \(2l\) & \(2l+1\) & \(2l-1\) & 1 & \(2l\) & \(2l+2\)\\
\hline
\(2l+1\) & 1 & \(2l+1\) & \(2l+2\) & \(2l-1\) &  & \(2l\) & \(2l+2\)\\
\(2l+2\) & 1 & \(2l+2\) & \(2l+3\) & \(2l\) & 2 & \(2l+2\) & \(2l+3\)\\
\(2l+3\) & 1 & \(2l+3\) & \(2l+4\) & \(2l+1\) & 1 & \(2l+3\) & \(2l+4\)\\
\(\vdots\) &  &  &  &  &  &  & \\
\end{tabular}
\end{center}

For simplicity, we assume here that all \(f_n \leq 1\). For \(l\) odd, we get
\begin{center}
\begin{tabular}{lrrrrrr}
\(n\) & \(f_n\) & \(\nu(a_n)\) & \(\nu(\alpha_n)\) & \(\nu(\LC(a_n))\) & \(\nu(q_n)\) & \(\nu(\LC(q_n))\)\\
\hline
0 & 0 & 0 & 0 & 0 & 0 & 0\\
1 & 0 & 0 & 0 & 0 & 0 & 0\\
\(\vdots\) &  &  &  &  & 0 & 0\\
\(l-1\) & 0 & 0 & 0 & 0 & 0 & 0\\
\hline
\(l\) & 1 & -2 & \(-\infty\) & -1 & -2 & -1\\
\(l+1\) & 0 & 2 & 3 & 2 & 2 & 2\\
\(l+2\) & 0 & -4 & -4 & -4 & -2 & -2\\
\(l+3\) & 0 & 4 & 4 & 4 & 2 & 2\\
\(\vdots\) &  &  &  &  &  & \\
\(2l-1\) & 0 & -4 & -4 & -4 & -2 & -2\\
\(2l\) & 0 & 4 & 4 & 4 & 2 & 2\\
\hline
\(2l+1\) & 1 & -6 & \(-\infty\) & -5 & -4 & -3\\
\(2l+2\) & 0 & 6 & 6 & 7 & 4 & 5\\
\(2l+3\) & 0 & -8 & -8 & -8 & -4 & -4\\
\(2l+4\) & 0 & 8 & 8 & 8 & 4 & 4\\
\(\vdots\) &  &  &  &  &  & \\
\end{tabular}
\end{center}

\pagebreak
But for \(l\) even, we get
\begin{center}
\begin{tabular}{lrrrrrr}
\(n\) & \(f_n\) & \(\nu(a_n)\) & \(\nu(\alpha_n)\) & \(\nu(\LC(a_n))\) & \(\nu(q_n)\) & \(\nu(\LC(q_n))\)\\
\hline
0 & 0 & 0 & 0 & 0 & 0 & 0\\
1 & 0 & 0 & 0 & 0 & 0 & 0\\
\(\vdots\) &  &  &  &  & 0 & 0\\
\(l-1\) & 0 & 0 & 0 & 0 & 0 & 0\\
\hline
\(l\) & 1 & -2 & \(-\infty\) & -1 & -2 & -1\\
\(l+1\) & 0 & 2 & 3 & 2 & 2 & 2\\
\(l+2\) & 0 & -4 & -4 & -4 & -2 & -2\\
\(l+3\) & 0 & 4 & 4 & 4 & 2 & 2\\
\(\vdots\) &  &  &  &  &  & \\
\(2l-1\) & 0 & 4 & 4 & 4 & 2 & 2\\
\(2l\) & 0 & -4 & -4 & -4 & -2 & -2\\
\hline
\(2l+1\) & 1 & 2 & \(-\infty\) & 3 & 0 & 1\\
\(2l+2\) & 0 & -2 & -2 & -1 & 0 & 0\\
\(2l+3\) & 0 & -8 & -8 & -8 & 0 & 0\\
\(2l+4\) & 0 & 8 & 8 & 8 & 0 & 0\\
\(\vdots\) &  &  &  &  &  & \\
\end{tabular}
\end{center}

See also the tables for Example \ref{ex-cfp1-zero-pattern-deg4} in Section \ref{sec:orgec32865}

\section{Reduction of abelian varieties}
\label{sec:org376166a}
To understand how the quasi-period length may change under reduction, and hence to understand bad reduction to a periodic continued fraction, it serves to study reduction of torsion points on the Jacobian of the (hyper)elliptic curve.
\subsection{Reduction of curve and its Jacobian}
\label{sec:orgd3ba0b6}
Our first step is to define a model of \(\CC\) over \(\O\). Here we can mostly retrace the steps from Section \ref{sec:orgf4393ef}: instead of over the field \(\K\), we are working over the discrete valuation ring \(\O\). Note that \(\Spec \O\) is a local affine Dedekind scheme of dimension \(1\) with just two points: the generic point and a closed point corresponding to \(\mm\).\footnote{Instead of \(\Spec \O\), we could also work with any Dedekind scheme, for example \(\Spec \Z\) if \(D \in \Z[X]\). We stick to the local case for simplicity and consistency of notation.}

For this section, we assume that both \(D\) and \(\Red{D}\) are square-free to ensure that the curve \(\CC\) (and its Jacobian) has good reduction at \(\nu\).

Gluing together
\begin{equation*}
\Spec \O[X,Y]/\spann{Y^2 - D(X)} \text{ and } \Spec \O[U,V]/\spann{V^2 - D^\flat(U)}
\end{equation*}
via the morphisms given by \(X \, U = 1\) and \(U^{g+1} \, Y = V\) (respectively \(X^{g+1} \, V = Y\)) we get a scheme \(\XX\) of dimension \(2\) which is our model of \(\CC\) over \(\O\). Note that the coefficients of \(D^\flat\) are those of \(D\) in reverse order, so \(D^\flat \in \O[U]\).

Think of the surface \(\XX\) as containing two curves: the fibre \(\XX_0\) over the generic point of \(\Spec \O\) which is essentially our curve \(\CC\), and the fibre \(\XX_\mm\) over the closed point of \(\Spec \O\) which is the curve \(\CCred\) defined over \(k\), with \(D\) replaced by \(\Red{D}\).

\begin{prop}
The fibered surface \(\XX \to \Spec \O\) is normal, regular, projective and flat, in other words it is a normal arithmetic surface.
\end{prop}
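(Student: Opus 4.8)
The plan is to verify the four listed properties in turn, reducing each to facts already established for $\CC$ over a field together with elementary commutative algebra over the discrete valuation ring $\O$. \emph{Flatness and integrality:} the two defining affine charts $\Spec \O[X,Y]/\spann{Y^2-D(X)}$ and $\Spec \O[U,V]/\spann{V^2-D^\flat(U)}$ are free of rank $2$ over $\O[X]$ and $\O[U]$ respectively (their defining equations being monic in $Y$ and $V$), hence free, hence flat over $\O$; flatness being local, $\XX \to \Spec \O$ is flat. Since $\O$ is a PID, $\O[X]$ is a UFD, the polynomial $Y^2-D(X)$ is primitive, and $D$ is not a square in $K(X)$ (being a non-square polynomial), so $Y^2-D(X)$ is irreducible in $\O[X][Y]$ by Gauss's lemma; each chart is therefore an integral domain, and the two charts glue along a common dense open, so $\XX$ is integral. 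As $\O[X,Y]/\spann{Y^2-D(X)}$ is module-finite over $\O[X]$, whose Krull dimension is $\dim \O + 1 = 2$, the scheme $\XX$ has dimension $2$.

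\emph{Regularity} I would check at closed points. The generic fibre $\XX_0 \cong \CC$ over $K$ is regular by Proposition~\ref{projective-curve-is-smooth-and-normal} (using that $D$ is square-free), so all its points are regular points of $\XX$. For the special fibre $\XX_\mm \cong \CCred$: because $\Red{D}$ is square-free, $\Char k \neq 2$, and $\deg \Red{D} = 2d = \deg D$ (as $\LC(D) \in \units \O$), the Jacobian-criterion argument of Proposition~\ref{affine-curve-is-smooth-and-normal}, performed over $k$, shows $\CCred$ is smooth, hence regular. It then remains to treat a closed point $x \in \XX_\mm$ as a point of $\XX$: by flatness $\uni$ is a nonzerodivisor in the Noetherian local ring $\mathcal{O}_{\XX,x}$, and $\mathcal{O}_{\XX,x}/\uni \cong \mathcal{O}_{\XX_{\mm},x}$ is regular of dimension $1$, so lifting a generator of its maximal ideal and adjoining $\uni$ yields a regular system of parameters; thus $\mathcal{O}_{\XX,x}$ is regular of dimension $2$. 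This last implication is the standard regularity criterion for arithmetic surfaces (see \cite{liu-2002-algebraic-geometry-arithmetic}), so $\XX$ is regular.

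\emph{Normality and projectivity.} A regular local ring is an integrally closed domain, so a regular scheme is normal; hence $\XX$ is normal. For projectivity, the same gluing construction as in Section~\ref{sec:orgf4393ef}, now carried out over $\O$, produces a finite morphism $\XX \to \closure{\CCa}_\O \subset \P^2_\O$; a finite morphism is projective, the composite $\closure{\CCa}_\O \hookrightarrow \P^2_\O \to \Spec \O$ is projective, and a composite of projective morphisms over the affine Noetherian base $\Spec \O$ is projective, so $\XX \to \Spec \O$ is projective (in particular proper). Combining the four points, $\XX \to \Spec \O$ is a normal arithmetic surface.

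The only step that is not pure bookkeeping is the regularity of $\XX$ at the points of the special fibre; everything else follows either immediately from the corresponding statement for $\CC$ over a field or from flatness over a DVR. The point to get right there is that smoothness of $\XX_\mm$ over $k$ together with flatness of $\XX$ over $\O$ forces $\XX$ itself to be regular --- that is, that passing from the special fibre to the total space preserves regularity --- which is exactly the cited criterion and is the one place where more than a routine verification is needed.
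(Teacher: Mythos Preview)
Your proof is correct and more self-contained than the paper's. The paper argues each property by citing results from \cite{liu-2002-algebraic-geometry-arithmetic}: flatness from surjectivity over a Dedekind base (Prop.~4.3.9), properness from properness of the generic fibre together with geometric connectedness of the fibres (Remark~3.3.28), and then projectivity from a general criterion for arithmetic surfaces (Remark~9.3.5); for normality and regularity it simply says these are local and follow from $\CC$ and $\CCred$ being smooth. Your route is different: flatness via explicit freeness of the charts as $\O[X]$-modules, regularity by lifting a regular system of parameters from the special fibre through the nonzerodivisor $\uni$, normality as a corollary of regularity, and projectivity via a finite morphism to a closed subscheme of $\P^2_\O$.

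Two remarks. First, your regularity argument is in fact more careful than the paper's: smoothness (hence regularity) of the two fibres does not by itself give regularity of the stalks of $\XX$ at special-fibre points, and the passage you spell out (regular modulo $\uni$ plus $\uni$ a nonzerodivisor) is exactly the missing step the paper leaves implicit. Second, for projectivity you could equally well use the degree-$2$ map $\XX \to \P^1_\O$ (the relative version of Proposition~\ref{curve-is-hyper-elliptic}), which is visibly finite on the affine charts and avoids having to think about the singular closure $\overline{\CCa}_\O$; either route works, but that one is slightly cleaner.
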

\begin{proof}
Normal and regular are local conditions and may be checked at the stalks. Hence this follows from \(\CC\) and \(\CCred\) being normal and smooth (and thus regular), see Propositions \ref{affine-curve-is-smooth-and-normal} and \ref{projective-curve-is-smooth-and-normal}.

Flatness follows from surjectivity via Proposition 4.3.9 of \cite{liu-2002-algebraic-geometry-arithmetic}: clearly the generic point of \(\XX\) maps to the generic point of \(\Spec \O\).

As the fibre \(\XX_0\) is proper, and both fibres are geometrically connected, Remark 3.3.28 of \cite{liu-2002-algebraic-geometry-arithmetic} with surjectivity implies that \(\XX \to \Spec \O\) is proper.

Then we can apply the second part of Remark 9.3.5 in \cite{liu-2002-algebraic-geometry-arithmetic} to obtain that \(\XX \to \Spec \O\) is projective.
\end{proof}

In order to properly define the reduction map, we need our field \(K\) to be Henselian, i.e. complete with respect to the valuation \(\nu\). See also Section 10.1.3 in \cite{liu-2002-algebraic-geometry-arithmetic} for further details.

\begin{defi}
Let \(\hat K\) the completion of \(K\), and \(\hat \O = \{ x \in \hat K \mid \nu(x) \geq 0 \}\). This remains a discrete valuation ring with residue field still \(k\). We now consider \(\XX\) as a scheme over \(\hat \O\).

For a closed point \(P \in \XX_0 = \CC\), the Zariski closure \(\closure{\{P\}}\) in \(\XX\) is irreducible and has a unique closed point, the point of \(\closure{\{P\}} \cap \XX_\mm\). This defines a reduction map \(\Redm : \CC(\hat K) \to \CC(k)\) which extends linearly to Weil divisors.
\end{defi}

\begin{rem}
For a point \(P = (x, y) \in \CCa\) we easily see that \(\nu(x) \geq 0\) implies \(\nu(y) \geq 0\), so in this case we set \(\Red{P} = (\Red{x}, \Red{y})\). Otherwise \(\nu(x) < 0\), but then write \(P = (u, v) \in \CCinf\) where now \(\nu(u) \geq 0\), so we may set \(\Red{P} = (\Red{u}, \Red{v})\). This also covers \(O_\pm\), clearly \(\Red{O_+} = O_+\) and \(\Red{O_-} = O_-\).
\end{rem}

\begin{rem}
Actually, for rational points \(P \in \CC(K)\) we do not have to worry about \(K\) being Henselian because the minimal polynomial of \(x\) remains irreducible after reduction.
\end{rem}

\begin{rem}
The reduction map extends to the algebraic closure \(\Kbb\) of \(\hat K\). Namely, for \(x\) algebraic over \(\hat K\), we define the valuation
\begin{equation*}
\nu(x) = \ifrac{\nu\left(\Nm_{\hat K(x)/\hat K}(x)\right)}{[\hat K(x):\hat K]}.
\end{equation*}
In fact the integral closure \(\Obb\) of \(\hat \O\) in \(\K\) is still a valuation ring (but no longer discrete).
\end{rem}

\pagebreak
We also get a reduction map for the Jacobian: it is defined as a quotient of divisors of degree \(0\) modulo principal divisors, and the latter are preserved by the reduction map:
\begin{prop}
\label{reduction-principal-divisor}
Let \(\di{D}\) a principal divisor over \(\CC(\Kbb)\). Then also the divisor \(\Red{\di{D}}\) over \(\CCred(\closure k)\) is principal.
\end{prop}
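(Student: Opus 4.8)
The plan is to use the arithmetic surface $\XX$ just constructed as a bridge between $\CC$ and $\CCred$. Write $\di{D} = \Div(f)$ with $f \in \units{\Kbb(\CC)}$; after adjusting $f$ by a constant it is defined over a suitable finite subextension $L$ of $\Kbb/\hat{K}$, and I would then replace $\XX$ by the base change $\XX_L = \XX \times_{\hat{\O}} \O_L$, where $\O_L$ is the valuation ring of $L$ (a complete discrete valuation ring with residue field $k_L \supseteq k$). Thus $f$ becomes a rational function on the \emph{surface} $\XX_L$, and I would decompose its divisor as $\Div_{\XX_L}(f) = \di{H} + \di{V}$, where $\di{H}$ is the horizontal part — which is precisely the Zariski closure of $\di{D}$, whose restriction to the special fibre defines $\Red{\di{D}}$ — and $\di{V}$ is vertical. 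Because $D$ and $\Red{D}$ are square-free, $\CCred$ is smooth (Proposition \ref{affine-curve-is-smooth-and-normal} over $k$), hence integral and, being geometrically connected, geometrically integral; so the special fibre of $\XX_L$ is the single integral curve $\CCred \times_k k_L$ and $\di{V} = n \cdot (\XX_L)_{\mm}$ for one integer $n$. Replacing $f$ by $f/\uni^{\,n}$ — which alters neither $\di{D}$ nor $\di{H}$, $\uni$ being constant on $\CC$ — I may assume $\di{V} = 0$, i.e.\ that $f$ is a unit in the local ring of $\XX_L$ at the generic point of its special fibre.

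Under this normalisation $f$ has a well-defined nonzero image $\bar{f} \in \units{k_L(\CCred)}$, and the heart of the matter is the identity
\[
\Red{\di{D}} = \Div_{\CCred}(\bar{f}),
\]
which displays $\Red{\di{D}}$ as principal. I would prove it by a local computation at each closed point $y$ of the special fibre: there $\XX_L$ is regular (automatically on the locus where $\XX \to \Spec\hat{\O}$ is smooth, and on the curve $\CC$ regularity of $\XX_L$ in codimension one is all that is used), $\uni$ is part of a regular system of parameters cutting out the special fibre, and the multiplicity of $y$ in the restricted divisor $\di{H} \cdot (\XX_L)_{\mm}$ equals $\operatorname{length}\bigl(\O_{\XX_L, y}/(f, \uni)\bigr) = \ord_y(\bar{f})$, i.e.\ the multiplicity of $y$ in $\Div_{\CCred}(\bar{f})$. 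The remaining point is to identify this intersection-theoretic restriction with the pointwise reduction map of the statement: for a closed point $P$ of $\CC$ the closure $\closure{\{P\}} \to \Spec\O_L$ is finite and flat, so it meets the special fibre exactly in $\Red{P}$ with the multiplicity the intersection product records — which is exactly what the definition of $\Redm$ via Zariski closures was set up to guarantee. Finally, descent from $L$ back to $\Kbb$ is free: reduction is compatible with finite extensions, so the divisor $\Red{\di{D}}$ computed over $L$ is the one over $\CCred(\closure{k})$, and it is principal there.

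The main obstacle is the bookkeeping in that last identification — matching the naive, point-by-point reduction of $\di{D}$ with the scheme-theoretic restriction $\di{H} \cdot (\XX_L)_{\mm}$, tracking multiplicities (and, if one insists on working with closed points rather than $\closure{k}$-points, residue-field degrees), and ensuring the model is regular enough after base change. None of this is deep; it is the standard behaviour of the specialization map on divisor classes of an arithmetic surface (compare the treatment of $\operatorname{Pic}$ in \cite{liu-2002-algebraic-geometry-arithmetic}), but it is the only step where genuine care is needed. As a shorter but less self-contained alternative I would note that, $\CC$ having good reduction, the Néron model of $\Jac$ is an abelian scheme and the reduction map $\Jac(\Kbb) \to \Jac(\CCred)(\closure{k})$ is a group homomorphism compatible with specialization of divisor classes, whence $\di{D} \sim 0$ on $\CC$ forces $\Red{\di{D}} \sim 0$ on $\CCred$ at once; I would present the arithmetic-surface argument as the proof and mention the Jacobian argument only as a remark.
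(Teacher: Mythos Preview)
Your proposal is correct and follows essentially the same route as the paper: pass to a finite extension so that $f$ and the relevant points are defined there, view $f$ as a rational function on the arithmetic surface, normalise by a power of the uniformiser to kill the vertical part of its divisor, and then identify the restriction of the horizontal part to the special fibre with $\Div(\bar f)$ via an intersection-multiplicity computation (the paper cites Corollary 9.1.32 in \cite{liu-2002-algebraic-geometry-arithmetic} for the step $i_P(\closure{\{P\}},\XX_\mm)=1$). The only cosmetic difference is that the paper carries out the normalisation concretely by writing $f=g/h$ with $g,h\in\hat\O[X,Y]$ and $\nu(g)=\nu(h)=0$, whereas you phrase it as dividing by $\uni^{\,n}$; your N\'eron-model alternative is not in the paper.
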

\begin{proof}
Let 
\begin{equation*}
\di{D} = \Div f = \divsum n_P \cdot \pd{P},
\end{equation*}
a principal divisor over \(\CC(\Kbb)\) with \(f \in \Kbb(X, Y)\). As only finitely many \(n_P \neq 0\), we may assume all \(P \in \CC(\hat K)\), and \(f \in \hat K(X, Y)\) by passing to a finite extension of \(\hat K\).

Of course we may write \(f = g/h\) with \(g, h \in \hat \O[X, Y]\), and multiply with a suitable power of the uniformiser \(\pi\) such that \(\nu(g) = \nu(h) = 0\) (recall that \(\hat \O[X, Y] \subset \laurinx{\hat \O}\)). Thus \(f\) is also a rational function on \(\XX\) which does not vanish nor has a pole on all of \(\XX_\mm\) (so there is no vertical component), because both \(\Red{g} \neq 0\) and \(\Red{h} \neq 0\). 

Moreover, note that for every \(P \in \CC(\hat K)\) we have that \(\closure{\{ P \}}\) is a zero or pole (with multiplicity \(n_P\)) of \(f\) on \(\XX\). This follows from zeroes and poles being Zariski-closed, or the stalks being isomorphic: \(\O_{\XX_0,P} \iso \O_{\XX,\closure{\{P\}}}\) (see the proof of Lemma 8.3.3 and Definition 7.1.27 of multiplicities in \cite{liu-2002-algebraic-geometry-arithmetic}). So
\begin{equation*}
\di{D}_\XX = \Div f_\XX = \divsum n_P \cdot \pd{\closure{\{P\}}},
\end{equation*}
which we intersect with \(\XX_\mm\) to get the divisor
\begin{equation*}
\di{D}_{\XX_\mm} = \divsum n_P \cdot \pd{\Red{P}}.
\end{equation*}
We wish to show that this is the divisor of the function \(f_\mm = \Red{g} / \Red{h} \in k(X, Y)\). Let \(P \in \XX\) a closed point (i.e. a closed point in \(\XX_\mm\)), and consider the intersection number \(i_P(\cdot, \cdot)\). Without loss of generality, we may assume that \(n_P \geq 0\) (otherwise pass to \(-\di{D}\) and \(1/f\)). Now by Corollary 9.1.32 in \cite{liu-2002-algebraic-geometry-arithmetic}, we have \(i_P(\pd{\closure{\{P\}}}, \XX_\mm) = 1\). This implies (using Definitions 7.1.27 and 9.1.1 in \cite{liu-2002-algebraic-geometry-arithmetic}) that
\begin{equation*}
n_P = i_P(\Div f_\XX, \XX_\mm) = \mathrm{length} \, \O_{\XX,P} /\! \left( \spann{f_\XX} + \mm \, \O_{\XX,P} \right) = \mathrm{length} \, \O_{\XX_\mm,P} /\! \spann{f_\mm} = \ord_P \left( f_\mm \right)
\end{equation*}
and hence \(\di{D}_{\XX_\mm} = \Div f_\mm\) is principal as desired.
\end{proof}

\subsection{Reduction of torsion points and periodicity test}
\label{sec:org36b2a71}
While it is not so clear how the period length changes when reducing a periodic continued fraction, it is quite well understood how the torsion order of \(\j{\OO}\) can change:
\begin{thm}[Serre-Tate]
\label{thm-serre-tate-torsion-reduction}
Suppose \(D\) and \(\Red{D}\) are square-free. Let \(P \in \Jac\) be torsion of order \(n\), and suppose that \(\RedM{P} \in \Jacred\) has order \(m\). If \(\Char k = 0\), then \(n = m\), otherwise there exists \(e \in \N\) such that \(n = \pp^e \, m\) with \(\pp = \Char k\).
\end{thm}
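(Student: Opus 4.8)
The plan is to deduce the statement from the classical fact that an abelian variety with good reduction over a complete discrete valuation ring has injective reduction on the subgroup of torsion of order prime to the residue characteristic, and on \emph{all} torsion when that characteristic is $0$.

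First I would pass to the completion, working over $\hat\O$ and $\Kbb$ as set up above. Since $D$ and $\Red{D}$ are square-free, the generic and special fibres of $\XX$ are the smooth curves $\CC$ over $K$ and $\CCred$ over $k$, so $\XX \to \Spec\hat\O$ is a smooth proper relative curve; its relative Picard functor is then representable, and its identity component $\mathcal J \to \Spec\hat\O$ is an abelian scheme with generic fibre $\Jac$ and special fibre $\Jacred$. I would cite the representability of the Picard scheme of a smooth proper curve (e.g. Chapter~9 of \cite{liu-2002-algebraic-geometry-arithmetic}) rather than reprove it. This is exactly the assertion that $\Jac$ has good reduction at $\nu$, and the reduction homomorphism $\rho : \Jac(\Kbb) \to \Jacred(\closure k)$ arising from this abelian-scheme structure agrees with the divisor-theoretic one described in the previous subsection — the essential compatibility being Proposition~\ref{reduction-principal-divisor}.

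The core of the argument is the structure of $\ker\rho$. Let $\mathfrak m$ denote the maximal ideal of the (non-discrete) valuation ring $\Obb$ and $\widehat{\mathcal J}$ the formal group of $\mathcal J$ along its zero section. Smoothness and properness of $\mathcal J$ give an exact sequence
\begin{equation*}
0 \longrightarrow \widehat{\mathcal J}(\mathfrak m) \longrightarrow \Jac(\Kbb) \longrightarrow \Jacred(\closure k) \longrightarrow 0,
\end{equation*}
where the last map is $\rho$. For any prime $\ell$ that is a unit in $\hat\O$ — that is, $\ell \neq \pp$ when $\Char k = \pp > 0$, and every $\ell$ when $\Char k = 0$ — the multiplication-by-$\ell$ endomorphism of $\widehat{\mathcal J}$ has linear term $\ell \in \units{\hat\O}$, hence is an automorphism of the abelian group $\widehat{\mathcal J}(\mathfrak m)$ by the formal inverse function theorem. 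Thus $\widehat{\mathcal J}(\mathfrak m)$ has no nonzero $\ell$-torsion, so $\ker\rho$ is a pro-$\pp$ group (trivial when $\Char k = 0$), and therefore $\rho$ is injective on torsion points of order prime to $\pp$. I would then conclude: write $n = \pp^a n'$ with $n'$ coprime to $\pp$ (taking $a = 0$, $n' = n$ if $\Char k = 0$) and split $\langle P \rangle$ as $\langle P_1\rangle \oplus \langle P_2 \rangle$ with $P_1$ of order $\pp^a$ and $P_2$ of order $n'$. By the injectivity just proved, $\rho(P_2)$ has order exactly $n'$, while $\rho(P_1)$ has order $\pp^b$ for some $0 \le b \le a$; since $\gcd(\pp^b, n') = 1$, the order $m$ of $\rho(P) = \rho(P_1) + \rho(P_2)$ equals $\pp^b n'$, so $n = \pp^{a-b} m$ with $e := a - b \ge 0$ (and $e = 0$, i.e. $n = m$, forced when $\Char k = 0$), which is the claim.

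The step I expect to be the main obstacle is not the formal-group computation, which is entirely standard, but the rigorous set-up of good reduction for the Jacobian: representability of the relative Picard functor of $\XX/\Spec\hat\O$, the identification of its fibres with $\Jac$ and $\Jacred$, and the check that the resulting reduction map coincides with the one built from divisors above. A clean write-up should either black-box these facts from the literature on Picard schemes and N\'eron models, or give a more elementary substitute using $W_g = \Jac$ — so that every class is represented by an effective degree-$g$ divisor which, after translating by a suitable multiple of $\OO$ and applying Proposition~\ref{reduction-principal-divisor}, reduces in a controlled way.
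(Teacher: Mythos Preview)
Your argument is correct and essentially follows the standard route; the paper's proof is more compressed in that it simply \emph{cites} Theorem~1 and Lemma~2 of Serre--Tate \cite{serre-tate-1968-good-reduction-abelian} for the key fact that reduction is injective on torsion of order prime to $\Char k$, whereas you unpack this via the formal group of the abelian scheme $\mathcal J$. What you have written is precisely the mechanism underlying that citation, so the two proofs differ mainly in how much is black-boxed. Your decomposition of $\langle P\rangle$ into its $\pp$-primary and prime-to-$\pp$ parts is a minor variant of the paper's device of passing to the multiple $\pp^e P$ (which has order $n'$ coprime to $\pp$, hence reduces to a point of the same order), but both arrive at $n' = m/\pp^{\nu_\pp(m)}$ by the same injectivity statement. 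Your worry about setting up good reduction of the Jacobian is well placed: the paper sidesteps this by invoking Serre--Tate directly, while your formal-group sketch implicitly relies on the same representability input you flag at the end.
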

\begin{proof}
As \(0 = \RedM{n \, P} = n \, \RedM{P}\), we see that \(\Redm : \Jac \to \Jacred\) restricts to a homomorphism of groups \(\Redm : \Jac[n] \to \Jacred[n]\) (the subgroups of points with torsion order dividing \(n\)). But the conditions we pose on \(D\) and \(\nu\) ensure that \(\Jac\) has good reduction at \(\nu\). So by Theorem 1 and Lemma 2 of \cite{serre-tate-1968-good-reduction-abelian}, for \(\Char k \notdiv n\), this map \(\Jac[n] \iso \Jacred[n]\) is actually an isomorphism of groups; this is always the case in zero characteristic.

For positive characteristic \(\Char k = \pp\), we may write \(n = \pp^e \, n'\) with \(\pp \notdiv n'\), and assume that \(\pp \notdiv m\): because \(m \div n\), we can remove any common power of \(\pp\) and go to a multiple of \(P\).

Now \(\pp^e \, P\) has order precisely \(n'\) not divisible by \(\pp\), so \(\RedM{\pp^e \, P}\) has the same order \(n'\). However, \(\pp^e\) is coprime with the order of \(\RedM{P}\), implying that \(\pp^e \, \RedM{P} = \RedM{\pp^e \, P}\) has likewise order \(m\). So \(n' = m\), and we are done.
\end{proof}

The above theorem enables an old trick to effectively test if a point is torsion, mentioned already in \cite{davenport-1981-the-integration-algebraic}, and described in \cite{yu-1999-arith} for hyperelliptic continued fractions.

\begin{rem}[Reduction modulo two primes]
\label{rem-periodicity-test-reduction-two-primes}
Given a square-free \(D \in K[X]\) with \(K\) some number field, \(\deg D\) even and \(\LC(D)\) a square as usual, we can always find two prime ideals \(\PP_1\) and \(\PP_2\) such that \(D \in \O_{\PP_i}[X]\), \(\nu_{\PP_i}(\LC(D)) = 0\) and \(D\) is square-free modulo \(\PP_i\), for \(i = 1,2\). Of course the residue fields are finite, and we may assume they are of different characteristics \(\pp_1\) and \(\pp_2\). Then \(\j{\OO_{\PP_i}}\) is torsion of order \(m_i\). Assuming that also \(\j{\OO}\) is torsion of order \(m\), we can write
\begin{equation*}
m = {\pp_1}^{e_1} \, m_1 = {\pp_2}^{e_2} \, m_2, \quad e_i \geq 0.
\end{equation*}
This implies \(e_1 \leq e'_1 = \nu_{\pp_1}(m_2)\) and \(e_2 \leq e'_2 = \nu_{\pp_2}(m_1)\), and moreover
\begin{equation*}
m \div \gcd({\pp_1}^{e'_1} \, m_1,  {\pp_2}^{e'_2} \, m_2) \div \lcm(m_1, m_2).
\end{equation*}
This already gives a bound for the torsion order \(m\) which translates into a bound for the period length via Proposition \ref{prop-bounds-torsion-period-length}. So we can test for periodicity effectively.

Indeed as \(m_1, m_2 \div m\), often it is even possible to immediately find a contradiction if \(m_1\) and \(m_2\) have too many different prime factors.
\end{rem}

\smallskip

Also if \(K\) is finitely generated over a number field, we can specialize \(D\) to be defined over a number field. If \(\j{\OO}\) is already torsion, this should not alter the torsion order, so we can lift the torsion bound as obtained above, and still determine effectively if \(\CF(\sqrt{D})\) is periodic.

However, we need to be careful to avoid bad reduction of the continued fraction: it might happen that specializing a non-Pellian \(D\), we end up with a Pellian \(D\). Usually, it should however not be a problem to find a specialization where this does not happen. See for example Proposition \ref{specialization-only-countably-many-bad-reduction} below.

\subsection{Shortening of quasi-period}
\label{sec:orgaebd15f}
Theorem \ref{thm-serre-tate-torsion-reduction} also gives a little bit of information on how the quasi-period may change in the case of bad reduction of the continued fraction.

Suppose that \(\CF(\sqrt{D})\) has quasi-period length \(\QPL\). Set \(d_i = \deg a_i < d = \deg D, \; i = 1, \dots, \QPL-1\). The torsion order of \(\j{\OO}\) is
\begin{equation}
\label{shortening-degrees-sum-above}
m = \deg p_{\QPL - 1} = d + d_1 + \dots + d_{\QPL-1}.
\end{equation}
Because the quasi-period is palindromic (see Proposition \ref{palindromic-period}) we have \(d_i = d_{\QPL - i}\).

Assuming that \(\Red{D}\) is not a square, with \(\CF(\sqrt{\Red{D}})\) having quasi-period length \(\QPL'\), we set \(d'_i = \deg c_i < d, \; i = 1, \dots, \QPL'-1\), with \(d'_i = d'_{\QPL-i}\). The torsion order of \(\j{\OOred}\) is then
\begin{equation}
\label{shortening-degrees-sum-below}
m' = \deg u_{\QPL' - 1} = d + d'_1 + \dots + d'_{\QPL'-1},
\end{equation}
where \(m = \pp^e \, m'\) for some non-negative integer \(e\) if \(\Char k = \pp\), and \(m = m'\) if  \(\Char k = 0\). 

If \(m' \neq m\), then \eqref{shortening-degrees-sum-below} has to be repeated \(\pp^e\) times to make up \eqref{shortening-degrees-sum-above}. Recall from Corollary \ref{cor-lambda-degree-sum} that each \(d'_i = d_{i_1} + \dots + d_{i_j}\).

For example
\begin{align*}
d'_1 &= d_1 + \dots + d_{j_1}, &  d'_{\QPL-1} &= d_{\QPL-1} + \dots + d_{\QPL - j_1} \\
d'_2 &= d_{j_1 + 1} + \dots + d_{j_2}, \\
d'_3 &= d_{j_2 + 1} + \dots + d_{j_3},
\end{align*}
and so on. Of course \(m' = m\) does not prevent bad reduction of \(\CF(\sqrt{D})\), as the \(d'_i\) might just be larger than the \(d_i\) -- a better criterion is to check if \(\QPL' = \QPL\).

Unfortunately, we do not get a lot more information about these degrees in general. But in some special cases, we can at a glance exclude the possibility of bad reduction of the continued fraction:

\begin{itemize}
\item If the sequence of \(\deg a_n\) starts with \(2, 1, 1, 1, 2, 1, 1, 1, 2, 1, \dots\), then bad reduction of \(\CF(\sqrt{D})\) is impossible because \(\deg c_n\) cannot follow the sequences  \(2, 2, 1, 2, 2, 1,  \dots\) or \(2, 1, 2, 2, 1, 2, \dots\). Then some complete quotients \(\gamma_n\) would have quasi-period length \(1\), but others would have quasi-period length \(2\), which is impossible.
\item Similarly, if the \(\deg a_n\) start with \(3, 1, 2, 1, 3, 1, 2, 1, 3, 1, \dots\), then bad reduction of \(\CF(\sqrt{D})\) would make \(\deg c_n\) start with \(3, 3, 1, 3, 3, 1, \dots\) or \(3, 1, 3, 3, 1, 3, \dots\). As above, this is not possible.
\end{itemize}

\subsection{Reduction of convergent divisors}
\label{sec:org9851fc8}
We now attempt to give a geometric description for the reduction of a hyperelliptic continued fraction, in terms of the divisors associated to convergents.

Recall from Section \ref{sec:org4e4485e} that we can write the divisors of the canonical convergents of \(\alpha \in K(X, Y)\) as
\begin{multline*}
\Div (p_n - \alpha \, q_n) = \Div (\vartheta_n) = \\ -(\deg p_n) \pd{O_-} - \pd{Q_1} - \dots - \pd{Q_h} + (\deg q_{n+1}) \pd{O_+} + \pd{P_1^n} + \dots + \pd{P_{e_n}^n}
\end{multline*}
where \(e_n = \deg a_0 - \deg a_{n+1} + h\). If \(\alpha = Y\) (\(= \sqrt{D}\)), then this divisor satisfies \(P_i^n \neq \sigma(P_j^n)\) if \(i \neq j\) because \(p_n\) and \(q_n\) are coprime.

What happens when we reduce this divisor, and pass to
\begin{multline*}
\Div (\Redn{p_n} - \gamma \, \Redn{q_n}) = \Div (\Red{\vartheta_n}) = \\ -(\deg p_n) \pd{\Red{O_-}} - \pd{\Red{Q_1}} - \dots - \pd{\Red{Q_h}} + (\deg q_{n+1}) \pd{\Red{O_+}} + \pd{\Red{P_1^n}} + \dots + \pd{\Red{P_{e_n}^n}}
\end{multline*}
as in the proof of Proposition \ref{reduction-principal-divisor}?

\begin{enumerate}
\item Of course \(\Red{O_\pm} = O_\pm\).
\item The \(\Red{Q_i}\) are always the same, and we can control them from the start.
\item It is possible that \(\Red{P^n_i} = O_+\) which means \(\io{\Red{\vartheta_n}} > \io{\vartheta_n}\).
\item Or \(\Red{P^n_i} = O_-\) which means \(\degb{\Redn{q_n}} < \deg q_n\).
\item Or if \(\alpha = Y\), then possibly \(\Red{P^n_i} = \sigma(\Red{P^n_j})\) for some \(i \neq j\). This corresponds to \(\Redn{p_n}\) and \(\Redn{q_n}\) sharing a common factor.
\item Otherwise, \(\Red{P^n_i}\) is just a finite point.
\end{enumerate}

In the case \(\alpha = Y\) with \(g = 1\), we have \(e_n \leq 1\), so there is at most \(P^n_1\) and we do not have to worry about case 5. We also know that \(P^n_1\) must be \(K\)-rational. But for higher genus, we may need to work over an algebraic extension of \(K\) (not necessarily the algebraic closure because the degree of the equations defining the \(P^n_i\) is uniformly bounded in terms of \(\deg D\)).

Let us have a closer look at the genus \(1\) case, and study how it is related to the valuation analysis from Theorem \ref{thm-genus1-zero-patterns}.

\begin{prop}
Under the same hypotheses as for Theorem \ref{thm-genus1-zero-patterns} and additionally \(D, \Red{D}\) square-free, we have for \(n \geq 0\)
\begin{equation*}
-(n+2) \, \j{\OO} = j(P_n) \quad \text{ where } P_n = (x_n, y_n) \in \CCa.
\end{equation*}
Let \(\j{\OOred}\) have torsion order \(m = \QPL +1\), then
\begin{enumerate}
\item If \(m \div n+2\), then \(\Red{P_n} = O_+\) and \(\nu(x_n) = - f_{n+1} < 0\).
\item If \(m \div n+1\), then \(\Red{P_n} = O_-\) and \(\nu(x_n) = - f_n < 0\).
\item Otherwise \(\Red{P_n}\) is a finite point and \(\nu(x_n) \geq 0\).
\end{enumerate}
\end{prop}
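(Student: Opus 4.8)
The plan is to combine the divisor description of the convergents from Section~\ref{sec:org4e4485e} (specifically Corollary~\ref{cor-canonical-convergent-general-divisor} and the discussion following it for $\alpha = \sqrt{D}$, $g=1$) with the valuation analysis of Theorem~\ref{thm-genus1-zero-patterns}. First I would record the divisor of $\vartheta_n = \normal{p_n} - \alpha\,\normal{q_n}$: since $\deg D = 4$, $g=1$, and $\alpha = Y$ has no finite poles ($s \in \units{\O}$, so $h=0$), Lemma~\ref{convergent-divisor-lemma} (or its generalisation) gives $\Div\vartheta_n = -(\deg p_n)\pd{O_-} + (\deg q_{n+1})\pd{O_+} + \pd{P_n}$ with $P_n \in \CCa$ and $e_n \le 1$; in fact here $\deg a_n = 1$ for $n \ge 1$ and $\deg a_0 = 2$, so $e_n = 1$ for all $n$ and there is always exactly one finite point $P_n$. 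Passing to the Jacobian via Remark~\ref{rem-convergent-lemma-jacobian} (with $\deg p_n = \deg q_n + 2$, $\deg q_{n+1} = \deg q_n + 1$) yields $(\deg p_n)\,j(O_-) = (\deg q_{n+1})\,j(O_+) + j(P_n)$, and using $j(O_+) = 0$ (the base point) and $j(O_-) = -\j{\OO}$ (by the convention $\OO = \pd{O_+}-\pd{O_-}$), together with $\deg p_n = \deg q_{n+1} + 1$, this collapses to $-(n+2)\,\j{\OO} = j(P_n)$ after reading off $\deg p_n = n+2$ from $\deg a_0 = 2$, $\deg a_i = 1$ for $i \ge 1$ (so $\deg q_{n+1} = n+1$, $\deg p_n = n+2$). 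This establishes the first displayed formula. Note $P_n$ is $K$-rational by Remark~\ref{convergent-divisor-rationality} since $e_n = 1$.

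Next I would reduce the divisor. By Proposition~\ref{reduction-principal-divisor}, $\Red{\vartheta_n}$ is again principal, with divisor $-(n+2)\pd{O_-} + (n+1)\pd{O_+} + \pd{\Red{P_n}}$ (using $\Red{O_\pm} = O_\pm$ and that there are no $Q_i$). The comparison of orders at infinity is exactly what Theorem~\ref{thm-genus1-zero-patterns} controls: $\io{\Red{\vartheta_n}} > \io{\vartheta_n}$ precisely when $\Red{P_n} = O_+$ (the zero at $O_+$ gains multiplicity), and this happens iff $f_{n+1} = \nu(\LC(\vartheta_n)) > 0$, i.e.\ iff $n+1 \in \unboundedset = \{j(\QPL+1)-1\}$, i.e.\ iff $\QPL+1 = m$ divides $n+2$. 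In that case the gain in order is exactly $f_{n+1}$, and since the total degree is zero, the point $\Red{P_n}$ must absorb into $O_+$, giving $\Red{P_n} = O_+$; moreover $\ord_{O_+}(\Red{x_n}) = -1$ with multiplicity bookkeeping forcing $\nu(x_n) = -f_{n+1}$ (here I use that $x_n$ is, up to the normalisation, the $X$-coordinate of $P_n$, and $\ord_{O_\pm}(X) = -1$; the power of the uniformiser needed to clear denominators of $\vartheta_n$ on reduction is exactly $f_{n+1}$). Symmetrically, $\degb{\Redn{q_n}} < \deg q_n$ corresponds to $\Red{P_n} = O_-$: by Theorem~\ref{thm-genus1-zero-patterns} this is governed by $f_n = \nu(\LC(\vartheta_{n-1}))$, and via the relation $\Redn{q_n} = h_n v_{\lambda(n)}$ with $h_n$ constant (Remark~\ref{red-deg4-coprime-convergents}) together with the degree drop formulas in Theorem~\ref{thm-genus1-zero-patterns}, one gets that the drop occurs iff $m \mid n+1$, and then $\nu(x_n) = -f_n$. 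In all remaining cases neither infinity point is hit, so $\Red{P_n}$ is a finite point of $\CCred$ and $\nu(x_n) \ge 0$.

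The main obstacle I anticipate is bookkeeping the precise relation between the analytic quantity $f_n$ (a valuation of a leading coefficient of a Laurent series) and the geometric quantity $\nu(x_n)$ (the valuation of a coordinate of the reduced point), i.e.\ making rigorous the claim that the exact power of $\pi$ by which $\vartheta_n$ fails to reduce well equals the order to which $P_n$ runs off to $O_\pm$. This requires care because $\vartheta_n = \normal{p_n} - \sqrt{D}\,\normal{q_n}$ mixes the two embeddings $\ord_{O_\pm}$, and one has to track how $\vartheta_n \cdot \sigma(\vartheta_n) = \normal{p_n}^2 - D\,\normal{q_n}^2$ (a polynomial in $\O[X]$ of controlled degree) behaves: its reduction vanishes to the order given by $\Red{P_n}$ being at infinity, and matching this with the $f_n$-formulas of Theorem~\ref{thm-genus1-zero-patterns} pins down $\nu(x_n)$. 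Once the dictionary ``$f_{n+1} > 0 \Leftrightarrow$ $\Red{\vartheta_n}$ loses top coefficients $\Leftrightarrow$ $P_n \to O_+$'' is set up cleanly, the three cases follow by sorting out which of $m \mid n+2$, $m \mid n+1$, or neither holds, exactly as $\unboundedset$ was analysed in Proposition~\ref{bad-reduction-periodic-deg4-fibres}. I would also double-check the edge cases $n = -1, 0$ against the conventions $\vartheta_{-1} = 1$, $\vartheta_0 = a_0 - \sqrt{D}$, and $P_0$ with $-2\,\j{\OO} = j(P_0)$, to make sure the index shifts are consistent with the definition $f_n = \nu(\LC(\vartheta_{n-1}))$.
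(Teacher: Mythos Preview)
Your proposal is correct and follows essentially the same route as the paper: set up the divisor $\Div\vartheta_n = -(n+2)\pd{O_-} + (n+1)\pd{O_+} + \pd{P_n}$, reduce it, split into the three cases via the torsion order $m = \QPL+1$, and extract $\nu(x_n)$ from the product $\vartheta_n\,\sigma(\vartheta_n)$. The paper resolves exactly the bookkeeping you flagged by writing $\vartheta_n\,\sigma(\vartheta_n) = b_n(X - x_n)$ with $b_n \in \units K$ (a linear polynomial since $g=1$), noting $\nu(b_n(X-x_n)) = 0$ from the normalisation, and then reading off $\nu(b_n) = f_{n+1}$ (case~1), $\nu(b_n) = \nu(\LC(\sigma(\vartheta_n))) = \nu(\LC(\normal{q_n})) = f_n$ (case~2), or $\nu(b_n) = 0$ (case~3), which immediately gives $\nu(x_n)$.
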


\begin{rem}
Notice that \(P_n\) reduces to infinity precisely when \(n\) is in a two element fibre (compare Proposition \ref{bad-reduction-periodic-deg4-fibres}). 
\end{rem}

\begin{proof}
We have \(\alpha = Y\), \(g = 1\), and \(\CF(\sqrt{D})\) non-periodic. This implies \(\deg p_n = n+2\) and
\begin{equation*}
\Div(\vartheta_n) = -(n+2) \, \pd{O_-} + (n + 1) \, \pd{O_+} + \pd{P_n}
\end{equation*}
where \(P_n  = (x_n, y_n) \in \CCa(K)\) for all \(n \geq 0\) because \(\deg a_n = 1\) for \(n \geq 1\).
\pagebreak

The reduction of this divisor is
\begin{equation*}
\Div(u_{\lambda(n)} - Y \, v_{\lambda(n)}) = -(n+2) \, \pd{O_-} + (n + 1) \, \pd{O_+} + \pd{\Red{P_n}}.
\end{equation*}
With \(\CF(\gamma)\) periodic, the point \(\OOred\) over \(k\) has torsion order \(m = \QPL + 1\).

\begin{enumerate}
\item If \(m \div n+2\), this forces \(\Red{P_n} = O_+\).
\item If \(m \div n+1\), this forces \(\Red{P_n} = O_-\).
\item Otherwise \(\Red{P_n} \in \CCa(k)\).
\end{enumerate}

Recall from Proposition \ref{introduce-theta-n} and \eqref{cf-sn-pell-eq} that
\begin{equation*}
\vartheta_n \, \sigma(\vartheta_n) = {g_n}^{-2} \, (-1)^{n+1} \, s_{n+1} = b_n (X - x_n)
\end{equation*}
for some \(b_n \in \units K\). From the normalisation of \(\vartheta_n\) with \(\nub{\vartheta_n} = 0\) (and analogously \(\nub{\sigma(\vartheta_n)} = 0\)), we get \(\nub{b_n (X - x_n)} = 0\).

\begin{enumerate}
\item In the case \(m \div n+2\), we have
\begin{equation*}
\io{\Red{\vartheta_n}} = 1 + \io{\vartheta_n}, \quad \io{\Red{\sigma(\vartheta_n)}} = \io{\sigma(\vartheta_n)}
\end{equation*}
which means \(f_{n+1} > 0\). In fact,
\begin{equation*}
f_{n+1} = \nub{\LC(\vartheta_n)} + \nub{\LC(\sigma(\vartheta_n))} = \nub{b_n}.
\end{equation*}
This forces \(\nub{x_n} = - \nub{b_n} = - f_{n+1} < 0\) because \(\nub{b_n (X - x_n)} = 0\), so \(x_n\) has \negval as expected.

\item In the case \(m \div n+1\), we have
\begin{equation*}
\io{\Red{\vartheta_n}} = \io{\vartheta_n}, \quad \io{\Red{\sigma(\vartheta_n)}} = 1 + \io{\sigma(\vartheta_n)}.
\end{equation*}
so \(f_{n+1} = 0\). But \(\nub{\LC(\sigma(\vartheta_n))} = \nu(\LC(\normal{q_n})) = f_n > 0\), and similarly as above \(\nub{x_n} = - \nub{\LC(\sigma(\vartheta_n))} = - f_n < 0\). We find that \(x_n\) has again \negval.

\item Otherwise, we have
\begin{equation*}
\io{\Red{\vartheta_n}} = \io{\vartheta_n}, \quad \io{\Red{\sigma(\vartheta_n)}} = \io{\sigma(\vartheta_n)}
\end{equation*}
and hence \(f_{n+1} = 0\). This implies \(\nub{b_n} = 0\) and thus \(\nub{x_n} \geq 0\), i.e. \(x_n \in \O\).
\end{enumerate}
\end{proof}
Observe how this matches Theorem \ref{thm-genus1-zero-patterns} and that \(-f_{n+1}\) is the valuation of both \(x_n\) and \(x_{n+1}\) at the two element fibre.

So we have found a second description of the \(f_n\) from Theorem \ref{thm-genus1-zero-patterns}. Unfortunately, this still does not suggest what type of patterns they might follow, or whether they might be bounded. Generally, we should not expect the \(f_n\) to be bounded, see for example the proposition on page 55 of \cite{silverman-tate-2015-rational-points-on}. It suggests that for an elliptic curve defined over \(\Q\), there are rational points \(P = (x,y)\) with arbitrarily low \(\nu_\pp(x)\) for any prime \(\pp\).
\section{Good reduction at infinitely many primes}
\label{sec:org53fcc2b}
We mentioned before that Theorem \ref{thm-infinite-poles-number-field} holds only for \(\CF(\sqrt{D})\), but not for other elements of \(K(X, \sqrt{D})\).

\begin{thm}
\label{thm-good-reduction-infinite-primes}
Let \(D = X^4 + 16 \, X^2 + 24 \, X + 9\) which is not Pellian (the torsion orders of \(\OO_3\) and \(\OO_{17}\) differ just by \(1\)). Set \(\alpha = \frac{\sqrt{D} - 3}{X}\).

There are infinitely many primes \(\pp\) for which \(\CF(\alpha)\) has good reduction (hence \(\pp\) never divides a denominator of a partial quotient \(a_n\)).
\end{thm}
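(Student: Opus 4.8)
The plan is to reformulate good reduction of $\CF(\alpha)$ at $\pp$ as a statement about divisor classes on the elliptic curve $E=\Jac(\CC)$ reducing modulo $\pp$, and then to finish with a \v{C}ebotarev argument in the spirit of the support problem (Schinzel \cite{schinzel-1960-the-congruence-a}, Corrales-Rodrig\'a\~nez--Schoof \cite{corrales-schoof-1997-support-problem-its}). First I would fix the data. Here $A=\gauss{\sqrt D}=X^2+8$, so $D-A^2=24\,X-55$; hence $\sqrt D\in\laurinx\O$ for every odd prime $\pp$ (Proposition \ref{laurent-sqrt-bounded}), and $\Red D$ is never a square (both coefficients of $24X-55$ would have to vanish mod $\pp$), so $\gamma=\Red{\sqrt D}\notin\F_\pp(X)$. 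Writing $\alpha=\alpha_0=\frac{-3+\sqrt D}{X}$ in the form $\frac{r+\sqrt D}{s}$ with $r=-3$, $s=X$, and noting $D(0)=9$ while $\sqrt D-3$ vanishes at $(0,3)$ but not at $(0,-3)$, the only finite pole of $\alpha$ on $\CCa$ is $Q_1=(0,-3)$. Identify $E=\Jac(\CC)$ with $\CC$ via $O_+\mapsto 0$, and put $\OOjac=\j{\OO}\in E(\Q)$ and $Q^\ast=\j{Q_1}-\j{O_+}\in E(\Q)$; since $D$ is not Pellian, $\OOjac$ has infinite order (Theorem \ref{thm-pellian-iff-torsion}). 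I then discard the finitely many primes dividing $2$ or the discriminant of $D$, so that for the remaining $\pp$ the curve (and $E$) have good reduction and $\gamma$ is irrational.

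For such $\pp$, Corollary \ref{cor-canonical-convergent-general-divisor} gives
\[
\Div(p_n-\alpha\,q_n)=-(\deg p_n)\,\pd{O_-}-\pd{Q_1}+(\deg q_{n+1})\,\pd{O_+}+\pd{P^n_1}+\dots+\pd{P^n_{e_n}},\qquad e_n=2-\deg a_{n+1}\in\{0,1\}.
\]
The case $e_n=0$ would force $Q^\ast=(\deg p_n)\,\OOjac$ in $E(\Q)$ for some $n$, which is excluded once independence is established (see below); hence $\deg a_n=1$ for all $n\ge 1$, $\deg p_n=n+1$, each $P^n_1=:P_n$ is a $\Q$-rational point (Remark \ref{convergent-divisor-rationality}), and $j(P_n)=Q^\ast-(n+1)\,\OOjac$. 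Reducing this principal relation mod $\pp$ (Proposition \ref{reduction-principal-divisor}) one gets $\Red{P_n}=O_+\iff\Red{Q^\ast}=(n+1)\Red{\OOjac}$ and $\Red{P_n}=O_-\iff\Red{Q^\ast}=n\,\Red{\OOjac}$ in $E(\F_\pp)$. By the genus-$1$ analysis of Section \ref{sec:org9851fc8}, adapted to $\alpha$ in place of $\sqrt D$ (the extra pole $Q_1$ is fixed and reduces to a finite point), $\CF(\alpha)$ has bad reduction at $\pp$ precisely when some $\Red{P_n}$ lands at $O_+$ or $O_-$; since $E(\F_\pp)$ is finite and $\{\deg p_n,\deg p_n-1:n\ge 0\}$ exhausts $\N$, this says: \emph{$\CF(\alpha)$ has good reduction at $\pp$ if and only if $\Red{Q^\ast}\notin\langle\Red{\OOjac}\rangle$}, the cyclic subgroup generated by $\Red{\OOjac}$. (Good reduction forces $a_n\in\O[X]$ for all $n$ by Theorem \ref{cf-good-red-partial-quotients}, so $\pp$ then never appears in a denominator of an $a_n$.) Equivalently, via Proposition \ref{cf-good-reduction-lambda-bijective}, $\lambda$ is bijective exactly when no $\Red{P_n}$ hits infinity.

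It then remains to produce infinitely many $\pp$ with $\Red{Q^\ast}\notin\langle\Red{\OOjac}\rangle$. Here I would invoke the elliptic analogue of the support problem, or run the underlying \v{C}ebotarev argument directly: if $\Red{Q^\ast}\in\langle\Red{\OOjac}\rangle$ for all but finitely many $\pp$, then for every prime $\ell$ and every $\pp$ splitting completely in $\Q(E[\ell],\tfrac1\ell\OOjac)$ one has $\langle\Red{\OOjac}\rangle\subseteq \ell\,E(\F_\pp)$, hence $\Red{Q^\ast}\in\ell\,E(\F_\pp)$, so $\pp$ also splits in $\Q(E[\ell],\tfrac1\ell Q^\ast)$; \v{C}ebotarev then forces $\Q(E[\ell],\tfrac1\ell Q^\ast)\subseteq\Q(E[\ell],\tfrac1\ell\OOjac)$ for every $\ell$, and Kummer theory for $E$ yields $Q^\ast\in\Z\,\OOjac+E(\Q)_{\mathrm{tors}}$. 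So the whole theorem reduces to checking that $Q^\ast=\j{(0,-3)}-\j{O_+}$ and $\OOjac$ are independent in $E(\Q)\otimes\Q$; I would verify this by an explicit computation: bring $Y^2=D(X)$ to a minimal Weierstrass form, locate the images of $O_\pm$ and $(0,\pm 3)$, and check that the $2\times 2$ N\'eron--Tate height pairing matrix of $\{\OOjac,Q^\ast\}$ has nonzero determinant (or run a $2$-descent). This independence also retroactively justifies $e_n=1$ above, and is exactly what makes $\alpha$ escape Theorem \ref{thm-infinite-poles-number-field} (it is not a square root of a polynomial). The main obstacle is the Kummer/\v{C}ebotarev bookkeeping needed to obtain the support-problem input in precisely the shape ``$\Red{Q^\ast}\in\langle\Red{\OOjac}\rangle$ for almost all $\pp$'' $\Rightarrow$ ``$Q^\ast\in\Z\,\OOjac+E(\Q)_{\mathrm{tors}}$'', together with making the height (or descent) computation that proves independence fully rigorous.
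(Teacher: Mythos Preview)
Your strategy coincides with the paper's up to the point where the problem is reduced to showing that \(\Red{Q^\ast}\notin\langle\Red{\OOjac}\rangle\) for infinitely many \(\pp\); the divisor bookkeeping, the identification of the pole \(Q_1=(0,-3)\), the criterion via \(\Red{P_n}\in\{O_+,O_-\}\), and the ``only if'' direction are all handled the same way. The divergence is entirely in how the \v{C}ebotarev step is executed.

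The paper does \emph{not} invoke the support problem or general Kummer theory. Instead it exploits a special feature of this particular curve: after passing to the Weierstrass model \(V^2=U(U-2)(U+18)\), the full \(2\)-torsion is \(\Q\)-rational. Writing \(R_1=j(O_-)\), \(R_2=j(Q)\), one takes halves \(R_i'\) with \(2R_i'=R_i\); all four choices of \(R_1'\) live over \(K_1=\Q(\zeta_8)\) and all four choices of \(R_2'\) over \(K_2=\Q(\sqrt{6})\). A single application of \v{C}ebotarev to the abelian degree-\(8\) extension \(K_1K_2/\Q\) produces infinitely many \(\pp\) for which \(\Red{R_1'}\) is \(\F_\pp\)-rational but \(\Red{R_2'}\) is not; since the \(2\)-torsion is rational, \(m\Red{R_1'}+\Red{R_2'}\) is then never \(2\)-torsion, hence \(mR_1+R_2\neq 0\bmod\pp\) for all \(m\). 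This is completely explicit and self-contained, and as a by-product it certifies the independence of \(R_1,R_2\) without any height computation.

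Your route via the support problem is also valid, but heavier: the implication ``\(\Q(E[\ell],\tfrac1\ell Q^\ast)\subseteq\Q(E[\ell],\tfrac1\ell\OOjac)\) for all \(\ell\) \(\Rightarrow\) \(Q^\ast\in\Z\,\OOjac+E(\Q)_{\mathrm{tors}}\)'' is not plain Kummer theory; it needs a Bashmakov/Ribet-type statement that independent points give linearly disjoint \(\ell\)-division fields (or a direct citation of Corrales-Rodrig\'a\~nez--Schoof), plus a separate independence verification. What you gain is generality---your argument would work for any \(\alpha\) of this shape once independence is checked---whereas the paper's trick hinges on the rational \(2\)-torsion of this specific \(D\).
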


This is related to questions treated in \cite{corrales-schoof-1997-support-problem-its}, and earlier in \cite{schinzel-1960-the-congruence-a}. Here we present an explicit proof for our particular example, to illustrate these arguments more concretely. In fact, the given problem boils down to an analogue for elliptic curves (more generally abelian varieties) of
\begin{prop}
There exist infinitely many prime numbers \(\pp\) such that for all \(n \in \Z\) we have \(2^n \not\equiv 5 \mod \pp\).\footnote{The reader might find it enjoyable to try and prove this exercise for himself.}
\end{prop}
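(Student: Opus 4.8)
The plan is to deduce this from the \v{C}ebotarev density theorem applied to a Kummer tower, via the standard ``higher power residue'' trick. The key elementary observation is this: if $\ell$ is a prime and $\pp \equiv 1 \pmod{\ell}$, then the $\ell$-th powers $H_\ell \subset \units{\F_\pp}$ form a subgroup of index $\ell$, and if $2 \in H_\ell$ then the whole cyclic group $\langle 2 \rangle$ lies in $H_\ell$; consequently any $a \in \units{\F_\pp} \setminus H_\ell$ is not a power of $2$ modulo $\pp$. So it suffices to exhibit infinitely many primes $\pp$ with $\pp \equiv 1 \pmod 3$, with the class of $2$ a cube modulo $\pp$, but with the class of $5$ \emph{not} a cube modulo $\pp$ (taking $\ell = 3$).

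First I would set up the fields $L = \Q(\zeta_3, 2^{1/3})$ and $M = L(5^{1/3}) = \Q(\zeta_3, 2^{1/3}, 5^{1/3})$. For a rational prime $\pp \nmid 30$, splitting completely in $L$ is equivalent to $\pp \equiv 1 \pmod 3$ together with $2$ being a cube modulo $\pp$ (so that $X^3 - 2$ splits into linear factors); and, among primes splitting completely in $L$, splitting completely in $M$ is equivalent to $5$ being a cube modulo $\pp$ (since $\zeta_3 \in \F_\pp$ already, $X^3 - 5$ has a root iff it splits iff $5 \in H_3$). Hence the primes I want are exactly those, away from $\{2,3,5\}$, that split completely in $L$ but not in $M$.

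Next I would count densities. One has $[L:\Q] = 6$, and $[M:\Q] = 18$: by Kummer theory over $\Q(\zeta_3)$ the degree $[M : \Q(\zeta_3)]$ equals $9$ provided $2$ and $5$ are multiplicatively independent modulo cubes in $\Q(\zeta_3)^\times$, which is the routine computation that $2$ and $5$ are inert in $\Z[\zeta_3]$ (both are $\equiv 2 \pmod 3$), so any relation $2^i 5^j = \gamma^3$ forces $3 \mid i$ and $3 \mid j$ by reading off valuations at the primes above $2$ and above $5$; thus $[M:L] = 3 > 1$. By \v{C}ebotarev the set of primes splitting completely in $L$ has density $1/6$ and the subset splitting completely in $M$ has density $1/18$, so the set of primes splitting completely in $L$ but not in $M$ has density $1/6 - 1/18 = 1/9 > 0$ and is in particular infinite.

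Finally, for each such prime $\pp$ we have $\pp \equiv 1 \pmod 3$, hence $H_3$ has index $3$ in $\units{\F_\pp}$; we have $2 \in H_3$ and $5 \notin H_3$ by construction; hence $\langle 2 \rangle \subseteq H_3$ misses $5$, i.e. $2^n \not\equiv 5 \pmod \pp$ for all $n \in \Z$. The main obstacle — and the only non-formal point — is the Kummer-independence check underlying $[M:\Q] = 18$ (equivalently $5^{1/3} \notin L$): it is elementary but must be carried out with some care, e.g. by first noting that $L/\Q$ is ramified only at $2$ and $3$ whereas $\Q(5^{1/3})/\Q$ is ramified at $5$, so $\Q(5^{1/3}) \not\subseteq L$, and then accounting for the $\zeta_3$-twist via the valuation argument above.
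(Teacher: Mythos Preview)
Your argument is correct. The paper does not actually prove this proposition --- it is left as an exercise (see the footnote) --- but the surrounding proof of Theorem~\ref{thm-good-reduction-infinite-primes} makes clear what the author has in mind: the elliptic-curve argument uses \emph{halving} of points (possible because the full $2$-torsion is rational), and its multiplicative-group analogue is the case $\ell = 2$, not $\ell = 3$. Concretely, one looks for primes $\pp$ for which $2$ is a quadratic residue but $5$ is not; then $\langle 2\rangle$ lies in the index-$2$ subgroup of squares, which misses $5$. This version avoids the Kummer-independence check entirely --- one can dispense with \v{C}ebotarev and just use quadratic reciprocity together with Dirichlet on primes in arithmetic progressions (e.g.\ $\pp \equiv \pm 1 \pmod 8$ and $\pp \equiv \pm 2 \pmod 5$).

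Your choice of $\ell = 3$ is perfectly valid and yields a higher density ($1/9$ rather than the quadratic approach's $1/4$), but at the cost of the extra verification that $2$ and $5$ are independent modulo cubes in $\Q(\zeta_3)^\times$. That check is fine as you wrote it (both primes are inert in $\Z[\zeta_3]$, so valuations settle it). Either route proves the statement; the $\ell = 2$ variant is what the paper's analogy points to and is the more elementary of the two.
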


\begin{proof}[Proof of Theorem \ref{thm-good-reduction-infinite-primes}]
Note that \(D\) non Pellian implies that \(\CF(\alpha)\) is not quasi-periodic by Theorem A in \cite{berry-1990-periodicity-continued-fractions}.

Recall from Proposition \ref{general-convergent-divisor-lemma} that the divisors induced by the convergents have the shape
\begin{equation*}
\Div(p_n - \alpha \, q_n) = (\deg p_n) \, \OO - \pd{Q} + \pd{P_n}
\end{equation*}
because \(\alpha\) has a single pole at \(Q = (0, ?)\). Note that \(X \div D - 3^2\), so we are in the situation of Theorem \ref{thm-quadratic-laurent-series-cf-representation}. In principle, we could have \(P_n = O_+\) for a single \(n\), but the reduction arguments below imply that \(P_n\) must always be a finite point.

Recall from Section \ref{sec:org9851fc8} that here bad reduction of \(\CF(\alpha)\) is equivalent to \(P_n\) reducing to a point at infinity \(O_\pm\) which means
\begin{equation*}
j(\Red{Q}) + (\deg p_n) \, j(O_-) = j(O_\pm) = 0 \text{ or } j(O_-) \quad \mod \pp.
\end{equation*}
So if we ensure that for all \(m \in \Z\)
\begin{equation}
\label{eq-good-red-inf-reduced-translate}
j(\Red{Q}) + m \, j(O_-) \neq 0 \quad \mod \pp
\end{equation}
then we know that we must have good reduction of \(\CF(\alpha)\) at this prime \(\pp\) (and additionally we confirm that \(P_n \neq O_\pm\) for all \(n\)).

We deduce from the \v{C}ebotarev density theorem (see Theorem 13.4 and Lemma 13.5 in \cite{neukirch-1999-algebraic-number-theory}) that this holds for infinitely many primes \(\pp\). For reasons of space, we will assume the reader is already familiar with this famous theorem, and also ramification of prime ideals, Galois theory and the Frobenius automorphism.

Our curve \(\CC\) is an elliptic curve, isomorphic to its Jacobian. We write it in Weierstrass form
\begin{equation*}
\EC : V^2 = U^3 + 16 \, U^2 - 36 \, U = U (U - 2) (U + 18)
\end{equation*}
using the transformation
\begin{equation*}
U = 2 X^2 + 2 Y, \qquad V = 2 X (U + 16) + 24
\end{equation*}
which sends \(j(O_-)\) to \(R_1 = (-16, -24)\) and \(j(Q)\) to \(R_2 = (6, 24)\) (over \(\Q\)). These are non-torsion rational points. Note that the \(2\) torsion points of \(\EC\) are by design rational. This implies that for any of the four choices for a point \(R_i' \in \CC\) with \(2 \, R_i' = R_i\), the point \(R_i'\) is defined over the same number field \(K_i\) (for \(i=1,2\)). Here the fields are
\begin{equation*}
K_1 = \Q(\zeta), \text{ where } \zeta^4 + 1 = 0, \qquad K_2 = \Q(\sqrt{6}).
\end{equation*}
The composite field \(K_1 K_2\) has degree \(8\) and is Galois, with abelian Galois group \(G = \Gal(K_1 K_2 / \Q)\). We denote by \(H_i\) the subgroup of \(G\) whose fixed field is \(K_i\). 

Ignoring the finitely many primes where the curve \(\EC\) has bad reduction (just \(2, 3, 5\), the discriminant of \(\EC\) is \(2^{12} \cdot 3^{4} \cdot 5^{2}\)), we now wish to find infinitely many primes \(\pp\) such that \(\Red{R_1'}\) is a \(\F_\pp\)-rational point, but \(\Red{R_2'}\) is not. 
In that case, the point \(m \, \Red{R_1'} + \Red{R_2'}\) cannot be \(\F_\pp\)-rational for any \(m \in \Z\). In particular it is not \(0\) or torsion of order \(2\). This implies that for all \(m \in \Z\)
\begin{equation*}
m \, R_1 + R_2 \neq 0 \mod\pp
\end{equation*}
which is equivalent to \eqref{eq-good-red-inf-reduced-translate}.

If we restrict to the (infinitely many) primes \(\pp\) which are unramified over \(K_1 K_2\) (and hence over \(K_1\) and \(K_2\)), the condition on rationality of \(\Red{R_1'}\) and \(\Red{R_2'}\) amounts to saying that \(\pp\) has a prime divisor \(\PP_1\) of degree \(1\) over \(K_1\) (i.e. the residue field \(k(\PP_1)\) has degree \(1\) over \(\F_\pp\)), but over \(K_2\) all the prime divisors of \(\pp\) have degree \(> 1\) (i.e. \([k(\PP_2):\F_\pp] > 1\) for any prime divisor \(\PP_2\)).

As described in Lemma 13.5 of \cite{neukirch-1999-algebraic-number-theory}, this happens \IFF the conjugacy class of the Frobenius automorphisms of prime ideals of \(K_1 K_2\) lying over \(\pp\) intersects \(H_1\), but not \(H_2\). Here \(H_1 = \{\id, \sigma_1\}\) where \(\sigma_1\) is defined by \(\sigma_1(\zeta) = \zeta\) and \(\sigma_1(\sqrt{6}) = - \sqrt{6}\). The conjugacy classes are trivial because \(G\) is abelian, so we are looking precisely for the primes \(\pp\) for which \(\sigma_1\) is a Frobenius automorphism of some prime \(\PP\) of \(K_1 K_2\) over \(\pp\).

But the set of these primes has positive density \(\geq \frac{1}{8}\) by the \v{C}ebotarev density theorem (Theorem 13.4 in \cite{neukirch-1999-algebraic-number-theory}), so in particular there are infinitely many of them.
\end{proof}

\begin{rem}
In our computations, we observed that among the first 100 odd prime numbers, there are 62 prime numbers \(\pp\) for which \(\CF(\alpha)\) has good reduction at \(\pp\). This is a much higher density than predicted by our \v{C}ebotarev density estimate, but of course the latter gives only a sufficient condition.

Moreover, we argued with \(R_i'\) satisfying \(2 \, R_i' = R_i\). Instead we could argue with \(m \, R_i' = R_i\) for any \(m\); then we probably get additional primes with the desired property.
\end{rem}

\section{Complex functions case}
\label{sec:org6b10a31}
Let us now discuss the situation of specialization, i.e. where \(K = \C(t)\) and the reduction map works by assigning a special value \(t_0 \in \C\) to \(t\). The corresponding valuation is \(\nu = \ord_{t_0}\) measuring the zero-order at \(t_0\), with uniformising parameter \(t - t_0\), and the discrete valuation ring \(\O = \C[t]_{\spann{t - t_0}}\), the localisation of the maximal ideal \(\spann{t - t_0}\) in \(\C[t]\). We also write \(\Red{\alpha} = \alpha_{t=t_0}\) to distinguish different specializations.

\subsection{Results of Masser and Zannier}
\label{sec:org087aad4}
This is precisely the situation found in the article of Masser and Zannier on the connection between the Pell equation and Unlikely intersections \cite{masser-zannier-2015-torsion-points-on}. Let me restate their results in our language of specialization of continued fractions.

For genus \(1\), they mention the following result:

\begin{prop}[Masser-Zannier]
\label{mz-thm-spec-periodic-deg4}
Let \(D = X^4 + X + t\), then \(\CF(\sqrt{D})\) is non-periodic. The set of \(t_0\) such that \(\CF(\sqrt{D_{t=t_0}})\) is periodic (i.e. \(\CF(\sqrt{D})\) has bad reduction at \(t - t_0\) by Proposition \ref{bad-reduction-is-periodic-deg4}), is infinite and denumerable.
\end{prop}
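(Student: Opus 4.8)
The plan is to recast the statement as one about torsion values of a section of an elliptic surface, and then to treat non-periodicity, countability and infinitude separately. Write $E_t \colon Y^2 = X^4 + X + t$; for all but the finitely many $t_0 \in \C$ making $X^4+X+t_0$ not square-free this is a smooth genus-$1$ curve with the two rational points $O_\pm$ at infinity, and by Theorems \ref{thm-pellian-iff-torsion} and \ref{thm-pellian-iff-cf-periodic} (together with Proposition \ref{bad-reduction-is-periodic-deg4}) the continued fraction $\CF(\sqrt{D_{t=t_0}})$ is periodic \IFF the class $\j{\OO_{t_0}}$ of $\pd{O_+}-\pd{O_-}$ is torsion in $\Jac(E_{t_0}) = E_{t_0}$. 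The $j$-invariant of $E_t$ is a non-constant rational function of $t$, so the fibration is not isotrivial; I would organise it as a relatively minimal smooth projective elliptic surface $\mathcal{E} \to \P^1$ over the $t$-line and let $C \subset \mathcal{E}$ be the closure of the horizontal section $t \mapsto \j{\OO_t}$ (well defined since $O_\pm$ are rational).

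First I would establish \emph{non-periodicity}, i.e. that $\j{\OO}$ is not torsion in $\Jac$ over $\closure{\C(t)}$. If it were torsion of some order $n$, every specialization $\j{\OO_{t_0}}$ would have order dividing $n$; but on a single convenient algebraic specialization $t_0$ one checks — for instance by the reduction-modulo-two-primes test of Remark \ref{rem-periodicity-test-reduction-two-primes}, exhibiting two residue characteristics whose torsion orders for $\j{\OO_{t_0}}$ are incompatible, or as in Masser–Zannier — that $\j{\OO_{t_0}}$ is non-torsion, a contradiction. Equivalently: since $\mathcal{E} \to \P^1$ is not isotrivial, the function-field Néron–Tate height $\hat h$ vanishes only on torsion classes, so producing one non-torsion value suffices to conclude $\hat h(\OO) > 0$.

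\emph{Countability} is then immediate. For $n \geq 1$ the set $T_n = \{\, t_0 \in \C : n\,\j{\OO_{t_0}} = 0 \,\}$ is the locus where the section $t \mapsto n\,\j{\OO_t}$ meets the zero section; by the previous step these are distinct irreducible horizontal curves on $\mathcal{E}$, so their intersection, hence $T_n$, is finite. The set of $t_0$ with periodic $\CF(\sqrt{D_{t=t_0}})$ is $\bigcup_{n \geq 1} T_n$, a countable union of finite sets; one may also note that each $T_n$ is the zero set of a non-zero polynomial in $\Q[t]$, so the whole set lies in $\closure{\Q}$.

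The substantial point is \emph{infinitude} of $\bigcup_n T_n$. Here I would use that on the elliptic surface the $n$-torsion locus $\mathcal{E}[n]$ is a (possibly reducible) curve, and that the intersection number $C \cdot \mathcal{E}[n]$ grows quadratically in $n$ — its leading coefficient a positive multiple of $\hat h(\OO)$, the remaining terms collecting the contributions of the finitely many singular fibres and of the fibre over $t = \infty$, via Shioda's height pairing on elliptic surfaces. Since $\hat h(\OO) > 0$ by the second step, $C \cdot \mathcal{E}[n] > 0$ for all large $n$, so $C$ meets $\mathcal{E}[n]$: there is $t_0$ with $\j{\OO_{t_0}}$ torsion of order dividing $n$. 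If the orders so produced were bounded by $M$, all these points would lie on the finite set $C \cap \mathcal{E}[M!]$, which is absurd; hence the orders are unbounded, and by Proposition \ref{prop-bounds-torsion-period-length} so are the quasi-period lengths, producing infinitely many distinct $t_0$. The main obstacle is exactly this last step: making the estimate $C \cdot \mathcal{E}[n] \sim n^2 \hat h(\OO)$ rigorous requires a careful choice of the minimal smooth model of $\mathcal{E}$ and an honest accounting of the local intersection contributions at the bad fibres — equivalently, in the Masser–Zannier approach one must show that the Betti (real-analytic, multivalued) map attached to $\OO$ is a submersion on a dense open subset of the base, so that its image contains a Euclidean-open set and hence rational points of arbitrarily large denominator. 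Either way everything is powered by the positivity $\hat h(\OO) > 0$ proved in the second paragraph; the rest is comparatively routine.
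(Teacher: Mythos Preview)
The paper does not actually prove this proposition: it is stated as a result of Masser--Zannier with a reference to \cite{masser-zannier-2015-torsion-points-on}, and the paper explicitly says that the ``hard part'' (infinitude) is carried out in Section~11 of that article. So there is no proof in the paper to compare against directly, and your sketch --- recasting everything as torsion specialisations of a section on a non-isotrivial elliptic surface, then handling non-periodicity, countability, and infinitude separately --- is a reasonable outline of how Masser and Zannier (and the surrounding literature) approach this.

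One genuine point of comparison: for the \emph{countability} part the paper does supply its own argument, Proposition~\ref{specialization-only-countably-many-bad-reduction}, and it is more elementary than yours. Rather than intersecting horizontal sections on the surface, the paper simply notes that bad reduction of $\CF(\sqrt{D})$ at $t-t_0$ forces $t_0$ to be a root of one of the denominators $d_n \in \C[t]$ of the leading coefficients $\LC(\alpha_n)$ (Proposition~\ref{bad-reduction-minimal-pole}); there are countably many such polynomials, each with finitely many roots. Your intersection-theoretic version is correct and more geometric, but the paper's route avoids any surface theory.

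Your sketch of the infinitude step (growth of $C \cdot \mathcal{E}[n]$ via Shioda's pairing, or the Betti-map submersion argument) is along the right lines and you correctly identify it as the crux; just be aware that this is precisely the part the paper declines to reproduce and attributes wholesale to Masser--Zannier.
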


For genus \(2\) however, their Theorem P1 says:
\begin{thm}[Masser-Zannier]
\label{mz-thm-spec-periodic-deg6}
Let \(D = X^6 + X + t\), then \(\CF(\sqrt{D})\) is non-periodic. The set of \(t_0\) such that \(\CF(\sqrt{D_{t=t_0}})\) is periodic is \emph{finite}.
\end{thm}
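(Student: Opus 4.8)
### Proof proposal for Theorem \ref{mz-thm-spec-periodic-deg6}

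The plan is to translate the statement into a torsion problem on the Jacobian and then invoke the main result of Masser--Zannier \cite{masser-zannier-2015-torsion-points-on}. By Theorem \ref{thm-pellian-iff-cf-periodic} (Abel) together with Proposition \ref{bad-reduction-is-periodic-deg4}-style reasoning --- more precisely, by Theorem \ref{thm-pellian-iff-torsion} applied over $\C$ and over the residue field after specialization --- the continued fraction $\CF(\sqrt{D_{t=t_0}})$ is periodic \IFF{} $D_{t=t_0}$ is Pellian \IFF{} the class $\j{\OO}$ of $\pd{O_+}-\pd{O_-}$ is torsion in the Jacobian $\Jac$ of the curve $Y^2 = D_{t=t_0}(X)$. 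So the set to be shown finite is exactly $\{\, t_0 \in \C \mid \j{\OO}\ \text{is torsion in}\ \Jac(Y^2 = D_{t=t_0})\,\}$. First I would set up the family: over the base curve $t \in \C$ (or a suitable open subset where $D$ stays square-free of degree $6$, discarding the finitely many bad $t_0$), the equation $Y^2 = X^6 + X + t$ defines a family of smooth genus-$2$ curves, hence an abelian scheme of relative dimension $2$ whose fibres are the Jacobians, and $\OO = \pd{O_+}-\pd{O_-}$ defines a section of this scheme.

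The heart of the argument is then precisely Theorem P1 of \cite{masser-zannier-2015-torsion-points-on}: for this one-parameter family, the section $\OO$ is torsion at only finitely many $t_0$, \emph{unless} the section is identically torsion on the whole family. So the key step is to rule out the degenerate case, i.e.\ to show that $\j{\OO}$ is \emph{not} torsion generically, equivalently that $\CF(\sqrt{X^6+X+t})$ is non-periodic over the function field $\C(t)$. This is asserted in the statement and I would justify it as follows: if $\j{\OO}$ were torsion of some order $m$ generically, then by Proposition \ref{prop-bounds-torsion-period-length} the quasi-period length $\QPL$ would satisfy $g + \QPL \le m \le 1 + g\QPL$ with $g=2$, so in particular $m$ would be bounded, and then specializing $t$ to, say, a value $t_0$ where one can exhibit a non-Pellian $D_{t=t_0}$ (for instance by the reduction-modulo-two-primes test of Remark \ref{rem-periodicity-test-reduction-two-primes}, finding two primes with incompatible torsion orders for $\j{\OO_{t=t_0}}$) gives a contradiction --- generic torsion of order $m$ would force torsion of order dividing $m$ on every good specialization. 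Alternatively, and more cleanly, non-periodicity over $\C(t)$ follows because periodicity would descend to $\closure{\Q}(t)$ and then specialize, and one checks a single specialization is non-Pellian.

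Putting it together: discard the finitely many $t_0$ where $D_{t=t_0}$ fails to be square-free of degree $6$ or where bad reduction of the continued fraction interferes (only finitely many by Proposition \ref{completion-of-square}-type arguments on $D - A^2$); on the remaining locus we have a genuine abelian scheme with a section $\OO$ that is non-torsion generically; Masser--Zannier then yields finiteness of the torsion locus; translating back via Abel's theorem gives finiteness of the set of $t_0$ with $\CF(\sqrt{D_{t=t_0}})$ periodic, and adding back the finitely many excluded values keeps the set finite. The main obstacle --- really the only non-bookkeeping point --- is the input from \cite{masser-zannier-2015-torsion-points-on} itself (an instance of relative Manin--Mumford / Pink's conjecture for a family of abelian surfaces, proved there via o-minimality and height bounds); everything else is a dictionary between the continued fraction language and the geometry already developed in Chapters \ref{sec:org99a8e17} and \ref{sec:orge1a60a6}, plus the verification that the generic fibre is non-Pellian, which I expect to be routine given the tools above.
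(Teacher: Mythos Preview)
The paper does not supply its own proof of this theorem: it is quoted verbatim as Theorem~P1 of Masser--Zannier \cite{masser-zannier-2015-torsion-points-on}, so there is no argument in the paper to compare your sketch against beyond the attribution itself. That said, your sketch contains a genuine gap in its account of what the Masser--Zannier input actually says.

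You phrase the key dichotomy as: the section $\OO$ is torsion at only finitely many $t_0$ \emph{unless} it is identically torsion on the whole family, and you then devote the remainder of the argument to ruling out generic torsion. That dichotomy is correct for one-parameter families of \emph{elliptic curves}, but it is \emph{false} for abelian surfaces. The paper itself provides the counterexample: Proposition~\ref{mz-spec-infinite-periodic-deg6} gives $D = X^6 + X^2 + t$, also of degree~$6$ and also generically non-Pellian, yet with \emph{infinitely} many $t_0$ for which $\CF(\sqrt{D_{t=t_0}})$ is periodic. So generic non-torsion alone cannot yield finiteness in genus~$2$. What distinguishes $X^6 + X + t$ is that the generic Jacobian is \emph{simple} (the paper's introduction makes this explicit), and it is this simplicity --- or more precisely the fact that no proper group subscheme of the family of Jacobians contains the section $\OO$ --- that makes the relative Manin--Mumford machinery of \cite{masser-zannier-2015-torsion-points-on} apply. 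Your proposal never mentions this condition, and your ``key step'' of verifying generic non-periodicity, while necessary, is far from sufficient.
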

For example \(\CF(\sqrt{D_{t=0}})\) is periodic. But because \(\deg D = 6\), it is now possible that \(\CF(\alpha)\) has  bad reduction at \(t-t_0\), even if \(\CF(\sqrt{D_{t=t_0}})\) is non-periodic.

\begin{table}[h!]
\centering
\begin{tabular}{|l|l|}
\hline
\(D = X^{6} + X + t\) & basefield \(\C(t)\)\\ \hline
Discriminant of $D$: \((-46656) \cdot (t^{5} - \frac{3125}{46656})\) & $D$ never reduces to a square.\\ \hline
\(D\) is not Pellian & \\ \hline
\multicolumn{2}{|l|}{Partial quotients}\\ \hline
\multicolumn{2}{|l|}{\(a_{0} = X^{3}\)}\\ \hline
\multicolumn{2}{|l|}{\(a_{1} = 2 X^{2} - 2 t X + 2 t^{2}\)}\\ \hline
\multicolumn{2}{|l|}{\(a_{2} = \frac{-\frac{1}{2} X - \frac{1}{2} t}{t^{3}}\)}\\ \hline
\end{tabular}
\end{table}

\enlargethispage{1cm}
To describe the \(t_0\) in the theorem, we need to search for an increase in the degree of the partial quotients when specialising, as seen in Lemma \ref{bad-reduction-minimal-pole}. Clearly \(\deg c_0 = 3, \deg c_1 = 2\) for the partial quotients of any specialization. Their Theorem P2 says:
\begin{thm}[Masser-Zannier]
\label{mz-thm-spec-weak-pell-deg6}
Let \(D = X^6 + X + t\), with \(\CF(\sqrt{D})\) non-periodic. The set of \(t_0\) such that for \(\gamma = \sqrt{D_{t=t_0}}\) there exists \(n \geq 2\) with \(\deg c_n = 2\), is an infinite and denumerable subset of \(\closure \Q\).
\end{thm}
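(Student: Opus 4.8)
The statement to establish is Theorem~\ref{mz-thm-spec-weak-pell-deg6}: for $D = X^6 + X + t$ over $\C(t)$ with $\CF(\sqrt D)$ non-periodic, the set $S$ of $t_0 \in \C$ such that the continued fraction of $\gamma = \sqrt{D_{t=t_0}}$ has some partial quotient $c_n$ (with $n \geq 2$) of degree $2$ is infinite, denumerable, and contained in $\closure\Q$. The key is to translate the condition ``$\deg c_n = 2$ for some $n \geq 2$'' into a geometric torsion-type condition on the Jacobian of the genus-$2$ curve $\CC : Y^2 = D(X)$, so that Theorem~P2 of Masser--Zannier (\cite{masser-zannier-2015-torsion-points-on}) applies directly. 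The plan is first to recall from Corollary~\ref{cor-lambda-degree-sum} and Corollary~\ref{cor-pq-degree-periodicity} that, since $\deg D = 6$ means $g = 2$, each partial quotient of $\gamma$ has degree $1$ or $2$ (for $n \geq 1$), and that $\deg c_n = 2$ corresponds to a complete quotient $\gamma_n = (\Red A + \sqrt{\Red D})/s_n$ with $\deg s_n = 1$ rather than the generic $\deg s_n = 0$. Via the dictionary of Section~\ref{sec:org4e4485e} (in particular Remark~\ref{rem-general-convergent-divisor-translated-multiples} and Corollary~\ref{cor-canonical-convergent-general-divisor}), the occurrence of such an $s_n$ is equivalent to a relation in $\Jac(\CC_{t=t_0})$ of the form: some multiple $m\,\j{\OO_{t=t_0}}$ lies in the image of $W_1 = j(\CC_{t=t_0})$, i.e.\ $m\,\j{\OO} = j(P)$ for a single point $P$, instead of requiring two points.

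Second, I would make the reduction-of-divisors picture precise. By the reasoning in Section~\ref{sec:org9851fc8}, when we specialize $\alpha = \sqrt D$ at $t - t_0$, bad reduction of $\CF(\sqrt D)$ is detected by the finite points $P_i^n$ of the convergent divisors colliding (either with $O_\pm$, forcing degree drops/jumps, or pairwise $P_i^n = \sigma(P_j^n)$, forcing common factors). Since $\deg D = 6$, we have $e_n \leq g = 2$, so a convergent divisor carries up to two finite points $P_1^n, P_2^n$; the event ``$\deg c_{\lambda(n)+1} = 2$'' is exactly the event that one of these two points reduces to a point at infinity while the other does not — equivalently, that the translated multiple $j(Q) + m\,j(O_-)$ (where $Q$ is the finite pole coming from $\alpha$) or simply $m\,\j{\OO}$ (in the $\alpha = \sqrt D$ case $Q$ is absent) becomes representable by $g-1 = 1$ point in the special Jacobian. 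This is precisely the relative Manin--Mumford / unlikely-intersection statement: we ask for $t_0$ such that the point $\j{\OO_{t=t_0}}$, which generically is non-torsion and whose multiples generically fill out $W_2 = \Jac$, has some multiple landing on the curve $j(\CC_{t=t_0}) = W_1$, a proper subvariety of codimension $1$.

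Third, I would invoke Theorem~P2 of Masser--Zannier, which asserts exactly that this set of $t_0$ — those for which $n\,\j{\OO_{t=t_0}} \in j(\CC_{t=t_0})$ for some $n$ — is infinite, countable, and consists of algebraic numbers. The translation in the previous steps shows $S$ coincides with (or differs by a finite set from) this Masser--Zannier set: countability and algebraicity transfer verbatim, and infinitude likewise, once one checks that each such geometric relation genuinely produces an $s_n$ of degree exactly $1$ (not $0$) for \emph{some} $n \geq 2$, which follows because $\CF(\sqrt D)$ is non-periodic (so the degree never reaches the maximum value $3$ for $n \geq 1$, by Corollary~\ref{cor-pq-degree-periodicity}) and because a point $P$ with $j(P) = m\,\j{\OO}$, $P \neq O_\pm$, forces $\deg s_n = 1$ via the degree bookkeeping of Corollary~\ref{cor-canonical-convergent-general-divisor}. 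One must also rule out that the relevant $t_0$ all give $n \leq 1$: but $\deg c_0 = 3$ and $\deg c_1 = 2$ hold for \emph{every} specialization (these come from $a_0 = X^3$ and $a_1$ of degree $2$, unchanged by specialization away from the finitely many $t_0$ with bad reduction of the first steps), so the condition genuinely lives at $n \geq 2$ and is the nontrivial one.

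\textbf{Main obstacle.} The hard part is not the input from \cite{masser-zannier-2015-torsion-points-on} — that is a black box — but rather making the equivalence ``$\exists\, n \geq 2$ with $\deg c_n = 2$'' $\iff$ ``$\exists\, m$ with $m\,\j{\OO_{t=t_0}} \in j(\CC_{t=t_0})$'' genuinely tight in both directions, while correctly handling the bad-reduction bookkeeping of the map $\lambda$ (Theorem~\ref{convergent-reduction-surjective}) and the possibility of multiple-element fibres collapsing several $a_n$'s into one $c_m$. In particular one direction requires that a geometric relation $m\,\j{\OO} = j(P)$ actually \emph{shows up} as a degree-$2$ partial quotient rather than being masked by further cancellation with the point $Q$ or by the specialization also causing $\Red D$ to drop degree (which it does not here, since $\LC(D) = 1$) — this needs the explicit structure of the convergent divisors for $\alpha = \sqrt D$ over the genus-$2$ curve and a careful count showing the single finite point in the reduced divisor forces $\deg \Red{s_n}$ down by exactly one. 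Finally, one should note (as the paragraph after the theorem in the excerpt implicitly warns) that for $\deg D = 6$ bad reduction of $\CF(\sqrt D)$ is strictly weaker than periodicity of $\CF(\sqrt{D_{t=t_0}})$, so Theorem~\ref{mz-thm-spec-periodic-deg6} alone does not suffice and one genuinely needs the finer Theorem~P2.
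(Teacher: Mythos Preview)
Your approach is essentially the one the paper takes: this theorem is not proved from scratch in the paper but is explicitly a restatement of Theorem~P2 of \cite{masser-zannier-2015-torsion-points-on} in the language of continued fractions, with the paper only remarking that the hard part (infinitude) is Section~11 of that reference and supplying the denumerability independently via Proposition~\ref{specialization-only-countably-many-bad-reduction}. Your dictionary between ``$\deg c_n = 2$ for some $n \geq 2$'' and ``some multiple of $\j{\OO_{t=t_0}}$ lies on $W_1 = j(\CC_{t=t_0})$'' is correct and is exactly the content encoded in Corollary~\ref{cor-canonical-convergent-general-divisor} (with $h=0$, $\deg a_0 = 3$, so $e_n = 3 - \deg a_{n+1}$, hence $\deg c_n = 2 \Leftrightarrow e_{n-1} = 1$); you spell this out more carefully than the paper does.

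One small correction to your ``main obstacle'': you worry about ``the point $Q$'' and ``the finite pole coming from $\alpha$'', but for $\alpha = \sqrt{D}$ there is no such pole ($h = 0$ in the notation of Section~\ref{sec:org4e4485e}), so that part of the bookkeeping is vacuous here. The genuine subtlety you should flag instead is that Masser--Zannier's geometric Theorem~P2 concerns the condition ``$m\,\j{\OO_{t=t_0}} \in W_1 \setminus \{0\}$'' for some $m \geq 1$, and you need this to match ``$\deg c_n = 2$ for some $n \geq 2$'' rather than $n \geq 1$; since $\deg c_1 = 2$ holds for \emph{every} $t_0$ (corresponding to $m = \deg p_0 = 3$ always landing on $W_1$ because $e_0 = 1$ generically), the set described by Masser--Zannier automatically includes all $t_0$, which is not what is claimed. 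The resolution is that Theorem~P2 in \cite{masser-zannier-2015-torsion-points-on} is actually formulated for the \emph{non-generic} occurrences (equivalently, bad reduction of the continued fraction), and the paper's paragraph after the theorem makes precisely this identification: the set in question equals the set of $t_0$ with bad reduction of $\CF(\sqrt{D})$ at $t - t_0$. So your equivalence should be phrased as ``$\deg c_n = 2$ for some $n \geq 2$'' $\iff$ ``bad reduction at $t - t_0$'' $\iff$ ``some $c_n$ has degree strictly larger than the corresponding $a_n$'', which is the form in which denumerability (Proposition~\ref{specialization-only-countably-many-bad-reduction}) and the Masser--Zannier infinitude argument both apply.
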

This set also includes the \(t_0\) with \(\CF(\sqrt{D_{t=t_0}})\) periodic: because the period always begins at \(c_1\), there are then infinitely many \(n\) with \(\deg c_n = 2\) for each of these \(t_0\). By Theorem \ref{cf-good-red-partial-quotients} the increase of degrees is necessary for bad reduction of \(\CF(\sqrt{D})\), so this infinite set is actually the set of all \(t_0\) with bad reduction of \(\CF(\alpha)\) at \(t-t_0\).

The hard part in the proof of this theorem is showing that this set of \(t_0\) is \emph{infinite} which is done in Section 11 of \cite{masser-zannier-2015-torsion-points-on}.

With the theory of Chapter \ref{sec:orgd5f1900}, it is however not so hard to show:

\begin{prop}
\label{specialization-only-countably-many-bad-reduction}
Let \(\alpha \in \laurinx{\C(t)}\) with \(\LC(\alpha) = 1\). Then there exist at most countably many \(t_0 \in \C\) such that \(\CF(\alpha)\) has bad reduction at \(t-t_0\) (the valuation being \(\nu = \ord_{t=t_0}\)).
\end{prop}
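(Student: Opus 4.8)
The plan is to prove the stronger statement that off a countable set of $t_0$ the expansion $\CF(\alpha)$ has actually \emph{good} reduction at $\nu = \ord_{t=t_0}$, which in particular precludes bad reduction. The whole argument is an induction inside the framework of Proposition~\ref{cf-good-red-leading-coeffs} and Theorem~\ref{cf-good-red-partial-quotients}, and the only thing one really has to observe is that a suitable sequence of ``bad loci'' in the affine $t$-line consists of finite sets.

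First I would arrange that $\alpha$ is bounded. Writing $\alpha = \sum_{n \le N}\alpha_n X^n$ with $\alpha_n \in \C(t)$ and $\alpha_N = \LC(\alpha) = 1$, each coefficient $\alpha_n$ has only finitely many poles in $\C$, so the set $S$ of those $t_0$ that are a pole of some $\alpha_n$ is a countable union of finite sets, hence countable. For $t_0 \notin S$ every $\alpha_n$ is regular at $t_0$, so $\alpha \in \laurinx\O$ and $\nu(\alpha) = \nu(\LC(\alpha)) = 0$. If $\io\alpha > 0$ I would replace $\alpha$ by $\alpha_1 = 1/\alpha$, which shares all complete quotients $\alpha_n$ for $n \ge 1$, again has leading coefficient $1$, and is still bounded at every $t_0 \notin S$ (by Corollary~\ref{laurent-inverse-bounded-corollary}, since $\LC(\alpha) = 1 \in \units\O$); so I may assume $\io\alpha \le 0$. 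If $\alpha \in \C(t)(X)$, then $\CF(\alpha)$ is finite by Proposition~\ref{cf-euclidean-algorithm} and the argument below involves only finitely many partial quotients, so I may as well assume $\CF(\alpha)$ is infinite.

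Next I would record the level-by-level bad loci. For each $n \ge 0$ the partial quotient $a_n \in \C(t)[X]$ is a fixed polynomial produced by the continued fraction algorithm over the field $\C(t)$; let $T_n$ be the finite set of $t_0 \in \C$ that is a pole of some coefficient of $a_n$, or a zero or a pole of the nonzero rational function $\LC(a_n)$. Then $\Sigma := S \cup \bigcup_{n\ge0}T_n$ is again a countable union of finite sets, hence countable. The core is then a short induction showing $\alpha_n \in \laurinx\O$ for all $n$ whenever $t_0 \notin \Sigma$ (with $\O = \C[t]_{(t-t_0)}$, $\nu = \ord_{t=t_0}$): the case $n=0$ is the previous paragraph; and assuming $\alpha_n \in \laurinx\O$, one has $a_n = \gauss{\alpha_n} \in \O[X]$ and hence $\inv{\alpha_{n+1}} = \alpha_n - a_n \in \laurinx\O$, so by Proposition~\ref{cf-good-red-leading-coeffs} (itself a consequence of Corollary~\ref{laurent-inverse-bounded-corollary}) $\alpha_{n+1} \in \laurinx\O$ as soon as $\LC(\alpha_{n+1}) = \LC(a_{n+1}) \in \units\O$, which holds precisely because $t_0 \notin T_{n+1}$ (here $\LC(\alpha_{n+1}) = \LC(a_{n+1})$ since $\io{\alpha_{n+1}} < 0$ by Remark~\ref{cf-absolute-values}). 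By Theorem~\ref{cf-good-red-partial-quotients} this gives good reduction of $\CF(\alpha)$ at $\nu$, so bad reduction of $\CF(\alpha)$ at $t - t_0$ can happen only for $t_0 \in \Sigma$, a countable set.

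I do not expect any real obstacle here; the content is essentially bookkeeping with the valuations of finitely many rational functions at a time. The only points needing a little care are that $\alpha$ itself may fail to be bounded at countably many $t_0$ (which is why $S$ has to be included, and why ``bad reduction'' is understood to also cover the unbounded case), and the normalisation used when $\io\alpha > 0$; both are disposed of by the finiteness of the pole set of a single coefficient.
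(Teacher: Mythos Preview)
Your proof is correct and follows essentially the same idea as the paper's: at each level $n$ the ``bad'' $t_0$ are cut out by the zeroes or poles of a single rational function in $t$ (namely $\LC(\alpha_n)=\LC(a_n)$), so the total bad locus is a countable union of finite sets. The paper argues directly from Proposition~\ref{bad-reduction-minimal-pole} (bad reduction forces $\nu(\LC(\alpha_n))<0$ at the first failure), whereas you run the contrapositive via Theorem~\ref{cf-good-red-partial-quotients} and Proposition~\ref{cf-good-red-leading-coeffs}; you are also a bit more careful than the paper in first discarding the countable set $S$ where $\alpha$ itself fails to lie in $\laurinx\O$ and in handling the case $\io\alpha>0$, but the core mechanism is identical.
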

\begin{proof}
For every \(n \geq 0\), let \(d_n\) the denominator of \(\LC(\alpha_{n}) \in \C(t)\) which is a monic polynomial in \(\C[t]\). Clearly \(\ord_{t=t_0}(\LC(\alpha_n)) < 0\) holds \IFF \(d_n(t_0) = 0\).

Then if \(\CF(\alpha)\) has bad reduction at \(t-t_0\), there exists by Proposition \ref{bad-reduction-minimal-pole} at least one \(n\) such that \(d_n(t_0)\).

Of course there are only countably many polynomials \(d_n\), each with finitely many zeroes (even though \(\deg d_n\) might increase with \(n\)). Hence there are only countably many possibilities for bad reduction of \(\CF(\alpha)\).
\end{proof}

Masser and Zannier also give another example in degree \(6\), with different behaviour (see Section 3.4.5 in \cite{zannier-2012-some-problems-unlikely}):
\begin{prop}[Masser-Zannier]
\label{mz-spec-infinite-periodic-deg6}
Let \(D = X^6 + X^2 + t\), then \(\CF(\sqrt{D})\) is non-periodic. The set of \(t_0\) such that \(\CF(\sqrt{D_{t=t_0}})\) is periodic, is infinite and denumerable.
\end{prop}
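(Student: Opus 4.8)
The plan is to reduce the genus $2$ problem to a genus $1$ one, using that $D=X^{6}+X^{2}+t$ is a polynomial in $X^{2}$. Writing $Z=X^{2}$ and $f(Z)=Z^{3}+Z+t$, the curve $\CC:Y^{2}=D(X)$ is \emph{bielliptic}: the two degree $2$ morphisms $\phi:\CC\to E_{1}$, $(X,Y)\mapsto(X^{2},Y)$, onto $E_{1}:Y^{2}=Z^{3}+Z+t$, and $\psi:\CC\to E_{2}$, $(X,Y)\mapsto(X^{2},XY)$, onto $E_{2}:W^{2}=Z^{4}+Z^{2}+tZ=Z\,f(Z)$, are invariant under the bielliptic involution $\iota:(X,Y)\mapsto(-X,Y)$ and under its twist $\iota\sigma$ by the hyperelliptic involution, and they realise $\phi^{*}E_{1}$ and $\psi^{*}E_{2}$ as the $(+1)$- and $(-1)$-eigenparts of $\iota_{*}$ on $\Jac(\CC)$; hence $\Jac(\CC)\sim E_{1}\times E_{2}$ over $\C(t)$. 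Since $\phi$ sends both points $O_{\pm}$ of $\CC$ at infinity to the unique point of $E_{1}$ at infinity, while $\psi$ separates them (they go to the two distinct points of $E_{2}$ above $Z=\infty$), one gets $\phi_{*}\OO=0$ and $\psi_{*}\OO\neq 0$; and because $\iota$ interchanges $O_{+}$ and $O_{-}$, the class $\j{\OO}$ lies in the $(-1)$-eigenpart $\psi^{*}E_{2}$, on which $\psi_{*}$ restricts to an isogeny onto $E_{2}$. So $\j{\OO}$ is torsion on $\Jac(\CC)$ \IFF $\psi_{*}\j{\OO}$ is torsion on $E_{2}$, and the same equivalence holds fibrewise at every $t_{0}$ for which $\CC_{t=t_{0}}$ and $E_{2,\,t=t_{0}}$ are smooth, i.e. for all but finitely many $t_{0}$.

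First I would check $\CF(\sqrt{D})$ is non-periodic over $\C(t)$. By Theorems \ref{thm-pellian-iff-cf-periodic} and \ref{thm-pellian-iff-torsion} this means $\j{\OO}$ is non-torsion in $\Jac(\CC)$, equivalently (by the above) $\psi_{*}\j{\OO}$ is non-torsion on $E_{2}$ over $\C(t)$. In characteristic $0$ a torsion section stays torsion of the same order under every good specialisation, so it is enough to exhibit one algebraic $t_{0}$ with $\j{\OO}$ non-torsion on $\Jac(\CC_{t=t_{0}})$; this is supplied by the effective periodicity test of Remark \ref{rem-periodicity-test-reduction-two-primes}, reducing $\CC_{t=t_{0}}$ modulo two primes of distinct residue characteristic and observing that the resulting torsion orders have incompatible least common multiple (for instance $t_{0}=1$). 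This gives the first assertion.

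It then remains to produce infinitely many $t_{0}$ with $\psi_{*}\j{\OO}$ torsion at $t=t_{0}$. The elliptic pencil $E_{2}\to\A^{1}_{t}$ is non-isotrivial (the four branch points $\{0\}\cup f^{-1}(0)$ of $E_{2}\to\P^{1}_{Z}$ have a $j$-invariant depending on $t$), and $\psi_{*}\j{\OO}$ is a section of infinite order; the claim is then the classical fact that a non-torsion section of a non-isotrivial elliptic pencil specialises to a torsion point at infinitely many (necessarily algebraic) parameters. This is exactly the mechanism behind Proposition \ref{mz-thm-spec-periodic-deg4}; concretely, for every $N$ the $N$-division locus $\{\,t_{0}:(\psi_{*}\j{\OO})(t_{0})\in E_{2,\,t=t_{0}}[N]\,\}$ is non-empty and finite — non-empty because, on a relatively minimal smooth model, the section $[N]\circ(\psi_{*}\j{\OO})$ meets the zero section as soon as its intersection number with the zero section is positive, which happens for $N$ large. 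Running $N$ over the primes and noting that each $t_{0}$ with $\CF(\sqrt{D_{t=t_{0}}})$ periodic accounts only for the finitely many primes dividing the torsion order of $\j{\OO}$ on $\Jac(\CC_{t=t_{0}})$, one gets infinitely many distinct such $t_{0}$. Denumerability is immediate: by the first assertion this set is contained in $\closure{\Q}$ and equals $\bigcup_{N\geq 1}\{\,t_{0}:N\,\j{\OO}=0\text{ on }\Jac(\CC_{t=t_{0}})\,\}$, a countable union of finite sets, after discarding the finitely many $t_{0}$ where $D_{t=t_{0}}$ fails to be square-free. Translating back through Theorems \ref{thm-pellian-iff-torsion} and \ref{thm-pellian-iff-cf-periodic} yields the proposition.

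The main obstacle is precisely the reduction step: proving $\Jac(\CC)\sim E_{1}\times E_{2}$ with $\j{\OO}\in\psi^{*}E_{2}$, so that torsion of $\j{\OO}$ is detected on a \emph{one-parameter} family of elliptic curves. This is exactly what is unavailable for $X^{6}+X+t$, whose Jacobian is simple, and is the reason the conclusion here is opposite to Theorem \ref{mz-thm-spec-periodic-deg6}. Once the bielliptic splitting is in hand, the infinitude of torsion specialisations of a non-torsion section of a non-isotrivial elliptic pencil is standard (and is the input already needed for the genus $1$ Proposition \ref{mz-thm-spec-periodic-deg4}), so I would cite it rather than reprove it in detail.
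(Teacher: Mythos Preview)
The paper does not give its own proof of this proposition; it simply records it as a result of Masser and Zannier, citing Section~3.4.5 of \cite{zannier-2012-some-problems-unlikely}. Your sketch is correct and is essentially the argument behind that citation: the key point is that $D=X^{6}+X^{2}+t$ is even in $X$, so $\CC$ is bielliptic, $\Jac(\CC)$ is isogenous to $E_{1}\times E_{2}$, and because $\iota:(X,Y)\mapsto(-X,Y)$ swaps $O_{\pm}$ the class $\j{\OO}$ lands in the $E_{2}$-factor; one then invokes the standard fact (already underlying Proposition~\ref{mz-thm-spec-periodic-deg4}) that a non-torsion section of a non-isotrivial elliptic pencil has infinitely many torsion specialisations.

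A couple of places where you could tighten the write-up: the phrase ``$\phi^{*}E_{1}$ and $\psi^{*}E_{2}$ are the $(\pm1)$-eigenparts'' is only true up to isogeny, and what you actually use is the weaker (and sufficient) statement that $\phi_{*}\times\psi_{*}:\Jac(\CC)\to E_{1}\times E_{2}$ has finite kernel, so that $\j{\OO}$ is torsion iff $(\phi_{*}\j{\OO},\psi_{*}\j{\OO})=(0,\psi_{*}\j{\OO})$ is torsion iff $\psi_{*}\j{\OO}$ is torsion on $E_{2}$. Also, the intersection-number justification for non-emptiness of the $N$-division locus deserves one more sentence to dispose of the finitely many bad fibres (the intersection number grows like $N^{2}$, while bad fibres contribute a bounded amount). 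With those clarifications your argument is complete and matches the intended one.
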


\subsection{Repeated occurrences of \(t-t_0\)}
\label{sec:orgf9f9759}
Also for the specialisation case we can say something about the occurrence of ``primes'' in infinitely many denominators of partial quotients \(a_n\). In degree \(4\), we can simply use Corollary \ref{cor-infinite-poles-deg4} to deduce this from Proposition \ref{mz-thm-spec-periodic-deg4} above:

\begin{cor}
Let \(D = X^4 + X + t\) as in Proposition \ref{mz-thm-spec-periodic-deg4}. For each of the infinitely many \(t_0\) where \(\CF(\sqrt{D})\) has bad reduction, there exist infinitely many \(n\) such that \(t-t_0\) appears in a denominator of \(a_n\).
\end{cor}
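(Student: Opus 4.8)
The plan is to reduce the statement to a direct application of Corollary \ref{cor-infinite-poles-deg4}, whose hypotheses are exactly those of Theorem \ref{thm-genus1-zero-patterns}. So the whole proof amounts to checking that, for each admissible $t_0$, the polynomial $D = X^4 + X + t$ together with the valuation $\nu = \ord_{t=t_0}$ on $K = \C(t)$ satisfies those hypotheses.

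First I would fix one of the infinitely many $t_0 \in \C$ furnished by Proposition \ref{mz-thm-spec-periodic-deg4} for which $\CF(\sqrt{D})$ has bad reduction at $\nu$, and set $\O = \C[t]_{\spann{t-t_0}}$, a discrete valuation ring with residue field $k = \C$ of characteristic $\neq 2$ and uniformiser $t - t_0$. Since $D \in \O[X]$ is monic of even degree $4$ with $\LC(D) = 1 \in \units\O$ a square, Proposition \ref{laurent-sqrt-bounded} gives $\alpha = \sqrt{D} \in \laurinx\O$ and $\gamma = \Red\alpha = \sqrt{\Red D}$, where $\Red D = X^4 + X + t_0$.

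Next I would rule out $\Red D$ being a square in $k[X]$: if $X^4 + X + t_0 = (X^2 + aX + b)^2$, then matching the coefficients of $X^3$ and $X^2$ forces $a = 0$ and then $b = 0$, contradicting the coefficient $1$ of $X$; equivalently, $\nu(D - A^2) = 0$ for every $t_0$, so $\Red D$ is never a square by Proposition \ref{completion-of-square} and the discussion in Section \ref{sec:org1ce08e7}. By Proposition \ref{mz-thm-spec-periodic-deg4}, $\CF(\sqrt{D})$ is non-periodic, and by Proposition \ref{bad-reduction-is-periodic-deg4} bad reduction of $\CF(\alpha)$ at $\nu$ (which is exactly our choice of $t_0$) forces $\CF(\gamma)$ to be periodic. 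Hence every hypothesis of Theorem \ref{thm-genus1-zero-patterns} is met, and Corollary \ref{cor-infinite-poles-deg4} provides infinitely many $n$ with $\nu(a_n) < 0$.

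Finally I would translate $\nu(a_n) < 0$ back into the language of the statement: by definition the Gauss norm $\nu$ on $K[X]$ is the infimum of the $\ord_{t=t_0}$-valuations of the coefficients, so $\nu(a_n) < 0$ says precisely that some coefficient of $a_n \in \C(t)[X]$ has a pole at $t_0$, i.e. that $t - t_0$ divides its denominator. Therefore $t - t_0$ appears in a denominator of $a_n$ for infinitely many $n$. There is no genuinely hard step here: the arithmetic-geometric substance (the infinitude of the $t_0$, the unbounded-valuation pattern) is already contained in Proposition \ref{mz-thm-spec-periodic-deg4}, Theorem \ref{thm-genus1-zero-patterns} and Corollary \ref{cor-infinite-poles-deg4}; the only mild verification is the non-squareness of $\Red D$ carried out above, which is immediate.
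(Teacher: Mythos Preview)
Your proposal is correct and follows exactly the route the paper takes: invoke Corollary \ref{cor-infinite-poles-deg4} after checking the hypotheses of Theorem \ref{thm-genus1-zero-patterns} via Proposition \ref{mz-thm-spec-periodic-deg4} and Proposition \ref{bad-reduction-is-periodic-deg4}. The paper states this in a single sentence without spelling out the non-squareness of $\Red D$ or the translation $\nu(a_n)<0 \Leftrightarrow$ ``$t-t_0$ in a denominator'', so your version simply makes explicit what the paper leaves implicit.
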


In degree \(6\), the situation becomes more subtle, and we need to use Proposition \ref{fibre-conditions-infinite-poles}. If \(D = X^6 + X + t\), then for \(t_0 = 0\), there is the problem that the only fibre of \(\lambda : \N_0 \to \N_0\) (the map describing the reduction of convergents, introduced in Section \ref{sec:orgea2cbbe}) with a single element is \(\inv\lambda(0) = \{0 \}\), so we cannot apply the proposition. This happens because for this specialization we have \(\deg c_n \neq 1\) for all \(n\).

\begin{table}[h!]
\centering
\begin{tabular}{|l|l|}
\hline
\(D = X^{6} + X\) & basefield \(\Q\)\\ \hline
Discriminant of $D$: \(5^{5}\) & $D$ never reduces to a square.\\ \hline
\multicolumn{2}{|l|}{period length \(2\) for \(\CF(\sqrt{D})\)}\\ \hline
\multicolumn{2}{|l|}{Minimal Pell solution}\\ \hline
\(p_{1} = 2 X^{5} + 1\) & \(q_{1} = 2 X^{2}\)\\ \hline
\multicolumn{2}{|l|}{Partial quotients of \(\sqrt{D}\)}\\ \hline
\multicolumn{2}{|l|}{\(a_{0} = X^{3}\)}\\ \hline
\multicolumn{2}{|l|}{\(a_{1} = 2 X^{2}\)}\\ \hline
\multicolumn{2}{|l|}{\(a_{2} = 2 X^{3}\)}\\ \hline
\end{tabular}
\end{table}

For all other \(t_0\), we can use Proposition \ref{prop-criterion-infinite-single-element-fibres} because clearly \(\deg c_2 = 1\) for any specialization to \(t_0 \neq 0\), and the minimal degree \(\delta\) is of course \(1\). This implies infinitely many fibres with a single element, and from Lemma \ref{bad-reduction-to-periodic} and Proposition \ref{fibre-conditions-infinite-poles} then follows:

\begin{cor}
Let \(D = X^6 + X + t\) as in Theorem \ref{mz-thm-spec-periodic-deg6}. For each of the finitely many \(t_0 \neq 0\) where \(\CF(\sqrt{D_{t=t_0}})\) is periodic, there exist infinitely many \(n\) such that \(t-t_0\) appears in a denominator of \(a_n\).
\end{cor}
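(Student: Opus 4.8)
The plan is to verify the hypotheses of Proposition \ref{fibre-conditions-infinite-poles}, producing the two required families of fibres of $\lambda$ via Proposition \ref{prop-criterion-infinite-single-element-fibres} and Lemma \ref{bad-reduction-to-periodic}. Fix one of the finitely many $t_0 \neq 0$ for which $\CF(\sqrt{D_{t=t_0}})$ is periodic, and put $\nu = \ord_{t=t_0}$, so $\O = \C[t]_{\spann{t-t_0}}$, $K = \C(t)$, $k = \C$, and set $\alpha = \sqrt{D}$, $\gamma = \Red{\alpha} = \sqrt{D_{t=t_0}}$. Since $D = X^6 + X + t$ is monic with $\LC(D) = 1$ a square, Proposition \ref{laurent-sqrt-bounded} gives $\alpha \in \laurinx{\O}$; and since $\CF(\gamma)$ is periodic it is infinite, so $\gamma \notin k(X)$ by Proposition \ref{cf-euclidean-algorithm}, i.e. $\Red{D}$ is not a square, which is what is needed for the reduction machinery of Chapter \ref{sec:orgd5f1900} and for Lemma \ref{bad-reduction-to-periodic}.

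Next I would exploit the partial quotients displayed in the table, namely $a_0 = X^3$, $a_1 = 2X^2 - 2tX + 2t^2$ and $a_2 = (-\tfrac{1}{2} X - \tfrac{1}{2} t)/t^3$. Here $a_0, a_1 \in \O[X]$ with unit leading coefficients $1$ and $2$, and — this is exactly where the hypothesis $t_0 \neq 0$ enters — also $a_2 \in \O[X]$ with $\LC(a_2) = -1/(2t^3) \in \units\O$. Applying Propositions \ref{cf-good-red-leading-coeffs} and \ref{cf-good-red-second-condition-redundant} inductively, we get $\alpha_0, \alpha_1, \alpha_2 \in \laurinx\O$ with $\Red{\alpha_j} = \gamma_j$ for $j \leq 2$, so Remark \ref{good-reduction-preserve-degrees} yields $\deg c_2 = \deg a_2 = 1$. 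Since $\deg a_n \geq 1$ for $n \geq 1$ and $\deg a_0 = 3$, the minimal degree is $\delta = \min\{\deg a_n \mid n \geq 0\} = 1 = \deg c_2$. Because $\CF(\alpha)$ is non-periodic by Proposition \ref{mz-thm-spec-periodic-deg6} while $\CF(\gamma)$ is periodic, Proposition \ref{prop-criterion-infinite-single-element-fibres} now provides infinitely many fibres of $\lambda$ with a single element, and Lemma \ref{bad-reduction-to-periodic} provides infinitely many fibres with at least two elements.

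With both families in hand, Proposition \ref{fibre-conditions-infinite-poles} produces infinitely many $n$ with $\nu(\LC(\alpha_n)) < 0$. For $n \geq 1$ one has $\io{\alpha_n} < 0$, hence $\LC(a_n) = \LC(\alpha_n)$ and so $\nu(\LC(a_n)) < 0$; writing $\LC(a_n) \in \C(t)$ in lowest terms, $\ord_{t=t_0}(\LC(a_n)) < 0$ says precisely that $t - t_0$ divides its denominator, so $t - t_0$ appears in a denominator of a coefficient of $a_n$. This holds for infinitely many $n$, which is the assertion. The only genuine content is the bookkeeping in the middle step: checking that $t_0 \neq 0$ forces good reduction of $\CF(\sqrt{D})$ up through $\alpha_2$, so that a partial quotient of minimal degree $1$ survives into $\CF(\gamma)$ and Proposition \ref{prop-criterion-infinite-single-element-fibres} applies; at $t_0 = 0$ this breaks down (there $\deg c_n \neq 1$ for all $n$, as already noted in the text), which is exactly why that value is excluded.
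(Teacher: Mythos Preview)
Your proof is correct and follows exactly the same approach as the paper: verify that for $t_0 \neq 0$ the continued fraction has good reduction through $\alpha_2$ so that $\deg c_2 = 1 = \delta$, then invoke Proposition \ref{prop-criterion-infinite-single-element-fibres} for the single-element fibres and Lemma \ref{bad-reduction-to-periodic} for the multi-element fibres, and conclude via Proposition \ref{fibre-conditions-infinite-poles}. The paper's own argument is the one-line remark preceding the corollary; you have simply spelled out the verification that $\LC(a_j) \in \units\O$ for $j \leq 2$ and the passage from $\nu(\LC(\alpha_n)) < 0$ to a denominator of $a_n$ in more detail.
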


It is likely this property also holds for \(t_0 = 0\), but this would require a different argument, for example an analogue to Theorem \ref{thm-genus1-zero-patterns} for degree \(6\) which is unfortunately not in sight.

For \(D = X^6 + X^2 + t\), we have however \(a_1 = 2 \, X\), so Proposition \ref{prop-criterion-infinite-single-element-fibres}  implies infinitely many fibres of \(\lambda\) with a single element for every \(t_0\), so again by Lemma \ref{bad-reduction-to-periodic} and Proposition \ref{fibre-conditions-infinite-poles} follows:
\begin{cor}
Let \(D = X^6 + X^2 + t\) as in Proposition \ref{mz-spec-infinite-periodic-deg6}. For each of the finitely many \(t_0 \neq 0\) where \(\CF(\sqrt{D_{t=t_0}})\) is periodic, there exist infinitely many \(n\) such that \(t-t_0\) appears in a denominator of \(a_n\).
\end{cor}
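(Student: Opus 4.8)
The plan is to check the hypotheses of Proposition \ref{fibre-conditions-infinite-poles} and then translate its conclusion into a statement about denominators. Fix a $t_0 \neq 0$ for which $\CF(\sqrt{D_{t=t_0}})$ is periodic, and set $\nu = \ord_{t=t_0}$, $\O = \C[t]_{\spann{t-t_0}}$, $\alpha = \sqrt{D}$ and $\gamma = \Red{\alpha} = \sqrt{D_{t=t_0}}$. Since $D \in \O[X]$ is monic with $\LC(D) = 1$ a unit square, Proposition \ref{laurent-sqrt-bounded} gives $\alpha \in \laurinx{\O}$ with $\LC(\alpha) = 1 \in \units{\O}$ and $\io{\alpha} = -3 \leq 0$, so the reduction-theoretic framework of the preceding chapters applies. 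Periodicity of $\CF(\gamma)$ forces $\gamma$ to be a quadratic irrational, hence $\gamma \notin \C(X)$ and $D_{t=t_0}$ is not a square; and $\CF(\sqrt{D})$ is non-periodic by Proposition \ref{mz-spec-infinite-periodic-deg6}.

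The computational input is $a_1 = 2X$ for $\CF(\sqrt{D})$, which follows from $A = X^3$ and $D - A^2 = X^2 + t$ by the formulas of Chapter \ref{sec:orge1a60a6} (a one-line computation, already recorded in the discussion preceding the corollary); in particular $\deg a_1 = 1$. Together with Corollary \ref{cor-pq-degree-periodicity}, which gives $\deg a_n \geq 1$ for all $n \geq 1$ and $\deg a_0 = 3$, this yields $\delta := \min\{\deg a_n \mid n \geq 0\} = 1$. Moreover $a_0 = X^3$ and $a_1 = 2X$ lie in $\O[X]$ with leading coefficients in $\units{\O}$ (here $2 \in \units{\C} \subset \units{\O}$), so Proposition \ref{cf-good-red-leading-coeffs} gives $\alpha_1 \in \laurinx{\O}$, and then Proposition \ref{cf-good-red-second-condition-redundant} and Remark \ref{good-reduction-preserve-degrees} give $\deg c_1 = \deg a_1 = 1 = \delta$. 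Hence Proposition \ref{prop-criterion-infinite-single-element-fibres} applies: the map $\lambda$ has infinitely many fibres with a single element. On the other hand, since $\CF(\gamma)$ is periodic, Lemma \ref{bad-reduction-to-periodic} shows that $\CF(\alpha)$ has bad reduction at $\nu$ and that infinitely many fibres of $\lambda$ have at least two elements.

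With both "infinitely many single-element fibres" and "infinitely many fibres of size $\geq 2$" in hand, Proposition \ref{fibre-conditions-infinite-poles} produces infinitely many $n$ with $\nu(\LC(\alpha_n)) < 0$. For $n \geq 1$ we have $\io{\alpha_n} < 0$, so the leading term of $\alpha_n$ is a term of $a_n = \gauss{\alpha_n}$ and $\LC(a_n) = \LC(\alpha_n)$; therefore $\ord_{t=t_0}(\LC(a_n)) < 0$, i.e. $t - t_0$ divides the denominator of a coefficient of $a_n$. As this occurs for infinitely many $n$, the corollary follows.

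The only point that is not entirely routine is the verification that the minimal degree $\delta = 1$ is actually attained by some $\deg c_m$ of the \emph{specialized} continued fraction, rather than merely by some $\deg a_n$ generically. This is precisely where the explicit value $a_1 = 2X$ enters: because this partial quotient is already defined over $\C$ with unit leading coefficient, good reduction of $\CF(\sqrt{D})$ persists through $\alpha_1$ for every $t_0$, forcing $\deg c_1 = 1$. By contrast, for $D = X^6 + X + t$ no $\deg a_n$ equals $1$, which is exactly why the analogous argument there fails at $t_0 = 0$; so this small computation is the crux that makes the present case go through cleanly.
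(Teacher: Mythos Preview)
Your proof is correct and follows exactly the paper's route: $a_1 = 2X$ gives $\delta = 1$ with $\deg c_1 = 1$ for every $t_0$, then Proposition~\ref{prop-criterion-infinite-single-element-fibres}, Lemma~\ref{bad-reduction-to-periodic}, and Proposition~\ref{fibre-conditions-infinite-poles} finish the job.

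One small slip in your closing aside: for $D = X^6 + X + t$ the generic partial quotients \emph{do} satisfy $\deg a_2 = 1$ (so $\delta = 1$ there as well); the obstruction at $t_0 = 0$ is rather that $a_2$ carries $t$ in its denominator, so one cannot deduce $\deg c_2 = 1$ after specialization---and indeed $\CF(\sqrt{X^6+X})$ has only $\deg c_m \in \{2,3\}$.
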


Now let us return to \(D = X^6 + X + t\) and consider the remaining \(t_0\) with non-periodic bad reduction of the continued fraction. To understand this better, we need Theorem 1.3 from \cite{zannier-2016-hyper-contin-fract} (which we state for arbitrary base field \(\K\)):
\begin{thm}[Zannier]
\label{thm-zannier-an-deg-bound}
Let \(D \in \K[X]\) of degree \(2d\), non-square, but \(\LC(D)\) square. Suppose further that if \(D = E^2 \, D'\) with \(D'\) Pellian, then \(\deg D' \leq \frac{3}{2} d\) (for example assume \(D\) is square-free). Then there are only finitely many \(n\) with \(\deg a_n > \frac{d}{2}\).
\end{thm}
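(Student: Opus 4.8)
The plan is to convert the condition $\deg a_{n+1}>d/2$ into a statement about effective divisors on the (generalized) Jacobian of $Y^2=D(X)$, to show that infinitely many such occurrences force a positive–dimensional subgroup to have a translate inside a small $W_r$, and then to read off from the classification of such subgroups a factorisation $D=E^2D'$ with $D'$ Pellian and $\deg D'>\tfrac32 d$, contradicting the hypothesis. For the dictionary, write $D=E^2D_0$ with $D_0$ square-free, so $\sqrt D=E\sqrt{D_0}$, $\CF(\sqrt D)=\CF(E\sqrt{D_0})$, and $E\sqrt{D_0}$ is an element of the function field of $\CC:Y^2=D_0(X)$ with poles only at infinity. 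By Corollary \ref{cor-canonical-convergent-general-divisor} — in the generalized–Jacobian form when $E$ is non-constant, as in \cite{zannier-2016-hyper-contin-fract} and \cite{serre-1988-algebraic-groups-class}, using the modulus supported on the zeros of $E$ — the convergent $(p_n,q_n)$ satisfies
\begin{equation*}
\Div(p_n-\sqrt D\,q_n)=-m_n\,\pd{O_-}+(m_n-e_n)\,\pd{O_+}+\pd{P^n_1}+\dots+\pd{P^n_{e_n}},
\end{equation*}
with $m_n=\deg p_n$ strictly increasing, the $P^n_i$ on the affine curve, and $e_n=\deg a_0-\deg a_{n+1}=d-\deg a_{n+1}$. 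Thus $\deg a_{n+1}>d/2$ is equivalent to $e_n<d/2$, and in the Jacobian this reads $m_n\,j(O_-)=j(P^n_1)+\dots+j(P^n_{e_n})\in W_{e_n}$. Put $\rho=\lceil d/2\rceil-1$; I must rule out $m_n\,j(O_-)\in W_{\rho}$ for infinitely many $n$.

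Next I would extract a subgroup. The hypothesis, applied with $E$ constant, forces $D$ non-Pellian, so by Theorem \ref{thm-pellian-iff-torsion} the point $j(O_-)$ is non-torsion; let $\mathcal V$ be the Zariski closure of $\langle j(O_-)\rangle$, a (semi-)abelian subgroup with identity component $V_0$ of dimension $b\ge1$, and $j(O_-)$ of finite order in $\mathcal V/V_0$. Invoking that the sequence $(\deg a_n)$ is eventually periodic — the Skolem–Mahler–Lech argument for algebraic groups established in \cite{zannier-2016-hyper-contin-fract} — the assumed-infinite set $\{n:e_n<d/2\}$ contains a full arithmetic progression $n_0+kL$, whence $m_{n_0+kL}\,j(O_-)=Q_0+kR$ with $R=\bigl(\sum_{j=n_0+1}^{n_0+L}\deg a_j\bigr)j(O_-)$ non-torsion; the Zariski closure of this orbit is a coset of a subgroup with identity component $V_0$ and it lies in the closed set $W_\rho$. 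Hence $x+V_0\subseteq W_\rho$ for some $x$, with $\dim V_0\ge1$. (Alternatively one can quote Mordell–Lang for semi-abelian varieties directly, without periodicity of the degrees.)

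When $D$ is square-free this already concludes, which is why the theorem is elementary in that case. A positive-dimensional abelian subvariety $V_0$ of $\Jac(\CC)$ with a translate inside $W_\rho$, $\rho<g$, comes (by the classification of such subvarieties via Abel's theorem) from a nonconstant morphism $\psi:\CC\to\CC_1$ with $g(\CC_1)=\dim V_0=b$ and the numerical relation $\rho\ge(\deg\psi)\,b$; by the Castelnuovo–Severi inequality applied to $\psi$ and the hyperelliptic map this $\psi$ must be compatible with the hyperelliptic structures, so $D_0=h^2\,(D_1\circ w)$ up to the square factor with $\deg w=\deg\psi$ and $\deg D_1\in\{2b+1,2b+2\}$, and square-freeness of $D_0$ forces $h$ constant, giving $2d=\deg D_1\cdot\deg\psi$; combining with $(\deg\psi)\,b\le\rho<d/2$ yields $b<1$, a contradiction. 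Hence $V_0=\Jac(\CC)$, and $x+\Jac(\CC)=\Jac(\CC)\subseteq W_\rho$ with $\rho<g$ is absurd.

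In the general (non-square-free) case the same three steps run, but now $\mathcal V$ and $W_r$ live on the \emph{generalized} Jacobian of $\CC$ attached to the modulus coming from $E$, and one must match positive-dimensional sub-semi-abelian varieties of this generalized hyperelliptic Jacobian having a translate in $W_\rho$ with square factorisations $D=E^2D'$: the cover $\psi$ (compatible with the hyperelliptic maps, again by Castelnuovo–Severi) gives $D=(Eh)^2D'$ with $D'=D_1\circ w$, and since $\OO$ is torsion modulo $V_0$, pushing forward by $\psi$ makes $\psi_*\OO=\pd{\psi(O_+)}-\pd{\psi(O_-)}$ torsion, i.e. $D_1$ Pellian, so $D'$ is Pellian by pulling a Pell solution of $D_1$ back along $w$. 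The remaining, and genuinely delicate, point is the degree bookkeeping: turning the threshold $d/2$ for $\deg a_n$ — together with the fact that non-periodicity forbids $\deg a_n=d$, so we really have $d/2<\deg a_n\le d-1$ — into the threshold $\deg D'>\tfrac32 d$, which is exactly where the constant $\tfrac32$ in the statement must originate and which (together with the precise form of the classification of translated subgroups in $W_\rho$ over generalized hyperelliptic Jacobians) I expect to be the main obstacle, the earlier steps being routine given Corollary \ref{cor-canonical-convergent-general-divisor}, the bounds of Proposition \ref{prop-bounds-torsion-period-length} and Corollary \ref{cor-pq-degree-periodicity}, and the cited Skolem–Mahler–Lech (or Mordell–Lang) input.
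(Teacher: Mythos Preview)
The paper does not prove this theorem: immediately after stating it, the text says only that ``This is a consequence of a Skolem-Mahler-Lech theorem for algebraic groups (for example the Jacobian of $\CC$) explained in the same article'', i.e.\ in \cite{zannier-2016-hyper-contin-fract}. So there is no in-paper proof to compare against; the result is imported.

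Your outline is exactly in the spirit of that cited reference: translate $\deg a_{n+1}>d/2$ into $m_n\,j(O_-)\in W_\rho$ with $\rho<d/2$ via the convergent divisors (Corollary \ref{cor-canonical-convergent-general-divisor}), invoke a Mordell--Lang/Skolem--Mahler--Lech argument to force a translate of a positive-dimensional connected subgroup $V_0$ into $W_\rho$, and then read off from the structure of such translated subgroups a factorisation $D=E^2D'$ with $D'$ Pellian of large degree. That is the correct architecture, and your use of the generalized Jacobian for the non-square-free case is also the right move.

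You do, however, underestimate where the work lies. Calling the square-free case ``elementary'' and the earlier steps ``routine'' glosses over two substantial ingredients that are part of Zannier's argument, not just black boxes: (i) the precise structure theorem for positive-dimensional translated (semi-)abelian subvarieties contained in $W_\rho$ of a (generalized) hyperelliptic Jacobian --- your assertion that such a $V_0$ ``comes from a nonconstant morphism $\psi:\CC\to\CC_1$ with $\rho\ge(\deg\psi)\,b$'' needs a careful statement and proof, not just an appeal to ``Abel's theorem''; and (ii) the Castelnuovo--Severi step forcing $\psi$ to be compatible with the hyperelliptic involutions, which in the generalized-Jacobian setting must also respect the modulus supported on the zeros of $E$. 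These two points, together with the degree bookkeeping pinning down the constant $\tfrac32$, are the actual content of \cite{zannier-2016-hyper-contin-fract}; the $\tfrac32d$ threshold you correctly flag as delicate does not stand alone as ``the'' obstacle.
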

This is a consequence of a Skolem-Mahler-Lech theorem for algebraic groups (for example the Jacobian of \(\CC\)) explained in the same article.

For \(\deg D = 6\), in particular for \(D = X^6 + X + t\) and its specializations, this means that if \(\CF(\sqrt{D})\) is not periodic, only finitely many partial quotients have degree \(\deg a_n > 1\); and the same property holds for \(\CF(\sqrt{D_{t=t_0}})\). This means that for the \(t_0\) with non-periodic \(\CF(\sqrt{D_{t=t_0}})\), the corresponding map \(\lambda\) has only finitely many fibres with more than one element. Again, we cannot apply Proposition \ref{fibre-conditions-infinite-poles}. This does not mean there might not be infinitely many \(a_n\) with \(t-t_0\) in the denominator, but for every \(n\) large enough, we can normalise the complete quotient \(\alpha_n\) to
\begin{multline*}
\normal{\alpha_n} = (t-t_0)^{-\ord_{t_0}(\alpha_n)} \, \alpha_n = \mu \, \alpha_n \\ = [\mu \, a_n, \inv{\mu} \, a_{n+1}, \mu \, a_{n+2}, \inv \mu \, a_{n+3}, \dots] = [b_0, b_1, b_2, b_3, b_4, \dots]
\end{multline*}
such that none of the \(b_i\) has \(t-t_0\) in a denominator. So \(\CF(\normal{\alpha_n})\) has good reduction at \(t-t_0\).

In fact, we cannot even exclude the possibility that \(\mu = 1\), in which case only finitely many \(a_n\) have \(t- t_0\) in the denominator, despite there being bad reduction of \(\CF(\alpha)\) at \(t -t_0\) (compare Remark \ref{infinite-poles-after-renormalising}).
\chapter{Heights}
\label{sec:org56e7cb7}
While the valuations used in the previous chapters give a local estimate for the complexity of the partial quotients, affine and projective heights provide a global measure of complexity. For the convergents, and more generally Padé approximations, the projective logarithmic height of the convergents has known lower bounds (in the non-Pellian case), see \cite{bombieri-cohen-1997-siegels-lemma-pade}: they should increase at least quadratically. A lower bound for the height of the partial quotients is more delicate, and has been found only recently: \cite{zannier-2016-hyper-contin-fract} gives a lower bound for affine logarithmic height, with at least quadratic growth for a frame of fixed length of partial quotients.

Upper bounds for the projective heights of the partial quotients follow from those of the convergents.
\section{Heights}
\label{sec:org743079c}
For the convenience of the reader, I will list some definitions and properties of heights to be used in this chapter, following mainly \cite{bombieri-gubler-2006-heights-diophantine-geometry} in notation and normalisation. For the Weil height machine, I follow \cite{hindry-silverman-2000-diophantine-geometry}.
\subsection{Places and Product formula}
\label{sec:orgfdc4da2}
For a number field \(K\), one defines the set of \emph{places} \(M_K\) as the equivalence classes of non-trivial absolute values on \(K\), where two absolute values are equivalent if they induce the same topology. A place contains either only archimedean absolute values, and then is called \emph{infinite}, or only non-archimedean absolute values, in which case we call it \emph{finite}. The infinite places correspond to embeddings of \(K\) into \(\C\) up to complex conjugation, hence there is only a finite number. The finite places correspond to prime ideals of the ring of integers of \(K\), so there are infinitely many.

In order to define heights, one carefully chooses and fixes an absolute value to represent a place. For \(\Q\), there is one infinite place, the restriction of the standard complex absolute value, and countably many finite places corresponding to the prime numbers. We represent these places by
\begin{align*}
M_\Q &= \{ \pp \in \N \text{ prime } \} \cup \{ \infty \}, \\
\abs{x}_\infty &= \max(x,-x), \\
\abs{x}_\pp &= \pp^{-n} \text{ for } x \neq 0  \text{ where } x = \pp^n \, \frac{a}{b} \text{ with } n \in \Z, a, b \in \Z \setminus \pp \Z
\end{align*}

This ensures \(K = \Q\) satisfies the \emph{product formula}
\begin{equation*}
\prod_{\nu \in M_\Q} \abs{x}_\nu = 1 \text{ for } x \in \units \Q.
\end{equation*}
The product on the left is well defined because for a fixed \(x\), only finitely many \(\nu \in M_\Q\) have \(\abs{x}_\nu \neq 1\).

On any number field \(K\), a place \(\omega\) on \(K\) restricts to a unique place \(\nu\) on \(\Q\), written \(\omega \div \nu\), and we can choose a cleverly normalised representative \(\omega\) such that
\begin{equation*}
\prod_{\omega \div \nu} \abs{x}_\omega = \abs{x}_\nu \text{ for all } x \in \units \Q.
\end{equation*}
In consequence, on any number field \(K\), there is a \emph{product formula}
\begin{equation*}
\prod_{\omega \in M_K} \abs{x}_\omega = 1 \text{ for } x \in \units K,
\end{equation*}
where on the left side only finitely \(\omega\) have \(\abs{x}_\omega \neq 1\).

We also remark that \(\omega\) normalised in this way satisfies an \emph{improved triangle equality}: Let \(x_1, \dots, x_r \in K\), then
\begin{equation}
\label{improved-triangle-equality}
\abs{x_1 + \dots + x_r}_\omega \leq \max(1, \abs{r}_\omega) \, \max\left(\abs{x_1}_\omega, \dots, \abs{x_r}_\omega\right).
\end{equation}

\subsection{Height on projective and affine space}
\label{sec:org5c9a4ee}
The product formula allows defining an \emph{exponential absolute projective height} on \(\P^n(K)\) for a number field \(K\) by setting
\begin{equation*}
\Hproj(x_0 : \dots : x_n) = \prod_{\nu \in M_K} \max\left( \abs{x_0}_\nu, \dots, \abs{x_n}_\nu \right)
\end{equation*}
By considering \(\A^n(K) \subset \P^n(K)\), this also defines an \emph{affine height} on \(\A^n(K)\),
\begin{equation*}
\Haff(x_1, \dots, x_n) = \Hproj(1 : x_1 : \dots : x_n)
\end{equation*}
and in particular we get a height \(H\) on \(K = \A^1(K)\). Note that the affine height is always larger than the projective height, i.e.
\begin{equation*}
\Hproj(x_1, \dots, x_n) \leq \Haff(x_1, \dots, x_n).
\end{equation*}

It can be shown that this definition does not depend on the number field \(K\), and thus extends uniquely to \(\closure \Q\).

Often it is more convenient to work with the \emph{logarithmic heights}, \(\hproj = \log \circ \Hproj\), \(\haff = \log \circ \Haff\) and \(h = \log \circ H\).
\subsection{Height of polynomials}
\label{sec:org3ad38dd}
A non-zero polynomial in \(K[X]\) of degree \(\leq n\) can be considered both as a point in \(\P^n(K)\), or in \(\A^{n+1}(K)\). So we define
\begin{align*}
\Hproj(a_n \, X^n + \dots + a_0) &= \Hproj(a_n : \dots : a_0) \\
\Haff(a_n \, X^n + \dots + a_0) &= \Haff(a_n, \dots, a_0)
\end{align*}
and likewise the logarithmic heights \(\hproj\) and \(\haff\). Note that this means the projective height of a polynomial depends only on its zeroes, while the affine height coincides with the height on \(K\) for constant polynomials.

Similarly as in Section \ref{sec:orge83be0b}, we define the \emph{Gauss norm} for \(\nu\) on \(K[X]\) by
\begin{equation*}
\abs{a_n \, X^n + \dots + a_0}_\nu = \max\left( \abs{a_n}_\nu, \dots, \abs{a_0}_\nu\right).
\end{equation*}
If \(\nu\) is non-archimedean, this is even an non-archimedean absolute value (see Proposition \ref{bounded-laurent-valuation}). Nevertheless, the notation is useful also for archimedean absolute values.

In the following, \(K\) is always a number field which has at most \([K:\Q]\) archimedean places.

\begin{prop}
\label{proj-height-poly-mult}
Let \(f_1, \dots, f_r \in K[X]\) and \(f = f_1 \cdots f_r\). Then
\begin{equation*}
-\deg f \, \log 2 + \sum_{i=1}^r \hproj(f_i) \leq \hproj(f) \leq \deg f \, \log 2 + \sum_{i=1}^r \hproj(f_i)
\end{equation*}
\end{prop}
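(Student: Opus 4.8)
The plan is to prove this via the theory of the Mahler measure, using the fact that the Mahler measure is exactly multiplicative and comparable to the projective height with a controlled error depending only on the degree. Concretely, for a place $\nu$ of $K$ and a polynomial $f \in K[X]$, I would introduce the local Mahler measure $M_\nu(f)$: at a non-archimedean place it equals the Gauss norm $|f|_\nu$ (this is Gauss's lemma, i.e. the content statement that $|f g|_\nu = |f|_\nu |g|_\nu$, which follows as in Proposition \ref{bounded-laurent-valuation}), and at an archimedean place $\nu$ corresponding to an embedding $K \hookrightarrow \C$ it is $M_\nu(f) = |a_n|_\nu \prod_j \max(1,|\zeta_j|_\nu)$ where $a_n$ is the leading coefficient and the $\zeta_j$ are the roots in $\C$. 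One then sets $M(f) = \prod_{\nu \in M_K} M_\nu(f)$.

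The key steps, in order: First, establish that $M(f) = \Hproj(f)$ for every non-zero $f \in K[X]$. At non-archimedean places this is immediate from the definition of the Gauss norm as the max of the $|a_i|_\nu$; at archimedean places one invokes Jensen's formula, which gives $M_\nu(f) = \exp\left(\int_0^1 \log|f(e^{2\pi i \theta})|_\nu \, d\theta\right)$, and then compares $M_\nu(f)$ with $\max_i |a_i|_\nu$ using the elementary inequality $\max_i |a_i|_\nu \le 2^{\deg f} M_\nu(f)$ and $M_\nu(f) \le (\deg f + 1)^{1/2}\max_i|a_i|_\nu$ — but actually for the exact identity $M(f) = \Hproj(f)$ one should work with $M$ directly rather than the crude bounds. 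Second, use exact multiplicativity: $M_\nu(f_1 \cdots f_r) = \prod_i M_\nu(f_i)$ holds at every place (non-archimedean by Gauss's lemma, archimedean because $\log M_\nu$ is additive via the integral / the roots of a product are the union of the roots), hence $M(f) = \prod_i M(f_i)$, i.e. $\hproj$ of the Mahler measure is exactly additive. Third, convert between $\hproj(f)$ and $\log M(f)$ at the finitely many archimedean places only: the comparison $2^{-\deg g} \max_i |b_i|_\nu \le M_\nu(g) \le \binom{\deg g}{\lfloor \deg g/2\rfloor}^{1/2}\cdots$ — more simply, the standard bound $2^{-\deg g}\,\Hproj(g) \le M(g) \le \Hproj(g)$ after summing $\sum_{\nu \mid \infty}$ the local inequalities, where only the lower inequality contributes a factor $2^{\deg g}$ because at each archimedean place $|a_i|_\nu \le 2^{\deg f} M_\nu(f)$ while $M_\nu(f) \le \max_i$ comparisons are absorbed. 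Putting $\hproj(f) = \log M(f) + O(\deg f \log 2)$ on both sides and using additivity of $\log M$ then yields the claimed two-sided inequality, with the constant $\log 2$ appearing once per unit of degree.

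Since this is a completely standard fact (see for instance \cite{bombieri-gubler-2006-heights-diophantine-geometry}), the cleanest write-up would simply cite the Mahler measure formalism and the Gauss lemma, then assemble the telescoping. The main obstacle — really a bookkeeping point rather than a genuine difficulty — is getting the direction and size of the error term right: one must be careful that $M(f) \le \Hproj(f)$ with no error (so the sum $\sum_i \hproj(f_i) = \sum_i \log M(f_i) + \sum_i(\text{error}_i)$ has its errors pushed the right way), whereas $\Hproj(f) \le 2^{\deg f} M(f)$, and that the total degree $\deg f = \sum_i \deg f_i$ makes the per-factor errors sum correctly to $\deg f \log 2$ on each side. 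I would double-check the archimedean local inequality $\max_{0\le i\le n}|a_i|_\nu \le \binom{n}{\lfloor n/2\rfloor}M_\nu(f) \le 2^n M_\nu(f)$ (a consequence of Vieta's formulas bounding elementary symmetric functions of the roots) since that is where the constant $2$ is born; everything else is formal manipulation of the product formula.
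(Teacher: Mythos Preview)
The paper gives no proof here; it simply cites Theorem~1.6.13 in \cite{bombieri-gubler-2006-heights-diophantine-geometry}. Your Mahler-measure strategy is exactly the one used in that reference, so you are on the same track as the cited source.

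That said, your sketch contains two false assertions that would need repair. First, you propose to ``establish that $M(f) = \Hproj(f)$''; this is not true, since at an archimedean place the Mahler measure $M_\nu(f)$ and the sup-norm $|f|_\nu$ genuinely differ. Second, you claim $M(g) \le \Hproj(g)$ holds with no error term; the polynomial $g = X^2 - 2X - 1 \in \Q[X]$ is already a counterexample, with $\Hproj(g) = 2$ but archimedean Mahler measure $1+\sqrt{2} > 2$. The archimedean comparison carries a degree-dependent factor in \emph{both} directions: one has $|g|_\nu \le 2^{\deg g} M_\nu(g)$ from bounding elementary symmetric functions of the roots (your Vieta remark is correct here), and in the other direction $M_\nu(g) \le \|g\|_{L^2,\nu} \le \sqrt{\deg g + 1}\,|g|_\nu$ by Landau's inequality. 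Combining these two bounds with the exact multiplicativity of $M_\nu$ at every place (Gauss's lemma at the finite ones) is indeed how Bombieri--Gubler argue; consult their Lemma~1.6.11 for the local bookkeeping that produces the clean constant $2^{\deg f}$ on each side.
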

For a proof see \cite{bombieri-gubler-2006-heights-diophantine-geometry}, Theorem 1.6.13.

\begin{prop}
\label{aff-height-poly-add}
Let \(f_1, \dots, f_r \in K[X]\) and \(f = f_1 + \cdots + f_r\). Then
\begin{equation*}
\haff(f_1 + \dots + f_r) \leq \haff(f_1) + \dots + \haff(f_r) + \log r.
\end{equation*}
\end{prop}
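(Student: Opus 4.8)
The plan is to reduce the statement to the two-summand case $f = g + h$ and then iterate. So first I would prove that $\haff(g+h) \le \haff(g) + \haff(h) + \log 2$, and then induct on $r$, noting that at each step one picks up an extra $\log 2$ and summing these gives $\log 2 + \log 2 + \dots = \log 2^{r-1} \le \log r$ once $r \geq 2$ (and the case $r = 1$ is trivial). Actually one has to be slightly careful that the telescoping of $\log 2$ terms really gives at most $\log r$; writing $f_1 + \dots + f_r = (f_1 + \dots + f_{r-1}) + f_r$ and applying the two-term bound together with the inductive hypothesis gives $\haff(f) \le \haff(f_1) + \dots + \haff(f_{r-1}) + \log(r-1) + \haff(f_r) + \log 2$, and $\log(r-1) + \log 2 = \log(2(r-1)) \le \log r$ fails for large $r$. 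So instead I would prove the sharper two-term bound $\haff(g+h) \le \haff(g) + \haff(h) + \log 2$ but keep a running count: by induction $\haff(f_1 + \dots + f_r) \le \sum \haff(f_i) + (r-1)\log 2$, and since $r - 1 \le \log_2 r \cdot \log 2^{-1}$... no. The clean route is to treat the sum symmetrically rather than by nested pairs.

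Here is the approach I would actually carry out. Write $f_i = \sum_j a_{ij} X^j$, so the coefficients of $f = \sum_i f_i$ are $b_j = \sum_i a_{ij}$. For each place $\omega \in M_K$, apply the improved triangle inequality \eqref{improved-triangle-equality} to each coefficient:
\begin{equation*}
\abs{b_j}_\omega = \Bigl| \sum_{i=1}^r a_{ij} \Bigr|_\omega \le \max(1, \abs{r}_\omega) \, \max_{i} \abs{a_{ij}}_\omega \le \max(1, \abs{r}_\omega) \, \max_{i} \abs{f_i}_\omega,
\end{equation*}
where $\abs{f_i}_\omega$ denotes the Gauss norm. Taking the maximum over $j$ gives $\abs{f}_\omega \le \max(1, \abs{r}_\omega) \prod_i \max(1,\abs{f_i}_\omega)$... no, that is wrong — the bound on $\abs{b_j}_\omega$ only involves a single $\max_i$, not a product. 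But the affine height of $f$ is $\prod_\omega \abs{f}_\omega$ only when $f$ is "primitive"; in general $\haff(f) = \sum_\omega \log\max(1,\abs{f}_\omega)$ is not right either. Let me recall: for a polynomial $f$ viewed in $\A^{n+1}$, $\Haff(f) = \prod_\omega \max(1, \abs{f}_\omega)$ where $\abs{f}_\omega$ is the Gauss norm (the max of $1$ is already absorbed since we include the coefficient $1$... actually no). The cleanest is: $\Haff(f_1,\dots)$ is $\Hproj(1:a_0:\dots)$, so $\Haff(f) = \prod_\omega \max(1, \abs{f}_\omega)$.

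So the key step is: for every $\omega$, $\max(1,\abs{f}_\omega) \le \max(1, \abs{r}_\omega) \prod_{i=1}^r \max(1, \abs{f_i}_\omega)$. This follows because $\abs{f}_\omega \le \max(1,\abs r_\omega)\max_i \abs{f_i}_\omega \le \max(1,\abs r_\omega)\prod_i\max(1,\abs{f_i}_\omega)$, and also $1 \le \prod_i \max(1,\abs{f_i}_\omega) \le \max(1,\abs r_\omega)\prod_i\max(1,\abs{f_i}_\omega)$. Taking the product over all $\omega \in M_K$ and using that $\prod_\omega \max(1,\abs r_\omega) = H(r)$ with $\log H(r) = h(r) \le \log r$ (the height of a rational integer $r$ is $r$ itself, since $\prod_{\pp} \abs{r}_\pp = 1/|r|_\infty$ and $\max(1,\abs r_\pp)=1$... wait, $H(r) = \prod_\omega \max(1,|r|_\omega) = \max(1,|r|_\infty) \cdot \prod_\pp \max(1,|r|_\pp) = r \cdot 1 = r$), gives exactly $\Haff(f) \le r \prod_i \Haff(f_i)$, hence $\haff(f) \le \log r + \sum_i \haff(f_i)$ after taking logarithms. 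The main obstacle, and the only real subtlety, is getting the normalization conventions exactly right — in particular that $\Haff$ of a polynomial equals $\prod_\omega\max(1,\abs{f}_\omega)$ with the correctly normalized $\abs{\cdot}_\omega$, and that $\log H(r) \le \log r$ for a positive integer $r$; once those are pinned down, the inequality drops out of \eqref{improved-triangle-equality} applied coefficientwise, with no induction needed.

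\begin{proof}
For a non-zero $g \in K[X]$ and a place $\omega \in M_K$, write $\abs{g}_\omega$ for the Gauss norm of $g$ at $\omega$; then $\Haff(g) = \prod_{\omega \in M_K} \max(1, \abs{g}_\omega)$, since $g$ regarded as a point of $\A^{n+1}(K)$ has first coordinate $1$. Write $f_i = \sum_j a_{ij} X^j$, so that $f = \sum_i f_i$ has coefficients $b_j = \sum_{i=1}^r a_{ij}$. Fix $\omega$. By the improved triangle inequality \eqref{improved-triangle-equality},
\begin{equation*}
\abs{b_j}_\omega \le \max(1, \abs{r}_\omega) \, \max_{1 \le i \le r} \abs{a_{ij}}_\omega \le \max(1, \abs{r}_\omega) \, \max_{1 \le i \le r} \abs{f_i}_\omega
\end{equation*}
for every $j$, hence $\abs{f}_\omega \le \max(1,\abs{r}_\omega) \prod_{i=1}^r \max(1, \abs{f_i}_\omega)$, and trivially $1 \le \prod_{i=1}^r \max(1, \abs{f_i}_\omega)$, so that
\begin{equation*}
\max(1, \abs{f}_\omega) \le \max(1, \abs{r}_\omega) \prod_{i=1}^r \max(1, \abs{f_i}_\omega).
\end{equation*}
Taking the product over all $\omega \in M_K$ yields $\Haff(f) \le H(r) \prod_{i=1}^r \Haff(f_i)$, where $H(r) = \prod_\omega \max(1,\abs{r}_\omega)$ is the height of the positive integer $r$, which equals $r$. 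Passing to logarithms gives $\haff(f) \le \log r + \sum_{i=1}^r \haff(f_i)$, as claimed.
\end{proof}
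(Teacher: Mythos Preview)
Your final proof is correct and is exactly the standard argument. The paper does not actually prove this proposition at all: it merely cites Proposition~1.5.15 in Bombieri--Gubler. Your coefficientwise application of the improved triangle inequality~\eqref{improved-triangle-equality}, followed by the observation $H(r)=r$ for a positive integer, is precisely how that reference proceeds, so you have simply supplied the details that the paper omits.
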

This follows from Proposition 1.5.15 in \cite{bombieri-gubler-2006-heights-diophantine-geometry}.

\begin{prop}
\label{proj-height-poly-division}
Let \(a, b, q, r \in \mino{K[X]}\) with \(a = q \, b + r\) and \(\deg r < \deg b\). Set \(N = \deg q = \deg a - \deg b\). Then
\begin{align}
\label{height-poly-div-q}
\hproj(q) &\leq \hproj(a) + N \left(\log 2 +  \hproj(b)\right), \\
\label{height-poly-div-r}
\hproj(r) &\leq \hproj(a) + (N+1) \left( \log 2 + \hproj(b) \right).
\end{align}
\end{prop}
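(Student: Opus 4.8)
The plan is to prove both inequalities at once by running the long-division algorithm explicitly, estimating the Gauss norm of each intermediate polynomial at every place of $K$, and then multiplying over all places.

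\textbf{Reduction to $b$ monic.} First I would observe that replacing $b$ by $\tilde b = b/\LC(b)$ replaces the quotient by $\LC(b)\,q$ and leaves $r$ unchanged. Since the projective height of a polynomial is invariant under multiplication by a nonzero constant (immediate from the product formula, as for the definition of $\hproj$ on $K[X]$), the asserted inequalities for $(a,b,q,r)$ are equivalent to those for $(a,\tilde b,\LC(b)\,q,r)$. So we may assume $\LC(b)=1$; then for every place $\omega\in M_K$ one has $\abs{b}_\omega\ge\abs{\LC(b)}_\omega=1$, where $\abs{\cdot}_\omega$ denotes the Gauss norm, so that $\max(1,\abs{b}_\omega)=\abs{b}_\omega$.

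\textbf{The division recursion and local estimates.} Set $a^{(0)}=a$ and, for $l=0,1,\dots,N$, let $q_{N-l}$ be the coefficient of $X^{\deg a-l}$ in $a^{(l)}$ and put $a^{(l+1)}=a^{(l)}-q_{N-l}\,X^{N-l}\,b$, so that $\deg a^{(l+1)}\le\deg a-l-1$; then $q=\sum_{l=0}^{N}q_{N-l}X^{N-l}$ and $a^{(N+1)}=r$. For each $\omega$ put $\kappa_\omega=\max(1,\abs{2}_\omega)$, so that $\abs{x+y}_\omega\le\kappa_\omega\max(\abs{x}_\omega,\abs{y}_\omega)$ for all $x,y\in K$ (with $\kappa_\omega=1$ at finite places, and the archimedean case coming from the improved triangle inequality \eqref{improved-triangle-equality}). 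Since every coefficient of $a^{(l+1)}$ is a difference of a coefficient of $a^{(l)}$ and a term $q_{N-l}\,b_j$ with $\abs{q_{N-l}}_\omega\le\abs{a^{(l)}}_\omega$, one gets $\abs{a^{(l+1)}}_\omega\le\kappa_\omega\,\abs{b}_\omega\,\abs{a^{(l)}}_\omega$, hence by induction $\abs{a^{(l)}}_\omega\le(\kappa_\omega\abs{b}_\omega)^{l}\,\abs{a}_\omega$ for $0\le l\le N+1$ (the factor $\kappa_\omega\abs{b}_\omega$ being $\ge1$). Therefore $\abs{q}_\omega=\max_{0\le l\le N}\abs{q_{N-l}}_\omega\le(\kappa_\omega\abs{b}_\omega)^{N}\abs{a}_\omega$ and $\abs{r}_\omega=\abs{a^{(N+1)}}_\omega\le(\kappa_\omega\abs{b}_\omega)^{N+1}\abs{a}_\omega$.

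\textbf{Taking the product over places.} Using that $\Hproj(f)=\prod_{\omega\in M_K}\abs{f}_\omega$ for any nonzero $f\in K[X]$, together with $\prod_{\omega}\kappa_\omega=\prod_{\omega}\max(1,\abs{2}_\omega)=H(2)=2$ (the infinite places contribute, the finite ones give $1$), I would conclude $\Hproj(q)\le 2^{N}\Hproj(b)^{N}\Hproj(a)$ and $\Hproj(r)\le 2^{N+1}\Hproj(b)^{N+1}\Hproj(a)$; taking logarithms yields \eqref{height-poly-div-q} and \eqref{height-poly-div-r}. The only genuinely delicate point is the archimedean bookkeeping: one must keep the triangle-inequality losses confined to powers of $\abs{2}_\omega$ so that, after the product over places, they reassemble into the single clean factor $H(2)=2$; combined with noticing that the monic normalization is exactly what makes $\max(1,\abs{b}_\omega)$ collapse to $\abs{b}_\omega$, the rest is the routine long-division estimate.
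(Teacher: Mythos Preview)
Your proof is correct and follows essentially the same approach as the paper: normalize $b$ to be monic, run the long-division recursion, bound each intermediate Gauss norm by a factor of $\max(1,\abs{2}_\omega)\,\abs{b}_\omega$ at each step, and then take the product over all places. The only cosmetic difference is that you index the steps by $l=0,\dots,N$ allowing possibly vanishing leading coefficients, whereas the paper phrases it as ``at most $N+1$ steps'' with strictly decreasing degree; the local estimates and the final product-over-places computation are identical.
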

\begin{rem}
The bound for \(\hproj(q)\) holds also if \(r = 0\).
\end{rem}
\begin{proof}
We can assume \(\LC(b) = 1\), as the projective height for polynomials is invariant under multiplication with a constant factor. This conveniently implies \(\abs{b}_\nu = \max(1, \abs{b}_\nu)\) for every \(\nu \in M_K\).

Using the standard algorithm for division, we define a sequence of polynomials, beginning with \(a_0 = a\) and continuing for \(i \geq 0\) via
\begin{equation*}
a_{i} = \LC(a_i) \, X^{N_i} \, b + a_{i+1} \text{ where } \deg a_{i+1} < \deg a_i \text{ and } N_i = \deg a_i - \deg b.
\end{equation*}
Using \eqref{improved-triangle-equality} on the coefficients, we estimate
\begin{multline*}
\abs{a_{i+1}}_\nu = \abs{ \LC(a_i) b - a_i }_\nu \\ \leq \max(1,\abs{2}_\nu) \, \max\left(\abs{\LC(a_i)}_\nu \abs{b}_\nu, \abs{a_i}_\nu\right) \leq \max(1,\abs{2}_\nu) \, \abs{a_i}_\nu \, \abs{b}_\nu
\end{multline*}
and obtain
\begin{equation*}
\abs{a_i}_\nu \leq \abs{a}_\nu \, \left(\max(1, \abs{2}_\nu) \abs{b}_\nu\right)^i.
\end{equation*}

There are at most \(N+1\) steps necessary in the algorithm to reach \(\deg a_i < \deg b\) (because in every step, the degree decreases by at least \(1\), hence \(\deg a_i < \deg a - i\), so \(i \leq N+1\)), at which point \(a_i = r\). Consequently
\begin{equation*}
\abs{r}_\nu \leq \abs{a}_\nu \, \left(\max(1, \abs{2}_\nu) \, \abs{b}_\nu\right)^i \leq \abs{a}_\nu \, \left(\max(1, \abs{2}_\nu) \, \abs{b}_\nu\right)^{N+1}
\end{equation*}
which implies \eqref{height-poly-div-r}.
The coefficients of \(q\) are precisely \(\LC(a_0), \dots, \LC(a_{i-1})\), so
\begin{equation*}
\abs{q}_\nu \leq \max\left( \abs{a_0}_\nu, \dots, \abs{a_{i-1}}_\nu\right) \leq \abs{a}_\nu \, \left(\max(1, \abs{2}_\nu) \, \abs{b}_\nu\right)^{i-1} \leq \abs{a}_\nu \, \left(\max(1, \abs{2}_\nu) \, \abs{b}_\nu\right)^{N}
\end{equation*}
whence \eqref{height-poly-div-q}.
\end{proof}

\begin{prop}
\label{proj-height-poly-zeroes}
Let \(f \in K[X]\) a polynomial of degree \(r\) with roots \(\alpha_1, \dots, \alpha_r \in \closure \Q\) (accounted for multiplicities). Then
\begin{equation*}
-r \, \log 2 + \hproj(f) \leq \haff(\alpha_1) + \dots + \haff(\alpha_r) \leq r \, \log 2 + \hproj(f).
\end{equation*}
\end{prop}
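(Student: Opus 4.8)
The plan is to deduce this directly from Proposition \ref{proj-height-poly-mult} (Gelfond's inequality for heights of products) applied to the linear factorisation of $f$. First I would pass to a number field $L \supseteq K$ containing all the roots $\alpha_1, \dots, \alpha_r$; this is harmless, since the logarithmic projective and affine heights do not depend on the chosen number field and extend uniquely to $\closure\Q$, as recalled in Section \ref{sec:org5c9a4ee}. Over $L$ we may write $f = c \prod_{i=1}^r (X - \alpha_i)$ with $c = \LC(f) \in \units L$. Because the projective height of a polynomial is invariant under multiplication by a non-zero constant (it depends only on the zeroes), we have $\hproj(f) = \hproj\bigl(\prod_{i=1}^r (X - \alpha_i)\bigr)$.

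Next I would apply Proposition \ref{proj-height-poly-mult} with $f_i = X - \alpha_i$ and $\deg f = r$, obtaining
\begin{equation*}
-r \, \log 2 + \sum_{i=1}^r \hproj(X - \alpha_i) \leq \hproj\Bigl(\prod_{i=1}^r (X - \alpha_i)\Bigr) \leq r \, \log 2 + \sum_{i=1}^r \hproj(X - \alpha_i).
\end{equation*}
It then remains to identify each term $\hproj(X - \alpha_i)$. By definition of the projective height of a polynomial, $\hproj(X - \alpha_i) = \hproj(1 : -\alpha_i)$; and since $\abs{-\alpha_i}_\omega = \abs{\alpha_i}_\omega$ for every place $\omega \in M_L$, this equals $\hproj(1 : \alpha_i) = \haff(\alpha_i)$. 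Substituting this into the displayed inequalities and using $\hproj(f) = \hproj\bigl(\prod_{i=1}^r (X - \alpha_i)\bigr)$ gives precisely the claimed double inequality
\begin{equation*}
-r \, \log 2 + \hproj(f) \leq \haff(\alpha_1) + \dots + \haff(\alpha_r) \leq r \, \log 2 + \hproj(f).
\end{equation*}

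I expect no serious obstacle: essentially all the content is carried by Proposition \ref{proj-height-poly-mult}, and the remaining ingredients are the scaling invariance of the projective height of a polynomial and the elementary identity $\hproj(X - \alpha) = \haff(\alpha)$. The only points deserving a line of care are the reduction to a common number field $L$ (together with independence of the height from $L$) and the check that $\deg f = r$ makes the constant $\deg f \cdot \log 2$ in Proposition \ref{proj-height-poly-mult} equal to $r \, \log 2$.
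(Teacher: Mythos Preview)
Your proposal is correct and follows essentially the same approach as the paper: factor \(f\) into linear factors, apply Proposition~\ref{proj-height-poly-mult}, and use \(\hproj(X-\alpha_i)=\haff(\alpha_i)\). The only cosmetic difference is that the paper keeps the leading coefficient \(\mu\) as one of the factors (noting \(\hproj(\mu)=0\)) rather than removing it via scaling invariance as you do.
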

\begin{proof}
This follows directly from Proposition \ref{proj-height-poly-mult}, noting that \(f\) factors as
\begin{equation*}
f = \mu \, (X - \alpha_1) \cdots (X - \alpha_r)
\end{equation*}
with \(\mu \in K\) having \(\hproj(\mu) = 0\), while \(\hproj(X - \alpha_i) = \hproj(1: \alpha_i) = \haff(\alpha_i)\).
\end{proof}

\subsection{Weil's Height Machine and Néron-Tate height}
\label{sec:org721d5ba}
On varieties defined over a number field \(K\), there is a plethora of different height functions. However, many of them differ only by a bounded function, so they produce essentially the same height. We capture this notion of \emph{quasi-equivalence of heights} in the following notation:

\begin{defi}
Let \(V\) a variety defined over a number field \(K\), and let \(f_1, f_2 : V(\closure{\Q}) \to \R\) two functions. We write \(f_1 \qeq f_2\) if there exists \(C \in \R\) such that
\begin{equation*}
\abs{f_1(P) - f_2(P)} \leq C \text{ for all } P \in V(\closure{\Q}).
\end{equation*}
\end{defi}

\pagebreak
We reproduce the following from Theorem B.3.2 in \cite{hindry-silverman-2000-diophantine-geometry}
\begin{thm}[Weil's Height Machine]
\label{weil-height-machine}
Let \(K\) a number field. For every smooth projective variety \(V/ K\) there exists a map
\begin{equation*}
h_V : \DIV(V) \longto \{ \text{functions } V(\closure{\Q}) \to \R \}
\end{equation*}
satisfying the following:
\begin{enumerate}
\item (Normalisation) For \(\di{H} \subset \P^n\) a hyperplane holds \(h_{\P^n, \di{H}} \qeq \hproj\).
\item (Functoriality) For \(\phi : V \to W\) a morphism and \(\di{D} \in \DIV(W)\) a divisor holds \(h_{V, \phi^*(\di{D})} \qeq h_{W,\di{D}} \circ \phi\).
\item (Additivity) For \(\di{D}, \di{E} \in \DIV(V)\) holds \(h_{V,\di{D}+\di{E}} \qeq h_{V,\di{D}} + h_{V,\di{E}}\).
\item (Linear Equivalence) For \(\di{D}, \di{E} \in \DIV(V)\) with \(\di{D} \sim \di{E}\) holds \(h_{V,\di{D}} \qeq h_{V,\di{E}}\).
\end{enumerate}
\end{thm}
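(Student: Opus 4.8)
# Proof Plan for Weil's Height Machine

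The statement to prove is Theorem \ref{weil-height-machine}, the existence and properties of the Weil Height Machine. This is a classical result (attributed to Weil), so my plan is to reconstruct the standard construction rather than invent something new; since the paper cites \cite{hindry-silverman-2000-diophantine-geometry} Theorem B.3.2, a reasonable option is simply to cite it, but below I sketch the argument one would give for completeness.

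The plan is to proceed in three stages. \textbf{First}, I would reduce to the case of very ample divisors. Every divisor on a smooth projective variety $V$ is linearly equivalent to a difference of two very ample divisors, so it suffices to define $h_{V,\di{D}}$ for $\di{D}$ very ample and then extend by additivity and linear equivalence. For $\di{D}$ very ample, choose a closed immersion $\phi_{\di{D}} : V \hookrightarrow \P^n$ with $\phi_{\di{D}}^*(\di{H}) \sim \di{D}$ for a hyperplane $\di{H}$, and \emph{define}
\begin{equation*}
h_{V,\di{D}} = \hproj \circ \phi_{\di{D}}.
\end{equation*}
The key lemma at this stage is that this is independent (up to $\qeq$) of the choice of immersion: if $\phi, \psi$ are two immersions attached to linearly equivalent very ample divisors, then $\hproj \circ \phi \qeq \hproj \circ \psi$. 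This is proved by comparing both with the Segre embedding of the product map $(\phi,\psi)$ and using that the coordinate functions of one embedding are expressible as ratios of the coordinate functions of the other (finiteness of global sections), giving a two-sided bound on the heights via the elementary triangle inequality \eqref{improved-triangle-equality} for the number field, exactly as in Proposition \ref{proj-height-poly-mult} and Proposition \ref{proj-height-poly-division} of this paper.

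\textbf{Second}, I would verify the four listed properties. Normalisation is immediate from the definition (take $V = \P^n$, $\phi = \mathrm{id}$). Additivity for very ample $\di{D},\di{E}$ follows because $\phi_{\di{D}+\di{E}}$ can be taken to be the composition of $(\phi_{\di{D}},\phi_{\di{E}})$ with the Segre embedding $\P^n \times \P^m \hookrightarrow \P^{nm+n+m}$, and $\hproj$ of a Segre point is the sum of the heights of its factors up to the bounded error from \eqref{improved-triangle-equality}; one then extends additivity to all divisors by writing each as a difference of very ample ones. Linear equivalence is built into the construction. Functoriality is checked first for very ample $\di{D}$ on $W$ — where $\phi^*(\di{D})$ pulls back the immersion, so $h_{V,\phi^*\di{D}} = \hproj \circ \psi_{\di{D}} \circ \phi \qeq h_{W,\di{D}} \circ \phi$ — and then extended by linearity. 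Throughout, the only analytic input is the improved triangle inequality and the product formula, both recalled in Section \ref{sec:orgfdc4da2}.

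\textbf{The main obstacle} is the independence lemma in the first stage, i.e. showing the construction is well-defined up to $\qeq$. This requires the fact that two embeddings by complete linear systems of linearly equivalent very ample divisors differ by a linear automorphism of the ambient projective space composed with a lower-dimensional linear projection business — more precisely, that the sections defining one embedding lie in the span of those of the other plus possibly a Veronese re-embedding — and controlling the resulting height distortion uniformly. All the estimates are of the same flavour as Proposition \ref{proj-height-poly-mult} and Proposition \ref{proj-height-poly-zeroes} above: products of polynomials (or coordinates) change height by at most a bounded additive constant depending only on degrees. Given the length and classical nature of this argument, for the purposes of this thesis I would in fact state the theorem with a reference to \cite{hindry-silverman-2000-diophantine-geometry} and only indicate the above structure, since none of the subsequent applications (the Néron–Tate height on $\Jac$, the Theta divisor estimates) depend on the internal details of the construction, only on the four formal properties.
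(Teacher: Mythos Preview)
Your proposal is correct and matches the paper's approach exactly: the paper does not prove this theorem at all but simply cites it from \cite{hindry-silverman-2000-diophantine-geometry}, Theorem B.3.2, which is precisely what you conclude you would do. Your sketch of the standard construction (very ample divisors, Segre embeddings, independence of the chosen immersion) is the classical argument found in that reference and is accurate, but it goes beyond what the paper itself provides.
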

There are further properties which we omit because we will not use them directly.

\begin{thm}[Néron-Tate height]
\label{neron-tate-height}
Let \(K\) a number field, and \(\mathcal A/K\) an abelian variety. Let \(\di{D} \in \DIV(\mathcal A)\) have symmetric divisor class (i.e. \([-1]^*\di{D} \sim \di{D}\)). Then there exists the (unique) \emph{canonical height on \(\mathcal A\) relative to \(\di{D}\)}, a height function \(\Th_{\mathcal A,\di{D}} : \mathcal A(\closure{\Q}) \longto \R\) satisfying the following:
\begin{enumerate}
\item It is equivalent to the height from the height machine: \(\Th_{\mathcal A,\di{D}} \qeq h_{\mathcal A,\di{D}}\).
\item For all integers \(m\) and \(P \in \mathcal A(\closure{\Q})\) we have \(\Th_{\mathcal A,\di{D}}(m \, P) = m^2 \, \Th_{\mathcal A,\di{D}}(P)\).
\item It is a quadratic form.
\end{enumerate}
\end{thm}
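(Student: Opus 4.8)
The plan is to build $\Th_{\mathcal A,\di{D}}$ from the Weil height $h_{\mathcal A,\di{D}}$ of Theorem \ref{weil-height-machine} by Tate's telescoping trick, and then upgrade a doubling estimate to the parallelogram law to obtain the quadratic-form properties. First I would record the divisor-class input: since the class of $\di{D}$ is symmetric, the theorem of the cube (see Chapter A.7 of \cite{hindry-silverman-2000-diophantine-geometry}) gives $[m]^*\di{D} \sim m^2 \, \di{D}$ in $\DIV(\mathcal A)$ for every integer $m$; in particular $[2]^*\di{D} \sim 4 \, \di{D}$. Applying the Functoriality and Linear Equivalence clauses of Theorem \ref{weil-height-machine} yields a constant $C$ with
\begin{equation*}
\abs{h_{\mathcal A,\di{D}}([2]P) - 4 \, h_{\mathcal A,\di{D}}(P)} \leq C \quad \text{for all } P \in \mathcal A(\closure \Q).
\end{equation*}

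Next I would define
\begin{equation*}
\Th_{\mathcal A,\di{D}}(P) = \lim_{n\to\infty} \frac{h_{\mathcal A,\di{D}}([2^n]P)}{4^n}
\end{equation*}
and show the limit exists: the differences of consecutive partial terms are bounded by $4^{-(n+1)}\,\abs{h_{\mathcal A,\di{D}}([2]([2^n]P)) - 4\,h_{\mathcal A,\di{D}}([2^n]P)} \leq C\,4^{-(n+1)}$, whose series converges, so the sequence is Cauchy. Summing this estimate over $n \geq 0$ gives $\abs{\Th_{\mathcal A,\di{D}}(P) - h_{\mathcal A,\di{D}}(P)} \leq C/3$, which is property 1, $\Th_{\mathcal A,\di{D}} \qeq h_{\mathcal A,\di{D}}$. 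Property 2 in the case $m = 2$, namely $\Th_{\mathcal A,\di{D}}([2]P) = 4\,\Th_{\mathcal A,\di{D}}(P)$, is immediate from a shift of the index in the defining limit. Uniqueness is the standard argument: if $g : \mathcal A(\closure\Q) \to \R$ is bounded and satisfies $g([2]P) = 4\,g(P)$, then $g(P) = 4^{-n} g([2^n]P) \to 0$, so a function satisfying 1 and 2 is determined.

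To obtain property 3 and property 2 for all $m$, I would replace the doubling relation by the parallelogram law. Using symmetry of $\di{D}$ once more, the theorem of the cube on $\mathcal A \times \mathcal A$ gives $s^*\di{D} + \delta^*\di{D} \sim 2\,\mathrm{pr}_1^*\di{D} + 2\,\mathrm{pr}_2^*\di{D}$, where $s$ and $\delta$ are the sum and difference morphisms and $\mathrm{pr}_1, \mathrm{pr}_2$ the projections. Theorem \ref{weil-height-machine} then produces a constant $C'$ with
\begin{equation*}
\abs{h_{\mathcal A,\di{D}}(P+Q) + h_{\mathcal A,\di{D}}(P-Q) - 2\,h_{\mathcal A,\di{D}}(P) - 2\,h_{\mathcal A,\di{D}}(Q)} \leq C'
\end{equation*}
for all $P, Q$. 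Substituting $([2^n]P, [2^n]Q)$, dividing by $4^n$ and letting $n \to \infty$ (the error $C'/4^n$ vanishes) shows $\Th_{\mathcal A,\di{D}}$ satisfies the parallelogram law \emph{exactly}. Since $\Th_{\mathcal A,\di{D}}(0) = 4\,\Th_{\mathcal A,\di{D}}(0)$ forces $\Th_{\mathcal A,\di{D}}(0) = 0$, a standard lemma applies: a real-valued function on an abelian group that obeys the parallelogram law and vanishes at $0$ is a quadratic form, because the pairing $\langle P,Q\rangle = \tfrac12\bigl(\Th_{\mathcal A,\di{D}}(P+Q) - \Th_{\mathcal A,\di{D}}(P) - \Th_{\mathcal A,\di{D}}(Q)\bigr)$ is $\Z$-bilinear and symmetric with $\langle P,P\rangle = \Th_{\mathcal A,\di{D}}(P)$; then $\Th_{\mathcal A,\di{D}}(mP) = \langle mP,mP\rangle = m^2\langle P,P\rangle = m^2\,\Th_{\mathcal A,\di{D}}(P)$, giving properties 2 and 3.

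\textbf{Main obstacle.} The limiting arguments are routine; the real content — and the only place where the hypothesis that $\di{D}$ has symmetric class is genuinely used — lies in correctly invoking the divisor-class identities $[m]^*\di{D}\sim m^2\di{D}$ and $s^*\di{D}+\delta^*\di{D}\sim 2\,\mathrm{pr}_1^*\di{D}+2\,\mathrm{pr}_2^*\di{D}$, which rest on the theorem of the cube. For a self-contained treatment I would simply cite Theorem B.5.1 and Chapter A.7 of \cite{hindry-silverman-2000-diophantine-geometry}.
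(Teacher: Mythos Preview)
Your sketch is correct and is precisely the standard Tate limiting construction that underlies Theorem B.5.1 in \cite{hindry-silverman-2000-diophantine-geometry}. Note, however, that the paper does not supply its own proof of this theorem at all: it simply states the result and records that it is ``adapted from Theorem B.5.1 \dots\ in \cite{hindry-silverman-2000-diophantine-geometry}''. So you have gone further than the paper, which treats the N\'eron--Tate height as a black box from the literature; your final remark about citing Theorem B.5.1 and Chapter A.7 of Hindry--Silverman is in fact exactly what the paper does in lieu of a proof.
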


\begin{prop}
\label{height-zero-iff-torsion}
Take \(\mathcal A\) and \(\di{D}\) as in Theorem \ref{neron-tate-height}, and \(\di{D}\) moreover ample, then for all \(P \in \mathcal A(\closure{\Q})\) we have \(\Th_{\mathcal A,\di{D}}(P) \geq 0\), and \(\Th_{\mathcal A,\di{D}}(P) = 0\) \IFF \(P\) is torsion on \(\mathcal A\).
\end{prop}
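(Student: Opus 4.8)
The statement to prove is Proposition \ref{height-zero-iff-torsion}: for an ample symmetric divisor class \(\di{D}\) on an abelian variety \(\mathcal A\), the Néron–Tate height \(\Th_{\mathcal A,\di{D}}\) is non-negative and vanishes exactly on torsion points.

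\textbf{Outline of the approach.} The plan is to combine the quadratic-form property of \(\Th = \Th_{\mathcal A,\di{D}}\) from Theorem \ref{neron-tate-height} with finiteness properties of points of bounded height (Northcott). First I would establish the two easy directions, then use a pigeonhole/Northcott argument for the hard inclusion. Throughout one works over a fixed number field \(K\) large enough to contain a given point \(P\); since \(\Th\) is independent of the field of definition this is harmless.

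\textbf{Step 1: torsion points have height zero.} If \(P\) is torsion of order \(N\), then by part (2) of Theorem \ref{neron-tate-height} we have \(N^2 \, \Th(P) = \Th(N \, P) = \Th(0)\). Since \(\Th\) is a quadratic form, \(\Th(0) = 0\) (a quadratic form vanishes at the origin), hence \(\Th(P) = 0\).

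\textbf{Step 2: non-negativity.} Here I would use ampleness together with Weil's Height Machine and the Northcott finiteness property. Because \(\di{D}\) is ample, some positive multiple gives a projective embedding, so up to quasi-equivalence \(h_{\mathcal A,\di{D}}\) is bounded below and, more importantly, for any constant \(C\) the set \(\{ Q \in \mathcal A(K) \mid h_{\mathcal A,\di{D}}(Q) \le C \}\) is finite (Northcott; this is one of the omitted properties of the height machine that I would invoke, or one can cite it directly for heights attached to ample divisors). Suppose for contradiction \(\Th(P) < 0\) for some \(P\). Using part (2), \(\Th(m \, P) = m^2 \, \Th(P) \to -\infty\) as \(m \to \infty\). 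By part (1), \(\Th \qeq h_{\mathcal A,\di{D}}\), so \(h_{\mathcal A,\di{D}}(m \, P)\) would also tend to \(-\infty\); but an ample height is bounded below on \(\mathcal A(\closure{\Q})\) (again Northcott, or just boundedness below), a contradiction. Hence \(\Th(P) \ge 0\) for all \(P\).

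\textbf{Step 3: height zero implies torsion — the main obstacle.} This is the crux. Suppose \(\Th(P) = 0\). Then for every integer \(m\), \(\Th(m \, P) = m^2 \, \Th(P) = 0\), so all multiples \(m \, P\) lie in the set \(S = \{ Q \in \mathcal A(K) \mid \Th(Q) = 0 \}\). By part (1), \(S\) is contained in \(\{ Q \mid h_{\mathcal A,\di{D}}(Q) \le C \}\) for the quasi-equivalence constant \(C\), which is finite by Northcott since \(\di{D}\) is ample. Therefore the sequence \((m \, P)_{m \in \Z}\) takes only finitely many values, so there exist \(m_1 \ne m_2\) with \(m_1 \, P = m_2 \, P\), i.e. \((m_1 - m_2) \, P = 0\), so \(P\) is torsion.

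\textbf{Remarks on what needs care.} The genuinely delicate ingredient is the Northcott finiteness property for the height associated to an ample divisor on \(\mathcal A\); I would either cite it from \cite{hindry-silverman-2000-diophantine-geometry} (it is among the properties of Weil's height machine that were not reproduced in Theorem \ref{weil-height-machine}, typically stated for very ample divisors and then extended) or reduce to the very ample case by replacing \(\di{D}\) with a suitable multiple \(n \, \di{D}\), which rescales \(\Th\) by \(n\) and so preserves both non-negativity and the zero set. Everything else is formal manipulation of the quadratic-form identity \(\Th(m \, P) = m^2 \, \Th(P)\) and the quasi-equivalence \(\Th \qeq h_{\mathcal A,\di{D}}\). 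I would also note that non-negativity of the quadratic form together with this identity is exactly what makes the "bounded height" set an honest obstruction, so the ampleness hypothesis is used twice: once for boundedness below, once for finiteness.
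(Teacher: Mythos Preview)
Your argument is correct and is the standard proof: combine the homogeneity \(\Th(mP) = m^2 \Th(P)\) with Northcott finiteness for an ample height, deducing both non-negativity and that the zero set is torsion. However, the paper does not give its own proof of this proposition; it simply cites it as Proposition B.5.3 in \cite{hindry-silverman-2000-diophantine-geometry}. So there is nothing to compare against beyond noting that your sketch is essentially the textbook argument found in that reference.
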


The preceding Theorem and Proposition are adapted from Theorem B.5.1 and Proposition B.5.3 in \cite{hindry-silverman-2000-diophantine-geometry}, respectively.

\subsection{Heights on the Jacobian}
\label{sec:orgbf62da0}
With these tools, we are finally able to setup our heights on our (hyper)elliptic curve \(\CC\) and its Jacobian \(\Jac\). On the Jacobian, we use the height corresponding to the Theta divisor. The Theta divisor induced by the map \(j: \CC \to \Jac\) defined in Section \ref{sec:orgdaae3ac} is unfortunately not symmetric for \(g > 1\). So we use a different embedding, which differs only by a translation on the Jacobian.

Indeed let \(P_0 \in \CC\) one of the Weierstrass points, i.e. \(P_0 = (\xi, 0)\) where \(\xi\) is one of the roots of \(D\). This implies \(2 \pd{P_0} \sim \pd{O_+} + \pd{O_-}\) (via the function \(X - \xi\)), hence the canonical divisor is a multiple of \(P_0\): \(\canondiv \sim 2(g-1) \, \pd{P_0}\).

Now embed the curve into the Jacobian via
\begin{equation*}
j_0 : \CC \longto \Jac, \quad P \mapsto \j{P} - \j{P_0}
\end{equation*}
and note that \(j_0(\canondiv) = 0\) (recall that \(j_0\) extends naturally to divisors).

Now Theorem A.8.2.1. in \cite{hindry-silverman-2000-diophantine-geometry} implies that
\begin{equation*}
\Theta_0 = j_0(\CC) + \dots + j_0(\CC) \quad (g-1 \text{ copies})
\end{equation*}
is symmetric, i.e. \([-1]^* \Theta_0 = \Theta_0\). Then Theorem \ref{neron-tate-height} implies that the Néron-Tate height \(\Th = \Th_{\Jac, \Theta_0}\) associated to the height \(h_{\Jac, \Theta_0}\) is a quadratic form.

Theorem A.8.2.1. also says that \(j_0^*\Theta_0 \sim g \, \pd{P_0}\), so \(\Th \circ j_0 \qeq g\, h_{\CC,P_0}\). 

Recall that the hyperelliptic curve comes with a degree 2 map \(\pi : \CC \to \P^1\), with the hyperplane \(H = \{(\xi : 1) \}\) in \(\P^1\) having \(\pi^*(H) = 2 \, \pd{P_0}\), hence \(\hproj(\pi(P)) \qeq 2 \, h_{\CC,P_0}\). So we get \(2 \, \Th \circ j_0 \approx g \, \hproj(\pi(P))\), or more precisely for \(P = (x, y) \in \CCa\):
\begin{equation*}
\Th(j_0(P)) \qeq \frac{g}{2} \, \haff(x).
%, \quad \Th(j_0(O_\pm)) \qeq 0
\end{equation*}

\section{Height of convergents}
\label{sec:orgcf28bd0}
We are now ready to study the height of the convergents. We begin by analysing the coefficients of the Laurent series \(\sqrt{D}\), and comparing the heights of the numerator and denominator of a convergent.
\subsection{Height bounds for series coefficients of \(\sqrt{D}\)}
\label{sec:org584c9f9}

We need some bounds for the absolute values (and height) of the coefficients of the power series \(\sqrt{D}\). 

\begin{prop}
\label{sqrt-d-coefficient-height}
Recall that \(\deg D = 2\,d\) and write
\begin{equation}
\label{sqrt-d-coefficients-for-height}
\sqrt{D} = X^d \sum_{n=0}^\infty w_n \, X^{-n}.
\end{equation}
For any place \(\nu\) on a number field, we have
\begin{equation*}
\abs{w_n}_\nu \leq \abs{\sqrt{\LC(D)}}_\nu \cdot \left(  \max(1, \abs{1/4}_\nu) \cdot \max(1, \abs{(2d)^2}_\nu) \cdot  \abs{D}_\nu / \abs{\LC(D)}_\nu \right)^n,
\end{equation*}
which implies
\begin{equation*}
h(w_n) \leq \frac{1}{2} h(\LC(D)) + n \, \left( \log 4 + 2 \, \log(2d) + \hproj(D) \right).
\end{equation*}
\end{prop}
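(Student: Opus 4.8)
Write $w_0 = \sqrt{\LC(D)} \in K$ and factor out the leading term of $D$: since $D = X^{2d}\bigl(\LC(D) + d_{2d-1}X^{-1} + \dots + d_0 X^{-2d}\bigr)$, we have
\[
\sqrt{D} = w_0 \, X^d \, \sqrt{1 + f(X)}, \qquad f(X) = \sum_{j=1}^{2d} \frac{d_{2d-j}}{\LC(D)}\, X^{-j},
\]
so that $w_n / w_0$ is the coefficient of $X^{-n}$ in $\sqrt{1+f(X)}$. Because $\io{f} \ge 1$, the binomial expansion $\sqrt{1+f} = \sum_{k\ge0}\binom{1/2}{k} f^k$ (which converges in $\laurinx{K}$, cf. the proof of Proposition~\ref{laurent-sqrt-d}) contributes to the coefficient of $X^{-n}$ only through the terms with $1 \le k \le n$, whence
\[
\frac{w_n}{w_0} = \sum_{k=1}^{n} \binom{1/2}{k} \,\bigl[X^{-n}\bigr] f(X)^k \qquad (n \ge 1).
\]

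The next step is a purely local estimate at a place $\nu$ of $K$. The coefficient $[X^{-n}]f^k$ is a sum of products $\frac{d_{2d-j_1}}{\LC(D)}\cdots\frac{d_{2d-j_k}}{\LC(D)}$ over tuples with $j_1 + \dots + j_k = n$ and $1 \le j_i \le 2d$; there are at most $(2d)^{k-1}$ such tuples (the first $k-1$ entries may be chosen freely, the last is then determined), and each factor has absolute value at most $\rho_\nu := \abs{D}_\nu / \abs{\LC(D)}_\nu \ge 1$, since $\abs{d_{2d-j}}_\nu \le \abs{D}_\nu$ and $\abs{\LC(D)}_\nu \le \abs{D}_\nu$. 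Using the improved triangle inequality \eqref{improved-triangle-equality} at archimedean $\nu$ (and the ultrametric inequality at finite $\nu$), together with the bound $\abs{\binom{1/2}{k}}_\nu \le \max(1,\abs{1/4}_\nu)^k$ (immediate at archimedean places, and a consequence of Lemma~\ref{lemma-binomial-half} at finite places), I obtain
\[
\Bigl\lvert \tfrac{w_n}{w_0}\Bigr\rvert_\nu \le
\begin{cases}
\rho_\nu^{\,n}\,\max(1,\abs{1/4}_\nu)^n & \text{$\nu$ finite,}\\[2pt]
\sum_{k=1}^{n}(2d)^{k-1}\rho_\nu^{\,k} \;\le\; n\,(2d)^{n-1}\rho_\nu^{\,n} & \text{$\nu$ archimedean.}
\end{cases}
\]
In the archimedean case one absorbs $n\,(2d)^{n-1}$ into $(2d)^{2n} = \max(1,\abs{(2d)^2}_\nu)^n$ using $n \le 2^n \le (2d)^n$; in the finite case the extra factor $\max(1,\abs{(2d)^2}_\nu) \ge 1$ costs nothing. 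Multiplying by $\abs{w_0}_\nu$ yields the asserted local inequality
\[
\abs{w_n}_\nu \le \abs{\sqrt{\LC(D)}}_\nu \cdot \bigl(\max(1,\abs{1/4}_\nu)\cdot\max(1,\abs{(2d)^2}_\nu)\cdot\rho_\nu\bigr)^{n}.
\]

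Finally I would pass to the global height. Setting $B_\nu = \max(1,\abs{1/4}_\nu)\max(1,\abs{(2d)^2}_\nu)\rho_\nu \ge 1$, the local bound gives $\max(1,\abs{w_n}_\nu) \le \max(1,\abs{\sqrt{\LC(D)}}_\nu)\, B_\nu^{\,n}$ (only finitely many $B_\nu \ne 1$), and summing $\log$ over all $\nu \in M_K$,
\[
h(w_n) \le h\bigl(\sqrt{\LC(D)}\bigr) + n\sum_{\nu} \log B_\nu .
\]
Here $h(\sqrt{\LC(D)}) = \tfrac12 h(\LC(D))$ by $h(\alpha^2)=2h(\alpha)$, while
\[
\sum_\nu \log B_\nu = h(1/4) + h\bigl((2d)^2\bigr) + \sum_\nu\bigl(\log\abs{D}_\nu - \log\abs{\LC(D)}_\nu\bigr) = \log 4 + 2\log(2d) + \hproj(D),
\]
using the product formula for $\sum_\nu\log\abs{\LC(D)}_\nu = 0$ and the definition $\sum_\nu \log\abs{D}_\nu = \hproj(D)$ of the projective height of a polynomial. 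This is precisely the claimed estimate. The only mildly delicate points are the combinatorial count of monomials in $f^k$ — where the generous factor $(2d)^{2n}$ leaves ample room — and the behaviour at the prime $2$, which is entirely confined to the binomial coefficients and absorbed by Lemma~\ref{lemma-binomial-half}; the rest is bookkeeping with \eqref{improved-triangle-equality} and the product formula.
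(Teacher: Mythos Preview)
Your proof is correct and follows essentially the same approach as the paper: both expand $\sqrt{1+f}$ via the binomial series, bound $\abs{\binom{1/2}{k}}_\nu$ through Lemma~\ref{lemma-binomial-half}, control the coefficients of $f^k$ by powers of $\rho_\nu = \abs{D}_\nu/\abs{\LC(D)}_\nu$, and pass to the height by summing over places with the product formula. The only cosmetic difference is in the combinatorics---the paper expands $f^i$ multinomially and bounds the multinomial coefficients by $(2d)^i$, whereas you count ordered tuples $(j_1,\dots,j_k)$ directly as at most $(2d)^{k-1}$ and then sum over $k$; both routes land on the same factor $\max(1,\abs{(2d)^2}_\nu)^n$.
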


Before we can prove this, we need an estimate for the growth of the binomial coefficient:

\begin{lemma}
\label{lemma-binomial-half}
For \(n \geq 1\), there exists an integer \(b_n \in \Z\) with \(\abs{b_n}_\R \leq 2^{2n-3}\)  such that the binomial coefficient \(\binom{1/2}{n} = \ifrac{b_n}{2^{2n-1}}\). 

For \(\nu\) a place on a number field, not over \(2\), this implies \(\abs{\binom{1/2}{n}}_\nu \leq 1\) for all \(n \geq 0\).

But if \(\nu\) represents a place over \(2\) (with the normalisations introduced at the beginning of the chapter), we find \(\abs{\binom{1/2}{n}}_\nu \leq 2^{2n}\) for all \(n \geq 0\).
\end{lemma}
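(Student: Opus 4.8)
The plan is to reduce everything to one explicit closed form for $\binom{1/2}{n}$ and then read off the three assertions. First I would expand
\[
\binom{1/2}{n} = \frac{1}{n!}\prod_{j=0}^{n-1}\left(\tfrac12 - j\right) = \frac{1}{2^n\,n!}\prod_{j=0}^{n-1}(1-2j) = \frac{(-1)^{n-1}\,(2n-3)!!}{2^n\,n!},
\]
where $(2n-3)!! = 1\cdot 3\cdots(2n-3)$, with the convention that this product is $1$ for $n\le 2$. Applying the elementary identity $(2n-3)!! = (2n-2)!\big/\bigl(2^{n-1}(n-1)!\bigr)$ turns this into
\[
\binom{1/2}{n} = \frac{(-1)^{n-1}(2n-2)!}{2^{2n-1}\,n!\,(n-1)!} = \frac{(-1)^{n-1}}{2^{2n-1}}\cdot\frac{1}{n}\binom{2n-2}{n-1}.
\]
Since $\tfrac1n\binom{2n-2}{n-1} = C_{n-1}$ is the $(n-1)$-st Catalan number, a positive integer, setting $b_n = (-1)^{n-1}C_{n-1}\in\Z$ establishes both the integrality claim and the equality $\binom{1/2}{n} = b_n/2^{2n-1}$.

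Next I would bound $|b_n|_\R = C_{n-1} = \tfrac1n\binom{2n-2}{n-1} \le \tfrac1n\,2^{2n-2} \le 2^{2n-3}$ for $n\ge 2$, checking the case $n=1$ by hand ($\binom{1/2}{1}=1/2$, so $b_1=1$). For a non-archimedean place $\nu$ of a number field not lying over $2$, the rational prime $2$ is a unit at $\nu$, so $|2|_\nu = 1$, while $b_n\in\Z$ gives $|b_n|_\nu\le 1$; hence $|\binom{1/2}{n}|_\nu = |b_n|_\nu\,|2|_\nu^{-(2n-1)}\le 1$ for all $n\ge 1$, and the case $n=0$ is trivial since $\binom{1/2}{0}=1$. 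For a place $\nu$ over $2$ I would invoke the normalisation fixed at the start of the chapter: because $\sum_{\nu\mid 2} e_\nu f_\nu = [K:\Q]$, each local degree satisfies $e_\nu f_\nu\le[K:\Q]$, whence $|2|_\nu\ge 2^{-1}$ and so $|2|_\nu^{-(2n-1)}\le 2^{2n-1}\le 2^{2n}$; combined with $|b_n|_\nu\le 1$ this yields $|\binom{1/2}{n}|_\nu\le 2^{2n}$ for all $n\ge 0$.

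The only steps needing genuine care are the integrality of $b_n$ — cleanly handled by the Catalan identity above, or alternatively by Legendre's formula, which gives $\nu_2(2^n n!) = 2n - s_2(n)$ (with $s_2$ the binary digit sum) and hence $\nu_2\bigl(\binom{1/2}{n}\bigr)\ge -(2n-1)$ — together with the estimate $|2|_\nu\ge 1/2$ at places above $2$ under the product-formula normalisation. Neither is deep, so I expect essentially all the \emph{content} of the lemma to be the algebraic simplification of $\binom{1/2}{n}$, and the rest to be bookkeeping about absolute values.
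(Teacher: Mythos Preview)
Your approach via the Catalan-number identity $\binom{1/2}{n}=(-1)^{n-1}C_{n-1}/2^{2n-1}$ is exactly what the paper intends: it gives no proof beyond calling this ``an easy exercise'' and noting the link to Catalan numbers, so you have filled in precisely the details the author had in mind.

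One small caveat worth flagging: you say you check $n=1$ by hand, but the stated bound $|b_1|_\R\le 2^{2\cdot 1-3}=1/2$ actually fails, since $b_1=1$. This is a slip in the lemma statement itself rather than in your argument (your estimate $C_{n-1}\le \tfrac1n 2^{2n-2}\le 2^{2n-3}$ is valid only from $n\ge 2$), and the downstream conclusions on $|\binom{1/2}{n}|_\nu$ are unaffected because $|\tfrac12|_\nu\le 1$ at places away from $2$ and $|\tfrac12|_\nu\le 2\le 2^{2}$ at places over $2$.
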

This is an easy exercise. Note that the \(b_n\) are closely related to the Catalan numbers (see for example  \cite{aigner-2007-course-enumeration}, pages 101, 102). See also Theorem 5 in \cite{siegel-2014-some-appl-diophantine} for a generalisation of this lemma.

\begin{proof}[Proof of Proposition \ref{sqrt-d-coefficient-height}]
We write
\begin{equation*}
D = d_{2d} \, X^{2d} + d_{2d-1} \, X^{2d-1} + \dots + d_0 = d_{2d} \, X^{2d} (1 + f(X)) \text{ with } f(X) \in K[\inv X].
\end{equation*}
We may then compute 
\begin{equation*}
\sqrt{D} = \sqrt{d_{2d}} \, X^d \; \sum_{n=0}^\infty \binom{1/2}{n} \, f(X)^n
\end{equation*}
which converges in \(\laurinx K\) because \(\io{f(X)} > 0\). Now let \(\nu\) any place on \(K\), and write \(f(X) = f_1 X^{-1} + \dots + f_{2d} X^{-2d}\), to define
\begin{equation*}
C_\nu = \max(1, \abs{f_1}_\nu, \dots, \abs{f_{2d}}_\nu) = \abs{1 + f}_\nu = \abs{D}_\nu / \abs{\LC(D)}_\nu.
\end{equation*}

Studying for \(i \geq 0\) the power
\begin{equation*}
\binom{1/2}{i} \, f(X)^i = \sum_{j_1 + \dots + j_{2d}=i} \binom{1/2}{i} \, \binom{i}{j_1, \dots, j_{2d}} \left( \prod_{l=1}^{2d} {f_l}^{j_l} \right) \; X^{-( j_1 + 2 \, j_2 + \dots + (2d) \, j_{2d})},
\end{equation*}
we note that \(\binom{i}{j_1, \dots, j_{2d}} \leq {2d}^i\) is an integer, so the coefficient of every summand is bounded in \(\abs{\cdot}_\nu\) by \((\max(1, \abs{1/4}_\nu \, \max(1, \abs{m}_\nu) \, C_\nu)^i\).

Now observe that every \(w_i/w_0\) (clearly \(w_0 = \sqrt{\LC(D)}\)) is a sum of at most \((2d)^i\) of these, so with the improved triangle inequality \eqref{improved-triangle-equality} we obtain the desired result
\begin{equation*}
\abs{w_n}_\nu \leq \abs{w_0}_\nu \, \max(1, \abs{(2d)}_\nu)^n \, (\max(1, \abs{1/4}_\nu \, \max(1, \abs{m}_\nu) \, C_\nu)^n
\end{equation*}

With \(\hproj(1 + f) = \hproj(D)\) and \(C_\nu \geq 1\) the inequality for the height follows after we replace \(\abs{w_i}_\nu\) with \(\max(1, \abs{w_i}_\nu)\) (also for \(i = 0\)).
\end{proof}

We can now bound the projective height of the numerator of a convergent in terms of the height of the denominator.\footnote{It is not quite clear if there is a similar bound in the other direction.}
\begin{prop}
\label{convergent-numerator-denominator-height}
For every convergent \((p, q) \in \Coset{\sqrt{D}}{K}\) holds
\begin{equation*}
\hproj(p) \leq \hproj(q) + (\deg p) \left(\log 2 + \log 4 + \log(2d) + \hproj(D) \right).
\end{equation*}
\end{prop}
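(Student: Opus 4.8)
The plan is to exploit the fact that $p$ is essentially a truncation of the Laurent series $\sqrt{D}\,q$, so its coefficients are explicit (finite) linear combinations of products of coefficients of $q$ and of the series coefficients $w_n$ of $\sqrt{D}$, which were bounded in Proposition \ref{sqrt-d-coefficient-height}. Concretely, by Proposition \ref{convergent-q-determines-p} we have $p = \gauss{\sqrt{D}\,q}$, and since $(p,q)$ is a convergent the polynomial part of $\sqrt{D}\,q$ has degree $\deg p = \deg q + d$. Writing $q = \sum_{j} q_j X^j$ and $\sqrt{D} = X^d \sum_{n \ge 0} w_n X^{-n}$, each coefficient of $p$ is a sum of at most $\deg p + 1$ terms of the form $w_n q_j$, with $n \le \deg p$.

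The key steps, in order, would be: first, fix a number field $K$ containing the coefficients of $D$ and $q$, and a place $\nu \in M_K$. Second, using the above description of the coefficients of $p$ together with the improved triangle inequality \eqref{improved-triangle-equality}, bound $\abs{p}_\nu$ by $\max(1,\abs{\deg p + 1}_\nu) \cdot \max_n \abs{w_n}_\nu \cdot \abs{q}_\nu$. Third, plug in the per-place bound from Proposition \ref{sqrt-d-coefficient-height}, namely $\abs{w_n}_\nu \le \abs{\sqrt{\LC(D)}}_\nu \cdot \bigl(\max(1,\abs{1/4}_\nu)\,\max(1,\abs{(2d)^2}_\nu)\,\abs{D}_\nu/\abs{\LC(D)}_\nu\bigr)^n$ for $n \le \deg p$. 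Fourth, take the product over all $\nu \in M_K$: the archimedean contribution of the combinatorial factor $\deg p + 1$ and the factor $\sqrt{\LC(D)}$ telescopes via the product formula (its height contribution from $\LC(D)$ cancels against the $\abs{\LC(D)}_\nu^{-n}$ in the denominator up to the projective height of $D$), leaving the stated bound $\hproj(p) \le \hproj(q) + (\deg p)(\log 2 + \log 4 + \log(2d) + \hproj(D))$, after absorbing $\max(1,\abs{(2d)^2}_\nu)$ into $\log(2d)$ (note $\sum_\nu \log\max(1,\abs{(2d)^2}_\nu) = 2\log(2d) \le 2\log(2d)$, and one has to be slightly careful here — see below).

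The main obstacle I expect is bookkeeping the constants precisely so that the final exponent matches $\log 2 + \log 4 + \log(2d) + \hproj(D)$ rather than something slightly larger. In particular the factor $\max(1,\abs{(2d)^2}_\nu)$ from Proposition \ref{sqrt-d-coefficient-height} contributes $2\log(2d)$ globally, whereas the statement only allows $\log(2d)$; this suggests one should re-derive the bound on $\abs{w_n}_\nu$ slightly more carefully in this special case (the multinomial coefficient $\binom{i}{j_1,\dots,j_{2d}}$ summed over compositions contributing to a fixed power of $X$ is bounded by $(2d)^i$ already accounting for the number of summands, so one does not pay the factor twice), or else simply note that $\hproj(D) \ge 0$ absorbs the discrepancy only if $d$ is small — more likely the intended argument bounds the number of monomials and the multinomial jointly by a single factor $(2d)^n$. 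The rest is a routine application of the product formula and $\hproj \le \haff$, replacing each $\abs{\cdot}_\nu$ by $\max(1,\abs{\cdot}_\nu)$ before multiplying so that all exponents stay nonnegative.
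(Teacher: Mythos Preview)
Your approach is correct and is essentially the same idea as the paper's proof: both use $p = \gauss{\sqrt{D}\,q}$ together with the coefficient bounds of Proposition~\ref{sqrt-d-coefficient-height}. The paper packages it slightly more cleanly: rather than redoing the Cauchy product place by place, it truncates $\sqrt{D}$ to a Laurent polynomial $A_m$ (keeping the first $d+m = \deg p$ coefficients, so that $p = \gauss{A_m\,q}$ still holds), bounds $\hproj(A_m)$ directly via Proposition~\ref{sqrt-d-coefficient-height}, and then applies the multiplicative height bound of Proposition~\ref{proj-height-poly-mult} to the product $A_m\,q$. This avoids the explicit counting of summands you do, but is otherwise the same computation.

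Your worry about the constants is well-founded and not an error on your part: the paper's own Proposition~\ref{sqrt-d-coefficient-height} gives a factor $2\log(2d)$, and its proof of the present proposition simply invokes that bound together with Proposition~\ref{proj-height-poly-mult}, so the stated constant $\log(2d)$ appears to be a minor slip in the paper rather than something your argument is missing. For the applications (Theorem~\ref{convergent-height-lower-bound}) only the shape ``$\hproj(p) \le \hproj(q) + C_3 \deg p$'' matters, so the exact constant is immaterial.
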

\begin{proof}
Recall from Proposition \ref{convergent-q-determines-p} that \(p = \gauss{\sqrt{D} \, q}\). However we did not define a height for \(\sqrt{D}\). To workaround this problem, let \(m = \deg q\) and write \(D = A_m + \varepsilon_m\) with \(A_m\) a Laurent polynomial and \(\io{\varepsilon} > m\). This ensures \(p = \gauss{A_m \, q}\), so \(\hproj(p) \leq \hproj(A_m \, q)\). With Proposition \ref{sqrt-d-coefficient-height}, we bound the projective height
\begin{equation*}
\hproj(A_m) \leq (d + m) \, \left( \log 4 + 2 \, \log(2d) + \hproj(D)\right).
\end{equation*}
Note that \(A_m\) has precisely \(d + m = d + \deg q = \deg p\) coefficients. The overall bound then follows from the bound for the product (Proposition \ref{proj-height-poly-mult}).
\end{proof}

\subsection{Lower bound}
\label{sec:org1835e85}
In \cite{bombieri-cohen-1997-siegels-lemma-pade}, a general result about the height of Padé approximations predicts that the projective height of the convergents of a square root should grow quadratically in the degree of the convergent.

If we just want to prove this for square root, a shorter and simpler proof by Zannier suffices. It is explained briefly in \cite{zannier-2016-hyper-contin-fract}, here we give a bit more detailed version.

\begin{thm}
\label{convergent-height-lower-bound}
If \(D\) is not Pellian, there exists a constant \(C = C(D) > 0\) such that for every convergent \((p, q) \in \Coset{\sqrt{D}}{\closure Q}\) we have for \(\deg q\) large enough:
\begin{equation*}
C \cdot (\deg q)^2 \leq \hproj(q).
\end{equation*}
\end{thm}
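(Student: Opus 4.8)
The plan is to exploit the geometric description of convergents from Lemma~\ref{convergent-divisor-lemma} and the fact that $\j{\OO}$ is \emph{not} torsion when $D$ is non-Pellian (Theorem~\ref{thm-pellian-iff-torsion}), together with the canonical (N\'eron--Tate) height $\Th$ on the Jacobian and its quadratic-form property (Theorem~\ref{neron-tate-height}). Recall from Lemma~\ref{convergent-divisor-lemma} and Remark~\ref{rem-convergent-lemma-jacobian} that a convergent $(p,q)$ of $\sqrt{D}$ gives $m\cdot j(O_-) = j(P_1)+\dots+j(P_r)$ in $\Jac$, where $m=\deg p = \deg q + d$ and $0\le r\le g=d-1$. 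Equivalently, writing $\di{P} = \pd{P_1}+\dots+\pd{P_r}$, the class $j_0(\di{P})$ (using the symmetric embedding $j_0$ from Section~\ref{sec:orgbf62da0}) equals $-m\,\j{\OO}$ up to a bounded correction coming from the difference between the base points $O_-$ and the Weierstrass point $P_0$ (a fixed point, contributing an $O(1)$ translation; since $\Th$ is a quadratic form, replacing $-m\,\j{\OO}$ by $-m\,\j{\OO} + (\text{fixed class})$ changes $\Th$ by at most $O(m)$).

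First I would apply the canonical height: since $D$ is not Pellian, $\j{\OO}$ is non-torsion, so $\Th(\j{\OO}) = c_0 > 0$ by Proposition~\ref{height-zero-iff-torsion}, and the quadratic-form property gives $\Th(m\,\j{\OO}) = c_0\, m^2$. Hence $\Th(j_0(\di{P})) \ge c_0\, m^2 - O(m)$, so for $m$ large, $\Th(j_0(\di{P})) \ge \tfrac{c_0}{2}\, m^2$. Next I would translate this back to a statement about heights of the points $P_i$: by the relation $\Th \circ j_0 \qeq \tfrac{g}{2}\,\haff(x)$ on $\CCa$ extended additively to the $r$-fold symmetric sum (i.e. $\Th(j_0(\di{P})) \qeq \tfrac{g}{2}\sum_{i=1}^r \haff(x_i)$, where $P_i=(x_i,y_i)$), we obtain $\sum_{i=1}^r \haff(x_i) \ge C_1\, m^2$ for a suitable $C_1 = C_1(D) > 0$ and $m$ large.

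Finally I would connect $\sum \haff(x_i)$ to $\hproj(q)$. The $x_i$ are the roots of the polynomial whose divisor on $\P^1$ is cut out by the finite part of $\Div(p - \sqrt{D}\,q)$; concretely, from $p^2 - D q^2 = \Omega$ with $\deg \Omega = r$ (Remark~\ref{convergent-omega-degree}), the $x_i$ are precisely the roots of $\Omega$, so by Proposition~\ref{proj-height-poly-zeroes} we get $\sum_{i=1}^r \haff(x_i) \le r\log 2 + \hproj(\Omega)$. It remains to bound $\hproj(\Omega) = \hproj(p^2 - Dq^2)$ from above in terms of $\hproj(q)$: using Proposition~\ref{convergent-numerator-denominator-height} to bound $\hproj(p)$ linearly in $\deg p$ plus $\hproj(q)$, then Propositions~\ref{proj-height-poly-mult} and~\ref{aff-height-poly-add} to bound $\hproj(p^2 - Dq^2)$, one finds $\hproj(\Omega) \le C_2\,\hproj(q) + C_3\,\deg q$ for constants depending only on $D$ — note here one must be slightly careful to work with affine heights where additivity is available and then pass back, but since $\Omega$ has degree $r \le g$ bounded independently of $n$, these conversions cost only $O(1)$. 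Combining, $C_1 m^2 \le \sum \haff(x_i) \le C_2\,\hproj(q) + C_3\,\deg q + O(1)$, and since $m = \deg q + d$, for $\deg q$ large this yields $\hproj(q) \ge C\,(\deg q)^2$ with $C = C(D) > 0$.

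The main obstacle I anticipate is the bookkeeping in the last paragraph: controlling $\hproj(p^2 - D q^2)$ in terms of $\hproj(q)$ requires passing between projective and affine heights (projective height is not additive under sums), and one must check that all the "error" terms introduced are genuinely linear in $\deg q$ (hence negligible against the quadratic main term) and that the constants depend only on $D$. A secondary subtlety is justifying $\Th(j_0(\di{P})) \qeq \tfrac{g}{2}\sum \haff(x_i)$ uniformly over all effective divisors of degree $r\le g$ — this should follow from additivity of the Weil height machine on the symmetric product and the pullback computation $j_0^*\Theta_0 \sim g\,\pd{P_0}$, but the uniformity of the implied constant over the (infinite) family of divisors $\di{P}$ needs a word. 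Everything else is a direct assembly of results already established in the excerpt.
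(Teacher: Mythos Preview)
Your approach is the same as the paper's: convergent divisor $\Rightarrow$ relation $-m\,\j{\OO}=j(P_1)+\dots+j(P_r)$ in $\Jac$, apply the canonical height $\Th$ to get $m^2\Th(\j{\OO})$ on the left, bound the right in terms of $\sum\haff(x_i)$, then pass to $\hproj(\Omega)$ via Proposition~\ref{proj-height-poly-zeroes} and finally to $\hproj(q)$ via Proposition~\ref{convergent-numerator-denominator-height} and the product/sum height bounds. The last paragraph of your outline, including the normalisation trick and the passage between affine and projective heights, matches the paper exactly.

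The one genuine soft spot is your claim that $\Th\circ j_0$ ``extends additively'' to give $\Th(j_0(\di P))\qeq\tfrac{g}{2}\sum\haff(x_i)$. This is false as stated: $\Th$ is a quadratic form, not linear, so $\Th(\sum j_0(P_i))\neq\sum\Th(j_0(P_i))$ in general, and the implied constant in $\qeq$ cannot be uniform over all effective divisors of bounded degree. What you actually need is only the one-sided inequality
\[
\Th\bigl(j(P_1)+\dots+j(P_r)\bigr)\le r\bigl(\Th(j(P_1))+\dots+\Th(j(P_r))\bigr),
\]
which is elementary for positive quadratic forms (Lemma~\ref{quadratic-form-sum-bound} in the Appendix). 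After that, $\Th(j(P_i))\le C_1+\haff(x_i)$ follows from the single-point relation $\Th\circ j_0\qeq\tfrac{g}{2}\haff(x)$ together with the fact that $j$ and $j_0$ differ by a fixed translation (absorbed into $C_1$). This also shows your $O(m)$ base-point correction is unnecessary: work directly with the equality $-m\,\j{\OO}=j(P_1)+\dots+j(P_r)$ and apply $\Th$ to both sides; the base-point discrepancy only enters at the stage of bounding each $\Th(j(P_i))$, where it costs a constant, not $O(m)$.
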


\begin{proof}
Recall that by Lemma \ref{convergent-divisor-lemma} and subsequent remarks, the convergents produce an equality on the Jacobian (recall \(\j{\OO} = -j(O_-) = \j{O_+} - \j{O_-}\))
\begin{equation*}
-m \, \j{\OO} = j(P_1) + \dots + j(P_r)
\end{equation*}
with \(r \leq g\) and \(P_i = (x_i, y_i) \in \CCa\). The \(x_i\) are precisely the zeroes of \(\Omega = p^2 - D \, q^2\) (accounted for multiplicities). And we have \(m = \deg p = \deg q + g + 1 \geq \deg q\).

Applying the Néron-Tate height, and using Lemma \ref{quadratic-form-sum-bound} from the Appendix, we obtain
\begin{equation*}
m^2 \Th(\j{\OO}) = \Th(-m \, \j{\OO}) = \Th(j(P_1) + \dots + j(P_r)) \leq g \left( \Th(j(P_1)) + \dots + \Th(j(P_r)) \right).
\end{equation*}
Next, there is a constant \(C_1 \geq 0\) depending only on \(D\) such that \(\Th(j(P_i)) \leq C_1 + \haff(x_i)\) (see Section \ref{sec:orgbf62da0}). Moreover, we know that
\begin{equation*}
h(x_1) + \dots + h(x_r) \leq g \, \log 2 + \hproj(\Omega).
\end{equation*}
We combine these estimates to
\begin{equation*}
m^2 \Th(\j{\OO}) \leq g \, C_1 + g^2 \, \log 2 + g \, \hproj(\Omega).
\end{equation*}
Because \(D\) is not Pellian, the point \(\j{\OO}\) is not torsion in the Jacobian, so \(\Th(\j{\OO}) > 0\).
We get a constant \(C_2 > 0\) such that
\begin{equation*}
C_2 \, (\deg q)^2 \leq \hproj(\Omega).
\end{equation*}

As we are only interested in the projective height of \(q\), we may without restriction normalise the convergent \((p, q)\) such that \(p\) is monic, so that both \(p^2\) and \(D \, q^2\) have to be monic. Of course, for a monic polynomial, affine and projective height coincide, and using Proposition \ref{aff-height-poly-add} we can estimate
\begin{equation*}
\hproj(\Omega) \leq \haff(\Omega) \leq \log 2 + \haff(p^2) + \haff(D\,q^2) = \log 2 + \hproj(p^2) + \hproj(D\,q^2).
\end{equation*}

By Proposition \ref{convergent-numerator-denominator-height}, we have \(\hproj(p) \leq \hproj(q) + \deg p \, C_3\) for some \(C_3 \geq 0\) depending only on \(D\). With Proposition \ref{proj-height-poly-mult} we get
\begin{align*}
\hproj(D \, q^2) &\leq 2 \, \deg p \, \log 2 + \hproj(D) + 2 \, \hproj(q), \\
\hproj(p^2) &\leq 2 \, \deg p \, \log 2 + 2 \, \hproj(p) \leq 2 \, \deg p \, \log 2 + 2 \deg p \, C_3 + 2 \, \hproj(q).
\end{align*}
Combining these, and noting \(\deg p = \deg q + d\), we find
\begin{equation*}
C_2 \, (\deg q)^2 \leq C_4 + C_5 \, \deg q + 4 \, \hproj(q)
\end{equation*}
with \(C_4, C_5 \geq 0\), so for example \(C = C_2 / 8\) yields the desired constant.
\end{proof}

This does not yet give a lower bound for the height of partial quotients. This seems more challenging, especially for the projective height -- see also Example \ref{ex-dbh1-bounded-height} in Section \ref{sec:orge27ae2f}. But there are new results for the affine height if we take some type of average, see Theorem 1.4 in \cite{zannier-2016-hyper-contin-fract}:
\begin{thm}[Zannier]
There exist \(M \in \N\) and \(C > 0\) such that for \(n\) large enough
\begin{equation*}
C \, n^2 \leq \max( \haff(a_{n-i}) \mid i=0,\dots,M ).
\end{equation*}
\end{thm}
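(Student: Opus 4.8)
The plan is to bootstrap from the quadratic lower bound for the projective height of the convergents (Theorem \ref{convergent-height-lower-bound}) to a lower bound on the heights of the partial quotients, using the recursion for the $\sigma$-reduced data $(s_n,t_n)$ together with the structural fact (from the Skolem--Mahler--Lech theorem for algebraic groups of \cite{zannier-2016-hyper-contin-fract}) that the combinatorial type of this recursion — in particular the degree sequence $(\deg a_n)_n$ — is eventually periodic.

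First I would record the quadratic input in a convenient, bounded-degree form. By Corollary \ref{cor-pq-degree-periodicity} one has $1\le\deg a_n\le d$ for $n\ge 1$, hence $n\le\deg q_n\le dn$, so Theorem \ref{convergent-height-lower-bound} gives $\hproj(q_n)\ge C_0\,n^2$ for $n$ large, with $C_0=C_0(D)>0$. Moreover the proof of that theorem already shows $\hproj(\Omega_n)\ge C_0'\,n^2$ where $\Omega_n=p_n^2-D\,q_n^2$, since by Lemma \ref{convergent-divisor-lemma} and Remark \ref{rem-convergent-lemma-jacobian} the element $-(\deg p_n)\,\j{\OO}$ is a sum of $r_n\le g$ classes $j(P_i^n)$ whose $X$-coordinates are exactly the roots of $\Omega_n$, and the Néron--Tate height on $\Jac$ (a positive quadratic form, with $\Th(\j{\OO})>0$ because $D$ is not Pellian, and $\Th(j_0(P))\qeq\frac{g}{2}\haff(x)$ for $P=(x,y)\in\CCa$, see Section \ref{sec:orgbf62da0}) combined with Proposition \ref{proj-height-poly-zeroes} forces the roots of $\Omega_n$ to have total affine height $\asymp n^2$. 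Using $\Omega_n=(-1)^{n+1}s_{n+1}$ from \eqref{cf-sn-pell-eq} and the upper bound of Theorem \ref{convergents-upper-proj-height-bound}, this says $\hproj(s_{n+1})\asymp n^2$: the bounded-degree polynomials $s_n$ are the quantities whose quadratic growth must be \emph{localised}.

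Next I would run the recursion. For $n$ large, $\alpha_n$ is $\sigma$-reduced (Proposition \ref{cf-compute-eventually-sigma-reduced}), and by Theorem \ref{thm-optimised-sqrt-cq-representation} and Proposition \ref{cf-ats-bound-t1} the pair $(s_{n+1},t_{n+1})$ is obtained from $(s_n,t_n)$ and $a_n$ through $t_n+t_{n+1}=a_n\,s_n-2A$ and $s_n\,s_{n+1}=D-(A+t_{n+1})^2$, all polynomials of degree $\le 2d$; since a $\sigma$-reduced quadratic continued fraction is reversible (Proposition \ref{cf-reduced-backwards}), these relations run both ways. The idea is: if $\haff(a_{n-i})$ were $o(n^2)$ for every $i=0,\dots,M$, then Propositions \ref{proj-height-poly-mult}, \ref{aff-height-poly-add} and \ref{proj-height-poly-division}, applied to these bounded-degree identities across a window of length $M+1$, would show that $\hproj(s_{n+1})$ and $\hproj(s_{n-M})$ cannot differ by a full quadratic amount — whereas Theorem \ref{thm-genus1-zero-patterns} (for $\deg D=4$) and its higher-genus analogue show that essentially the entire quadratic weight of $\hproj(s_m)$ is deposited at the sparse set of indices where the complete quotient becomes unbounded. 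Choosing $M$ large enough, in terms of $D$, that every window of $M+1$ consecutive indices contains such an index — this is exactly where the eventual periodicity of $(\deg a_n)_n$, hence of the shape of the recursion, enters, via the Skolem--Mahler--Lech input of \cite{zannier-2016-hyper-contin-fract} — one would conclude $\max_{0\le i\le M}\haff(a_{n-i})\ge C\,n^2$.

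The hard part is precisely this last step. A priori $\hproj(s_{n-M})$ is itself of order $n^2$, so one cannot simply subtract a ``history term'': the real content is to exhibit, uniformly in $n$, a single step inside a bounded window that carries a fixed positive proportion of the quadratic height, and this requires the fine structural information about where the unbounded complete quotients sit and how $s_n$ grows there (the $\deg D=4$ picture of Theorem \ref{thm-genus1-zero-patterns}, made uniform and generalised on the basis of the Skolem--Mahler--Lech theorem for the Jacobian). This is carried out in \cite{zannier-2016-hyper-contin-fract}, and is considerably more delicate than the matching upper bound of Corollary \ref{partial-quotients-upper-proj-height-bound}, which follows directly from the recursion.
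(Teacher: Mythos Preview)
The paper does not actually contain a proof of this statement: it is merely quoted as Theorem~1.4 of \cite{zannier-2016-hyper-contin-fract}, immediately after the remark that a lower bound for the heights of partial quotients ``seems more challenging'' and requires ``different arguments'' (see the introduction and the end of Section~\ref{sec:org1835e85}). So there is no in-paper proof to compare against; your proposal is really a sketch of what you believe the argument in the cited reference might look like, and you yourself defer to that reference for the ``hard part''.

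As a sketch, your outline has a genuine gap that you correctly identify but do not close. Your plan is to show that if all $\haff(a_{n-i})$ in a window of length $M+1$ were $o(n^2)$, then the recursion would prevent $\hproj(s_{n+1})$ from being of order $n^2$. But as you note, $\hproj(s_{n-M})$ is already of order $n^2$, so the recursion bounds alone give nothing: one needs to show that a \emph{definite proportion} of the height is created inside the window, not merely transported through it. You then appeal to ``Theorem~\ref{thm-genus1-zero-patterns} (for $\deg D=4$) and its higher-genus analogue'' for this. However, the paper is explicit that no such higher-genus analogue is available: see the remarks after Corollary~\ref{cor-genus1-unbounded-gauss-norm} and after Proposition~\ref{prop-two-element-fibre-analysis}, where it is stated that for $\deg D>4$ the valuation analysis breaks down and ``it is not at all clear how this generalises''. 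So the structural input you invoke does not exist in this paper, and your sketch reduces to: the result follows from \cite{zannier-2016-hyper-contin-fract}. That is of course true, but it is exactly what the paper already says rather than an independent proof.
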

\subsection{Upper bound}
\label{sec:orgdaa4c63}
An upper bound for the height of the convergents can be deduced with more elementary tools, using the Toeplitz determinants from Section \ref{sec:org542977d}. It is then straightforward to deduce an upper bound also for the height of the partial quotients.

\begin{thm}
\label{convergents-upper-proj-height-bound}
For the canonical convergents \((p_m, q_m)\) of \(\sqrt{D}\) we obtain the height bounds
\begin{align}
\label{eq-height-pm-upper-bound}
\hproj(p_m) &\leq ((n+1) \,d + \tfrac{3}{2} (n^2 + n)) \, (\log 4 + 2 \, \log(2d) + \hproj(D)), \\
\label{eq-height-qm-upper-bound}
\hproj(q_m) &\leq (n \,d + \tfrac{1}{2} (3 \, n^2 + n)) \, (\log 4 + 2 \, \log(2d) + \hproj(D)),
\end{align}
where \(\deg D = 2d\) and \(n = \deg q_m\).
\end{thm}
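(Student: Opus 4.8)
The strategy is to exploit the explicit formula for the canonical convergents as (signed) minors of the Toeplitz matrix $\pqmatrix_n$ from Section \ref{sec:org542977d}, combined with the height bound for the coefficients $w_n$ of $\sqrt{D}$ from Proposition \ref{sqrt-d-coefficient-height}. By Remark \ref{rem-cramers-rule}, up to signs the coefficients $P_{n+N},\dots,P_0,Q_n,\dots,Q_0$ of $p_m$ and $q_m$ are the $(n+2N+2)$-many maximal minors of $\pqmatrix_n$ (here $N=\ord_\infty(\sqrt D)=d$, so the matrix has $n+2d+1$ rows, and the size of each minor is $n+2d+1$). Each such minor is a polynomial expression in the entries of $\pqmatrix_n$, and those entries are $-1$, $0$, or coefficients $A_j$ of the Laurent series $\sqrt{D}=X^d\sum_{j\ge 0} w_j X^{-j}$, i.e. $A_{d-j}=w_j$. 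So the first step is simply to write down which coefficients $w_j$ can appear: in $\pqmatrix_n$ the entries range over $A_N=w_0,\dots,A_{-2n}=w_{2n+d}$, so only $w_0,\dots,w_{2n+d}$ occur.

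\textbf{Key steps.} First I would fix a place $\nu$ on a number field $K$ containing the coefficients of $D$ and bound $|p_m|_\nu$ and $|q_m|_\nu$, i.e. the maximum of the $\nu$-absolute values of the relevant minors. A maximal minor of a matrix of size $r=n+2d+1$ is a sum of $r!$ products of $r$ entries; using the improved triangle inequality \eqref{improved-triangle-equality} and the fact that each entry has $\nu$-absolute value at most $\max(1,|w_0|_\nu,\dots,|w_{2n+d}|_\nu)$, we get $|p_m|_\nu,|q_m|_\nu \le \max(1,|r!|_\nu)\cdot \big(\max_{0\le j\le 2n+d}\max(1,|w_j|_\nu)\big)^{r}$. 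Summing the resulting logarithmic contributions over all $\nu\in M_K$ — the archimedean factorials contributing $\log r! \le r\log r$ and, more cleanly, one can just use the crude bound that a determinant of an $r\times r$ $0/1/\text{entry}$ matrix is bounded in archimedean absolute value by $r!$ times the product of entry-norms, while non-archimedean places contribute nothing extra from $r!$ — one obtains $\hproj(q_m)\le (n+2d+1)\,\max_j h(w_j) + (\text{lower order})$. Here the key input is Proposition \ref{sqrt-d-coefficient-height}, which gives $h(w_j)\le \tfrac12 h(\LC(D)) + j\,(\log 4 + 2\log(2d) + \hproj(D))$; taking $j\le 2n+d$ and multiplying by roughly $n$ rows produces the quadratic-in-$n$ main term, with coefficient governed by $(n)\cdot(2n+d) = 2n^2 + nd$, which should be tuned to land on the stated constants $nd + \tfrac12(3n^2+n)$ for $q_m$ and $(n+1)d + \tfrac32(n^2+n)$ for $p_m$. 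One must be slightly careful: the naive count $2n^2$ must be sharpened, since the submatrix structure means not every minor actually involves the largest index $w_{2n+d}$ to a full power; a row-by-row accounting of which $w_j$ sits in which row (the Toeplitz shift structure in \eqref{convergents-condition-matrix}) gives that the $i$-th row contributes a coefficient of index at most something like $d+i$, and summing $\sum_i (d+i)$ over the $\approx n+d$ rows yields precisely the $\tfrac32 n^2$-type term. The $p_m$ bound has $n+d$ rather than $n$ coefficients in its ``profile'', accounting for the shift from $q$ to $p$.

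\textbf{Main obstacle.} The genuinely delicate point is getting the \emph{exact} constants in \eqref{eq-height-pm-upper-bound} and \eqref{eq-height-qm-upper-bound} rather than just ``$O(n^2)$''. This forces a careful bookkeeping of the Toeplitz/Hankel structure of $\pqmatrix_n$ — exactly which coefficient $A_{N-k}=w_{d+k}$ appears in which of the $n+2d+1$ rows, and hence the sharp exponent profile $\sum(\text{index in row }i)$ when expanding the determinant — and a matching careful treatment of the archimedean $\log r!$ contributions, which one wants to absorb into the ``$\log 4$'' slack rather than let them spoil the clean coefficient. I would handle this by not expanding blindly but rather by bounding $|q_m|_\nu$ directly via a row-wise estimate on the Laplace expansion: $|\det|_\nu \le \max(1,|r!|_\nu)\prod_i |(\text{row }i)|_\nu$, and each $|(\text{row }i)|_\nu$ is $\max(1, |w_0|_\nu,\dots,|w_{\text{(last index in row }i)}|_\nu)$. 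Then Proposition \ref{sqrt-d-coefficient-height} applied row by row, followed by $\prod$ over rows and $\prod$ over places, gives the height as a sum of the per-row bounds, and the arithmetic $\sum_{i}(\text{index}) = $ the stated quadratic expression. The factorial $r!$ at archimedean places contributes $\le (n+2d+1)\log(n+2d+1)$, which for $n$ large is dominated by, and can be folded into, the quadratic term — or, to be safe, one states the bound with an explicit additive $\log((n+2d+1)!)$ that the authors presumably simply absorb; I would check whether the intended bound silently uses that $\hproj$ ignores a single overall leading coefficient so that one may normalise to kill some factorial growth. Everything else — the passage from Cramer's-rule minors to polynomials $p_m,q_m$, the product formula, Propositions \ref{proj-height-poly-mult} and \ref{sqrt-d-coefficient-height} — is routine once this combinatorial profile is pinned down.
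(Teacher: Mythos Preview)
Your overall plan—compute the convergents as signed minors of $\pqmatrix_n$ and bound those minors via Proposition \ref{sqrt-d-coefficient-height}—is exactly the paper's approach. What you miss is the one structural observation that makes the computation clean and produces the stated constants: the left block of $\pqmatrix_n$ is $-I_{d+n+1}$, so Laplace expansion along those columns collapses each maximal minor to a much smaller one. Striking a column in the \emph{right} block (to get a coefficient of $q$) leaves the full $-I$ intact, and expanding along it reduces to an $n\times n$ minor $\mathcal M'$ of the bottom $n\times(n{+}1)$ block; striking a column in the \emph{left} block (a coefficient of $p$) leaves an $(n{+}1)\times(n{+}1)$ minor. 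You work instead with the full $(n+2d+1)\times(n+2d+1)$ minors, which is why you cannot pin down the constants and are forced to worry about absorbing $(n+2d+1)!$.

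After the reduction the bookkeeping is immediate. Row $i$ of the $n\times n$ block for $q$ has entries taken from $w_{d+i+1},\dots,w_{d+n+i}$, so $|\mathcal M'_{ij}|_\nu \le |w_0|_\nu\,C_\nu^{d+n+i}$ with the $C_\nu$ from Proposition \ref{sqrt-d-coefficient-height}. Bounding the determinant termwise in its $S_n$-expansion gives
\[
|q|_\nu \;\le\; \prod_{i=1}^n |w_0|_\nu\,C_\nu^{d+n+i} \;=\; |w_0|_\nu^{\,n}\,C_\nu^{\,nd+n^2+n(n+1)/2},
\]
and $nd+n^2+n(n+1)/2 = nd+\tfrac12(3n^2+n)$, which is exactly the exponent in \eqref{eq-height-qm-upper-bound}. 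The analogous sum $\sum_{i=1}^{n+1}(d+n+i-1)=(n+1)d+\tfrac32(n^2+n)$ gives \eqref{eq-height-pm-upper-bound}. When one takes the product over all places, the factors $|w_0|_\nu^{\,n}$ and the $|\LC(D)|_\nu$ hidden in $C_\nu$ vanish by the product formula, so only $\hproj(D)$ survives—this is why the final bound involves $\hproj(D)$ alone and no $h(\LC(D))$ term, a point your sketch did not address. Your ``row-by-row accounting'' instinct is correct, but it must be applied to the reduced $n\times n$ (resp.\ $(n{+}1)\times(n{+}1)$) minor, not to the full matrix.
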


\begin{proof}
We wish to apply the results of Section \ref{sec:org542977d} for \(\alpha = \sqrt{D}\).  Connecting the notations of \eqref{sqrt-d-coefficients-for-height} and \eqref{convergents-condition-matrix}, we have \(N = d\) and \(A_{j} = w_{d-j}\). As we chose \(n = \deg q_m\) and the canonical convergents are coprime, Proposition \ref{convergent-linear-matrix-full-rank} tells us that the matrix \(\pqmatrix_n\) has full rank. So the kernel has dimension \(1\), and we can compute a solution \((p, q)\) using \eqref{toeplitz-matrix-convergents} which differs from \((p_m, q_m)\) only by a constant factor, and thus has the same projective height. The coefficients of \(p\) and \(q\) are (up to signs) the minors of
\begin{equation*}
\pqmatrix_n = \left( \begin{array}{ccccc|ccc}
-1 &       &        &        &    & w_0 \\
   &\ddots &        &        &    & w_{1}  & \ddots \\
   &       & \ddots &        &    & \vdots   & \ddots & w_{0} \\
   &       &        & \ddots &    & \vdots   & \ddots & \vdots \\
   &       &        &        & -1 & w_{d+n}   & \dots  & w_{d} \\ \hline
   &       &        &        &    & w_{d+n+1} & \dots  & w_{d+1} \\
   &       & 0      &        &    & \vdots   & \ddots & \vdots \\
   &       &        &        &    & w_{d+2n}  & \dots  & w_{d+n}
\end{array} \right).
\end{equation*}
If we strike any column (to get the minor), we obtain a \((d+2n+1)\times(d+2n+1)\) matrix. Now set
\begin{equation*}
C_\nu = \max(1, \abs{1/4}_\nu) \cdot \max(1, \abs{(2d)^2}_\nu) \cdot \abs{D}_\nu / \abs{\LC(D)}_\nu
\end{equation*}
so that \(\abs{w_j}_\nu \leq \abs{w_0}_\nu \, {C_\nu}^j\).

If we strike a column in the right block (to compute the coefficients of \(q\)), by using Laplace development we get (up to sign) a minor \(\mathcal M'\) of the lower right block of dimensions \(n\times n\), with determinant
\begin{equation*}
\det \mathcal M' = \sum_{\sigma \in S_n} \sign(\sigma) \, \mathcal M'_{1 \;\sigma(1)} \dots \mathcal M'_{n \; \sigma(n)}
\end{equation*}
with \(\abs{\mathcal M'_{ij}}_\nu \leq \abs{w_0}_\nu \, {C_\nu}^{d+n+i}\), hence
\begin{equation*}
\abs{q}_\nu \leq \prod_{i=1}^n \abs{w_0}_\nu \, {C_\nu}^{d+n+i} \leq {\abs{w_0}_\nu}^n {C_\nu}^{n\,d + n^2 + n(n+1)/2}.
\end{equation*}
When taking the product over all \(\nu\), the first term with \(w_0\) vanishes by the product formula, and likewise the term with \(\LC(D)\). We arrive at \eqref{eq-height-qm-upper-bound} by a straightforward calculation.

If we strike a column in the left block (to compute the coefficients of \(p\)), we can use similar arguments,  with Laplace development we only get a \((n+1) \times (n+1)\) matrix \(\mathcal M''\), with \(\abs{\mathcal M''_{ij}}_\nu \leq \abs{w_0}_\nu \, {C_\nu}^{d+n+i-1}\), so
\begin{equation*}
\abs{p}_\nu \leq \prod_{i=1}^{n+1} \abs{w_0}_\nu \, {C_\nu}^{d+n+i-1} \leq {\abs{w_0}_\nu}^{n+1} {C_\nu}^{(n+1) \, d + (n+1) n  + n(n+1)/2}.
\end{equation*}
Again, \eqref{eq-height-pm-upper-bound} follows by a straightforward calculation.
\end{proof}

\begin{cor}
The projective height of the convergents grows at most quadratically:
\begin{equation*}
\hproj(p_m) = O(m^2), \qquad \hproj(q_m) = O(m^2).
\end{equation*}
\end{cor}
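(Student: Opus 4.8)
The plan is straightforward: this corollary is an immediate consequence of Theorem \ref{convergents-upper-proj-height-bound}, which was just established. I will simply read off the asymptotic behaviour from the explicit bounds \eqref{eq-height-pm-upper-bound} and \eqref{eq-height-qm-upper-bound}.

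First I would recall that in Theorem \ref{convergents-upper-proj-height-bound} we wrote $n = \deg q_m$, so I must relate $n$ to the index $m$. Since the partial quotients of $\sqrt{D}$ satisfy $\deg a_j \geq 1$ for all $j \geq 1$ (Remark \ref{cf-absolute-values}), Proposition \ref{canonical-convergents-degree-and-lc} gives $\deg q_m = \sum_{j=1}^m \deg a_j \geq m$; on the other hand $\deg a_j \leq d$ for $\sqrt{D}$ by Corollary \ref{cor-pq-degree-periodicity}, so $n = \deg q_m \leq m \, d$. In particular $n = O(m)$.

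Then I would substitute: the quantity $\log 4 + 2\log(2d) + \hproj(D)$ is a constant depending only on $D$, call it $C_D$. The bound \eqref{eq-height-qm-upper-bound} reads $\hproj(q_m) \leq (n\,d + \tfrac12(3n^2+n))\,C_D$, which is a polynomial of degree $2$ in $n$; since $n \leq m\,d$, this is $O(m^2)$. The same argument applied to \eqref{eq-height-pm-upper-bound} gives $\hproj(p_m) = O(m^2)$. There is essentially no obstacle here — the only thing to be careful about is the bookkeeping between the two degree parameters $n$ and $m$, which is handled by the degree growth formula for the convergents.

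\begin{proof}
Write $C_D = \log 4 + 2 \log(2d) + \hproj(D)$, a constant depending only on $D$. Set $n = \deg q_m$. By Remark \ref{cf-absolute-values} we have $\deg a_j \geq 1$ for all $j \geq 1$, so Proposition \ref{canonical-convergents-degree-and-lc} gives $n = \deg q_m = \sum_{j=1}^m \deg a_j \geq m$; and by Corollary \ref{cor-pq-degree-periodicity} we have $\deg a_j \leq d$ for $\sqrt{D}$, hence $n \leq m \, d$. In particular $n = O(m)$. Now Theorem \ref{convergents-upper-proj-height-bound} gives
\begin{equation*}
\hproj(q_m) \leq \left(n \, d + \tfrac{1}{2}(3 n^2 + n)\right) C_D, \qquad
\hproj(p_m) \leq \left((n+1) \, d + \tfrac{3}{2}(n^2 + n)\right) C_D,
\end{equation*}
and since the right-hand sides are polynomials of degree $2$ in $n$ with $n = O(m)$, we conclude $\hproj(p_m) = O(m^2)$ and $\hproj(q_m) = O(m^2)$.
\end{proof}
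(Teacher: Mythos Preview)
Your proof is correct and follows essentially the same route as the paper: both use the degree bounds $1 \leq \deg a_j \leq d$ (for $j \geq 1$) to get $m \leq n = \deg q_m \leq d\,m$, and then read off the quadratic growth in $n$ from Theorem \ref{convergents-upper-proj-height-bound}. Your version is simply more explicit about the references and the constant $C_D$.
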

\begin{proof}
The partial quotients have bounded degree \(1\leq \deg a_i \leq d\), hence \(m \leq \deg q_m = n \leq d \, m\), and the above theorem gives \(\hproj(p_m) = O(n^2)\) and \(\hproj(q_m) = O(n^2)\).
\end{proof}

\begin{cor}
\label{partial-quotients-upper-proj-height-bound}
The projective height of the partial quotients also grows at most quadratically:
\begin{equation*}
\hproj(a_m) = O(m^2)
\end{equation*}
\end{cor}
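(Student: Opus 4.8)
The plan is to derive the bound for $\hproj(a_m)$ directly from the quadratic upper bound for the heights of the convergents established in Theorem \ref{convergents-upper-proj-height-bound}, together with the recursion relation for the canonical convergents. The key identity is $p_m = a_m \, p_{m-1} + p_{m-2}$, equivalently $a_m = (p_m - p_{m-2})/p_{m-1}$, which expresses $a_m$ as a quotient of polynomials whose heights we already control.

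First I would invoke Proposition \ref{proj-height-poly-division} applied to the polynomial division $p_m = a_m \, p_{m-1} + p_{m-2}$ (here the ``remainder'' is exactly $p_{m-2}$, since $\deg p_{m-2} < \deg p_{m-1}$ by Proposition \ref{canonical-convergents-degree-and-lc}, using $\deg a_m \geq 1$; and $N = \deg a_m = \deg p_m - \deg p_{m-1}$). That proposition gives
\begin{equation*}
\hproj(a_m) \leq \hproj(p_m) + N \left( \log 2 + \hproj(p_{m-1}) \right),
\end{equation*}
where $N = \deg a_m \leq d$ is bounded. Then I would substitute the bounds $\hproj(p_m) = O(m^2)$ and $\hproj(p_{m-1}) = O(m^2)$ from the corollary to Theorem \ref{convergents-upper-proj-height-bound} (using $\deg q_m \leq d\,m$ to convert the bound in $n = \deg q_m$ into a bound in $m$), obtaining $\hproj(a_m) \leq O(m^2) + d\,(\log 2 + O(m^2)) = O(m^2)$, since $d$ is a constant depending only on $D$.

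There is essentially no serious obstacle here: the statement is a routine combination of the height-of-quotient estimate (Proposition \ref{proj-height-poly-division}) with the already-proven quadratic growth of the convergents and the boundedness of the partial quotient degrees. The only point requiring a little care is making sure the hypotheses of Proposition \ref{proj-height-poly-division} are met — namely that $p_{m-1} \neq 0$ (true, as $\deg p_{m-1} \geq d \geq 1$) and that $p_{m-2}$ genuinely plays the role of the remainder, i.e. $\deg p_{m-2} < \deg p_{m-1}$, which follows from the strictly increasing degrees in Proposition \ref{canonical-convergents-degree-and-lc}. One should also note the small edge cases for $m=0$ (where $a_0 = p_0$ and $\hproj(a_0) = \hproj(D)/2$-type bound is immediate) and $m=1$, which can be checked directly.
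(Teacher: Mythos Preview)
Your proposal is correct and follows essentially the same approach as the paper: the paper likewise applies Proposition~\ref{proj-height-poly-division} to the division $q_m = a_m\,q_{m-1} + q_{m-2}$ (writing $a_m = \gauss{q_m/q_{m-1}}$) to obtain $\hproj(a_m) \leq \hproj(q_m) + (\deg a_m)(\log 2 + \hproj(q_{m-1})) = O(m^2)$. The only cosmetic difference is that you use the numerators $p_m$ while the paper uses the denominators $q_m$; both variants work identically.
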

\begin{proof}
We can compute the partial quotients from subsequent convergents as in
\begin{equation*}
a_m = \gauss{\frac{p_m}{p_{m-1}}} \qquad a_m = \gauss{\frac{q_m}{q_{m-1}}}
\end{equation*}
and then Proposition \ref{proj-height-poly-division} yields
\begin{equation*}
\hproj(a_m) \leq \hproj(q_m) + (\deg a_m) ( \log 2 + \hproj(q_{m-1}) ) = O(m^2).
\end{equation*}
\end{proof}
\begin{rem}
An explicit bound is
\begin{multline*}
\hproj(a_m) \leq \left( ((a+1) n + a) \, d + a \, \log 2 + \tfrac{1}{2} \left( 3 (a+1) n^2 + (7 a+ 1) n + 3 a^2 +a \right) \right)
\\ \cdot \left(\log 4 + 2 \, \log(2d) + \hproj(D)\right)
\end{multline*}
where \(a = \deg a_m\), \(n = \deg q_{m-1}\).
\end{rem}
\section{Connecting heights and valuations}
\label{sec:orgba963e8}
From the definitions of the height of polynomials in Section \ref{sec:org3ad38dd} and the Gauss norms in Chapter \ref{sec:orgd5f1900} it is clear that there is a direct connection between the height and the valuations computations, as for example in Theorem \ref{thm-genus1-zero-patterns}. Note that for a polynomial \(f \in K[X]\), we have
\begin{align}
\hproj(f) & = \sum_{\nu \in M_K} \, \log \abs{f}_\nu, \\
\haff(f) & = \sum_{\nu \in M_K} \, \max\left(0, \log \abs{f}_\nu\right).
\end{align}
For \(\nu\) non-archimedean, \(\log \abs{f}_\nu\) is essentially \(c \cdot \nu(f)\) for some \(c < 0\).

However, in Chapters \ref{sec:orgd5f1900} and \ref{sec:org5f9d2ce} we do not treat the places over \(2\), and certainly not the archimedean places. So this connection must remain incomplete. Still, we try to point out some phenomena relating the global picture of heights and the local picture of Gauss norms.

We restrict to the genus \(1\) case with \(\deg D = 4\) and \(D\) non-Pellian, so that we may use Theorem \ref{thm-genus1-zero-patterns}. 

Corollary \ref{cor-genus1-unbounded-gauss-norm} says that for places with \(\j{\OOred}\) having even torsion order, the Gauss norms of \(q_n\) grow at least linearly in \(n\). But \(\hproj(q_n)\) should grow quadratically, which suggests that either the Gauss norms grow faster than linearly, or the number of places with bad reduction of the continued fraction before \(n\) also grows linearly.

Computational evidence suggests that the latter is the case. Also, if we work over \(\Q\), the Hasse-Weil interval (see Remark \ref{finite-field-torsion-bound}) predicts that \(\Jacred(\F_\pp)\) grows about linearly in \(\pp\), so the quasi-period length of \(\CF(D_\pp)\) and hence the first occurrence of \(\pp\) in a denominator of \(a_n\) grows linearly in \(\pp\). However, the number of primes \(\pp \leq n\) grows only as \(n/\log n\).

\smallskip

The valuations \(\nu(q_n)\) mostly alternate between positive and negative signs, so for the projective height they might cancel each other out. But for the affine height, there is no such cancellation, and in fact computations for examples suggest that \(\haff(q_n)\) grows more or less cubically. This is in line with Remark 4.8 (ii) in \cite{zannier-2016-hyper-contin-fract}, which says that \(\haff(q_n)\) should at most grow cubically in \(n\).

\smallskip

Similar observations can be made for the partial quotients.

\chapter{Examples}
\label{sec:orgda48525}
We now apply the theory developed in this thesis to some examples. Hopefully, this illustrates our theorems and their limitations. To this end, we include examples also for the corner cases that have been somewhat neglected in the theoretical part. 

For \(D\) defined over the rationals (or perhaps a number field), and \(\pp\) some prime (in the ring of integers), we use the notations \(D_\pp\) for \(\Red{D}\) in \(\F_\pp[X]\), \(\nu_\pp\) for the \(\pp\)-adic valuation, we denote by \(\OO_\pp\) the torsion divisor \(\pd{O_+} - \pd{O_-}\) on \(\CCred\) over \(\F_\pp\).

For \(D\) defined over \(\C(t)\), we use analogous notation, with \(t - t_0\) or \(t = t_0\) instead of \(\pp\).

\section{Reduction to a square}
\label{sec:orge6e4f78}
We begin with some examples where \(D\) reduces (or specializes) to a square.

\begin{table}[h!]
\centering
\caption*{Example \cfexample{ex-periodic-to-square-1}}
\begin{tabular}{|l|l|}
\hline
\(D = (X - 1) \cdot X \cdot (X - t) \cdot (X + t - 1)\) & basefield \(\C(t)\)\\ \hline
Discriminant of $D$: \((4) \cdot (t - \frac{1}{2})^{2} \cdot (t - 1)^{4} \cdot t^{4}\) & Primes with $\Red{D}$ square: \(t - 1, t\)\\ \hline
period length \(2\) for \(\CF(\sqrt{D})\) & quasi-period length \(1\) for \(\CF(\sqrt{D})\)\\ \hline
\multicolumn{2}{|l|}{Minimal Pell solution}\\ \hline
\(p_{0} = X^{2} - X - \frac{1}{2} t^{2} + \frac{1}{2} t\) & \(q_{0} = 1\)\\ \hline
\multicolumn{2}{|l|}{Partial quotients of \(\sqrt{D}\)}\\ \hline
\multicolumn{2}{|l|}{\(a_{0} = X^{2} - X - \frac{1}{2} t^{2} + \frac{1}{2} t\)}\\ \hline
\multicolumn{2}{|l|}{\(a_{1} = \frac{-8 X^{2} + 8 X + 4 t^{2} - 4 t}{(t - 1)^{2} \cdot t^{2}}\)}\\ \hline
\multicolumn{2}{|l|}{\(a_{2} = 2 X^{2} - 2 X - t^{2} + t\)}\\ \hline
\end{tabular}
\end{table}

Example \ref{ex-periodic-to-square-1} is very simple, but it illustrates already that bad reduction of the continued fraction is not the same as bad reduction of the elliptic curve. Only \(D_{t=0}\) and \(D_{t=1}\) are square, and \(t, t-1\) are the only irreducible/prime factors appearing in the coefficient denominators (of \(a_1\)).

By the way, this means we can specialise \(t\) to say an integer \(t_0 \in \Z \setminus\{0, 1\}\) to get a periodic continued fraction over \(\Q\). This continued fraction has bad reduction precisely at the prime numbers dividing \(t_0 \, (t_0 - 1)\). In this way we obtain an example also for the reduction modulo \(\pp\) case.

\smallskip

\begin{table}[h!]
\centering
\caption*{Example \cfexample{ex-nonperiodic-to-square-2}}
\begin{tabular}{|l|l|}
\hline
\(D = X^{4} + 2 X^{2} + t X + 1\) & basefield \(\C(t)\)\\ \hline
Discriminant of $D$: \((-27) \cdot t^{2} \cdot (t^{2} - \frac{256}{27})\) & Primes with $\Red{D}$ square: \(t\)\\ \hline
\(D\) is not Pellian & \\ \hline
\multicolumn{2}{|l|}{Partial quotients of \(\sqrt{D}\)}\\ \hline
\multicolumn{2}{|l|}{\(a_{0} = X^{2} + 1\)}\\ \hline
\multicolumn{2}{|l|}{\(a_{1} = \frac{2 X}{t}\)}\\ \hline
\multicolumn{2}{|l|}{\(a_{2} = \frac{1}{2} t X - \frac{1}{8} t^{2}\)}\\ \hline
\multicolumn{2}{|l|}{\(\deg a_n = 2, 1, 1, 1, \dots\)}\\ \hline
\end{tabular}
\end{table}

Example \ref{ex-nonperiodic-to-square-2} clearly reduces to a square at \(t = 0\). We check that \(\CF(\sqrt{D})\) is non-periodic by specializing to \(t = 3\). Then reduction of the continued fraction \(\CF(D_{t=3})\) modulo \(5\) and \(7\) yields torsion orders \(5\) respectively \(10\) which implies non-periodicity for both \(\CF(\sqrt{D_{t=3}})\) and \(\CF(\sqrt{D})\). As we are in the degree \(4\) case, this implies good reduction of \(\CF(\sqrt{D})\) at \(t-3\) (by Proposition \ref{bad-reduction-is-periodic-deg4}). Of course \(D_{t=3}\) reduces then to a square modulo \(3\), so the example works for the reduction modulo \(\pp\) case too.

\medskip

So both in the periodic and non-periodic case, it is possible that \(D\) reduces to a square.
\section{Reduction periodic to periodic}
\label{sec:orgc648887}
For \(\deg D = 4\), we have seen that torsion order \(m\) and quasi-period length \(\QPL\) satisfy \(m = \QPL +1\) (see Proposition \ref{prop-bounds-torsion-period-length}). Together with the discussion of Section \ref{sec:orgaebd15f} on how the quasi-period may shorten, and rational torsion on elliptic curves being bounded by \(12\), there cannot be many examples of bad reduction of a periodic continued fraction for \(D \in \Q[X]\). 

\begin{table}[h!]
\centering
\caption*{Example \cfexample{ex-periodic-good-red-1}}
\begin{tabular}{|l|l|}
\hline
\(D = X^{4} - 8 X^{3} - 42 X^{2} + 424 X - 119\) & basefield \(\Q\)\\ \hline
Discriminant of $D$: \(-1 \cdot 2^{29} \cdot 3^{5}\) & Primes with $\Red{D}$ square: \(3\)\\ \hline
\multicolumn{2}{|l|}{period length \(8\) for \(\CF(\sqrt{D})\)}\\ \hline
\multicolumn{2}{|l|}{Minimal Pell solution}\\ \hline
\(\deg p_{7} = 9\) & \(\deg q_{7} = 7\)\\ \hline
\multicolumn{2}{|l|}{Partial quotients of \(\sqrt{D}\)}\\ \hline
\(a_{0} = X^{2} - 4 X - 29\) & \(a_{4} = \frac{4}{3} X - \frac{44}{3}\)\\ \hline
\(a_{1} = \frac{1}{96} X + \frac{1}{96}\) & \(a_{5} = \frac{1}{32} X + \frac{5}{32}\)\\ \hline
\(a_{2} = -4 X + 12\) & \(a_{6} = -4 X + 12\)\\ \hline
\(a_{3} = \frac{1}{32} X + \frac{5}{32}\) & \(a_{7} = \frac{1}{96} X + \frac{1}{96}\)\\ \hline
\end{tabular}
\end{table}

Indeed this is the case in Example \ref{ex-periodic-good-red-1}, where we see only \(2\) and \(3\) in the denominators. As \(\OO\) has order \(9\), the only candidate for bad reduction of \(\CF(D)\) is \(3\), but \(D_3\) is already a square. Everywhere else we have good reduction of \(\CF(\sqrt{D})\).

But if we are working over number fields, and can increase the torsion orders, then in principle one should be able to construct example with bad reduction of the continued fraction (by applying Theorem \ref{thm-serre-tate-torsion-reduction}).

\medskip

We also analysed an example with \(\deg D = 6\) given in \cite{platonov-2014-number-theoretic-properties} (\(f_{33}\) in Section 6). In Example \ref{ex-periodic-good-red-2}, we have torsion order \(33\), so both \(3\) and \(11\) are good (but a priori not the only) candidates for bad reduction of \(\CF(\sqrt{D})\). But again \(D_3\) is already square. And we do not see \(11\) in the denominators (it suffices to check the first half of the palindromic quasi-period as mentioned in Section \ref{sec:orgaebd15f}). We have good reduction of \(\sqrt{D}\) at all other primes which is not surprising given that the example seems to have been constructed by testing for this.
Also observe that the factor for the quasi-period is \(\mu = 3\), as predicted by Remark \ref{bad-reduction-quasi-period-factor}.

\begin{table}[h!]
\centering
\caption*{Example \cfexample{ex-periodic-good-red-2}}
\begin{tabular}{|p{9cm}|p{5cm}|}
\hline
\(D = 4 X^{6} + 28 X^{5} + 37 X^{4} - 30 X^{3} + 87 X^{2} - 54 X + 9\) & basefield \(\Q\)\\ \hline
Discriminant of $D$: \(-1 \cdot 2^{22} \cdot 3^{14} \cdot 127\) & Primes with $\Red{D}$ square: \(3\)\\ \hline
period length \(54\) for \(\CF(\sqrt{D})\) & quasi-period length \(27\) for \(\CF(\sqrt{D})\)\\ \hline
\multicolumn{2}{|p{14cm}|}{Minimal Pell solution}\\ \hline
\(\deg p_{26} = 33\) & \(\deg q_{26} = 30\)\\ \hline
\multicolumn{2}{|p{14cm}|}{Partial quotients of \(\sqrt{D}\)}\\ \hline
\(a_{0} = 2 X^{3} + 7 X^{2} - 3 X + 3\) & \(a_{9} = 6 X\)\\ \hline
\(a_{1} = \frac{1}{9} X + \frac{1}{2}\) & \(a_{10} = \frac{1}{9} X + \frac{7}{18}\)\\ \hline
\(a_{2} = 3 X - \frac{9}{2}\) & \(a_{11} = -3 X - \frac{3}{2}\)\\ \hline
\(a_{3} = \frac{2}{27} X^{2} + \frac{1}{3} X + \frac{2}{9}\) & \(a_{12} = -\frac{1}{3} X - \frac{5}{6}\)\\ \hline
\(a_{4} = 6 X^{2} + 21 X - 9\) & \(a_{13} = \frac{2}{3} X + 1\)\\ \hline
\(a_{5} = \frac{1}{9} X - \frac{1}{6}\) & \(a_{14} = 2 X + 3\)\\ \hline
\(a_{6} = X + \frac{11}{2}\) & \(a_{15} = -\frac{1}{9} X - \frac{5}{18}\)\\ \hline
\(a_{7} = -2 X + 2\) & \(a_{16} = -9 X - \frac{9}{2}\)\\ \hline
\(a_{8} = -\frac{1}{9} X - \frac{1}{2}\) & \(a_{17} = \frac{1}{27} X + \frac{7}{54}\)\\ \hline
\end{tabular}
\end{table}

\clearpage

\section{Reduction non-periodic to periodic}
\label{sec:org3de9d88}
\subsection{Genus 1}
\label{sec:orgec32865}
For a polynomial of degree \(4\), our Theorem \ref{thm-genus1-zero-patterns} describes the behaviour of the valuations. We now give an example to illustrate this, both for odd and even quasi-period length of \(\CF(\sqrt{D_\pp})\).

\begin{table}[h!]
\centering
\caption*{Example \cfexample{ex-cfp1-zero-pattern-deg4}}
\begin{tabular}{|l|l|}
\hline
\(D = X^{4} + 5 X^{2} - 3 X + 19\) & basefield \(\Q\)\\ \hline
Discriminant of $D$: \(3^{2} \cdot 7^{2} \cdot 11^{2} \cdot 17\) & Primes with $\Red{D}$ square: \(3\)\\ \hline
\(D\) is not Pellian because of incompatible torsion orders & torsion order \(8\) modulo \(5\)\\ \hline
 & torsion order \(3\) modulo \(7\)\\ \hline
\multicolumn{2}{|l|}{Partial quotients of \(\sqrt{D}\)}\\ \hline
\multicolumn{2}{|l|}{\(a_{0} = X^{2} + \frac{5}{2}\)}\\ \hline
\multicolumn{2}{|l|}{\(a_{1} = -\frac{2}{3} X - \frac{17}{6}\)}\\ \hline
\multicolumn{2}{|l|}{\(a_{2} = -\frac{24}{329} X + \frac{33270}{108241}\)}\\ \hline
\multicolumn{2}{|l|}{\(\deg a_n = 2, 1, 1, 1, 1, \dots\)}\\ \hline
\end{tabular}
\end{table}

Example \ref{ex-cfp1-zero-pattern-deg4} is chosen randomly. Note again that while the discriminant has a finite number of prime divisors, only \(D_3\) is a square polynomial, and of course \(\CF(\sqrt{D})\) has bad reduction for every odd prime number \(\pp\) by Lemma \ref{bad-reduction-to-periodic} and Corollary \ref{cor-finite-field-always-periodic}.

\subsubsection{modulo 5}
\label{sec:org421d2d5}
\begin{table}[hp]
\caption{\label{cfr-mod5-valuations-table}
5-adic valuations for Example \ref{ex-cfp1-zero-pattern-deg4}}
\centering
\begin{tabular}{rrrrrrr}
\(n\) & \(\lambda(n)\) & \(\nu(\alpha_n)\) & \(\nu(a_n)\) & \(\nu(\LC(a_n))\) & \(\nu(q_n)\) & \(\nu(\LC(q_n))\)\\
\hline
0 & 0 & 0 & 0 & 0 & 0 & 0\\
1 & 1 & 0 & 0 & 0 & 0 & 0\\
2 & 2 & 0 & 0 & 0 & 0 & 0\\
3 & 3 & 0 & 0 & 0 & 0 & 0\\
4 & 4 & 0 & 0 & 0 & 0 & 0\\
5 & 5 & 0 & 0 & 0 & 0 & 0\\
6 & 6 & 0 & 0 & 0 & 0 & 0\\
7 & 6 & \(-\infty\) & -2 & -1 & -2 & -1\\
8 & 7 & 2 & 2 & 3 & 2 & 2\\
9 & 8 & -4 & -4 & -4 & -2 & -2\\
10 & 9 & 4 & 4 & 4 & 2 & 2\\
11 & 10 & -4 & -4 & -4 & -2 & -2\\
12 & 11 & 4 & 4 & 4 & 2 & 2\\
13 & 12 & -4 & -4 & -4 & -2 & -2\\
14 & 13 & 4 & 4 & 4 & 2 & 2\\
15 & 13 & \(-\infty\) & -6 & -5 & -4 & -3\\
16 & 14 & 6 & 6 & 7 & 4 & 4\\
17 & 15 & -8 & -8 & -8 & -4 & -4\\
18 & 16 & 8 & 8 & 8 & 4 & 4\\
19 & 17 & -8 & -8 & -8 & -4 & -4\\
20 & 18 & 8 & 8 & 8 & 4 & 4\\
21 & 19 & -8 & -8 & -8 & -4 & -4\\
22 & 20 & 8 & 8 & 8 & 4 & 4\\
23 & 20 & \(-\infty\) & -10 & -9 & -6 & -5\\
24 & 21 & 10 & 10 & 11 & 6 & 6\\
25 & 22 & -12 & -12 & -12 & -6 & -6\\
26 & 23 & 12 & 12 & 12 & 6 & 6\\
27 & 24 & -12 & -12 & -12 & -6 & -6\\
28 & 25 & 12 & 12 & 12 & 6 & 6\\
29 & 26 & -12 & -12 & -12 & -6 & -6\\
30 & 27 & 12 & 12 & 12 & 6 & 6\\
31 & 27 & \(-\infty\) & -14 & -13 & -8 & -7\\
32 & 28 & 14 & 14 & 15 & 8 & 8\\
33 & 29 & -16 & -16 & -16 & -8 & -8\\
34 & 30 & 16 & 16 & 16 & 8 & 8\\
\end{tabular}
\end{table}

Table \ref{cfr-mod5-valuations-table} lists the 5-adic valuations (Gauss norms). Note how the changes in the patterns, and the unbounded \(\alpha_n\) are aligned with the 2-element fibres of \(\lambda\). We can also read off the quasi-period length of \(\CF(\sqrt{D_{5}})\) from the first occurrence of non-zero valuations, and determine it to be \(7\) (this works only in degree \(4\)).

As the quasi-period length is odd, we can observe (as predicted by Corollary \ref{cor-genus1-unbounded-gauss-norm}) that the valuations increase in absolute value. We also see that the sign of the exponent is alternating, and that almost all the \(\nu(a_n)\) are divisible by \(4\) (as predicted by Theorem \ref{thm-genus1-zero-patterns}). Pay attention to the valuations of the leading coefficients being larger. For \(q_n\) this indicates that \(\Redn{q_n}\) has a lower degree.

\subsubsection{modulo 19}
\label{sec:org46eabc7}

\begin{table}[hp]
\caption{\label{cfr-mod19-valuations-table}
19-adic valuations for Example \ref{ex-cfp1-zero-pattern-deg4}}
\centering
\begin{tabular}{rrrrrrr}
\(n\) & \(\lambda(n)\) & \(\nu(\alpha_n)\) & \(\nu(a_n)\) & \(\nu(\LC(a_n))\) & \(\nu(q_n)\) & \(\nu(\LC(q_n))\)\\
\hline
0 & 0 & 0 & 0 & 0 & 0 & 0\\
1 & 1 & 0 & 0 & 0 & 0 & 0\\
2 & 2 & 0 & 0 & 0 & 0 & 0\\
3 & 3 & 0 & 0 & 0 & 0 & 0\\
4 & 4 & 0 & 0 & 0 & 0 & 0\\
5 & 5 & 0 & 0 & 0 & 0 & 0\\
6 & 5 & \(-\infty\) & -2 & -1 & -2 & -1\\
7 & 6 & 2 & 2 & 3 & 2 & 2\\
8 & 7 & -4 & -4 & -4 & -2 & -2\\
9 & 8 & 4 & 4 & 4 & 2 & 2\\
10 & 9 & -4 & -4 & -4 & -2 & -2\\
11 & 10 & 4 & 4 & 4 & 2 & 2\\
12 & 11 & -4 & -4 & -4 & -2 & -2\\
13 & 11 & \(-\infty\) & 2 & 3 & 0 & 1\\
14 & 12 & -2 & -2 & -1 & 0 & 0\\
15 & 13 & 0 & 0 & 0 & 0 & 0\\
16 & 14 & 0 & 0 & 0 & 0 & 0\\
17 & 15 & 0 & 0 & 0 & 0 & 0\\
18 & 16 & 0 & 0 & 0 & 0 & 0\\
19 & 17 & 0 & 0 & 0 & 0 & 0\\
20 & 17 & \(-\infty\) & -2 & -1 & -2 & -1\\
21 & 18 & 2 & 2 & 3 & 2 & 2\\
22 & 19 & -4 & -4 & -4 & -2 & -2\\
23 & 20 & 4 & 4 & 4 & 2 & 2\\
24 & 21 & -4 & -4 & -4 & -2 & -2\\
25 & 22 & 4 & 4 & 4 & 2 & 2\\
26 & 23 & -4 & -4 & -4 & -2 & -2\\
27 & 23 & \(-\infty\) & 2 & 3 & 0 & 1\\
28 & 24 & -2 & -2 & -1 & 0 & 0\\
29 & 25 & 0 & 0 & 0 & 0 & 0\\
30 & 26 & 0 & 0 & 0 & 0 & 0\\
31 & 27 & 0 & 0 & 0 & 0 & 0\\
32 & 28 & 0 & 0 & 0 & 0 & 0\\
33 & 29 & 0 & 0 & 0 & 0 & 0\\
34 & 29 & \(-\infty\) & -2 & -1 & -2 & -1\\
35 & 30 & 2 & 2 & 3 & 2 & 2\\
36 & 31 & -4 & -4 & -4 & -2 & -2\\
37 & 32 & 4 & 4 & 4 & 2 & 2\\
\end{tabular}
\end{table}

Table \ref{cfr-mod19-valuations-table} is for the 19-adic valuations. The patterns are very similar to the table for \(5\). We can also read off the quasi-period of \(\CF(\sqrt{D_{19}})\): it is \(6\), hence even. The alternating signs of the valuations  then lead to cancellation of exponents at the 2-element fibres. However it remains an open question whether these valuations are eventually periodic. If so, our computations suggest that their period length must be significantly larger than the quasi-period length of \(\CF(\sqrt{D_\pp})\).

Note that the torsion order of \(\OO_{19}\) is 7, while the torsion order of \(\OO_5\) is 8 (from Proposition \ref{prop-bounds-torsion-period-length}). This implies that \(\CF(\sqrt{D})\) cannot be periodic, using the arguments from Remark \ref{rem-periodicity-test-reduction-two-primes} in Section \ref{sec:org36b2a71} with reduction modulo two primes.

% \clearpage
\subsection{Genus 2}
\label{sec:org0b837d5}
We also give an example of degree \(6\), to illustrate the difficulties arising in higher genus, and the more complicated patterns of the valuations in this case. Moreover, we will find convergents where \(\Redn{p_n}\) and \(\Redn{q_n}\) share a common linear factor.

\begin{table}[h!]
\centering
\caption*{Example \cfexample{ex-cfp2-zero-pattern-deg6}}
\begin{tabular}{|l|l|}
\hline
\(D = X^{6} + 7 X^{4} + 8 X^{3} + 9 X^{2} + 5\) & basefield \(\Q\)\\ \hline
Discriminant of $D$: \(-1 \cdot 2^{10} \cdot 5 \cdot 7^{2} \cdot 353^{2}\) & $D$ never reduces to a square.\\ \hline
\(D\) is not Pellian & \\ \hline
\multicolumn{2}{|l|}{Partial quotients of \(\sqrt{D}\)}\\ \hline
\multicolumn{2}{|l|}{\(a_{0} = X^{3} + \frac{7}{2} X + 4\)}\\ \hline
\multicolumn{2}{|l|}{\(a_{1} = -\frac{8}{13} X + \frac{896}{169}\)}\\ \hline
\multicolumn{2}{|l|}{\(a_{2} = -\frac{2197}{100508} X - \frac{112744970}{631366129}\)}\\ \hline
\multicolumn{2}{|l|}{\(\deg a_n = 3, 1, 1, 1, 1, \dots\)}\\ \hline
\end{tabular}
\end{table}

Example \ref{ex-cfp2-zero-pattern-deg6} is again a random non-periodic example. See how the coefficient size explodes worse than in the genus 1 example. And observe that the prime \(13\) appears already in \(a_1\). However, it turns out that \(\CF(\sqrt{D_{13}})\) has a quite long quasi-period length: it is \(126\).

So unlike in genus \(1\), the first occurrence of a prime \(\pp\) in a denominator of the \(a_n\) does not give so much information on the quasi-period length of \(\CF(\sqrt{D_\pp})\).

\subsubsection{modulo 3}
\label{sec:org2ff323f}

\begin{table}[hp]
\caption{\label{cf2-mod3-degrees-table}
Degrees for reduction mod 3 in Example \ref{ex-cfp2-zero-pattern-deg6}}
\centering
\begin{tabular}{rrrrrrr}
\(n\) & \(m\) & \(\deg a_n\) & \(\deg c_m\) & \(\deg q_n\) & \(\deg \Redn{q_n}\) & \(\deg v_m\)\\
\hline
0 & 0 & 3 & 3 & 0 & 0 & 0\\
1 & 1 & 1 & 1 & 1 & 1 & 1\\
2 & 2 & 1 & 1 & 2 & 2 & 2\\
3 & 2 & 1 & 1 & 3 & 2 & 2\\
4 & 3 & 1 & 2 & 4 & 4 & 4\\
5 & 4 & 1 & 1 & 5 & 5 & 5\\
6 & 5 & 1 & 1 & 6 & 6 & 6\\
7 & 5 & 1 & 1 & 7 & 7 & 6\\
8 & 5 & 1 & 1 & 8 & 6 & 6\\
9 & 6 & 1 & 3 & 9 & 9 & 9\\
10 & 7 & 1 & 1 & 10 & 10 & 10\\
11 & 8 & 1 & 1 & 11 & 11 & 11\\
12 & 8 & 1 & 1 & 12 & 11 & 11\\
13 & 9 & 1 & 2 & 13 & 13 & 13\\
14 & 10 & 1 & 1 & 14 & 14 & 14\\
15 & 11 & 1 & 1 & 15 & 15 & 15\\
16 & 11 & 1 & 1 & 16 & 16 & 15\\
17 & 11 & 1 & 1 & 17 & 15 & 15\\
18 & 12 & 1 & 3 & 18 & 18 & 18\\
19 & 13 & 1 & 1 & 19 & 19 & 19\\
20 & 14 & 1 & 1 & 20 & 20 & 20\\
21 & 14 & 1 & 1 & 21 & 20 & 20\\
22 & 15 & 1 & 2 & 22 & 22 & 22\\
23 & 16 & 1 & 1 & 23 & 23 & 23\\
24 & 17 & 1 & 1 & 24 & 24 & 24\\
25 & 17 & 1 & 1 & 25 & 25 & 24\\
26 & 17 & 1 & 1 & 26 & 24 & 24\\
27 & 18 & 1 & 3 & 27 & 27 & 27\\
28 & 19 & 1 & 1 & 28 & 28 & 28\\
\end{tabular}
\end{table}

In Table \ref{cf2-mod3-degrees-table}, we compare the degrees of partial quotients between \(\CF(\sqrt{D})\) and its reduction \(\CF(\sqrt{D_3})\). We put \(m = \lambda(n)\), and be aware that the columns depending on \(m\) contain \emph{repeated entries}. 

Note particularly that the sequence of the \(\deg \Redn{q_n}\) is also decreasing, and sometimes is larger than the corresponding \(\deg v_m\). This means that \(\Redn{p_n}, \Redn{q_n}\) have a common linear factor. This of course happens here only in the 3-element fibres of \(\lambda\) (in the table \(m = \lambda(n)\)). For example
\begin{align*}
\Redn{p_7} &= (X + 1)\cdot(2 X^{9} + 2 X^{8} + X^{7} + 2 X^{6} + 2 X^{5} + 2 X^{4} + 2),\\ \Redn{q_7} &= (X + 1)\cdot(2 X^{6} + 2 X^{5} + 2 X^{3} + X^{2}).
\end{align*}

\begin{table}[hp]
\caption{\label{cf2-mod3-valuations-table}
3-adic valuations for Example \ref{ex-cfp2-zero-pattern-deg6}}
\centering
\begin{tabular}{rrrrrrr}
\(n\) & \(\lambda(n)\) & \(\nu(\alpha_n)\) & \(\nu(a_n)\) & \(\nu(\LC(a_n))\) & \(\nu(q_n)\) & \(\nu(\LC(q_n))\)\\
\hline
0 & 0 & 0 & 0 & 0 & 0 & 0\\
1 & 1 & 0 & 0 & 0 & 0 & 0\\
2 & 2 & 0 & 0 & 0 & 0 & 0\\
3 & 2 & \(-\infty\) & -2 & -1 & -2 & -1\\
4 & 3 & 2 & 2 & 3 & 2 & 2\\
5 & 4 & -4 & -4 & -4 & -2 & -2\\
6 & 5 & 4 & 4 & 4 & 2 & 2\\
7 & 5 & \(-\infty\) & -5 & -5 & -3 & -3\\
8 & 5 & 5 & 6 & 6 & 2 & 3\\
9 & 6 & -5 & -5 & -5 & -2 & -2\\
10 & 7 & 4 & 4 & 4 & 2 & 2\\
11 & 8 & -4 & -4 & -4 & -2 & -2\\
12 & 8 & \(-\infty\) & 0 & 2 & -2 & 0\\
13 & 9 & 0 & 0 & 2 & 2 & 2\\
14 & 10 & -4 & -4 & -4 & -2 & -2\\
15 & 11 & 4 & 4 & 4 & 2 & 2\\
16 & 11 & \(-\infty\) & -5 & -5 & -3 & -3\\
17 & 11 & 5 & 6 & 6 & 2 & 3\\
18 & 12 & -5 & -5 & -5 & -2 & -2\\
19 & 13 & 4 & 4 & 4 & 2 & 2\\
20 & 14 & -4 & -4 & -4 & -2 & -2\\
21 & 14 & \(-\infty\) & 2 & 3 & 0 & 1\\
22 & 15 & -2 & -2 & -1 & 0 & 0\\
23 & 16 & 0 & 0 & 0 & 0 & 0\\
24 & 17 & 0 & 0 & 0 & 0 & 0\\
25 & 17 & \(-\infty\) & -2 & -2 & -2 & -2\\
26 & 17 & 2 & 4 & 4 & 0 & 2\\
27 & 18 & -2 & -2 & -2 & 0 & 0\\
28 & 19 & 0 & 0 & 0 & 0 & 0\\
\end{tabular}
\end{table}

The patterns for the valuations in Table \ref{cf2-mod3-valuations-table} are now more interesting, as there are fibres of \(\lambda\) with \(2\) or \(3\) elements. But at least these are still isolated. Observe the differences between the valuation of the entire polynomial (the Gauss norm) and of the leading coefficient between 2-element fibres and 3-element fibres. Note that now odd valuations are occurring.

Also, we cannot read off the quasi-period length of \(\CF(\sqrt{D_3})\) just by counting the rows with only zero valuations. From Table \ref{cf2-mod3-degrees-table}, we know that it is actually \(6\), not \(3\) (by looking for \(c_m\) of degree \(3\) which first occurs for \(m = 6\)). This corresponds to torsion order \(9 = \deg p_{5}\) of \(\OO_{3}\) (via Theorem \ref{thm-pellian-iff-torsion} and Remark \ref{convergent-omega-degree}).

\subsubsection{modulo 19}
\label{sec:org5ccc0d4}
Another interesting prime would be \(19\). There \(\CF(\sqrt{D_{19}})\) has (quasi-)period length \(6\), with degrees of the \(a_n\) having the periodic pattern
\(\deg a_n = \overline{3, 1, 1, 1, 1, 1}.\)

This degree pattern implies that \(\lambda\) has only fibres with \(1\) or \(3\) elements.

\begin{table}[hp]
\caption{\label{cf2-mod19-valuations-table}
19-adic valuations for Example \ref{ex-cfp2-zero-pattern-deg6}}
\centering
\begin{tabular}{rrrrrrr}
\(n\) & \(\lambda(n)\) & \(\nu(\alpha_n)\) & \(\nu(a_n)\) & \(\nu(\LC(a_n))\) & \(\nu(q_n)\) & \(\nu(\LC(q_n))\)\\
\hline
0 & 0 & 0 & 0 & 0 & 0 & 0\\
1 & 1 & 0 & 0 & 0 & 0 & 0\\
2 & 2 & 0 & 0 & 0 & 0 & 0\\
3 & 3 & 0 & 0 & 0 & 0 & 0\\
4 & 4 & 0 & 0 & 0 & 0 & 0\\
5 & 5 & 0 & 0 & 0 & 0 & 0\\
6 & 5 & \(-\infty\) & -1 & -1 & -1 & -1\\
7 & 5 & 1 & 2 & 2 & 0 & 1\\
8 & 6 & -1 & -1 & -1 & 0 & 0\\
9 & 7 & 0 & 0 & 0 & 0 & 0\\
10 & 8 & 0 & 0 & 0 & 0 & 0\\
11 & 9 & 0 & 0 & 0 & 0 & 0\\
12 & 10 & 0 & 0 & 0 & 0 & 0\\
13 & 11 & 0 & 0 & 0 & 0 & 0\\
14 & 11 & \(-\infty\) & -1 & -1 & -1 & -1\\
15 & 11 & 1 & 2 & 2 & 0 & 1\\
16 & 12 & -1 & -1 & -1 & 0 & 0\\
17 & 13 & 0 & 0 & 0 & 0 & 0\\
18 & 14 & 0 & 0 & 0 & 0 & 0\\
19 & 15 & 0 & 0 & 0 & 0 & 0\\
20 & 16 & 0 & 0 & 0 & 0 & 0\\
21 & 17 & 0 & 0 & 0 & 0 & 0\\
22 & 17 & \(-\infty\) & -1 & -1 & -1 & -1\\
23 & 17 & 1 & 2 & 2 & 0 & 1\\
24 & 18 & -1 & -1 & -1 & 0 & 0\\
25 & 19 & 0 & 0 & 0 & 0 & 0\\
26 & 20 & 0 & 0 & 0 & 0 & 0\\
27 & 21 & 0 & 0 & 0 & 0 & 0\\
28 & 22 & 0 & 0 & 0 & 0 & 0\\
29 & 23 & 0 & 0 & 0 & 0 & 0\\
30 & 23 & \(-\infty\) & -1 & -1 & -1 & -1\\
31 & 23 & 1 & 2 & 2 & 0 & 1\\
32 & 24 & -1 & -1 & -1 & 0 & 0\\
33 & 25 & 0 & 0 & 0 & 0 & 0\\
34 & 26 & 0 & 0 & 0 & 0 & 0\\
35 & 27 & 0 & 0 & 0 & 0 & 0\\
\end{tabular}
\end{table}

In Table \ref{cf2-mod19-valuations-table}, note how the valuations are reset to \(0\) after the 3-element fibres. This illustrates nicely why we require infinitely many fibres of \(\lambda\) with multiple elements in Proposition \ref{fibre-conditions-infinite-poles}.

\subsubsection{modulo 5}
\label{sec:org0144a9b}
So far, the regularity of these valuation patterns for \(\deg D = 6\) has been deceiving, so let us look at the 5-adic valuations too. The quasi-period length of \(\CF(\sqrt{D_5})\) is just \(6\). The partial quotients period is
\(\deg a_n = \overline{3, 1, 1, 2, 1, 1}.\)

So compared to \(\pp = 3\), there are also 2-element fibres. This makes the patterns much more complicated, as seen in Table \ref{cf2-mod5-valuations-table} (and in other examples, this might be even worse).

\begin{table}[hp]
\caption{\label{cf2-mod5-valuations-table}
5-adic valuations for Example \ref{ex-cfp2-zero-pattern-deg6}}
\centering
\begin{tabular}{rrrrrrr}
\(n\) & \(\lambda(n)\) & \(\nu(\alpha_n)\) & \(\nu(a_n)\) & \(\nu(\LC(a_n))\) & \(\nu(q_n)\) & \(\nu(\LC(q_n))\)\\
\hline
0 & 0 & 0 & 0 & 0 & 0 & 0\\
1 & 1 & 0 & 0 & 0 & 0 & 0\\
2 & 2 & 0 & 0 & 0 & 0 & 0\\
3 & 2 & \(-\infty\) & -6 & -3 & -6 & -3\\
4 & 3 & 6 & 6 & 9 & 6 & 6\\
5 & 4 & -12 & -12 & -12 & -6 & -6\\
6 & 5 & 12 & 12 & 12 & 6 & 6\\
7 & 5 & \(-\infty\) & -13 & -13 & -7 & -7\\
8 & 5 & 13 & 14 & 14 & 6 & 7\\
9 & 6 & -13 & -13 & -13 & -6 & -6\\
10 & 7 & 12 & 12 & 12 & 6 & 6\\
11 & 8 & -12 & -12 & -12 & -6 & -6\\
12 & 8 & \(-\infty\) & 6 & 9 & 0 & 3\\
13 & 9 & -6 & -6 & -3 & 0 & 0\\
14 & 10 & 0 & 0 & 0 & 0 & 0\\
15 & 11 & 0 & 0 & 0 & 0 & 0\\
16 & 11 & \(-\infty\) & -1 & -1 & -1 & -1\\
17 & 11 & 1 & 2 & 2 & 0 & 1\\
18 & 12 & -1 & -1 & -1 & 0 & 0\\
19 & 13 & 0 & 0 & 0 & 0 & 0\\
20 & 14 & 0 & 0 & 0 & 0 & 0\\
21 & 14 & \(-\infty\) & -8 & -4 & -8 & -4\\
22 & 15 & 8 & 8 & 12 & 8 & 8\\
23 & 16 & -16 & -16 & -16 & -8 & -8\\
24 & 17 & 16 & 16 & 16 & 8 & 8\\
25 & 17 & \(-\infty\) & -17 & -17 & -9 & -9\\
26 & 17 & 17 & 18 & 18 & 8 & 9\\
27 & 18 & -17 & -17 & -17 & -8 & -8\\
28 & 19 & 16 & 16 & 16 & 8 & 8\\
29 & 20 & -16 & -16 & -16 & -8 & -8\\
30 & 20 & \(-\infty\) & 10 & 13 & 2 & 5\\
31 & 21 & -10 & -10 & -7 & -2 & -2\\
32 & 22 & 4 & 4 & 4 & 2 & 2\\
33 & 23 & -4 & -4 & -4 & -2 & -2\\
34 & 23 & \(-\infty\) & 3 & 3 & 1 & 1\\
35 & 23 & -3 & -2 & -2 & -2 & -1\\
36 & 24 & 3 & 3 & 3 & 2 & 2\\
37 & 25 & -4 & -4 & -4 & -2 & -2\\
\end{tabular}
\end{table}

\clearpage
\section{Non-constant degrees of partial quotients}
\label{sec:orge7e06d0}

The following example was constructed in collaboration with Prof. Zannier and Francesca Malagoli, to answer a question raised during preparation of \cite{zannier-2016-hyper-contin-fract}: In the article, it is a consequence of the Skolem-Mahler-Lech Theorem for algebraic groups (mentioned before) that the sequence of the \(\deg a_n\) (for \(\alpha = \sqrt{D}\)) becomes eventually periodic. However, in any non-periodic examples known previously, these degrees stabilised on a single value. Of course, in that case periodicity of the degrees is not very interesting. 

So we searched for an non-periodic example where the degrees assume multiple values infinitely often.

We found Example \ref{ex-dnc1-non-constant-deg} which has infinitely many partial quotients \(a_n\) both of degree \(1\) and of degree \(2\) (we remark that this would be impossible for \(\deg D = 4\) or \(6\), so we cannot do better than \(\deg D = 8\)).

\begin{table}[h!]
\centering
\caption*{Example \cfexample{ex-dnc1-non-constant-deg}}
\begin{tabular}{|p{9cm}|p{5cm}|}
\hline
\(D = X^{8} - X^{7} - \frac{3}{4} X^{6} + \frac{7}{2} X^{5} - \frac{21}{4} X^{4} + \frac{7}{2} X^{3} - \frac{3}{4} X^{2} - X + 1\) & basefield \(\Q\)\\ \hline
Discriminant of $D$: \(-1 \cdot 2^{2} \cdot 3 \cdot 13 \cdot 173^{2}\) & Primes in denominators of $D$: \(2\)\\ \hline
$D$ never reduces to a square. & \\ \hline
\(D\) is not Pellian because of incompatible torsion orders & torsion order \(10\) modulo \(3\)\\ \hline
 & torsion order \(40\) modulo \(11\)\\ \hline
\multicolumn{2}{|p{14cm}|}{Partial quotients of \(\sqrt{D}\)}\\ \hline
\multicolumn{2}{|p{14cm}|}{\(a_{0} = X^{4} - \frac{1}{2} X^{3} - \frac{1}{2} X^{2} + \frac{3}{2} X - 2\)}\\ \hline
\multicolumn{2}{|p{14cm}|}{\(a_{1} = \frac{2}{3} X + \frac{7}{9}\)}\\ \hline
\multicolumn{2}{|p{14cm}|}{\(a_{2} = -\frac{27}{4} X - \frac{45}{8}\)}\\ \hline
\multicolumn{2}{|p{14cm}|}{\(a_{3} = \frac{16}{81} X^{2} - \frac{64}{81} X + \frac{200}{81}\)}\\ \hline
\multicolumn{2}{|p{14cm}|}{\(a_{4} = -\frac{27}{200} X - \frac{171}{400}\)}\\ \hline
\multicolumn{2}{|p{14cm}|}{\(\deg a_n = 4, 1, 1, 2, 1, 1, 1, 1, 1, 1, 1, 1, 2, 1, 1, 1, 1, 1, 1, 1, 1, 2, 1, 1, 1, 1, 1, 1, 1, 1, 2, 1, \dots\)}\\ \hline
\end{tabular}
\end{table}

This is related to the fact that the Jacobian in Example \ref{ex-dnc1-non-constant-deg} is not simple. It contains an elliptic curve, and infinitely many multiples of the point \(\OO\) lie on a certain translate of it. This causes the degrees of the \(a_n\) to follow the pattern \(4, 1, 1, \overline{2, 1, 1, 1, 1, 1, 1, 1, 1}\). For details, we refer to an article in preparation together with Malagoli and Zannier.

\medskip

Here we remark only that if we reduce modulo 3, we actually get a square-free polynomial in \((X+1)^2\) (and divisible by \((X+1)^2\) too, hence the \(3\) in the discriminant). So all the partial quotients have at least degree \(2\) (see also Table \ref{cfnc1-mod3-degrees-table}).
\begin{table}[hp]
\caption{\label{cfnc1-mod3-degrees-table}
Degrees modulo 3 for Example \ref{ex-dnc1-non-constant-deg}}
\centering
\begin{tabular}{rrrrrrr}
\(n\) & \(m\) & \(\deg a_n\) & \(\deg c_m\) & \(\deg q_n\) & \(\deg \Redn{q_n}\) & \(\deg v_m\)\\
\hline
0 & 0 & 4 & 4 & 0 & 0 & 0\\
1 & 0 & 1 & 4 & 1 & 0 & 0\\
2 & 1 & 1 & 2 & 2 & 2 & 2\\
3 & 2 & 2 & 2 & 4 & 4 & 4\\
4 & 2 & 1 & 2 & 5 & 4 & 4\\
5 & 3 & 1 & 2 & 6 & 6 & 6\\
6 & 3 & 1 & 2 & 7 & 7 & 6\\
7 & 3 & 1 & 2 & 8 & 7 & 6\\
8 & 3 & 1 & 2 & 9 & 6 & 6\\
9 & 4 & 1 & 4 & 10 & 10 & 10\\
10 & 4 & 1 & 4 & 11 & 10 & 10\\
11 & 5 & 1 & 2 & 12 & 12 & 12\\
12 & 6 & 2 & 2 & 14 & 14 & 14\\
13 & 6 & 1 & 2 & 15 & 14 & 14\\
14 & 7 & 1 & 2 & 16 & 16 & 16\\
15 & 7 & 1 & 2 & 17 & 17 & 16\\
16 & 7 & 1 & 2 & 18 & 17 & 16\\
17 & 7 & 1 & 2 & 19 & 16 & 16\\
18 & 8 & 1 & 4 & 20 & 20 & 20\\
19 & 8 & 1 & 4 & 21 & 20 & 20\\
20 & 9 & 1 & 2 & 22 & 22 & 22\\
21 & 10 & 2 & 2 & 24 & 24 & 24\\
22 & 10 & 1 & 2 & 25 & 24 & 24\\
23 & 11 & 1 & 2 & 26 & 26 & 26\\
24 & 11 & 1 & 2 & 27 & 27 & 26\\
25 & 11 & 1 & 2 & 28 & 27 & 26\\
26 & 11 & 1 & 2 & 29 & 26 & 26\\
\end{tabular}
\end{table}

Note that \(\lambda\) has still infinitely many fibres with a single element. Observe the sequence \(\deg \Redn{q_n}\) is sometimes decreasing, so there are again convergents with a common factor between \(\Redn{p_n}\) and \(\Redn{q_n}\).

\section{Recurring partial quotients}
\label{sec:orge27ae2f}
Recall that \cite{zannier-2016-hyper-contin-fract} gave a lower bound for an average of the affine heights of partial quotients (see the end of Section \ref{sec:org1835e85}). A strengthening of this would be a bound like
\begin{equation*}
C \, n^2 \leq \hproj(a_n)
\end{equation*}
for the non-Pellian case.

However, together with Prof. Zannier and Francesca Malagoli, and some assistance from Solomon Vishkautsan for the computations, we have found Example \ref{ex-dbh1-bounded-height} below. There for \(n = 7 + 17 \, j \pm 1, \; j \in \N_0\) the partial quotients have the shape
\begin{equation*}
a_n = C_n \, (X-2), \quad C_n \in \closure{\Q}
\end{equation*}
so in particular \(\hproj(a_n)\) remains constant on this subsequence and the above lower bound is \emph{impossible} in general.

\begin{table}[h!]
\centering
\caption*{Example \cfexample{ex-dbh1-bounded-height}}
\begin{tabular}{|p{9cm}|p{5cm}|}
\hline
\(D = X^{12} + (-8 \tau^{4} + 6 \tau^{3} - 28 \tau^{2} + 22 \tau + 22) X^{10} + (-8 \tau^{4} + 6 \tau^{3} - 28 \tau^{2} + 22 \tau + 22) X^{9} + (83 \tau^{4} - 62 \tau^{3} + 291 \tau^{2} - 225 \tau - 309) X^{8} + (166 \tau^{4} - 124 \tau^{3} + 582 \tau^{2} - 450 \tau - 618) X^{7} + (-127 \tau^{4} + 92 \tau^{3} - 447 \tau^{2} + 327 \tau + 529) X^{6} + (-630 \tau^{4} + 462 \tau^{3} - 2214 \tau^{2} + 1656 \tau + 2514) X^{5} + (-538 \tau^{4} + 398 \tau^{3} - 1893 \tau^{2} + 1434 \tau + 2115) X^{4} + (158 \tau^{4} - 102 \tau^{3} + 546 \tau^{2} - 336 \tau - 758) X^{3} + (552 \tau^{4} - 384 \tau^{3} + 1926 \tau^{2} - 1332 \tau - 2394) X^{2} + (368 \tau^{4} - 256 \tau^{3} + 1284 \tau^{2} - 888 \tau - 1596) X + 92 \tau^{4} - 64 \tau^{3} + 321 \tau^{2} - 222 \tau - 399\) & basefield \(K = \Q(\tau)\), where \(\tau\) has minimal polynomial \(t^{5} + 3 t^{3} - 6 t - 3\)\\ \hline
 & $D$ never reduces to a square.\\ \hline
\(D\) is not Pellian because of incompatible torsion orders & torsion order \(42\) modulo \(\tau\)\\ \hline
 & torsion order \(861\) modulo \(3 \tau^{4} - 2 \tau^{3} + 11 \tau^{2} - 8 \tau - 11\)\\ \hline
\multicolumn{2}{|p{14cm}|}{Partial quotients of \(\sqrt{D}\)}\\ \hline
\multicolumn{2}{|p{14cm}|}{\(a_{0} = X^{6} + (-4 \tau^{4} + 3 \tau^{3} - 14 \tau^{2} + 11 \tau + 11) X^{4} + (-4 \tau^{4} + 3 \tau^{3} - 14 \tau^{2} + 11 \tau + 11) X^{3} + (16 \tau^{4} - 12 \tau^{3} + 56 \tau^{2} - 43 \tau - 65) X^{2} + (32 \tau^{4} - 24 \tau^{3} + 112 \tau^{2} - 86 \tau - 130) X + 28 \tau^{4} - 22 \tau^{3} + 98 \tau^{2} - 79 \tau - 119\)}\\ \hline
\multicolumn{2}{|p{14cm}|}{\(a_{1} = (\frac{7}{18} \tau^{4} - \frac{5}{18} \tau^{3} + \frac{25}{18} \tau^{2} - \tau - \frac{5}{3}) X - \frac{118}{81} \tau^{4} + \frac{55}{54} \tau^{3} - \frac{140}{27} \tau^{2} + \frac{98}{27} \tau + \frac{115}{18}\)}\\ \hline
\multicolumn{2}{|p{14cm}|}{\(a_{2} = (\frac{54453438756}{1411680971} \tau^{4} + \frac{15324049962}{1411680971} \tau^{3} + \frac{225993869532}{1411680971} \tau^{2} + \frac{96839726742}{1411680971} \tau - \frac{49873817664}{1411680971}) X - \frac{233866366289476431120}{1992843163883502841} \tau^{4} - \frac{33996626533462319838}{1992843163883502841} \tau^{3} - \frac{951199485540229734912}{1992843163883502841} \tau^{2} - \frac{278019658364688097056}{1992843163883502841} \tau + \frac{305591634283859417718}{1992843163883502841}\)}\\ \hline
\multicolumn{2}{|p{14cm}|}{\(\deg a_n = 6, 1, 1, 1, 1, 1, 1, 3, 1, 1, 1, 1, 1, 1, 1, 2, 2, 1, 1, 1, 1, 1, 1, 1, 3, 1, 1, 1, 1, 1, \dots\)}\\ \hline
\end{tabular}
\end{table}

This also gives an example where \(\deg a_n\) assumes three different values infinitely often, and again this is related to the Jacobian containing an elliptic curve. We hope to describe this example in much more detail in the article in preparation together with Malagoli and Zannier mentioned above.

\appendix
\chapter{Appendix}
\label{sec:orgbd3afba}
\section{Polynomial Pell equation in characteristic 2}
\label{sec:orgfe2a066}
Let us quickly have a look at the polynomial Pell equation in characteristic \(2\) and give a criterion which allows to easily test for and construct solutions in this case.
\begin{thm}
Let \(\K\) a field of characteristic \(2\) and \(D \in \K[X]\). There exists a non-trivial solution (with \(q \neq 0\)) of
\begin{equation*}
p^2 - D \, q^2 = \eta, \qquad p, q \in \K[X], \eta \in \units \K
\end{equation*}
if and only if there exist \(E \in \K[X], r \in \K\) such that \(D = E^2 + r\).

Moreover, \(r = 0\) is possible if and only if there exists a non-trivial solution with \(\eta\) a \emph{square}.
\end{thm}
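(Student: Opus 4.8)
The plan is to reduce everything to the fact that in characteristic $2$ the Frobenius map $x \mapsto x^2$ is an injective ring homomorphism, so that $p^2 - D\,q^2 = p^2 + D\,q^2$ and, over any field extension in which $D$ acquires a square root, this expression is a perfect square.

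For the "if" direction of the main equivalence I would simply exhibit solutions: if $D = E^2 + r$ with $E \in \K[X]$ and $r \in \K$, then $(p,q) = (E,1)$ gives $p^2 - D\,q^2 = E^2 - (E^2 + r) = r$, a non-trivial solution whenever $r \neq 0$; while if $r = 0$ (so $D = E^2$) then $(p,q) = (E+1,1)$ gives $p^2 - D\,q^2 = (E+1)^2 - E^2 = 1$. This also settles one half of the Moreover at once, since in the $r = 0$ case the unit produced is $\eta = 1$, a square. The reverse half of the Moreover is equally immediate: if $D = E^2$ is a square in $\K[X]$, the same $(E+1,1)$ is a non-trivial solution with $\eta = 1$ a square.

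The substantive step is the "only if" direction of the main equivalence. Starting from $p^2 - D\,q^2 = \eta$ with $q \neq 0$ and $\eta \in \units\K$, I would first differentiate formally: since $(p^2)' = (q^2)' = \eta' = 0$ in characteristic $2$, this yields $D'\,q^2 = 0$, hence $D' = 0$ because $\K[X]$ is a domain; equivalently every odd-degree coefficient of $D$ vanishes, so $D = \sum_j d_{2j}\,X^{2j}$ (the case of constant $D$ being trivial, with $E = 0$, $r = D$). I would then adjoin to $\K$ a square root $\sqrt\eta$ of $\eta$ together with square roots of all the $d_{2j}$, obtaining a field $L \supseteq \K$, and set $\tilde E = \sum_j \sqrt{d_{2j}}\,X^j \in L[X]$, so that $\tilde E^2 = D$. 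Over $L[X]$ the equation becomes $(p + \tilde E\,q)^2 = (\sqrt\eta)^2$, and injectivity of Frobenius on the domain $L[X]$ forces $\tilde E\,q = p + \sqrt\eta$. The remaining work is to descend this to $\K$: comparing the coefficients of $X^\ell$ for $\ell \geq 1$, and using that the leading coefficient of $q$ is a unit of $\K$, a downward induction on the degree recovers $\sqrt{d_{2j}} \in \K$ for every $j \geq 1$; hence each $d_{2j}$ with $j \geq 1$ is a square in $\K$, and with $E = \sum_{j\geq 1}\sqrt{d_{2j}}\,X^j \in \K[X]$ and $r = d_0$ we obtain $D = E^2 + r$.

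For the last half of the Moreover — a non-trivial solution with $\eta$ a square forces $D$ to be a square in $\K[X]$, so that $r = 0$ is achievable — I would reuse this set-up: now $\sqrt\eta \in \K$, so $\tilde E\,q = p + \sqrt\eta \in \K[X]$, whence $\tilde E = (p + \sqrt\eta)/q \in \K(X)$ is integral over $\K[X]$; since $\K[X]$ is integrally closed in $\K(X)$, in fact $\tilde E \in \K[X]$ and $D = \tilde E^2$ (alternatively, comparing the lowest-degree coefficients of $\tilde E\,q$ and $p + \sqrt\eta$ shows directly that $\sqrt{d_0} \in \K$). The main obstacle throughout is precisely this descent: passing to $L$ is what makes the Frobenius trick usable, but it discards the information that the answer lives over $\K$, so recovering the rationality of the coefficients of $\tilde E$ — via the coefficient comparison, respectively via integral closure — is the delicate part; the rest is bookkeeping.
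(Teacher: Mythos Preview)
Your proof is correct and takes a genuinely different route from the paper's. The paper first proves the ``Moreover'' directly via unique factorisation: if $\eta=\mu^2$ then $D\,q^2=(p-\mu)^2$, so $D$ is a square in $\K[X]$. It then adjoins only $\sqrt\eta$, applies this Moreover over $K=\K(\sqrt\eta)$ to write $D=E^2$ with $E\in K[X]$, decomposes $E=E_0+\sqrt\eta\,E_1$ along the $\K$-basis $\{1,\sqrt\eta\}$, obtains $D=E_0^2+\eta\,E_1^2$, and substitutes back into the Pell equation to get $(p-q E_0)^2=\eta\,(qE_1+1)^2$, concluding by a short case split on whether $qE_1+1=0$. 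Your approach instead uses formal differentiation to see $D'=0$ up front, adjoins square roots of all even-degree coefficients to build an explicit $\tilde E$ with $\tilde E^2=D$, invokes Frobenius injectivity to get the linear identity $\tilde E\,q=p+\sqrt\eta$, and descends by a top-down coefficient comparison. The paper's argument is structurally cleaner (a single quadratic extension and a basis decomposition, with a case split that also yields a classification of all solutions), while yours is more explicit and makes the necessary condition $D'=0$ visible immediately; the integral-closure step you use for the Moreover is essentially equivalent to the paper's UFD argument.
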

\begin{proof}
Let us first treat the second case \(r = 0\). Suppose \(D = E^2\), and choose \(\mu \in \units\K\), \(p = E-\mu, \; q = 1\). This yields
\begin{equation*}
p^2 - D \, q^2 = (E-\mu)^2 - E^2 = \mu^2 = \eta,
\end{equation*}
hence \(\eta\) can be chosen a square.

On the other hand, suppose \((p,q) \in \K[X]^2\) with \(q \neq 0\) is a solution with \(\eta = \mu^2\) a square, then
\begin{equation*}
D \, q^2 = p^2 - \mu^2 = (p - \mu)^2
\end{equation*}
implies \(D\) is a square because \(\K[X]\) is a unique factorisation domain.

For the general case, note that if \(D = E^2 + r\) with \(r \neq 0\), then
\begin{equation*}
p = E, q = 1, \eta = r \implies p^2 - D \, q^2 = -r = \eta
\end{equation*}
gives the desired non-trivial solution.

Conversely, if there exists with a solution \((p,q) \in \K[X]^2\) with \(q \neq 0\), set \(K = \K(\sqrt{\eta})\) and reduce to the case with \(r = 0\) -- we now write \(D = E^2\) with \(E \in K[X]\), or rather \(E = E_0 + \mu \, E_1\) with \(E_0, E_1 \in \K[X]\) (here again \(\mu = \sqrt{\eta}\)). We obtain
\begin{equation*}
D = E^2 = E_0^2 + \mu^2 \, E_1^2 = E_0^2 + \eta \, E_1^2
\end{equation*}
and plugging it into the Pell equation we have
\begin{equation*}
0 = p^2 - q^2 \, \left(E_0^2 - \eta \, E_1^2\right) + \eta = (p - q \, E_0)^2 - \eta \, (q \, E_1 +1)^2
\end{equation*}
If \((q \, E_1 + 1) \neq 0\), then \(\mu = \ifracBB{p-q\,E_0}{q \, E_1 + 1}\in K \cap \K(X) = \K\) and we are actually in the first case. Otherwise, \(q \, E_1 = 1\), so \(E_1 \in \units \K\) (because \(q \in \K[X]\)), hence \(D = E_0^2 + \eta \, E_1^2 = E_0^2 + r\) with \(r = \eta \, E_1^2 \in \K\).
\end{proof}
\begin{rem}
So if we require \(\eta = 1\), we see that in characteristic \(2\) non-trivial solutions to the Pell equations only exist if \(D\) is actually a square.
\end{rem}

\begin{rem}
The proof also yields a classification of the Pell solutions:

For the first case with \(D = E^2\), the solutions always have the shape \(p = q \, E - \mu\). And obviously, we are free to choose \(q\) here, so there are a lot of non-trivial solutions in this case.

In the second case with \(D = E^2 + r\), we need to expand this observation. But note that we actually showed \(q \in \units \K\) in the above proof, so essentially \(q = 1\) after multiplying \(\eta\) with a square factor. Hence there is \emph{only one} non-trivial solution up to a constant factor.
\end{rem}

\section{Valuations in Laurent series quotients}
\label{sec:org5f19ed2}
The problem that arises with bad reduction is that we can no longer read off \(\nu(\alpha_n)\) from the leading coefficient. This also means that \(\nu(a_n)\) could be different, so we need to compute the valuations of the coefficients of \(\alpha_n\). As the latter can be written as a quotient of \(\vartheta_i\)'s, we naturally need to study quotients of Laurent series.

Indeed we may work with quotients of power series, as multiplying with powers of \(X\) only shifts coefficient indices. For convenience, we work in \(\powerseries{K}{Z}\) (think \(Z = \inv X\)) to avoid negative indices.

As in Chapters \ref{sec:orgd5f1900} and \ref{sec:org5f9d2ce}, \(K\) is the fraction field of a discrete valuation ring \(\O\) with maximal ideal \(\mm\) and valuation \(\nu\).

\medskip

Let \(a_n, c_n \in \O, b_n \in K\), and consider the Cauchy product
\begin{equation*}
\left(\sum_{n=0}^\infty a_n \, Z^n\right) \left( \sum_{n=0}^\infty b_n \, Z^n\right) = \sum_{n=0}^\infty c_n \, Z^n.
\end{equation*}

For the coefficients, we get the relations
\begin{equation*}
c_n = \sum_{i+j=n} a_i \, b_j
\end{equation*}
which we can recursively solve to \(b_n\) as
\begin{equation}
\label{series-inverse-rec-formula}
b_n = \frac{1}{a_0} \left( c_n - \sum_{i+j=n,\atop i\neq 0} a_i \, b_j \right).
\end{equation}
For the first couple of indices, we compute
\begin{align*}
b_0 &= \frac{c_0}{a_0} \\
b_1 &= \frac{1}{a_0^2} \, \left(a_0 \, c_1 - a_1 \, c_0 \right) \\
b_2 &= \frac{1}{a_0^3} \, \left(a_{1}^{2} c_{0} - a_{0} a_{2} c_{0} - a_{0} a_{1} c_{1} + a_{0}^{2} c_{2} \right) % \\
% &\vdots
\end{align*}
So we can try to calculate or estimate the valuations of the coefficient with these formulas.

The following Lemma addresses the simplest case (sufficient to treat \(\deg D = 4\)).
\begin{lemma}
\label{cauchy-valuation-lemma}
\begin{itemize}
\item Suppose \(\nu(c_0) > 0\), but \(\nu(c_1) = \nu(a_0) = 0\). Then \(\nu(b_0) = \nu(c_0) > 0\) and \(\nu(b_1) = 0\).
\item Suppose \(\nu(a_0) > 0\) and \(\nu(c_0) = \nu(a_1) = 0\). Then \(\nu(b_0) = - \nu(a_0) < 0\) and \(\nu(b_1) = -2 \, \nu(a_0) < 0\).
\end{itemize}
\end{lemma}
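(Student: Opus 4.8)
The plan is to prove both bullet points directly from the recursive inversion formula \eqref{series-inverse-rec-formula}, which for $n=0$ gives $b_0 = c_0/a_0$ and for $n=1$ gives $b_1 = \tfrac{1}{a_0}(c_1 - a_1 \, b_0)$. Since these expressions involve only $a_0, a_1, c_0, c_1$ and one already-computed coefficient $b_0$, the entire argument reduces to tracking valuations through the ultrametric inequality, using repeatedly that $a_n, c_n \in \O$ means $\nu(a_n) \geq 0$ and $\nu(c_n) \geq 0$.

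For the first bullet, I would first record $\nu(b_0) = \nu(c_0) - \nu(a_0) = \nu(c_0) > 0$ from $\nu(a_0) = 0$. Then in $b_1 = \tfrac{1}{a_0}(c_1 - a_1 b_0)$ I would note $\nu(a_1 \, b_0) = \nu(a_1) + \nu(b_0) \geq \nu(b_0) = \nu(c_0) > 0$, while $\nu(c_1) = 0$; the strict inequality $\nu(c_1) < \nu(a_1 b_0)$ lets me apply the ultrametric equality to conclude $\nu(c_1 - a_1 b_0) = 0$, and dividing by $a_0$ (valuation $0$) gives $\nu(b_1) = 0$.

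For the second bullet the structure is identical: $\nu(b_0) = \nu(c_0) - \nu(a_0) = -\nu(a_0) < 0$, and then $\nu(a_1 \, b_0) = \nu(a_1) + \nu(b_0) = -\nu(a_0)$ since $\nu(a_1) = 0$, whereas $\nu(c_1) \geq 0 > -\nu(a_0)$; again the two valuations differ, so $\nu(c_1 - a_1 b_0) = -\nu(a_0)$, and dividing by $a_0$ yields $\nu(b_1) = -2\,\nu(a_0) < 0$.

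There is no real obstacle here beyond being careful that at each step the two terms being added have \emph{distinct} valuations, so that the ultrametric inequality becomes an equality (the one place where $\nu(x) \neq \nu(y) \Rightarrow \nu(x+y) = \min(\nu(x),\nu(y))$ is genuinely used). I would state this explicitly at the start to justify both computations uniformly. One may also remark that the hypotheses are exactly what is needed for these strict separations to hold; without $\nu(c_1) = 0$ in the first case or $\nu(a_1) = 0$ in the second, the conclusion about $\nu(b_1)$ could fail.
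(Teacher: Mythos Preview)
Your proof is correct and follows essentially the same approach as the paper: direct computation of $\nu(b_0)$ and $\nu(b_1)$ from the explicit formulas, using the ultrametric equality when the two summands have distinct valuations. The only cosmetic difference is that the paper works with the closed form $b_1 = \tfrac{1}{a_0^2}(a_0 c_1 - a_1 c_0)$ rather than the recursive $b_1 = \tfrac{1}{a_0}(c_1 - a_1 b_0)$, but the valuation bookkeeping is identical.
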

\begin{proof}
The valuation of \(b_0\) is obvious. In the first situation, we deduce from \(\nu(c_0) > 0\) and \(\nu(a_1) \geq 0\)
\begin{equation*}
\nu(b_1) = \nu(c_1 \, a_0 - c_0 \, a_1) = \min(0, \nu(c_0) + \nu(a_1)) = 0.
\end{equation*}

In the second situation, \(\nu(c_1) \geq 0, \nu(a_0) > 0\) implies
\begin{equation*}
\nu(b_1) = - 2\, \nu(a_0) + \nu(c_1 \, a_0 - c_0 \, a_1) = - 2 \, \nu(a_0) + \min(\nu(c_1) + \nu(a_0), 0) = -2 \, \nu(a_0).
\end{equation*}
\end{proof}

We can actually generalise this somewhat, but first we need a better description of the formulas for the \(b_n\):
\begin{prop}
Define \(B_n = - (-a_0)^{n+1} \, b_n\). Then we find
\begin{equation*}
B_n = \sum_{i_0 + \dots + i_l = n\atop 0\leq i_0 \leq n,  1 \leq i_1, \dots, i_l \leq n} c_{i_0} \, a_{i_1} \cdots a_{i_l} \, (-a_0)^{n-l}.
\end{equation*}
\end{prop}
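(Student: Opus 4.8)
The plan is to prove the formula by strong induction on $n$, after first converting the defining recursion \eqref{series-inverse-rec-formula} for $b_n$ into a clean recursion for $B_n$.

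First I would rewrite \eqref{series-inverse-rec-formula} as $a_0 b_n = c_n - \sum_{i=1}^n a_i b_{n-i}$ and substitute $b_m = -B_m/(-a_0)^{m+1}$ everywhere. Using $-a_0/(-a_0)^{n+1} = (-a_0)^{-n}$, the left side becomes $B_n/(-a_0)^n$, so after clearing denominators one obtains
\begin{equation*}
B_n = c_n \, (-a_0)^n + \sum_{i=1}^n a_i \, (-a_0)^{i-1} \, B_{n-i}, \qquad B_0 = c_0 .
\end{equation*}
The base case $n=0$ is immediate, since both the claimed formula and $B_0$ equal $c_0$.

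For the inductive step, assume the closed form holds for every $m < n$ and substitute it for each $B_{n-i}$ in the recursion above. A generic term of the resulting double sum has the shape $a_i\,(-a_0)^{i-1}\cdot c_{i_0} a_{i_1}\cdots a_{i_l}\,(-a_0)^{n-i-l}$ with $i_0+\dots+i_l = n-i$; collecting the two powers of $-a_0$ gives $(-a_0)^{n-1-l}$, and prepending $a_i$ to the string $a_{i_1},\dots,a_{i_l}$ yields exactly the term indexed by the composition $(i_0,i,i_1,\dots,i_l)$ of $n$, which has length $l+1 \geq 1$ and satisfies the required constraints (first part $\geq 0$, all later parts $\geq 1$). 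The assignment $(i;(i_0,i_1,\dots,i_l)) \mapsto (i_0,i,i_1,\dots,i_l)$ is a bijection between pairs consisting of an integer $i \geq 1$ together with an admissible composition of $n-i$, and admissible compositions of $n$ of length $\geq 1$; moreover the leftover term $c_n(-a_0)^n$ from the recursion supplies precisely the length-$0$ composition $(n)$, contributing $c_n(-a_0)^{n}=c_n(-a_0)^{n-0}$. Summing over all these contributions reconstitutes $\sum c_{i_0} a_{i_1}\cdots a_{i_l}\,(-a_0)^{n-l}$, completing the induction.

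The only delicate point is the combinatorial bookkeeping: one must treat compositions as \emph{ordered} tuples, so that the displayed assignment is literally a bijection rather than merely an identity of multisets of monomials, and one must check that the exponent of $-a_0$ comes out as $n-l$ after the $i$-dependence cancels. Everything else is a routine manipulation of a geometric-series-type recursion, so I do not anticipate any real difficulty beyond keeping the indices straight.
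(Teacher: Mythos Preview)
Your proposal is correct and follows essentially the same approach as the paper: both derive the recursion $B_n = c_n(-a_0)^n + \sum_{i\ge 1} a_i(-a_0)^{i-1}B_{n-i}$, prove the base case $B_0=c_0$, and then substitute the induction hypothesis, matching terms via a bijection between $(i,\text{composition of }n-i)$ and compositions of $n$ of length $\ge 1$. The only cosmetic difference is that you insert the new index $i$ in the second position of the tuple whereas the paper appends it at the end, but both bijections are equally valid.
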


Essentially, we are summing over integer partitions of \(n\) with (at most) \(n+1\) parts. However, except for the parts which are \(0\), the ordering of the parts matters.

\begin{proof}
We prove this by induction. Clearly \(B_0 = c_0\), precisely what the formula produces as no \(a_i\) appears in the sum.

For the induction step, we use the recursion formula \eqref{series-inverse-rec-formula}
\begin{multline*}
B_n = (-a_0)^n \, c_n + \sum_{i+j=n,\atop i \neq 0} a_i \, (-a_0)^{i-1} \, B_j \\
    = (-a_0)^n \, c_n + \sum_{i+j=n,\atop i \neq 0} a_i \, (-a_0)^{i-1} \, \sum_{i_0 + \dots + i_l = l\atop 0\leq i_0 \leq j, 1 \leq i_1, \dots, i_l \leq j} c_{i_0} \, a_{i_1} \cdots a_{i_l} \, (-a_0)^{j-l} \\
    = \sum_{i_0 + \dots + i_l + i = n\atop 0\leq i_0 \leq n,  1 \leq i_1, \dots, i_l, i\leq n} c_{i_0} \, a_{i_1} \cdots a_{i_l} \, a_{i} \, (-a_0)^{n-l-1}.
\end{multline*}
Essentially, we are recursing by fixing the last (or first) \(a_i\).
\end{proof}

\pagebreak
We can now generalise the second part of Lemma \ref{cauchy-valuation-lemma}:
\begin{prop}
\label{inverse-drop1-valuation-lemma}
If \(c_0, a_1 \in \units \O\) and \(a_0 \in \mm\), then for all \(n \geq 0\) we have \(B_n \in \units \O\). This implies \(\nu(b_n) = -(n+1) \, \nu(a_0)\).
\end{prop}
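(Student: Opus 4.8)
The plan is to read off the result directly from the closed formula for $B_n = -(-a_0)^{n+1}\, b_n$ proved in the preceding proposition, using the ultrametric property of $\nu$ to isolate one ``dominant'' summand of valuation $0$ while showing every other summand has strictly positive valuation. Recall that in the present setting $a_i, c_i \in \O$, so every term
\[
c_{i_0}\, a_{i_1}\cdots a_{i_l}\,(-a_0)^{n-l}
\]
of the sum
\[
B_n = \sum_{\substack{i_0 + \dots + i_l = n\\ 0\le i_0\le n,\ 1\le i_1,\dots,i_l\le n}} c_{i_0}\, a_{i_1}\cdots a_{i_l}\,(-a_0)^{n-l}
\]
lies in $\O$, hence has $\nu \ge 0$. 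Since the parts $i_1,\dots,i_l$ are all $\ge 1$ and sum to at most $n$, we have $l \le n$, and the proof splits according to whether $l < n$ or $l = n$.

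First I would handle $l < n$: here the factor $(-a_0)^{n-l}$ contributes valuation $(n-l)\,\nu(a_0) \ge \nu(a_0) > 0$ because $a_0 \in \mm$, and the remaining factor $c_{i_0} a_{i_1}\cdots a_{i_l}$ is in $\O$, so the whole summand has $\nu > 0$. Next, for $l = n$ I would observe that $i_1 + \dots + i_n \ge n$ forces, together with $i_0 + i_1 + \dots + i_n = n$ and $i_0 \ge 0$, the unique configuration $i_0 = 0$, $i_1 = \dots = i_n = 1$; the corresponding summand is $c_0\, a_1^{\,n}$, which lies in $\units\O$ since $c_0, a_1 \in \units\O$. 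Thus $B_n = c_0 a_1^{\,n} + R_n$ with $\nu(c_0 a_1^{\,n}) = 0$ and $\nu(R_n) > 0$, so the strict ultrametric inequality yields $\nu(B_n) = 0$, i.e. $B_n \in \units\O$. Then $B_n = -(-a_0)^{n+1} b_n$ gives $\nu(b_n) = -(n+1)\,\nu(a_0) + \nu(B_n) = -(n+1)\,\nu(a_0)$, as claimed.

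There is essentially no serious obstacle here: the one point requiring care is the combinatorial claim that $l = n$ occurs for exactly one index tuple, so that the valuation-zero part of $B_n$ is genuinely a single unit rather than a sum of terms that might cancel. Everything else is a routine application of the ultrametric inequality once the formula for $B_n$ is available, so I expect the write-up to be short.
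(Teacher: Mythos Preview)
Your proof is correct and follows essentially the same approach as the paper: isolate the unique summand with $l=n$ (namely $c_0\,a_1^n \in \units\O$) and observe that every other summand has $l<n$ and hence carries a positive power of $a_0 \in \mm$. The only difference is organizational---you split on $l<n$ versus $l=n$ first, while the paper identifies the unit term first and then checks the others have $l<n$---but the content is identical.
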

\begin{proof}
It is clear that \(B_n \in \O\), as all the summands are in \(\O\) (recall that \(a_i, c_i \in \O\)). We show that precisely one summand lies in \(\units \O\), while all others are in \(\mm\).

Of course, with \(i_0 = 0\) and \(i_j = 1\) for the rest, we get \(c_0 \, a_1^n \in \units \O\).

For all other summands, we show that \(l < n\) which implies that \(a_0\) appears in \(c_{i_0} \, a_{i_1} \cdots a_{i_l} \, (-a_0)^{n-l}\), so the product is in \(\mm\).

If still \(i_0 = 0\), but one of the \(i_j \neq 1\), i.e. \(i_j \geq 2\), then clearly \(l < i_1 + \dots + i_l = n\).

If on the other hand \(i_0 > 0\), then immediately \(l \leq i_1 + \dots + i_l < n\).
\end{proof}

\section{A lemma for a quadratic form}
\label{sec:orgab72b2f}
Let \(G\) a \(\Z\)-module (an abelian group) and \(q: G \to \R\) a quadratic form. By abuse of notation, we also denote the corresponding \(\Z\)-bilinear form by \(q : G \times G \to \R\).
\begin{lemma}
\label{quadratic-form-sum-bound}
Suppose that \(q\) is positive (i.e. \(q(g) \geq 0\) for all \(g \in G\)).
Let \(g_1, \dots, g_r \in G\). Then
\begin{equation}
\label{eq-quadratic-form-sum-bound}
q(g_1 + \dots + g_r) \leq r \cdot \left( q(g_1) + \dots + q(g_r) \right) \leq r^2 \, \max\{q(g_i) \mid i=1,\dots,r \}.
\end{equation}
\end{lemma}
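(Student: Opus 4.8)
The plan is to prove both inequalities in \eqref{eq-quadratic-form-sum-bound} by elementary manipulations of the bilinear form associated to $q$, using only positivity and the defining identity $q(g+h) = q(g) + q(h) + 2\,q(g,h)$ (with the convention that $q(g,g) = q(g)$). First I would handle the right-hand inequality, which is immediate: since each $q(g_i) \leq \max\{q(g_j) \mid j = 1,\dots,r\}$, summing over $i$ gives $q(g_1) + \dots + q(g_r) \leq r \cdot \max\{q(g_j)\}$, and multiplying by $r$ yields the bound. So the real content is the left-hand inequality.

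For that, I would expand $q(g_1 + \dots + g_r)$ using bilinearity:
\begin{equation*}
q(g_1 + \dots + g_r) = \sum_{i=1}^r q(g_i) + 2 \sum_{1 \leq i < j \leq r} q(g_i, g_j).
\end{equation*}
The key step is to bound each cross term $2\,q(g_i, g_j)$. Here I would use positivity applied to $g_i - g_j$: from $0 \leq q(g_i - g_j) = q(g_i) + q(g_j) - 2\,q(g_i, g_j)$ we get $2\,q(g_i, g_j) \leq q(g_i) + q(g_j)$. Substituting this into the expansion gives
\begin{equation*}
q(g_1 + \dots + g_r) \leq \sum_{i=1}^r q(g_i) + \sum_{1 \leq i < j \leq r} \left( q(g_i) + q(g_j) \right).
\end{equation*}
Now each index $i$ appears in the double sum exactly $r-1$ times (paired with each of the other $r-1$ indices), so the double sum equals $(r-1)\sum_{i=1}^r q(g_i)$, and the total is $r \sum_{i=1}^r q(g_i)$, which is exactly the desired left-hand inequality.

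I do not anticipate a genuine obstacle here; the only mild subtlety is making sure the bilinear form is symmetric and that $q(g,g)$ genuinely recovers $q(g)$ (a standard fact for quadratic forms over a $\Z$-module once we take $q(g,h) = \tfrac12(q(g+h) - q(g) - q(h))$, which is well-defined as a real number even though we cannot divide in $G$). I would state this normalisation explicitly at the start of the proof. Everything else is bookkeeping with finite sums, so the proof should be only a few lines.
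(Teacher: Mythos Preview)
Your proof is correct and follows essentially the same approach as the paper: expand $q(g_1+\dots+g_r)$ bilinearly, bound each cross term via $0 \leq q(g_i - g_j)$, and count how often each $q(g_i)$ appears. Your write-up is in fact slightly cleaner, as you make the counting argument explicit and handle the trivial second inequality separately.
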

\begin{proof}
Because \(q\) is a quadratic form, we have
\begin{equation*}
q(g_1 + \dots + g_r) = \sum_{i=1}^r q(g_i) + 2 \, \sum_{1 \leq i < j \leq r} q(g_i, g_j).
\end{equation*}
Moreover \(q\) positive implies that
\begin{equation*}
0 \leq q(g_i - g_j) = q(g_i) + q(g_j) - 2 \, q(g_i, g_j)
\end{equation*}
so we deduce
\begin{equation*}
q(g_1 + \dots + g_r) \leq \sum_{i=1}^r q(g_i) + 2 \, \sum_{1 \leq i < j \leq r} q(g_i) + q(g_j) = \sum_{1\leq i, j \leq r} q(g_i) = r \, \sum_{i=1}^r q(g_i).
\end{equation*}
The second inequality in \eqref{eq-quadratic-form-sum-bound} is then obvious.
\end{proof}

\bibliography{phd-thesis.en}
\bibliographystyle{amsalpha}
\end{document}